\newcommand{\pr}{\mathbb{P}}								%	short hand for blackboard P
\newcommand{\Pee}{\mathbb{P}}
\newcommand{\Prob}[1]{\pr\left(#1\right)}					%	Standard probability command, 
\newcommand{\e}{\mathbb{E}}								%	short hand for blackboard E
\newcommand{\E}{\mathbb{E}}
\newcommand{\Exp}[1]{\e\left[#1\right]}					%	Standard expectation command, argument
\newcommand{\CExp}[2]{\e\left[\left.#1\right|#2\right]}	%	Conditional expectation, second 
\newcommand{\bigO}[1]{O\left(#1\right)}				%	Big O notation
\newcommand{\smallO}[1]{o\left(#1\right)}			%	Small o notation
\newcommand{\bigT}[1]{\Theta\left(#1\right)}		%	Theta notation
\newcommand{\smallOp}[1]{o_\pr \left(#1\right)}		%	Probabilistic little o notation
\newcommand{\ind}[1]{\mathbbm{1}_{\left\{#1\right\}}}	%	Command for indicator where argument is a condition
\newcommand{\1}{\mathds{1}}								%	Indication shorthand command
\newcommand{\N}{\mathbb{N}}							%	Natural numbers
\newcommand{\R}{{\ensuremath{\mathbb R}}}			%	Real numbers
\newcommand{\Kcal}{\ensuremath{{\mathcal K}}}		%	Mathcal K
\newcommand{\Bcal}{\ensuremath{{\mathcal B}}}		%	Mathcal B
\newcommand{\Ncal}{\ensuremath{{\mathcal N}}}		%	Mathcal N	
\newcommand{\eps}{\varepsilon}						%	varepsilon
\DeclareMathOperator{\dd}{\mathrm{d}\hspace{-2pt}}			%	straight d for integration
\renewcommand{\H}{\ensuremath{\mathbb{H}}}			%	H used for the hyperbolic plane and hyperbolic random graph
\newcommand{\Rcal}[0]{\ensuremath{{\mathcal R}}}	%	Image of the hyperbolic disc under \Psi
\newcommand{\Pcal}[0]{\ensuremath{{\mathcal P}}}	%	Point process
\newcommand{\Po}[0]{\ensuremath{{\mathrm{Po}}}}		%	Poisson random variable
\newcommand{\PPP}{\ensuremath{\Pcal}}	%	Poisson Point Process
\newcommand{\dom}[1]{\mathcal{D}_{#1}}
\newcommand{\BallHyp}[1]{\Bcal\left(#1\right)}		%	The image of the hyperbolic ball around \Psi^{-1}(p)
\newcommand{\BallPo}[1]{\Bcal_{\infty}\left(#1\right)}		%	The neighborhood of a point p in the infinite limit model
\newcommand{\BallPon}[1]{\Bcal_{\text{box}}\left(#1\right)}	%	The neighborhood of a point p in the finit box model
\newcommand{\FatBallHyp}[1]{\check{\Bcal}_{\Po}(#1)}		%
\newcommand{\BallSym}[1]{\Bcal_{\Po \bigtriangleup \infty}\left(#1\right)}	%	Symmetric difference between balls in the 
\newcommand{\BallInter}[1]{\Bcal_{\Po \cap \text{box}}\left(#1\right)}
\newcommand{\Mu}[1]{\mu\left(#1\right)}
\newcommand{\expH}{\overline{n}_{\Po}(k_n)}
\newcommand{\expP}{\overline{n}_{\mathrm{box}}(k_n)}
\newcommand{\stripknc}{\mathcal{S}_{k_n,C}}
\newcommand\numberthis{\addtocounter{equation}{1}\tag{\theequation}}	%	Use to insert equation number 
\newcommand{\MeijerGnew}[7]{G_{#3,#4}^{#1,#2} \left(#7 \bigg|\begin{matrix}#5  \\ #6 \end{matrix}\right)}
\newcommand{\Li}{\operatorname{Li}}
\newcommand{\Nbox}{\ensuremath{N_{\text{box}}}}
\newcommand{\Dbox}{\ensuremath{D_{\Gbox}}}
\newcommand{\Vbox}{\ensuremath{V_{\text{box}}}}
\newcommand{\VPo}{\ensuremath{V_{\Po}}}
\newcommand{\eR}[0]{\ensuremath{{\mathbb R}}}
\newcommand{\pmf}[0]{\ensuremath{{\pi}}}
\newcommand{\Ee}[0]{\ensuremath{{\mathbb E}}}
\newcommand{\Haa}{\ensuremath{{\mathbb H}}}
\newcommand{\GPo}[0]{\ensuremath{G_{\Po}}}
\newcommand{\Gbox}[0]{\ensuremath{G_{\mathrm{box}}}}
\newcommand{\GboxH}[0]{\ensuremath{G_{\mathrm{box}-}}}
\newcommand{\Ginf}[0]{\ensuremath{G_{\infty}}}
\newcommand{\isd}{\stackrel{\text{d}}{=}}
\newcommand{\Fcal}{\ensuremath{\mathcal{F}}}
\newenvironment{proofof}[1]{\vspace{1ex}\noindent{\bf Proof of #1.}}{\hspace*{\fill}$\blacksquare$\vspace{1ex}}
\renewenvironment{proof}{\vspace{1ex}\noindent{\bf Proof.}}{\hspace*{\fill}$\blacksquare$\vspace{1ex}}
\DeclareMathOperator{\Var}{Var}
\definecolor{orange}{RGB}{255,127,0}
\definecolor{pink}{RGB}{255,150,150}
\newtheorem{remark}{Remark}[section]
\newtheorem{theorem}{Theorem}[section]
\newtheorem{lemma}[theorem]{Lemma}
\newtheorem{proposition}[theorem]{Proposition}
\newtheorem{corollary}[theorem]{Corollary}
\title{Clustering in a hyperbolic model of complex networks.}
\author[1]{Nikolaos Fountoulakis\thanks{Research partially supported by the Alan Turing Institute, grant no. EP/N510129/1.}}
\author[2,3]{Pim van der Hoorn\thanks{Research partially supported by ARO Grant W911NF-16-1-0391 and W911NF-17-1-0491}}
\author[4]{Tobias M\"{u}ller\thanks{Research partially supported by NWO grants 639.032.529 and 612.001.409.}}
\author[4]{Markus Schepers\thanks{Research partially supported by NWO grant 639.032.529.}}
\affil[1]{School of Mathematics, University of Birmingham, United Kingdom.}
\affil[2]{Department of Mathematics and Computer Science, Eindhoven University of Technology, The Netherlands}
\affil[3]{Department of Physics, Northeastern University, United States.}
\affil[4]{Bernoulli Institute, University of Groningen, The Netherlands.}
\pgfplotsset{compat=1.14}
\begin{document}

\maketitle

\begin{abstract}
In this paper we consider the clustering coefficient, and clustering function in a random graph model proposed by Krioukov et al.~in 2010. 
In this model, nodes are chosen randomly inside a disk in the hyperbolic plane and two nodes are connected if they are at most at a certain 
hyperbolic distance from each other. It has been previously shown that this model has various properties associated with complex networks, 
including a power-law degree distribution, ``short distances'' and a non-vanishing clustering coefficient. 
The model is specified using three parameters: the number of nodes $n$, which we think of as going to infinity, and $\alpha, \nu > 0$, 
which we think of as constant. Roughly speaking, the parameter $\alpha$ controls the power law exponent of the degree sequence 
and $\nu$ the average degree.

Here we show that the clustering coefficient tends in probability to a constant $\gamma$ that we give explicitly as a closed form 
expression in terms of $\alpha, \nu$ and certain special functions. 
This improves earlier work by Gugelmann et al., who proved that the clustering coefficient remains bounded away from zero 
with high probability, but left open the issue of convergence to a limiting constant. Similarly, we are able to show that 
$c(k)$, the average clustering coefficient over all vertices of degree exactly $k$, tends in probability to a limit 
$\gamma(k)$ which we give explicitly as a closed form expression in terms of $\alpha, \nu$ and certain special functions. 
We are able to extend this last result also to sequences $(k_n)_n$ where $k_n$ grows as a function of $n$. 
Our results show that $\gamma(k)$ scales differently, as $k$ grows, for different ranges of $\alpha$. 
More precisely, there exists constants $c_{\alpha,\nu}$ depending on $\alpha$ and $\nu$, such that as $k \to \infty$, 
$\gamma(k) \sim c_{\alpha,\nu} \cdot k^{2 - 4\alpha}$ if $\frac{1}{2} < \alpha < \frac{3}{4}$, 
$\gamma(k) \sim c_{\alpha,\nu} \cdot \log(k) \cdot k^{-1} $ 
if $\alpha=\frac{3}{4}$ and $\gamma(k) \sim c_{\alpha,\nu} \cdot k^{-1}$ when $\alpha > \frac{3}{4}$. 
These results contradict a claim of Krioukov et al., which stated that the limiting values $\gamma(k)$ should always scale 
with $k^{-1}$ as we let $k$ grow. 
\end{abstract}

\newpage

\tableofcontents

\newpage

\section{Introduction and main results}

In this paper, we will consider clustering in a model of random graphs that involves points taken randomly in the hyperbolic plane. This model was introduced by Krioukov, Papadopoulos, Kitsak, Vahdat and Bogu\~{n}\'a~\cite{krioukov2010hyperbolic} in 
2010 - we abbreviate it as \emph{the KPKVB model}. We should however note that the model also goes by several other names in the literature, including {\em hyperbolic random geometric graphs} and {\em random hyperbolic graphs}. Krioukov et al.~suggested this model as a suitable model for complex networks. It exhibits the three main characteristics usually associated with complex networks: a power-law degree distribution, a non-vanishing clustering coefficient and short graph distances.

\subsection{KPKVB model}
We start with the definition of the model. As mentioned, its nodes are situated in the hyperbolic plane $\Haa$, which is a surface with constant Gaussian curvature $-1$. This surface has several convenient representations (i.e.~coordinate maps), such as the Poincar\'e half-plane model, the Poincar\'e disk model and the Klein disk model. A gentle introduction to Gaussian curvature, hyperbolic geometry and these representations of the hyperbolic plane can be found in~\cite{stillwell2012geometry}. Throughout this paper we will be working with a representation of the hyperbolic plane using {\em hyperbolic polar coordinates}, sometimes called the {\em native representation}. That is, a point $u \in \Haa$ is represented as $(r,\theta)$, where $r$ is the hyperbolic distance between $u$ and the origin $O$ and $\theta$ as the angle between the line segment $Ou$ and the positive $x$-axis. 
Here, when mentioning ``the origin'' and the angle between the line segment and the positive $x$-axis, we think of $\Haa$ embedded as the Poincar\'e disk in the ordinary euclidean plane.

The KPKVB model has three parameters: the number of vertices $n$, which we think of as going to infinity, and $\alpha > \frac{1}{2}$, $\nu > 0$ which we think of as fixed. Given $n, \alpha, \nu$ we define $R = 2\log(n/\nu)$. Then the hyperbolic random graph $G(n;\alpha, \nu)$ is defined as follows:
\begin{itemize}
\item The vertex set is given by $n$ i.i.d.~points $u_1, \dots, u_n$ denoted in polar coordinates $u_i = (r_i, \theta_i)$, where the angular coordinate $\theta$ is chosen uniformly from $(-\pi,\pi]$ while the radial coordinate $r$ is sampled independently according to the cumulative distribution function
\begin{equation}\label{eq:def_hyperbolic_point_distribution}
	F_{\alpha,R}(r) = \begin{cases}
		0 &\mbox{if } r < 0\\
		\frac{\cosh(\alpha r)-1}{\cosh(\alpha R) - 1} &\mbox{if } 0 \le r \le R\\
		1&\mbox{if } r > R
	\end{cases}
\end{equation}
\item Any two vertices $u_i=(r_i,\theta_i)$ and $u_j=(r_j,\theta_j)$ are adjacent if and only if $d_\H(u_i,u_j) \le R$, where $d_\H$ denotes the distance in the hyperbolic plane. We will frequently be using that, by the hyperbolic law of cosines, $d_\H(u_i,u_j) \le R$ is equivalent to
\[
	\cosh(r_i) \cosh(r_j) - \sinh(r_i) \sinh( r_j) \cos(|\theta_i-\theta_j|_{2\pi}) \le \cosh(R),
\]
where $|a|_{b} = \min( |a|, b - |a|)$ for $-b\leq a\leq b$.
\end{itemize}

\begin{figure}[!t]
\centering
\includegraphics[scale=0.3]{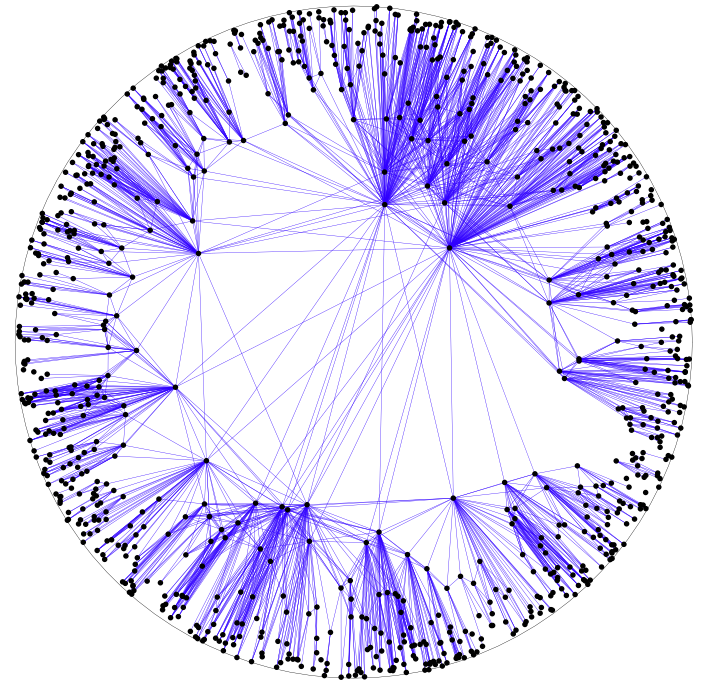}
\caption{Simulation $G(n;\alpha, \nu)$ with $\alpha = 0.9$, $\nu = 0.2$ and $n = 5000$.}
\label{fig:H_graph_example}
\end{figure}

\noindent
Figure~\ref{fig:H_graph_example} shows a computer simulation of $G(n;\alpha, \nu)$.

As observed by Krioukov et al.~\cite{krioukov2010hyperbolic}, and proved rigorously by Gugelmann et al.~\cite{gugelmann2012random}, the 
degree sequence of the KPKVB model follows a power-law with exponent $2\alpha+1$.
Gugelmann et al.~\cite{gugelmann2012random} also showed that the average degree converges in probability to the 
constant $8\nu\alpha^2/ \pi (2\alpha-1)^2$, and they showed that the (local) clustering coefficient is non-vanishing 
in the sense that it is bounded below by a positive constant a.a.s. 
Here, and in the rest of the paper, for a sequence $(E_n)_n$ of events, $E_n$ {\em asymptotically almost surely} (a.a.s.) means 
that $\Prob{E_n} \to 1$ as $n \to \infty$. 

Apart from the degree sequence and clustering, the third main characteristic associated with complex networks, ``short distances'', has 
also been established in the literature. In~\cite{abdullah2017typical} it is shown that for $\alpha < 1$ the largest component 
is what is called an \emph{ultra-small world}: if we randomly sample two vertices of the graph then, a.a.s., conditional on them 
being in the same component, their graph distance is of order $\log\log n$. 
In~\cite{kiwi2015bound} and~\cite{friedrich2018diameter} a.a.s.~polylogarithmic upper and lower bounds on the graph diameter of 
the largest component are shown, and in~\cite{muller2017diameter}, these were sharpened to show that $\log n$ is the correct order of 
the diameter.

Earlier work of the first and third authors with Bode~\cite{bode2015largest} and of the first and third authors~\cite{fountoulakis2018law} has established the ``threshold for a giant component'': if $\alpha < 1$ then there is a unique component of size linear in $n$ no matter how small $\nu$ (i.e. the average degree); if $\alpha > 1$ all components are sublinear no matter the value of $\nu$; and if $\alpha=1$ then there is a critical value $\nu_{\text{c}}$ such that for $\nu < \nu_{\text{c}}$ all components are sublinear and for $\nu > \nu_{\text{c}}$ there is a unique linearly sized component (all of these statements holding a.a.s.). Whether or not there is a giant component if $\alpha=1$ and $\nu=\nu_{\text{c}}$ remains an open problem. In~\cite{kiwi2015bound} and~\cite{kiwi2017second}, Kiwi and Mitsche considered the size of the second largest component and showed that for $\alpha \in (1/2, 1)$, a.a.s., the second largest component has polylogarithmic order with exponent $1/(\alpha -1/2)$.

In another paper of the first and third authors with Bode~\cite{bode2016probability} it was shown that $\alpha=1/2$ is the threshold for connectivity: for $\alpha < 1/2$ the graph is a.a.s.~connected, for $\alpha>1/2$ the graph is a.a.s.~disconnected and when $\alpha=1/2$ the probability of being connected tends to a continuous, non-decreasing function of $\nu$ which is identically one for $\nu \geq \pi$ and strictly less than one for $\nu < \pi$. Friedrich and Krohmer~\cite{blasius2018cliques} studied the size of the largest clique as well as the number of cliques of a given size. Bogu\~{n}a et al.~\cite{boguna2010sustaining} and Bl\"asius et al.~\cite{blasius2018efficient} considered fitting the KPKVB model to data using maximum likelihood estimation. Kiwi and Mitsche~\cite{kiwi2018spectral} studied the spectral gap and related properties, and Bl\"asius et al.\cite{blasius2016hyperbolic} considered the tree-width and related parameters of the KPKVB model. Recently Owada and Yogeshwaran~\cite{owada2018sub} considered subgraph counts, and in particular established a central limit theorem for the number of copies of a fixed tree $T$ in $G(n;\alpha,\nu)$, subject to some restrictions on the parameter $\alpha$.

\subsubsection*{Clustering}

In this work we study the clustering coefficient in the KPKVB model. In the literature there are unfortunately two distinct, rival definitions of the {\em clustering coefficient}. One of those, sometimes called the {\em global} clustering coefficient, is defined as three times the ratio of the number of triangles to the number of paths of length two in the graph. Results for this version of the clustering coefficient in the KPKVB model were obtained by Candellero and the first author~\cite{candellero2016clustering} and for the evolution of graphs on more general spaces with negative curvature by the first author in~\cite{fountoulakis2012evolution}. 

We will study the other notion of clustering, the one which is also considered by Krioukov et al.~\cite{krioukov2010hyperbolic} and Gugelmann et al.~\cite{gugelmann2012random}. It is sometimes called the {\em local} clustering coefficient, although we should point out that Gugelmann et al.~actually call it the global clustering coefficient in their paper. For a graph $G$ and a vertex $v\in V(G)$ we define the clustering coefficient {\em of $v$} as:
\[
	c(v) := \left\{\begin{array}{cl}
		\displaystyle \frac{1}{\binom{\text{deg}(v)}{2}} \sum_{u,w\sim v} 1_{\{uw \in E(G)\}}, 
			& \text{ if $\text{deg}(v) \geq 2$, }\\
		& \\
        0, & \text{ otherwise,}
        \end{array}\right.
\]
where $E(G)$ denotes the edge set of $G$ and $\text{deg}(v)$ is the degree of vertex $v$. That is, provided $v$ has degree at least two, $c(v)$ equals the number of edges that are actually present between the neighbours of $v$ divided by the number of edges that could possibly be present between the neighbours given the degree of $v$.
The clustering coefficient of $G$ is now defined as the average of $c(v)$ over all vertices $v$:
\[
	c(G) := \frac{1}{|V(G)|} \sum_{v\in V(G)} c(v).
\]

As mentioned above, Gugelmann et al.~\cite{gugelmann2012random}, have established that $c(G(n;\alpha,\nu))$ is non-vanishing a.a.s., but they left open the question of convergence. Theorem~\ref{thm:maincc} below establishes that the clustering coefficient indeed converges in probability to a constant $\gamma$ that we give explicitly as a closed form expression involving $\alpha,\nu$ and several classical special functions.

In addition to the clustering coefficient, we shall also be interested in the {\em clustering function}.
This assigns to each non-negative integer $k$ the value
\begin{equation}\label{eq:def_clustering_function}
	c(k; G) := \begin{cases}
		\displaystyle \frac{1}{N(k)} \sum_{\substack{v \in V(G), \\ \text{deg}(v)=k}}  c(v),  &\mbox{ if } N(k) \ge 1,\\
		0, &\mbox{else,}
	\end{cases}
\end{equation}
where $N(k)$ denotes the number of vertices of degree exactly $k$ in $G$. In other words, the clustering function assigns to the integer $k$ the average of the local clustering coefficient over all vertices of degree $k$. We remark that, while it might seem natural to consider $c(k)$ to be ``undefined'' when $N(k)=0$, we prefer to use the above definition for technical 
convenience.  This way $c(k; G(n;\alpha,\nu) )$ is a plain vanilla random variable and we can for instance compute its moments without any issues.

Krioukov et al.~state (\cite{krioukov2010hyperbolic}, last sentence on page 036106-10) that as $k$ tends to infinity, the clustering function decays as $k^{-1}$. This seems to be based on computations that were not included in the paper.
Despite the attention the KPKVB model has generated since then, the behaviour of the clustering function in KPKVB random graphs has not been rigorously determined yet. In particular it has not been established whether it converges as $n\to\infty$ to some suitable limit function.  
Theorems~\ref{thm:mainkfixed} and~\ref{thm:mainktoinfty} below settle this question. Theorem~\ref{thm:mainkfixed} shows that for each fixed $k$, the value $c(k;G(n;\alpha,\nu))$ converges in probability to a constant $\gamma(k)$ that we again give explicitly as a closed form expression involving $\alpha,\nu$ and several classical special functions. Theorem~\ref{thm:mainktoinfty} extends this result to growing sequences satisfying $k \ll n^{1/(2\alpha+1)}$. Proposition~\ref{prop:asymp} clarifies the asymptotic behavior of the limiting function $\gamma(k)$, as $k\to\infty$. This depends on the parameter $\alpha$, and $\gamma(k)$ only scales with $k^{-1}$ when $\alpha > 3/4$, which corresponds to the exponent of the degree distribution exceeding $5/2$. 
So in particular our findings contradict the above-mentioned claim of Krioukov et al.~\cite{krioukov2010hyperbolic}.

\subsubsection*{Notation}

In the statement of our main results, and throughout the rest of the paper, we will use the following notations. 
We set 
$$\xi := \frac{4\alpha\nu}{\pi(2\alpha-1)}. $$

We write $\Gamma(z) := \int_0^\infty t^{z-1} e^{-t}\text{d}t$ for the gamma function, 
$\Gamma^+(a,b) := \int_b^\infty t^{a-1} e^{-t}\text{d}t$ for the upper incomplete gamma function, 
 $B(a,b) := \int_0^1 u^{a-1}(1-u)^{b-1}\text{d}u = \Gamma(a)\Gamma(b) / \Gamma(a+b)$ for the beta function and 
 $B^-(x ; a,b) := \int_0^x u^{a-1}(1-u)^{b-1}\text{d}u$ for the lower incomplete beta function. 
We write $U(a,b,z)$ for the hypergeometric U-function (also called Tricomi's confluent hypergeometric function), which 
%for $a,b,z\in \mathbb{C}$, $b \not \in \mathbb{Z}_{\leq 0}$, $\mathrm{Re}(a), \mathrm{Re}(z) >0$ 
has the integral representation 
\[
	U(a,b,z) = \frac{1}{\Gamma(a)} \int_0^\infty e^{-zt} t^{a-1} (1+t)^{b-a-1} dt,
\] 
see~\cite[p.255 Equation (2)]{erdelyi1953higher}, and let $\MeijerGnew{m}{\ell}{p}{q}{{\bf a}}{{\bf b}}{z}$ denote 
Meijer's G-Function~\cite{meijer1946gfunction}, see Appendix~\ref{sec:Meijer_G_functions} for more details.

For a sequence $(X_n)_n$ of random variables, we write $X_n \xrightarrow[n\to\infty]{\Pee} X$ to denote that $X_n$ converges in 
probability to $X$. %, and we write $X_n \xrightarrow[n\to\infty]{L_1} X$ if $X_n$ converges to $X$ in the $L_1$ sense (i.e.~if $\Ee |X_n-X| \to 0$.)

%We further adopt standard notations on asymptotic behavior of functions and sequences. That is, for two functions $f$ and $g$, we write $f(n) = \smallO{g(n)}$ as $n \to \infty$ if $\limsup_{n \to \infty} f(n)/g(n) = 0$ and $f(n) = \bigO{g(n)}$ as $n \to \infty$ if $\limsup_{n \to \infty} |f(n)|/g(n) < \infty$. Furthermore, $f(n) = \Omega(g(n))$ as $n \to \infty$ if $\limsup_{n \to \infty} |f(n)/g(n)| > 0$ and $f(n) = \omega(g(n))$ as $n \to \infty$ if $\limsup_{n \to \infty} |f(n)/g(n)| = \infty$. Finally, $f(n) = \bigT{g(n)}$ as $n \to \infty$ if $f(n) = \bigO{g(n)}$ and $g(n) = \Omega(f(n))$.

 %In addition, $X_n \xrightarrow[n\to\infty]{L^1} X$ denotes convergence in expectation, i.e. $\Exp{|X_n - X|} \to 0$ as $n \to \infty$, which is a stronger notion than convergence in probability.

\subsection{Main results}\label{ssec:main_results}

\subsubsection{The clustering coefficient}

Our first main result shows the convergence of the local clustering coefficient in the KPKVB model and 
establishes the limit exactly.

\begin{theorem}\label{thm:clustering_coefficient_hyperbolic}\label{thm:maincc}
Let $\alpha > \frac{1}{2}$, $\nu > 0$ be fixed. Writing $G_n := G(n;\alpha,\nu)$, we have
\[
	c( G_n ) \xrightarrow[n\to\infty]{\Pee} \gamma,
\]
where $\gamma$ is defined for $\alpha \ne 1$ as
\begin{align*}
	\gamma 
	&=\frac{2 + 4 \alpha + 13 \alpha^2 - 34 \alpha^3 - 12\alpha^4 + 24 \alpha^5}{16(\alpha-1)^2 \alpha (\alpha+1) (2\alpha+1)} 
		+  \frac{2^{-1 - 4 \alpha}}{(\alpha - 1)^2} \\
&\hspace{10pt}+ \frac{(\alpha - 1/2) (B(2 \alpha, 2 \alpha + 1) + B^-(1/2; 1 + 2 \alpha, -2 + 2 \alpha))}{2 (\alpha - 1) (3 \alpha - 1)} \\
&\hspace{10pt}+ \frac{\xi^{2\alpha} \left( \Gamma^+( 1 - 2 \alpha, \xi) + \Gamma^+( - 2 \alpha, \xi)\right) }{4(\alpha-1)} \\
&\hspace{10pt}+ \frac{\xi^{2\alpha + 2}\alpha (\alpha - 1/2)^2 \left( \Gamma^+(- 2 \alpha - 1, \xi) + \Gamma^+(- 2 \alpha - 2, \xi)\right)}%
{2(\alpha-1)^2} \\
&\hspace{10pt}- \frac{\xi^{2\alpha + 1}\alpha (2\alpha - 1) \left( \Gamma^+( - 2\alpha,\xi)+\Gamma^+( - 2 \alpha - 1,\xi) \right)}%
{(\alpha-1)} \\
&\hspace{10pt}- \frac{\xi^{6\alpha-2}2^{-4\alpha}(3\alpha - 1)
\left( \Gamma^+( - 6 \alpha + 3, \xi)+\Gamma^+( - 6 \alpha + 2, \xi) \right)}{(\alpha-1)^2}\\
&\hspace{10pt}- \frac{\xi^{6\alpha - 2}(\alpha - 1/2) B^-(1/2; 1 + 2 \alpha, -2 + 2 \alpha)%
\left(\Gamma^+( - 6 \alpha + 3, \xi)+\Gamma^+( - 6 \alpha + 2, \xi)\right)}{(\alpha-1)} \\
&\hspace{10pt}- \frac{e^{-\xi} \Gamma(2\alpha+1) \left(U(2\alpha+1,1-2\alpha,\xi) + U(2\alpha+1,2-2\alpha,\xi)\right)}{4(\alpha-1)} \\
&\hspace{10pt}+ \frac{\xi^{6\alpha - 2} \Gamma(2\alpha+1)\left( \MeijerGnew{3}{0}{2}{3}{1,3-2\alpha}{3-4\alpha,-6\alpha+2,0}{\xi}
 		+ \MeijerGnew{3}{0}{2}{3}{1,3-2\alpha}{3-4\alpha,-6\alpha+3,0}{\xi}\right)}{4(\alpha-1)},
\end{align*}
and for $\alpha = 1$ as the $\alpha\to 1$ limit of the above expression. 
% \begin{align*}
% 	c_\infty &= \frac{575 - 12 \pi^2}{576} + \frac{\eta^4(7 + \pi^2)\Gamma^\ast(-4, \eta)}{4}\\
% 	&\hspace{10pt}- \frac{1}{2} \int_0^1 (1 - 4z + 3z^3)\log(1-z)(z + \eta)e^{-\eta/z} \dd z\\
% 	&\hspace{10pt}- \int_0^1 \Li_2(z)(z^3 + \eta z^2) e^{-\eta/z} \dd z,		
% \end{align*}
% with $\eta = 4\nu/\pi$ and $\Li_2(z) = \sum_{t = 1}^\infty z^t/t^2$, the dilogarithm function\footnote{Note that the integrals in the expression for $c_\infty$ for $\alpha = 1$ exists: for the first one note that $1-4z+3z^2=(1-z)(1-3z)$, so the integrand can be bounded by $C(1-z)\log(1-z)$ on $[0,1)$ for some constant $C$, which can be continued continuously to the compact interval $[0,1]$ by noting that the limit for $z \rightarrow 1$ is zero, so the integrand is bounded on a bounded domain and hence, this integral is finite; for the second integral note that $\Li_2(z)$ is bounded by $\Li_2(1)$ on $[0,1]$, which is a series with well-known finite limit, so again the integrand is bounded on a bounded domain and hence the second integral is also finite.}.
\end{theorem}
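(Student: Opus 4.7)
The plan is to combine the standard ``Poissonization plus linearization'' analysis of the KPKVB model with a clean conditional factorization of the local clustering coefficient. As a first step I would replace the $n$ i.i.d.\ points by a Poisson point process $\Pcal$ on the hyperbolic disk of radius $R$ with intensity $n f_{\alpha,R}(r)/(2\pi)$; a routine coupling shows that $c(G_{\Po})$ and $c(G_n)$ differ by $o_{\Pee}(1)$. Next I would apply the usual rescaling $(r,\theta)\mapsto(y,x)=(R-r,\theta e^{R/2}/2)$, under which $G_{\Po}$ becomes a Poisson process on a rectangle that converges to a Poisson point process $\Ginf$ on the strip $\eR\times[0,\infty)$ with intensity $\nu\alpha e^{-\alpha y}\,dx\,dy$, and under which the adjacency rule $d_\H(\cdot,\cdot)\le R$ reduces, to leading order, to the ``box'' rule $|x_i-x_j|\le e^{(y_i+y_j)/2}$. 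All analytic work is then performed inside this infinite limit model.

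The central tool for computing $\Ee[c(G_n)]$ is a conditional factorization. By exchangeability $\Ee[c(G_n)]=\Ee[c(v_1)]$, and given the Poisson structure, conditional on the radial coordinate of $v_1$ and on $\deg(v_1)=k\ge 2$ the neighbours of $v_1$ are i.i.d.\ draws from the distribution restricted to the ball around $v_1$; exchangeability of those neighbours then yields
\[
\Ee[c(v_1)\mid r_1=r] = p_{\mathrm{adj}}(r)\cdot\Pee(\deg(v_1)\ge 2\mid r_1=r),
\]
where $p_{\mathrm{adj}}(r)$ is the probability that two independent uniform neighbours of $v_1$ are adjacent to each other. This factorization is what makes the ratio $c(v)$ tractable without having to simultaneously control numerator and denominator. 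Integrating against the radial density of $r_1$ and passing to the limit in $\Ginf$ produces an explicit one-dimensional integral that should coincide with $\gamma$.

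Identifying that integral with the stated closed form is the bulk of the work. In $\Ginf$, a vertex at height $y$ has $\mathrm{Poisson}(\mu(y))$ degree with $\mu(y)$ an explicit exponential in $y$, so $\Pee(\deg\ge 2\mid y)=1-e^{-\mu(y)}(1+\mu(y))$. The quantity $p_{\mathrm{adj}}(y)$ is itself a triple integral over two neighbour heights $y_1,y_2$ and their relative angular position, which splits into several geometric regimes according to the relative sizes of the three box widths $e^{(y+y_1)/2}$, $e^{(y+y_2)/2}$ and $e^{(y_1+y_2)/2}$. Each regime then contributes one or more summands of the formula for $\gamma$: the $\Gamma^+$ terms arise from integrating powers of $\mu(y)$ weighted by $e^{-\mu(y)}$ against the density $\alpha e^{-\alpha y}$; the incomplete beta term $B^{-}(1/2;\,\cdot\,,\,\cdot\,)$ encodes a geometric overlap with threshold $1/2$; the Tricomi $U$-function appears through a Laplace-type transform of a two-variable integration region; and the Meijer $G$-function arises from the one nested integral that does not reduce to classical special functions.

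Finally, to upgrade $\Ee[c(G_n)]\to\gamma$ to convergence in probability I would show $\Var(c(G_n))=o(1)$. Since $c(v)\in[0,1]$ is determined by the one-neighbourhood of $v$, pairs $(u,v)$ whose one-neighbourhoods in the Poisson process are disjoint contribute nothing to $\mathrm{Cov}(c(u),c(v))$, and the fraction of pairs with overlapping neighbourhoods is small in the usual sense; standard second-moment estimates in the spirit of~\cite{gugelmann2012random} then make the total covariance contribution $o(n^2)$. The main obstacle in the whole argument is the special-function integration in step three: the case split according to the relative box widths, and the nested integral that produces the Meijer $G$-function terms, are where essentially all of the complexity of the final closed form is generated. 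By contrast, the probabilistic steps (Poissonization, linearization and variance control) follow a template that is by now standard for this model.
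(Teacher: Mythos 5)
Your conditional factorization
\[
\Ee[c(v_1)\mid y_1=y_0]=p_{\mathrm{adj}}(y_0)\cdot\Pee(\deg(v_1)\ge 2\mid y_1=y_0)
\]
is exactly what the paper uses to express $\gamma$: since, given the degree $k$ and the height $y_0$, the $k$ neighbours in the Poisson model are i.i.d., the conditional expected clustering coefficient is $P(y_0)$ independently of $k$, which gives $\gamma=\int_0^\infty P(y_0)\bigl(1-\rho(y_0,0)-\rho(y_0,1)\bigr)\alpha e^{-\alpha y_0}\,dy_0$. This part, and the identification of the special functions, matches the paper's Section~\ref{sec:Ginf}. Your route to convergence in probability, however, is genuinely different from the paper's: you propose to show $\Var(c(G_n))=o(1)$ directly, whereas the paper first proves $c(k;G_n)\plim\gamma(k)$ for each fixed $k$ (Theorem~\ref{thm:mainkfixed}) and then sums against the empirical degree distribution using Slutsky's theorem and a tail cutoff in $k$.

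The gap is in your variance control. You claim that pairs $(u,v)$ with disjoint one-step neighbourhoods contribute no covariance and that ``the fraction of pairs with overlapping neighbourhoods is small in the usual sense''; but in this model that fraction is not small --- it equals $1$. In the box model, the ball $\BallPon{p}$ around any point $p=(x,y)$ has half-width $e^{(y+y')/2}$ at height $y'$, which eventually exceeds the box half-width $\tfrac{\pi}{2}e^{R/2}$ as $y'\to R$. Consequently, the balls of \emph{every} pair of vertices intersect near the top of the box, and the naive ``disjoint balls $\Rightarrow$ independent'' second-moment bound has no pairs to exploit. The paper circumvents this by first passing to the induced subgraph $\GboxH$ on heights $y\le R/4$, where the balls have half-width at most $e^{R/4}$ and therefore any two vertices at horizontal distance more than $2e^{R/4}$ do have disjoint neighbourhoods (this is the second-moment estimate inside Lemma~\ref{lem:ckGboxH}), and only then shows that deleting the high-height part of the graph changes $O(1)$-degree clustering quantities by $o(1)$. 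Your proposal would need either that truncation step or a more delicate argument that the covariance coming from the thin overlap region near the top of the box is negligible because the Poisson intensity there is $o(1)$; as written, the claimed ``standard'' second-moment estimate does not apply.

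A second, minor omission: ``the fraction of pairs with overlapping neighbourhoods is small'' is also what makes your direct computation of $\Ee[c(G_n)]$ (rather than $\Ee[c(k;G_n)]$) legitimate, because the expression $\gamma=\int P(y_0)(1-\rho(y_0,0)-\rho(y_0,1))\alpha e^{-\alpha y_0}\,dy_0$ is derived in the \emph{infinite} model. Passing this limit through the indicator $\1_{\{\deg\ge 2\}}$ and through the neighbour-adjacency probability uniformly in $y_0$ requires a dominated-convergence-style argument together with the coupling estimates relating $\BallHyp{p}$, $\BallPon{p}$ and $\BallPo{p}$; this is precisely what the paper expends most of its technical effort on, and your proposal should not treat it as routine.
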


\noindent
A plot of $\gamma$ can be found in Figure~\ref{fig:gamma}. The figure also shows the results of computer simulations that appear to be in agreement with our findings.

\begin{figure}[!ht]
    \centering
    \includegraphics[scale=0.6]{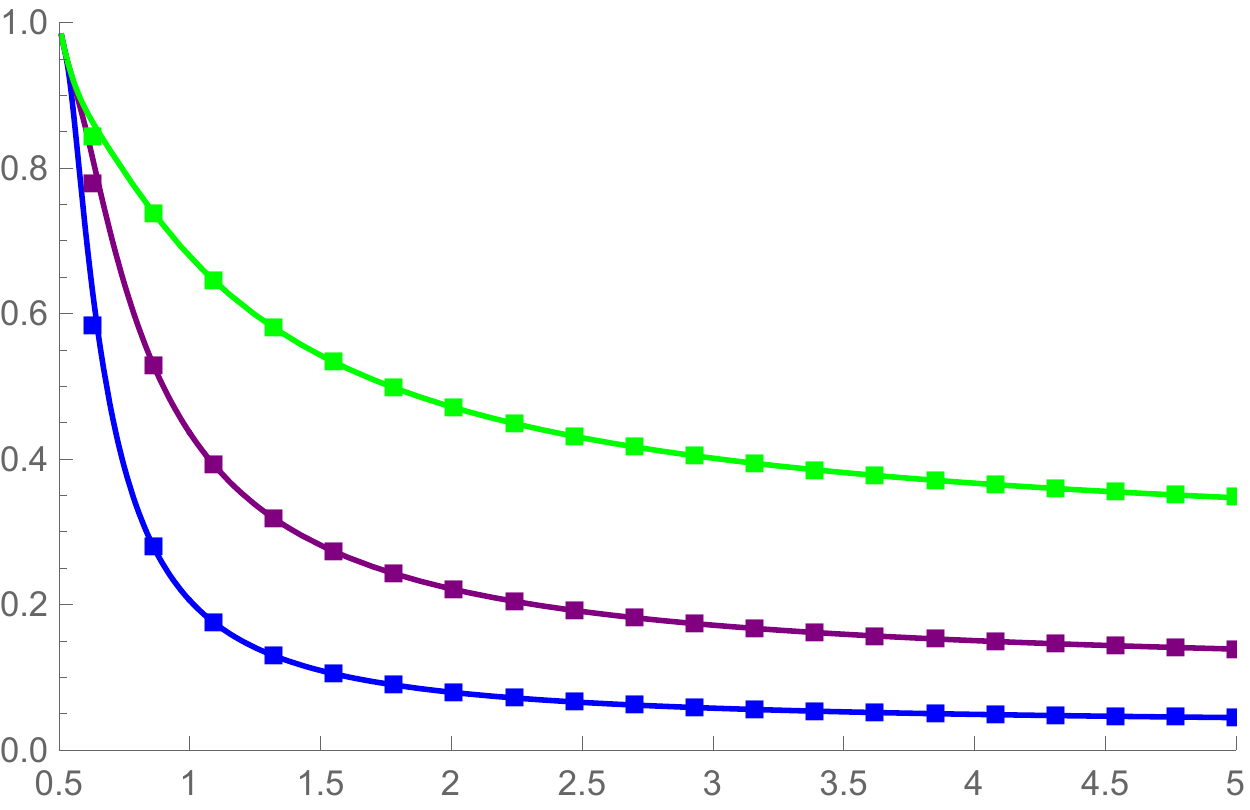}
    \caption{Plot of $\gamma$ for $\alpha$ varying from $0.5$ to $5$ on the horizontal axis and 
    for $\nu=\frac{1}{2}$ (blue), $\nu=1$ (purple), $\nu=2$ (green). Simulations (squares in corresponding colour) with $n=10000$ and $100$ repetitions.\label{fig:gamma}}
\end{figure}

In the above expression for $\gamma$, a factor $\alpha-1$ occurs in the denominator of each term, but we will see that this
corresponds to a removable singularity. We have not been able to find a closed form expression in terms of standard functions in the case 
when $\alpha=1$, but in Section~\ref{ssec:alphais1} we do provide an explicit expression involving integrals.

\subsubsection{The clustering function}

Our second main result is on the clustering function for constant $k$.

\begin{theorem}\label{thm:local_clustering_hyperbolic}\label{thm:mainkfixed}
Let $\alpha > \frac{1}{2}$, $\nu > 0$ and $k\geq2$ be fixed. 
Writing $G_n := G(n;\alpha,\nu)$, we have

\[
	c(k;G_n) \xrightarrow[n\to\infty]{\Pee} \gamma(k),
\]
where $\gamma(k)$ is defined for $\alpha \ne 1$ as 
\begin{align*}
\gamma(k)  =&\frac{1}{8\alpha (\alpha-1)\Gamma^+(k-2\alpha,\xi)} \left( -\Gamma^+(k - 2 \alpha, \xi) - 2\frac{\alpha (\alpha - 1/2)^2 \xi^{2} \Gamma^+(k - 2 \alpha - 2, \xi)}{(\alpha - 1)} \right. \\ 
&\left.+ 8 \alpha (\alpha - 1/2) \xi \Gamma^+(k - 2 \alpha - 1,\xi) \right.\\ 
&\left.+ 4\xi^{4\alpha - 2} \Gamma^+(k - 6 \alpha + 2, 
      \xi) \left( \frac{2^{ - 4\alpha}(3 \alpha - 1)}{(\alpha - 1)} + (\alpha - 1/2) B^-(1/2; 1 + 2 \alpha, -2 + 2 \alpha) \right)  \right.\\ 
&\left.+ \xi^{k-2\alpha} \Gamma(2\alpha+1)e^{-\xi} U(2\alpha+1,1+k-2\alpha,\xi) \right. \\ 
&\left.- \xi^{4\alpha-2} \Gamma(2\alpha+1)\MeijerGnew{3}{0}{2}{3}{1,3-2\alpha}{3-4\alpha,-6\alpha+k+2,0}{\xi}  \right)
\end{align*}
and for $\alpha = 1$ as the $\alpha\to1$ limit of the above expression.
\end{theorem}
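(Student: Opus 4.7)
The approach is to reduce the convergence of $c(k;G_n)$ to the joint convergence of two simpler statistics. Writing $T_v$ for the number of triangles in $G_n$ containing $v$, and setting $T(k) := \sum_{v:\deg(v)=k} T_v$, one has $c(k;G_n) = T(k)/(\binom{k}{2} N(k))$ on the event $\{N(k)\ge 1\}$. Since Gugelmann et al.\ already show that $N(k)/n$ converges in probability to a positive limit $p(k)$, it suffices by the continuous mapping theorem to prove that $T(k)/n$ converges in probability to an explicit constant $\tau(k)$; then $\gamma(k)=\tau(k)/(\binom{k}{2}p(k))$.

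To carry this out I would transfer the problem from the KPKVB model on the disk of radius $R=2\log(n/\nu)$ to the infinite-volume Poisson limit model that is standard in this line of work. Concretely: first Poissonise by replacing the $n$ i.i.d.\ vertices with a Poisson point process of intensity $n$ on the disk (a change affecting $N(k)$ and $T(k)$ only by \smallOp{n}); then apply the usual linearisation that sends the radial variable to $y=\alpha(R-r)$ and rescales the angular coordinate to a horizontal $x$; and finally pass to the Poisson process $\Pcal$ on $[0,\infty)\times\R$ of intensity $\nu e^{-y}\,dy\,dx$, equipped with the explicit adjacency rule inherited from the hyperbolic law of cosines. Standard coupling arguments of the type used by Bode--Fountoulakis--M\"uller, Fountoulakis--M\"uller and Kiwi--Mitsche show that $N(k)$ and $T(k)$ in $G_n$ coincide with their counterparts in $\Pcal$ up to \smallOp{n}, reducing everything to a computation in $\Pcal$.

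The main work is then the explicit computation of $\tau(k)$ and of $p(k)$ in the limit model. Using the Mecke equation, $\tau(k)$ decomposes as an integral over the height $y$ of a "typical" vertex, weighted by the Palm probability that it has exactly $k$ neighbours in $\Pcal$, multiplied by $\binom{k}{2}$ times the probability that two i.i.d.\ neighbours (drawn from the Palm measure on its ball) are themselves adjacent. The number of neighbours of a typical point at height $y$ is Poisson with mean proportional to $\xi e^{y/(2\alpha)}$ (up to boundary corrections), so combined with the radial density $\alpha e^{-\alpha y}$ and the substitution $u=\xi e^{y/(2\alpha)}$ this contributes the factor $\Gamma^+(k-2\alpha,\xi)$ that appears in the denominator of $\gamma(k)$. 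The pair-adjacency probability splits into several regions according to the relative positions of the two neighbours, and each region produces one of the terms in the closed form: the incomplete beta term $B^-(1/2;1+2\alpha,-2+2\alpha)$ comes from a $\cos^2$-substitution in an angular integral near the boundary of the ball, the upper incomplete gamma factors come from truncation in the $y$-coordinate, and the confluent hypergeometric $U$-function and the Meijer-$G$ function arise from the long-range interaction integrals that cannot be written in terms of more elementary special functions.

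The hardest part, I expect, will be the case-by-case evaluation of the pair-adjacency integral leading to the Meijer-$G$ term: partitioning the integration domain correctly and then recognising the resulting one-dimensional integrals as incomplete betas, a Tricomi $U$-function and a Meijer-$G$ requires careful bookkeeping, and this is the source of all the special functions in the displayed formula. A secondary obstacle is concentration of $T(k)$ in the Poisson model, but since $T_v$ depends on $\Pcal$ only in a bounded neighbourhood of $v$, a second-moment computation via a further application of Mecke's formula gives $\Var(T(k))=O(n)$, so Chebyshev yields convergence in probability. Finally, the passage from $\alpha\neq1$ to $\alpha=1$ by continuity is routine once one verifies that the closed form has only a removable singularity at $\alpha=1$.
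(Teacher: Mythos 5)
Your high-level route matches the paper's: write $c(k;G_n) = T(k)/(\binom{k}{2}N(k))$, exploit known convergence of $N(k)/n$, and obtain the limit of the degree-restricted triangle sum via Campbell--Mecke plus a second-moment argument after transferring to the Poissonized and infinite models. The identification of $\gamma(k)$ through the pair-adjacency probability of two neighbours of a typical point (and the integrals producing the incomplete gamma/beta, $U$, and Meijer-$G$ terms) is exactly what Section~3 of the paper carries out, so the plan as a whole is sound and not a genuinely different argument.

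Two of your auxiliary claims are, however, stated more loosely than they can be proved. First, the assertion that ``standard coupling arguments show $T(k)$ coincides with its $\Pcal$-counterpart up to $\smallOp{n}$'' does not follow automatically from $|E(G)\Delta E(H)| = o(n)$: a single added or deleted edge between two high-degree vertices could in principle affect $T_v$ for a large number of degree-$k$ vertices $v$. The argument works, but only after you observe that each degree-$k$ vertex contributes at most $\binom{k}{2}$ to $T(k)$ (which the paper encodes cleanly by controlling $c(k;G)-c(k;H)$ directly using $c(v)\in[0,1]$, Lemma~\ref{lem:ckGckH}) and that the edge sets of $\GPo$ and $\Gbox$ genuinely coincide on pairs of vertices of height at most $R/4$ (Lemma~\ref{lem:coupling_edges}), which is why the paper restricts to the induced subgraph $\GboxH$. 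Your sketch makes no mention of this height truncation, which is essential: without it the ``edges agree up to $o(n)$'' coupling does not literally transfer triangle counts.

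Second, the justification ``$\Var(T(k)) = O(n)$ since $T_v$ depends on $\Pcal$ only in a bounded neighbourhood of $v$'' is incorrect as stated. In the box and infinite models a ball around a vertex is not bounded: it extends horizontally to distance $e^{(y+y')/2}$ at height $y'$, so two degree-$k$ vertices can share a common high neighbour up to horizontal separation of order $e^{R/4} \sim n^{1/2}$. The paper's second-moment bound is $\Ee X(X-1) - (\Ee X)^2 = O(n^{3/2})$, coming precisely from this decorrelation radius, and this is what Chebyshev actually needs ($o((\Ee X)^2)$). The final conclusion is unaffected, but the ``bounded neighbourhood'' reasoning is not the mechanism, and if taken literally it would give a false intuition for the dependency structure of this model.
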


\noindent
A plot of $\gamma(k)$,  together with the results of computer experiments, can be found in Figure~\ref{fig:gammak}. %
\begin{figure}[ht]
    \centering
    \includegraphics[scale=0.6]{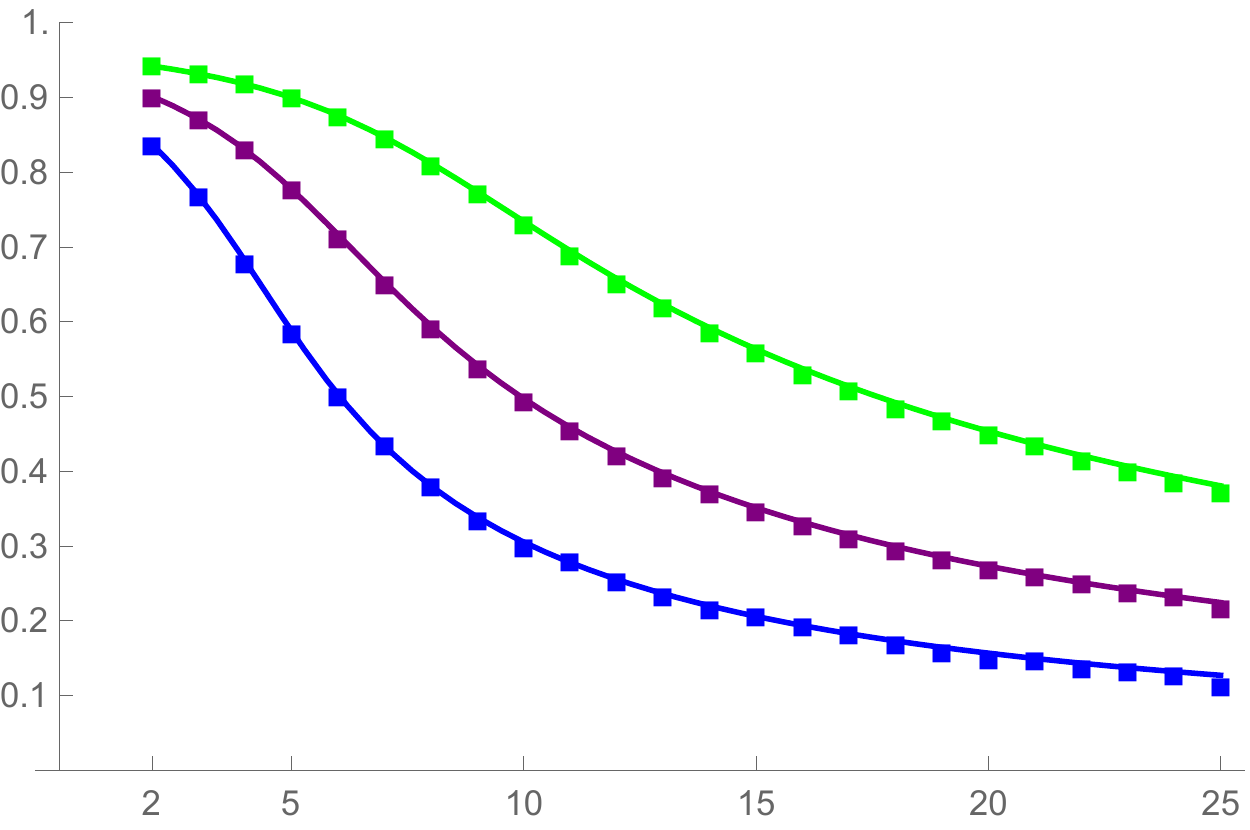}
    \caption{Plot $\gamma(k)$ for $k$ varying from 2 to 25 on the horizontal axis, for $\alpha=0.8$ and $\nu=\frac{1}{2}$ 
    (blue), $\nu=1$ (purple), $\nu=2$ (green). Simulations (squares in corresponding colour) with $n=10000$ and $100$ repetitions.\label{fig:gammak}}
\end{figure}%
Again, we remark that the above expression for $\gamma(k)$ appears to have a singularity at $\alpha=1$, but this will turn out to be a removable singularity. 
Again, we have not been able to find a closed form expression in terms of standard functions in the case when $\alpha=1$, but in 
Section~\ref{ssec:alphais1} we do provide an explicit expression involving integrals.

Theorem~\ref{thm:mainkfixed} in fact generalises to increasing sequences $(k_n)_{n \ge 1}$.

\begin{theorem}\label{thm:mainktoinfty}
Let $\alpha>\frac12, \nu>0$ be fixed and let $k_n$ be a sequence of non-negative integers
satisfying $1 \ll k_n \ll n^{1/(2\alpha+1)}$. Then, writing $G_n := G(n;\alpha,\nu)$, we have

$$ \frac{c(k_n;G_n)}{\gamma(k_n)} \xrightarrow[n\to\infty]{\Pee} 1. $$ 

\end{theorem}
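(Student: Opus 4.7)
\noindent\textbf{Proof plan for Theorem~\ref{thm:mainktoinfty}.}

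Begin by writing
\[
c(k_n; G_n) \cdot N(k_n) \;=\; \frac{2\, T(k_n)}{k_n(k_n-1)},
\]
where $T(k_n)$ is the total number of triangles at vertices of degree exactly $k_n$ in $G_n$. The strategy is to show that both $N(k_n)$ and $T(k_n)$ concentrate around their expectations, and that the ratio $\E[T(k_n)] / (\binom{k_n}{2}\E[N(k_n)])$ equals $\gamma(k_n)(1+o(1))$. Combining these two facts immediately yields $c(k_n;G_n)/\gamma(k_n)\xrightarrow{\Pee} 1$. All calculations would be carried out on the Poissonised version of the model and, after the standard coordinate change $y = R - r$ together with the angular rescaling, on the strip/box limit used throughout the paper; the conclusion is then transferred back to $G_n$ via the same de-Poissonisation coupling that underlies Theorem~\ref{thm:mainkfixed}.

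The first-moment computation is essentially the one that produced $\gamma(k)$ in Theorem~\ref{thm:mainkfixed}, now carried out with the parameter $k = k_n$ growing with $n$. A Campbell--Mecke unpacking on the strip should yield
\[
\E[N(k_n)] \;=\; n\, p(k_n)(1 + o(1)), \qquad \E[T(k_n)] \;=\; n \binom{k_n}{2} p(k_n)\, \gamma(k_n)(1 + o(1)),
\]
where $p(k)\sim c_{\alpha,\nu}\,k^{-(2\alpha+1)}$ is the limiting degree distribution of Gugelmann et al.~\cite{gugelmann2012random}. What needs to be verified beyond the fixed-$k$ analysis is that the relative error from the strip/box approximation remains $o(1)$ when $k$ is replaced by $k_n$. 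Since a typical vertex of degree $k_n$ concentrates (up to fluctuations of logarithmic width) at radius $r \approx R - 2\log(k_n/\nu)$, and since Proposition~\ref{prop:asymp} pins down the exact order of magnitude of $\gamma(k_n)$, one can compare the correction terms in the approximation directly against this scale and verify that they are negligible. The hypothesis $k_n \ll n^{1/(2\alpha+1)}$ then guarantees $\E[N(k_n)] \to \infty$.

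Concentration is established by Chebyshev, using the Mecke formula on the Poissonised model. For $N(k_n)$ this is quick: the variance equals $\E[N(k_n)]$ plus a covariance over ordered pairs of points both conditioned to have degree exactly $k_n$, which is of the same order, and division by $\E[N(k_n)]^2$ produces $o(1)$ since $\E[N(k_n)] \to \infty$. For $T(k_n)$ the second-moment expansion produces a sum indexed by pairs of labelled triangles with a distinguished apex, classified by the number $0,1,2,3$ of vertices they share. The zero-overlap contribution is $\E[T(k_n)]^2(1+o(1))$; the remaining overlap contributions have to be shown to be sub-leading, and this is where the main obstacle lies.

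The critical case is a single shared vertex, namely when the apex of degree exactly $k_n$ is common to both triangles. Five distinct points are then involved, and the corresponding Campbell--Mecke integral carries only one factor $p(k_n)$ (for the apex), giving an estimate of order $n \cdot k_n^4 \cdot p(k_n) \cdot \gamma(k_n)^2$, to be compared with $\E[T(k_n)]^2 \asymp n^2 k_n^4 p(k_n)^2 \gamma(k_n)^2$. Their ratio is $1/(n p(k_n)) = 1/\E[N(k_n)]$, which tends to $0$ precisely under the hypothesis $k_n \ll n^{1/(2\alpha+1)}$; this is the step that dictates the upper bound on $k_n$ in the statement of the theorem. The remaining overlap patterns produce strictly smaller contributions, expressed in terms of the same special-function integrals that already appear in $\gamma(k)$, so no genuinely new identities are needed; the bookkeeping, however, is non-trivial because the degree constraint is an equality $\deg(v)=k_n$ rather than an inequality, which forces one to work with restricted conditional intensities rather than the cruder upper bounds available for supercritical degree thresholds.
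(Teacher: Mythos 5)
Your decomposition $c(k_n;G_n)\,N(k_n)=2T(k_n)/(k_n(k_n-1))$, the first--second moment strategy, and the identification of the critical overlap contribution (shared apex, ratio $1/\E[N(k_n)]=\Theta(k_n^{2\alpha+1}/n)$) all match the structure of the paper's proof, which is organised around the auxiliary quantity $c^\ast(k_n;G)=T(k_n)/\bigl(\binom{k_n}{2}\E[N(k_n)]\bigr)$. So the high-level plan is correct and essentially the same.

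There is, however, a genuine gap in how you pass from the Poissonised hyperbolic model to the box model. You treat ``the Poissonised version'' and ``the strip/box limit'' as a single object after a coordinate change, but these are two different graphs with different connection rules: $\GPo$ connects $p,p'$ when $|x-x'|_n\le\Phi(y,y')$ with $\Phi$ the exact hyperbolic-law-of-cosines expression, while $\Gbox$ uses $|x-x'|_n\le e^{(y+y')/2}$. For fixed $k$ this causes no trouble because the relevant heights are bounded and Lemma~\ref{lem:coupling_edges} makes the two models agree exactly below height $R/4$, but for $k_n$ close to $n^{1/(2\alpha+1)}$ the vertices of degree $k_n$ live at height $y\approx 2\log(k_n/\xi)$, which exceeds $R/4$, and the triangles contributing to $T(k_n)$ also involve much higher vertices; here the two connection rules genuinely disagree and the symmetric difference $\BallHyp{p}\bigtriangleup\BallPo{p}$ has nontrivial measure. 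Controlling the resulting discrepancy in the triangle count relative to the shrinking target $\gamma(k_n)\asymp s(k_n)$ is the single most involved step in the paper (Proposition~\ref{prop:couling_c_H_P} and all of Section~\ref{sec:coupling_H_P_n}), and you collapse it into the phrase ``verify that the correction terms are negligible'' without giving an argument. A second, smaller gap: in the second-moment computation you assert that the zero-overlap contribution is $\E[T(k_n)]^2(1+o(1))$, but the events $\{\deg(p)=k_n\}$ and $\{\deg(p')=k_n\}$ are not independent even for six distinct points unless $p$ and $p'$ are horizontally well-separated, since their balls overlap. The paper needs the explicit factorisation lemma for joint degree probabilities on the set $\mathcal{E}_\varepsilon(k_n)$ (Lemma~\ref{lem:probdegFact}, building on the common-neighbour bounds of Lemmas~\ref{lem:common_neighbours_Pcal_n}--\ref{lem:common_neighbours_KPKVB}), and this does not follow from the Campbell--Mecke formula alone. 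Both of these steps need genuine new estimates rather than a recitation of the fixed-$k$ argument with $k$ replaced by $k_n$.
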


\noindent
The statement of Theorem~\ref{thm:mainktoinfty} is equivalent to $c(k_n;G_n) = (1+o(1)) \gamma(k_n)$ a.a.s., using notation that is common in the random graphs community. %$c(k_n; G_n)/\gamma(k_n)\xrightarrow[n\to\infty]{\Pee} 1$, which might alternatively be written as 

\subsubsection{Scaling of \texorpdfstring{$\gamma(k)$}{gamma(k)}}

To clarify the scaling behaviour of $\gamma(k)$ we offer the following result.

\begin{proposition}\label{prop:asymp}
As $k\to\infty$, we have

$$ \gamma(k) = 
\left\{ \begin{array}{ll}
(c_{\alpha,\nu}+o(1)) \cdot k^{-1} &\text{ if } \alpha > \frac{3}{4}, \\
(c_{\alpha,\nu}+o(1)) \cdot \frac{\log(k)}{k}& \text{ if } \alpha = \frac{3}{4},\\
(c_{\alpha,\nu}+o(1)) \cdot k^{2-4\alpha} & \text{ if } \frac12 < \alpha < \frac34, 
\end{array} \right.,
$$
where 

$$ c_{\alpha,\nu} := 
\left\{ \begin{array}{cl}
8\alpha \nu / (\pi\left(4\alpha - 3\right)) & \text{ if } \alpha > \frac{3}{4}, \\
6 \nu / \pi & \text{ if } \alpha = \frac{3}{4},\\
 \left( \frac{3\alpha - 1}{2^{4\alpha+1}\alpha(\alpha-1)^2} 
	+ \frac{(\alpha - \frac{1}{2})B^-(\frac{1}{2},2\alpha + 1, 2\alpha - 2)}{2(\alpha - 1)\alpha} 
	- \frac{B(2\alpha, 3\alpha - 4)}{4(\alpha - 1)} \right)  \cdot \xi^{4\alpha-2} 
	& \text{ if } \frac12 < \alpha < \frac34.
\end{array} \right. $$
\end{proposition}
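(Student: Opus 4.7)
The plan is to start from the exact closed-form expression for $\gamma(k)$ in Theorem~\ref{thm:mainkfixed} and to analyse the large-$k$ behaviour of each of its summands individually, then combine them. The three asymptotic ingredients I will need are: first, the standard asymptotics of incomplete-gamma ratios, namely $\Gamma^+(k-c,\xi)=\Gamma(k-c)(1+o(1))$ and $\Gamma(k-c-j)/\Gamma(k-c)\sim k^{-j}$ for fixed $c,\xi>0$; second, an exact rewriting of the Tricomi $U$-function term via its integral representation; and third, a Mellin--Barnes residue analysis of the Meijer $G$-function term.

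For the $U$-function term, the substitution $s=\xi(1+t)$ in the integral representation given in the notation section yields the identity
\[
\xi^{k-2\alpha}\,\Gamma(2\alpha+1)\,e^{-\xi}\,U(2\alpha+1,1+k-2\alpha,\xi) \;=\; \int_\xi^{\infty} e^{-s}\,s^{k-2\alpha-1}\,(1-\xi/s)^{2\alpha}\,\mathrm{d}s.
\]
Expanding $(1-\xi/s)^{2\alpha}$ as a binomial series (valid because $\xi/s<1$ on the integration range) and interchanging sum and integral gives $\sum_{j\ge 0}\binom{2\alpha}{j}(-\xi)^j\,\Gamma^+(k-2\alpha-j,\xi)$. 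The $j=0$ piece cancels against the $-\Gamma^+(k-2\alpha,\xi)$ at the head of the bracket, confirming that $\gamma(k)\to 0$; the $j=1$ piece contributes at order $\Gamma^+(k-2\alpha-1,\xi)\sim k^{-1}\Gamma^+(k-2\alpha,\xi)$; higher $j$ are of strictly lower order.

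For the Meijer $G$-function term, the Mellin--Barnes integrand has three families of simple poles, at $s=b_j+n$ for $b_1=3-4\alpha$, $b_2=-6\alpha+k+2$, $b_3=0$ and $n\ge 0$. Residues at $s=b_2+n$ carry factors $\xi^{k-6\alpha+2+n}$ which are exponentially small compared with $\Gamma(k-2\alpha)$ and can be discarded. The $n=0$ residue at $s=b_1=3-4\alpha$ carries a factor $\Gamma(k-2\alpha-1)$, contributing at order $k^{-1}$; the $n=0$ residue at $s=b_3=0$ carries a factor $\Gamma(k-6\alpha+2)$, contributing at order $k^{2-4\alpha}$; all other residues are of strictly lower order. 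Combining contributions: for $\alpha>3/4$ the $\Gamma^+(k-2\alpha-1,\xi)$-order terms dominate, and the coefficients from the ``$8\alpha(\alpha-1/2)\xi$'' summand, the $j=1$ term of the $U$-expansion, and the $s=3-4\alpha$ Meijer $G$ residue combine --- with the algebraic identity $(4\alpha-3)^2-1=8(\alpha-1)(2\alpha-1)$ removing the spurious $(\alpha-1)$ in the denominator --- to give exactly $c_{\alpha,\nu}=8\alpha\nu/(\pi(4\alpha-3))$ after substituting $\xi=4\alpha\nu/(\pi(2\alpha-1))$. For $\alpha<3/4$ the $\Gamma^+(k-6\alpha+2,\xi)$-order terms dominate: the explicit ``$4\xi^{4\alpha-2}\Gamma^+(k-6\alpha+2,\xi)$'' summand and the $s=0$ Meijer $G$ residue combine, after simplifying the resulting $\Gamma$-ratio via the reflection formula, to the claimed expression for $c_{\alpha,\nu}\cdot\xi^{4\alpha-2}$.

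The hard part will be the critical case $\alpha=3/4$. There, the scales $k^{-1}$ and $k^{2-4\alpha}$ coincide and, more delicately, the Meijer $G$ poles at $b_1=3-4\alpha=0$ and $b_3=0$ collide, producing a \emph{double} pole of the integrand at $s=0$. Computing its residue via the Laurent expansion $\Gamma(-s)^2=s^{-2}+2\gamma_{\mathrm{E}}\,s^{-1}+O(1)$ (with $\gamma_{\mathrm{E}}$ the Euler--Mascheroni constant) produces a term of the form $\phi'(0)+2\gamma_{\mathrm{E}}\phi(0)$, and the digamma factor $\psi(k-6\alpha+2)=\psi(k-1/2)\sim\log k$ coming from $\phi'(0)$ is precisely what gives the extra $\log k$ in the $\alpha=3/4$ asymptotic. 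Verifying that the various finite constants, including the $B^-$ and $B$ contributions of the formula (which themselves have to be evaluated at $\alpha=3/4$), then assemble cleanly to $c_{\alpha,\nu}=6\nu/\pi$ is an elementary but tedious bookkeeping exercise.
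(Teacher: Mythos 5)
Your approach is genuinely different from the paper's. The paper explicitly states at the start of Section 3.4: ``Instead of extracting the scaling of $\gamma(k)$ from its explicit expression, it turns out to be more convenient to derive it using $P(y)$.'' In other words, the route you propose is exactly the one the authors decided against. The paper's proof first establishes the large-$y$ asymptotics of $P(y)$ (Proposition 3.9), which falls out transparently from the expression in Lemma 3.4 (as $z=e^{-y/2}\to 0$, each summand of $P$ is a simple power or a power times an incomplete beta function, and Lemma E.1 handles the latter). It then transfers the $y\to\infty$ asymptotics of $P$ into the $k\to\infty$ asymptotics of $\gamma(k)=\int P\,\rho\,\alpha e^{-\alpha y}\,dy / \int \rho\,\alpha e^{-\alpha y}\,dy$ by a concentration-of-heights argument (Proposition 2.8 and equation (eq:general\_clustering\_integral\_scaling)). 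All three cases, including the $\log k$ at $\alpha=3/4$, are handled uniformly by this device: the $\log$ simply comes from $y\sim 2\log k$ on the concentration interval.

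Your route, by contrast, works directly on the closed form of Theorem 1.2 and is also correct in outline — I confirmed that the $j=1$ term of the $U$-expansion, the $t=3-4\alpha$ residue of the Meijer $G$ term, and the explicit $8\alpha(\alpha-1/2)\xi\,\Gamma^+(k-2\alpha-1,\xi)$ summand combine, via your identity $(4\alpha-3)^2-1=8(2\alpha-1)(\alpha-1)$, to give precisely $8\alpha\nu/(\pi(4\alpha-3))\cdot k^{-1}$ for $\alpha>3/4$; and I checked that the $t=0$ residue plus the explicit $\Gamma^+(k-6\alpha+2,\xi)$ summand reproduce $c_{\alpha,\nu}\xi^{4\alpha-2}$ for $\alpha<3/4$ (with $B(2\alpha,3-4\alpha)$, as in the paper's own proof of Proposition 3.9; the $B(2\alpha,3\alpha-4)$ in the statement of Proposition 1.4 is a typo). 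However, this route is considerably heavier. You would need to actually justify the residue expansion of $G^{3,0}_{2,3}$ (which is standard but not free), note that the $\Gamma(-t)$ family of poles collapses to the single pole at $t=0$ since $\Gamma(-t)/\Gamma(1-t)=-1/t$ cancels all $t=n\geq 1$ poles (your ``three families of simple poles'' is thus slightly imprecise), handle the possibility of double poles for every $\alpha$ with $2\alpha\in\mathbb{Z}$ or $4\alpha\in\mathbb{Z}$ — not only $\alpha=3/4$ — and carry out the double-residue bookkeeping for $\alpha=3/4$ that you acknowledge as ``tedious.'' Moreover, your Laurent expansion $\Gamma(-s)^2=s^{-2}+2\gamma_{\mathrm{E}}s^{-1}+O(1)$ is not quite what appears: after the cancellation by $1/\Gamma(1-t)$ the relevant product near $t=0$ is $\Gamma(-t)\cdot(-1/t)=t^{-2}+\gamma_{\mathrm{E}}t^{-1}+O(1)$, with coefficient $\gamma_{\mathrm{E}}$ rather than $2\gamma_{\mathrm{E}}$. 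In short: your approach buys nothing over the paper's and is substantially more laborious; the paper's $P(y)$-asymptotics route is the cleaner way to prove this proposition, and your sketch as written is incomplete precisely where the extra labour would be required.
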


Theorem~\ref{thm:mainktoinfty} states that the clustering function of the KPKVB model scales as $\gamma(k)$ as the number of vertices $n\to\infty$, and
Proposition~\ref{prop:asymp} makes clear how $\gamma(k)$ behaves as $k$ grows.
In particular, these results contradict the scaling claimed in~\cite{krioukov2010hyperbolic} for 
$\alpha \leq \frac{3}{4}$, and confirms it only for $\alpha > \frac{3}{4}$.

We remark that simultaneously and independently Stegehuis, van der Hofstad and van Leeuwaarden~\cite{stegehuis2019scale} used a completely different technique to obtain a similar, though less detailed, result on the $k\to\infty$ scaling of the clustering function in the KPKVB model.

\subsection{Additional observations and results}

There are a few additional remarks we would like to make regarding our results.

\subsubsection{The degree distribution and the range of \texorpdfstring{$k_n$}{kn} in Theorem~\ref{thm:mainktoinfty}}

The reader may already have observed that, with a power law exponent of $2\alpha+1$ for the probability mass function of 
the degree sequence, we would expect $\Theta( n \cdot k^{-(1+2\alpha)} ) = o(1)$ vertices of degree {\em exactly} $k$ whenever
$k \gg n^{1/(1+2\alpha)}$.

This is the reason why in Theorem~\ref{thm:mainktoinfty} we restrict ourselves to 
sequences $k_n$ with $k_n \ll n^{1/(1+2\alpha)}$. 
When  $k_n \gg n^{1/(1+2\alpha)}$ there are no vertices of degree exactly $k_n$ a.a.s., which in particular
implies that the clustering function equals zero a.a.s.~for any such sequence $k_n$.
%When $k_n$ is of the same order as $n^{1/(1+2\alpha)}$ then the probability that there are no vertices of degree exactly $k_n$
%stays bounded away from zero.

As mentioned previously, Gugelmann et al.~\cite{gugelmann2012random} gave a mathematically rigorous result on the degree sequence, which can of course be rephrased as a result on the number of nodes with degree exactly $k$. 
Their results allow $k = k_n$ to grow with $n$, but unfortunately require that $k_n \leq n^\delta$ with 
$\delta < \min\left\{\frac{2\alpha-1}{4\alpha(2\alpha +1)}, \frac{2(2\alpha -1)}{5(2\alpha + 1)}\right\} < \frac{1}{2\alpha + 1}$.
For completeness we offer the following result, which extends that of Gugelmann et al.~to the full range $1 \le k_n \le n-1$. 

\begin{theorem}\label{thm:degrees_hyperbolic}
Let $\alpha >\frac{1}{2}, \nu > 0$, denote by $N_n(k)$ the number of vertices with degree $k$ in the KPKVB model $G(n;\alpha,\nu)$ and 
consider a sequence of integers $(k_n)_n$ with $0 \leq k_n \leq n-1$.
\begin{enumerate}
\item If $k_n \ll n^{\frac{1}{2\alpha+1}} $ as $n\rightarrow \infty$, then a.a.s.

$$N_n(k_n) = (1+o(1)) \cdot n \cdot \pmf(k_n),$$

where $\pmf(k_n) = 2\alpha \xi^{2\alpha}\Gamma^+(k_n-2\alpha,\xi)/k_n!$.

\item If $k_n = (1+o(1))c n^{\frac{1}{2\alpha+1}}$ for some fixed $c >0$, then

$$ N_n(k_n) \xrightarrow[n\rightarrow\infty]{d} \operatorname{Po}(2\alpha \xi^{2\alpha} c^{-(2\alpha+1)}). $$

\item If $k_n \gg n^{\frac{1}{2\alpha+1}}$, then a.a.s. $N_n(k_n) = 0$.
\end{enumerate}
\end{theorem}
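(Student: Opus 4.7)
My plan is to work first with the Poissonised version of the model, in which the vertices form an inhomogeneous Poisson point process on the hyperbolic disk of radius $R$ with intensity $n f_{\alpha,R}(r)/(2\pi)$, and then recover the statement for the i.i.d.\ model via a standard de-Poissonisation argument. The Poisson structure gives independence of point counts in disjoint regions, which is what makes the variance and factorial moment computations below tractable.

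\textbf{Expected count.} The key computation is
\begin{equation*}
\Exp{N_n(k_n)} \;=\; n\int_0^R \binom{n-1}{k_n} p(r)^{k_n}(1-p(r))^{n-1-k_n}\,f_{\alpha,R}(r)\,dr,
\end{equation*}
where $p(r) := \Pee\bigl(d_\H(u_1,u_2)\le R\mid r_1=r\bigr)$ is the probability that a uniformly chosen second vertex connects to a vertex at radial coordinate $r$. A standard expansion via the hyperbolic law of cosines yields $n p(r) = \xi e^{(R-r)/2}(1+o(1))$, uniformly for $r$ with $R-r = O(\log n)$, and with an exponentially decaying tail otherwise. Substituting $s = \xi e^{(R-r)/2}$, approximating the binomial by a Poisson with the same mean, and using $f_{\alpha,R}(r)\,dr \approx \alpha e^{-\alpha(R-r)}\,dr$ on the relevant part of the range, the integral evaluates to
\begin{equation*}
\Exp{N_n(k_n)} \;=\; (1+o(1))\cdot n \cdot \frac{2\alpha \xi^{2\alpha}\Gamma^+(k_n-2\alpha,\xi)}{k_n!} \;=\; (1+o(1))\cdot n\cdot \pmf(k_n).
\end{equation*}
The Binomial-to-Poisson error is controllable precisely while $k_n^2 p(r) = o(1)$ at the $r$ dominating the integral, which is the origin of the threshold $k_n \ll n^{1/(2\alpha+1)}$. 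The same mean formula, combined with the elementary asymptotic $\Gamma^+(k-2\alpha,\xi)/k! \sim k^{-(2\alpha+1)}$ as $k\to\infty$, yields $n\cdot\pmf(k_n)\to\lambda := 2\alpha\xi^{2\alpha}c^{-(2\alpha+1)}$ in the regime of part~(ii) and $n\cdot\pmf(k_n)\to 0$ in the regime of part~(iii).

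\textbf{The three regimes.} Part~(iii) follows from Markov's inequality applied to $\Exp{N_n(k_n)}$. For part~(i) I would use Chebyshev via the second moment: write
\begin{equation*}
\Exp{N_n(k_n)(N_n(k_n)-1)} \;=\; n(n-1)\,\Pee(\deg(u_1)=k_n,\deg(u_2)=k_n),
\end{equation*}
and split the right-hand side according to whether $u_1$ and $u_2$ lie within hyperbolic distance $R$ of one another. For ``far'' pairs the two degrees live in essentially disjoint Poisson regions, which gives independence to leading order and contributes $(1+o(1))\Exp{N_n(k_n)}^2$; for ``close'' pairs the joint probability is bounded using the hyperbolic area of the overlap region, whose integrated contribution is negligible. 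Part~(ii) is then the same idea extended to all factorial moments: show that $\Exp{N_n(k_n)(N_n(k_n)-1)\cdots(N_n(k_n)-\ell+1)}\to\lambda^\ell$ for each fixed $\ell\ge 1$, from which Poisson convergence follows by the method of moments.

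\textbf{Main obstacle.} The most delicate step is controlling the joint probability that two vertices at small angular separation both have degree exactly $k_n$, uniformly up to and including $k_n \asymp n^{1/(2\alpha+1)}$. In that top regime the expected count is only of constant order, so one cannot afford any constant-factor loss in the ``close pair'' estimate. The required bound reduces to sharp asymptotics for the hyperbolic area of intersection of two radius-$R$ balls centred near the boundary, together with a Poisson-type concentration for the count of vertices in that intersection conditional on both centres having the prescribed degree.
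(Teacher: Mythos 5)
Your overall roadmap matches the paper's proof almost step for step: Poissonise, compute the mean by integrating over the radial coordinate (the paper phrases this via the Campbell--Mecke formula and a change of variables to $y=R-r$), split pairs into ``near'' and ``far'' to get asymptotic factorisation of the joint degree probabilities, deduce the second moment for Chebyshev in part (i) and all factorial moments for the Poisson limit in part (ii), use Markov for part (iii), and finally de-Poissonise. The paper implements the factorisation step (your ``far'' pairs) in the box model $\Gbox$ via explicit bounds on $\mu(\BallPon{p}\cap\BallPon{p'})$, and handles the i.i.d.\ vs.\ Poisson correction at the level of $N_n(k_n)-N_{\Po}(k_n)$ (coupling the two vertex sets) rather than conditionally on $r$, but these are implementation choices; your outline is essentially the same argument.

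One remark that is actually wrong and worth flagging: you say ``the Binomial-to-Poisson error is controllable precisely while $k_n^2 p(r)=o(1)$ at the $r$ dominating the integral, which is the origin of the threshold $k_n\ll n^{1/(2\alpha+1)}$.'' At the dominating radius $np(r)\asymp k_n$, so $k_n^2 p(r)\asymp k_n^3/n$, which would give only $k_n\ll n^{1/3}$ --- strictly weaker than $n^{1/(2\alpha+1)}$ when $1/2<\alpha<1$. Moreover, the relevant Le Cam bound $d_{TV}\bigl(\mathrm{Bin}(n,p),\mathrm{Po}(np)\bigr)=O(np^2)=O(k_n^2/n)$ remains $o(1)$ all the way up to $k_n\ll n^{1/2}$, so the binomial-to-Poisson approximation is never the bottleneck. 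The true origin of the threshold is that $n\,\pmf(k_n)\sim 2\alpha\xi^{2\alpha}\,n\,k_n^{-(2\alpha+1)}$ passes from $\to\infty$ through $\Theta(1)$ to $\to 0$ precisely at $k_n\asymp n^{1/(2\alpha+1)}$; this, and nothing to do with approximation error, is what selects concentration (i), a Poisson limit (ii), or a.s.\ vanishing (iii). You should correct this explanation, but it does not affect the validity of the rest of the argument.
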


\subsubsection{Transition in scaling at \texorpdfstring{$\alpha = 3/4$}{alpha equals 3/4}.}

Proposition~\ref{prop:asymp} demonstrates that there is a transition in the scaling of the local clustering 
function at $\alpha = 3/4$. 
This corresponds to an exponent $5/2$ for the probability mass function of the degree distribution. 
This transition is different from those often observed for networks with scale-free degree distributions, where transitions occur at
integer values of the exponent. 
At this point, it is unclear what the underlying reason is for the appearance of the transition at this particular half integer 
exponent. 
Interestingly, a similar transition point has also been observed for both majority vote models~\cite{chen2015critical} and 
flocking dynamics~\cite{miguel2018effects} on networks with scale-free degree degree distributions.

%\subsubsection{Uniform convergence.}

%Our results for the local clustering function in fact imply uniform convergence of $c(k; G_n)$ for all $2 \le k \le a_n$ 
%where $a_n \ll n^{\frac{1}{2\alpha + 1}}$. 
%To see this we first observe that for {\em fixed} $k$, the statement $c(k;G_n) \xrightarrow[n\to\infty]{\Pee} \gamma(k)$
%is equivalent to $c(k;G_n)/\gamma(k) \xrightarrow[n\to\infty]{L_1} 1$.
%Now let
%\[
%	b_n = \arg \max_{2 \le k \le a_n} \Exp{\left|\frac{c(k; G_n)}{\gamma(k)} - 1\right|}.
%\]
%Then $b_n \le a_n \ll n^{\frac{1}{2\alpha + 1}}$ and therefore by Theorem~\ref{thm:mainkfixed} and Theorem~\ref{thm:mainktoinfty}
%\[
%	\lim_{n \to \infty} \max_{2 \le k \le a_n} \Exp{\left|\frac{c(k; G_n)}{\gamma(k)} - 1\right|} 
%	= \lim_{n \to \infty}\Exp{\left|\frac{c(b_n; G_n)}{\gamma(b_n)} - 1\right|} = 0.
%\]

\subsection{Outline of the paper}

In the next section we will recall some useful tools from the literature and define a series of auxiliary random graph models 
that will be used in the proofs. In particular, we relate in a series of steps the KPKVB model to an infinite percolation model 
$\Ginf$ that was used in previous work of the first and third authors~\cite{fountoulakis2018law} on the largest component 
of the KPKVB model. 
The value of the limiting constant $\gamma$, respectively limiting clustering function $\gamma(k)$, correspond to the probability 
that two randomly chosen neighbours of a ``typical point'' in this infinite model are themselves neighbours, respectively the 
probability of this event conditional on the typical point having exactly $k$ neighbours. These probabilities can be expressed as 
certain integrals, which we solve explicitly in Section~\ref{sec:Ginf}. 
In the same section we also prove Proposition~\ref{prop:asymp}, on the asymptotics of $\gamma(k)$. 
We then proceed to prove Theorems~\ref{thm:maincc} and~\ref{thm:mainkfixed} by relating said probabilities for the typical point 
of the infinite model to the corresponding clustering coefficient/function in the original KPKVB random graph, using the Campbell-Mecke 
formula and some other, relatively straightforward considerations.

In Section~\ref{sec:degrees} we prove Theorem~\ref{thm:degrees_hyperbolic}, which also doubles as a warm-up for the proof 
of Theorem~\ref{thm:mainktoinfty} of the clustering function for growing $k$.
The remaining sections are devoted to the proof of Theorem~\ref{thm:mainktoinfty}, which turns out to be technically involved.
The main reason for this is that when we push $k_n$ close to the maximum possible value, a great deal of work is needed 
to properly control the arising error terms.

The Appendix includes some auxiliary results on Meijer's G-function, Chernoff bounds for Poisson and Binomial 
random variables and the code used for simulations.

\section{Preliminaries\label{sec:proof_outline}}

In this section we recall some definitions and tools that we will need in our proofs.

\subsection{The infinite limit model \texorpdfstring{$\Ginf$}{G infinity}\label{ssec:infinite_model}}

We start by recalling the definition of the infinite limit model from~\cite{fountoulakis2018law}.
Let $\mathcal{P}=\mathcal{P}_{\alpha,\nu}$ be a Poisson point process on $\eR^2$ with intensity function $f=f_{\alpha,\nu}$ given by

\begin{equation}\label{eq:def_intensity_function_f}
	f(x,y) = \frac{\alpha \nu}{\pi} e^{-\alpha y} \cdot \1_{\{y>0\}}.
\end{equation} 

The \emph{infinite limit model} $\Ginf = \Ginf(\alpha,\nu)$ has vertex set $\mathcal{P}$ and edge set such that
\[
	pp' \in E(\Ginf) \iff |x - x'| \leq e^{\frac{y + y'}{2}},
\]
for $p=(x,y),p'=(x',y')\in\Pcal$.

For any point $p \in \R \times (0,\infty)$, we write $\BallPo{p}$ to denote the \emph{ball} around $p$, i.e.

\begin{equation}\label{eq:def_ball_P}
	\BallPo{p} = \{p^\prime \in \eR\times(0,\infty) : |x - x^\prime| \leq e^{\frac{y + y^\prime}{2}}\}.
\end{equation}

With this notation we then have that $\BallPo{p} \cap \Pcal$ denotes the set of neighbours of a vertex $p \in \Ginf$.
We will denote the intensity measure of the Poisson process $\mathcal{P}$ by $\mu = \mu_{\alpha, \nu}$, i.e. for every 
Borel-measurable subset $S \subseteq \eR^2$ we have $\mu(S) = \int_S f(x,y) \dd x \dd y$. Using the notation $p = (x,y)$ for a point in $\R \times \R_+$ we shall write $\int_S h(p) \dd \mu(p)$ for the integral of $h$ over $S$ with respect to the intensity measure $\mu$, i.e. $\int_S h(p) \dd \mu(p) = \int_{S} h(x,y) f(x,y) \dd x \dd y$.

\subsection{The finite box model \texorpdfstring{$\Gbox$}{G box}\label{ssec:finite_model}}

Recall that in the definition of the KPKVB model we set $R = 2\log(n/\nu)$.
We consider the box $\Rcal = (-\frac{\pi}{2}e^{R/2}, \frac{\pi}{2}e^{R/2}] \times (0, R]$ in $\eR^2$. 
Then the \emph{finite box model} $\Gbox = \Gbox(n;\alpha, \nu)$ has vertex set $\Vbox := \mathcal{P} \cap \Rcal$ and edge set such that
\[
	pp' \in E(\Gbox) \iff |x - x'|_{\pi e^{R/2}} \leq e^{\frac{y + y'}{2}},
\]
where $|x|_{r} = \min( |x|, r - |x|)$ for $-r\leq x\leq r$. Using $|.|_{\pi e^{R/2}}$ instead of $|.|$ results in the left and right boundaries of the box $\Rcal$ getting identified, which in particular makes the model invariant under horizontal shifts and reflections in vertical lines. 
The graph $\Gbox$ can thus be seen as a subgraph of $\Ginf$ induced on $\Vbox$, with some additional edges caused by the identification of the boundaries.

Similar to the infinite graph, for a point $p \in \Rcal$ we define the ball $\BallPon{p}$ as
\begin{equation}\label{eq:def_ball_P_n}
	\BallPon{p} = 
	\left\{p' \in \Rcal : |x - x'|_{\pi e^{R/2}} \leq e^{\frac{y + y'}{2}}\right\}.
\end{equation}

\subsection{The Poissonized KPKVB model \texorpdfstring{$\GPo$}{G Po}}

Imagine that we have an infinite supply of i.i.d.~points $u_1, u_2, \dots$ in the hyperbolic plane $\Haa$ chosen according to a distribution we'll define shortly, in~\eqref{eq:def_quasi_uniform_density} below. In the standard KPKVB random graph $G(n;\alpha,\nu)$ we take $u_1,\dots, u_n$ as our vertex set and add edges between points at hyperbolic distance at most $R = 2\log(n/\nu)$. In the \emph{Poissonized} KPKVB random graph $\GPo := \GPo(n;\alpha,\nu)$, we instead take $N\isd \Po(n)$, a Poisson random variable 
with mean $n$, independent of our i.i.d.~sequence of points and let the vertex set be $u_1,\dots, u_N$ and add edges 
according to the same rule as before. Equivalently, we could say that the vertex set consists of the points of a Poisson point process with intensity function $n g$, where $g$ denotes the probability density of the $(\alpha,R)$-quasi uniform distribution. That is,
\begin{equation}\label{eq:def_quasi_uniform_density}
	g(r,\theta) = \frac{\alpha\sinh(\alpha r)}{2\pi(\cosh(\alpha R) - 1)} \cdot 1_{\{0\leq r\leq R, -\pi<\theta\leq \pi\}}.
\end{equation}

Working with the Poissonized model has the advantage that when we take two disjoint regions $A, B$ then the number of points in $A$ and the number of points  in $B$ are independent Poisson-distributed random variables. As we will see, and as is to be expected, switching to the Poissonized model does not significantly alter the limiting behaviour of the clustering coefficient and function.

\subsection{Coupling \texorpdfstring{$\GPo$}{G Po} and \texorpdfstring{$\Gbox$}{G box}\label{ssec:coupling_H_P}}

The following lemmas from \cite{fountoulakis2018law} establish a useful coupling between the Poissonized KPKVB random graph
and the finite box model and relate the edge  sets of the two graphs. 

\begin{lemma}[{\cite[Lemma 27]{fountoulakis2018law}}]\label{lem:coupling_hyperbolic_poisson}
Let $\VPo$ denote the vertex set of $\GPo(n;\alpha, \nu)$ and $\Vbox$ the vertex set of $\Gbox(n;\alpha, \nu)$. 
Define the map $\Psi : [0,R] \times (-\pi, \pi] \to \Rcal$ by
\begin{equation}\label{eq:def_Psi}
	\Psi(r,\theta) = \left(\theta \frac{e^{R/2}}{2}, R - r\right).
\end{equation}
Then there exists a coupling such that, a.a.s., $\Vbox = \Psi[\VPo]$. %Moreover, if $\Ccal_n$ is the event that ${\cal V}_n = \Psi\left(\mathcal{V}_{\HP,n}\right)$ then
%\begin{equation}\label{eq:convergence_miscoupling_hyperbolic_poisson}
%	\Prob{\Ccal_n^c} = \bigO{n^{-(2\alpha - 1)}}. 
%\end{equation}
\end{lemma}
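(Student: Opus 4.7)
The plan is to apply the mapping theorem for Poisson point processes to $\Psi$ and then to couple the resulting image with $\Vbox$ via the standard construction that uses a common dominating intensity. First I would note that $\Psi$ is a diffeomorphism from $[0,R]\times(-\pi,\pi]$ onto $\Rcal$ with constant Jacobian determinant $e^{R/2}/2$. Since $\VPo$ is a Poisson point process with intensity $n g$, where $g$ in~\eqref{eq:def_quasi_uniform_density} is independent of $\theta$, the mapping theorem says that $\Psi[\VPo]$ is a Poisson point process on $\Rcal$ whose intensity is the pushforward
\[
\tilde f_n(x,y) \;=\; \frac{2n}{e^{R/2}}\, g(R-y,\cdot) \;=\; \frac{\alpha\nu \sinh(\alpha(R-y))}{\pi(\cosh(\alpha R)-1)},
\]
after substituting $e^{R/2}=n/\nu$.

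The crux of the argument is to show that $\tilde f_n$ is close in $L^1(\Rcal)$ to the target intensity $f(x,y)=\frac{\alpha\nu}{\pi}e^{-\alpha y}$ of $\Pcal$. Expanding the hyperbolic functions in exponentials gives
\[
\tilde f_n(x,y) - f(x,y) \;=\; \frac{\alpha\nu e^{-\alpha y}}{\pi}\cdot\frac{2 - e^{-\alpha R}(e^{2\alpha y}+1)}{e^{\alpha R}+e^{-\alpha R}-2}.
\]
I would split $\Rcal$ at $y=R/2$. For $y\le R/2$ the numerator is bounded by $2$ and the denominator is of order $e^{\alpha R}$, so integrating $|\tilde f_n-f|$ against the horizontal width $\pi e^{R/2}$ of the box gives a contribution of order $e^{R/2-\alpha R} = n^{1-2\alpha}$. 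For $y>R/2$ one has $\tilde f_n \le f$ and the sharper identity $f - \tilde f_n = f\cdot e^{-2\alpha(R-y)}(1+o(1))$; the extra factor $e^{-2\alpha(R-y)}$ is exactly what is needed to cancel the growth of $\int f$ on this strip, yielding again a contribution of order $n^{1-2\alpha}$. Thus $\|\tilde f_n - f\|_{L^1(\Rcal)} = O(n^{1-2\alpha}) = o(1)$, since $\alpha>\tfrac12$.

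Finally I would assemble the coupling in the standard way. Set $m:=\min(\tilde f_n, f)$ on $\Rcal$ and let $\Xi_m,\Xi_1^+,\Xi_2^+$ be independent Poisson point processes on $\Rcal$ with intensities $m$, $\tilde f_n - m$, and $f-m$. By superposition, $\Xi_m\cup\Xi_1^+$ is Poisson with intensity $\tilde f_n$ and $\Xi_m\cup\Xi_2^+$ is Poisson with intensity $f$, so on a common probability space we may take these to be $\Psi[\VPo]$ and $\Vbox$ respectively. On this coupling, $\{\Psi[\VPo]\neq\Vbox\}\subseteq\{\Xi_1^+\cup\Xi_2^+\neq\emptyset\}$, whose probability is bounded by $\E[|\Xi_1^+|+|\Xi_2^+|]=\int_{\Rcal}|\tilde f_n-f|=o(1)$, giving the a.a.s.\ claim.

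The main technical obstacle is the $L^1$ estimate on the strip $y>R/2$, which corresponds in hyperbolic coordinates to the disc of radius $R/2$ around the origin: naively $\int_{\{y>R/2\}\cap\Rcal} f$ is already of order $n^{1-\alpha}$ and hence not $o(1)$ when $\alpha<1$. The fact that the two intensities are nonetheless $O(n^{1-2\alpha})$-close in $L^1$ is powered by the cancellation $\sinh(\alpha(R-y))=\tfrac12 e^{\alpha(R-y)}(1-e^{-2\alpha(R-y)})$, and it is the resulting factor $e^{-2\alpha(R-y)}$ that must be tracked with care.
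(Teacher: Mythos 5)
The paper itself does not reproduce a proof of this lemma---it is quoted as Lemma 27 of~\cite{fountoulakis2018law}---so there is no in-paper argument to compare against, but your route is the canonical one: mapping theorem for the pushforward intensity, an $L^1$ estimate on the difference of intensities, and the maximal coupling determined by $\min(\tilde f_n, f)$. The pushforward formula, the closed-form expression for $\tilde f_n - f$, the $O(n^{1-2\alpha})$ bound from the two regions of integration, and the final superposition/maximal-coupling step are all correct, and together they give the a.a.s.\ equality.

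One intermediate claim is overstated. It is not true that $\tilde f_n \le f$ on all of $\{y > R/2\}$: solving $2 = e^{-\alpha R}(e^{2\alpha y}+1)$ shows that the sign of $\tilde f_n - f$ only flips at
$y_0 = \tfrac{1}{2\alpha}\log(2e^{\alpha R}-1) = \tfrac{R}{2} + \tfrac{\log 2}{2\alpha} + o(1)$,
so on $(R/2, y_0)$ you still have $\tilde f_n > f$, and the ratio $(f - \tilde f_n)/(f e^{-2\alpha(R-y)})$ is of order $-1$ there, not $1 + o(1)$. This does not damage the estimate, because all you actually use (and what does hold uniformly on $(R/2, R]$) is the one-sided bound $|f - \tilde f_n| \le C\, f\, e^{-2\alpha(R-y)}$ for an absolute constant $C$, which still integrates against the horizontal width $\pi e^{R/2}$ to give $O(n^{1-2\alpha})$. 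Replace the ``sharper identity'' and the claim $\tilde f_n \le f$ with this inequality and the argument is clean.
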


In the remainder of this paper we will write $\BallHyp{p}$ to denote the image under $\Psi$ of the ball of hyperbolic radius $R$ around the point 
$\Psi^{-1}(p)$ for $p \in \Rcal$, i.e. 
\[
	\BallHyp{p} := 
	%\left\{p^\prime := \Psi(u) \, : \, u \in \Dcal_{R} \text{ and } d_\H(\Psi^{-1}(p),u) \le R\right\}.
	\Psi\left[ \left\{ u \in \Haa : 
	d_\H(\Psi^{-1}(p),u), d_\H(O,u) \le R \right\}\right] \subset \Rcal.
\]

Under the map $\Psi$, a point $p = (x,y) \in \Rcal$ corresponds to $u := \Psi^{-1}(p) = (2 e^{-R/2} x, R - y)$. 

By the hyperbolic rule of cosines, for two points $p = (x,y) = \Psi( (r,\theta) ), p' = (x',y') = \Psi( (r',\theta') ) \in \Rcal$ we have that $p' \in \BallHyp{p}$ iff.~either $r+r'\leq R$ or $r+r'>R$ and
\[
	\cosh r \cosh r' - \sinh r \sinh r'\cos\left( |\theta-\theta'|_{2\pi} \right) \le \cosh(R),
\]
This can be rephrased as $p'\in \BallHyp{p}$ iff.~either $y+y'\geq R$ or $y+y'<R$ and

\begin{equation}\label{eq:def_Omega_hyperbolic}
	|x-x'|_{\pi e^{R/2}} \leq \Phi(y,y^\prime) := \frac{1}{2}e^{R/2} \arccos\left( \frac{\cosh(R-y) \cosh(R-y^\prime) - \cosh R}{\sinh(R-y) \sinh(R-y^\prime)} \right).
\end{equation}

The following lemma provides useful bounds on the function $\Phi(r,r^\prime)$. Note that in~\cite{fountoulakis2018law} the function $\Phi$ is written in terms of $r := R - y, r^\prime := R - y^\prime$. 

\begin{lemma}[{\cite[Lemma 28]{fountoulakis2018law}}]\label{lem:asymptotics_Omega_hyperbolic}
There exists a constant $K>0$ such that, for every $\varepsilon > 0$ and for $R$ sufficiently large, the following holds.
For every $r,r^\prime \in [\varepsilon R,R]$ with $y + y^\prime < R$ we have that 
\begin{equation}\label{eq:asymp1}
	e^{\frac{1}{2}(y+y^\prime)} - K e^{\frac{3}{2}(y+y^\prime) - R} \leq \Phi(y, y^\prime) 
	\leq  e^{\frac{1}{2}(y+y^\prime)} + K e^{\frac{3}{2}(y+y^\prime) - R},
\end{equation}
Moreover:
\begin{equation}\label{eq:asymp2} 
\Phi(y,y^\prime) \geq e^{\frac12(y+y^\prime)} \quad \text{if \quad $y, y^\prime > K$.} 
\end{equation}
\end{lemma}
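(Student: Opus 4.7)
The plan is to reduce the definition of $\Phi(y,y')$ to a small-argument problem for $\arcsin$ and then carefully track the next-order correction. Setting $r := R - y$, $r' := R - y'$, and $\theta := 2 e^{-R/2} \Phi(y,y') \in [0,\pi]$, definition~\eqref{eq:def_Omega_hyperbolic} becomes $\cos\theta = (\cosh r \cosh r' - \cosh R)/(\sinh r \sinh r')$. Applying the half-angle formula $1 - \cos\theta = 2\sin^2(\theta/2)$ together with $\cosh r \cosh r' - \sinh r \sinh r' = \cosh(r-r')$ and the identity $\cosh R - \cosh(r-r') = 2 \sinh(\tfrac{R+r-r'}{2})\sinh(\tfrac{R-r+r'}{2})$ yields the exact formula
\[
\sin^2(\theta/2) \;=\; \frac{\sinh(\tfrac{R+r-r'}{2}) \sinh(\tfrac{R-r+r'}{2})}{\sinh r \sinh r'}.
\]
Writing $\sinh t = \tfrac{1}{2}e^t(1 - e^{-2t})$ in all four factors and letting the leading $\tfrac12 e^{\cdot}$ pieces telescope, I arrive at $\sin(\theta/2) = QB$ with $B := e^{(R-r-r')/2} = e^{(y+y'-R)/2} \in (0,1)$ and $Q^2 := (1 - e^{-(R+r-r')})(1 - e^{-(R-r+r')})/[(1-e^{-2r})(1-e^{-2r'})] \in (0,1]$.

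For the two-sided bound~\eqref{eq:asymp1}, the main task is to show $(1 - Q^2)/B^2 = O(1)$. Setting $\rho := r + r'$, a short algebraic calculation gives the exact factorisation
\[
1 - Q^2 \;=\; \frac{(e^{-R}-e^{-\rho})\!\left(2\cosh(r-r') - e^{-R} - e^{-\rho}\right)}{(1-e^{-2r})(1-e^{-2r'})}.
\]
Using $2\cosh(r-r') = e^{y-y'}+e^{y'-y}$, dividing by $B^2$ and expanding leads to
\[
(1-Q^2)/B^2 \;\leq\; \big(e^{-2y}+e^{-2y'} - e^{-R}(e^{y-y'}+e^{y'-y})\big)\big(1 + O(e^{-2\varepsilon R})\big) \;\leq\; 2 + o(1).
\]
Combining $\sin(\theta/2) = QB$ with the Maclaurin expansion $\arcsin(x) = x + x^3/6 + O(x^5)$, valid on $[0,1)$, yields $\theta/2 = B + O(B^3)$. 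Multiplying by $e^{R/2}$ converts this into $|\Phi(y,y') - e^{(y+y')/2}| \leq K \cdot e^{R/2} B^3 = K e^{3(y+y')/2 - R}$, which is exactly~\eqref{eq:asymp1}.

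For~\eqref{eq:asymp2}, the same estimate sharpens to $(1-Q^2)/B^2 \leq 2 e^{-2K}(1+o(1))$ under the assumption $y, y' > K$, so that $1-Q \leq 3 e^{-2K} B^2$ for $R$ large. Using the one-sided bound $\arcsin(x) \geq x + x^3/6$ on $[0,1]$ with $x = QB$ I would then obtain
\[
\theta/2 - B \;\geq\; \tfrac{1}{6} Q^3 B^3 - (1-Q) B \;\geq\; B^3\!\left(\tfrac{Q^3}{6} - 3 e^{-2K}\right),
\]
which is non-negative once $K$ is chosen large enough that $Q^3/6 > 3 e^{-2K}$ (concretely, any $K$ with $e^{2K} > 20$ works for $R$ sufficiently large, since then $Q = 1 - O(e^{-2K})$). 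Multiplying by $e^{R/2}$ finally gives $\Phi(y,y') \geq e^{(y+y')/2}$. I expect the main obstacle to be extracting the sharp bound $(1-Q^2)/B^2 \leq e^{-2y}+e^{-2y'} + o(1)$: a crude estimate such as $\cosh(r-r') \leq e^R$ gives only $1 - Q^2 = O(1)$, which is too weak for the cubic term of $\arcsin$ to dominate $(1-Q)B$ in the one-sided bound. The precise $e^{-2\min(y,y')}$ decay emerges only after expanding the factorisation above as a sum of four exponentials and noticing the cancellation between the $e^{\delta}$ and $\cosh(r-r')$ factors.
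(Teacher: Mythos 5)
This lemma is quoted verbatim from \cite[Lemma 28]{fountoulakis2018law} and the present paper contains no proof of it, so there is no internal argument to compare against; I will therefore just assess your proposal on its own merits.

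Your derivation is essentially correct and self-contained. The reduction $\sin^2(\theta/2) = \sinh\left(\tfrac{R+r-r'}{2}\right)\sinh\left(\tfrac{R-r+r'}{2}\right)/(\sinh r \sinh r')$ via the half-angle formula and the sum-to-product identity is exact, and extracting the leading $\tfrac12 e^{\cdot}$ from each $\sinh$ correctly gives $\sin(\theta/2)=QB$ with $B=e^{(y+y'-R)/2}$ and $Q\in(0,1]$ (the condition $y+y'<R$ is exactly what makes $Q^2\le 1$). I checked your factorisation of $1-Q^2$ by direct expansion and it is right, and dropping the four residual exponentials $-e^{-y-y'-R}-e^{-y-y'-\rho}+e^{-2R}+e^{-R-\rho}$ is a legitimate upper bound because $y+y'<R$ makes each pair nonpositive; the denominator $(1-e^{-2r})(1-e^{-2r'})\ge(1-e^{-2\varepsilon R})^2$ contributes only a $1+o(1)$ factor. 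For the two-sided bound, the uniformity of the ``$O(B^3)$'' remainder needs one sentence that you leave implicit: $(\arcsin(B)-B)/B^3$ is increasing on $[0,1]$ (all Maclaurin coefficients of $\arcsin$ are nonnegative) with supremum $\pi/2-1$, so $\arcsin(QB)\le\arcsin(B)\le B+(\pi/2-1)B^3$ for the whole range of $B$, and the lower bound follows from $\arcsin(QB)\ge QB\ge(1-(1-Q^2))B\ge B-\delta B^3$ with $\delta=O(1)$.

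There is one small numerical slip in your treatment of~\eqref{eq:asymp2}. You need $(1-3e^{-2K})^{3/2}\ge 18\,e^{-2K}$ (this is what $Q^3/6\ge 3e^{-2K}$ amounts to after bounding $Q^2\ge 1-3e^{-2K}$), and at $e^{2K}=20$ the left side is about $0.78$ while the right side is $0.9$, so that choice fails; $e^{2K}\ge 30$ is comfortably enough. This is harmless since the lemma only asks for the existence of some $K$, but the ``concretely, any $K$ with $e^{2K}>20$ works'' claim should be weakened.
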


A key consequence of Lemma~\ref{lem:asymptotics_Omega_hyperbolic} is that the coupling from Lemma~\ref{lem:coupling_hyperbolic_poisson} preserves edges between points whose heights are not too large.  

\begin{lemma}[{\cite[Lemma 30]{fountoulakis2018law}}]\label{lem:coupling_edges}
On the coupling space of Lemma~\ref{lem:coupling_hyperbolic_poisson} the following holds a.a.s.:
\begin{enumerate}
\item for any two points $p, p' \in \Vbox$ with $y, y'\le R/2$, we have 
\[
	pp' \in E(\Gbox) \Rightarrow \Psi^{-1}(p)\Psi^{-1}(p') \in E(\GPo),
\]
\item for any two points $p, p' \in \Vbox$ with $y, y' \le R/4$, we have that 
\[
	pp' \in E(\Gbox) \iff \Psi^{-1}(p)\Psi^{-1}(p') \in E(\GPo).
\]

\end{enumerate}
\end{lemma}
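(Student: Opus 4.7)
My plan is to prove both claims via a first-moment bound on the number of ``bad'' pairs, followed by Markov's inequality. Working on the coupling of Lemma~\ref{lem:coupling_hyperbolic_poisson} (on which a.a.s.~$\Vbox = \Psi[\VPo]$), we identify $p \in \Vbox$ with $\Psi^{-1}(p) \in \VPo$ and then have: $pp' \in E(\Gbox)$ iff $|x - x'|_{\pi e^{R/2}} \le e^{(y+y')/2}$, while $\Psi^{-1}(p)\Psi^{-1}(p') \in E(\GPo)$ iff either $y + y' \ge R$ (an automatic edge via the triangle inequality in $\Haa$) or else $y+y' < R$ and $|x - x'|_{\pi e^{R/2}} \le \Phi(y,y')$, with $\Phi$ as in~\eqref{eq:def_Omega_hyperbolic}. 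So the whole question reduces to controlling when $|x-x'|_{\pi e^{R/2}}$ lies in the symmetric difference between the thresholds $e^{(y+y')/2}$ and $\Phi(y,y')$, whose size is controlled by Lemma~\ref{lem:asymptotics_Omega_hyperbolic}.

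For part (ii), both heights satisfy $y, y' \le R/4$, so $y+y' \le R/2 < R$ holds automatically and the claim is an equivalence. A bad pair is one whose horizontal separation lies in the symmetric-difference sliver, which by~\eqref{eq:asymp1} has total width at most $2K e^{\frac{3}{2}(y+y')-R}$. An application of Campbell's two-point formula for the Poisson process $\Pcal$ (integrating $x_2$ over the box of length $\pi e^{R/2}$, and $x_1$ over the sliver around $x_2$) bounds the expected number of bad pairs by a constant multiple of
\[
e^{-R/2} \int_0^{R/4}\!\!\int_0^{R/4} e^{(\tfrac{3}{2}-\alpha)(y_1+y_2)} \, dy_1 \, dy_2.
\]
A case split on whether $\alpha$ exceeds, equals, or is less than $\tfrac32$ shows this vanishes for every $\alpha > \tfrac12$, and Markov's inequality finishes part (ii).

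For part (i) we may again assume $y+y' < R$ (otherwise the $\GPo$-edge is automatic), and then a bad pair has $|x-x'|_{\pi e^{R/2}} \in (\Phi(y,y'), e^{(y+y')/2}]$. The analogous integral run over $y_1, y_2 \le R/2$ only converges to zero for $\alpha > 1$, so a sharper argument is needed: by the second clause~\eqref{eq:asymp2} of Lemma~\ref{lem:asymptotics_Omega_hyperbolic}, whenever $y, y' > K$ one has $\Phi(y,y') \ge e^{(y+y')/2}$ and the sliver is empty, so every bad pair must satisfy $\min(y,y') \le K$. By symmetry we may restrict to $y_1 \le K$; Campbell's formula then yields the bound
\[
C e^{-R/2} \int_0^{K}\!\!\int_0^{R/2} e^{(\tfrac{3}{2}-\alpha)(y_1+y_2)} \, dy_1 \, dy_2,
\]
which vanishes for every $\alpha > \tfrac12$, and Markov closes part (i). The only genuine obstacle is this narrowing step in part (i): without the second half of Lemma~\ref{lem:asymptotics_Omega_hyperbolic} to confine bad pairs to the boundary layer $\min(y,y') \le K$, a direct first-moment bound over $y, y' \le R/2$ is too weak across the full parameter range $\alpha > \tfrac12$; once the confinement is in place, the integrals are routine.
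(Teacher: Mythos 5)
Your proof is correct. The paper does not give its own proof of this statement — it is cited as \cite[Lemma 30]{fountoulakis2018law} — but your first-moment argument, bounding the expected number of ``bad'' pairs via Campbell--Mecke and the sliver width from~\eqref{eq:asymp1}, and then in part (i) using~\eqref{eq:asymp2} to confine bad pairs to the boundary layer $\min(y,y')\le K$ before integrating (which is the crucial step making the bound vanish for all $\alpha>\tfrac12$ rather than only $\alpha>1$), is exactly the intended argument; indeed~\eqref{eq:asymp2} is included in Lemma~\ref{lem:asymptotics_Omega_hyperbolic} precisely to enable this confinement.
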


\begin{remark}[Notational convention for points]
We will often be working with the finite box graph $\GPo$ or the infinite graph $\Ginf$, whose nodes are points 
in $\R \times \R_+$. For any point $p \in \R \times \R_+$ we will always use $p = (x,y)$. 
When considering different points $p,p^\prime \in \R \times \R_+$, we will use primed coordinates to refer 
to $p^\prime$, i.e. $p^\prime = (x^\prime ,y^\prime)$, and similar with subscripts, i.e. $p_i = (x_i,y_i)$.
\end{remark}

\subsection{The Campbell-Mecke formula}

A useful tool for analyzing subgraph counts, and their generalizations, in the 
setting of Poissonized random geometric graphs, and in particular the Poissonized KPKVB model and the box 
model is the \emph{Campbell-Mecke formula}. 
We use a specific incarnation, which follows from the Palm theory of Poisson point processes on metric 
spaces, see~\cite{last2017lectures}. For this consider a Poisson point process $\Pcal$ on some metric 
space $\mathcal{M}$ with density $\mu$ and let $\mathcal{N}$ denote the set of all possible point configurations 
in $\mathcal{M}$, equipped with the sigma algebra of the process $\Pcal$. 
Then, for any natural number $k$ and measurable function $h : \R^k \times \mathcal{N} \to \R$,

\begin{equation}\label{eq:def_campbell_mecke}
\begin{array}{c}
\displaystyle	\Exp{\sum_{\substack{p_1, \dots, p_k \in \Pcal, \\ \text{distinct}}} h(p_1, \dots, p_k, \Pcal)} \\
= \\
\displaystyle \int_{\mathcal{M}} \dots \int_{\mathcal{M}} \Exp{h(x_1,\dots, x_k, \Pcal\cup\{x_1,\dots,x_k\})} \mu(dx_1)\dots\mu(dx_k).
\end{array}
\end{equation}

\subsection{Concentration of heights}

When analyzing degrees and clustering in the Poissonized KPKVB and related models we often encounter expressions of the form
\begin{equation}\label{eq:example_poisson_integral}
	\int_{0}^R \Prob{\Po(\hat{\mu}(y)) = k_n} h(y) e^{-\alpha y} \dd y,
\end{equation}
where $h(y)$ is some function and $\hat{\mu}(y)$ is $\Mu{\BallHyp{y}}$, $\Mu{\BallPon{y}}$ or $\Mu{\BallPo{y}}$. We will often have to either bound the behavior of such integrals as $k_n \to \infty$ or establish their asymptotic behavior. For this we will utilize that Poisson random variables are well concentrated around their mean. 

Let $\Po(\lambda)$ denote a Poisson random variable with mean $\lambda$. Then we have the following Chernoff bound (c.f. \cite[Lemma 1.2]{penrose2003random})
\begin{equation}\label{eq:def_chernoff_bound_poisson}
	\Prob{\left|\mathrm{Po}(\lambda) - \lambda\right| \ge x} \le 2e^{-\frac{x^2}{2(\lambda + x)}}.
\end{equation}
In particular, if $\lambda = \lambda_n \to \infty$, then for any $C>0$,
\begin{equation}\label{eq:def_chernoff_bound_poisson_C}
	\Prob{\left|\mathrm{Po}(\lambda_n) - \lambda_n\right| \ge C \sqrt{\lambda_n\log(\lambda_n)}} \le \bigO{\lambda_n^{-\frac{C^2}{2}}}.
\end{equation}

For our application these Chernoff bounds imply that if $y$ is such that $\hat{\mu}(y)$ is far from $k_n$ then $\Prob{\Po(\hat{\mu}(y)) = k_n}$ becomes very small. To be more specific, we define for any $k \ge 0$ and $C > 0$,
\begin{equation}\label{eq:def_y_k_C}
	y_{k,C}^\pm = 2 \log\left(\frac{k \pm C \sqrt{k \log(k)}}{\xi}\right),
\end{equation}
where we set $y_{k,C}^- = 0$ if $k - C \sqrt{k \log(k)} < \xi$ (recall that we define $\xi := 4\alpha\nu/\pi(2\alpha-1)$ throughout the paper) 
and likewise if $k + C \sqrt{k \log(k)} < \xi$ 
we set $y_{k,C}^+=0$, but  note that as we consider $k\rightarrow\infty$, we can assume that this case does not occur. For convenience we write $\Kcal_C(k_n) := [y_{k_n,C}^-, y_{k_n,C}^+]$. Then we can show that for all $y$ outside $\Kcal_C(k_n)$
\begin{equation}\label{eq:chernoff_bound_degrees}
	\Prob{\Po(\hat{\mu}(y)) = k_n} \le \bigO{k_n^{-\frac{C^2}{2}}}.
\end{equation}
Since we can select $C$ to be as big as we want we can make this error as small as needed. This implies that then the main contribution to the integral~\eqref{eq:example_poisson_integral} comes from those `'heights" $y$ that are in the interval $\Kcal_C(k_n)$. In other words, the main contribution is concentrated around the heights $y$ for which $\mu(y) = k_n$. We thus refer to this as the concentration of heights result. More precisely, we prove the following.

\begin{proposition}[Concentration of heights]\label{prop:concentration_height_general}
Let $\alpha > \frac{1}{2}$, $\nu > 0$, $(k_n)_{n \ge 1}$ be any positive sequence such that $k_n \to \infty$ and $k_n = \smallO{n}$. Furthermore, let $\hat{\mu}(y)$ denoting either $\Mu{\BallHyp{y}}$, $\Mu{\BallPon{y}}$ or $\Mu{\BallPo{y}}$. Then for any continuous function $h : \R_+ \rightarrow  \R$, such that $h(y) = \bigO{e^{\beta y}}$ as $y \to \infty$ for some $\beta < \alpha$, it holds that
\begin{equation*}%\label{eq:error_bound_int_rho_not_K}
	\int_0^\infty \hspace{-3pt} h(y) \Prob{\Po(\hat{\mu}(y)) = k_n} \alpha e^{-\alpha y} \dd y
	\sim \int_{\Kcal_C(k_n)} \hspace{-8pt} h(y) \Prob{\Po(\hat{\mu}(y)) = k_n} \alpha e^{-\alpha y} \dd y,
\end{equation*}
as $n \to \infty$.
\end{proposition}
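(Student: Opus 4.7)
The plan is to split the integral into the main part over $\Kcal_C(k_n)$ and the tail over its complement, and to show the tail is asymptotically negligible relative to the main part. First I would verify that for each of the three choices of $\hat\mu(y)$, one has the asymptotic $\hat\mu(y) = (1+o(1))\,\xi e^{y/2}$ uniformly on the relevant range of $y$: for $\Mu{\BallPo{y}}$ this is an exact identity from the computation $\int_0^\infty 2 e^{(y+y')/2}\,(\alpha\nu/\pi) e^{-\alpha y'}\,\dd y' = \xi e^{y/2}$, and for $\Mu{\BallPon{y}}$ and $\Mu{\BallHyp{y}}$ it follows from Lemma~\ref{lem:asymptotics_Omega_hyperbolic} together with the identification of boundaries in the box. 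This reduces the characterization of $\Kcal_C(k_n)$ to the fact that on its boundary $\hat\mu(y_{k_n,C}^\pm) \sim k_n \pm C\sqrt{k_n\log k_n}$.

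For the tail, I would use the Chernoff estimate~\eqref{eq:chernoff_bound_poisson} in the form that for $y\notin\Kcal_C(k_n)$ one has $|\hat\mu(y) - k_n| \gtrsim C\sqrt{k_n\log k_n}$, so that $\Prob{\Po(\hat\mu(y))=k_n} \le \Prob{|\Po(\hat\mu(y))-\hat\mu(y)|\ge |k_n-\hat\mu(y)|} = \bigO{k_n^{-C^2/2}}$ as in~\eqref{eq:chernoff_bound_degrees}. Combining this with the growth bound $h(y) = \bigO{e^{\beta y}}$ for some $\beta<\alpha$ gives
\begin{equation*}
\int_{[0,\infty)\setminus\Kcal_C(k_n)} h(y)\,\Prob{\Po(\hat\mu(y))=k_n}\,\alpha e^{-\alpha y}\,\dd y = \bigO{k_n^{-C^2/2}} \int_0^\infty |h(y)|\,\alpha e^{-\alpha y}\,\dd y,
\end{equation*}
and the rightmost integral is finite since $\beta<\alpha$. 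Some care is needed for very large $y$ where $\hat\mu(y) \gg k_n$: here it is cleaner to exploit $\Prob{\Po(\hat\mu(y))=k_n} = e^{-\hat\mu(y)}\hat\mu(y)^{k_n}/k_n!$ directly and substitute $u = \hat\mu(y)$ to recognise a Gamma-type integral, which is negligible by the same mass-outside-$\sqrt{k_n\log k_n}$-window argument.

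For the lower bound on the main part, I would invoke Stirling: at the height $y^\ast := 2\log(k_n/\xi)$ solving $\hat\mu(y^\ast)=k_n$, Stirling gives $\Prob{\Po(k_n)=k_n} \sim 1/\sqrt{2\pi k_n}$, and by a local-limit-type argument this estimate persists (up to a constant factor) throughout $\Kcal_C(k_n)$, whose length is $\Theta(\sqrt{\log k_n/k_n})$. Since $e^{-\alpha y^\ast} = \Theta(k_n^{-2\alpha})$ and $h(y^\ast)$ is at worst $\bigO{k_n^{2\beta}}$ (continuity handles the lower bound away from degenerate cancellations), the main integral is of order at least $k_n^{-2\alpha - 1}\sqrt{\log k_n}$ times the relevant values of $h$. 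Choosing $C$ sufficiently large relative to $\alpha$ and $\beta$ then makes the tail bound $\bigO{k_n^{-C^2/2}}$ of smaller order than this lower bound, yielding the claimed asymptotic equivalence.

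The main obstacle is handling the three versions of $\hat\mu(y)$ uniformly: for $\Mu{\BallHyp{y}}$ and $\Mu{\BallPon{y}}$ the error terms in Lemma~\ref{lem:asymptotics_Omega_hyperbolic} are only controlled away from the top of the box, so one must also argue separately that the contribution from $y$ close to $R$ is negligible — this is where the condition $k_n = \smallO{n}$ enters, ensuring that $y^\ast = 2\log(k_n/\xi) \ll R = 2\log(n/\nu)$, so that $\Kcal_C(k_n)$ stays safely inside the range where the asymptotic $\hat\mu(y) \sim \xi e^{y/2}$ is valid and the Chernoff bound applies in the form we need.
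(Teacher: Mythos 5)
Your proposal captures the same high-level strategy as the paper: approximate $\hat\mu(y) \sim \xi e^{y/2}$ uniformly on a suitable range, apply a Poisson concentration bound to kill the tail, use $\beta < \alpha$ to make the tail integral converge, and treat the region $y$ close to $R$ (where the approximation to $\hat\mu$ breaks down) separately using $k_n = o(n)$. So in broad outline the two arguments agree, and your identification of the key ingredients (Chernoff, the growth bound on $h$, Lemma~\ref{lem:asymptotics_Omega_hyperbolic}, the condition $k_n = o(n)$ placing $\Kcal_C(k_n)$ safely inside the box) is correct.

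There is one genuinely different — and somewhat more delicate — choice in your argument. You bound the tail by applying the Chernoff estimate \emph{pointwise} over all $y \notin \Kcal_C(k_n)$ and then integrating. The paper instead first observes that $\hat\mu(y)$ is monotone increasing in $y$ (this is verified separately for each of the three measures: it is immediate for $\mu$, follows from Lemma~3.3 of Gugelmann et al.\ for $\mu_{\Po}$, and is checked directly for $\mu_{\mathrm{box}}$), so that $y \mapsto \Prob{\Po(\hat\mu(y)) = k_n}$ is increasing below $\Kcal_C(k_n)$ and decreasing above it. One can then bound each tail integral by the value of the integrand \emph{at the boundary} of $\Kcal_C(k_n)$ times an absolutely convergent factor, and apply Chernoff only at those two boundary heights. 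This monotonicity step is not merely cosmetic: in your pointwise version, translating ``$y \notin \Kcal_C(k_n)$'' (a statement about $\mu(y)$) into ``$|\hat\mu(y) - k_n| \gtrsim \sqrt{k_n\log k_n}$'' (what Chernoff on $\hat\mu$ actually requires) needs the uniform error $\hat\mu(y) = (1+o(1))\mu(y)$ to be of smaller order than $\sqrt{\log k_n/k_n}$ relative to $k_n$, i.e.\ a quantitative estimate on the $o(1)$ rate. The paper's Lemma~\ref{lem:concentration_heights_mu_approx} handles exactly this by defining $\hat y_n^\pm$ so that $\hat\mu_n(\hat y_n^\pm) = k_n \pm C\sqrt{k_n\log k_n}$, showing these are a vanishing perturbation of $y_{k_n,C}^\pm$, and absorbing the slop by passing from $C$ to $C/2$ in the Chernoff exponent. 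Your write-up glosses over this translation step, which is where the real technical work in the generalization from $\mu$ to $\mu_{\Po}$ or $\mu_{\mathrm{box}}$ lies. You should also state explicitly that $\hat\mu$ is monotone, since that is a hypothesis, not a consequence of $\hat\mu = (1+o(1))\mu$.

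One further remark: your lower bound via Stirling / local limit reasoning is on the right track, but the asymptotic equivalence in the statement only makes sense once one fixes a $C$ large enough relative to $\alpha$ and $\beta$ (or, in the paper's usage, an additional lower bound on $h$ such as $h = \Omega(1)$ on $\Kcal_C(k_n)$, as appears in Corollary~\ref{cor:concentration_heights_bounds_n}); as stated the proposition cannot hold for, say, $h \equiv 0$. This is an informality shared by the paper and your proposal alike, and is harmless in all applications, but it is worth noting the implicit non-degeneracy assumption.
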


The key implication of Proposition~\ref{prop:concentration_height_general} is that if the function $h(y)$ does not increase too fast, then we can restrict integration to the interval $\Kcal_C(k_n)$. The full details associated with these concentration of heights and the proof of Proposition~\ref{prop:concentration_height_general} can be found in the Section~\ref{sec:concentration_argument} of the Appendix.

\section{Clustering and the degree of a typical point in \texorpdfstring{$\Ginf$}{G infinity}\label{sec:Ginf}\label{sec:asymptotics_average_clustering_ast_P}}

As alluded to earlier, we plan to make use of the Campbell-Mecke formula for comparing the clustering coefficient and function of the (Poissonized) KPKVB random graph 
with certain quantities associated with $\Ginf$.
We will be considering the Poisson process $\Pcal$ to which we add one additional point $(0,y)$ on the $y$-axis.
In some computations the height $y$ will be fixed, but eventually we shall take it exponentially distributed with parameter $\alpha$, and
independent of $\Pcal$.
We refer to $(0,y)$ as ``the typical point''.

To provide some intuition for this definition and name, note that we can alternatively view $\Pcal$ as follows. 
We take a constant intensity Poisson process on $\eR$ corresponding to the $x$-coordinates, and to each point
we attach a random ``mark'', corresponding to the $y$-coordinate, where the marks are i.i.d.~exponentially distributed with parameter $\alpha$.

Since $c(G)$ is defined as an average over all vertices of the graph, it is not immediately obvious how to meaningfully define 
a corresponding notion for infinite graphs, and similarly for the clustering function, the degree sequence, etc.
We can however without any issues speak of the (expected) clustering coefficient of the typical point, or the expected clustering
coefficient given that it has degree k, or the distribution of the degree of the typical point.
(All considered in the graph obtained from $\Ginf$ by adding the typical point to its vertex set.)

If $p = (x,y) \in \eR\times[0,\infty)$ is a point, not necessarily part of the Poisson process, then we will write

$$ \mu(y) = \mu(p) := \mu(\BallPo{p}). $$

Integrating the intensity function of $\Pcal$ over $\BallPo{p}$ gives

$$ \begin{array}{rcl} 
\mu(y) & = & \int_{\BallPo{p}} f(x',y') \, dx' \,d y'  
 = \int_0^\infty \int_{-e^{(y+y')/2}}^{e^{(y+y')/2}} \frac{\alpha \nu}{\pi} e^{-\alpha y'} \, dx' \, dy' \\
& = & \int_0^\infty 2 e^{(y+y')/2} \frac{\alpha \nu}{\pi} e^{-\alpha y'} \, dy' 
 = \frac{2\alpha \nu e^{y/2}}{\pi} \int_0^\infty e^{(\frac12-\alpha)y'} \, dy' \\
& = & \frac{2\alpha \nu e^{y/2}}{\pi(\alpha-\frac12)} = \xi e^{y/2}.
\end{array} $$

\subsection{The degree of the typical point\label{ssec:degrees_infinite_model}}

Before considering clustering we briefly investigate the distribution of the degree of 
the typical point. 
For $y \geq 0$ we define
\begin{equation}
	\rho(y,k) := \Prob{\Po(\mu(y)) = k},
\end{equation}

where $\Po(\lambda)$ denotes a Poisson random variable with mean $\lambda$. 
%We will often write $\rho(y,k)$ instead of $\rho(p,k)$.

Let the random variable $D$ denote the degree of the typical point.
Since the typical point has a height that is independent of the Poisson process and exponential($\alpha$)-distributed:

$$ \pmf(k) := \Pee( D=k ) = \int_0^\infty \rho(y,k) \alpha e^{-\alpha y} \, dy. $$

\noindent
(Note that here we {\em define} $\pi(k)$ as the probability that the degree of the typical point equals $k$.)
Using the transformation of variables $z = \xi e^{\frac{y}{2}}$ (so $dy = \frac{2}{z}dz$), we compute
\begin{align*}
	\pmf(k)
    &= \frac{1}{k!} \int_0^\infty \left(\xi e^{\frac{y}{2}}\right)^k 
    	e^{-\xi e^{\frac{y}{2}}} \alpha e^{-\alpha y} \, dy\\
    &= \frac{\alpha \xi^{2\alpha}}{k!} \int_0^\infty 
    	\left(\xi e^{\frac{y}{2}}\right)^{k - 2\alpha} e^{-\xi e^{\frac{y}{2}}}
        \, dy\\
    &= \frac{2\alpha \xi^{2\alpha}}{k!} \int_{\xi}^{\infty} 
    	z^{k -2\alpha-1} e^{-z} \, dz\\
    &= \frac{2\alpha \xi^{2\alpha}\Gamma^+(k - 2\alpha, \xi)}{k!}, \numberthis \label{eq:def_pk}
\end{align*}
where we recall that $\Gamma$ denotes the gamma-function and $\Gamma^{+}$ the upper incomplete gamma-function.
Note that, unsurprisingly, this is identical to the expression Gugelmann et al.~\cite{gugelmann2012random} gave for the limiting 
degree distribution of $G(n;\alpha,\nu)$.
Using Stirling's approximation to the gamma function, we find that 

\begin{equation}\label{eq:degree_distribution_P_asymptotics}
	\pmf(k) \sim 2\alpha\xi^{2\alpha} k^{-(2\alpha + 1)}
	\quad \text{as } k \to \infty.
\end{equation}

\noindent
(In a bit more detail: we use that 
$\Gamma(a,b) = (1+o_a(1))\cdot \Gamma(a)$ if $a$ tends to infinity and $b$ remains constant, and that 
$\Gamma(a+1) = (1+o_a(1))\cdot \sqrt{2\pi a} \cdot (a/e)^{a}$ as $a$ tends to infinity by Stirling's approximation. For a proof of Stirling's approximation to the $\Gamma$ function, also for non-integer values of the argument, see for instance~\cite{DiaconisFreedman}.)

By a similar computation we have the following result, which will be useful later on. For any $\beta > 0$, as $k \to \infty$

\begin{equation}\label{eq:general_integral_rho_y_k}
	\int_0^\infty e^{-\beta y} \rho(y, k) \alpha e^{-\alpha y} \, dy
    \sim 2\alpha \xi^{2(\beta + \alpha)} k^{-2(\beta + \alpha)-1}.
\end{equation}

\subsection{The expected clustering coefficient of the typical point\label{sec:42}}

Let the random variable $C$ denote the clustering coefficient of the typical point $(0,y)$, in the graph obtained from $\Ginf$ by adding $(0,y)$. We now {\em define}
\[
	\gamma := \Exp{C}, \quad \gamma(k) := \CExp{C}{D = k}.
\]
(Where we take the expectation over both the Poisson point process $\Pcal$ and $y \isd \text{exp}(\alpha)$, independently of the Poisson process $\Pcal$.) We shall show shortly that these take on the values stated in Theorem~\ref{thm:maincc} and~\ref{thm:mainkfixed}.

For any fixed value $y_0>0$, the set of points inside $\BallPo{y_0}$ is a Poisson process with intensity $f \cdot 1_{\BallPo{y_0}}$. As $\mu(\BallPo{y_0}) = \mu(y_0) = \xi e^{y_0/2} < \infty$, this can be described alternatively by first picking $N \isd \Po( \mu(y_0) )$ and then taking $N$ i.i.d.~points in $\BallPo{y_0}$ according to the probability density $f \cdot 1_{\BallPo{y_0}} / \mu(y_0)$. (That is, the intensity function of the Poisson point process, but set to zero outside of $\BallPo{y_0}$ and re-normalized in such a way that it integrates to one.) Hence, if we condition on the event that $y$ takes on some fixed value $y_0$ and that there are exactly $k$ points of $\Pcal$ inside $\BallPo{y_0}$, then those $k$ points behave like $k$ i.i.d.~points in $\BallPo{y_0}$ chosen according to the mentioned re-normalized probability density function. This shows that, for every $k\geq 2$:
\[
	\CExp{C}{D=k, y=y_0} 
	= \frac{1}{\binom{k}{2}} \Ee\left( \sum_{1\leq i < j \leq k} \1_{\{u_i\in \BallPo{u_j}\}} \right)
	= \Exp{\1_{\{u_1\in\BallPo{u_2}\}}},
\]
where $u_1,\dots, u_k$ are i.i.d.~points in $\BallPo{y_0}$ with the above mentioned density.
Note that this does not depend on the value of $k$. For notational convenience, we will write 
\begin{equation}\label{eq:def_Py}
	P(y_0) := \Exp{\1_{\{u_1\in\BallPo{u_2}\}}},
\end{equation}
with $u_1, u_2$ as above.

We now observe that 
\[
	\gamma(k) = \CExp{C}{D=k} = \int_0^\infty \CExp{C}{D=k, y=y_0} g_k(y_0) \, d y_0,
\]
where $g_k$ denotes the density of $y$ {\em conditional on} $D=k$. That is,
\[
	g_k(y_0)  = \frac{\rho(y_0, k) \alpha e^{-\alpha y_0} }{\int_0^\infty \rho(t, k) \alpha e^{-\alpha t} \, d t}
	= \frac{1}{\pmf(k)} \cdot \rho(y_0, k) \alpha e^{-\alpha y_0},
\]
where we recall that $\rho(y,k) = \Prob{\Po(\mu(y)) = k}$ denotes the probability that a Poisson random variable with mean $\mu(y)$ is $k$. Hence, 
\begin{equation}\label{eq:gammakint}
\gamma(k) 
= \frac{1}{\pmf(k)} \cdot \int_0^\infty P(y_0) \rho(y_0,k) \alpha e^{-\alpha y_0} \, d y_0. 
\end{equation}
This also gives
\begin{equation}\label{eq:gammaint}
\begin{array}{rcl} 
\gamma 
& = & \displaystyle \Exp{C} = \sum_{k\geq 2} \CExp{ C }{ D=k }\Prob{ D=k } \\
& = & \displaystyle \int_0^\infty P(y_0) \left( \sum_{k=2}^\infty \rho(y_0,k) \right) \alpha e^{-\alpha y_0} \, d y_0 \\
& = & \displaystyle \int_0^\infty P(y_0) \left(1 - \rho(y_0,0)-\rho(y_0,1)\right) \alpha e^{-\alpha y_0} \, d y_0.
\end{array}
\end{equation}

A key step is to derive the following explicit expression for $P(y)$.

\begin{lemma}\label{lem:Paneq1}
%\begin{enumerate}
%	\item 
	If $\alpha \not = 1$, then
	\begin{align*}
	 P(y) &=-\frac{1}{8 (\alpha - 1) \alpha} + \frac{(\alpha - 1/2) e^{-\frac{1}{2}y}}{\alpha - 1} - \frac{(\alpha - 1/2)^2 e^{-y}}{
		4 (\alpha - 1)^2} \\
	&+ 
	(e^{-\frac{1}{2}y})^{4\alpha -2} \left(\frac{2^{-4 \alpha-1} (3 \alpha - 1)}{\alpha (\alpha - 1)^2} 
		+ \frac{(\alpha - 1/2 ) B^-(1/2; 1 + 2 \alpha, -2 + 2 \alpha)}{2(\alpha - 1) \alpha} \right) \\
	&+ \frac{(1 - 
		e^{-\frac{1}{2}y})^{2 \alpha}}{8 (\alpha - 1) \alpha} - \frac{  
		(e^{-\frac{1}{2}y})^{4 \alpha - 2} B^-(1 - e^{-\frac{1}{2}y}; 2 \alpha, 3 - 4 \alpha)}{4 (\alpha - 1)}
	\end{align*}
% 	\item If $\alpha = 1$, then
% 	\begin{align*}
% 	&P(y) =\frac{9}{4} e^{-\frac{1}{2}y} + \frac{1 - 4 e^{-\frac{1}{2}y} + 3 e^{-y}}{4}\ln(1 - e^{-\frac{1}{2}y}) - \frac{7+\pi^2}{8}
% e^{-y}  + 
% \frac{1}{2}e^{-y}\Li_2(e^{-y})
% 	\end{align*}
% 	where $\Li_2(z)=\int_0^z \frac{\ln(1-t)}{t}dt$ is the dipolylogarithm function.
%\end{enumerate}
\end{lemma}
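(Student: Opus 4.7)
The strategy is to compute $P(y)$ by direct evaluation of its defining integral. Using the independence between the two i.i.d.\ points and writing $p_i = (x_i, y_i)$, the definition~(\ref{eq:def_Py}) unfolds to
\[
P(y) = \frac{1}{\mu(y)^2} \int_0^\infty \int_0^\infty \Bigl(\int\int \mathbf{1}_{\{|x_1|\leq a_1,\, |x_2|\leq a_2,\, |x_1-x_2|\leq r\}}\,dx_1\,dx_2\Bigr) f(y_1) f(y_2)\, dy_1\, dy_2,
\]
where $a_i := e^{(y+y_i)/2}$, $r := e^{(y_1+y_2)/2}$, and $f(y) = (\alpha\nu/\pi)e^{-\alpha y}$. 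Since $\mu(y) = \xi e^{y/2}$ is already computed, the heart of the calculation is evaluating the inner area integral geometrically and then integrating over $(y_1, y_2)$.

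The inner integral is the area of the intersection of the rectangle $[-a_1,a_1]\times[-a_2,a_2]$ with the strip $\{|x_1-x_2|\leq r\}$. By WLOG taking $a_1\leq a_2$ (i.e., $y_1\leq y_2$, compensated by a factor $2$ for symmetry), this area equals $4a_1a_2$ when $r\geq a_1+a_2$ (strip contains rectangle); equals $4a_1 r$ when $r\leq a_2-a_1$ (strip cut off by the short sides); and equals $4a_1a_2-(a_1+a_2-r)^2$ in the intermediate case. I would then substitute $t_i:=e^{-y_i/2}$ and $T:=e^{-y/2}$, so that the integration region becomes $\{(t_1,t_2)\in(0,1]^2:\, t_2\leq t_1\}$, with Jacobian producing the factor $t_i^{2\alpha-1}$ (since $f(y_i)\,dy_i \propto t_i^{2\alpha-1}\,dt_i$). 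The three cases translate cleanly: $r\geq a_1+a_2\iff T\geq t_1+t_2$ and $r\leq a_2-a_1\iff T\leq t_1-t_2$. This partitions the triangle $\{t_2\leq t_1\}$ into subregions bounded by the lines $t_2=t_1-T$, $t_2=T-t_1$, and $t_1=T/2,T,1$, and on each subregion the integrand is a rational monomial in $t_1,t_2$ (after expanding $(a_1+a_2-r)^2$ in the middle case).

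The third step is systematic iterated integration of these monomials over the three subregions. All of the contributions from the interior of the unit square produce elementary terms that, once re-expressed using $T=e^{-y/2}$, yield the constants and powers $e^{-y/2}$, $e^{-y}$, $(e^{-y/2})^{4\alpha-2}$, and $(1-e^{-y/2})^{2\alpha}$ appearing in the statement. The only genuinely non-elementary piece arises when the $t_1$-integration extends to the boundary $t_1=1$ in the intermediate region, where the inner integral in $t_2$ leaves a term of the form $\int_T^1 t_1^{2\alpha-3}(1-T/t_1)^{\text{something}}\,dt_1$. The substitution $u=1-T/t_1$ (so $t_1=T/(1-u)$) transforms this into $\int_0^{1-T} u^{a-1}(1-u)^{b-1}\,du$, which is exactly the lower incomplete beta function $B^-(1-e^{-y/2};\,2\alpha,\,3-4\alpha)$ after identifying parameters. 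Collecting all contributions yields the closed form; the assumption $\alpha\neq 1$ avoids the case $4\alpha-3=-1$, where the elementary antiderivative becomes logarithmic rather than a power.

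The main obstacle is not any one integral (each is elementary or beta-type) but the sheer bookkeeping: the middle case's area $4a_1a_2-(a_1+a_2-r)^2$ expands into five monomial integrands, each of which must be integrated over pieces of a region whose shape depends on whether $t_1\lessgtr T/2, T, 1$; combining the many resulting fragments and verifying that they telescope into the short closed form of the statement is the tedious but straightforward end of the proof.
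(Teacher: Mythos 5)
Your proposal is correct and follows essentially the same approach as the paper: decompose $P(y)$ into a double integral over the heights $(y_1,y_2)$ of the probability that two uniformly placed $x$-coordinates realize the triangle, compute that probability from the geometry of a rectangle-intersected-with-a-strip, change variables to $t_i = e^{-y_i/2}$, and then integrate monomials region by region, with the boundary contribution producing the incomplete beta function. Your one small streamlining — using the single formula $4a_1a_2-(a_1+a_2-r)^2$ on the intermediate region rather than sub-splitting into $y_0<y_1$ versus $y_1<y_0$ as the paper does via Lemmas~\ref{lem:ordered}--\ref{lem:triangle_prob_y_coordinates} — is valid, since the two subcase formulas $1-G(z_0,z_1,z_2)$ and $(z_0/z_1)\bigl(1-G(z_1,z_0,z_2)\bigr)$ coincide algebraically; however, your final remark slightly misidentifies the role of the hypothesis $\alpha\neq1$: the vanishing denominator that $\alpha\neq1$ protects against is $b+c+4\alpha-2 = 4\alpha-4$ (occurring when $b=c=-1$), not $4\alpha-3=-1$, which would correspond to $\alpha=1/2$ and lies outside the admissible range.
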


We will prove this lemma in a sequence of steps.

Recall that $P(y_0)$ is the probability that $u_1 = (x_1,y_1), u_2 = (x_2,y_2)$ are neighbours in $\Ginf$, where
$u_1, u_2$ are i.i.d.~with probability density $f \cdot \1_{\BallPo{y_0}} / \mu(y_0)$.
In particular
\begin{align*}
	\Prob{y_i > t} &= \frac{\nu\alpha}{\pi \mu(y_0)} \int_t^\infty \int_{-e^{(y+y_0)/2}}^{e^{(y+y_0)/2}} e^{-\alpha y}
		\dd x \dd y
		= \frac{\nu\alpha}{\pi \mu(y_0)} \int_t^\infty 2 e^{(y+y_0)/2} \cdot e^{-\alpha y} \dd y \\
	&= \frac{2 \nu\alpha e^{y_0/2} }{\pi \xi e^{y_0/2} (\alpha-\frac{1}{2}) }  \cdot e^{(\frac{1}{2}-\alpha)t}
		= e^{(\frac{1}{2}-\alpha)t},
\end{align*}
using that $\mu(y_0) = \xi e^{y_0/2} = \left(\frac{2\alpha\nu}{\pi(\alpha-\frac12)}\right) e^{y_0/2}$.
Thus, $y_1, y_2$ are exponentially distributed with parameter $\alpha-\frac12$.
Now note that, for each $t>0$, the probability density $f \cdot 1_{\BallPo{y_0}} / \mu(y_0)$
is constant on $[-e^{(t+y_0)/2}, e^{(t+y_0)/2}] \times \{t\}$ and it is 
vanishes on $(-\infty, -e^{(t+y_0)/2}) \times \{t\} \cup (e^{(t+y_0)/2},\infty)\times\{t\}$.

Hence, given the height $y_i$ of $u_i$, the $x$-coordinate of $u_i$ is uniform in $[-e^{\frac{1}{2}(y+y_i)},e^{\frac{1}{2}(y+y_i)}]$. 
With this in mind we define $P(y_0,y_1,y_2)$ to be the probability that $y_0, (x_1,y_1), (x_2,y_2)$ 
satisfy \\ $|x_1-x_2| \leq e^{(y_1+y_2)/2}$, where $x_1$ and $x_2$ are independent uniform random variables in the intervals
$[-e^{\frac{1}{2}(y_0+y_1)},e^{\frac{1}{2}(y_0+y_1)}]$ and  $[-e^{\frac{1}{2}(y_0+y_2)},e^{\frac{1}{2}(y_0+y_2)}]$, respectively. We then have that

\begin{equation}\label{eq:delta_P}
 P(y_0) = (\alpha-1/2)^2 \int_0^\infty \int_0^\infty P(y_0, y_1, y_2) e^{-(\alpha-1/2)(y_1+y_2)} 
 \dd y_2 \dd y_1.
\end{equation}

\subsubsection{Determining \texorpdfstring{$P(y_0,y_1,y_2)$}{P(y0,y1,y2)}}

To compute the integral~\eqref{eq:delta_P} it will be convenient to use the 
change of variable $z_i = e^{-y_i/2}$, for $i= 0, 1, 2$. 
The following result will turn out to be
all we need to compute the integral~\eqref{eq:delta_P}.

\begin{lemma}\label{lem:triangle_prob_y_coordinates}
Set $z_i = e^{-y_i/2}$, $i=0,1,2$. We have
\begin{align*}
P(y_0,y_1,y_2) = \begin{cases}
	1, &\text{ if } z_0 \geq z_1+z_2, z_0 > z_1 > z_2, \\
	1-G(z_0,z_1,z_2), &\text{ if } z_0 < z_1+z_2, z_0 > z_1 > z_2, \\
	\frac{z_0}{z_1}, &\text{ if } z_1 \geq z_0+z_2, z_1 > \max(z_0,z_2), \\
	\frac{z_0}{z_1}\left(1-G(z_1,z_0,z_2)\right), &\text{ if } z_1 < z_0+z_2, z_1 > \max(z_0,z_2),
\end{cases}
\end{align*}
where 
\begin{align*}
G(a,b,c) = \frac{1}{4}
\left( b^{-1}c + bc^{-1} + a^2b^{-1}c^{-1} + 2 - 2ab^{-1}-2ac^{-1}\right).
\end{align*}
\end{lemma}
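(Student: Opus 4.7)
The plan is to reduce $P(y_0,y_1,y_2)$ to a two-dimensional geometric probability in a rectangle and then read off the four cases of the lemma by matching three underlying geometric regimes.

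First, I introduce rescaled coordinates $\tilde{x}_i := e^{-y_0/2} x_i$ and set $a_i := 1/z_i = e^{y_i/2}$. Then $\tilde{x}_1,\tilde{x}_2$ are independent and uniform on $[-a_1,a_1]$ and $[-a_2,a_2]$ respectively, and the event $\{|x_1-x_2| \leq e^{(y_1+y_2)/2}\}$ becomes $\{|\tilde{x}_1 - \tilde{x}_2| \leq L\}$ with $L := e^{(y_1+y_2-y_0)/2} = a_1 a_2/a_0$. Hence $P(y_0,y_1,y_2) = (4 a_1 a_2)^{-1} \cdot \mathrm{Area}(S_L \cap R)$, where $S_L := \{(u,v):|u-v|\leq L\}$ is the diagonal strip and $R := [-a_1,a_1]\times[-a_2,a_2]$.

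Next I split according to where $L$ sits relative to $a_1+a_2$ and $|a_1-a_2|$. If $L \geq a_1 + a_2$, then $S_L \supseteq R$ and $P = 1$. If $|a_1 - a_2| \leq L < a_1 + a_2$, then $R \setminus S_L$ consists of two congruent right triangles with legs of length $a_1+a_2-L$, whose combined area is $(a_1+a_2-L)^2$, giving $P = 1 - (a_1+a_2-L)^2/(4 a_1 a_2)$. If $L < |a_1 - a_2|$, WLOG $a_1 \leq a_2$, then for every $\tilde{x}_1 \in [-a_1,a_1]$ the interval $[\tilde{x}_1-L,\tilde{x}_1+L]$ fits inside $[-a_2,a_2]$, so $P = L/a_2$.

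Finally I translate back to the $z_i$ and identify the four stated cases. The identities $a_1 + a_2 - L = (z_1+z_2-z_0)/(z_1 z_2)$ and $4a_1 a_2 = 4/(z_1 z_2)$ give $(a_1+a_2-L)^2/(4 a_1 a_2) = (z_1+z_2-z_0)^2/(4 z_1 z_2)$, which one sees is exactly $G(z_0,z_1,z_2)$ via the rearrangement $G(a,b,c) = (b+c-a)^2/(4bc)$ from the stated formula. The condition $L \geq a_1+a_2$ is $z_0 \geq z_1 + z_2$ (case 1); the middle regime under $z_0 > z_1 > z_2$ is case 2. Under $z_1 > \max(z_0,z_2)$ one has $\max(a_1,a_2) = a_2$, so $L \leq a_2 - a_1 \iff z_1 \geq z_0 + z_2$, yielding $P = L/a_2 = a_1/a_0 = z_0/z_1$ (case 3); otherwise the middle regime applies and, using the identity
\[
1 - G(z_0,z_1,z_2) = \frac{z_0}{z_1}\bigl(1 - G(z_1,z_0,z_2)\bigr),
\]
which reduces to the one-line factorization $(z_0+z_2-z_1)^2 - (z_1+z_2-z_0)^2 = 4 z_2 (z_0 - z_1)$, one recovers the form stated in case 4. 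The main obstacle is not any single computation but rather the case-by-case bookkeeping: verifying that the three geometric regimes, combined with the two orderings $z_0 > z_1 > z_2$ and $z_1 > \max(z_0,z_2)$ explicitly handled in the lemma, exhaust all possibilities up to the symmetry $u_1 \leftrightarrow u_2$, and that the superficially asymmetric expression in case 4 is algebraically equivalent to the symmetric form coming directly from the area computation.
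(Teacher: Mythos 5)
Your proposal is correct, and the core calculation is the same as the paper's (the area of the strip $S_L\cap R$ divided by $4a_1a_2$ is exactly the integral over $x_1$ of the conditional probability over $x_2$ that the paper computes in Lemma~\ref{lem:ordered}). Where you genuinely diverge is in how cases 3 and 4 are obtained. The paper only ever computes the geometric probability under the ordering $y_0<y_1<y_2$; it then establishes the scaling relation
$P(y_0,y_1,y_2)=\tfrac{z_0}{z_1}\,P(y_1,y_0,y_2)=\tfrac{z_0}{z_1}\,P(y_1,y_2,y_0)$
by a change of variables in the two-fold integral defining $P$, and feeds the result of Lemma~\ref{lem:ordered} through this identity. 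The asymmetric form $\tfrac{z_0}{z_1}\bigl(1-G(z_1,z_0,z_2)\bigr)$ therefore falls out automatically and no algebraic manipulation is required. You instead carry the geometric picture through all three regimes of $L$ (including $L<|a_1-a_2|$, which the paper never needs since it always works with the smallest $z$ in first position) and then reconcile your symmetric answer with the paper's asymmetric statement via the factorization $(z_0+z_2-z_1)^2-(z_1+z_2-z_0)^2=4z_2(z_0-z_1)$. Both routes are sound; yours gives a single unified geometric classification covering all orderings at once and shows that the middle-regime formula $1-\frac{(z_1+z_2-z_0)^2}{4z_1z_2}$ is actually symmetric in the indices, while the paper's scaling identity is reusable and sidesteps any need to check that the two forms of case 4 coincide.
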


%\begin{align*}
%G'(z_0,z_1,z_2) = \frac14 ( z_1^{-1} z_2 +z_0^2 z_1^{-1} z_2^{-1} + z_1 z_2^{-1} + 2z_0z_1^{-1} - 2 - 2z_0z_2^{-1} )
%\end{align*}

We split the proof of this lemma into a couple of smaller pieces. We begin with the following lemma.

\begin{lemma}\label{lem:ordered}
Write $z_i = e^{-y_i/2}$, $i=0,1,2$. If $y_0<y_1<y_2$ (or equivalently $z_0 > z_1 > z_2$), then
\begin{align*}
P(y_0,y_1,y_2) = \begin{cases}
1, &\text{ if } z_0 \geq z_1+z_2,  \\
1-G(z_0,z_1,z_2), &\text{ if } z_0 < z_1+z_2
\end{cases}
\end{align*}
\end{lemma}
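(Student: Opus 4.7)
The plan is to reduce the computation of $P(y_0,y_1,y_2)$ to a standard geometric probability on a rectangle, then check by elementary algebra that it matches the stated expression. Recall that $P(y_0,y_1,y_2)$ is the probability that $|x_1 - x_2| \leq e^{(y_1+y_2)/2}$ when $x_1$ and $x_2$ are independent and uniformly distributed on $[-e^{(y_0+y_1)/2}, e^{(y_0+y_1)/2}]$ and $[-e^{(y_0+y_2)/2}, e^{(y_0+y_2)/2}]$ respectively. It is natural to set
\[
a := e^{(y_0+y_1)/2} = \frac{1}{z_0 z_1}, \qquad b := e^{(y_0+y_2)/2} = \frac{1}{z_0 z_2}, \qquad c := e^{(y_1+y_2)/2} = \frac{1}{z_1 z_2},
\]
so that the task becomes computing $\Pr[\,|x_1-x_2|\leq c\,]$ for $x_1\sim U[-a,a]$, $x_2\sim U[-b,b]$ independent. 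The ordering $z_0>z_1>z_2$ immediately yields $a<b<c$, and a direct computation gives
\[
a+b-c = \frac{z_1+z_2-z_0}{z_0 z_1 z_2},
\]
so the condition $z_0\geq z_1+z_2$ is exactly $a+b\leq c$.

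If $a+b\leq c$, the triangle inequality forces $|x_1-x_2|\leq |x_1|+|x_2|\leq a+b\leq c$ deterministically, so $P(y_0,y_1,y_2)=1$. In the remaining case $a+b>c$, I would compute the complementary probability $\Pr[\,|x_1-x_2|>c\,]$ as a ratio of areas inside the rectangle $[-a,a]\times[-b,b]$ (total area $4ab$). The region $\{x_1-x_2>c\}$ within the rectangle is the right triangle with vertices $(a-c,a)$, $(-b,a)$, $(-b,c-b)$; under our assumption $c-b>0$ and $a-c>-b$, so this triangle has two perpendicular legs of length $a+b-c$ and area $\tfrac12(a+b-c)^2$. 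The symmetric region $\{x_2-x_1>c\}$ contributes an equal area, giving
\[
\Pr[\,|x_1-x_2|>c\,] \;=\; \frac{(a+b-c)^2}{4ab}.
\]

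The final step is purely algebraic: substitute the expressions for $a,b,c$ and verify that $(a+b-c)^2/(4ab)$ equals $G(z_0,z_1,z_2)$. Using the computation above, $(a+b-c)^2 = (z_1+z_2-z_0)^2/(z_0 z_1 z_2)^2$ and $4ab = 4/(z_0^2 z_1 z_2)$, so
\[
\frac{(a+b-c)^2}{4ab} \;=\; \frac{(z_1+z_2-z_0)^2}{4 z_1 z_2}.
\]
Expanding the square and dividing through by $4z_1z_2$ reproduces term-by-term the formula
$G(z_0,z_1,z_2)=\tfrac14\bigl(z_1^{-1}z_2 + z_1 z_2^{-1} + z_0^2 z_1^{-1}z_2^{-1} + 2 - 2z_0 z_1^{-1} - 2z_0 z_2^{-1}\bigr)$, completing the proof.

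I do not anticipate a real obstacle here: the only thing to watch is checking that the triangle vertices do lie inside the rectangle (equivalently $c-b\geq -b$ and $a-c\geq -b$, which follow from $b<c$ and $a+b>c$ respectively), and then carrying out the algebraic simplification cleanly. The identification $(z_1+z_2-z_0)^2 = z_0^2+z_1^2+z_2^2+2z_1z_2-2z_0z_1-2z_0z_2$ is the only step that is not immediate from bookkeeping.
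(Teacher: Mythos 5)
Your proof is correct, and it reaches exactly the formula in the paper by the same essential mechanism: the key reduction is identifying $a=e^{(y_0+y_1)/2}$, $b=e^{(y_0+y_2)/2}$, $c=e^{(y_1+y_2)/2}$ with $a<b<c$, observing that $z_0\geq z_1+z_2$ is equivalent to $a+b\leq c$ (whence the probability is $1$), and otherwise computing the chance that $|x_1-x_2|>c$. The only organizational difference is that the paper conditions on $x_1$, uses symmetry to restrict $x_1\geq 0$, and integrates the conditional probability for $x_2$, whereas you compute the complementary probability directly as the area of two congruent right triangles of leg $a+b-c$ in the rectangle $[-a,a]\times[-b,b]$; this is the same double integral packaged more geometrically, and your version is arguably a bit cleaner since it avoids the case split on $x_1$. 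Your sanity checks on the triangle vertices lying inside the rectangle ($c-b>0$ from $b<c$, and $a-c>-b$ from $a+b>c$) are exactly what is needed to justify the area formula.
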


\begin{figure}[!t]
\centering
\begin{tikzpicture}
	%Define the coordinates 
	%p = (\u,\v) and p^\prime = (\uu, \vv)
	%Box \Rcal_n has width 2\r and height \t
	\pgfmathsetmacro{\u}{0} %0
	\pgfmathsetmacro{\v}{0.5} %1
	\pgfmathsetmacro{\vv}{1.2} %0.8
	\pgfmathsetmacro{\vvv}{2} %0.8
	\pgfmathsetmacro{\uuu}{-3} %1.4	
	\pgfmathsetmacro{\r}{6.5}
	\pgfmathsetmacro{\t}{4}
	
	\pgfmathsetmacro{\ubound}{exp((\vv+\vvv)/2)-exp((\v+\vvv)/2)}
	
	\pgfmathsetmacro{\uu}{3*\ubound/4}
	
	\pgfmathsetmacro{\leftintvandvvv}{-exp((\v + \vvv)/2)}
	\pgfmathsetmacro{\rightintvandvvv}{exp((\v + \vvv)/2)}
	\pgfmathsetmacro{\leftintvvandvvv}{\uu-exp((\vv + \vvv)/2)}
	\pgfmathsetmacro{\rightintvvandvvv}{\uu+exp((\vv + \vvv)/2)}
		
	%The box \Rcal_n
	\draw[line width=1pt,dashed] (-\r,0) -- (\r,0) -- (\r,\t) -- (-\r,\t) -- (-\r,0);

	%Dram all three nodes
    \draw node[fill, circle, inner sep=0pt, minimum size=5pt] (p1) at (\u,\v) {};
    \path (p1)+(-0.2,0.2) node {$p_0$};
    \draw node[fill,blue, circle, inner sep=0pt, minimum size=5pt] (p2) at (\uu,\vv) {};
    \path (p2)+(0.2,0.2) node {\color{blue}$p_1$};	
	
	%Boundaries p_0 = (\u,\v)
	
	%Right boundary
	\pgfmathsetmacro{\rightbounduv}{\u+exp((\v)/2)}
	\draw[domain=\rightbounduv:\r,smooth,variable=\x,black,line width=1pt] plot (\x, {2*ln(\x)-\v});
    %Left boundary
    \pgfmathsetmacro{\leftbounduv}{\u-exp((\v)/2)}
    \draw[domain=\leftbounduv:-\r,smooth,variable=\x,black,line width=1pt] plot (\x, {2*ln(-\x)-\v});
    
    %Boundaries p_1 = (\uu,\vv)
    
    %Right boundary
    \pgfmathsetmacro{\rightbounduuvv}{\uu+exp((\vv)/2)}
    \draw[domain=\rightbounduuvv:\r,smooth,variable=\x,blue,line width=1pt] plot (\x, {2*ln(\x-\uu)-\vv});
%    %Shifted right boundary
%    \pgfmathsetmacro{\shiftrightbounduuvv}{\uu+exp((\vv + \t)/2)-2*\r}
%    \draw[domain=\shiftrightbounduuvv:-\r,smooth,variable=\x,blue,line width=1pt] plot (\x, {2*ln(\x+(2*\r-\uu))-\vv});
    %Left boundary 
    \pgfmathsetmacro{\leftbounduuvv}{\uu-exp((\vv)/2)}
    \draw[domain=\leftbounduuvv:-\r,smooth,variable=\x,blue,line width=1pt] plot (\x, {2*ln(\uu-\x)-\vv});
%    %Shifted left boundary
%    \pgfmathsetmacro{\shiftleftbounduuvv}{\uu-exp((\vv + \t)/2)+2*\r}
%    \draw[domain=\shiftleftbounduuvv:\r,smooth,variable=\x,blue,line width=1pt] plot (\x, {2*ln(2*\r + \uu-\x)-\vv});

	\draw [red,dotted,line width=1pt] (\leftintvandvvv,\vvv) -- (\rightintvandvvv,\vvv);
	
	\draw [red,dotted,line width=1pt] (\leftintvvandvvv,\vvv) -- (\rightintvvandvvv,\vvv);

	\draw [red,line width=1pt] (\leftintvandvvv,\vvv) -- (\rightintvandvvv,\vvv);

    \draw node[fill,black!20!green, circle, inner sep=0pt, minimum size=5pt] (p2) at (\uuu,\vvv) {};
    \path (p2)+(0.25,-0.2) node {\color{black!20!green}$p_2$};
    
	%Boundaries p_2 = (\uuu,\vvv)
	
	%Right boundary
	\pgfmathsetmacro{\rightbounduuuvvv}{\uuu+exp((\vvv)/2)}
	\draw[domain=\rightbounduuuvvv:\r,smooth,variable=\x,black!20!green,line width=1pt] plot (\x, {2*ln(\x-\uuu)-\vvv});
    %Left boundary
    \pgfmathsetmacro{\leftbounduuuvvv}{\uuu-exp((\vvv)/2)}
    \draw[domain=\leftbounduuuvvv:-\r,smooth,variable=\x,black!20!green,line width=1pt] plot (\x, {2*ln(\uuu-\x)-\vvv});

	\draw node[fill,red, circle, inner sep=0pt, minimum size=3pt] at (\leftintvandvvv,\vvv) {};
	\path (\leftintvandvvv,\vvv)+(-0.5,-0.5) node[red] {$-e^{\frac{y_0 + y_2}{2}}$};
	\draw node[fill,red, circle, inner sep=0pt, minimum size=3pt] at (\rightintvandvvv,\vvv) {};
	\path (\rightintvandvvv,\vvv)+(0,-0.5) node[red] {$e^{\frac{y_0 + y_2}{2}}$};
	
	\draw node[fill,red, circle, inner sep=0pt, minimum size=3pt] at (\leftintvvandvvv,\vvv) {};
	\path (\leftintvvandvvv,\vvv)+(0.75,0.5) node[red] {$x_1 - e^{\frac{y_0 + y_2}{2}}$};
	\draw node[fill,red, circle, inner sep=0pt, minimum size=3pt] at (\rightintvvandvvv,\vvv) {};
	\path (\rightintvvandvvv,\vvv)+(-0.5,0.5) node[red] {$x_1 + e^{\frac{y_0 + y_2}{2}}$};

\end{tikzpicture}\\
\vspace{0.5cm}
\begin{tikzpicture}
	%Define the coordinates 
	%p = (\u,\v) and p^\prime = (\uu, \vv)
	%Box \Rcal_n has width 2\r and height \t
	\pgfmathsetmacro{\u}{0} %0
	\pgfmathsetmacro{\v}{0.5} %1
	\pgfmathsetmacro{\vv}{1.2} %0.8
	\pgfmathsetmacro{\vvv}{1.8} %0.8
	\pgfmathsetmacro{\uuu}{2.5} %1.4	
	\pgfmathsetmacro{\r}{6.5}
	\pgfmathsetmacro{\t}{4}
	
	\pgfmathsetmacro{\ubound}{exp((\vv+\vvv)/2)-exp((\v+\vvv)/2)}
	
	\pgfmathsetmacro{\uu}{5*\ubound/4}
	
	\pgfmathsetmacro{\leftintvandvvv}{-exp((\v + \vvv)/2)}
	\pgfmathsetmacro{\rightintvandvvv}{exp((\v + \vvv)/2)}
	\pgfmathsetmacro{\leftintvvandvvv}{\uu-exp((\vv + \vvv)/2)}
	\pgfmathsetmacro{\rightintvvandvvv}{\uu+exp((\vv + \vvv)/2)}
		
	%The box \Rcal_n
	\draw[line width=1pt,dashed] (-\r,0) -- (\r,0) -- (\r,\t) -- (-\r,\t) -- (-\r,0);

	%Dram all three nodes
    \draw node[fill, circle, inner sep=0pt, minimum size=5pt] (p1) at (\u,\v) {};
    \path (p1)+(-0.2,0.2) node {$p_0$};
    \draw node[fill,blue, circle, inner sep=0pt, minimum size=5pt] (p2) at (\uu,\vv) {};
    \path (p2)+(0.2,0.2) node {\color{blue}$p_1$};	
	
	%Boundaries p_0 = (\u,\v)
	
	%Right boundary
	\pgfmathsetmacro{\rightbounduv}{\u+exp((\v)/2)}
	\draw[domain=\rightbounduv:\r,smooth,variable=\x,black,line width=1pt] plot (\x, {2*ln(\x)-\v});
    %Left boundary
    \pgfmathsetmacro{\leftbounduv}{\u-exp((\v)/2)}
    \draw[domain=\leftbounduv:-\r,smooth,variable=\x,black,line width=1pt] plot (\x, {2*ln(-\x)-\v});
    
    %Boundaries p_1 = (\uu,\vv)
    
    %Right boundary
    \pgfmathsetmacro{\rightbounduuvv}{\uu+exp((\vv)/2)}
    \draw[domain=\rightbounduuvv:\r,smooth,variable=\x,blue,line width=1pt] plot (\x, {2*ln(\x-\uu)-\vv});
    %Left boundary 
    \pgfmathsetmacro{\leftbounduuvv}{\uu-exp((\vv)/2)}
    \draw[domain=\leftbounduuvv:-\r,smooth,variable=\x,blue,line width=1pt] plot (\x, {2*ln(\uu-\x)-\vv});

	\draw [red,dotted,line width=1pt] (\leftintvandvvv,\vvv) -- (\rightintvandvvv,\vvv);
	
	\draw [red,dotted,line width=1pt] (\leftintvvandvvv,\vvv) -- (\rightintvvandvvv,\vvv);

	\draw [red,line width=1pt] (\leftintvvandvvv,\vvv) -- (\rightintvandvvv,\vvv);

    \draw node[fill,black!20!green, circle, inner sep=0pt, minimum size=5pt] (p2) at (\uuu,\vvv) {};
    \path (p2)+(-0.2,0.2) node {\color{black!20!green}$p_2$};
    
	%Boundaries p_2 = (\uuu,\vvv)
	
	%Right boundary
	\pgfmathsetmacro{\rightbounduuuvvv}{\uuu+exp((\vvv)/2)}
	\draw[domain=\rightbounduuuvvv:\r,smooth,variable=\x,black!20!green,line width=1pt] plot (\x, {2*ln(\x-\uuu)-\vvv});
    %Left boundary
    \pgfmathsetmacro{\leftbounduuuvvv}{\uuu-exp((\vvv)/2)}
    \draw[domain=\leftbounduuuvvv:-\r,smooth,variable=\x,black!20!green,line width=1pt] plot (\x, {2*ln(\uuu-\x)-\vvv});

	\draw node[fill,red, circle, inner sep=0pt, minimum size=3pt] at (\leftintvandvvv,\vvv) {};
	\path (\leftintvandvvv,\vvv)+(-0.5,-0.55) node[red] {$-e^{\frac{y_0 + y_2}{2}}$};
	\draw node[fill,red, circle, inner sep=0pt, minimum size=3pt] at (\rightintvandvvv,\vvv) {};
	\path (\rightintvandvvv,\vvv)+(0,-0.55) node[red] {$e^{\frac{y_0 + y_2}{2}}$};
	
	\draw node[fill,red, circle, inner sep=0pt, minimum size=3pt] at (\leftintvvandvvv,\vvv) {};
	\path (\leftintvvandvvv,\vvv)+(0.75,0.5) node[red] {$x_1 - e^{\frac{y_0 + y_2}{2}}$};
	\draw node[fill,red, circle, inner sep=0pt, minimum size=3pt] at (\rightintvvandvvv,\vvv) {};
	\path (\rightintvvandvvv,\vvv)+(-0.5,0.5) node[red] {$x_1 + e^{\frac{y_0 + y_2}{2}}$};

\end{tikzpicture}
\caption{Situation for the intersections of the connection intervals considered in Lemma~\ref{lem:ordered}, with $y_0 < y_1 <y_2$ fixed and for different cases of $0 \le x_1 \le e^{(y_0 + y_1)/2}$. The top figure shows the case where $0 \le x_1 \le e^{(y_1 + y_2)/2} - e^{(y_0 + y_2)/2}$, while the bottom one shows the case $x_1 > e^{(y_1 + y_2)/2} - e^{(y_0 + y_2)/2}$. The solid red line indicates the range for $x_2$ such that the points $p_0$, $p_1$ and $p_2$ form a triangle. The boundaries of their neighbourhoods are shown in, respectively, black, blue and green.}
\label{fig:triangle_prob_lemma}
\end{figure}
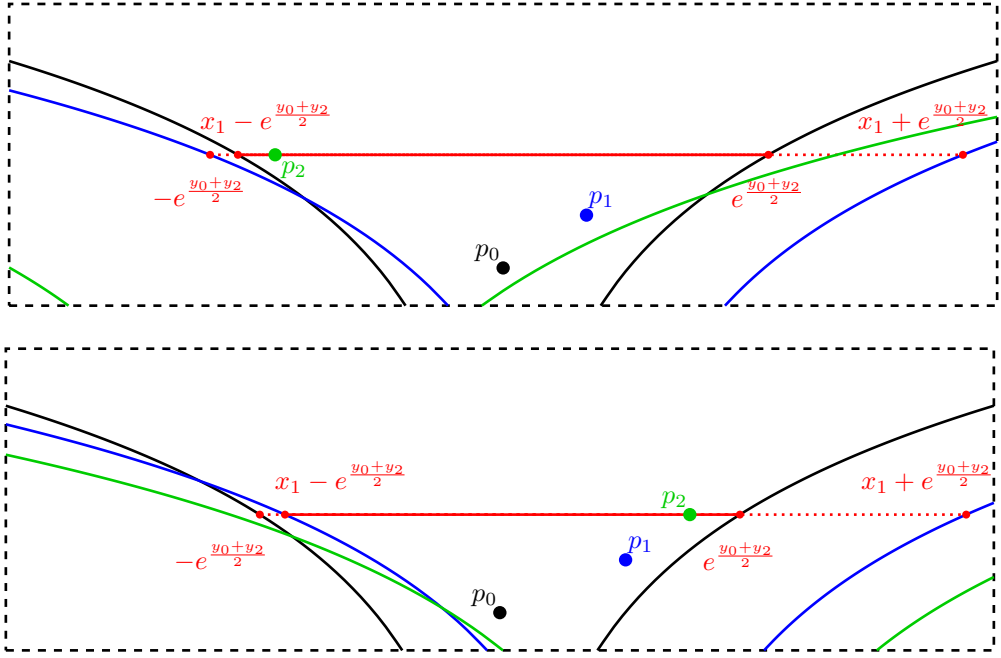

\begin{proof}
Note that $P(y_0,y_1,y_2)$ is the probability that $x_2$ falls into the interval $[x_1-e^{(y_1+y_2)/2},x_1+e^{(y_1+y_2)/2}]$, as well as into the interval $[-e^{(y_0+y_2)/2},e^{(y_0+y_2)/2}]$. By symmetry considerations, we can take $x_1$ uniformly at random from $[0,e^{y_0/2+y_1/2}]$ as opposed to $[-e^{y_0/2+y_1/2}, e^{y_0/2+y_1/2}]$. Figure~\ref{fig:triangle_prob_lemma} shows the intersection of the intervals (red line) for two different cases for $x_1 \le e^{(y_0 + y_1)/2}$. 

Since $y_0 < y_1 < y_2$ we have that $e^{(y_1+y_2)/2} > e^{(y_0+y_2)/2}$ and so, when $x_1 \geq 0$, the ``right half'' of the 
interval $[-e^{(y_0+y_2)/2}, e^{(y_0+y_2)/2}]$ is always covered by the interval $[x_1-e^{(y_1+y_2)/2}, x_1+e^{(y_1+y_2)/2}]$.
If $e^{(y_1+y_2)/2} - e^{(y_0+y_1)/2} \geq e^{(y_0+y_2)/2}$ then the ``left half'' is always covered as well.
In other words:
\[
	e^{(y_1+y_2)/2} - e^{(y_0+y_1)/2} \geq e^{(y_0+y_2)/2} \Rightarrow P(y_0,y_1,y_2) = 1.
\]

Now consider the case where $e^{(y_1+y_2)/2} - e^{(y_0+y_1)/2} < e^{(y_0+y_2)/2}$. 
Then, if $x_1 \in [0, e^{(y_1+y_2)/2} - e^{(y_0+y_2)/2}]$ the whole interval $[-e^{(y_0+y_2)/2}, e^{(y_0+y_2)/2}]$ is still covered 
so that $p_0, p_1$ and $p_2$ form a triangle. If, on the other hand $e^{(y_1+y_2)/2} - e^{(y_0+y_2)/2} < x_1 \leq e^{(y_0+y_1)/2}$ then
the probability that $|x_2-x_1| \leq e^{(y_1+y_2)/2}$ equals
\[ 
	1 - \frac{x_1 - (e^{(y_1+y_2)/2} - e^{(y_0+y_2)/2)}) }{ 2e^{(y_0+y_2)/2} }. 
\]

Hence, when $e^{(y_1+y_2)/2} - e^{(y_0+y_1)/2} < e^{(y_0+y_2)/2}$ we have
\begin{align*}
	P(y_0,y_1,y_2) &= \frac{e^{(y_1+y_2)/2} - e^{(y_0+y_2)/2} }{ e^{(y_0+y_1)/2} }  \\
	&\hspace{10pt}+ \int_{ e^{(y_1+y_2)/2} - e^{(y_0+y_2)/2} }^{ e^{(y_0+y_1)/2} } 
	    \left(1 - \frac{x_1 - (e^{(y_1+y_2)/2} - e^{(y_0+y_2)/2)}) }{ 2e^{(y_0+y_2)/2} }\right)
	    \cdot \frac{1}{e^{(y_0+y_1)/2}} \dd x_1 \\
	&= 1 - \frac{1}{2e^{y_0+y_1/2+y_2/2} } \int_0^{ e^{(y_0+y_1)/2}+e^{(y_0+y_2)/2}-e^{(y_1+y_2)/2} } x_1 \dd x_1 \\
	&= 1 - \frac{ \left( e^{(y_0+y_1)/2}+e^{(y_0+y_2)/2}-e^{(y_1+y_2)/2} \right)^2 }{ 4 e^{y_0+y_1/2+y_2/2} }.
\end{align*}

At this point it is convenient to rewrite everything in terms of $z_i := e^{-y_i/2}$.
Note that $y_0 < y_1 < y_2$ if and only if $z_0 > z_1 > z_2$ while the condition $e^{(y_1+y_2)/2} - e^{(y_0+y_1)/2} < e^{(y_0+y_2)/2}$ becomes
\[ e^{(y_1+y_2)/2} - e^{(y_0+y_1)/2} < e^{(y_0+y_2)/2} \Leftrightarrow 
z_1^{-1} z_2^{-1} < z_0^{-1} z_1^{-1} + z_0^{-1}z_2^{-1} 
\Leftrightarrow
z_0 < z_1+z_2. 
\]

We now conclude that
\[
	P(y_0(z_0), y_1(z_1), y_2(z_2)) = 1 \quad \text{if} \quad z_0 > z_1 > z_2 \text{ and } z_0 \geq z_1 + z_2
\]
while for $z_0 > z_1 > z_2$ and $z_0 < z_1 + z_2$
\begin{align*}
	P(y_0(z_0), y_1(z_1), y_2(z_2)) 
	&= 1 - \frac{z_0^2z_1z_2}{4} \cdot \left( z_0^{-1}z_1^{-1}+z_0^{-1}z_2^{-1}-z_1^{-1}z_2^{-1} \right)^2 \\
%	&= 1 - \frac{z_0^2z_1z_2}{4} \cdot \left( z_0^{-2}z_1^{-2} + z_0^{-2}z_2^{-2} + z_1^{-2}z_2^{-2}
%		+ 2z_0^{-2}z_1^{-1}z_2^{-1} - 2 z_0^{-1}z_1^{-2}z_2^{-1} - 2z_0^{-1}z_1^{-1}z_2^{-2} \right) \\
	&= 1 - \frac{1}{4} \left( z_1^{-1}z_2 + z_1z_2^{-1} + z_0^2z_1^{-1}z_2^{-1} + 2 - 2z_0z_1^{-1}-2z_0z_2^{-1}\right),
\end{align*}
which finishes the proof.
\end{proof}

The previous lemma covers the case when $y_0<y_1<y_2$. We now leverage it to take care of the other cases as well. 

\begin{proofof}{Lemma~\ref{lem:triangle_prob_y_coordinates}}
Let $y_i >0$ and $z_i = e^{-y_i/2}$, $i=0,1,2$. Lemma~\ref{lem:ordered} gives the expression for $P(y_0(z_0),y_1(z_1),y_2(z_2))$ in the case $y_0<y_1<y_2$, or equivalently $z_0>z_1>z_2$, i.e. the first two lines in the claim of Lemma~\ref{lem:triangle_prob_y_coordinates}. To analyze the other cases we shall express $P(y_1,y_0,y_2)$ and $P(y_1,y_2,y_0)$ in terms of $P(y_0,y_1,y_2)$ and $z_i$. For this we note that we can view $P(y_0,y_1,y_2)$ as a 2-fold integral of the indicator function
\[ 
	h(x_0, x_1, x_2) := \ind{ |x_0 - x_1| < e^{(y_0+y_1)/2}, |x_0 - x_2| < e^{(y_0+y_2)/2}, |x_1-x_2| < e^{(y_1+y_2)/2}}, 
\]
where $x_0$ was set to zero, without loss of generality, and the other two $x_i$ are uniform random variables on $[-e^{(y_0+y_i)/2}, e^{(y_0+y_i)/2}]$. When we consider the probability $P(y_1,y_0,y_2)$, this is the 2-fold integral of $h(x_0,0,x_2)$ so that
\begin{align*}
	P(y_1,y_0,y_2) &= \frac{1}{2e^{(y_1+y_0)/2}} \cdot \frac{1}{2e^{(y_1+y_2)/2}} 
		\iint_{\R} h(x_0,0,x_2) \dd x_0 \dd x_2\\
	&= \frac{e^{y_0/2}}{e^{y_1/2}} \frac{1}{2e^{(y_0+y_1)/2}} \frac{1}{2e^{(y_0+y_2)/2}} 
		\iint_{\R} h(0,x_1,x_2) \dd x_1 \dd x_2\\
	&= \frac{e^{y_0/2}}{e^{y_1/2}} P(y_0,y_1,y_2) = \frac{z_1}{z_0}  P(y_0,y_1,y_2).
\end{align*}
Finally we note that $h(x_0,0,x_2) = h(x_2,0,x_0)$ from which we conclude that
\begin{equation}\label{eq:symmetry_relation_triangle_prob}
	P(y_0, y_1, y_2) = \left(z_0/z_1\right) P(y_1,y_0,y_2) = \left(z_0/z_1\right) P(y_1,y_2,y_0).
\end{equation}

To complete the proof for the other cases we note that since $P(y_0,y_1,y_2)$ is symmetric in $y_1$ and $y_2$, we can assume, without loss of generality, that $y_1 < y_2$. Then, there are two more orderings of $y_0, y_1, y_2$, namely $y_1< y_0< y_2$ and $y_1<y_2<y_0$, which can be summarized as $y_1 < \min (y_0,y_2)$, or equivalently $z_1 > \max(z_0,z_2)$. For $y_1 < y_0 < y_2$ and $y_1 < y_2<y_0$ we can apply Lemma~\ref{lem:ordered} to obtain $P(y_1,y_0,y_2) = P(y_1,y_2,y_0)$ which happen to agree due to the symmetry in the last two arguments of the expression found in Lemma~\ref{lem:ordered}. The expression for $P(y_0,y_1,y_2)$ then follows from~\eqref{eq:symmetry_relation_triangle_prob}.
\end{proofof}

\subsubsection{Integrating over \texorpdfstring{$y_1, y_2$}{y1, y2}}

Now that we have established the expression for $P(y_0,y_1,y_2)$ we can proceed to compute $P(y_0)$ by integrating over $y_1, y_2$.
We however start with the following observation.

\begin{lemma}\label{lem:continuity_Delta_function}
The function $\alpha \mapsto P_\alpha(y_0)$ is continuous for all $\alpha > \frac{1}{2}$.
\end{lemma}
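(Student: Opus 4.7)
The plan is to apply dominated convergence to the explicit integral representation
\[
P_\alpha(y_0) = (\alpha-\tfrac12)^2 \int_0^\infty \int_0^\infty P(y_0,y_1,y_2) \, e^{-(\alpha-\tfrac12)(y_1+y_2)} \, dy_1 \, dy_2,
\]
from \eqref{eq:delta_P}, noting that the function $P(y_0,y_1,y_2)$ appearing inside the integral does not depend on $\alpha$ and is bounded (it is a probability, hence takes values in $[0,1]$).

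Fix $\alpha_0 > \tfrac12$ and choose $\varepsilon > 0$ small enough that $\alpha_0 - \varepsilon > \tfrac12$. For every $\alpha$ in the compact interval $I := [\alpha_0-\varepsilon,\alpha_0+\varepsilon]$ we have the pointwise bound
\[
\bigl| P(y_0,y_1,y_2) e^{-(\alpha-\tfrac12)(y_1+y_2)} \bigr| \le e^{-(\alpha_0-\varepsilon-\tfrac12)(y_1+y_2)},
\]
and the right-hand side is integrable on $(0,\infty)^2$ since its exponent is strictly negative. The integrand is continuous in $\alpha$ for every fixed $(y_1,y_2)$, so by dominated convergence the double integral is a continuous function of $\alpha$ on $I$. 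Multiplying by the continuous prefactor $(\alpha-\tfrac12)^2$ shows that $\alpha \mapsto P_\alpha(y_0)$ is continuous at $\alpha_0$. Since $\alpha_0 > \tfrac12$ was arbitrary, the claim follows.

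I do not expect any real obstacle: the only thing to check is that the integrability of the dominating function is uniform for $\alpha$ in a neighbourhood of $\alpha_0$, which is precisely guaranteed by choosing the neighbourhood bounded away from $\tfrac12$. The lemma is essentially a soft statement whose role is to justify later taking $\alpha \to 1$ limits in closed-form expressions; the work has already been done in writing $P(y_0)$ as the integral \eqref{eq:delta_P}.
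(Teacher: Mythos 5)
Your proof is correct and takes essentially the same approach as the paper: both apply dominated convergence to the integral representation \eqref{eq:delta_P}, using that $P(y_0,y_1,y_2)\in[0,1]$ and that one can find a dominating exponential uniformly for $\alpha$ in a compact interval bounded away from $\tfrac12$. The only cosmetic difference is that you pull the continuous prefactor $(\alpha-\tfrac12)^2$ outside the integral, whereas the paper absorbs it into the dominated integrand by bounding it over the neighbourhood; both variants are valid.
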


\begin{proof}
This follows from the theorem of dominated convergence:
Let $\alpha > \frac{1}{2}$ and $(\alpha_n)_{n\in \mathbb{N}}$ a sequence of real numbers converging to $\alpha$, so we can 
assume $|\alpha_n - \alpha| < \epsilon := \frac{\alpha-1/2}{2}$. 
This means that $-\epsilon < \alpha_n - \alpha < \epsilon$, i.e. $\frac{\alpha-1/2}{2} < \alpha_n - 1/2 < \frac{3\alpha-3/2}{2}$. Define 

$$f_n(y_1,y_2) = P(y_0,y_1,y_2) (\alpha_n - 1/2)^2 e^{-(\alpha_n-1/2)(y_1+y_2)}.$$ 

As the function $x \mapsto x^2$ is increasing in $x$ for $x>0$ and the function $x \mapsto e^{-(y_1+y_2)x}$ is decreasing 
in $x$ and $P(y_0,y_1,y_2) \in [0,1]$, it holds that 

$$|f_n(y_1,y_2)| \leq \left(\frac{3\alpha-3/2}{2}\right)^2e^{-(y_1+y_2)\frac{\alpha-1/2}{2}}$$

which is integrable over $\R_{\geq 0} \times \R_{\geq 0}$ (with integral equalling $(6\alpha-3)^2/(2\alpha-1)^2$). 
Application of the theorem of dominated convergence yields that 
$P_{\alpha_n}(y_0) \rightarrow P_\alpha(y_0)$ which gives the claim as the 
sequence $(\alpha_n)_n$ was arbitrary.
\end{proof}

Due to this lemma we can first assume $\alpha \notin \{ \frac{3}{4},1 \}$, compute $P(y_0)$ and then obtain the values of $P(y_0)$ at 
the remaining two points by taking the corresponding limit in $\alpha$. 
This strategy is executed below. 
It involves the computation of several integrals which are involved and will take up a few pages. 
The proof is structured using headers, to aid the reader. 

Note that when writing $P(y_0)$ as an integral, see equation~\eqref{eq:delta_P}, by symmetry in the integration 
variables $y_1$ and $y_2$, we can assume that $y_1<y_2$ in which case either $y_0$ or $y_1$ is the smallest height. 
This gives half the value of $P(y_0)$ and hence
\[ 
	P(y_0) = 2(I_1(y_0) +I_2(y_0)), 
\] 
where $I_1$ and $I_2$ are given by:
\begin{align*}
	I_1(y_0) &:= \int_{0<y_0<y_1<y_2} P(y_0,y_1,y_2) \cdot (\alpha-1/2)^2 e^{-(\alpha-1/2)(y_1+y_2)}  \dd y_2 \dd y_1 \\ 
	I_2(y_0) &:= \int_{0<y_1<\min(y_0,y_2)} P(y_0,y_1,y_2) \cdot (\alpha-1/2)^2 e^{-(\alpha-1/2)(y_1+y_2)} \dd y_2 \dd y_1 \\ 
\end{align*}

We proceed with computing these two integrals, each of which is split into two parts.
The final expressions of those four integrals can be found 
in~\eqref{eq:Delta_P_computation_I11}, \eqref{eq:Delta_P_computation_I12}, \eqref{eq:Delta_P_computation_I21} 
and~\eqref{eq:Delta_P_computation_I22}.

\paragraph{Computing $\bm{I_1(y_0)}$}

Applying the change of variables $z_i := e^{-y_i/2}$, $i=1,2$, and Lemma~\ref{lem:triangle_prob_y_coordinates} gives 
\begin{align*}
	I_1(y_0) &=	4 (\alpha-1/2)^2 \cdot \int_{z_0>z_1>z_2>0} P(y_0,y_1(z),y_2(z)) z_1^{2\alpha-2} z_2^{2\alpha-2} 
		\dd z_2\dd z_1 \\
	&= 4 (\alpha-1/2)^2 \cdot \left( \int_{z_0>z_1>z_2>0} 1 \cdot z_1^{2\alpha-2} z_2^{2\alpha-2} 
		\dd z_2 \dd z_1 \right. \\
	&\hspace{10pt} \left. - \int_{\substack{{z_0>z_1>z_2>0,}\\{z_0 < z_1+z_2}}} G(z_0,z_1,z_2) \cdot z_1^{2\alpha-2} z_2^{2\alpha-2} 
		\dd z_2 \dd z_1 \right) \\
	&=: 4 (\alpha-1/2)^2 ( I_{11}(y_0) - I_{12}(y_0)). 
\end{align*}

The integral $I_{11}(y_0)$ is easily obtained:
\begin{align*}
	I_{11}(y_0) &= \int_0^{z_0} \int_0^{z_1} z_1^{2\alpha-2} z_2^{2\alpha-2} \dd z_2 \dd z_1
		= \int_0^{z_0} z_1^{2\alpha-2} \left[ \frac{z_2^{2\alpha-1}}{2\alpha-1} \right]_0^{z_1} \dd z_1\\
	&= \frac{1}{2\alpha-1} \cdot \int_0^{z_0} z_1^{4\alpha-3} \dd z_1
		= \frac{1}{2(2\alpha-1)^2} \cdot z_0^{4\alpha-2}. \numberthis \label{eq:Delta_P_computation_I11}
\end{align*}

To deal with $I_{12}$ we note that $G(z_0,z_1,z_2)$ is a linear combination of monomials of the form $z_0^az_1^bz_2^c$ with 
$a,b,c \in \{-1,0,1,2\}$ and $a+b+c=0$. Let us consider the integral $J_{(a,b,c)}(z_0)$ defined by 

\begin{equation}\label{eq:def_Delta_P_computation_int_J_1}
	J_{a,b,c}(z_0) := z_0^a \int_{\substack{{z_0>z_1>z_2>0,}\\{z_0 < z_1+z_2}}} z_1^{b+2\alpha-2} z_2^{c+2\alpha-2} \dd z_2\dd z_1.
\end{equation}
and note that
\begin{equation}\label{eq:Delta_P_computation_I12_with_J}
	I_{1,2}(y_0) = \frac{1}{4} (J_{0,-1,1}(z_0)+J_{0,1,-1}(z_0)+J_{2,-1,-1}(z_0)+2J_{0,0,0}(z_0)-2J_{1,-1,0}(z_0)-2J_{1,0,-1}(z_0)).
\end{equation}

Next we compute $J_{a,b,c}(z_0)$.
\begin{align*}
	J_{a,b,c}(z_0) 
	&= z_0^a \int_{z_0/2}^{z_0}\int_{z_0-z_1}^{z_1} z_1^{b+2\alpha-2} z_2^{c+2\alpha-2} \dd z_2 \dd z_1
		= z_0^a \int_{z_0/2}^{z_0} z_1^{b+2\alpha-2} \left[ \frac{ z_2^{c+2\alpha-1} }{ c+2\alpha-1 } \right]_{z_0-z_1}^{z_1} \dd z_1\\
	&= \frac{z_0^a}{c+2\alpha-1} \cdot \left( \int_{z_0/2}^{z_0} z_1^{b+c+4\alpha-3} \dd z_1
	   - \int_{z_0/2}^{z_0} z_1^{b+2\alpha-2} (z_0-z_1)^{c+2\alpha-1} \dd z_1 \right) \\
	&= \frac{z_0^{a+b+c+4\alpha-2}(1-(1/2)^{b+c+4\alpha-2})}{(c+2\alpha-1)(b+c+4\alpha-2)} \\
	&\hspace{10pt}- \frac{z_0^{a+b+c+4\alpha-3}}{c+2\alpha-1} \int_{z_0/2}^{z_0}  \left(z_1/z_0\right)^{b+2\alpha-2} 
	    \left(1-(z_1/z_0)\right)^{c+2\alpha-1} \dd z_1\\
	&= \frac{z_0^{4\alpha-2}(1-(1/2)^{b+c+4\alpha-2})}{(c+2\alpha-1)(b+c+4\alpha-2)} 
		- \frac{z_0^{4\alpha-2}}{c+2\alpha-1}
	   \int_{1/2}^1  u^{b+2\alpha-2}(1-u)^{c+2\alpha-1} \dd u \\
	&= \frac{z_0^{4\alpha-2}(1-(1/2)^{b+c+4\alpha-2})}{(c+2\alpha-1)(b+c+4\alpha-2)} 
		- \frac{z_0^{4\alpha-2}}{c+2\alpha-1} B^-(1/2;c+2\alpha, b+2\alpha-1),
\end{align*}
where we have used the substitution $u := z_1/z_0$ giving $z_0 \dd u = \dd z_1$ in the penultimate line and
$B^-$ denotes the (lower) incomplete beta function. Note that since $c \geq -1$, $-a \in \{0,-1,-2\}$ and by our assumption $\alpha \not \in \{\frac{3}{4},1\}$, the denominators that occur during the integration are all non-zero.

Plugging this back into~\eqref{eq:Delta_P_computation_I12_with_J} gives
\begin{align*}
	I_{1,2}(y_0)
	&= \frac{z_0^{4\alpha-2}(1-(1/2)^{4\alpha-2})}{32\alpha(\alpha-1/2)} 
   		- \frac{z_0^{4\alpha-2}}{8\alpha} B^-(1/2;1+2\alpha, 2\alpha-2)\\
	&\hspace{10pt}+ \frac{z_0^{4\alpha-2}(1-(1/2)^{4\alpha-2})}{32(\alpha-1)(\alpha-1/2)} 
   		-  \frac{z_0^{4\alpha-2}}{4(2\alpha-2)} B^-(1/2;2\alpha-1,2\alpha)\\
	&\hspace{10pt}+ \frac{z_0^{4\alpha-2}(1-(1/2)^{4\alpha-4})}{32(\alpha-1)^2} 
   		- \frac{z_0^{4\alpha-2}}{4(2\alpha-2)} B^-(1/2;-1+2\alpha, 2\alpha-2)\\
	&\hspace{10pt}+ \frac{z_0^{4\alpha-2}(1-(1/2)^{4\alpha-2})}{16(\alpha-1/2)^2} 
   		- \frac{z_0^{4\alpha-2}}{2(2\alpha-1)} B^-(1/2;2\alpha,2\alpha-1)\\
	&\hspace{10pt}- \frac{z_0^{4\alpha-2}(1-(1/2)^{4\alpha-3})}{16(\alpha-1/2)(\alpha-3/4)} 
   		+ \frac{z_0^{4\alpha-2}}{2(2\alpha-1)} B^-(1/2;2\alpha, 2\alpha-2)\\
	&\hspace{10pt}- \frac{z_0^{4\alpha-2}(1-(1/2)^{4\alpha-3})}{16(\alpha-1)(\alpha-3/4)} 
   		+ \frac{z_0^{4\alpha-2}}{2(2\alpha-2)} B^-(1/2;-1+2\alpha, 2\alpha-1) \\
	&=\frac{\left(\frac{3}{64}- \frac{3}{16} 2^{-4\alpha}+ 
   		\alpha (-\frac{41}{128} + \frac{13}{16}  2^{-4\alpha}) 
   		+ \alpha^2 (\frac{5}{8} - \frac{3}{4} 2^{-4\alpha}) - \frac{15}{32}\alpha^3 +\frac{1}{8} \alpha^4\right) 
   		z_0^{4 \alpha-2} }{4(\alpha-1/2)^2 (\alpha-1)^2 (\alpha-3/4) \alpha} \\
	&\hspace{10pt}+ \frac{z_0^{4\alpha-2}}{8 (\alpha-1) \alpha (2\alpha-1)}(4 (\alpha-1) \alpha 
		(B^-(1/2; 2\alpha, 2\alpha-2) - B^-(1/2;2 \alpha,2\alpha-1) ) \\
    &\hspace{10pt}- (2 \alpha-1)\alpha ( B^-(1/2; 2\alpha-1, 2\alpha-2) + 
    	B^-(1/2; 2\alpha-1, 2\alpha) - 
    	2 B^-(1/2;2\alpha -1, 2\alpha-1) ) \\
    &\hspace{10pt}- (2\alpha-1)(\alpha-1) B^-(1/2; 1 + 2\alpha, 2\alpha-2)) \\
	&=\frac{\left(\frac{3}{64}- \frac{3}{16} 2^{-4\alpha} 
		+ \alpha (-\frac{41}{128} + \frac{13}{16}  2^{-4\alpha})  
		+ \alpha^2 (\frac{5}{8} - \frac{3}{4} 2^{-4\alpha}) - \frac{15}{32}\alpha^3 +\frac{1}{8} \alpha^4\right) 
		z_0^{4 \alpha-2} }{4(\alpha-1/2)^2 (\alpha-1)^2 (\alpha-3/4) \alpha} \\
 	&\hspace{10pt}+ \frac{z_0^{4\alpha-2}}{8 (\alpha-1) \alpha (2\alpha-1)}(4 (\alpha-1) \alpha 
 		B^-(1/2; 2\alpha+1, 2\alpha-2) \\
  	&\hspace{10pt}- (2 \alpha-1)\alpha B^-(1/2;2\alpha+1,2\alpha-2) \\
    &\hspace{10pt}- (2\alpha-1)(\alpha-1) B^-(1/2; 2\alpha+1, 2\alpha-2)). \\
\end{align*}
For the last step we use the identities 
\begin{align}
	B^-(z;a,b)-B^-(z;a,b+1) &= B^-(z; a+1,b), \label{eq:Delta_P_computation_beta_id_1}\\
	B^-(z;a,b)+B^-(z;a,b+2)-2B^-(z;a,b+1) &= B^-(z;a+2,b). \label{eq:Delta_P_computation_beta_id_2}
\end{align}
to obtain
\begin{equation}
\begin{aligned}
	I_{1,2}(y_0) &=\frac{\left(\frac{3}{64}- \frac{3}{16} 2^{-4\alpha}
		+ \alpha (-\frac{41}{128} + \frac{13}{16}  2^{-4\alpha})
		+ \alpha^2 (\frac{5}{8} - \frac{3}{4} 2^{-4\alpha}) - \frac{15}{32}\alpha^3 +\frac{1}{8} \alpha^4\right) 
		z_0^{4 \alpha-2} }{4(\alpha-1/2)^2 (\alpha-1)^2 (\alpha-3/4) \alpha} \\
 	&\hspace{10pt}- \frac{z_0^{4\alpha-2}B^-(1/2; 2\alpha+1, 2\alpha-2) }{8 (\alpha-1) \alpha (2\alpha-1)}	\label{eq:Delta_P_computation_I12}
\end{aligned}
\end{equation}

\paragraph{Computing $\bm{I_2(y_0)}$}

We will follow a similar strategy as for $I_1(y_0)$. First, using the change of variables $z_i := e^{-y_i/2}$, $i=1,2$,
we get
\begin{align*}
	I_2(y_0) &= 4 (\alpha-1/2)^2 \cdot \int_{1>z_1>\max(z_2,z_0), \atop z_0,z_2>0} P(y_0,y_1(z_1),y_2(z_2)) z_1^{2\alpha-2} z_2^{2\alpha-2} 
		\dd z_2 \dd z_1 \\
	&= 4 (\alpha-1/2)^2 \cdot \left( \int_{1>z_1>\max(z_0,z_2)>0, \atop z_0,z_2>0}  z_0 z_1^{2\alpha-3} z_2^{2\alpha-2} 
		 \dd z_2 \dd z_1 \right. \\
	&\hspace{10pt}\left. - \int_{{1>z_1>\max(z_0,z_2),}\atop{z_0,z_2>0,\atop{z_1 < z_0+z_2}}} G(z_1,z_0,z_2) z_0 z_1^{2\alpha-3} 	
		z_2^{2\alpha-2} \dd z_2 \dd z_1 \right) \\
	&=: 4 (\alpha-1/2)^2 (I_{21}(y_0) - I_{22}(y_0)). 
\end{align*}

We start with the easy integral:
\begin{align*}
	I_{21}(y_0) &= z_0 \int_{1>z_1>\max(z_2,z_0),\atop z_0,z_2>0} z_1^{2\alpha-3} z_2^{2\alpha-2}  \dd z_2 \dd z_1   
		= z_0 \int_{z_0}^1 \int_{0}^{z_1} z_1^{2\alpha-3} z_2^{2\alpha-2}  \dd z_2 \dd z_1\\
	&= z_0 \int_{z_0}^1 \left[ \frac{z_2^{2\alpha-1}}{2\alpha-1}\right]_{0}^{z_1} z_1^{2\alpha-3} \dd z_1 
		= \frac{z_0}{2\alpha-1}  \int_{z_0}^1 z_1^{4\alpha-4} \dd z_1
		= \frac{z_0 - z_0^{4\alpha-2}}{(4\alpha-3)(2\alpha-1)}. \numberthis \label{eq:Delta_P_computation_I21}
\end{align*}
We note that the denominators above are non-zero as $\alpha > \frac{1}{2}$ and $\alpha \not =\frac{3}{4}$.

To deal with $I_{22}(y_0)$ we consider the function
\[
	J_{a,b,c}^\prime(z_0) := z_0^a \int_{ {1>z_1>\max(z_0,z_2),} \atop{{z_0,z_2>0,}\atop{z_1 < z_0+z_2}} } z_1^{b+2\alpha-2} z_2^{c+2\alpha-2} \dd z_2 \dd z_1
\]
and write
\begin{equation}\label{eq:Delta_P_computation_I22_J}
\begin{aligned}
		I_{2,2}(y_0) &= \frac{1}{4}\left(J_{0,-1,1}^\prime(z_0) + J_{2,-1,-1}^\prime(z_0)
		+ J_{0,1,-1}^\prime(z_0)\right) \\
		&\hspace{10pt}+ \frac{1}{2}\left( J_{1,-1,0}^\prime(z_0) - J_{0,0,0}^\prime(z_0) - J_{1,0,-1}^\prime(z_0)\right).
\end{aligned}
\end{equation}

We now compute $J_{a,b,c}^\prime(z_0)$
\begin{align*}
	J_{a,b,c}^\prime(z_0) 
	&= z_0^a \int_{z_0}^1 \int_{z_1-z_0}^{z_1}  z_1^{b+2\alpha-2} z_2^{c+2\alpha-2} \dd z_2 \dd z_1  \\
 	&= z_0^a \int_{z_0}^1 \frac{1}{c+2\alpha-1}  z_1^{b+2\alpha-2}( z_1^{c+2\alpha-1}-(z_1-z_0)^{c+2\alpha-1})  \dd z_1  \\
 	&=z_0^a \int_{z_0}^1 \frac{1}{c+2\alpha-1} z_1^{b+c+4\alpha-3} {\dd } z_1 -z_0^{a} \int_{z_0}^1 	
 		\frac{1}{c+2\alpha-1}z_1^{b+2\alpha-2}(z_1-z_0)^{c+2\alpha-1}  \dd z_1  \\
 	&= z_0^a  \frac{1}{(c+2\alpha-1)(b+c+4\alpha-2)}(1-z_0^{b+c+4\alpha-2}) \\
 	&\hspace{10pt}- \frac{z_0^a}{c+2\alpha-1}z_0^{b+c+4\alpha-2}B^-(1-z_0;c+2\alpha,-b-c-4\alpha+2) \\
	&= \frac{z_0^a -z_0^{4\alpha-2}}{(c+2\alpha-1)(b+c+4\alpha-2)}  
		- \frac{z_0^{4\alpha -2}B^-(1-z_0;c+2\alpha,-b-c-4\alpha+2)}{c+2\alpha-1}.  
\end{align*}
Here we used that for $x  \in \R,y>-1$ (note that as $c\geq -1$, it holds that $c+2\alpha-1 >-1$):
\begin{align*}
 \int_{z_0}^{1} z_1^x (z_1-z_0)^y \dd z_1 
 &= \int_0^{1-z_0} (s+z_0)^x s^y \dd s \\
 &= z_0^{x+y} \int_0^{1-z_0} \left( (s/z_0) + 1 \right)^x (s/z_0)^y \dd s \\
 &= z_0^{x+y+1} \int_{0}^{1/z_0 -1 } (t+1)^x t^y \dd t \\
 &= z_0^{x+y+1} \int_0^{1-z_0} u^y (1-u)^{-(x+y+2)} \dd u \\
 &= z_0^{x+y+1} B^-(1-z_0; y+1,-x-y-1 ).
\end{align*}
As $c \geq -1$ and $-a \in \{0,-1,-2\}$ and by our assumption $\alpha \not \in \{\frac{3}{4}\}$, the denominators 
that occur during the computations above are non-zero. 

Plugging the expression for $J_{a,b,c}^\prime(z_0)$ back into~\eqref{eq:Delta_P_computation_I22_J} we get,
\begin{align*}
	I_{2,2}(y_0) 
	&= \frac{1 -z_0^{4\alpha-2}}{32\alpha(\alpha-1/2)} 
		-  \frac{z_0^{4\alpha -2}B^-(1-z_0;1+2\alpha,-4\alpha+2)}{8\alpha} \\
	&\hspace{10pt}+ \frac{z_0^2 -z_0^{4\alpha-2}}{32(\alpha-1)^2}  
		- \frac{z_0^{4\alpha -2}B^-(1-z_0;-1+2\alpha,-4\alpha+4)}{8(\alpha-1)} \\
	&\hspace{10pt}+ \frac{1 -z_0^{4\alpha-2}}{32(\alpha-1)(\alpha-1/2)}  
		- \frac{z_0^{4\alpha -2}B^-(1-z_0;-1+2\alpha,-4\alpha+2)}{8(\alpha-1)} \\
	&\hspace{10pt}+ \frac{z_0 -z_0^{4\alpha-2}}{16(\alpha-1/2)(\alpha-3/4)}  
		- \frac{z_0^{4\alpha -2}B^-(1-z_0;2\alpha,-4\alpha+3)}{4(\alpha-1/2)} \\
	&\hspace{10pt}- \frac{1 -z_0^{4\alpha-2}}{16(\alpha-1/2)^2}  
		+ \frac{z_0^{4\alpha -2}B^-(1-z_0;2\alpha,-4\alpha+2)}{4(\alpha-1/2)}\\
	&\hspace{10pt}-\frac{z_0 -z_0^{4\alpha-2}}{16(\alpha-1)(\alpha-3/4)} 
		+ \frac{z_0^{4\alpha -2}B^-(1-z_0;-1+2\alpha,-4\alpha+3)}{4(\alpha-1)}.
\end{align*}
Using some algebra and the identities~\eqref{eq:Delta_P_computation_beta_id_1} and~\eqref{eq:Delta_P_computation_beta_id_2}  
this can be reduced to
\begin{equation}\label{eq:Delta_P_computation_I22}
\begin{aligned}
	I_{2,2}(y_0)
	&=\frac{1}{64\alpha(\alpha-1/2)^2(\alpha-1)} -\frac{(1 - z_0)^{2\alpha}}{64\alpha(\alpha-1/2)^2 (\alpha-1)} 
		- \frac{z_0}{8(\alpha-1/2)(\alpha-1)(4\alpha-3)}\\ 
	&\hspace{10pt}+ \frac{z_0^2}{32(\alpha-1)^2} + \frac{(-6 + 25\alpha - 48\alpha^2 + 44\alpha^3 -16\alpha^4) 
   		z_0^{4\alpha-2}}{512\alpha(\alpha-1/2)^2(\alpha-1)^2(\alpha-3/4)} \\
	&\hspace{10pt}+ \frac{z_0^{4\alpha-2}B^-(1 - z_0; 2\alpha, 3 - 4\alpha)}{32(\alpha-1)(\alpha-1/2)^2}.
\end{aligned}
\end{equation}

\paragraph{Combining the results for $\bm{I_1(y_0)}$ and $\bm{I_2(y_0)}$}

Combining the results for $I_{11}(y_0), I_{12}(y_0), I_{21}(y_0)$ and $I_{22}(y_0)$ we get, after some algebra, an explicit expression 
for $P(y_0)$ as a linear combination of terms of the form $z_0^u$, $(1-z_0)^u$ and $z_0^u B^-(1-z_0;a,b)$: 
\begin{align*}
P(y_0)=& 2(I_1+I_2) = 8(\alpha- 1/2)^2(I_{1,1}-I_{1,2}+I_{2,1}-I_{2,2}) \\
=&8(\alpha-1/2)^2 \left(\frac{1}{2(2\alpha-1)^2 }z_0^{4\alpha -2} \right. \\ 
&\left.-\frac{\left(\frac{3}{64}- \frac{3}{16} 2^{-4\alpha}+ 
   \alpha (-\frac{41}{128} + \frac{13}{16}  2^{-4\alpha})  + 
   \alpha^2 (\frac{5}{8} - \frac{3}{4} 2^{-4\alpha}) - \frac{15}{32}\alpha^3 +\frac{1}{8} a^4   \right) z_0^{4 \alpha-2} }{4(\alpha-1/2)^2 (\alpha-1)^2 (\alpha-3/4) \alpha} \right. \\ 
&\left.+\frac{z_0^{4\alpha-2}B^-(1/2; 2\alpha+1, 2\alpha-2) }{8 (\alpha-1) \alpha (2\alpha-1)} +\frac{z_0 - z_0^{4\alpha -2}}{(4\alpha-3)(2\alpha-1)} \right.\\
&\left.-\frac{1}{64\alpha(\alpha-1/2)^2(\alpha-1)} +\frac{(1 - z_0)^{2\alpha}}{64\alpha(\alpha-1/2)^2 (\alpha-1)} +\frac{z_0}{8(\alpha-1/2)(\alpha-1)(4\alpha-3)} \right.\\
   &\left. - \frac{z_0^2}{32(\alpha-1)^2}
   - \frac{(-6 + 25\alpha - 48\alpha^2 + 44\alpha^3 -16\alpha^4) z_0^{4\alpha-2}}{512\alpha(\alpha-1/2)^2(\alpha-1)^2(\alpha-3/4)} \right. \\
   & \left. - \frac{z_0^{4\alpha-2}B^-(1 - z_0; 2\alpha, 3 - 4\alpha)}{32(\alpha-1)(\alpha-1/2)^2} \right) \\
=&-\frac{1}{8 (\alpha - 1) \alpha} + \frac{(\alpha - 1/2) z_0}{\alpha - 1} - \frac{(\alpha - 1/2)^2 z_0^2}{
	4 (\alpha - 1)^2} \\
&+ 
z_0^{-2 + 4 \alpha} \left(\frac{2^{-4 \alpha-1} (3 \alpha - 1)}{\alpha (\alpha - 1)^2} + \frac{(\alpha - 
	1/2 ) B^-(1/2; 1 + 2 \alpha, 
	-2 + 2 \alpha)}{2(\alpha - 1) \alpha} \right) \\
&+ \frac{(1 - 
	z_0)^{2 \alpha}}{8 (\alpha - 1) \alpha} - \frac{  
	z_0^{4 \alpha - 2} B^-(1 - z_0; 2 \alpha, 3 - 4 \alpha)}{4 (\alpha - 1)}
\end{align*}

Observe that the above expression only contains terms of the form $\alpha - 1$ in the denominator. 
The only expression of the form $\alpha - 3/4$ is in the lower incomplete beta-function $B^-(1 - z_0; 2 \alpha, 3 - 4 \alpha)$ which 
appears twice in the expression for $P(y_0)$.

\paragraph{The case of $\bm{\alpha = 3/4}$}\hfill\\

Note that the factor $\alpha-\frac{3}{4}$ does not occur in any denominator of the previously obtained expression. 
For the lower incomplete beta function, the last argument $3-4\alpha$ is zero for $\alpha=\frac{3}{4}$, however as $z_0 < 1$ the 
integration domain of the lower incomplete beta function does not touch the singularity at $t=1$ 
(note $B^-(1-z_0;2\alpha,3-4\alpha) = \int_0^{1-z_0} t^{2\alpha-1} (1-t)^{2-4\alpha}dt$). 
Therefore, the previous expression holds for this case as well.

%%%\end{proof}

\subsubsection{Computing \texorpdfstring{$\gamma$}{gamma} and \texorpdfstring{$\gamma(k)$}{gamma(k)}\label{ssec:exact_expressions_clustering_P}}

Now that we have an expression for $P(y_0)$ we can compute $\gamma, \gamma(k)$ by integrating 
over $y_0$ and prove that they equal the expressions given in, respectively, Theorem~\ref{thm:clustering_coefficient_hyperbolic} and 
Theorem~\ref{thm:local_clustering_hyperbolic}.

We define
\[
	I^{(k)} := 
	\int_0^{\infty} P(y) \alpha e^{-\alpha y}\rho(y,k) \dd y = 
	\int_0^{\infty} P(y) \alpha e^{-\alpha y} \frac{\left(\xi e^{y/2}\right)^k}{k!} e^{-\xi e^{y/2}} \dd y
\]
and
\[
	J := \int_0^\infty P(y) \alpha e^{-\alpha y} \dd y.
\]

Then, recalling~\eqref{eq:gammaint} and~\eqref{eq:gammakint}, we have
\[   
	\gamma = J - I^{(1)} - I^{(2)} \quad \text{and} \quad
	\gamma(k) = \frac{I^{(k)}}{\pmf(k)}.
\]

We will thus compute $J$ and $I^{(k)}$. It will be helpful to change coordinates to $z := e^{-y/2}$. This yields 
\[ 
	J = 2 \alpha \int_0^1 P(y) z^{2\alpha-1} \dd z, 
\]
and 
\[ 
	I^{(k)} = \frac{2 \alpha \xi^k}{k!} \cdot \int_0^1 P(y(z)) \cdot z^{2\alpha-(k+1)} e^{-\xi z^{-1}} \dd z. 
\]

We shall be assuming $\alpha \not = 1$. 
We observe from Lemma~\ref{lem:Paneq1} that for $\alpha \not =1$, $P(y(z))$ is in fact a linear combination 
of terms of the form $z^u$, $(1-z)^u$ and $z^u B^-(1-z;v,w)$.

To compute $J$ we observe that, by integration by parts, 
\begin{align*}
	\int_0^1 z^{u+2\alpha-1} B^-(1-z;v,w) \dd z 
	&= \left[ \frac{z^{u+2\alpha}}{u+2\alpha} B^-(1-z;v,w) \right]_0^1 
		+ \frac{1}{u+2\alpha} \int_0^1 z^{u+2\alpha+w-1} (1-z)^{v-1} \dd z \\
	&= \frac{1}{u+2\alpha} B(u+w+2\alpha,v)
\end{align*}
where we used that $\frac{\partial}{\partial z} B^-(1-z;v,w) = - z^{w-1} (1-z)^{v-1}$. This takes care of the two integrands involving the beta function in $P(y)$. The other integrals are easily computed and yield the following expression for $J$ (note that it only depends on $\alpha$ but not on $\nu$)
\begin{align*}
J&=\frac{2 + 4 \alpha + 13 \alpha^2 - 34 \alpha^3 - 12\alpha^4 + 
	24 \alpha^5}{16(\alpha-1)^2 \alpha (\alpha+1) (2\alpha+1)} +  \frac{2^{-1 - 
		4 \alpha}}{(\alpha - 1)^2} \\
	&\qquad+ \frac{(\alpha - 1/2) (B(2 \alpha, 2 \alpha + 1) + 
	B^-(1/2; 1 + 2 \alpha, -2 + 2 \alpha))}{2 (\alpha - 1) (3 \alpha - 1)}.
\end{align*}

We proceed to work out $I^{(k)}$. For this we will compute the integrals involving terms in $P(y(z))$ of the form $z^u$, $(1-z)^u$ and $B(1-z,v,w)$ separately. We first point out that for any $0 \le a < b \le 1$
\begin{align*}
	\int_a^b z^{u+2\alpha-(k+1)} e^{-\xi z^{-1}} \dd z
	&= \xi^{u+2\alpha-k} \int_{\xi/b}^{\xi/a} t^{k-1-2\alpha-u} e^{-t} \dd t \\
	&= \xi^{u+2\alpha-k} \left( \Gamma^+( k-2\alpha-u,\xi/b) - \Gamma^+( k-2\alpha-u, \xi/a) \right).
\end{align*}
In particular
\begin{equation}\label{eq:integral_Delta_P_z}
	\int_0^1 z^{u+2\alpha-k-1} e^{-\xi z^{-1}} \dd z = \xi^{u+2\alpha-k} \Gamma^+(k-2\alpha-u,\xi).
\end{equation}
where $\Gamma^+$ denotes the (upper) incomplete gamma function, and we have used the substitution
$t = \xi / z$ which gives $\dd z = -\xi t^{-2} \dd t$. (And of course it is understood that 
$\xi/0 = \infty$). This takes care of the integrals of all terms in $P(y(z))$ of the form $z^{u}$.

Next we will consider the integrals over the terms in $P(y(z))$ of the form $(1-z)^u$. For this we need the 
hypergeometric U-function (also called Tricomi's confluent hypergeometric function), which has the integral representation 
\[
	U(a,b,z) = \frac{1}{\Gamma(a)} \int_0^\infty e^{-zt} t^{a-1} (1+t)^{b-a-1} \dd t.
\] 
which holds for $a,b,z\in \mathbb{C}$, $b \not \in \mathbb{Z}_{\leq 0}$, $Re(a), Re(z) >0$, see~\cite[p.255]{erdelyi1953higher}. 
Applying the change of variables $t=\frac{1-s}{s}$ (i.e. $\dd t = -s^{-2} \dd s$ and $s = \frac{1}{t+1}$) yields
%yields the integral 
%$\int_1^0 e^{-z(-1+\frac{1}{s})} (\frac{1-s}{s})^{a-1} (\frac{1}{s})^{b-a-1} (-s^2)ds = e^z\int_0^1 e^{-z/s} s^{-b}(1-s)^{a-1} ds$, i.e.
\begin{align*}
	U(a,b,z) = \frac{e^z}{\Gamma(a)} \int_0^1 s^{-b} (1-s)^{a-1} e^{-z/s} ds
\end{align*}
Setting $a=2\alpha+1 >0$, $b=-2\alpha+k+1$, $z=\xi>0$, then gives
\begin{equation}\label{eq:integral_Delta_P_1_z}
	\int_0^1 z_0^{2\alpha-k-1} e^{-\xi/z_0} (1-z_0)^{2\alpha} dz_0 = \Gamma(2\alpha+1)e^{-\xi} U(2\alpha+1,1+k-2\alpha,\xi).
\end{equation}

Finally we need to deal with the terms in $P(y(z))$ that involve the incomplete beta function. Let $a, c \in \R$, $\xi, b >0$ 
positive real numbers. Using the integral definition of the incomplete beta function, the change of variables $s=1-t$ gives:
\begin{align*}
	\int_0^1 z^a e^{-\xi/z} B^-(1-z;b,c) \dd z 
	&=\int_0^1 z^a e^{-\xi/z} \int_0^{1-z} t^{b-1} (1-t)^{c-1} \dd t \dd z \\
	&=\int_0^1 z^a e^{-\xi/z} \int_z^1 s^{c-1} (1-s)^{b-1} \dd s \dd z.
\end{align*}
Then changing the order of integration and using the substitution $u = \xi/z$  and recognizing the upper incomplete gamma function yields
\begin{align*}
	&\hspace{-30pt}\int_0^1 z^a e^{-\xi/z} \int_z^1 s^{c-1} (1-s)^{b-1} \dd s \dd z\\
	&=\int_0^1 \int_0^s z^a e^{-\xi/z} \dd z \, s^{c-1} (1-s)^{b-1} \dd s \\
	&=\int_0^1 \int_{\xi/s}^\infty \xi^{a+1} u^{-a-2} e^{-u} \dd u \, s^{c-1} (1-s)^{b-1} \dd s \\
	&= \xi^{a+1} \int_0^1 \Gamma^+(-a-1,\xi/s) s^{c-1} (1-s)^{b-1} \dd s.
		\numberthis \label{eq:integral_gamma_function}
\end{align*}
To compute this last integral we make use of the fact that the incomplete $\Gamma$-function has a representation in terms of 
Meijer's $G$-function (see Lemma~\ref{lem:gamma_meijer_G} in Appendix~\ref{sec:Meijer_G_functions})
\[
	\Gamma^+(-a-1,\xi/s) = \MeijerGnew{2}{0}{1}{2}{1}{-a-1,0}{\frac{\xi}{s}},
\] 
which holds for any $a\in \R$ and $s>0$ (that for a fixed second argument, the upper incomplete gamma function is entire 
in the first argument, see~\cite[pp. 899, 1032ff.]{gradshteyn2015table}). 
We can now evaluate the integral in~\eqref{eq:integral_gamma_function} using several identities for Meijer's $G$-function. 
%formula 8.351.3, p. 
 %(see the wolfram functions website \url{http://functions.wolfram.com/06.06.26.0005.01}, it can also be found in the paper \url{https://arxiv.org/pdf/1507.05571}; , so we get the integral
First, inserting the expression for the incomplete Gamma-function into~\eqref{eq:integral_gamma_function} gives
\begin{align*}
	\xi^{a+1} \int_0^1 s^{c-1} (1-s)^{b-1} \MeijerGnew{2}{0}{1}{2}{1}{-a-1,0}{\frac{\xi}{s}} \dd s.
\end{align*}
Next we apply the inversion identity for Meijer's $G$-function (see \cite[p. 209, 5.3.1.(9))]{erdelyi1953higher}) to get
\begin{align*}
	\xi^{a+1} \int_0^1 s^{c-1} (1-s)^{b-1} \MeijerGnew{0}{2}{2}{1}{2+a,1}{0}{\frac{s}{\xi}} \dd s.
\end{align*}
This expression is actually the Euler transform of Meijer's $G$-function (see \cite[p. 214, 5.5.2.(5)]{erdelyi1953higher}) and (as the conditions $2+1<2(0+2)$ and $|\arg(\xi^{-1})| < \frac{\pi}{2}$ (as $\xi>0$) and $1-c-b<1-c$ (as $b>0$)  are satisfied) it equals
\begin{align*}
	\xi^{a+1} \Gamma(b) \MeijerGnew{0}{3}{3}{2}{1-c,2+a,1}{0,1-c-b}{\xi^{-1}}.
\end{align*}
Using again the inversion identity for Meijer's $G$-function we now get
\begin{align*}
	\xi^{a+1} \Gamma(b) \MeijerGnew{3}{0}{2}{3}{1,b+c}{c,-1-a,0}{\xi}.
\end{align*}
Finally, plugging in $a= 6\alpha-k-3$, $b=2\alpha$, $c=3-4\alpha$ we obtain
\begin{equation}\label{eq:integral_Delta_P_Beta}
	\int_0^1 z^a e^{-\xi/z} B^-(1-z;b,c) \dd z =
	\xi^{6a-k-2} \Gamma(2\alpha) \MeijerGnew{3}{0}{2}{3}{1,3-2\alpha}{3-4\alpha,-6\alpha+k+2,0}{\xi}.
\end{equation}
%(note that this agrees with the result in the old approach, after `bringing the power of $\xi$ into Meijer's $G$-function'.)

Using equation~\eqref{eq:integral_Delta_P_z}, \eqref{eq:integral_Delta_P_1_z} and~\eqref{eq:integral_Delta_P_Beta} we get
\begin{align*}
	I^{(k)}
	&=\frac{\xi^{2\alpha}}{4k!(\alpha-1)} \left( -\Gamma^+(k - 2 \alpha, \xi) 
		- 2\frac{\alpha (\alpha - 1/2)^2 \xi^{2} \Gamma^+(k - 2 \alpha - 2, \xi)}{(\alpha - 1)} \right. \\ 
	&\hspace{15pt}\left.+ 8 \alpha (\alpha - 1/2) \xi \Gamma^+(k - 2 \alpha - 1,\xi) \right.\\ 
	&\hspace{15pt}\left.+ 4\xi^{4\alpha - 2} 
		\Gamma^+(k - 6 \alpha + 2, \xi) \left( 
		\frac{2^{ - 4\alpha}(3 \alpha - 1)}{(\alpha - 1)} + (\alpha - 1/2) B^-(1/2; 1 + 2 \alpha, -2 + 2 \alpha) \right)  \right.\\ 
	&\hspace{15pt}\left.+ \xi^{k-2\alpha} \Gamma(2\alpha+1)e^{-\xi} 	
		U(2\alpha+1,1+k-2\alpha,\xi) \right. \\ 
	&\hspace{15pt}\left.- \xi^{4\alpha-2} 
		\Gamma(2\alpha+1)\MeijerGnew{3}{0}{2}{3}{1,3-2\alpha}{3-4\alpha,-6\alpha+k+2,0}{\xi}  \right)\\
\end{align*}%{4\alpha-k+1,6\alpha-k-1}{6\alpha-k-2,2\alpha-k+1,0}

With the expressions for $J$ and $I^{(k)}$ and using $\Gamma^\ast(q,z) = \Gamma^+(q+1,z) + \Gamma^+(q,z)$ we now obtain, after 
some algebra, the expression for $\gamma$
\begin{align*}
	\gamma &= J-I^{(0)}-I^{(1)} \\
	&=\frac{2 + 4 \alpha + 13 \alpha^2 - 34 \alpha^3 - 12\alpha^4 + 
	24 \alpha^5}{16(\alpha-1)^2 \alpha (\alpha+1) (2\alpha+1)} +  \frac{2^{-1 - 
		4 \alpha}}{(\alpha - 1)^2} \\
	&\hspace{10pt}+ \frac{(\alpha - 1/2) (B(2 \alpha, 2 \alpha + 1) + 
	B^-(1/2; 1 + 2 \alpha, -2 + 2 \alpha))}{2 (\alpha - 1) (3 \alpha - 1)} \\
	&\hspace{10pt}-\frac{\xi^{2\alpha}}{4(\alpha-1)} \left( -\Gamma^+( - 2 \alpha, \xi) - 2\frac{\alpha (\alpha - 1/2)^2 \xi^{2} 
	\Gamma^+(- 2 \alpha - 2, \xi)}{(\alpha - 1)} \right. \\ 
	&\hspace{10pt}\hspace{10pt}\left.+ 8 \alpha (\alpha - 1/2) \xi \Gamma^+( - 2 \alpha - 1,\xi) \right.\\ 
	&\hspace{10pt}\hspace{10pt}\left.+ 4\xi^{4\alpha - 2} \Gamma^+( - 6 \alpha + 2, 
      \xi) \left( \frac{2^{ - 4\alpha}(3 \alpha - 1)}{(\alpha - 1)} + (\alpha - 1/2) B^-(1/2; 1 + 2 \alpha, -2 + 2 \alpha) \right)  \right.\\ 
	&\hspace{10pt}\hspace{10pt}\left.+ \xi^{-2\alpha} \Gamma(2\alpha+1)e^{-\xi} 
		U(2\alpha+1,1-2\alpha,\xi) \right. \\ 
	&\hspace{10pt}\hspace{10pt}\left.- \xi^{4\alpha-2} 
		\Gamma(2\alpha+1)\MeijerGnew{3}{0}{2}{3}{1,3-2\alpha}{3-4\alpha,-6\alpha+2,0}{\xi}  \right) \\
	&\hspace{10pt}-\frac{\xi^{2\alpha}}{4(\alpha-1)} \left( -\Gamma^+(1 - 2 \alpha, \xi) - 
		2\frac{\alpha (\alpha - 1/2)^2 \xi^{2} \Gamma^+( - 2 \alpha - 1, \xi)}{(\alpha - 1)} \right. \\ 
	&\hspace{10pt}\hspace{10pt}\left.+ 8 \alpha (\alpha - 1/2) \xi \Gamma^+(1 - 2 \alpha - 1,\xi) 
		\right.\\ 
	&\hspace{10pt}\hspace{10pt}\left.+ 4\xi^{4\alpha - 2} \Gamma^+(1 - 6 \alpha + 2, 
    	\xi) \left( \frac{2^{ - 4\alpha}(3 \alpha - 1)}{(\alpha - 1)} + (\alpha - 1/2) B^-(1/2; 1 + 2 \alpha, -2 + 2 \alpha) \right)  \right.\\ 
	&\hspace{10pt}\hspace{10pt}\left.+ \xi^{1-2\alpha} \Gamma(2\alpha+1)e^{-\xi} 
		U(2\alpha+1,2-2\alpha,\xi) \right. \\ 
	&\hspace{10pt}\hspace{10pt}\left.- \xi^{4\alpha-2} 
		\Gamma(2\alpha+1)\MeijerGnew{3}{0}{2}{3}{1,3-2\alpha}{3-4\alpha,-6\alpha+3,0}{\xi}  \right)\\
	&=\frac{2 + 4 \alpha + 13 \alpha^2 - 34 \alpha^3 - 12\alpha^4 + 24 \alpha^5}
		{16(\alpha-1)^2 \alpha (\alpha+1) (2\alpha+1)} 
		+  \frac{2^{-1 - 4 \alpha}}{(\alpha - 1)^2} \\
	&\hspace{10pt}+ \frac{(\alpha - 1/2) (B(2 \alpha, 2 \alpha + 1) + B^-(1/2; 1 + 2 \alpha, -2 + 2 \alpha))}
		{2 (\alpha - 1) (3 \alpha - 1)} \\
	&\hspace{10pt}+ \frac{\xi^{2\alpha} \Gamma^\ast( - 2 \alpha, \xi)}{4(\alpha-1)}
		+ \frac{\xi^{2\alpha + 2}\alpha (\alpha - 1/2)^2 \Gamma^\ast(- 2 \alpha - 2, \xi)}
		{2(\alpha-1)^2} \\
	&\hspace{10pt}- \frac{\xi^{2\alpha + 1}\alpha (2\alpha - 1) \Gamma^\ast( - 2 \alpha - 1,\xi)}{(\alpha-1)}
		- \frac{\xi^{6\alpha-2}2^{-4\alpha}(3\alpha - 1)\Gamma^\ast( - 6 \alpha + 2, \xi)}{(\alpha-1)^2}\\
	&\hspace{10pt}-\frac{\xi^{6\alpha - 2}(\alpha - 1/2) B^-(1/2; 1 + 2 \alpha, -2 + 2 \alpha)\Gamma^\ast( - 6 \alpha + 2, \xi)}{(\alpha-1)} \\
	&\hspace{10pt}- \frac{e^{-\xi} \Gamma(2\alpha+1) 
		\left(U(2\alpha+1,1-2\alpha,\xi) + U(2\alpha+1,2-2\alpha,\xi)\right)}{4(\alpha-1)} \\
	&\hspace{10pt}+ \frac{\xi^{6\alpha - 2} \Gamma(2\alpha+1)\left( 	
		\MeijerGnew{3}{0}{2}{3}{1,3-2\alpha}{3-4\alpha,-6\alpha+2,0}{\xi}
		+ \MeijerGnew{3}{0}{2}{3}{1,3-2\alpha}{3-4\alpha,-6\alpha+3,0}{\xi}\right)}{4(\alpha-1)}.
\end{align*}
which is the expression in Theorem~\ref{thm:clustering_coefficient_hyperbolic}.

Similarly, we get
\begin{align*}
	\gamma(k) &= \frac{I^{(k)}}{\pmf(k)} \\
	&=\frac{1}{8\alpha (\alpha-1)\Gamma^+(k-2\alpha,\xi)} \left( -\Gamma^+(k - 2 \alpha, \xi) 
		- 2\frac{\alpha (\alpha - 1/2)^2 \xi^{2} \Gamma^+(k - 2 \alpha - 2, \xi)}{(\alpha - 1)} \right. \\ 
	&\hspace{10pt}\left.+ 8 \alpha (\alpha - 1/2) \xi \Gamma^+(k - 2 \alpha - 1,\xi) \right.\\ 
	&\hspace{10pt}\left.+ 4\xi^{4\alpha - 2} \Gamma^+(k - 6 \alpha + 2, 
      \xi) \left( \frac{2^{ - 4\alpha}(3 \alpha - 1)}{(\alpha - 1)} + (\alpha - 1/2) B^-(1/2; 1 + 2 \alpha, -2 + 2 \alpha) \right)  \right.\\ 
	&\hspace{10pt}\left.+ \xi^{k-2\alpha} \Gamma(2\alpha+1)e^{-\xi} U(2\alpha+1,1+k-2\alpha,\xi) \right. \\ 
	&\hspace{10pt}\left.- \xi^{4\alpha-2} \Gamma(2\alpha+1)\MeijerGnew{3}{0}{2}{3}{1,3-2\alpha}{3-4\alpha,-6\alpha+k+2,0}{\xi}  \right),
\end{align*}
which equals the expression in Theorem~\ref{thm:local_clustering_hyperbolic}. %% when $\alpha \ne 1$.

\subsection{The proof of Proposition~\ref{prop:asymp}\label{ssec:asymptotics_local_clustering_P}}

Instead of extracting the scaling of $\gamma(k)$ from its explicit expression, it turns out to be more convenient to derive it using $P(y)$. Recall that
\[
	\gamma(k) = \frac{\int_0^\infty \rho(y,k) P(y) \alpha e^{-\alpha y} \dd y}{\int_0^\infty \rho(y,k) \alpha e^{-\alpha y} \dd y}.
\]
The asymptotic behavior for the denominator is given by~\eqref{eq:degree_distribution_P_asymptotics}. Hence, the main term to consider is the numerator
\[
	\int_0^{\infty} P(y) \, \rho(y,k) \alpha e^{-\alpha y} \, dy,
\]
and in particular the function $P(y)$. We therefore start with establishing the asymptotic behavior of the latter. First we combine~\eqref{eq:degree_distribution_P_asymptotics} and~\eqref{eq:general_integral_rho_y_k} to obtain the following scaling result
\begin{equation}\label{eq:general_clustering_integral_scaling}
	\frac{\int_0^\infty e^{-\beta y} \rho(y,k) \alpha e^{-\alpha y} \dd y}
	{\int_0^\infty \rho(y,k) \alpha e^{-\alpha y} \dd y}
	\sim \xi^{2\beta} k^{-2\beta}.
\end{equation}

%\TM{ Here we need a properly spelled out proof of the proposition. (Which Pim was preparing I believe.)}

\begin{proposition}[Asymptotic behavior of $P(y)$]\label{prop:asymptotics_P}
Let $\alpha > \frac{1}{2}$, $\nu > 0$ and $c_{\alpha,\nu}$ as defined in Proposition~\ref{prop:asymp} Then, as $y \to \infty$, 
\begin{enumerate}
\item for $\frac{1}{2} < \alpha < \frac{3}{4}$,
\[
	P(y) \sim e^{-\frac{y}{2}(4\alpha - 2)} c_{\alpha,\nu} \xi^{4\alpha - 2},
\]
\item for $\alpha = \frac{3}{4}$,
\[
	P(y) \sim \frac{y}{2} e^{-\frac{y}{2}},
\]
\item and for $\alpha > \frac{3}{4}$,
\[
	P(y) \sim e^{-\frac{y}{2}} \frac{\alpha - \frac{1}{2}}{\alpha - \frac{3}{4}}.
\]
\end{enumerate}
\end{proposition}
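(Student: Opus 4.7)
The plan is to work directly from the explicit formula for $P(y)$ given in Lemma~\ref{lem:Paneq1}, substituting $z := e^{-y/2}$ (so that $z \to 0$ as $y \to \infty$) and extracting the leading-order behaviour. The expression for $P(y)$ is a linear combination of terms of the form $z^u$ (with $u \in \{0,1,2,4\alpha-2\}$), the binomial $(1-z)^{2\alpha}$, and the incomplete beta function $z^{4\alpha-2} B^{-}(1-z;2\alpha,3-4\alpha)$. Since $P(y)$ is a probability, the constant ($z^0$) terms must cancel; one checks quickly that the $-1/(8(\alpha-1)\alpha)$ coming from the explicit term and the $+1/(8(\alpha-1)\alpha)$ coming from $(1-z)^{2\alpha}$ at $z=0$ do indeed cancel. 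The leading behaviour is thus determined by the competition between a $z^1$ contribution (coming from the explicit $(\alpha-1/2)z/(\alpha-1)$ and the Taylor expansion $(1-z)^{2\alpha} = 1 - 2\alpha z + O(z^2)$) and a $z^{4\alpha-2}$ contribution. Whichever exponent is smaller dominates, and this is exactly what produces the trichotomy at $\alpha = 3/4$.

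The key technical input is the asymptotic behaviour of $B^{-}(1-z;2\alpha,3-4\alpha)$ as $z\to 0$. Substituting $u = 1-t$ one has
\[
B^{-}(1-z;2\alpha,3-4\alpha) = \int_z^1 (1-u)^{2\alpha-1} u^{2-4\alpha}\,du,
\]
and the behaviour at the lower endpoint splits into three regimes:
\begin{itemize}
\item If $\alpha > 3/4$, then $2-4\alpha < -1$, the integral diverges, and $B^{-}(1-z;2\alpha,3-4\alpha) \sim z^{3-4\alpha}/(4\alpha-3)$; consequently $z^{4\alpha-2}B^{-}(1-z;2\alpha,3-4\alpha) \sim z/(4\alpha-3)$, contributing another term of order $z$.
\item If $\alpha = 3/4$, then the integrand is $u^{-1}(1-u)^{1/2}$ and the integral behaves as $-\log z + O(1)$, producing a $z\log(1/z) = (y/2)e^{-y/2}$ term.
\item If $1/2<\alpha<3/4$, the integral is convergent and tends to the complete beta function $B(2\alpha,3-4\alpha)$, so the beta-function term only contributes at order $z^{4\alpha-2}$.
\end{itemize}

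Given these inputs, the proof reduces to carrying out the coefficient arithmetic in each regime. For $\alpha>3/4$, collecting all contributions of order $z$ yields the coefficient
\[
\frac{1}{\alpha-1}\left(\alpha - \tfrac{1}{2} - \tfrac{1}{4} - \frac{1}{4(4\alpha-3)}\right) \;=\; \frac{(4\alpha-3)^2 - 1}{4(\alpha-1)(4\alpha-3)} \;=\; \frac{2(2\alpha-1)}{4\alpha-3} \;=\; \frac{\alpha-1/2}{\alpha-3/4},
\]
after the factorisation $(4\alpha-3)^2 - 1 = (4\alpha-4)(4\alpha-2)$, which neatly cancels the $\alpha-1$ in the denominator (removing the apparent singularity at $\alpha=1$, which makes continuity via Lemma~\ref{lem:continuity_Delta_function} unnecessary here). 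For $\alpha=3/4$ one reads off the coefficient $-1/(4(\alpha-1)) = 1$ of the $z\log(1/z)$ term, giving $(y/2)e^{-y/2}$. For $1/2<\alpha<3/4$ the $z^{4\alpha-2}$ coefficient is simply the sum of the three explicit pieces from the formula for $P(y)$ with $B^{-}(1-z;2\alpha,3-4\alpha)$ replaced by $B(2\alpha,3-4\alpha)$, and a short algebraic check matches the constant $c_{\alpha,\nu}\xi^{4\alpha-2}$ in the statement.

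The main obstacle is the $\alpha>3/4$ case: one might expect the $z$ term to be immediate, but the divergence of $B^{-}(1-z;2\alpha,3-4\alpha)$ means that the $z^{4\alpha-2}$ prefactor and the $z^{3-4\alpha}$ divergence conspire to contribute at the same order as the ``obvious'' $z$ terms, and only after combining all three contributions does one see the cancellation $(4\alpha-3)^2 - 1 = 16(\alpha-1)(\alpha-1/2)$ that removes the $(\alpha-1)$ singularity and produces the clean coefficient $(\alpha-1/2)/(\alpha-3/4)$. The scaling of $\gamma(k)$ claimed in Proposition~\ref{prop:asymp} then follows by plugging these three asymptotics into~\eqref{eq:general_clustering_integral_scaling} and simplifying via $\xi = 4\alpha\nu/(\pi(2\alpha-1))$; for instance, in the regime $\alpha>3/4$ one obtains $\gamma(k) \sim \xi \cdot (\alpha-1/2)/(\alpha-3/4) \cdot k^{-1} = 8\alpha\nu/(\pi(4\alpha-3)) \cdot k^{-1}$, matching $c_{\alpha,\nu}$.
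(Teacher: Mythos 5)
Your overall strategy coincides with the paper's: substitute $z=e^{-y/2}$ in the explicit formula of Lemma~\ref{lem:Paneq1}, invoke the trichotomy for $B^-(1-z;2\alpha,3-4\alpha)$ as $z\to 0$ (this is precisely Lemma~\ref{lem:asymptotics_incomplete_beta} in the paper), verify that the constant terms cancel, and collect coefficients of the competing powers of $z$. Your coefficient arithmetic is correct in all three regimes; in particular the verification
\[
\frac{\alpha-\tfrac12}{\alpha-1}-\frac{1}{4(\alpha-1)}-\frac{1}{4(\alpha-1)(4\alpha-3)}
=\frac{(4\alpha-3)^2-1}{4(\alpha-1)(4\alpha-3)}
=\frac{8(\alpha-1)(2\alpha-1)}{4(\alpha-1)(4\alpha-3)}
=\frac{\alpha-\tfrac12}{\alpha-\tfrac34}
\]
is exactly right, and the factorisation is a clean way to see that the limiting coefficient has no singularity at $\alpha=1$.

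There is, however, one genuine gap. You assert that the removable singularity at $\alpha=1$ ``makes continuity via Lemma~\ref{lem:continuity_Delta_function} unnecessary here.'' This does not follow. Lemma~\ref{lem:Paneq1} gives an explicit formula for $P(y)$ \emph{only for $\alpha\neq 1$}; every term has $\alpha-1$ or $(\alpha-1)^2$ in the denominator, and at $\alpha=1$ the definition of $P(y)$ is by the $\alpha\to 1$ limit (which, after evaluation, produces additional $\log$- and $\mathrm{Li}_2$-type terms as in Lemma~\ref{lem:Pais1}). Your computation therefore establishes $e^{y/2}P_\alpha(y)\to(\alpha-\tfrac12)/(\alpha-\tfrac34)$ only for fixed $\alpha>3/4$, $\alpha\neq 1$. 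The fact that the limiting \emph{coefficient} extends continuously to $\alpha=1$ does not by itself transfer the asymptotic to $\alpha=1$: the error terms in the expansion (for instance the $e^{-(4\alpha-3)y/2}Q_\alpha$ contribution, where $Q_\alpha=\Theta((\alpha-1)^{-2})$) blow up as $\alpha\to 1$, so the convergence $e^{y/2}P_\alpha(y)\to(\alpha-\tfrac12)/(\alpha-\tfrac34)$ is not uniform near $\alpha=1$, and an interchange of the limits $y\to\infty$ and $\alpha\to 1$ is not justified by the factorisation alone. The paper closes this by treating $\alpha=1$ directly from the separate formula of Lemma~\ref{lem:Pais1}. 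You either need to do the same, or supply an explicit uniformity estimate in $\alpha$ that you currently do not have.
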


\begin{proof}
We shall deal with each of the three cases for $\alpha$ separately.

\paragraph{\bm{$1/2 < \alpha < 3/4$}}
By Lemma~\ref{lem:Paneq1} we get that
\begin{align*}
	e^{(4\alpha - 2)\frac{y}{2}}P(y) &= \frac{2^{-4 \alpha-1} (3 \alpha - 1)}{\alpha (\alpha - 1)^2} 
		+ \frac{(\alpha - \frac{1}{2} ) B^-(\frac{1}{2}; 1 + 2 \alpha, -2 + 2 \alpha)}{2(\alpha - 1) \alpha}
		- \frac{B^-(1-e^{-\frac{y}{2}}; 2\alpha, 3-4\alpha)}{4(\alpha - 1)} \\
	&\hspace{10pt}+ \frac{e^{(4\alpha - 2)\frac{y}{2}}}{8(\alpha-1)\alpha}\left((1 - e^{-\frac{y}{2}})^{2\alpha} - 1\right)
		+ \frac{\alpha-\frac{1}{2}}{\alpha-1} e^{(4\alpha-3)\frac{y}{2}}
		- \frac{(\alpha - \frac{1}{2})^2}{4(\alpha-1)^2} e^{4(\alpha-1)\frac{y}{2}}.
\end{align*}
Now consider again variable $z = e^{-y/2}$ and not that $z \to 0$ as $y \to \infty$. Because for any $b < 1$, $B^-(1-z: a,b)$ converges to $B(a,b) < \infty$ as $z \to 0$, we get that as $y \to \infty$, the first line is asymptotically equivalent to
\[
	\frac{3 \alpha - 1}{2^{4 \alpha+1} \alpha (\alpha - 1)^2} 
			+ \frac{(\alpha - 1/2 ) B^-(1/2; 1 + 2 \alpha, -2 + 2 \alpha)}{2(\alpha - 1) \alpha}
			- \frac{B(2\alpha, 3-4\alpha)}{4(\alpha - 1)} = c_{\alpha,\nu} \xi^{-(4\alpha - 2)},
\]
with $c_{\alpha,\nu}$ as defined in Proposition~\eqref{prop:asymp}. The proof now follows since for $1/2 < \alpha < 3/4$, the remaining three terms go to zero as $y \to \infty$. For the first of these terms this is true since 
\[
	e^{(4\alpha - 2)\frac{y}{2}}\left((1 - e^{-\frac{y}{2}})^{2\alpha} - 1\right) = \bigO{e^{(4\alpha - 2)\frac{y}{2}}e^{-\frac{y}{2}}}
	= \bigO{e^{(4\alpha - 3) \frac{y}{2}}} = \smallO{1}
\]
as $y \to \infty$ and $1/2 < \alpha < 3/4$.

\paragraph{\bm{$\alpha = 3/4$}}

Similar to the previous case we use Lemma~\ref{lem:Paneq1} to obtain (evaluating the expressions for $\alpha = 3/4$)
\begin{align*}
	\frac{2}{y} e^{\frac{y}{2}}P(y) 
	&= \frac{2}{y}B^-(1-e^{-\frac{y}{2}}; 3/2, 0)
		- \frac{4}{y}\frac{e^{\frac{y}{2}}\left((1 - e^{-\frac{y}{2}})^{3/2} - 1\right)}{3}
		- \frac{1}{y} - \frac{ e^{-\frac{y}{2}}}{4 y} \\
	&\hspace{10pt}+ \frac{2}{y} 
		\left(\frac{5}{3} 
		- \frac{2 B^-(\frac{1}{2}; 5/2, -1/2)}{3}\right)
\end{align*}
First we note that as $y \to \infty$,
\begin{equation}\label{eq:asymptotics_Delta_P_auxiliary}
	e^{\frac{y}{2}}\left(\left(1 - e^{-\frac{y}{2}}\right)^{3/2}-1\right) 
	\sim -\frac{3}{2},
\end{equation}
which implies that
\[
	\lim_{y \to \infty} \frac{4}{y}\frac{e^{\frac{y}{2}}\left((1 - e^{-\frac{y}{2}})^{3/2} - 1\right)}{3} = 0.
\]
We can now conclude that all terms in $\frac{2}{y} e^{\frac{y}{2}}P(y)$ except the first one are $\smallO{1}$ as $y \to \infty$. By writing $z = e^{-\frac{y}{2}}$ we can rewrite the first term as
\[
	\frac{2}{y}B^-(1-e^{-\frac{y}{2}}; 3/2, 0) 
	= -\frac{B^-(1-z; 3/2, 0)}{\log(z)} .
\]
Since $B^-(1-z,3/2,0) \sim - \log(z)$ as $z \to 0$, see Lemma~\ref{lem:asymptotics_incomplete_beta}, it now follows
that for $\alpha = 3/4$,
\[
	\lim_{y \to \infty} \frac{2}{y} B^-(1-e^{-\frac{y}{2}}; 3/2, 0)
	= \lim_{z \to 0} -\frac{1}{\log(z)} B^-(1-z; 3/2, 0) = 1.
\]
We therefore conclude that
\[
	P(y) \sim \frac{y}{2} e^{-\frac{y}{2}},
\]
as $y \to \infty$.

\paragraph{\bm{$\alpha > 3/4$}}
We first deal with the case $\alpha = 1$. Here it follows from Lemma~\ref{lem:Pais1} that
\begin{align*}
	e^{y/2}P(y) &= \frac{9}{4} + \frac{e^{y/2}\log(1-e^{-y/2})}{4} \\
	&\hspace{10pt}-\log(1-e^{-y/2}) + e^{-y/2}\left(\frac{3}{4}\log(1-e^{-y/2}) - \frac{7 + \pi^2}{8} + 	
		\frac{1}{2}\mathrm{Li}_2(e^{-y})\right) \\
	&= 2 + \frac{1}{4}\left(e^{y/2}\log(1-e^{-y/2}) + 1\right)\\
	&\hspace{10pt}-\log(1-e^{-y/2}) + e^{-y/2}\left(\frac{3}{4}\log(1-e^{-y/2}) - \frac{7 + \pi^2}{8} + 	
		\frac{1}{2}\mathrm{Li}_2(e^{-y})\right)
\end{align*}
The last three terms are $\smallO{1}$ as $y \to \infty$, while $2 = (\alpha - 1/2)/(\alpha - 3/4)$ for $\alpha = 1$.

Now we will deal with the case $\alpha > 3/4$ and $\alpha \not = 1$. For simplicity we write
\[
	Q_\alpha := \frac{2^{-4 \alpha-1} (3 \alpha - 1)}{\alpha (\alpha - 1)^2} 
		+ \frac{(\alpha - 1/2 ) B^-(1/2; 1 + 2 \alpha, -2 + 2 \alpha)}{2(\alpha - 1) \alpha}.
\]
Then, by Lemma~\ref{lem:Paneq1} we get
\begin{align*}
	e^{y/2} P(y) &= \frac{\alpha - \frac{1}{2}}{\alpha - 1} 
		+ \frac{e^{\frac{y}{2}}}{8(\alpha - 1)\alpha}\left(\left(1 - e^{-\frac{y}{2}}\right)^{2\alpha}-1\right)\\
	&\hspace{10pt}- e^{-(4\alpha - 3)\frac{y}{2}}\frac{B^-(1 - e^{-\frac{1}{2}y}; 2 \alpha, 3 - 4 \alpha)}{4 (\alpha - 1)}\\
	&\hspace{10pt}+ e^{-(4\alpha - 3)\frac{y}{2}}Q_\alpha + \frac{(\alpha-\frac{1}{2})^2}{4(\alpha-1)^2} e^{-\frac{y}{2}}.
\end{align*}
The first term is constant while the last two terms go to zero as $y \to \infty$. We will therefore focus on the remaining two terms. For the first we have, see~\eqref{eq:asymptotics_Delta_P_auxiliary} 
\[
	\frac{e^{\frac{y}{2}}}{8(\alpha - 1)\alpha}\left(\left(1 - e^{-\frac{y}{2}}\right)^{2\alpha}-1\right) 
	\sim \frac{-2\alpha}{8(\alpha - 1)\alpha} = -\frac{1}{4(\alpha -1)},
\]
as $y \to \infty$. Finally, writing $z = e^{-\frac{y}{2}}$ we get that
\[
	e^{-(4\alpha - 3)\frac{y}{2}} B^-(1 - e^{-\frac{1}{2}y}; 2 \alpha, 3 - 4 \alpha)
	= z^{4\alpha - 3} B^-(1 - z, 2 \alpha, 3 - 4 \alpha).
\]
Therefore it follows, see Lemma~\ref{lem:asymptotics_incomplete_beta}, that
\begin{align*}
	\lim_{y \to \infty} - e^{-(4\alpha - 3)\frac{y}{2}}
		\frac{B^-(1 - e^{-\frac{1}{2}y}; 2 \alpha, 3 - 4 \alpha)}{4 (\alpha - 1)}
	&= \lim_{z \to 0} z^{4\alpha - 3} \frac{B^-(1 - z; 2 \alpha, 3 - 4 \alpha)}{4(\alpha - 1)}\\
	&= \frac{1}{4(\alpha - 1)(4\alpha - 3)}.
\end{align*}
We conclude that as $y \to \infty$
\begin{align*}
	e^{y/2} P(y) 
	&\sim \frac{\alpha - \frac{1}{2}}{\alpha - 1} -\frac{1}{4(\alpha -1)} - \frac{1}{4(\alpha - 1)(4\alpha - 3)}
	= \frac{1 - 3\alpha + 2 \alpha^2}{(\alpha - 1)(\alpha - \frac{3}{4})} 
	= \frac{\alpha - \frac{1}{2}}{\alpha - \frac{3}{4}},
\end{align*}
which finishes the proof.
\end{proof}

With the asymptotic behavior of $P(y)$ we are ready to prove Proposition~\ref{prop:asymp}. Recall that for any $C > 0$ we defined
\[
	y_{k,C}^\pm = 2 \log\left(\frac{k \pm C \sqrt{k \log(k)}}{\xi} \vee 1\right).
\] 
and $\Kcal_C(k) = [y_{k,C}^-, y_{k,C}^+]$. Since $P(y) \le 1$ by the concentration of heights results (Proposition~\ref{prop:concentration_height_general}) we have that, as $k \to \infty$,
\begin{equation}\label{eq:asymptotics_average_clustering_concentration}
	\int_0^\infty P(y) \rho(y,k) \alpha e^{-\alpha y} \dd y
	= (1+\smallO{1}) \int_{y_{k,C}^-}^{y_{k,C}^+} P(y) \rho(y,k) \alpha e^{-\alpha y} \dd y.
\end{equation}

Note that this implies that if $P(y) = h(y)(1 + \smallO{1})$ as $y \to \infty$, then 
\begin{equation}\label{eq:asymptotics_average_clustering_error_term}
	\int_0^\infty P(y) \rho(y,k) \alpha e^{-\alpha y} \dd y
	\sim \int_0^\infty h(y) \rho(y,k) \alpha e^{-\alpha y} \dd y,
\end{equation}
as $y \to \infty$.

\begin{proofof}{Proposition~\ref{prop:asymp}}
We split the proof over the different cases for $\alpha$.

\paragraph{\bm{$1/2 < \alpha < 3/4$}}
By Proposition~\ref{prop:asymptotics_P} and~\eqref{eq:asymptotics_average_clustering_error_term}  it follows that as $k \to \infty$,
\begin{align*}
	\gamma(k) \sim c_{\alpha,\nu} \xi^{-(4\alpha - 2)} \frac{\int_0^{\infty} e^{-(4\alpha - 2)y/2} \rho(y,k) \alpha  e^{-\alpha y} \dd y}
		{\int_0^\infty \rho_{y}(k) \alpha e^{-\alpha y} \dd y} 
	\sim c_{\alpha,\nu} k^{-4\alpha + 2}.
\end{align*}
where the last line is due to~\eqref{eq:general_clustering_integral_scaling} with $\beta = 2\alpha - 1$.

\paragraph{\bm{$\alpha = 3/4$}}
Similar to the previous case Proposition~\ref{prop:asymptotics_P} and~\eqref{eq:asymptotics_average_clustering_error_term} imply that as $k \to \infty$
\[
	\gamma(k) = \frac{\int_0^{\infty} P(y) \rho(y,k) \alpha  e^{-\alpha y} \dd y}
	{\int_0^\infty \rho_{y}(k) \alpha e^{-\alpha y} \dd y}
	\sim  \frac{\int_0^{\infty} \frac{y}{2} e^{-y/2} \rho(y,k) \alpha  e^{-\alpha y} \dd y}
		{\int_0^\infty \rho_{y}(k) \alpha e^{-\alpha y} \dd y}.
\]
However, the final step does not follow immediately from~\eqref{eq:general_clustering_integral_scaling} because of the additional logarithmic term. 
%To prove the result we first show that
%\begin{equation}\label{eq:average_clustering_proof_34_main}
%	\int_{a^-(k)}^{a^+(k)} P(y) \rho(y,k) \alpha e^{-\alpha y} \dd y
%	\sim \int_{a^-(k)}^{a^+(k)} \frac{y}{2} e^{-y/2} \rho(y,k) \alpha e^{-\alpha y} \dd y.
%\end{equation}
To deal with this we observe the following upper and lower bound
\begin{align*}
	\int_{\Kcal_C(k)} \frac{y}{2} e^{-y/2} \rho(y,k) \alpha e^{-\alpha y} \dd y
	&\le \frac{y_{k,C}^+}{2} \int_{\Kcal_C(k)} e^{-y/2} \rho(y,k) \alpha e^{-\alpha y} \dd y
\end{align*}
and similarly, a lower bound
\begin{align*}
	\int_{\Kcal_C(k)} \frac{y}{2} e^{-y/2} \rho(y,k) \alpha e^{-\alpha y} \dd y
	&\ge \frac{y_{k,C}^-}{2} \int_{\Kcal_C(k)} e^{-y/2} \rho(y,k) \alpha e^{-\alpha y} \dd y
\end{align*}
Now observe that as $k \to \infty$,
\[
	\frac{y_{k,C}^\pm}{2} = \log\left(\frac{k \pm \sqrt{k\log(k)}}{\xi}\right) \sim \log(k)
\]
It follows that
\[
	\limsup_{k \to \infty} \frac{\int_{\Kcal_C(k)} \frac{y}{2} e^{-y/2} \rho(y,k) \alpha e^{-\alpha y} \dd y}
	{\log(k) \int_{\Kcal_C(k)} e^{-y/2} \rho(y,k) \alpha e^{-\alpha y} \dd y} \le 1.
\]
and
\[
	\liminf_{k \to \infty} \frac{\int_{\Kcal_C(k)} \frac{y}{2} e^{-y/2} \rho(y,k) \alpha e^{-\alpha y} \dd y}
	{\log(k) \int_{\Kcal_C(k)} e^{-y/2} \rho(y,k) \alpha e^{-\alpha y} \dd y} \ge 1,
\]
which imply
\begin{equation}\label{eq:average_clustering_proof_34_main}
	\int_{\Kcal_C(k)} \frac{y}{2} e^{-y/2} \rho(y,k) \alpha e^{-\alpha y} \dd y
	\sim \log(k) \int_{\Kcal_C(k)} e^{-y/2} \rho(y,k) \alpha e^{-\alpha y} \dd y,
\end{equation}
as $k \to \infty$.

Since~\eqref{eq:general_clustering_integral_scaling} with $\beta = 1/2$ implies
\[
	\frac{\int_0^\infty e^{-y/2} \rho(y,k) \alpha e^{-\alpha y} \dd y}{\int_0^\infty \rho(y,k) \alpha e^{-\alpha y} \dd y}
	\sim \xi k^{-1},
\]
it follows from~\eqref{eq:average_clustering_proof_34_main} that as $k \to \infty$,
\begin{align*}
	\gamma(k) &\sim \frac{\int_0^\infty \frac{y}{2} e^{-y/2} \rho(y,k) \alpha e^{-\alpha y} \dd y}
		{\int_0^\infty \rho(y,k) \alpha e^{-\alpha y} \dd y} \\
	&\sim \log(k) \frac{\int_0^\infty e^{-y/2} \rho(y,k) \alpha e^{-\alpha y} \dd y}
	{\int_0^\infty \rho(y,k) \alpha e^{-\alpha y} \dd y}
	\sim \xi \log(k)k^{-1} = \frac{6\nu}{\pi} \log(k) k^{-1},
\end{align*}
when $\alpha = 3/4$.

\paragraph{\bm{$\alpha > 3/4$}}

Again, by Proposition~\ref{prop:asymptotics_P}, equation~\eqref{eq:asymptotics_average_clustering_error_term} and~\eqref{eq:general_clustering_integral_scaling} with $\beta = 1/2$, it follows that as $k \to \infty$,
%by Proposition~\ref{prop:asymptotics_P} and~\eqref{eq:asymptotics_average_clustering_error_term} it is enough to show that
\[
	\gamma(k) \sim \frac{\alpha - \frac{1}{2}}{\alpha - \frac{3}{4}} \, \frac{\int_0^{\infty} e^{-y/2} \rho(y,k) \alpha  e^{-\alpha y} \dd y} {\int_0^\infty \rho(y,k) \alpha e^{-\alpha y} \dd y} 
	\sim \frac{\alpha - \frac{1}{2}}{\alpha - \frac{3}{4}} \, \xi k^{-1} = \frac{8\alpha \nu}{\pi(4\alpha - 3)}.
\]
\end{proofof}

\section{Proofs of Theorem~\ref{thm:maincc} and Theorem~\ref{thm:mainkfixed}\label{sec:proofs_fixed_k}}

We will first derive Theorem~\ref{thm:mainkfixed}. It will turn out that Theorem~\ref{thm:maincc} has a quick derivation
assuming Theorem~\ref{thm:mainkfixed}.

\subsection{Clustering function for fixed \texorpdfstring{$k$}{k}, proving Theorem~\ref{thm:mainkfixed}}
%=======
%\subsection{Clustering function, proving Theorem~\ref{thm:mainkfixed}}
%In this subsection, we want to show that the clustering function of the KPKVB model $c(k; G(n;\alpha,\nu)) \xrightarrow{\Pee} \gamma(k)$ for any fixed $k$. The key idea is that for the poissonized KPKVB model the coupling with the box model is guaranteed to be exact (in the sense that it also preserves edges) for all vertices up to height $H = \frac{R}{4}$ (which is all we need to show the convergence of the clustering function for constant $k$).
%>>>>>>> Stashed changes

We will now show that the clustering function of the KPKVB model $c(k;G_n) \xrightarrow{\Pee} \gamma(k)$ for a fixed $k$. 
The key ideas are that the coupling of the Poissonized KPKVB model with the box model is guaranteed to be exact 
(in the sense that it also preserves edges) for all vertices up to height $R/4$; and that when computing 
the expected value clustering function $c(k;G)$ in the subgraph of the box model induced by all vertices up to height $R/4$
using the Campbell-Mecke formula we obtain integrals that are very similar to the expressions we found earlier for $\gamma(k)$.
%, the conditional expectation of 
%clustering coefficient of the typical point in the infinite limit model $\Ginf$ conditioned on the typical point having degree $k$.

%=======
%We will encounter the following models: we will start with the KPKVB model $G_n = G(n;\alpha,\nu)$, then the Poissonized KPKVB model $\GPo  = G_{Po}(n;\alpha,\nu)$, moving to the truncated Poissonized KPKVB model $\GPoH$ which is the subgraph of $\GPo$ induced on all vertices with height at most $H$ resp. radial coordinate at least $R-H$, then the coupling will bring us to the truncated box model $\GboxH$ which is the subgraph of the box model $G_{box}$ induced by all vertices with height at most $H$.
%>>>>>>> Stashed changes

We will repeatedly rely on the following observation.
When we are simultaneously considering two graphs $G,H$
we will use the notations $N_G(k), N_H(k)$ to 
denote the number of vertices of degree $k$ in $G$, respectively $H$.

\begin{lemma}\label{lem:ckGckH}
Let $k\geq 2$ and let $G, H$ be graphs such that $G$ is an induced subgraph of $H$, or vice versa.
Then

$$ \left| c(k;G) - c(k;H) \right| \leq \frac{6|E(G)\Delta E(H)|}{N_G(k) - 2|E(G)\Delta E(H)|}, $$

\noindent
provided $N_G(k) > 2|E(G)\Delta E(H)|$.
\end{lemma}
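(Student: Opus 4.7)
I would prove this by reducing, without loss of generality, to the case where $G$ is an induced subgraph of $H$ (the reverse case is symmetric in $G, H$). Writing $E_\Delta := E(G) \triangle E(H)$, $D := |E_\Delta|$, $N := N_G(k)$, $M := N_H(k)$, and $V^{\square} := V_G^k \cap V_H^k$, the crucial preparatory observation is that any vertex $v$ with $\deg_G(v) \neq \deg_H(v)$ must be incident to at least one edge of $E_\Delta$. Since each such edge has at most two endpoints, this yields $|V_G^k \setminus V^{\square}| \leq 2D$ and $|V_H^k \setminus V^{\square}| \leq 2D$, hence $|V^{\square}| \geq N - 2D > 0$ under the hypothesis, $M \geq N - 2D$, and $|M - N| \leq 2D$.

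Next I would apply the elementary identity
\[
c(k;G) - c(k;H) \;=\; \frac{S_G}{N} - \frac{S_H}{M} \;=\; \frac{S_G - S_H}{M} \;+\; \frac{S_G\,(M-N)}{NM},
\]
where $S_G := \sum_{v \in V_G^k} c(v;G)$ and similarly $S_H$. Using $S_G \leq N$ and $|M - N| \leq 2D$, the second term is bounded in absolute value by $2D/M$. To estimate $|S_G - S_H|$, I would split the difference as $\sum_{v \in V^{\square}}(c(v;G) - c(v;H)) + \sum_{V_G^k \setminus V^{\square}} c(v;G) - \sum_{V_H^k \setminus V^{\square}} c(v;H)$; the last two sums each contribute at most $2D$ because $c(v;\cdot) \in [0,1]$ and the vertex sets have size at most $2D$.

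The main technical step is bounding $\sum_{v \in V^{\square}}|c(v;G) - c(v;H)|$. For $v \in V^{\square}$, the inclusions $E(G) \subseteq E(H)$ and $\deg_G(v) = \deg_H(v) = k$ force $N_G(v) = N_H(v)$, so
\[
c(v;H) - c(v;G) \;=\; \binom{k}{2}^{-1}\;\#\{xy \in E_\Delta : x,y \in N_G(v)\},
\]
and after swapping the order of summation this reduces to controlling $\sum_{xy \in E_\Delta}|V^{\square} \cap N_G(x) \cap N_G(y)|$, i.e.\ triangles in $H$ with a unique $E_\Delta$-edge whose opposite vertex has degree exactly $k$ in both graphs. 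This will be the hard part of the proof, since naively each $E_\Delta$-edge could contribute as many common neighbours as there are degree-$k$ vertices; the bound in the lemma requires this contribution to be dominated by a constant multiple of $D$. Once that estimate is in hand, combining it with the boundary bound $4D$, the trivial bound $2D$ from the denominator change, and $M \geq N - 2D$ assembles to produce the numerator $6D$ over denominator $N - 2D$.
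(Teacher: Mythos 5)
Your decomposition is sound, and the bookkeeping assembles to $6D/M \leq 6D/(N-2D)$ once the remaining term is controlled. However, the step you flagged as ``the hard part of the proof'' is not hard at all: it is identically zero, and recognizing this is precisely the point of the \emph{induced} hypothesis. For $v \in V^{\square}$ with $\deg_G(v) = \deg_H(v) = k$, you correctly note that $N_G(v) = N_H(v)$. But then observe that $N_G(v) \subseteq V(G)$, and since $G$ is an \emph{induced} subgraph of $H$, any edge of $H$ with both endpoints in $V(G)$ is automatically an edge of $G$. Hence no edge $xy \in E(H) \setminus E(G)$ can have $x,y \in N_G(v)$, so
\[
\#\{xy \in E_\Delta : x,y \in N_G(v)\} = 0,
\]
and $c_G(v) = c_H(v)$ exactly for every $v \in V^{\square}$. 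There is nothing to estimate. You seem to have silently downgraded ``induced subgraph'' to ``subgraph'' (you only invoke $E(G) \subseteq E(H)$), which is what makes the step look like it needs a new idea; restoring the induced hypothesis closes the gap at no cost.

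Once you substitute this in, your proof is correct and runs along essentially the same lines as the paper's. The paper phrases the decomposition slightly differently — it directly splits the two sums over $V_G^k$ and $V_H^k$ into the parts where $v \notin V^{\square}$ (bounded by $2D$ each via the incident-$E_\Delta$-edge observation you also make) and the common part over $V^{\square}$, which is multiplied by $\bigl(\tfrac{1}{N_G(k)} - \tfrac{1}{N_H(k)}\bigr)$ and bounded using $|N_G(k) - N_H(k)| \leq 2D$. Your $\tfrac{S_G - S_H}{M} + \tfrac{S_G(M-N)}{NM}$ split is algebraically equivalent and leads to the same three contributions, each $\leq 2D/(N-2D)$.
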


\begin{proof} 
We observe that
 
$$ \begin{array}{rcl} 
|c(k; G)-c(k;H)| 
& = & \displaystyle
\left| \sum_{\substack{v\in V(G), \\ \text{deg}_G(v)=k}} \frac{c_G(v)}{N_G(k)} - 
\sum_{\substack{v\in V(H), \\ \text{deg}_{H}(v)=k}} \frac{c_{H}(v)}{N_{H}(k)} \right| \\[8ex]
& = & \displaystyle 
\left| \frac{1}{N_G(k)} \left( \sum_{\substack{v\in V(G)\setminus V(H), \\ \text{deg}_G(v)=k} } c_G(v)
+ \sum_{\substack{v\in V(G)\cap V(H), \\ \text{deg}_G(v)=k, \\ \text{deg}_{H}(v)\neq k}} c_G(v) \right) \right. \\
& & \displaystyle
- \frac{1}{N_{H}(k)} \left(\sum_{\substack{ v\in V(H)\setminus V(G), \\ \text{deg}_{H}(v)=k} } c_{H}(v)
+ \sum_{\substack{v\in V(G)\cap V(H), \\ \text{deg}_G(v)\neq k, \\ \text{deg}_{H}(v)=k}} c_{H}(v) \right) \\
& & \displaystyle  
\left. + \left(\frac{1}{N_G(k)}-\frac{1}{N_{H}(k)} \right) \cdot 
\left( \sum_{\substack{v \in V(G)\cap V(H), \\ \text{deg}_G(v)=\text{deg}_{H}(v)=k}}
c_G(v) \right) \right| \\[8ex]
 & \leq & \displaystyle
 \frac{2|E(G)\Delta E(H)|}{N_G(k)} + \frac{2|E(G)\Delta E(H)|}{N_{H}(k)} + 
 \frac{|N_{H}(k) - N_G(k)|}{N_G(k)\cdot N_{H}(k)} \cdot N_{H}(k) \\[5ex]
 & = & \displaystyle 
 \frac{2|E(G)\Delta E(H)|}{N_G(k)} + \frac{2|E(G)\Delta E(H)|}{N_{H}(k)} + \frac{|N_{H}(k) - N_G(k)|}{N_G(k)} \\[5ex]
 & \leq & \displaystyle
 \frac{2|E(G)\Delta E(H)|}{N_G(k)} + \frac{2|E(G)\Delta E(H)|}{N_{H}(k)} + \frac{2|E(G)\Delta E(H)|}{N_G(k)} \\[5ex]
 & \leq & \displaystyle
\frac{6 |E(G)\Delta E(H)|}{N_G(k)-2|E(G)\Delta E(H)|}.
\end{array} $$

\noindent
(In the second line we use that $\text{deg}_G(v) = \text{deg}_{H}(v)$ in fact 
implies that $c_G(v) = c_{H}(v)$ since one of $G,H$ is an induced subgraph of the other. 
In the third line we use that clustering coefficients $c_G(v), c_H(v)$ take values in $[0,1]$, and if either 
$\text{deg}_{G}(v) \neq \text{deg}_{H}(v)$ or $v \in V(G)\Delta V(H)$ and $v$ has degree $K$ in whichever of $G, H$ it belongs to 
then at least one edge of $E(G)\Delta E(H)$ is incident with $v$, and that every edge in 
$E(G)\Delta E(H)$ only affects the status of its two incident vertices.
For the fifth line we used that $|N_G(k)-N_H(k)| \leq |\{ v\in V(G) : \text{deg}_G(v) = k \} \Delta 
\{ v\in V(H) : \text{deg}_H(v) = k \}| \leq 2|E(G)\Delta E(H)|$ for similar reasons.
In the last line we used that $N_H(k) \geq N_G(k) - |N_G(k)-N_H(k)| \geq N_G(k) - 2|E(G)\Delta E(H)|$.)
\end{proof}

\begin{lemma}\label{lem:NDGnGPo}
%Under the standard coupling of $G_n$ and $\GPo$, we have 
$|E(G_n) \Delta E(\GPo) | = o(n)$ a.a.s. 
\end{lemma}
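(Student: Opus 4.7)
The strategy is to couple $G_n$ and $\GPo$ on a common probability space so that they share as many vertices as possible, then bound the edges in the symmetric difference by those incident to the few points that differ between the two vertex sets. Concretely, I would take an i.i.d.~sequence $u_1, u_2, \ldots$ of points in $\Haa$ with the quasi-uniform density $g$ of~\eqref{eq:def_quasi_uniform_density}, and independently draw $N \isd \Po(n)$; then the vertex sets of $G_n$ and $\GPo$ become $\{u_1,\ldots,u_n\}$ and $\{u_1,\ldots,u_N\}$, with the usual hyperbolic adjacency rule applied to both. Under this coupling the two vertex sets agree on the first $\min(n,N)$ points, so every edge in $E(G_n) \Delta E(\GPo)$ has at least one endpoint among the $D := |N-n|$ points indexed by $\{\min(n,N)+1,\ldots,\max(n,N)\}$. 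Writing $M := \max(n,N)$ and $G_M$ for the KPKVB graph on $u_1,\ldots,u_M$, this gives the pointwise bound
\[
|E(G_n) \Delta E(\GPo)| \le \sum_{i=\min(n,N)+1}^{M} \deg_{G_M}(u_i).
\]

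The next step is to estimate the expectation of the right-hand side. By exchangeability of the $u_i$ and independence of $N$ from the point sequence, conditional on $N=m$ each $u_i$ with $i \le M$ has the same expected degree in $G_M$, namely $(M-1)p_n$, where $p_n := \Pee(u_1 u_2 \in E)$ denotes the probability that two i.i.d.~points from density $g$ are at hyperbolic distance at most $R$. Since the average degree of $G(n;\alpha,\nu)$ equals $(n-1)p_n$ and converges to a positive constant by the result of Gugelmann et al.~\cite{gugelmann2012random}, we have $p_n = \Theta(1/n)$. Using $M \le n + D$, it follows that
\[
\Exp{|E(G_n) \Delta E(\GPo)|} \le p_n \,\Exp{D\,(M-1)} \le p_n \bigl(n\,\Exp{D} + \Exp{D^2}\bigr),
\]
and since $N \isd \Po(n)$ gives $\Exp{D} = O(\sqrt n)$ and $\Exp{D^2} = O(n)$, the whole bound collapses to $O(\sqrt n)$.

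Finally, Markov's inequality applied at level $n^{3/4}$ will give $\Pee(|E(G_n) \Delta E(\GPo)| \ge n^{3/4}) = O(n^{-1/4}) \to 0$, which yields $|E(G_n) \Delta E(\GPo)| = O(n^{3/4}) = o(n)$ a.a.s. The only step requiring real input is the estimate $p_n = \Theta(1/n)$, which is immediate from the average-degree convergence cited in the introduction; everything else is an elementary first-moment computation combined with the standard $\sqrt n$ concentration of a Poisson random variable around its mean, so I do not expect any significant obstacle.
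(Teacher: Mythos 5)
Your proof is correct, and it takes a genuinely different route from the paper's. The paper sandwiches $G_n$ and $\GPo$ between two auxiliary KPKVB graphs $G_- := G((1-\eps)n,\alpha,(1-\eps)\nu)$ and $G_+ := G((1+\eps)n,\alpha,(1+\eps)\nu)$, chosen so that all four graphs live on the same hyperbolic disk of radius $R=2\log(n/\nu)$; it then invokes Gugelmann et al.'s a.a.s.\ statement that $|E(G_\pm)|/n$ converges to an explicit constant, obtaining $|E(G_n)\Delta E(\GPo)| \le |E(G_+)\setminus E(G_-)| = \bigO{\eps n} + o(n)$ a.a.s., and lets $\eps\searrow 0$. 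You instead bound the symmetric difference pointwise by the sum of degrees of the $|N-n|$ points present in only one of the two graphs, then make a direct first-moment computation using $p_n=\Theta(1/n)$ and the $\sqrt n$ Poisson fluctuation of $N$, followed by Markov. Your argument actually yields the quantitatively stronger conclusion $\Exp{|E(G_n)\Delta E(\GPo)|}=\bigO{\sqrt n}$, hence $|E(G_n)\Delta E(\GPo)|=\bigO{n^{3/4}}$ a.a.s., rather than merely $o(n)$; it is also somewhat more elementary in that it avoids the parameter-rescaling trick and the a.a.s.\ edge-count result. The one place where you should be a touch more careful is the step from ``average degree converges in probability'' to $p_n=\Theta(1/n)$: the quantity $(n-1)p_n$ is the \emph{expected} degree of a vertex, not the (random) average degree, and convergence in probability of the latter does not by itself give convergence of the former. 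This is harmless here because the expected degree is computed explicitly in the literature (and is essentially reproduced in this paper's computation of $\pmf(k)$ and its finite mean, or via Lemma~\ref{lem:average_degree_P_n}), but one should cite that computation rather than the in-probability statement.
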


\begin{proof}
Let us fix some $\eps>0$ and write 

$$G_- := G((1-\eps)n, \alpha, (1-\eps)\nu ), \quad G_+ := G((1+\eps)n, \alpha, (1+\eps)\nu ). $$

\noindent
(We ignore rounding issues, i.e.~the issue that $(1-\eps)n, (1+\eps)n$ may not be integers, 
to avoid notational burden. We leave the straightforward details of adapting our arguments below to deal with it to the reader.)

Observe that the vertices of $G_-, G_+, G_n, \GPo$ all live on the same hyperbolic disk, of radius 
$R = 2\ln(n/\nu)$. 
We consider the standard coupling where we have an infinite supply of i.i.d.~points $u_1, u_2, \dots$ chosen according
to the $(\alpha,R)$-quasi uniform distribution, the vertices of $G_n = G(n;\alpha,\nu)$ are $u_1,\dots, u_n$, 
the vertices of $G_-$ are $u_1,\dots, u_{(1-\eps)n}$, the vertices of $G_+$ are $u_1,\dots, u_{(1+\eps)n}$ and 
the vertices of $\GPo$ are $u_1,\dots, u_N$ with $N\isd \Po(n)$ independently of
$u_1, u_2, \dots$.

As $N\isd\Po(n)$, by Chebychev's inequality, we have that $|N-n| < \eps n$ a.a.s.
So in particular, under our coupling we have $G_- \subseteq G_n, \GPo \subseteq G_+$ a.a.s. 
We now point out that, by the results of Gugelmann et al.~on the average degree (\cite{gugelmann2012random}, Theorem 2.3), 
we have 

$$|E(G_-)| = (1-\eps)^2\cdot \frac{4\nu\alpha^2}{\pi(2\alpha-1)^2} \cdot n + o(n), \quad 
|E(G_+)| = (1+\eps)^2 \cdot \frac{4\nu\alpha^2}{\pi(2\alpha-1)^2} \cdot n + o(n) \quad \text{ a.a.s. } $$

%$G_-$ hand $G_+$ both have $4\nu\alpha^2/\pi(2\alpha-1)^2 \cdot n + o(n)$ many edges (a.a.s.).
It follows that 

$$ | E(G_n) \Delta E(\GPo) | \leq |E(G_+)\setminus E(G_-)|
= \eps \cdot \frac{16\nu\alpha^2}{\pi(2\alpha-1)^2}\cdot n + o(n) \text{ a.a.s. }
$$

This holds for every fixed $\eps>0$. Sending $\eps\searrow 0$, concludes the proof of the lemma.
\end{proof}

Next, let us recall that by the results of Gugelmann et al.~on the degree sequence
(\cite{gugelmann2012random}, Theorem 2.2) we have that 

\begin{equation}\label{eq:gugeldegseqaap} 
\frac{N_{G_n}(k)}{n} \xrightarrow[n\to\infty]{\Pee} \pmf(k), 
\end{equation}

\noindent
for every fixed $k$.
In particular $N_{G_n}(k) = \Omega(n)$ a.a.s. Combining this with lemmas~\ref{lem:ckGckH} and~\ref{lem:NDGnGPo} we obtain:

\begin{corollary}\label{cor:pok} 
%Under the standard coupling of $G_n$ and $\GPo$, we have
For every fixed $k\geq 2$, we have

$$ c(k; G_n)-c(k; \GPo) \xrightarrow[n\to\infty]{\Pee} 0, \quad\text{ and }\quad
\frac{N_{\GPo}(k)}{n} \xrightarrow[n\to\infty]{\Pee} \pmf(k). $$

\end{corollary}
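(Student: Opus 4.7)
The plan is to exploit the natural coupling from the proof of Lemma~\ref{lem:NDGnGPo}: take an i.i.d.~sequence $u_1, u_2, \dots$ of points with the $(\alpha, R)$-quasi-uniform distribution and an independent $N \isd \Po(n)$, and set $V(G_n) = \{u_1, \dots, u_n\}$, $V(\GPo) = \{u_1, \dots, u_N\}$. Since $R = 2\log(n/\nu)$ is the same for both graphs, on common vertices the edge set is determined by the same hyperbolic-distance condition $d_\Haa(\cdot,\cdot) \le R$. Hence, depending on the sign of $N - n$, either $G_n$ is an induced subgraph of $\GPo$ or vice versa, so Lemma~\ref{lem:ckGckH} is applicable under this coupling.

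For the first convergence, Lemma~\ref{lem:NDGnGPo} gives $|E(G_n) \Delta E(\GPo)| = o(n)$ a.a.s., while~\eqref{eq:gugeldegseqaap} together with the strict positivity of $\pmf(k)$ (immediate from~\eqref{eq:def_pk}, since $\Gamma^+(k-2\alpha,\xi) > 0$) gives $N_{G_n}(k) = \pmf(k)\, n + o(n)$ a.a.s., which in particular implies $N_{G_n}(k) > 2|E(G_n)\Delta E(\GPo)|$ a.a.s. Plugging these two estimates into Lemma~\ref{lem:ckGckH} yields $|c(k; G_n) - c(k; \GPo)| \le 6\,o(n)/(\pmf(k)\,n - o(n)) = o(1)$ a.a.s., which gives the first claim.

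For the degree-count convergence, each edge in $E(G_n) \Delta E(\GPo)$ changes the degree of at most two common vertices, and each vertex in $V(G_n) \Delta V(\GPo)$ is present in only one of the two graphs; hence
\[
|N_{\GPo}(k) - N_{G_n}(k)| \;\le\; |V(G_n) \Delta V(\GPo)| + 2|E(G_n) \Delta E(\GPo)| \;=\; |n - N| + 2|E(G_n)\Delta E(\GPo)|.
\]
Chebyshev's inequality applied to $N \isd \Po(n)$ gives $|N - n| = O(\sqrt{n\log n}) = o(n)$ a.a.s., and Lemma~\ref{lem:NDGnGPo} handles the other term, so $|N_{\GPo}(k) - N_{G_n}(k)| = o(n)$ a.a.s. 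Combining with~\eqref{eq:gugeldegseqaap} yields $N_{\GPo}(k)/n \xrightarrow[n\to\infty]{\Pee} \pmf(k)$.

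The main obstacle is essentially nil: this is a routine reduction from Lemmas~\ref{lem:ckGckH} and~\ref{lem:NDGnGPo} together with the Gugelmann et al.~degree result. The only mildly non-obvious points are to note that under the explicit coupling one of $G_n, \GPo$ is always an induced subgraph of the other (so that Lemma~\ref{lem:ckGckH} is genuinely applicable, not merely its statement), and that $\pmf(k) > 0$ for every fixed $k \ge 2$ so that the denominator in the clustering-coefficient bound stays bounded below by a positive multiple of $n$.
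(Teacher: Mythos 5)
Your argument matches the paper's proof: combine Lemmas~\ref{lem:ckGckH} and~\ref{lem:NDGnGPo} with~\eqref{eq:gugeldegseqaap} and $\pmf(k)>0$, noting that the coupling makes one of $G_n,\GPo$ an induced subgraph of the other. The only variation is in the second claim, where the paper cites the tighter bound $|N_{G_n}(k)-N_{\GPo}(k)|\le 2|E(G_n)\Delta E(\GPo)|$ (extracted from the proof of Lemma~\ref{lem:ckGckH}, valid because $k\ge 2$), whereas you add a redundant $|V(G_n)\Delta V(\GPo)|=|N-n|$ term and hence need an extra Chebyshev step; both are correct.
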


\noindent
(For the second statement we use that $N_{G_n}(k) - 2|E(G_n)\Delta E(\GPo)| \leq N_{\GPo}(k) \leq N_{G_n}(k) + 2|E(G_n)\Delta E(\GPo)|$.)

In the remainder of this section, we will denote by $\GboxH$ the subgraph of $\Gbox$ induced by all 
vertices $(x,y) \in \Vbox = \Pcal \cap \Rcal$ of height at most $R/4$.

\begin{lemma}
Under the coupling provided by Lemma~\ref{lem:coupling_hyperbolic_poisson}, a.a.s., 
$\GboxH$ is an induced subgraph of $\GPo$ and $|E(\GPo) \setminus E(\GboxH)| = o(n)$.
\end{lemma}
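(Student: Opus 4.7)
The plan is to work under the coupling of Lemma~\ref{lem:coupling_hyperbolic_poisson}, identifying $\VPo$ with $\Vbox$ via $\Psi$ a.a.s., so that $\GPo$ and $\Gbox$ may be viewed as two graphs on (essentially) the same vertex set with $V(\GboxH)=\{p\in\Vbox : y_p\le R/4\}$. The first claim, that $\GboxH$ is an induced subgraph of $\GPo$, is then immediate from part~(2) of Lemma~\ref{lem:coupling_edges}: for every pair $p,p'\in V(\GboxH)$, which both have box-height at most $R/4$, we have $pp'\in E(\Gbox)$ if and only if $\Psi^{-1}(p)\Psi^{-1}(p')\in E(\GPo)$, so that $\GboxH$ coincides with the subgraph of $\GPo$ induced on $\Psi^{-1}[V(\GboxH)]$.

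For the second claim, observe first that every edge in $E(\GPo)\setminus E(\GboxH)$ must have at least one endpoint of box-height strictly greater than $R/4$; otherwise both endpoints lie in $V(\GboxH)$ and the edge is in $\GboxH$ by what we just showed. Consequently
\[
|E(\GPo)\setminus E(\GboxH)| \;\le\; \sum_{p\in\Vbox\,:\,y_p>R/4} \deg_{\GPo}(\Psi^{-1}(p)).
\]
I will bound the expectation of the right-hand side and conclude with Markov's inequality. By the Campbell--Mecke formula~\eqref{eq:def_campbell_mecke} applied to the Poisson point process $\Pcal\cap\Rcal = \Psi[\VPo]$, this expectation equals
\[
\int_{R/4}^{R}\int_{-\pi e^{R/2}/2}^{\pi e^{R/2}/2} \mu(\BallHyp{(x,y)})\,f(x,y)\,\dd x\,\dd y,
\]
since, conditional on a vertex being placed at $p$, its $\GPo$-degree equals the number of other points of $\Pcal$ lying inside $\BallHyp{p}$, which has expectation $\mu(\BallHyp{p})$.

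The key step is to show that $\mu(\BallHyp{p})=O(e^{y/2})$ uniformly in $p=(x,y)\in\Rcal$. I split the integration over the second coordinate $y'$ according to whether $y+y'<R$ or $y+y'\ge R$. In the first regime Lemma~\ref{lem:asymptotics_Omega_hyperbolic} gives $\Phi(y,y')\le (1+K)e^{(y+y')/2}$, so the contribution to $\mu(\BallHyp{p})$ is at most a constant multiple of $\xi e^{y/2}$ by the same calculation that produced $\mu(\BallPo{p})=\xi e^{y/2}$ in Section~\ref{sec:Ginf}. In the second regime the full horizontal strip $\{(x',y')\in\Rcal : y'\ge R-y\}$ lies in $\BallHyp{p}$ and contributes $\nu e^{R/2}(e^{-\alpha(R-y)}-e^{-\alpha R})$, which a direct check using $R=2\log(n/\nu)$ shows is at most $\nu e^{y/2}$ for every $0\le y\le R$. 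Substituting this bound gives
\[
\Ee\bigl[|E(\GPo)\setminus E(\GboxH)|\bigr] \;\le\; C\,e^{R/2}\int_{R/4}^{R} e^{(1/2-\alpha)y}\,\dd y \;=\; O\!\left(e^{R(5/2-\alpha)/4}\right) \;=\; O\!\left(n^{5/4-\alpha/2}\right),
\]
which is $o(n)$ for every $\alpha>1/2$ since $5/4-\alpha/2<1$, and Markov's inequality closes the argument.

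The main obstacle is the uniform bound $\mu(\BallHyp{p})=O(e^{y/2})$: for $y$ close to $R$ the ball wraps around the entire horizontal extent of $\Rcal$ and one must verify carefully that the wrap-around contribution does not spoil the $e^{y/2}$ growth, and Lemma~\ref{lem:asymptotics_Omega_hyperbolic} only controls $\Phi(y,y')$ when both coordinates exceed $\varepsilon R$, so the boundary strip $y'\in[0,\varepsilon R]$ has to be handled separately using the exact expression~\eqref{eq:def_Omega_hyperbolic} for $\Phi$. Both issues are routine but require careful tracking of the exponential factors in $R$.
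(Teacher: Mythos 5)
Your first claim, that $\GboxH$ is an induced subgraph of $\GPo$ a.a.s., is handled exactly as in the paper (via Lemma~\ref{lem:coupling_edges}(2)), and your overall strategy for the edge count --- bound the sum of $\GPo$-degrees over vertices of height above $R/4$ and apply Campbell--Mecke and Markov --- is a legitimate (and slightly more compact) reorganization of what the paper does. However, there is a genuine gap in the crucial step, namely the uniform bound $\mu(\BallHyp{p}) = O(e^{y/2})$ for all $p \in \Rcal$.

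The gap is this: you invoke Lemma~\ref{lem:asymptotics_Omega_hyperbolic} to get $\Phi(y,y') \le (1+K)e^{(y+y')/2}$ in the regime $y+y'<R$, but that lemma applies only for $r, r' \in [\eps R, R]$, i.e.\ $y, y' \le (1-\eps)R$. Since you integrate the outer variable $y$ over the full interval $[R/4, R]$, the lemma gives you nothing on the range $y \in ((1-\eps)R, R]$, where the exact behaviour of $\Phi(y,y')$ is not controlled by any cited estimate. Your final paragraph shows you sensed there was a technicality, but you misdiagnose it: you write that ``the boundary strip $y' \in [0,\eps R]$ has to be handled separately,'' but this range of $y'$ corresponds to $r' \ge (1-\eps)R \ge \eps R$, which is precisely where the lemma \emph{does} apply (and indeed, since you take $y \ge R/4 > \eps R$, the inner variable always satisfies $y' < R - y \le (1-\eps)R$, so the constraint on $y'$ is never the issue). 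The problematic regime is $y$ near $R$, not $y'$ near $0$.

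The fix is straightforward and mirrors the paper: split the outer integral at $y = (1-\eps)R$. For $y \le (1-\eps)R$ your argument goes through and gives a contribution of order $e^{R/2+(1/2-\alpha)R/4} = o(n)$. For $y > (1-\eps)R$ use the crude bound $\mu(\BallHyp{p}) \le \mu(\Rcal) = O(n)$; the total intensity mass of $\Rcal([(1-\eps)R, R])$ is $O(e^{R/2 - \alpha(1-\eps)R}) = O(n^{1 - 2\alpha(1-\eps)})$, so this tail contributes $O(n^{2 - 2\alpha(1-\eps)}) = o(n)$ once $\eps < 1 - \tfrac{1}{2\alpha}$. (The paper takes a slightly different route here, first showing that a.a.s.\ no vertices at all lie above height $(1-\eps)R$, and then splitting the remaining pair count into the ``$y+y'\ge R$'' and ``$y+y'<R$'' pieces; your version does the same work in one Campbell--Mecke integral, at the cost of having to confront the high-$y$ tail directly.) Without the split, the proposal silently applies Lemma~\ref{lem:asymptotics_Omega_hyperbolic} outside its domain, and the mis-stated caveat suggests the domain restriction was misread.
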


\begin{proof}
We remind the reader that under the coupling of Lemma~\ref{lem:coupling_hyperbolic_poisson}, 
we can view $\Gbox$ and $\GPo$ as having the same vertex set $\Vbox = \Pcal \cap \Rcal$; and 
two points $p = (x,y), p' = (x',y') \in \Vbox$ are joined by an edge in $\Gbox$ if 
$|x-x'|_{\pi e^{R/2}} \leq e^{(y+y')/2}$, while $p,p'$ are joined by an edge in $\GPo$ 
if either $y+y'\geq R$ or $y+y'<R$ and $|x-x'|_{\pi e^{R/2}} \leq \Phi(y,y')$ with $\Phi$ as provided by~\eqref{eq:def_Omega_hyperbolic}.
It follows immediately from Lemma~\ref{lem:coupling_edges} that $\GboxH$ is an induced subgraph of $\GPo$, a.a.s., as claimed.

Fix $\eps > 0$, and let $X$ denote the number points of $\Vbox$ with $y$-coordinate $\geq (1-\eps) R$. 
Then $X$ is a Poisson random variable with mean

$$ \begin{array}{rcl} 
\Ee X 
& = & \displaystyle \mu\left( (-\frac{\pi}{2} e^{R/2},\frac{\pi}{2} e^{R/2}] \times [(1-\eps) R,R] \right)
= \int_{-\frac{\pi}{2} e^{R/2}}^{\frac{\pi}{2} e^{R/2}}\int_{(1-\eps)R}^R \left(\frac{\nu\alpha}{\pi}\right) e^{-\alpha y}\;dy\;dx \\[4ex]
& = & O( e^{R/2-(1-\eps)\alpha R} ) = o(1), 
\end{array} $$

\noindent
the last equality holding provided $\eps$ was chosen sufficiently small (using that $\alpha > 1/2$).
We conclude that, a.a.s., there are no vertices of height at least 
$(1-\eps) R$.

Let $Y$ denote the number of pairs of vertices $p = (x,y), p'=(x',y') \in \Vbox$ with $y+y' \geq R$.
Then, by the Campbell-Mecke formula

$$ \begin{array}{rcl} \Ee Y 
& = & \displaystyle
\int_\Rcal\int_\Rcal 1_{\{y+y'\geq R\}} \mu(dp')\mu(dp) \\[4ex]
& = & \displaystyle
\int_{-\frac{\pi}{2}e^{R/2}}^{\frac{\pi}{2}e^{R/2}}\int_0^R
\int_{-\frac{\pi}{2}e^{R/2}}^{\frac{\pi}{2}e^{R/2}}\int_{R-y}^R \left(\frac{\nu\alpha}{\pi}\right)^2 e^{-\alpha(y+y')}dy'dx'dydx \\[4ex]
%= n^2 \int_0^R \int_{R-y}^R \alpha^2 e^{-\alpha(y+y')}dy'dy
& = & O( R e^{(1-\alpha)R} ) = o( n ),
\end{array} $$

\noindent
the last equality holding because $\alpha > 1/2$ and $n = \nu e^{R/2}$.
In particular, by Markov's inequality, $Y = o(n)$ a.a.s.

Now let $Z$ denote the number of pairs of vertices $p = (x,y), p'=(x',y')$
for which $y+y'<R, y < (1-\eps)R, R/4 \leq y' < (1-\eps)R$ and $|x-x'|_{\pi e^{R/2}} < \Phi(y,y')$.
By Lemma~\ref{lem:asymptotics_Omega_hyperbolic} we have that $\Phi(y,y') = O( e^{(y+y')/2} )$ 
for all such $y,y'$.
By Campbell-Mecke we can write

$$ \begin{array}{rcl} 
\Ee Z 
& = & \displaystyle
\int_{-\frac{\pi}{2}e^{R/2}}^{\frac{\pi}{2}e^{R/2}}\int_0^{(1-\eps)R}
\int_{-\frac{\pi}{2}e^{R/2}}^{\frac{\pi}{2}e^{R/2}}\int_{R/4}^{(1-\eps)R} 
1_{\{|x-x'|_{\pi e^{R/2}}<\Phi(y,y'), y+y'<R\}}\left(\frac{\nu\alpha}{\pi}\right)^2 e^{-\alpha(y+y')}dy'dxdydx' \\[4ex]
& = & \displaystyle
O\left( e^{R/2} \int_0^{(1-\eps)R} \int_{R/4}^{(1-\eps)R} e^{(1/2-\alpha)(y+y')} dy'dy \right) \\[4ex]
& = & \displaystyle
O\left( e^{R/2 + (1/2-\alpha) R/4} \right) = o(n).
\end{array} $$ 

\noindent
Hence also $Z = o(n)$ a.a.s.

This concludes the proof as we've now shown that under the stated coupling, a.a.s., $\GboxH$ and $\GPo$ differ by only $o(n)$ edges.
\end{proof}

Analogously to Corollary~\ref{cor:pok} we obtain:

\begin{corollary}\label{cor:GPoGboxH}
For every fixed $k\geq 2$ we have 

$$c(k;\GPo) - c(k;\GboxH) \xrightarrow[n \rightarrow \infty]{\Pee} 0, \quad\text{ and }\quad  
\frac{N_{\GboxH}(k)}{n} \xrightarrow[n\to\infty]{\Pee} \pmf(k). $$

\end{corollary}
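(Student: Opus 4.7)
The plan is to mirror the derivation of Corollary~\ref{cor:pok} almost verbatim, using the preceding lemma in place of Lemma~\ref{lem:NDGnGPo}. The two ingredients needed are already in place: Lemma~\ref{lem:ckGckH} gives a quantitative clustering-function comparison in terms of the symmetric-difference of edge sets, and the lemma immediately preceding this corollary supplies (under the coupling of Lemma~\ref{lem:coupling_hyperbolic_poisson}) that $\GboxH$ is a.a.s.\ an induced subgraph of $\GPo$ with $|E(\GPo)\setminus E(\GboxH)| = o(n)$ a.a.s.

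For the first statement, I would apply Lemma~\ref{lem:ckGckH} with $G = \GboxH$ and $H = \GPo$. To invoke it we must know $N_{\GboxH}(k) > 2|E(\GPo)\Delta E(\GboxH)|$ a.a.s. For this I rely on Corollary~\ref{cor:pok}, which gives $N_{\GPo}(k)/n \xrightarrow{\Pee} \pmf(k)$, together with the observation that $\pmf(k) = 2\alpha\xi^{2\alpha}\Gamma^+(k-2\alpha,\xi)/k! > 0$ for every fixed $k\geq 2$. Hence $N_{\GPo}(k) = \Omega(n)$ a.a.s., and combined with the symmetric-difference bound $|N_{\GPo}(k) - N_{\GboxH}(k)| \leq 2|E(\GPo)\Delta E(\GboxH)|$ (this inequality was already used inside the proof of Lemma~\ref{lem:ckGckH}), we also get $N_{\GboxH}(k) = \Omega(n)$ a.a.s. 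Lemma~\ref{lem:ckGckH} then bounds $|c(k;\GPo) - c(k;\GboxH)|$ by $6\cdot o(n)/(\Omega(n) - o(n)) = o(1)$ a.a.s., which is the desired convergence in probability.

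For the second statement, I would chain the same symmetric-difference bound $|N_{\GPo}(k) - N_{\GboxH}(k)| \leq 2|E(\GPo)\Delta E(\GboxH)| = o(n)$ a.a.s.\ with $N_{\GPo}(k)/n \xrightarrow{\Pee} \pmf(k)$ from Corollary~\ref{cor:pok}, to conclude $N_{\GboxH}(k)/n \xrightarrow{\Pee} \pmf(k)$.

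Honestly, I do not expect any genuine obstacle at this point: all the substantive work (the coupling, the exactness of edges below height $R/4$, and the negligibility of the number of edges involving points of larger height) has already been carried out in the preceding lemma, and the positivity of $\pmf(k)$ for $k\geq 2$ is immediate from the explicit formula. This corollary is really just formal bookkeeping that transports the convergence statements from $\GPo$ to $\GboxH$.
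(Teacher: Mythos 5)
Your proposal correctly reconstructs the argument the paper leaves implicit under the phrase ``Analogously to Corollary~\ref{cor:pok}'': it combines Lemma~\ref{lem:ckGckH} with the preceding (unnumbered) lemma's bound $|E(\GPo)\Delta E(\GboxH)|=o(n)$ a.a.s.\ and the convergence $N_{\GPo}(k)/n\xrightarrow{\Pee}\pmf(k)>0$ supplied by Corollary~\ref{cor:pok}. The only cosmetic point is that taking $G=\GPo$ (rather than your choice $G=\GboxH$) in Lemma~\ref{lem:ckGckH} gives the hypothesis $N_G(k)>2|E(G)\Delta E(H)|$ directly from Corollary~\ref{cor:pok} without the intermediate deduction that $N_{\GboxH}(k)=\Omega(n)$, but your route is equally valid.
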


\begin{lemma}\label{lem:ckGboxH} 
For every fixed $k\geq 2$ we have

 $$ c(k;\GboxH) \xrightarrow[n \rightarrow \infty]{\Pee}  \gamma(k). $$

 \end{lemma}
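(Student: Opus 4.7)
The plan is to write $c(k;\GboxH) = S_n/N_{\GboxH}(k)$, where
\[ S_n := \sum_{v \in V(\GboxH):\, \deg_{\GboxH}(v) = k} c_{\GboxH}(v). \]
Since Corollary~\ref{cor:GPoGboxH} already yields $N_{\GboxH}(k)/n \plim \pmf(k)$ and $\pmf(k) > 0$ for every $k \ge 2$, it will suffice to establish $S_n/n \plim \gamma(k)\pmf(k)$ and then appeal to the continuous mapping theorem.

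First I would compute the mean by the Campbell-Mecke formula:
\[ \Ee[S_n] = \int_{\Rcal \cap \{y \le R/4\}} \Ee\bigl[c_{\GboxH}(p) \ind{\deg_{\GboxH}(p) = k}\bigr]\, \dd \mu(p), \]
where on the right $\GboxH$ is taken to include the inserted point $p$. The key geometric remark is that for $p = (x,y)$ with $y \le R/4$, every $\GboxH$-neighbour of $p$ also sits at height $\le R/4$, so the connection radius $e^{(y+y')/2} \le e^{R/4}$ is negligible compared to the horizontal period $\pi e^{R/2}$. Consequently the cyclic identification is invisible and, after translating $p$ to $(0,y)$, the restriction of $\Pcal$ to $\BallPon{p} \cap \{y' \le R/4\}$ agrees in law with the restriction of $\Pcal$ to $\BallPo{(0,y)} \cap \{y'\le R/4\}$ in the infinite model $\Ginf$. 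For fixed $y$, the $R/4$-truncation is asymptotically immaterial since the discarded $\mu$-mass is $O\bigl(e^{y/2}e^{-(\alpha-1/2)R/4}\bigr)$, and so the inner expectation converges to $P(y)\rho(y,k)$ by the analysis of Section~\ref{sec:42}: conditional on the typical point at height $y$ having exactly $k \ge 2$ neighbours, they are i.i.d.\ from $f \cdot \1_{\BallPo{(0,y)}}/\mu(y)$ and the probability that any two of them are connected equals $P(y)$. Using $\dd \mu = (\nu\alpha/\pi)e^{-\alpha y}\dd x \dd y$, $n = \nu e^{R/2}$, and horizontal invariance, $\Ee[S_n]/n$ reduces to $\int_0^{R/4} \Ee[\cdots]\,\alpha e^{-\alpha y}\dd y$, and dominated convergence (integrand bounded by $1$, dominator $\alpha e^{-\alpha y}$) yields
\[ \frac{\Ee[S_n]}{n} \xrightarrow[n\to\infty]{} \int_0^\infty P(y)\rho(y,k)\,\alpha e^{-\alpha y}\dd y = \gamma(k)\pmf(k), \]
the last equality being~\eqref{eq:gammakint}.

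For concentration I would invoke a Poincar\'e-type bound for functionals of Poisson processes (see e.g.~\cite{last2017lectures}). Writing $D_p S_n := S_n(\Pcal \cup \{p\}) - S_n(\Pcal)$ for the add-one cost, inserting $p$ at height $y$ modifies the summand at $v$ only when $v \in \BallPon{p}$, so $|D_p S_n| \le 1 + |\Pcal \cap \BallPon{p} \cap \{y' \le R/4\}|$ and hence $\Ee[(D_p S_n)^2] = O(1 + \mu(y)^2) = O(1 + e^y)$. The Poincar\'e inequality $\Var(S_n) \le \int \Ee[(D_p S_n)^2]\,\dd\mu(p)$ then gives $\Var(S_n) = O\bigl(n \int_0^{R/4}(1+e^y)\alpha e^{-\alpha y}\dd y\bigr)$, which is $o(n^2)$ for every $\alpha > \tfrac{1}{2}$, so that Chebyshev's inequality finishes the job. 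The main obstacle I anticipate is not the variance bound itself but the uniformity needed for dominated convergence in the first-moment step: one has to verify that the errors from the cyclic boundary identification and from the $R/4$-truncation, in matching $\GboxH$-statistics to their $\Ginf$-counterparts, are uniformly $o(1)$ in $R$ for each fixed $y$ in the effective range. This amounts to making the ball comparison of Lemma~\ref{lem:coupling_edges} quantitative on the relevant scale, but is routine once the bounded-height restriction is in place.
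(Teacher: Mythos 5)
Your argument is correct and the first‐moment computation mirrors the paper's exactly: both apply Campbell--Mecke, reduce by translation invariance to an integral over heights $y\le R/4$, use the fact that the truncated $\Gbox$‐ball around $(0,y)$ coincides with $\BallPo{(0,y)}\cap\{y'\le R/4\}$ (so the inner conditional expectation converges to $P(y)\rho(y,k)$), and finish by dominated convergence. Where you diverge from the paper is the concentration step. The paper computes $\Ee X(X-1)$ by a second Campbell--Mecke application, observing that the integrand factorizes as soon as $|x-x'|_{\pi e^{R/2}} > 2e^{R/4}$ (disjoint determining regions) and bounding the complementary region crudely, yielding $\Ee X(X-1) \le (\Ee X)^2 + O(n^{3/2})$. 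You instead invoke the Poincar\'e inequality for Poisson functionals with the add-one cost $D_pS_n$, and the bound $|D_pS_n|\le 1+|\Pcal\cap\BallPon{p}\cap\{y'\le R/4\}|$ is correct since inserting $p$ only affects $p$ itself and vertices in $\BallPon{p}$ at height $\le R/4$, each by at most $1$. Integrating $\Ee[(D_pS_n)^2]=O(1+e^{y})$ against $\dd\mu$ gives $\Var(S_n)=O\bigl(n^{1+(1-\alpha)_+/2}\operatorname{polylog}(n)\bigr)=o(n^2)$, which is actually a slightly sharper variance bound than the paper's for $\alpha$ near $\tfrac12$, though both suffice. Your approach is arguably cleaner in that it avoids reasoning about where two neighbourhoods overlap, at the cost of importing the Poisson--Poincar\'e inequality as a black box; the paper's factorial-moment computation is more self-contained.

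One minor remark: the ``obstacle'' you flag at the end is not actually an obstacle. Dominated convergence requires only pointwise convergence of the integrand in $y$ plus an integrable dominating function (here the constant $1$ times $\alpha e^{-\alpha y}$); no uniformity in $R$ is needed. The pointwise limits $\mu(\BallPon{(0,y)}\cap\{y'\le R/4\})\to\mu(y)$ and $P_n(y)\to P(y)$ are elementary for each fixed $y$, so the first-moment step closes without further quantitative comparison of balls.
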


 \begin{proof}
 We write $\Rcal_- := \Rcal \cap (\eR\times[0,R/4]) = (-\frac{\pi}{2}e^{R/2}, \frac{\pi}{2} e^{R/2}] \times [0,R/4]$ and set 
 
 $$ 
 X := \sum_{v \in N_{\GboxH}(k)} c(v) = \sum_{v \in \Pcal \cap \Rcal_-} c_{\GboxH}(v) \cdot 1_{\{\text{deg}_{\GboxH}(v)=k\}}.
 $$
 
 \noindent
 By the Campbell-Mecke formula
 
 $$ \Ee X = \int_{\Rcal_-} 
 \Ee_\Pcal\left[ c_{\GboxH^z}(z) \cdot 1_{\{\text{deg}_{\GboxH^z}(z)=k\}}\right] \mu(dz), $$
 
 \noindent
 where $\GboxH^z$ denotes the graph we get by adding $z$ as an additional vertex to $\GboxH$, and adding edges between $z$ and the other 
 vertices as per the connection rule (for $\Gbox$).
 Spelling out the intensity measure $\mu$, plus symmetry considerations, gives
 
 $$ \begin{array}{rcl}
     \Ee X 
     & = & \displaystyle 
     \int_{-\frac{\pi}{2}e^{R/2}}^{\frac{\pi}{2}e^{R/2}} \int_0^{R/4}
     \Ee_\Pcal\left[ c_{\GboxH^{(x,y)}}((x,y)) \cdot 1_{\{\text{deg}_{\GboxH^{(x,y)}}((x,y))=k\}}\right] 
     \left(\frac{\nu\alpha}{\pi}\right) e^{-\alpha y}\;dy\;dx \\[6ex]
     & = & \displaystyle
     n \dot \int_0^{R/4}
     \Ee_\Pcal\left[ c_{\GboxH^{(0,y)}}((0,y)) \cdot 1_{\{\text{deg}_{\GboxH^{(0,y)}}((0,y))=k\}}\right] 
     \alpha e^{-\alpha y}\;d y \\[6ex]
     & = & \displaystyle
     n \cdot \int_0^{R/4} \Ee_\Pcal\left[ c_{\GboxH^{(0,y_0)}}((0,y_0)) {\Big|} \text{deg}_{\GboxH^{(0,y_0)}}((0,y_0))=k \right] \cdot \\[6ex]
     & & \displaystyle \hspace{6ex} 
     %\cdot 
     \Pee\left[ \text{deg}_{\GboxH^{(0,y_0)}}((0,y_0))=k  \right] \alpha e^{-\alpha y_0} \;dy_0.
    \end{array}
$$

\noindent
The random variable $\text{deg}_{\GboxH^{(0,y_0)}}((0,y_0))$ follows a Poisson distribution with mean 

$$ \begin{array}{rcl} 
\Ee\left[ \text{deg}_{\GboxH^{(0,y_0)}}((0,y_0))\right] 
& = & \displaystyle  
\mu(\BallPo{(0,y_0)}\cap\Rcal_-) 
= \int_0^{R/4} \int_{-e^{(y+y_0)/2}}^{e^{(y+y_0)/2}} \left(\frac{\nu\alpha}{\pi}\right) e^{-\alpha y}\;dx\;dy \\
& = & \xi e^{y_0/2} \cdot (1-e^{(1/2-\alpha)R/4}). 
\end{array} $$

\noindent
Hence, for every fixed $y_0$ and $k$, we have that 

$$ \Pee\left[ \text{deg}_{\GboxH^{(0,y_0)}}((0,y_0))=k\right] %= \Pee( \Po( \mu(\BallH{(0,y_0)}) = k ) 
\xrightarrow[n\to\infty]{} 
%\Pee( \Po( \mu(\BallPo{(0,y_0)})) = k ) = 
\Pee( \Po(\xi e^{y_0/2})=k ) = \rho(y_0,k).$$

%\noindent
%with $D$ denoting the (unrestricted) degree of the typical point in the limit model $\Ginf$.

Next we remark that, analogously to the argument given in the beginning of Section~\ref{sec:42}, we have

$$ %\Ee[ c_{\GboxH^{(0,y_0)}}((0,y_0)) | \text{deg}_{\GboxH^{(0,y_0)}}((0,y_0))=k ] 
\Ee\left[ c_{\GboxH^{(0,y_0)}}((0,y_0)) {\Big|} \text{deg}_{\GboxH^{(0,y_0)}}((0,y_0))=k \right] 
= \Pee( w_1 \in \BallPo{w_2} ) =: P_n(y_0), $$

\noindent
with $w_1 = (x_1, y_1), w_2 = (x_2, y_2)$ chosen independently from $\BallPo{(0,y_0)} \cap \Rcal_-$ according to the probability 
measure we get by renormalizing $\mu$, i.e.~with pdf $f_\mu \cdot 1_{\BallPo{(0,y_0)} \cap \Rcal_-} / \mu(\BallPo{(0,y_0)}\cap \Rcal_-)$.
By considerations completely analogous to those following Lemma~\ref{lem:Paneq1}, the random variables $y_1, y_2$ both follow
a truncated exponential distribution with parameter $\alpha - 1/2$ truncated at height $R/4$
(i.e.~with density $1_{\{y_i \leq R/4\}} \cdot (\alpha-1/2) e^{(1/2-\alpha)y_i} / (1-e^{(1/2-\alpha)R/4})$) and, given the 
values of $y_0, y_1, y_2$, each $x_i$ is chosen uniformly on the interval $[-e^{(y_0+y_i)/2}, e^{(y_0,y_i)/2}]$.
In particular

$$ P_n(y_0) = 
\left(\frac{\alpha-1/2}{1-e^{(1/2-\alpha)R/4}}\right)^2 \int_0^{R/4}\int_0^{R/4} P(y_0, y_1, y_2) e^{(1/2-\alpha)(y_1+y_2)}\;dy_1\;dy_2,
$$

\noindent
with $P(.,.,y.)$ as defined in the paragraph following Lemma~\ref{lem:Paneq1}.
(That is, $P(y_0, y_1, y_2)$ is the probability that $|x_1-x_2| < e^{(y_1+y_2)/2}$, where $x_1, x_2$ are independent with $x_i$ uniform 
on the interval $[-e^{(y_0+y_i)/2}, e^{(y_0+y_i)/2}]$). 
It follows that, for any fixed $y_0$, we have

$$ P_n(y_0) \xrightarrow[n\to\infty]{} 
(\alpha-1/2)^2\int_0^\infty\int_0^\infty P(y_0, y_1, y_2) e^{(1/2-\alpha)(y_1+y_2)}\;dy_1\;dy_2 = P(y_0). $$

\noindent
(Applying monotone convergence to justify the convergence of the integral as $n \to \infty$.) 

Since (expected) clustering coefficients and probabilities are between zero and one and $\alpha e^{\alpha y_0}$ is integrable, we can now apply the dominated 
convergence theorem to obtain that 

\begin{equation}\label{eq:EXn} 
\frac{\Ee X}{n} \xrightarrow[n\to\infty]{} \int_0^\infty P(y_0) \rho(y_0,k)\alpha e^{-\alpha y_0}\;dy_0 = \pmf(k) \cdot \gamma(k).  
\end{equation}

\noindent
(Applying~\eqref{eq:gammakint} for the last equality.)

Next, we turn attention our to $X(X-1) = \sum_{u\neq v \in N_{\GboxH}(k)} c(v)c(u)$.
Another application of Campbell-Mecke shows that 

$$
\Ee X(X-1) 
= 
\int_{\Rcal_-}\int_{\Rcal_-} \Ee\left[ c_{\GboxH^{z,z'}}(z)c_{\GboxH^{z,z'}}(z') \cdot 
1_{\{\text{deg}_{\GboxH^{z,z'}}(z)=\text{deg}_{\GboxH^{z,z'}}(z')=k\}}\right]\mu(dz)\mu(dz'),
$$

\noindent
with $\GboxH^{z,z'}$ denoting the graph we get by adding $z, z'$ as additional vertices to $\GboxH$.
Now note that if $z=(x,y)$ and $z'=(x',y')$ satisfy $|x-x'|_{\pi e^{R/2}} > 2 e^{R/4}$ then 
the neighbourhoods of $z,z'$ are determined by the points of the Poisson process $\Pcal$ in disjoint areas
of the plane.
This implies that, provided $|x-x'|_{\pi e^{R/2}} > 2 e^{R/4}$:

\begin{equation}\label{eq:noot1} 
\begin{array}{c} 
\Ee\left[ c_{\GboxH^{z,z'}}(z)c_{\GboxH^{z,z'}}(z') \cdot 
1_{\{\text{deg}_{\GboxH^{z,z'}}(z)=\text{deg}_{\GboxH^{z,z'}}(z')=k\}}\right] \\
= \\
\Ee\left[ c_{\GboxH^{z}}(z) \cdot 
1_{\{\text{deg}_{\GboxH^{z}}(z)=k\}}\right] \cdot 
\Ee\left[ c_{\GboxH^{z'}}(z') \cdot 
1_{\{\text{deg}_{\GboxH^{z'}}(z')=k\}}\right].
\end{array} 
\end{equation}

\noindent
On the other hand, the LHS of~\eqref{eq:noot1} is always between zero and one, also if $|x-x'|_{\pi e^{R/2}} \leq 2 e^{R/4}$. 
We may conclude that 

$$ \begin{array}{rcl} 
\Ee X(X-1) 
& \leq & \displaystyle  
\int_{\Rcal_-}\int_{\Rcal_-} \Ee\left[ c_{\GboxH^{z}}(z) \cdot 
1_{\{\text{deg}_{\GboxH^{z}}(z)=k\}}\right] %\\
%& & \displaystyle 
\cdot 
\Ee\left[ c_{\GboxH^{z'}}(z') \cdot 
1_{\{\text{deg}_{\GboxH^{z'}}(z')=k\}}\right] \mu(dz)\mu(dz') \\[8ex]
& & \displaystyle 
+ \int_{\Rcal_-}\int_{\Rcal_-} 1_{\{|x-x'|\leq 2e^{R/4}\}} \mu(dz)\mu(dz') \\[8ex]
& = & \displaystyle
\left( \int_{\Rcal_-} \Ee\left[ c_{\GboxH^z}(z) \cdot 1_{\{\text{deg}_{\GboxH^z}(z)=k\}}\right] \mu(dz) \right)^2
+ O( e^{3R/4} ) \\[8ex] 
& = & \displaystyle
\left(\Ee X\right)^2 + O( n^{3/2} ).
\end{array} $$

\noindent
Combining this with~\eqref{eq:EXn}, it follows that 
$\Var X = \Ee X(X-1) + \Ee X - \left(\Ee X\right)^2 = o\left( \left(\Ee X\right)^2 \right)$.
 By Chebychev's inequality, we therefore have 
 
 $$ X = n \cdot \gamma(k) \cdot \pmf(k) + o(n) \text{ a.a.s. } $$
 
In combination with Corollary~\ref{cor:GPoGboxH} (second limit) we can conclude that

$$ c(k;\GboxH) = \frac{X}{N_{\GboxH(k)}} \xrightarrow[n\to\infty]{\Pee} \gamma(k), $$

\noindent
as desired.
 \end{proof}

\begin{proofof}{Theorem~\ref{thm:mainkfixed}}
For completeness, we point out that Theorem~\ref{thm:mainkfixed} follows immediately from 
Corollaries~\ref{cor:pok},~\ref{cor:GPoGboxH} and Lemma~\ref{lem:ckGboxH}.
\end{proofof}

\subsection{Overall clustering coefficient, proving Theorem~\ref{thm:maincc}}

% 
% 
% 
% In this subsection, we will infer that the clustering coefficient of the KPKVB random graph 
% $c(G_n) \xrightarrow{\Pee} \gamma$ by using that $c(k;G_n) \xrightarrow{\Pee} \gamma(k)$ and $N_{G_n}(k)/n \xrightarrow{\Pee} \pmf(k)$ 
% for all fixed $k$.

\begin{proofof}{Theorem~\ref{thm:maincc}}
Recall in Section~\ref{sec:Ginf},  we {\em defined} $\pmf(k) := \Pee(D=k), \gamma := \Ee C, \gamma(k) := \Ee(C|D=k)$ with $D$ the degree
and $C$ the clustering coefficient of the ``typical point'' in the infinite limit model $\Ginf$. We can write

$$ \gamma = \Ee C = \sum_{k\geq 2} \Ee\left( C | D=k \right) \Pee( D=k ) = \sum_{k\geq 2} \gamma(k) \cdot \pmf(k). $$

For the KPKVB random graph, or any graph for that matter, we have the similar relation

$$ c(G_n) = \sum_{k\geq 2} c(k;G_n) \cdot (N_{G_n}(k)/n). $$

By Theorem~\ref{thm:mainkfixed} and~\eqref{eq:gugeldegseqaap} we have, for any fixed $k\geq 2$:

\begin{equation}\label{eq:slutskyUB} 
 c(G_n) \geq \sum_{k=2}^K c(k;G_n) \cdot (N_{G_n}(k)/n) \xrightarrow[n\to\infty]{\Pee} \sum_{k=2}^K \gamma(k)\cdot \pmf(k), 
\end{equation}

\noindent
where Slutsky's theorem justifies the convergence in probability.
On the other hand we have 

\begin{equation}\label{eq:slutskyLB} 
c(G_n) \leq \sum_{k=2}^K c(k;G_n) \cdot (N_{G_n}(k)/n) + \sum_{k>K} (N_{G_n}(k)/n)
\xrightarrow[n\to\infty]{\Pee} \sum_{k=2}^K \gamma(k)\cdot \pmf(k) + \sum_{k>K} \pmf(k), 
\end{equation}

\noindent
where the convergence in probability can be justified using Slutsky's theorem together 
with the fact that $\sum_{k=0}^\infty \pmf(k) = 1$ (one convenient way to convince oneself that this is true, is to note that 
$D$, the degree of the typical point, is a.s.~finite). In more detail, 

$$\sum_{k>K} (N_{G_n}(k)/n) 
= 1 - \sum_{k=0}^K (N_{G_n}(k)/n) \xrightarrow[n\to\infty]{\Pee} 1 - \sum_{k=0}^K \pmf(k) = \sum_{k>K} \pmf(k). $$

The result follows from~\eqref{eq:slutskyLB} and~\eqref{eq:slutskyUB}, by sending $K\to\infty$. 
\end{proofof}

\section{Degrees when \texorpdfstring{$k\to\infty$}{k tends to infinity}: proof of Theorem~\ref{thm:degrees_hyperbolic}\label{sec:degrees}}

Since the new contribution of Theorem~\ref{thm:degrees_hyperbolic} concerns the cases where the degree $k_n \to \infty$, we 
will assume that this holds throughout this section.

\subsection{Proof overview}

We start by using the Campbell-Mecke formula to compare the degree distribution in $\GPo$ with that of the typical point in $\Ginf$. 
As we've already seen this equals 
\[
	\pmf(k) = \int_0^\infty \rho(y,k) \alpha e^{-\alpha y} \, dy.
\]
We will relate this to the Poissonized KPKVB model $\GPo$. 
More precisely, let $N_{\Po}(k)$ denote the set of degree $k$ vertices in $\GPo$. 
We then show in Section~\ref{ssec:expected_degrees_GPo} that for any $1 \ll k_n \le n -1$ with 
$k_n = \bigO{n^{\frac{1}{2\alpha + 1}}}$, $\Exp{N_{\Po}(k_n)} = (1+\smallO{1}) n \pmf(k_n)$ and more generally,

\begin{equation}\label{eq:def_factorial_moments_GPo}
	\Exp{\binom{N_{\Po}(k_n)}{r}} = (1+\smallO{1}) \frac{\Exp{N_{\Po}(k_n)}^r}{r!},
\end{equation}

\noindent
for any integer $r \ge 1$ in Section~\ref{ssec:factorial_moments_GPo}. 
The latter result requires us to analyze the joint degree distribution in $\GPo$, which we do in Section~\ref{ssec:joint_degrees_GPo}. 
The above result in particular implies concentration of $N_{\Po}(k_n)$ from which the result on the degree distribution in $\GPo$ follows for $k_n = \smallO{n^{\frac{1}{2\alpha + 1}}}$. When $k_n = (1+\smallO{1}) c n^{\frac{1}{2\alpha + 1}}$ we use the above result to show that the fraction of degree $k_n$ nodes in $\GPo$ converges to a Poisson distribution. 

To extend these results to $G_n$ we couple the construction of the KPKVB model to that of the Poissonized version $\GPo$ in Section~\ref{ssec:coupling_Gn_GPo} to show that a.a.s, $N_n(k_n) = (1+\smallO{1}) N_{\Po}(k_n)$. With these results we then prove Theorem~\ref{thm:degrees_hyperbolic} in Section~\ref{ssec:proof_thm_degrees}.

We will also establish all the above mentioned results for the finite box model $\Gbox$, since the proofs only require small 
alterations and we will need these results later on when analyzing the clustering coefficient and function.

\subsection{Expected degrees in \texorpdfstring{$\Gbox$}{G box} and \texorpdfstring{$\GPo$}{G Po}}\label{ssec:expected_degrees_GPo}

We proceed with the expected degrees in the finite box and Poissonized KPKVB model. Recall the definition of the neighbourhood balls $\BallPon{y}$ and $\BallHyp{y}$ of a point $(0,y)$ in, respectively $\Gbox$ and $\GPo$. We introduce the short hand notation
\[
	\mu_{\Po}(y) := \Mu{\BallHyp{y}} \quad \text{and} \quad \mu_{\mathrm{box}}(y) := \Mu{\BallPon{y}}.
\]

Our first results relate these measures to the measure $\mu(y)$, of the ball $\BallPo{y}$ in the infinite model $\Ginf$.

\begin{lemma}[Expected degree given height in $\GPo$]\label{lem:average_degree_P_n}
Let $\alpha > \frac{1}{2}$, $\varepsilon >0$ and $0 \leq y \leq (1-\epsilon)R$. Then as $n \to \infty$, uniformly in $y$,
\[
	\mu_{\Po}(y) = (1+\smallO{1}) \mu(y).
\]
\end{lemma}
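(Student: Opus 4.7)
My plan is to compute $\mu_\Po(y)$ directly by integrating the intensity $f$ over $\BallHyp{(0,y)}$, exploit the explicit approximation to $\Phi(y,y')$ from Lemma~\ref{lem:asymptotics_Omega_hyperbolic}, and compare the result to the closed form $\mu(y)=\xi e^{y/2}$ already computed in Section~\ref{sec:Ginf}. Using the characterisation of $\BallHyp{(0,y)}$ given just after~\eqref{eq:def_Omega_hyperbolic}, one can write
\begin{equation*}
\mu_\Po(y) = \int_0^{R-y} \min\bigl(2\Phi(y,y'),\pi e^{R/2}\bigr)\cdot \tfrac{\alpha\nu}{\pi} e^{-\alpha y'}\, dy' \;+\; \int_{R-y}^R \pi e^{R/2}\cdot \tfrac{\alpha\nu}{\pi} e^{-\alpha y'}\, dy',
\end{equation*}
so that the whole problem reduces to understanding the first integral, since the second contributes at most $\nu e^{R/2-\alpha(R-y)}$, and $y\le(1-\varepsilon)R$ together with $\alpha>\tfrac12$ makes its ratio to $\mu(y)=\xi e^{y/2}$ equal to $O(e^{-(\alpha-1/2)\varepsilon R})=o(1)$ uniformly in $y$.

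Next I would pick a slowly growing cutoff, say $c_n=\log R$, and split the first integral at $y'=R-y-c_n$. On the range $y'\in[0,R-y-c_n]$ one has $y+y'\le R-c_n$, so Lemma~\ref{lem:asymptotics_Omega_hyperbolic} yields $\Phi(y,y')=e^{(y+y')/2}\bigl(1+O(e^{y+y'-R})\bigr)=e^{(y+y')/2}(1+O(e^{-c_n}))$; moreover for $c_n$ large enough the bound $\Phi(y,y')\le 2e^{(y+y')/2}\le\tfrac{\pi}{2}e^{R/2}$ holds, so the $\min$ is inactive. Therefore this part of the integral equals
\begin{equation*}
(1+O(e^{-c_n}))\cdot e^{y/2}\cdot\tfrac{2\alpha\nu}{\pi}\int_0^{R-y-c_n} e^{(\frac12-\alpha)y'}\, dy'
 = (1+O(e^{-c_n}))\cdot\xi e^{y/2}\bigl(1-e^{-(\alpha-1/2)(R-y-c_n)}\bigr),
\end{equation*}
which is $(1+o(1))\mu(y)$ uniformly for $y\le(1-\varepsilon)R$, because with $c_n=\log R$ the exponent $(\alpha-1/2)(R-y-c_n)\ge(\alpha-1/2)(\varepsilon R-\log R)\to\infty$.

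The remaining sliver $y'\in[R-y-c_n,R-y]$ is bounded crudely by replacing $\min(2\Phi(y,y'),\pi e^{R/2})$ with $\pi e^{R/2}$, giving a contribution of order $\nu e^{R/2}\cdot e^{-\alpha(R-y-c_n)}$; its ratio to $\mu(y)$ is $O(e^{-(\alpha-1/2)\varepsilon R+\alpha c_n})=o(1)$ uniformly in $y$, by the same exponent comparison as before and because $c_n=\log R=o(R)$. Combining the three pieces gives $\mu_\Po(y)=(1+o(1))\mu(y)$ uniformly on $[0,(1-\varepsilon)R]$.

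The only real obstacle is keeping all error terms uniform in $y$; this is precisely why the cutoff $c_n$ must be chosen to simultaneously (i) tend to infinity so that the multiplicative error $O(e^{-c_n})$ from Lemma~\ref{lem:asymptotics_Omega_hyperbolic} vanishes, and (ii) grow strictly slower than $R$ so that the additional exponential loss $e^{\alpha c_n}$ incurred by crudely bounding the near-boundary strip is still dominated by the $e^{-(\alpha-1/2)\varepsilon R}$ gain coming from the hypothesis $y\le(1-\varepsilon)R$. Any slowly growing choice such as $c_n=\log R$ achieves both, which closes the argument.
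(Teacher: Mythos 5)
Your proof is correct and arrives at the same conclusion, but via a genuinely different decomposition than the paper's. The paper keeps the full range $y'\in[0,R-y]$, writes $\Phi(y,y')$ additively as $e^{(y+y')/2}\pm Ke^{\frac32(y+y')-R}$, and integrates the error term $I_{1,\mathrm{error}}$ explicitly over the whole range; this costs a separate computation with a case distinction at $\alpha=3/2$ (where the antiderivative degenerates into a linear term). You instead introduce a moving cutoff at $y'=R-y-c_n$ with $c_n=\log R$: below the cutoff the additive error becomes a uniform multiplicative error $1+O(e^{-c_n})$, which factors out of the integral cleanly and avoids the $\alpha=3/2$ case altogether, while the thin strip $[R-y-c_n,R-y]$ is disposed of by the crude bound $\Phi\le\frac{\pi}{2}e^{R/2}$ and the exponent comparison $-(\alpha-\tfrac12)\varepsilon R+\alpha\log R\to-\infty$. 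Your treatment of the top part $[R-y,R]$ is the same as the paper's $I_2$. One small cosmetic remark: the $\min(2\Phi,\pi e^{R/2})$ in your first display is never active, since $\Phi(y,y')=\tfrac12 e^{R/2}\arccos(\cdot)\le\tfrac{\pi}{2}e^{R/2}$ by definition, so both you and the paper may drop it. Both approaches share the same (unstated) caveat that Lemma~\ref{lem:asymptotics_Omega_hyperbolic} is applied with its range constraint treated somewhat loosely near $y'$ close to $R-y$; your argument does not make this any worse than the paper's.
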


\begin{proof}
Recall that when $y^\prime \ge R - y$ then $p^\prime \in \BallHyp{y}$ while for $y^\prime < R - y$ this is true when~\eqref{eq:def_Omega_hyperbolic} holds. We split the integral for $\mu_{Po,n}(y)$ accordingly, into two integrals $I_1$ and $I_2$,
\begin{align*}
\mu_{\Po}(y) 
&= \int_0^{R-y} 2\Phi(y,y_1)\frac{\alpha\nu}{\pi}e^{-\alpha y_1} dy_1+\int_{R-y}^R \frac{\pi n}{\nu} \frac{\alpha \nu}{\pi}e^{-\alpha y_1}dy_1 =: I_1+I_2.
\end{align*}
Firstly, we will show that the second integral $I_2=o(\mu(y))$ and then we will show that $I_1 = (1+o(1))\mu(y)$ (both with  convergence uniform in $0\leq y\leq (1-\epsilon)R$).

For the second integral $I_2$, we compute
\begin{align*}
I_2 &= \int_{R-y}^R \frac{\pi n}{\nu} \frac{\alpha \nu}{\pi}e^{-\alpha y_1}dy_1 =n(e^{-\alpha(R-y)}-e^{-\alpha R}) =ne^{-\alpha R}(e^{\alpha y} -1) =n^{1-2\alpha}\nu^{2\alpha}(e^{\alpha y} -1). 	%&= o(\xi e^{\frac{y}{2}})=o(\mu(y)).
\end{align*}
To see that $n^{1-2\alpha}\nu^{2\alpha}(e^{\alpha y} -1) = o(\mu(y))$, recall that $\mu(y)=\xi e^{\frac{y}{2}}$. So, we need to show that
\begin{align*}
\frac{n^{1-2\alpha}\nu^{2\alpha}(e^{\alpha y} -1)}{\xi e^{\frac{y}{2}}} = o(1)
\end{align*}
or equivalently that
\begin{align*}
\frac{e^{\alpha y}-1}{\xi e^{\frac{y}{2}}}=\smallO{n^{2\alpha-1}}.
\end{align*}
For this, note that
\begin{align*}
\frac{e^{\alpha y}-1}{\xi e^{\frac{y}{2}}}=\bigO{e^{\alpha y-\frac{y}{2}}} = \bigO{e^{\left(\alpha-\frac{1}{2}\right)y}}.
\end{align*}
As $y \leq (1-\epsilon)R =(1-\epsilon)2\ln \frac{n}{\nu}$ and $\alpha > \frac{1}{2}$, we have 
\[
	e^{\left(\alpha-\frac{1}{2}\right)y} \leq e^{\left(\alpha-\frac{1}{2}\right)(1-\epsilon)R} = \left(\frac{n}{\nu}\right)^{2\left(\alpha-\frac{1}{2}\right)(1-\epsilon)} = \smallO{n^{2\alpha-1}},
\]
where the convergence is uniform in $y$, $0\leq y\leq (1-\epsilon)R$, as the last upper bound does not depend on $y$.

For the first integral $I_1$, we first recall from Lemma~\ref{lem:asymptotics_Omega_hyperbolic} that there is a positive constant $K$ such that for any $\varepsilon >0$, for all $y_1, y_2 \in [0,(1-\varepsilon)R]$, $y_1+y_2 <R$, we have
\[
	e^{\frac{1}{2}(y_1+y_2)} - Ke^{\frac{3}{2}(y_1+y_2)-R} \leq \Phi(y_1,y_2) \leq e^{\frac{1}{2}(y_1+y_2)} + Ke^{\frac{3}{2}(y_1+y_2)-R}.
\]
We thus define the main and error term of $I_1$ as
\begin{align*}
I_{1,main} &=\int_0^{R-y} 2e^{\frac{y+y_1}{2}}\frac{\alpha\nu}{\pi}e^{-\alpha y_1}dy_1, \\
I_{1,error} &= \int_0^{R-y} 2Ke^{\frac{3}{2}(y+y_1)-R} \frac{\alpha\nu}{\pi} e^{-\alpha y_1} dy_1.
\end{align*}
From the error bounds for $\Phi$ as given in Lemma~\ref{lem:asymptotics_Omega_hyperbolic}, it follows that
$$I_{1,main}-I_{1,error} \leq I_1 \leq I_{1,main}+I_{1,error}. $$
We will firstly show that $I_{1,main} =(1+o(1))\mu(y) $ and then that $I_{1,error} = o(\mu(y))$.

For the main term, we obtain, as $R-y \geq \epsilon R \rightarrow \infty$, uniformly in $0\leq y\leq (1-\epsilon)R$,
\begin{align*}
I_{1,main} &= \int_0^{R-y} 2e^{\frac{y+y_1}{2}}\frac{\alpha\nu}{\pi}e^{-\alpha y_1}dy_1 
		= \frac{2\alpha\nu}{\pi}e^{\frac{y}{2}}\int_0^{R-y} e^{\left(\frac{1}{2}-\alpha\right)y_1}dy_1\\
	&=\frac{2\alpha\nu}{\pi\left(\alpha-\frac{1}{2}\right)}e^{\frac{y}{2}}
		\left(1-e^{\left(\frac{1}{2}-\alpha\right)(R-y)}\right)
		=(1+o(1))\xi e^{\frac{y}{2}} = (1+o(1))\mu(y).
\end{align*}
%\PvdH{It is not clear to the reader what the error term is. Please be specific, for example writing it out and using equation numbers.}

For the error term, we obtain, for $\alpha \not = \frac{3}{2}$, uniformly in $0\leq y\leq (1-\epsilon)R$,
\begin{align*}
I_{1,error}=\int_0^{R-y} 2Ke^{\frac{3}{2}(y+y_1)-R} \frac{\alpha\nu}{\pi} e^{-\alpha y_1} dy_1 &= 2K\frac{\alpha\nu}{\pi} e^{\frac{3}{2}y-R} \int_0^{R-y} e^{\left(\frac{3}{2}-\alpha\right)y_1}dy_1 \\
&= \frac{2K\alpha\nu}{\pi\left(\frac{3}{2}-\alpha\right)}e^{\frac{3}{2}y-R} \left(e^{\left(\frac{3}{2}-\alpha\right)(R-y)}-1\right)\\
&= \frac{2K\alpha\nu}{\pi\left(\frac{3}{2}-\alpha\right)}e^{\frac{1}{2}y} \left(e^{\left(\frac{1}{2}-\alpha\right)(R-y)}-e^{-(R-y)}\right) =o\left(\xi e^{\frac{y}{2}}\right).
\end{align*}
For the error term with $\alpha=\frac{3}{2}$, uniformly in $0\leq y\leq (1-\epsilon)R$,
\begin{align*}
\int_0^{R-y} 3Ke^{\frac{3}{2}(y+y_1)-R} \frac{\nu}{\pi} e^{-\frac{3}{2} y_1} dy_1 &= 3K\frac{\nu}{\pi} e^{\frac{3}{2}y-R} \int_0^{R-y} dy_1 = \frac{3K\nu}{\pi}e^{\frac{3}{2}y-R} (R-y) =o\left(\xi e^{\frac{y}{2}}\right).
\end{align*}
We conclude that $I_{1,error} = o(\mu(y))$ and hence $I_{1,main}\pm I_{1,error} = (1+o(1))\mu(y)$, which finishes the proof.
\end{proof}

\begin{lemma}[Expected degree given height in $\Gbox$]\label{lem:average_degree_G_box}
Let $\alpha > \frac{1}{2}$, $\varepsilon >0$ and $0 \leq y \leq (1-\epsilon)R$. Then as $n \to \infty$, uniformly in $y$,
\[
	\mu_{\mathrm{box}}(y) = (1+\smallO{1}) \mu(y).
\]
\end{lemma}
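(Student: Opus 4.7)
The plan is to compute $\mu_{\mathrm{box}}(y)$ essentially from scratch. The key simplification over Lemma~\ref{lem:average_degree_P_n} is that the connection rule for $\Gbox$ is already in closed form via $|x-x'|_{\pi e^{R/2}} \leq e^{(y+y')/2}$, so there is no analogue of the function $\Phi$ to estimate and no need to appeal to Lemma~\ref{lem:asymptotics_Omega_hyperbolic}. Integrating out $x'$ first, and using that on the ``circle'' $(-\tfrac{\pi}{2} e^{R/2}, \tfrac{\pi}{2} e^{R/2}]$ the set $\{x' : |x-x'|_{\pi e^{R/2}} \leq r\}$ has Lebesgue measure $\min(2r, \pi e^{R/2})$, one gets
\[
\mu_{\mathrm{box}}(y) = \int_0^R \min\!\left(2 e^{(y+y')/2},\, \pi e^{R/2}\right) \frac{\alpha\nu}{\pi} e^{-\alpha y'} \dd y'.
\]

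I would then split the integration at the threshold $y^\ast := R - y + 2\log(\pi/2)$ at which the two expressions inside the minimum agree. On the ``non-saturated'' range $y' \in [0, y^\ast \wedge R]$ the integrand equals $\tfrac{2\alpha\nu}{\pi} e^{(y+y')/2} e^{-\alpha y'}$, and on the (possibly empty) ``saturated'' range $y' \in [y^\ast, R]$ it equals $\alpha \nu e^{R/2} e^{-\alpha y'}$. Denote the two resulting integrals $I_1$ and $I_2$, so $\mu_{\mathrm{box}}(y) = I_1 + I_2$.

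The main step is to evaluate $I_1$ and match it against $\mu(y) = \xi e^{y/2}$ up to a $(1+\smallO{1})$ factor; this is a one-line computation giving $I_1 = \tfrac{2\alpha\nu}{\pi(\alpha - 1/2)} e^{y/2} \bigl(1 - e^{(1/2-\alpha)(y^\ast \wedge R)}\bigr)$, and since $y \leq (1-\varepsilon) R$ forces $y^\ast \wedge R \geq \varepsilon R + 2\log(\pi/2) \to \infty$, the bracket tends to $1$ uniformly, yielding $I_1 = (1+\smallO{1}) \mu(y)$ uniformly in $y \in [0, (1-\varepsilon) R]$. For $I_2$, a direct computation gives $I_2 = \nu e^{R/2}\bigl(e^{-\alpha y^\ast} - e^{-\alpha R}\bigr) = \bigO{e^{(1/2-\alpha)R + \alpha y}}$, so $I_2 / \mu(y) = \bigO{e^{-\varepsilon(\alpha - 1/2) R}} = \smallO{1}$ uniformly in $y$, using $\alpha > 1/2$.

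The only small case distinction is whether $y^\ast < R$ (so both $I_1$ and $I_2$ contribute) or $y^\ast \geq R$ (the saturated regime is empty and only $I_1$ appears), but in both situations the conclusion follows immediately from the above estimates. There is no genuine obstacle; the argument is strictly easier than the one for $\mu_{\Po}(y)$ precisely because the box connection rule does not involve any hyperbolic geometry.
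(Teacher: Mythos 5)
Your computation is correct and follows essentially the same route as the paper: write $\mu_{\mathrm{box}}(y)$ as an explicit integral in $y'$ (the only difference being that you phrase the inner $x'$-integral as the circle-length $\min(2e^{(y+y')/2}, \pi e^{R/2})$ while the paper keeps the indicator), split at the crossing height $y^\ast = R - y + 2\log(\pi/2)$, and verify that the saturated contribution is $\bigO{e^{-(\alpha-1/2)(R-y)}}\mu(y) = \smallO{\mu(y)}$ uniformly on $[0,(1-\varepsilon)R]$. Both proofs then observe that the remaining factor $1 - e^{(1/2-\alpha)(y^\ast\wedge R)}$ is $1+\smallO{1}$ uniformly, so your argument matches the paper's.
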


\begin{proof}
First note that since we have identified the boundaries of $[-\frac{\pi}{2}e^{\frac{R}{2}}, \frac{\pi}{2}e^{\frac{R}{2}}]$ we can assume, without loss of generality, that $p = (0,y)$. We then have that the boundaries of $\BallPon{p}$ are given by the equations $x^\prime = \pm e^{\frac{y+y^\prime}{2}}$, which intersect the left and right boundaries of $[-\frac{\pi}{2}e^{\frac{R}{2}}, \frac{\pi}{2}e^{\frac{R}{2}}]$ at height
\[
	h(y) = R + 2 \log\left(\frac{\pi}{2}\right) - y.
\]
Therefore, if $y \le 2 \log(\pi/2)$ this intersection occurs above the height $R$ of the box $\Rcal$ while in the other case the full region of the box above $h(y)$ is connected to $p$. 

We will first consider the case where $y \le 2 \log(\pi/2)$. Here we have
\begin{align*}
	\mu(\BallPon{p})
	&= \int_0^{R} \int_{-e^{\frac{y+y^\prime}{2}}}^{e^{\frac{y+y^\prime}{2}}} 
		f(x^\prime,y^\prime) \, dx^\prime \, dy^\prime\\
	&= \frac{2 \alpha \nu}{\pi} e^{\frac{y}{2}} \int_0^{R} e^{-(\alpha - \frac{1}{2})y^\prime} \, dy^\prime\\
	&= \mu(y)\left(1 - e^{-(\alpha - \frac{1}{2})R}\right),
\end{align*}
where the error term is $\smallO{1}$, uniformly in $y$.

Now let $y > 2 \log(\pi/2)$ and recall that $\mu(y) = \xi e^{\frac{y}{2}}$ where $\xi = \frac{4\alpha \nu}{(2\alpha - 1)\pi}$. Then, after some simple algebra, we have that
\begin{align*}
	\mu_{\mathrm{box}}(y)
	&= \int_0^{h(y)} \int_{-\frac{\pi}{2}e^{\frac{R}{2}}}^{\frac{\pi}{2}e^{\frac{R}{2}}} 
		\ind{|x^\prime| \le e^{\frac{y+y^\prime}{2}}} f(x^\prime,y^\prime) \, dx^\prime \, dy^\prime
		+ \int_{h(y)}^{R} \int_{-\frac{\pi}{2}e^{\frac{R}{2}}}^{\frac{\pi}{2}e^{\frac{R}{2}}} 
		f(x^\prime,y^\prime) \, dx^\prime \, dy^\prime\\
	&= \frac{2 \alpha \nu}{\pi} e^{\frac{y}{2}} \int_0^{h(y)} e^{-(\alpha - \frac{1}{2})y^\prime} \, dy^\prime
		+ \alpha \nu e^{\frac{R}{2}} \int_{h(y)}^{R} e^{-\alpha y^\prime} \, dy^\prime \\
	&= \xi e^{\frac{y}{2}}\left(1 - \left(\frac{\pi}{2}\right)^{-(2\alpha - 1)} 
		e^{-(\alpha - \frac{1}{2})(R - y)}\right)
	+ \nu e^{\frac{R}{2}}\left(\left(\frac{\pi}{2}\right)^{-2\alpha} e^{-\alpha(R - y)} 
		- e^{-\alpha R}\right)\\
	&= \mu(y)\left(1 - \phi_n(y) \right).
\end{align*}
where 
\begin{equation}\label{eq:def_average_degree_Gbox_phi}
	\phi_n(y) :=  \left(\frac{\pi}{2}\right)^{-(2\alpha - 1)} \hspace{-3pt} e^{-(\alpha - \frac{1}{2})(R - y)}
				+ \frac{\nu}{\xi}e^{-(\alpha - \frac{1}{2})R - \frac{y}{2}} - \frac{\nu}{\xi}\left(\frac{\pi}{2}\right)^{-2\alpha} e^{-(\alpha-\frac{1}{2})(R - y)}.
\end{equation}
Since $R - y \ge \varepsilon R$ we have that $|\phi_n(y)|$ is uniformly bounded by
$\bigO{e^{-(\alpha - \frac{1}{2})\varepsilon R}}$, which is $\smallO{1}$ for $\alpha > \frac{1}{2}$. 
\end{proof}

We can now use a concentration of heights argument to show that the integration of the Poisson probabilities $\Prob{\Po(\mu_{\Po}(y)) = k_n}$ over $0 \le y \le (1-\varepsilon) R$ is asymptotically equivalent to $\pmf(k_n)$. And the same holds if we instead consider $\mu_{\mathrm{box}}(y)$. The proof contains some technical elements that are contained in the Appendix to not hinder the flow of the argument. 

\begin{lemma}\label{lem:degree_integral}
Let $0 < \varepsilon < 1$. Then for all $0 \le k_n \le n - 1$, as $n \to \infty$,

\begin{equation}\label{eq:degree_integral_BallHyp}
	\int_0^{(1-\varepsilon)R} \Prob{\Po(\Mu{\BallHyp{y}} = k_n} \alpha e^{-\alpha y} \dd y
	= (1+\smallO{1}) \pmf(k_n).
\end{equation}

Moreover, the same holds if we replace $\Mu{\BallHyp{y}}$ with $\Mu{\BallPon{y}}$ in~\eqref{eq:degree_integral_BallHyp}.
\end{lemma}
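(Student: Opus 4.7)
The plan is to reduce both sides of the claimed equivalence to integrals over the interval $\Kcal_C(k_n)$ on which $\mu(y)\approx k_n$, and then to exchange $\mu_{\Po}(y) := \Mu{\BallHyp{y}}$ for $\mu(y)$ using the uniform approximation from Lemma~\ref{lem:average_degree_P_n}. The argument for $\Mu{\BallPon{y}}$ is word-for-word the same, with Lemma~\ref{lem:average_degree_G_box} in place of Lemma~\ref{lem:average_degree_P_n}, so I'd treat only the first case and remark on the second.

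First I would split into two regimes according to whether $k_n$ stays bounded or diverges. If $k_n \equiv k$ is eventually constant, then Lemma~\ref{lem:average_degree_P_n} gives $\mu_{\Po}(y)\to\mu(y)$ pointwise on compact subsets of $[0,\infty)$, whence $\Prob{\Po(\mu_{\Po}(y)) = k} \to \rho(y,k)$ pointwise in $y$. Since the integrand is bounded by $\alpha e^{-\alpha y}$, which is integrable, dominated convergence yields
\[
\int_0^{(1-\varepsilon)R}\Prob{\Po(\mu_{\Po}(y))=k}\,\alpha e^{-\alpha y}\,\dd y \longrightarrow \int_0^\infty \rho(y,k)\,\alpha e^{-\alpha y}\,\dd y = \pmf(k),
\]
which is the statement in this case.

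For $k_n\to\infty$, I would apply Proposition~\ref{prop:concentration_height_general} (with $h\equiv 1$) once to $\hat\mu = \mu$ and once to $\hat\mu = \mu_{\Po}$, so that both
\[
\pmf(k_n) = (1+o(1))\int_{\Kcal_C(k_n)}\rho(y,k_n)\,\alpha e^{-\alpha y}\,\dd y
\]
and the LHS of the lemma collapse, up to a $(1+o(1))$ factor, to integrals over $\Kcal_C(k_n)$. To make this valid one first checks (using the Chernoff bound~\eqref{eq:chernoff_bound_degrees} and the bound $\alpha e^{-\alpha y}$) that the contribution to the LHS from $[0,(1-\varepsilon)R]\setminus\Kcal_C(k_n)$ is negligible relative to $\pmf(k_n)$, and that the LHS is not affected by the truncation at $(1-\varepsilon)R$; in the intended range for $k_n$ this is automatic since $\Kcal_C(k_n)\subset[0,(1-\varepsilon)R]$ eventually. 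It then remains to show that on $\Kcal_C(k_n)$ one has the uniform pointwise comparison
\[
\Prob{\Po(\mu_{\Po}(y))=k_n} = (1+o(1))\,\rho(y,k_n).
\]
Writing $\mu_{\Po}(y)=\mu(y)(1+\eta_n(y))$ with $\eta_n(y) = o(1)$ uniformly in $y\in[0,(1-\varepsilon)R]$ by Lemma~\ref{lem:average_degree_P_n}, a direct calculation gives
\[
\frac{\Prob{\Po(\mu_{\Po}(y))=k_n}}{\rho(y,k_n)} = e^{-\eta_n(y)\mu(y)}(1+\eta_n(y))^{k_n} = \exp\!\bigl(\eta_n(y)(k_n-\mu(y)) + O(k_n\eta_n(y)^2)\bigr).
\]
On $\Kcal_C(k_n)$ one has $|k_n-\mu(y)|\le C\sqrt{k_n\log k_n}$ by construction, so the ratio is $1+o(1)$ as soon as $\eta_n(y)\sqrt{k_n\log k_n}\to 0$ and $k_n\eta_n(y)^2\to 0$ uniformly in $y\in\Kcal_C(k_n)$.

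The main obstacle is precisely to verify these two refined smallness conditions on $\eta_n(y)$: mere uniform convergence $\mu_{\Po}(y)/\mu(y)\to 1$ is not enough when $k_n$ is large, because the Poisson mass function is very sensitive to perturbations of its mean near the mode. Tracking the error terms inside the proof of Lemma~\ref{lem:average_degree_P_n}, one has $\eta_n(y) = O(n^{-(2\alpha-1)\varepsilon})$ (times factors involving $e^{(\alpha-1/2)y}$ that remain polynomially controlled on $\Kcal_C(k_n)$), and a short calculation shows this is comfortably strong enough in the range where the lemma is actually used downstream, namely $k_n = O(n^{1/(2\alpha+1)})$. Combining the three estimates above closes the proof, and repeating the same argument with Lemma~\ref{lem:average_degree_G_box} replacing Lemma~\ref{lem:average_degree_P_n} establishes the statement for $\Mu{\BallPon{y}}$.
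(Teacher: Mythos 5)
Your proposal has a genuine gap at the key step: the uniform pointwise estimate
\[
\Prob{\Po(\mu_{\Po}(y))=k_n} = (1+o(1))\,\rho(y,k_n) \quad\text{on } \Kcal_C(k_n)
\]
does not hold in the regimes where the lemma is actually applied, and the claim that ``a short calculation shows this is comfortably strong enough'' for $k_n=O(n^{1/(2\alpha+1)})$ is incorrect. You correctly derive that this requires $\eta_n(y)\sqrt{k_n\log k_n}\to 0$ and $k_n\eta_n(y)^2\to 0$. But even taking the tightest bound one can extract from Lemma~\ref{lem:average_degree_P_n} on the interval $\Kcal_C(k_n)$, namely $\eta_n(y)=O\bigl((k_n/n)^{2\alpha-1}\bigr)$, one finds for $k_n=n^{1/(2\alpha+1)}$ that
\[
k_n\eta_n^2 = \frac{k_n^{4\alpha-1}}{n^{4\alpha-2}} = n^{\frac{-8\alpha^2+4\alpha+1}{2\alpha+1}},
\]
whose exponent is strictly positive for all $\frac12 <\alpha<\frac{1+\sqrt3}{4}\approx 0.683$, so the ratio diverges even at the centre of $\Kcal_C(k_n)$. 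Using the coarser uniform bound $\eta_n=O(n^{-(2\alpha-1)\varepsilon})$ is worse still: the first condition then forces $\varepsilon>\frac{1}{4\alpha-1}$, yet the lemma is stated for all $\varepsilon\in(0,1)$ and is invoked later with $\varepsilon$ small. In short, when $\eta_n\gg k_n^{-1/2}$ the two Poisson pmfs at means $\mu(y)$ and $\mu_\Po(y)$ are nearly disjoint bumps (the mode is displaced by $\sim k_n\eta_n$, much larger than the width $\sim\sqrt{k_n}$), so no pointwise ratio argument can close this step.

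The paper's proof sidesteps the pointwise comparison entirely by a change of variables. It sets $z(y)=2\log\bigl(\mu_\Po(y)/\xi\bigr)$, so that $\mu_\Po(y)=\xi e^{z(y)/2}=\mu(z(y))$ and the Poisson probability matches \emph{exactly}: $\Prob{\Po(\mu_\Po(y))=k_n}=\rho(z(y),k_n)$. All of the error is thereby transferred to the ratio $e^{-\alpha y}/e^{-\alpha z(y)}$ and the Jacobian $1/z'(y)$, and \emph{these} are genuinely $1+o(1)$ uniformly (via Lemma~\ref{lem:average_degree_P_n} and Lemma~\ref{lem:derivative_mu_Po}), because the smooth exponential weight is insensitive to the $o(1)$ additive shift $z(y)-y$, unlike the pmf. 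After substituting one lands directly on $\int\rho(z,k_n)\,\alpha e^{-\alpha z}\,\dd z$ over a range containing $\Kcal_C(k_n)$, and Proposition~\ref{prop:concentration_height_general} finishes the job. Your reduction to $\Kcal_C(k_n)$, the dominated-convergence argument for bounded $k_n$, and the observation that the $\Gbox$ case is identical are all fine; only the pointwise matching step needs to be replaced by this substitution (or an equivalent integral-level shift argument) for the proof to hold across the full parameter range.
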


\begin{proof}
We will show that
\[
	\int_0^{(1-\varepsilon)R} \Prob{\Po(\mu_{\Po}(y)) = k_n} \alpha e^{-\alpha y} \dd y
	= (1+o(1))\int_0^\infty \Pee(\Po(\xi e^{\frac{z}{2}})=k_n) \alpha e^{-\alpha z} \dd z.
\]
This implies the result because the last integral equals $(1+o(1))2\alpha \xi^{2\alpha} k_n^{-(2\alpha+1)} = (1+o(1))\pmf(k_n)$. Where the last asymptotic equality follows from~\eqref{eq:degree_distribution_P_asymptotics}.

Define the function $z(y)=2\ln\frac{\mu_{\Po}(y)}{\xi}$ (note that $z(y)$ is well-defined as $\mu_{Po,n}(y)\geq 0$ and that $z(y)$ is bijective because $\mu_{\Po}(y)$ is strictly monotone increasing and continuous, see Lemma~3.3. in \cite{gugelmann2012random}). By rearranging, we have that
\begin{align*}
\mu_{\Po}(y) = \xi e^{\frac{z(y)}{2}}.
\end{align*}
From Lemma~\ref{lem:average_degree_P_n}, it follows that uniformly for all $0\leq y\leq (1-\varepsilon)R$, $\xi e^{\frac{y}{2}} = (1+o(1))\mu_{\Po}(y) = (1+o(1))\xi e^{\frac{z(y)}{2}}$, and hence that
\begin{align*}
e^{-\alpha y} = (1+o(1))e^{-\alpha z(y)}.
\end{align*}
Next we need a similar result regarding the derivative of $\mu_{\Po}(y)$, i.e.
\[
	\mu_{\Po}^\prime(y) = (1+o(1))\frac{1}{2}\mu(y) = (1+o(1))\frac{1}{2}\mu_{\Po}(y)
	= (1+\smallO{1}) \mu^\prime(y),
\]
uniformly for $0 \leq  y \leq (1-\varepsilon)R$. This result is given by Lemma~\ref{lem:derivative_mu_Po} in the Appendix. The lemma is placed there since the proof is a straightforward though cumbersome use of function analysis and we do not want to break the flow of the argument.

We now have that
\[
	z^\prime(y) = \frac{2\mu_{\Po}^\prime(y)}{\mu_{\Po}(y)} = 1+o(1).
\]
which implies that
\begin{align*}
	\int_0^{(1-\varepsilon)R} \Prob{\Po(\mu_{\Po}(y))=k_n} \alpha e^{-\alpha y}dy 
	= (1+o(1)) \int_0^{(1-\varepsilon)R} \Prob{\Po(\xi e^{\frac{z(y)}{2}})=k_n}\alpha e^{-\alpha z(y)} z^\prime(y) dy.
\end{align*}
We now apply integration by substitution to the integral, i.e. use the new variable $z=z(y)$, to obtain 
\begin{align*}
	\int_{z(0)}^{z((1-\varepsilon)R)} \Prob{\Po(\xi e^{\frac{z}{2}})=k_n}\alpha e^{-\alpha z} dz
	= \int_{z(0)}^{z((1-\varepsilon)R)} \Prob{\Po(\mu(z))=k_n}\alpha e^{-\alpha z} dz.
\end{align*}

Note that since the function $y \mapsto 2\ln \frac{y}{\xi}$ is monotone increasing it follows that for large enough $n$, 
$\Kcal_C(k_n)\subset [z(0),z((1-\epsilon)R)]$. Therefore, by a concentration of heights argument (Proposition~\ref{prop:concentration_height_general}) it follows that
\begin{align*}
	\int_{z(0)}^{z((1-\varepsilon)R)} \Prob{\Po(\mu(z))=k_n}\alpha e^{-\alpha z} dz 
	&= (1+o(1))\int_0^\infty \Pee(Po(\mu(z))=k_n)\alpha e^{-\alpha z}dz.
\end{align*}
and hence
\[
	\int_0^{(1-\varepsilon)R} \Prob{\Po(\mu_{\Po}(y))=k_n} \alpha e^{-\alpha y}dy
	= (1+o(1))\int_0^\infty \Pee(Po(\mu(z))=k_n)\alpha e^{-\alpha z}dz,
\]
which finishes the proof for $\mu_{\Po}(y)$.

The proof for $\mu_{\mathrm{box}}(y)$ follows similar arguments. First, we define $z(y) = 2 \log (\mu_{\mathrm{box}}(y)/\xi)$ and use Lemma~\ref{lem:average_degree_G_box} instead of Lemma~\ref{lem:average_degree_P_n} to establish that $e^{-\alpha y} = (1+\smallO{1})e^{-\alpha z(y)}$. For the derivative $z^\prime(y)$ we recall from the proof of Lemma~\ref{lem:average_degree_G_box} that $\mu_{\mathrm{box}}(y) = (1 + \phi_n(y)) \mu(y)$ with $\phi_n(y)$ given by~\eqref{eq:def_average_degree_Gbox_phi}. The derivative of $\phi_n(y)$ can be uniformly bounded by $\smallO{1}$ for $0 \le y \le (1-\varepsilon)R$. Hence we get $\mu_{\mathrm{box}}^\prime(y) = (1+\smallO{1}) \mu^\prime(y)$ and thus $z^\prime(y) = 1 + \smallO{1}$ uniformly for $0 \le y \le (1-\varepsilon)R$. We can now apply the same change of variables and a concentration of heights argument to arrive at the required statement.

\end{proof}

The main result of this section now follows almost immediately.

\begin{lemma}[First moment of number of degree $k$ vertices]\label{lem:expnnkn}
Let $N_{Po}(k)$ and $\Nbox(k)$ denote the number of vertices with degree $k$ vertices in the Poissonized KPKVB model $\GPo$ and the finite box model $\Gbox$, respectively. Consider a sequence of integers $k_n\rightarrow\infty$ with $0 \leq k_n \leq n-1$.	If $k_n = O\left(n^{\frac{1}{2\alpha+1}}\right)$, then as $n \to \infty$,
\[
	\Exp{N_{Po}(k_n)} =(1+o(1)) n \pmf(k_n) \quad \text{and} \quad \Exp{\Nbox(k_n)} =(1+o(1)) n \pmf(k_n).
\]
Moreover, if $k_n \gg n^{\frac{1}{2\alpha + 1}}$ then
\[
	\Exp{N_{\Po}(k_n)} = \smallO{1} \quad \text{and} \quad \Exp{\Nbox(k_n)} = \smallO{1}.
\]
\end{lemma}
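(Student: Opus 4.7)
The starting point is the Campbell--Mecke formula applied to the Poisson point process that underlies $\GPo$ (viewed in the box $\Rcal$ via the coupling $\Psi$ of Lemma~\ref{lem:coupling_hyperbolic_poisson}). A point $p = (x,y) \in \Pcal \cap \Rcal$ has degree $k_n$ in $\GPo$ precisely when the number of other points of $\Pcal$ lying in the image of its hyperbolic neighbourhood equals $k_n$, and by the Poisson property this number has a $\Po(\mu_{\Po}(y))$ distribution. Since $\mu_{\Po}(y)$ depends only on $y$ (the intensity $f$ and the shape of the neighbourhoods are invariant under horizontal shifts), integrating out $x$ over the full width $\pi e^{R/2}$ and using $n = \nu e^{R/2}$ yields
\[
\Exp{N_{\Po}(k_n)} \;=\; n \int_0^R \Prob{\Po(\mu_{\Po}(y)) = k_n}\,\alpha e^{-\alpha y}\,\mathrm{d}y,
\]
and the analogous identity holds for $\Nbox(k_n)$ with $\mu_{\Po}$ replaced by $\mu_{\mathrm{box}}$.

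Next I would split the $y$-range into a ``bulk'' $[0,(1-\eps)R]$ and a ``tail'' $[(1-\eps)R,R]$, for some small but fixed $\eps>0$ to be chosen depending on $\alpha$. On the bulk, Lemma~\ref{lem:degree_integral} immediately gives
\[
\int_0^{(1-\eps)R} \Prob{\Po(\mu_{\Po}(y)) = k_n}\,\alpha e^{-\alpha y}\,\mathrm{d}y \;=\; (1+o(1))\,\pmf(k_n),
\]
for any sequence $0 \le k_n \le n-1$, and the same statement with $\mu_{\mathrm{box}}$ in place of $\mu_{\Po}$. On the tail, I bound $\Prob{\Po(\cdot) = k_n}\le 1$ and estimate
\[
\int_{(1-\eps)R}^R \alpha e^{-\alpha y}\,\mathrm{d}y \;=\; e^{-\alpha(1-\eps)R} - e^{-\alpha R} \;=\; O\bigl(n^{-2\alpha(1-\eps)}\bigr).
\]

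To finish, I compare these two contributions with $n\,\pmf(k_n)$, recalling from~\eqref{eq:degree_distribution_P_asymptotics} that $\pmf(k_n) \sim 2\alpha\xi^{2\alpha} k_n^{-(2\alpha+1)}$. In the regime $k_n = O(n^{1/(2\alpha+1)})$ one has $n\,\pmf(k_n) \gtrsim 1$, while $n \cdot n^{-2\alpha(1-\eps)} = n^{1-2\alpha+2\alpha\eps} = o(1)$ provided $\eps$ is small enough (using $\alpha > 1/2$); hence the tail is negligible compared to the bulk and the first claim follows. In the regime $k_n \gg n^{1/(2\alpha+1)}$ the bulk already contributes $(1+o(1))\,n\,\pmf(k_n) = o(1)$, and the tail is again $O(n^{1-2\alpha(1-\eps)}) = o(1)$ for small $\eps$, giving $\Exp{N_{\Po}(k_n)} = o(1)$. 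The argument for $\Nbox(k_n)$ is identical, using the $\mu_{\mathrm{box}}$ version of Lemma~\ref{lem:degree_integral} for the bulk and the same tail bound. The only delicate point is choosing $\eps$ consistently (so that $1-2\alpha(1-\eps) < 0$), but since $\alpha > 1/2$ is fixed this is routine; no single step is really an obstacle, and the proof is essentially a bookkeeping exercise built on the earlier lemmas.
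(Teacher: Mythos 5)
Your proposal is correct and follows essentially the same route as the paper: apply Campbell--Mecke to express $\Exp{N_{\Po}(k_n)}$ as $n\int_0^R \Prob{\Po(\mu_{\Po}(y))=k_n}\,\alpha e^{-\alpha y}\,\mathrm{d}y$, split at height $(1-\eps)R$, apply Lemma~\ref{lem:degree_integral} to the bulk, bound the tail by $O(n^{-2\alpha(1-\eps)})$, and use $\alpha>1/2$ to choose $\eps$ so that $2\alpha(1-\eps)>1$. The only cosmetic difference is that you compare the tail ($=o(1)$) directly to the lower bound $n\,\pmf(k_n)\gtrsim 1$, while the paper shows the tail contribution is $o(\pmf(k_n))$; both are valid and lead to the same conclusion.
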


\begin{proof}
We shall consider $\GPo$. The proof of the statements for $\Gbox$ follows using the same arguments and we omit it here. 

Let $D_{\Po}(p)$ denote the degree of a node $p \in \GPo$. Then since $N_{Po}(k_n)= \sum_{v\in V(\GPo)}\1_{\{D_{\GPo}(v)=k_n\}}$ and $\Prob{D_{\Po}(p) = k}$ is invariant under translations in the $x$-axis, we can apply the Campbell-Mecke formula to obtain
\begin{align*}
	\Exp{N_{Po}(k_n)} 
	&= \int_0^R \Prob{\Po(\mu_{\Po}(y)) = k_n} \frac{\pi n}{\nu}\frac{\alpha \nu}{\pi} e^{-\alpha y}dy \\
	&=n \int_0^R \Prob{\Po(\mu_{\Po}(y)) = k_n} \alpha e^{-\alpha y}dy \\
	&=n\left( \int_0^{(1-\epsilon)R} \Prob{\Po(\mu_{\Po}(y)) = k_n} \alpha e^{-\alpha y}dy
		+ \int_{(1-\varepsilon)R}^R \Prob{\Po(\mu_{\Po}(y)) = k_n} \alpha e^{-\alpha y}dy \right),
%		\numberthis \label{eq:enpoknsplit}
\end{align*}
where $0 < \varepsilon < 1$ is a constant to be chosen later.
Note that
\begin{align*}
	\int_{(1-\varepsilon)R}^R \Prob{\Po(\mu_{\Po}(y)) = k_n}\alpha e^{-\alpha y}dy 
	\le \int_{(1-\varepsilon)R}^R \alpha e^{-\alpha y}dy 
	= \Theta(e^{-\alpha (1-\varepsilon) R}) = \Theta(n^{-2\alpha(1-\varepsilon)}) .%= o(k_n^{-(2\alpha+1)})
\end{align*}

Now first consider the case where $k_n = \bigO{n^{\frac{1}{2\alpha + 1}}}$. Then, for $\alpha > \frac{1}{2}$ and $0 < \varepsilon < \frac{1}{2\alpha} - 1$, we have $2\alpha(1-\varepsilon)>1$. Therefore, $\frac{2\alpha (1-\varepsilon)}{2\alpha+1}>\frac{1}{2\alpha+1}$ and thus
$k_n = \bigO{n^{\frac{1}{2\alpha+1}}} = \smallO{n^{\frac{2\alpha(1-\varepsilon)}{2\alpha+1}}}$. This implies that $k_n^{-(2\alpha+1)} \gg n^{-2\alpha(1-\varepsilon)}$ or, stated differently, $\Theta(n^{-2\alpha(1-\varepsilon)}) = \smallO{k_n^{-(2\alpha+1)}}$. The first statement of the lemma now follows from Lemma~\ref{lem:degree_integral}.

When $k_n \gg n^{\frac{1}{2\alpha + 1}}$, Lemma~\ref{lem:degree_integral} implies that
\[
	n \int_0^{(1-\varepsilon)R} \Pee(Po(\mu_{Po}(y))=k_n) \alpha e^{-\alpha y}dy
	= (1+\smallO{1}) n \pmf(k_n)= \bigO{n k^{-(2\alpha + 1}} = \smallO{1}.
\]
On the other hand,
\[
	n \int_{(1-\varepsilon)R}^R \Pee(Po(\mu_{Po}(y))=k_n) \alpha e^{-\alpha y}dy = \bigO{n^{1 - 2\alpha(1-\varepsilon)}}.
\]
which is $\smallO{1}$ since by our choice $2\alpha(1-\varepsilon) > 1$. Thus the second claim of the lemma follows.
\end{proof}

\subsection{Joint degrees in \texorpdfstring{$\Gbox$}{G box} and \texorpdfstring{$\GPo$}{G Po}}\label{ssec:joint_degrees_GPo}

To prove the factorization of higher moments of $N_{\Po}(k_n)$ and $\Nbox(k_n)$ as in~\eqref{eq:def_factorial_moments_GPo}, we first have to understand the joint degree distribution in $\GPo$ and $\Gbox$, respectively. This subsequently requires us to analyze the joint neighbourhoods of two points $p, p^\prime$ in these models. To explaining the proof strategy we will use the finite box model, since the formulas there are slightly easier. The results for the Poissonized KPKVB model $\GPo$ follow the same idea.

%We start with a general result for near independent Poisson random variables.
%
%\begin{lemma}\label{lem:near_independence_poisson}
%Let $k_n \to \infty$ and $X_1 = \Po(\lambda_1(n))$, $X_2 = \Po(\lambda_2(n))$ and $Y = \Po(\lambda_3(n))$, be three Poisson random variables where $\lambda_3(n) = \bigO{k_n^{1-\varepsilon}}$, for some $0 < \varepsilon < 1$ and for some $C > 0$,
%\[
%	k_n - C\sqrt{k_n \log(k_n)} \le \lambda_i(n) + \lambda_3(n) \le k_n + C\sqrt{k_n \log(k_n)},
%\] 
%for $i = 1,2$.Then, as $n \to \infty$
%\[
%	\Prob{X_1 + Y = k_n, X_2 + Y = k_n} = (1+\smallO{1}) \Prob{X_1 + Y = k_n}\Prob{X_2 + Y = k_n},
%\]
%where the error term $\smallO{1}$ only depends on $k_n, \varepsilon$ and $C$.
%\end{lemma}
%
%The proof can be found in the Appendix. To see how this lemma can be applied 

For $r\in \N$ and $p_1, \dots, p_r \in \Rcal$, we write $\Gbox\cup \{p_1,\dots,p_r\}$ for the finite box model obtained by adding $p_1,\dots,p_r$ to the vertex set of the graph and adding all corresponding edges according to the connection rule. Then we define, for any positive integer $s$ and $V \subset \{p_1,\dots,p_s\}$,
\begin{equation}\label{eq:def_joint_degree_distribution}
\varphi_{\mathrm{box}}(V, k ; p_1,\dots,p_s) = \Prob{\text{every }p\in V\text{ has degree }k\text{ in }\Gbox \cup \{p_1,\dots,p_s\}}.
\end{equation}
In particular, for two points $p, p^\prime \in \Rcal$ and $V = \{p,p^\prime\}$, $\varphi_{\mathrm{box}}(V, k ;p,p^\prime)$ is the joint degree distribution of $p, p^\prime $ in $\Gbox$. We will use similar notation for the Poissonized KPKVB model. That is, $\GPo \cup \{p_1,\dots,p_r\}$ denotes the Poissonized KPKVB model obtained by adding $p_1,\dots,p_r$ to the vertex set of the graph and adding all corresponding edges and $\varphi_{\Po}(V, k ; p_1,\dots,p_s)$ the corresponding joint degree function.

If we define,
\begin{align*}
	X_1(p,p^\prime) &:= \Po\left(\Mu{\BallPon{p}\setminus \BallPon{p^\prime}}\right),\\
	X_2(p,p^\prime) &:= \Po\left(\Mu{\BallPon{p^\prime}\setminus \BallPon{p}}\right),\\
	Y(p,p^\prime) &:= \Po\left(\Mu{\BallPon{p} \cap \BallPon{p^\prime}}\right)
\end{align*}
then each of these are independent Poisson random variables, while
\[
	\varphi_{\mathrm{box}}(\{p,p^\prime\}, k ;p,p^\prime) 
	= \Prob{X_1(p,p^\prime) + Y(p,p^\prime) = k, X_2(p,p^\prime) + Y(p,p^\prime) = k_n}.
\]
Recall the definition of $y_{k_n,C}^\pm$ from equation~\eqref{eq:def_y_k_C}. We will show (see Lemma~\ref{cor:expected_common_neighbours_Ecal_set}) that for any two points $p, p^\prime$ whose $y$-coordinate is in $\Kcal_C(k_n)$ and whose $x$-coordinates are sufficiently separated, it holds that $\Mu{\BallPon{p} \cap \BallPon{p^\prime}} = \bigO{k_n^{1-\varepsilon}}$. Since the mean of $X_1$ and $X_2$ for such two points is $k_n$, the contribution of the Poisson random variable $Y(p,p^\prime)$ to their degrees becomes negligible as $k_n \to \infty$ and hence the joint degree distribution will factorizes on this set. The main idea is that if $p$ and $p^\prime$ are sufficiently separated in the $x$-direction, then the overlap of their neighbourhoods $\BallPon{p} \cap \BallPon{p^\prime}$ is of smaller order than $\Mu{\BallPon{p}} + \Mu{\BallPon{p^\prime}}$. We now proceed with analyzing theses joint neighbourhoods.

Let $p, p^\prime \in \Rcal$ and denote by $\Ncal_{\text{box}}(p,p^\prime)$ the number of common neighbours of $p$ and $p^\prime$ in $\Gbox \cup \{p,p^\prime\}$. We shall establish an upper bound on the expected number of joint neighbours when $p$ and $p^\prime$ are sufficiently separated. Observe that $\Exp{\Ncal_{\text{box}}(p,p^\prime)} = \Mu{\BallPon{p} \cap \BallPon{p^\prime}}$. 

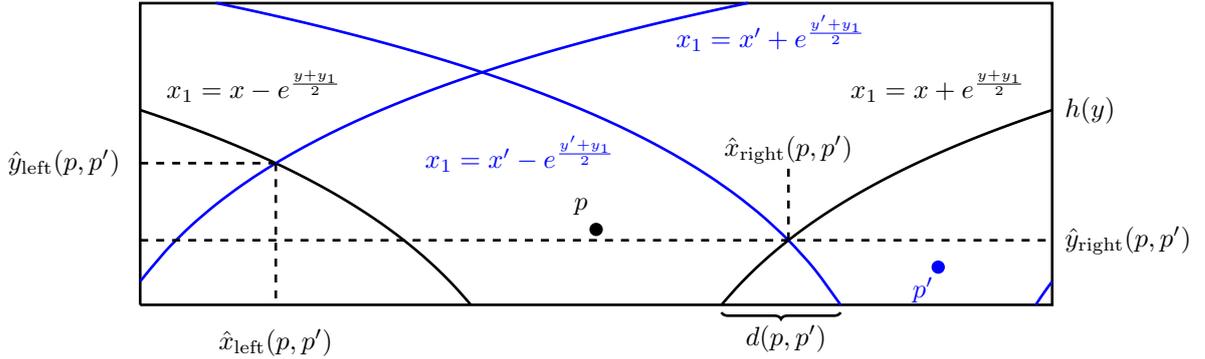
\begin{figure}[!t]
\centering
\begin{tikzpicture}

	\pgfmathsetmacro{\u}{0} %0
	\pgfmathsetmacro{\v}{1} %1
	\pgfmathsetmacro{\uu}{4.5} %1.4
	\pgfmathsetmacro{\vv}{0.5} %0.8
	\pgfmathsetmacro{\r}{6}
	\pgfmathsetmacro{\t}{4}
	
	%The box \Rcal
	\draw[line width=1pt] (-\r,0) -- (\r,0) -- (\r,\t) -- (-\r,\t) -- (-\r,0);
	
    \draw node[fill, circle, inner sep=0pt, minimum size=5pt] at (\u,\v) {};
    \draw node at (-0.2,1.3) {$p$};
    
    \draw node[fill,blue, circle, inner sep=0pt, minimum size=5pt] at (\uu,\vv) {};
    \draw node at (4.3,0.2) {\color{blue}$p^\prime$};
	
	\draw[domain=1.6487:6,smooth,variable=\x,black,line width=1pt] plot (\x, {2*ln(\x)-1});
    \draw[domain=-1.6487:-6,smooth,variable=\x,black,line width=1pt] plot (\x, {2*ln(-\x)-1});
    \draw[domain=5.7840:6,smooth,variable=\x,blue,line width=1pt] plot (\x, {2*ln(\x-4.5)-0.5});
    \draw[domain=3.2160:-5,smooth,variable=\x,blue,line width=1pt] plot (\x, {2*ln(4.5-\x)-0.5});
    \draw[domain=2:-6,smooth,variable=\x,blue,line width=1pt] plot (\x, {2*ln(\x+7.5)-0.5});

    \draw [decorate,decoration={brace},line width=1pt] (3.2160,-0.1) -- (1.6487,-0.1);
    \draw node at (2.5,-0.45) {$d(p,p^\prime)$};
    
%    \draw [dotted,line width=1pt] (2.5298,0.8563) -- (2.5298,1.7);
%    \draw node at (2.5298,2) {$\hat{x}(p,p^\prime)$};
    
    %Define intersection left black and shifted blue curve
    \pgfmathsetmacro{\vast}{2*ln((2*\r - \uu)/(exp(\v/2) + exp(\vv/2)))}
    \pgfmathsetmacro{\uast}{(\uu - 2*\r)/(1+exp((\vv-\v)/2))}
    %Define intersection right black and left blue curve
    \pgfmathsetmacro{\vvast}{2*ln((\uu)/(exp(\v/2) + exp(\vv/2)))}
    \pgfmathsetmacro{\uuast}{(\u*exp(\vv/2) + \uu*exp(\v/2))/(exp(\v/2) + exp(\vv/2))}
    \pgfmathsetmacro{\h}{2*ln(\r)-\v}
%    \draw[dotted,thick,black] (4.5208,2.0174) -- (4.5208,0);
%    \draw[dotted,thick,black] (4.5208,2.0174) -- (6,2.0174);
    
%    \draw node at (4.5208,-0.5) {$w_x(p,p^\prime)$};
%    \draw node at (7,2.0174) {$w_y(p,p^\prime)$};
    
    \draw node at (\r+0.5,\h) {$h(y)$};
    
    \draw node at (2.3,3.6) {\color{blue}$x_1 = x^\prime + e^{\frac{y^\prime + y_1}{2}}$};
    \draw node at (-1,2) {\color{blue}$x_1 = x^\prime - e^{\frac{y^\prime + y_1}{2}}$};
    \draw node at (-4.5,2.9) {$x_1 = x - e^{\frac{y + y_1}{2}}$};
    \draw node at (4.5,2.9) {$x_1 = x + e^{\frac{y + y_1}{2}}$};

	\draw[dashed,black,line width=1pt] (-\r,\vast) -- (\uast,\vast);
	\draw node at (-\r-1,\vast) {\color{black}$\hat{y}_{\text{left}}(p,p^\prime)$};
	\draw[dashed,line width=1pt] (\uast,\vast) -- (\uast,0);
	\draw node at (\uast,-0.5) {$\hat{x}_{\text{left}}(p,p^\prime)$};

	\draw[dashed,black,line width=1pt] (-\r,\vvast) -- (\r,\vvast);
	\draw node at (\r+1,\vvast) {\color{black}$\hat{y}_{\text{right}}(p,p^\prime)$};
	\draw[dashed,line width=1pt] (\uuast,\vvast) -- (\uuast,\vvast+1);
	\draw node at (\uuast,\vvast+1.2) {$\hat{x}_{\text{right}}(p,p^\prime)$};

\end{tikzpicture}
\caption{Schematic representation of the neighbourhoods of $p$ and $p^\prime$ in $\Gbox$ when $|x - x^\prime| > e^{\frac{y}{2}} + e^{\frac{y^\prime}{2}}$ used for the proof of Lemma \ref{lem:common_neighbours_Pcal_n}. Note that although here $p^\prime \notin \BallPon{p}$, this is not true in general. This situation was merely chosen to improve readability of the figure.}
\label{fig:representation_disjoint_neighbourhoods_P_n_3}
\end{figure}

We start by analyzing the shape of the joint neighbourhood. Due to symmetry and the fact that we have identified the left and right boundaries of the box $\Rcal$, we can, without loss of generality, assume that $p = (0,y)$ and $p^\prime = (x^\prime,y^\prime)$ with $x^\prime > 0$. To understand the computation it is helpful to have a picture. Figure~\ref{fig:representation_disjoint_neighbourhoods_P_n_3} shows such an example. There are several different quantities that are important. The first are the heights where the left and right boundaries of the ball $\BallPon{p}$ hit the boundaries of the box $\Rcal$. Since $x = 0$ these heights are the same and we denote their common value by $h(y)$. We also need to know the coordinates $\hat{y}_{\text{right}}(p,p^\prime)$ and $\hat{x}_{\text{right}}(p,p^\prime)$ of the intersection of the right boundary of the neighbourhood of $p$ with the left boundary of the neighbourhood of $p^\prime$ and those for the intersection of the left boundary of the neighbourhood of $p$ with the right boundary of the neighbourhood of $p^\prime$, which we denote by $\hat{y}_{\text{left}}(p,p^\prime)$ and $\hat{x}_{\text{left}}(p,p^\prime)$. Finally we will denote by $d(p,p^\prime)$ the distance between the lower right boundary of $\BallPon{p}$ and the lower left of $\BallPon{p^\prime}$, which is positive only when the bottom parts of both neighbourhoods do not intersect, as is the case in Figure~\ref{fig:representation_disjoint_neighbourhoods_P_n_3}. The condition $d(p,p^\prime) > 0$ is exactly the right notion for $p$ and $p^\prime$ being sufficiently separated.

We start by deriving expressions for these important coordinates. For this we introduce some notation. For any $p = (x,y) \in \Rcal$ we will define the left and right boundary functions as, respectively,
\begin{align}
	b_p^-(z) &= \begin{cases}
		2 \log\left(x-z\right) - y &\mbox{if }  -\frac{\pi}{2} e^{R/2} \le z \le x - e^{y/2}  \\
		2\log\left(\pi e^{R/2} + x - z\right) - y 
			&\mbox{if } x - e^{(y + R)/2} + \pi e^{R/2} \le z \le \frac{\pi}{2} e^{R/2}\\
		0 &\mbox{otherwise}
	\end{cases} \label{eq:def_left_boundary_Bp} \\ 
	b_p^+(z) &= \begin{cases}
		2 \log\left(z-x\right) - y &\mbox{if } x + e^{y/2} \le z \le \frac{\pi}{2} e^{R/2} \\
		2\log\left(\pi e^{R/2} + z - x\right) - y 
			&\mbox{if } -\frac{\pi}{2} e^{R/2} \le z \le x + e^{(y + R)/2} - \pi e^{R/2}\\
		0 &\mbox{otherwise}
	\end{cases}
\end{align}
Note that these functions describe the boundaries of the ball $\BallPon{p}$. In particular, $p^\prime = (x^\prime, y^\prime) \in \BallPon{p}$ if and only if $y^\prime \ge \min\left\{b_p^-(x^\prime), b_p^+(x^\prime)\right\}$.

We shall derive the expressions for the point $(\hat{x}_{\text{left}}(p,p^\prime), \hat{y}_{\text{left}}(p,p^\prime))$. The $x$-coordinate $\hat{x}_{\text{left}}(p,p^\prime)$ is the solution to the equation $b_{p}^+(z) = b_{p^\prime}^-(z)$ for $-\frac{\pi}{2} e^{R/2} \le z \le + e^{y/2}$. This equation becomes
\[
	2\log\left(\pi e^{R/2} + z - x^\prime\right) - y^\prime = 2 \log\left(x^\prime-z\right) - y^\prime,
\]
whose solution is $\frac{x^\prime - \pi e^{R/2}}{1 + e^{(y^\prime - y)/2}}$. Plugging this into either the left or right hand side of the above equation yields the $y$-coordinate $\hat{y}_{\text{left}}(p,p^\prime) = 2 \log\left(\frac{\pi e^{R/2} - x^\prime}{e^{y/2} + e^{y^\prime/2}}\right)$. The expressions for $\hat{x}_{\text{right}}(p,p^\prime$ and $\hat{y}_{\text{right}}(p,p^\prime)$ are derived in a similar way. The expression for $d(p,p^\prime)$ follows as the difference $b_{p^\prime}^-(x^\prime - e^{y^\prime/2}) - b_p^+(e^{y/2})$.

The full expressions of all coordinates are given below for further reference.

\begin{align}
	h(y) &= R - y + 2\log\left(\frac{\pi}{2}\right) \label{eq:def_height_y_P_n}\\
%	h_1(p^\prime) &= 2\log\left(x^\prime + \frac{\pi}{2}e^{\frac{R}{2}}\right) - y^\prime \label{eq:def_height_left_P_n} \\
%	h_2(p^\prime) &= 2\log\left(\frac{\pi}{2}e^{\frac{R}{2}} - x^\prime\right) - y^\prime 
%		\label{eq:def_height_right_P_n} \\
%	\iota_x(p,p^\prime) &= \frac{(x^\prime-x)e^{\frac{y}{2}}}{e^{\frac{y}{2}} - e^{\frac{y^\prime}{2}}}
%		\label{eq:def_x_intersection_close_boundaries}\\
%	\iota_y(p,p^\prime) &= 2\log\left(\frac{x^\prime - x}{e^{\frac{y}{2}} - e^{\frac{y^\prime}{2}}}\right)
%		\label{eq:def_y_intersection_close_boundaries}\\
	\hat{y}_{\text{right}}(p,p^\prime) &= 2\log\left(\frac{x^\prime}{e^{\frac{y}{2}} + e^{\frac{y^\prime}{2}}}\right)\\
	\hat{x}_{\text{right}}(p,p^\prime) &= \frac{x^\prime}{1 + 	
		e^{\frac{y^\prime - y}{2}}},\\
	\hat{y}_{\text{left}}(p,p^\prime) &= 2 \log\left(\frac{\pi e^{R/2} - x^\prime}{e^{\frac{y}{2}} + e^{\frac{y^\prime}{2}}}\right),\\
	\hat{x}_{\text{left}}(p,p^\prime) &= \frac{x^\prime - \pi e^{R/2}}{1 + e^{\frac{y^\prime - y}{2}}}, \\
	d(p,p^\prime) &= |x - x^\prime|_n - \left(e^{\frac{y}{2}} + e^{\frac{y^\prime}{2}}\right).
	\label{eq:def_d_p_p_prime}
\end{align}

The following result shows that if $d(p,p^\prime) > 0$, then the expected number of common neighbours is $\smallO{\Mu{\BallPon{p}} + \Mu{\BallPon{p^\prime}}}$.

\begin{lemma}\label{lem:common_neighbours_Pcal_n}
Let $p, p^\prime \in \Rcal$. Then, whenever $|x - x^\prime|_n > \left(e^{\frac{y}{2}} + e^{\frac{y^\prime}{2}}\right)$,
\[
	\Exp{\Ncal_{\text{box}}(p,p^\prime)} \le \Mu{\BallPon{p}}
	\left(\left(\frac{|x - x^\prime|}{e^{\frac{y}{2}} + e^{\frac{y^\prime}{2}}}\right)^{-(2\alpha - 1)}  
	+ \frac{\nu}{\xi} e^{-(\alpha - \frac{1}{2})(R-y)}\right).
\]
\end{lemma}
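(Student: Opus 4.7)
\textbf{Proof plan for Lemma~\ref{lem:common_neighbours_Pcal_n}.} The plan is to reduce the statement to a direct evaluation of the integral expression for the measure of the intersection $\BallPon{p}\cap\BallPon{p'}$, using two geometric facts: the separation hypothesis $|x-x'|_n > A$ (where $A:=e^{y/2}+e^{y'/2}$) forces the intersection to sit strictly above a threshold height; and the slice of $\BallPon{p}$ at each height controls the slice of the intersection. Concretely, I would first exploit the horizontal shift and reflection invariance of $\Gbox$ to assume $p=(0,y)$ and $p'=(x',y')$ with $D:=|x-x'|_n = x'\in(A,\pi e^{R/2}/2]$. Writing $\Exp{\Ncal_{\text{box}}(p,p')} = \Mu{\BallPon{p}\cap\BallPon{p'}} = \int_0^R \ell(z)\,\tfrac{\alpha\nu}{\pi}e^{-\alpha z}\,dz$, where $\ell(z)$ denotes the $1$-dimensional Lebesgue measure of the slice at height $z$, reduces the problem to a bound on $\ell(z)$ that is tight enough.

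The geometric input comes in two parts. First, at height $z$ the slices of $\BallPon{p}$ and $\BallPon{p'}$ have respective $x$-lengths $2e^{(y+z)/2}$ and $2e^{(y'+z)/2}$ (truncated by the identified boundary at height $h(y)$, respectively $h(y')$); when their combined length $Ae^{z/2}$ is less than $D$ they are disjoint in the direct sense, and since $D\le\pi e^{R/2}/2$ their wrap-around intersection is also empty. This gives $\ell(z)=0$ for $z<\hat{y}_{\text{right}}(p,p')=2\log(D/A)$. Second, regardless of wrap-around considerations, $\ell(z)\le |X_p(z)|$, where $|X_p(z)|$ denotes the $x$-length of the slice of $\BallPon{p}$; that is $2e^{(y+z)/2}$ for $z<h(y)$, and $\pi e^{R/2}$ for $z\ge h(y)$.

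Combining these, I would split the nontrivial range $[\hat{y}_{\text{right}},R]$ at $z=h(y)$. A direct calculation using $\xi = 2\alpha\nu/[\pi(\alpha-\tfrac12)]$ shows
\[
\int_{\hat{y}_{\text{right}}}^{h(y)} 2e^{(y+z)/2}\frac{\alpha\nu}{\pi}e^{-\alpha z}\,dz \;\le\; \xi e^{y/2}\left(\frac{D}{A}\right)^{-(2\alpha-1)},
\]
while using $e^{h(y)/2} = \tfrac{\pi}{2}e^{(R-y)/2}$ and $(2/\pi)^{2\alpha}\le 1$ yields
\[
\int_{h(y)}^R \pi e^{R/2}\frac{\alpha\nu}{\pi}e^{-\alpha z}\,dz \;\le\; \nu e^{y/2} e^{-(\alpha-1/2)(R-y)}.
\]
Adding the two estimates gives the overall bound $\xi e^{y/2}\bigl((D/A)^{-(2\alpha-1)} + (\nu/\xi) e^{-(\alpha-1/2)(R-y)}\bigr)$, and it then remains to replace the prefactor $\xi e^{y/2}$ by $\Mu{\BallPon{p}}$ by invoking the identity $\Mu{\BallPon{p}} = \xi e^{y/2}(1-\phi_n(y))$ established within the proof of Lemma~\ref{lem:average_degree_G_box}.

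The main obstacle is this last replacement: since $\phi_n(y)$ need not have a fixed sign, $\xi e^{y/2}$ is not literally bounded by $\Mu{\BallPon{p}}$, and so the clean swap requires a careful accounting in which the $(2/\pi)^{2\alpha}$-type slack in the second integral absorbs the multiplicative correction $1/(1-\phi_n(y))$ on both terms. Handling the degenerate regime $y\le 2\log(\pi/2)$, in which $h(y)\ge R$ and the second integral vanishes, is strictly easier and just amounts to dropping that contribution. Subject to this bookkeeping, the decomposition above delivers the stated bound uniformly in $p,p'$.
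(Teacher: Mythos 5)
Your overall scheme mirrors the paper's: fix $p=(0,y)$ by invariance, observe the intersection of $\BallPon{p}$ and $\BallPon{p'}$ lies above $\hat{y}_{\mathrm{right}}(p,p')$, bound the intersection by the part of $\BallPon{p}$ above that height, split the resulting integral at $h(y)$, and evaluate. The two partial integrals you obtain and the formulas $\xi e^{y/2}(D/A)^{-(2\alpha-1)}$ and $\nu e^{y/2}(2/\pi)^{2\alpha}e^{-(\alpha-1/2)(R-y)}$ are correct. Where you depart from a complete proof is the final replacement of $\xi e^{y/2}$ by $\Mu{\BallPon{p}}$; you flag this as ``the main obstacle'' but do not resolve it, and the resolution you sketch does not actually work. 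Two points here. First, a small correction: $\phi_n(y)$ \emph{does} have a fixed sign. Since $\nu/\xi = \pi(2\alpha-1)/(4\alpha)$, the combination $(\pi/2)^{-(2\alpha-1)} - (\nu/\xi)(\pi/2)^{-2\alpha} = (\pi/2)^{-2\alpha}\bigl(\pi/2 - \nu/\xi\bigr) > 0$, so $\phi_n(y)>0$ always and $\Mu{\BallPon{p}} = \xi e^{y/2}(1-\phi_n(y)) < \xi e^{y/2}$. Thus the naive swap always goes the wrong way for an upper bound, and you do need genuine slack.

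Second, the slack you propose to use is not enough. Writing $T=(D/A)^{-(2\alpha-1)}$, $S = e^{-(\alpha-1/2)(R-y)}$, and $M=\Mu{\BallPon{p}}$, the inequality you would need reduces (after dividing out $M$ and $e^{y/2}$) to $\phi_n T \le (\nu/\xi)S\bigl(1-(2/\pi)^{2\alpha}-\phi_n\bigr)$. But $(2/\pi)^{2\alpha}\to 2/\pi$ as $\alpha\to 1/2^+$, while for $y$ near $R$ one has $\phi_n(y)$ close to $(\pi/2)^{-(2\alpha-1)}\to 1$, so the bracket $1-(2/\pi)^{2\alpha}-\phi_n$ becomes negative; the right side is then nonpositive while the left is nonnegative, and the intended absorption fails. (For what it's worth, the paper's own displayed proof glosses over exactly this point: its penultimate ``$\le$'' discards the negative tail terms and then asserts an ``$=$'' to $\Mu{\BallPon{p}}(\cdots)$ which is not an identity; so the difficulty you noticed is real.)

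The clean way to close the gap is \emph{not} to discard the negative tails of the two integrals before making the comparison. Keeping them, one has $\Mu{A} = B_1 + B_2$ and $\Mu{\BallPon{p}} = C_1 + B_2$ where $B_1 = \xi e^{y/2}\bigl(e^{-(\alpha-1/2)\hat{y}} - e^{-(\alpha-1/2)h(y)}\bigr)$, $C_1 = \xi e^{y/2}\bigl(1 - e^{-(\alpha-1/2)h(y)}\bigr)$, and $B_2 = \nu e^{R/2}\bigl(e^{-\alpha h(y)} - e^{-\alpha R}\bigr)$. A direct computation gives
\[
\Mu{A} - \Mu{\BallPon{p}}\,e^{-(\alpha-\frac12)\hat{y}}
= \bigl(1 - e^{-(\alpha-\frac12)\hat{y}}\bigr)\Bigl(B_2 - \xi e^{y/2} e^{-(\alpha-\frac12)h(y)}\Bigr),
\]
and since $B_2 \le \nu e^{y/2}S(2/\pi)^{2\alpha}$ while $\xi e^{y/2}e^{-(\alpha-1/2)h(y)} = \xi e^{y/2}S(2/\pi)^{2\alpha-1}$, the second factor is at most $e^{y/2}S(2/\pi)^{2\alpha}(\nu - \xi\pi/2) < 0$ because $\xi\pi/2 = 2\alpha\nu/(2\alpha-1) > \nu$; and the first factor is $\ge 0$ because the separation hypothesis forces $\hat{y}>0$. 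So in fact $\Mu{A}\le \Mu{\BallPon{p}}\,e^{-(\alpha-1/2)\hat{y}}$, which is even stronger than the stated bound, and the degenerate case $y\le 2\log(\pi/2)$ (no top piece, $B_2 = 0$) follows by the same computation with $h(y)$ replaced by $R$. In short: right approach and a correctly diagnosed obstacle, but the repair you propose does not hold uniformly; you need to retain the endpoint contributions rather than rely on the $(2/\pi)^{2\alpha}$ slack.
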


\begin{proof}
Again, without loss of generality we assume that $p = p_0 = (0,y)$ and $p^\prime = (x^\prime, y^\prime)$ with $0 \le x^\prime \le \frac{\pi}{2} e^{R/2}$. Note that since $0 < x^\prime \le \frac{\pi}{2} e^{R/2}$, $ \hat{y}_{\mathrm{right}}(p,p^\prime) \le \hat{y}_{\mathrm{left}}(p,p^\prime)$. We write $\hat{y}$ for $\hat{y}_{\mathrm{right}}(p,p^\prime)$ and observe that below $\hat{y}$ the balls $\BallPon{p}$ and $\BallPon{p^\prime}$ are disjoint. Therefore, if we define $A := \left\{p_1 = (x_1,y_1) \in \Rcal \cap \BallPon{p} \, : \, y_1 \ge \hat{y} \right\}$. Then
\[
	\Exp{\Ncal_{\text{box}}(p,p^\prime)} \le \Mu{A}.
\]
We proceed with computing the right hand side
\begin{align*}
	\Mu{A} &= \int_{\hat{y}}^{h(y)} 
		\int_{- e^{\frac{y + y_1}{2}}}^{e^{\frac{y^\prime+y_1}{2}}} 
		f(x_1,y_1) \dd x_1 \dd y_1
		+ \int_{h(y)}^{R} \int_{-\frac{\pi}{2}e^{R/2}}^{\frac{\pi}{2} e^{R/2}} 
		\hspace{-10pt}  f(x_1,y_1) \dd x_1 \dd y_1 \\
	&= \frac{2\alpha \nu}{\pi}e^{\frac{y}{2}}
		\int_{\hat{y}}^{h(y)} e^{-(\alpha - \frac{1}{2})y_1} \dd y_1 
		+ \alpha \nu e^{R/2} \int_{h(y)}^R e^{-\alpha y_1} \dd y_1\\
	&\le \xi\left(e^{\frac{y}{2}} + e^{\frac{y^\prime}{2}}\right) e^{-(\alpha-\frac{1}{2})\hat{y}}
		+ \nu e^{R/2} e^{-\alpha h(y)} \\
	&= \Mu{\BallPon{p}}\left(
		e^{-(\alpha-\frac{1}{2})\hat{y}} + \frac{\nu}{\xi} e^{-(\alpha - \frac{1}{2})(R-y)}\right).
\end{align*}
The result follows by plugging in 
\[
	\hat{y} := \hat{y}_{\text{right}}(p,p^\prime) = 2 \log\left(\frac{x^\prime}{e^{\frac{y}{2}} + e^{\frac{y^\prime}{2}}}\right),
\]
and noting that $x^\prime$ is the same as $|x - x^\prime|$, by our generalization step.

\end{proof} 

We can also prove a similar result for the Poissonized KPKVB model $\GPo$, denoting by $\Ncal_{\Po}(p,p^\prime)$ the number of joint neighbours in $\GPo \cup \{p,p^\prime\}$.

\begin{lemma}\label{lem:common_neighbours_KPKVB}
Let $0 < \varepsilon < 1$, $p, p^\prime \in \Rcal$ with $y,y^\prime \le (1-\varepsilon)R$, denote by $\Ncal_{\Po}(p,p^\prime)$ the number of joint neighbours of $p, p^\prime$ in $\GPo$ and let $K$ be the constant from Lemma~\ref{lem:asymptotics_Omega_hyperbolic}. Then, whenever $|x - x^\prime|_n > \left(e^{\frac{y}{2}} + e^{\frac{y^\prime}{2}}\right)\left(1 + \frac{\pi^2 K}{4}\right)$,
\[
	\Exp{\Ncal_{\Po}(p,p^\prime)} \le \Mu{\BallHyp{p}}
	\left(e^{(2\alpha - 1)\lambda} \left(\frac{|x - x^\prime|}{e^{\frac{y}{2}} + e^{\frac{y^\prime}{2}}}\right)^{-(2\alpha - 1)}  
	+ \frac{\nu}{\xi} e^{-(\alpha - \frac{1}{2})(R-y)}\right),
\]
where
\[
	\lambda = \log\left(1 + \frac{\pi^2 K}{4}\right).
\]
\end{lemma}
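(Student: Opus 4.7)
The plan is to adapt the proof of Lemma~\ref{lem:common_neighbours_Pcal_n}, replacing the box-model connection radius $e^{(y+y_1)/2}$ with the hyperbolic radius $\Phi(y,y_1)$. WLOG take $p = (0,y)$ and $p' = (x',y')$ with $0 < x' \le \tfrac{\pi}{2} e^{R/2}$. The key geometric input is Lemma~\ref{lem:asymptotics_Omega_hyperbolic}: for $y+y_1 < R$ one has $\Phi(y,y_1) \le e^{(y+y_1)/2} + K e^{3(y+y_1)/2 - R}$. Since $\Phi$ is, by definition, bounded by $\tfrac{\pi}{2} e^{R/2}$, and since $e^{(y+y_1)/2}$ is comparable to $\Phi$ from below, one obtains $e^{(y+y_1)/2} \le \tfrac{\pi}{2} e^{R/2}$, hence $e^{y+y_1-R} \le \pi^2/4$. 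Combining these yields the clean envelope $\Phi(y,y_1) \le \beta \cdot e^{(y+y_1)/2}$ with $\beta := 1 + \pi^2 K/4 = e^{\lambda}$.

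Next, identify where common neighbours can live. For any $p_1 = (x_1,y_1) \in \BallHyp{p} \cap \BallHyp{p'}$, the triangle inequality in the cylinder metric $|\cdot|_{\pi e^{R/2}}$ gives $|x-x'|_n \le \Phi(y,y_1) + \Phi(y',y_1) \le \beta(e^{y/2}+e^{y'/2}) e^{y_1/2}$ in the regime $y+y_1, y'+y_1 < R$; points with one of these inequalities reversed sit above the wrap-around height $R-y$ and are absorbed by the boundary region below. This forces $y_1 \ge \hat{y}^* := 2 \log\bigl( |x-x'|_n / (\beta(e^{y/2}+e^{y'/2})) \bigr)$, and the hypothesis of the lemma is exactly what guarantees $\hat{y}^* > 0$. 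Therefore $\Mu{\BallHyp{p} \cap \BallHyp{p'}} \le \Mu{\BallHyp{p} \cap \{y_1 \ge \hat{y}^*\}}$.

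Finally, estimate $\Mu{\BallHyp{p} \cap \{y_1 \ge \hat{y}^*\}}$ by splitting the height range at $R-y$. On $[\hat{y}^*, R-y]$ the width of $\BallHyp{p}$ is $2\Phi(y,y_1)$; using the refined upper bound $\Phi \le e^{(y+y_1)/2} + K e^{3(y+y_1)/2 - R}$ the leading contribution $\int_{\hat{y}^*}^{R-y} 2 e^{(y+y_1)/2} \cdot (\alpha\nu/\pi) e^{-\alpha y_1} \dd y_1$ is at most $\xi e^{y/2} e^{-(\alpha-1/2)\hat{y}^*}$, which equals $\mu(y) \beta^{2\alpha-1} (|x-x'|_n/(e^{y/2}+e^{y'/2}))^{-(2\alpha-1)} = e^{(2\alpha-1)\lambda} \mu(y)(|x-x'|_n/(e^{y/2}+e^{y'/2}))^{-(2\alpha-1)}$. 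Using $\mu(y) = (1+o(1)) \Mu{\BallHyp{p}}$ from Lemma~\ref{lem:average_degree_P_n} then produces the main term in the claim, while the $K$-correction integral is of lower order in $R$ and can be absorbed into the constant. On $[R-y,R]$ the width is at most $\pi e^{R/2}$, and the integral yields the boundary term $(\nu/\xi)\Mu{\BallHyp{p}} e^{-(\alpha-1/2)(R-y)}$ exactly as in the box-model proof. The main subtlety is the bookkeeping of the $\beta$-factor: bounding the width uniformly by $2\beta e^{(y+y_1)/2}$ would introduce an extra factor of $\beta$ beyond the one arising from the shift of the intersection height $\hat{y}^*$, so one must use the refined $\Phi$ bound in the width estimate while using the inflated $\hat{y}^*$ in the height truncation; verifying that the $K$-correction integral is indeed absorbable is the main piece of computation.
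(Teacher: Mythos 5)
Your approach is essentially the paper's: truncate $\BallHyp{p}\cap\BallHyp{p'}$ at the height where the two balls can first overlap, lower-bound that truncation height via Lemma~\ref{lem:asymptotics_Omega_hyperbolic}'s upper bound on $\Phi$, and split the resulting measure estimate at $y_1 = R-y$. The paper phrases the truncation height as the intersection of the two ball boundaries, whereas you phrase it as a triangle-inequality constraint in the cylinder metric; these are equivalent. Two details in your write-up need attention, though.

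First, the argument you give for the envelope $\Phi(y,y_1)\le\beta\,e^{(y+y_1)/2}$ with $\beta=1+\pi^2K/4$ is circular: you deduce $e^{(y+y_1)/2}\le \tfrac{\pi}{2}e^{R/2}$ from ``$e^{(y+y_1)/2}$ is comparable to $\Phi$ from below,'' but that lower-bound comparison is itself only useful once you already know the $K$-correction is small, which is what you are trying to establish. The clean reason the envelope holds is simply that $\Phi$ is only defined on the range $y+y_1<R$, so $e^{y+y_1-R}<1\le\pi^2/4$ and hence $Ke^{\frac32(y+y_1)-R}\le \tfrac{\pi^2K}{4}e^{(y+y_1)/2}$ directly. (The paper's proof actually never invokes such an envelope for the $\hat y$ bound; it substitutes the threshold value of $y_1$ into the two-term $\Phi$-bound and uses $(x')^2\le \tfrac{\pi^2}{4}e^R$ to produce the $\pi^2K/4$ constant, which is where the specific value of $\lambda$ in the statement comes from.)

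Second, your claim that ``the $K$-correction integral is of lower order in $R$ and can be absorbed into the constant'' is not correct, and this is in fact the bookkeeping trap you warn about. Computing, for $\alpha<3/2$,
\[
\int_{\hat y^*}^{R-y} Ke^{\frac32(y+y_1)-R}e^{-\alpha y_1}\,\dd y_1
= \bigO{1}\,e^{y/2}e^{-(\alpha-\frac12)(R-y)},
\]
which is of exactly the same order as the boundary term coming from $[R-y,R]$ — not lower order. The paper's proof sidesteps this by bounding $\Phi(y,y_1)\le(1+K)e^{(y+y_1)/2}$ uniformly on $y_1\le R-y$ (using $\frac32 y_1\le (R-y)+\frac12 y_1$) before doing the integral, which produces a $(1+K)$ prefactor in front of $\mu(y)\,e^{-(\alpha-\frac12)\hat y}$; that prefactor does not actually appear in the lemma's statement, and neither does the discrepancy between $\mu(y)=\xi e^{y/2}$ and $\Mu{\BallHyp{p}}$ that you flag. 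These constant-level mismatches are present in the paper's own proof as well; they are harmless for the downstream use (Lemma~\ref{cor:expected_common_neighbours_KPKVB} only needs a big-$O$ bound), but your ``absorbed into the constant'' claim overstates what happens — you should either carry the $(1+K)$ factor explicitly as the paper does, or note that the resulting bound is the stated one up to a bounded multiplicative constant.
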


\begin{proof}
We will proceed in a similar fashion as for Lemma~\ref{lem:common_neighbours_Pcal_n}. That is, we will bound the expected number of common neighbours by the number of neighbors of $p$ whose $y$-coordinate is above the intersection of the right boundary of $\BallHyp{p}$ and the left boundary of $\BallHyp{p^\prime}$. Denote by $\hat{y}$ the height of this intersection point. Then
\begin{align*}
	\Exp{\Ncal_{\Po}(p,p^\prime)} &\le \frac{2\alpha \nu}{\pi} \int_{\hat{y}}^{R-y} \Phi(y,y_1) e^{-\alpha y_1} \dd y_1
		+ \alpha \nu e^{R/2} \int_{R-y}^{R} e^{-\alpha y_1} \dd y_1.
\end{align*} 
The second integral is bounded by $\frac{\nu}{\xi} \Mu{\BallPon{y}} e^{-(\alpha - \frac{1}{2})(R-y)}$. We bound the first integral using Lemma~\ref{lem:asymptotics_Omega_hyperbolic} as
\begin{align*}
	\frac{2\alpha \nu}{\pi} \int_{\hat{y}}^{R-y} \Phi(y,y_1) e^{-\alpha y_1} \dd y_1
	&\le \frac{2\alpha \nu}{\pi} \int_{\hat{y}}^{R-y} \left(e^{\frac{y+y_1}{2}} + K e^{\frac{3}{2}(y + y_1)-R}\right) 
		e^{-\alpha y_1} \dd y_1 \\
	&\le \frac{2\alpha \nu}{\pi} (1+K)e^{\frac{y}{2}} \int_{\hat{y}}^{R-y} e^{-(\alpha - \frac{1}{2}) y_1} \dd y_1\\
	&\le (1 + K)\Mu{\BallPon{p}}  e^{-(\alpha - \frac{1}{2})\hat{y}},
\end{align*}
where we used that $\frac{3y_1}{2} \le R - y + \frac{y_1}{2}$ for all $y_1 \le R-y$ for the second line. 

It remains to compute $\hat{y}$, for which we will establish the following bound
\begin{equation}\label{eq:joint_neighbors_KPKVB_intersection}
	\hat{y} \ge 2 \log\left(\frac{x^\prime}{e^{\frac{y}{2}} + e^{\frac{y^\prime}{2}}}\right) - 2\lambda.
\end{equation}

To show~\eqref{eq:joint_neighbors_KPKVB_intersection} we note that for any point $y_1 \ge \hat{y}$, the corresponding $x$-coordinate of the left boundary of $\BallHyp{p^\prime}$ must be to the left of that of the ball $\BallHyp{p}$, i.e.
$x^\prime - \Phi(y^\prime, y_1) \le \Phi(y,y_1)$. Therefore it is enough to show that for all 
\[
	y_1 \le 2 \log\left(\frac{x^\prime}{e^{\frac{y}{2}} + e^{\frac{y^\prime}{2}}}\right) - 2\lambda
\]
it holds that $\Phi(y,y_1) \le x^\prime - \Phi(y^\prime, y_1)$, with $\lambda$ as defined in the statement of the lemma. Note that by assumption on $x^\prime := |x - x^\prime|_n$ (since we can take $x = 0$) the above upper bound is non-negative. Using Lemma~\ref{lem:asymptotics_Omega_hyperbolic} it suffices to prove that for all such $y_1$,
\[
	e^{\frac{y + y_1}{2}} + K e^{\frac{3}{2}(y + y_1) - R} \le x^\prime - e^{\frac{y^\prime + y_1}{2}}
	- K e^{\frac{3}{2}(y^\prime + y_1) - R},
\]
which is equivalent to
\[
	\left(e^{\frac{y}{2}} + e^{\frac{y^\prime}{2}}\right) e^{\frac{y_1}{2}} 
	+ K e^{-R} \left(e^{\frac{y}{2}} + e^{\frac{y^\prime}{2}}\right)^3 e^{\frac{3 y_1}{2}} \le x^\prime.
\]
Plugging the upper bound for $y_1$ into the left hand side and using that $(e^{y/2} + e^{y^\prime/2})^3 \ge e^{3y/2} + e^{3y^\prime/2}$, we obtain
\begin{align*}
	\left(e^{\frac{y}{2}} + e^{\frac{y^\prime}{2}}\right) e^{\frac{y_1}{2}} 
		+ K e^{-R} \left(e^{\frac{y}{2}} + e^{\frac{y^\prime}{2}}\right)^3 e^{\frac{3 y_1}{2}}
	&\le x^\prime e^{-\lambda} + K e^{-R} (x^\prime)^3 e^{-3\lambda}
		\le x^\prime \left(e^{-\lambda} + \frac{\pi^2 K}{4} e^{-3\lambda}\right)\\
	&\le x^\prime e^{-\lambda} \left(1 + \frac{\pi^2 K}{4}\right) = x^\prime,
\end{align*} 
where we also used that $x^\prime \le \frac{\pi}{2} e^{-R/2}$.
\end{proof}

Let us now define the \emph{stripe}
\begin{equation}\label{eq:def_stripe}
	\stripknc = \Rcal \cap (\R_+ \times \Kcal_C(k_n)).
\end{equation} 
and in addition define, for any $0 < \varepsilon < 1$, the following set
\begin{equation}\label{eq:def_joint_degree_set_E_growing_k}
	\mathcal{E}_{\varepsilon}(k_n) = \left\{(p,p^\prime) \in \stripknc
		\, : \,  |x - x^\prime|_n > k_n^{1 + \varepsilon} \right\}, 
\end{equation}
where $|x|_n = \min\{|x|, \pi e^{R/2} - |x|\}$ denotes the norm on the finite box $\Rcal$ where the left and right boundaries are identified. Then for any two points $p,p^\prime \in \mathcal{E}_\varepsilon(k_n)$ the expected number of joint neighbours is $\smallO{k_n}$.

\begin{lemma}\label{cor:expected_common_neighbours_Ecal_set}
Fix $0 < \varepsilon < 1$ and let $\varepsilon^\prime = \min\{\varepsilon(2\alpha - 1),\varepsilon\}$. Then for all $(p,p^\prime) \in \mathcal{E}_\varepsilon(k_n)$, as $n \to \infty$,
\[
	\Mu{\BallPon{p} \cap \BallPon{p^\prime}} = \bigO{k_n^{1-\varepsilon^\prime}}.
\] 
\end{lemma}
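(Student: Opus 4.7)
The plan is to apply Lemma~\ref{lem:common_neighbours_Pcal_n} essentially as a black box to each pair $(p,p')\in\mathcal{E}_\varepsilon(k_n)$, after verifying its hypothesis and supplying an estimate for $\Mu{\BallPon{p}}$ in the range of heights we care about. First, since $y,y'\in \Kcal_C(k_n)$, the definition~\eqref{eq:def_y_k_C} gives
\[
e^{y/2}\le e^{y_{k_n,C}^{+}/2}=\frac{k_n+C\sqrt{k_n\log k_n}}{\xi} = \Theta(k_n),
\]
and likewise for $y'$. Hence $e^{y/2}+e^{y'/2}=\bigO{k_n}$, so the separation assumption $|x-x'|_n>k_n^{1+\varepsilon}$ defining $\mathcal{E}_\varepsilon(k_n)$ dominates this for all sufficiently large $n$, and the hypothesis $|x-x'|_n>e^{y/2}+e^{y'/2}$ of Lemma~\ref{lem:common_neighbours_Pcal_n} is satisfied. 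Moreover, Lemma~\ref{lem:average_degree_G_box} yields $\Mu{\BallPon{p}}=(1+o(1))\xi e^{y/2}=\bigO{k_n}$, uniformly for $p\in\stripknc$.

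Applying Lemma~\ref{lem:common_neighbours_Pcal_n} then gives
\[
\Mu{\BallPon{p}\cap\BallPon{p'}}\le \bigO{k_n}\cdot\left(\left(\frac{|x-x'|_n}{e^{y/2}+e^{y'/2}}\right)^{-(2\alpha-1)}+\frac{\nu}{\xi}\,e^{-(\alpha-1/2)(R-y)}\right).
\]
The first term in the bracket is $\bigO{k_n^{-\varepsilon(2\alpha-1)}}$, because the ratio is $\Omega(k_n^{\varepsilon})$; multiplied by the $\bigO{k_n}$ prefactor this contributes $\bigO{k_n^{1-\varepsilon(2\alpha-1)}}\subseteq\bigO{k_n^{1-\varepsilon'}}$ by the definition of $\varepsilon'$.

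The one place where some care is needed is the ``boundary'' contribution $\bigO{k_n}\cdot e^{-(\alpha-1/2)(R-y)}$. Using $R=2\log(n/\nu)$ together with $e^{y/2}=\bigO{k_n}$ for $y\in\Kcal_C(k_n)$, a direct calculation gives $e^{-(\alpha-1/2)(R-y)}=\bigO{(k_n/n)^{2\alpha-1}}$, so this second contribution is $\bigO{k_n^{2\alpha}/n^{2\alpha-1}}$. Under the ambient assumption $k_n\ll n^{1/(2\alpha+1)}$ (the range relevant to Theorem~\ref{thm:mainktoinfty}, in whose proof this corollary will be invoked), an elementary check gives $k_n^{2\alpha-1+\varepsilon'}=o(n^{2\alpha-1})$, so this second contribution is $o(k_n^{1-\varepsilon'})$ and is absorbed into the first. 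I do not anticipate any real obstacle; the lemma is essentially a bookkeeping corollary of Lemma~\ref{lem:common_neighbours_Pcal_n} combined with the height bounds provided by $\Kcal_C(k_n)$.
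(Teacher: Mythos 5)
Your argument is correct and follows essentially the same route as the paper's: invoke Lemma~\ref{lem:common_neighbours_Pcal_n} after checking its separation hypothesis, use $\Kcal_C(k_n)$ to get $\Mu{\BallPon{p}}=\bigO{k_n}$, bound the geometric term by $\bigO{k_n^{-\varepsilon(2\alpha-1)}}$, and show the boundary term is of lower order using $k_n=\bigO{n^{1/(2\alpha+1)}}$. The only superficial difference is that the paper records the boundary contribution as $\bigO{n^{-(\alpha-1/2)}}$ whereas you write it as $\bigO{k_n^{2\alpha}/n^{2\alpha-1}}$, but these estimates are equivalent in the relevant range and both are absorbed into $\bigO{k_n^{1-\varepsilon'}}$.
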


\begin{proof}
Since for all $(p,p^\prime) \in \mathcal{E}_\varepsilon(k_n)$ we have $\Mu{\BallPon{p}}, \Mu{\BallPon{p^\prime}} = \bigT{k_n}$, Lemma~\ref{lem:common_neighbours_Pcal_n} implies that
\[
	\Mu{\BallPon{p} \cap \BallPon{p^\prime}} \le \bigO{k_n} \phi_n(p,p^\prime),
\]
where
\[
	\phi_n(p,p^\prime) = 2\left(\frac{|x - x^\prime|_n}{e^{\frac{y}{2}} + e^{\frac{y^\prime}{2}}}\right)^{-(2\alpha - 1)} 
		+ \frac{3 \nu^{2\alpha + 1}e^{-(\alpha - \frac{1}{2})R} e^{\alpha y}}{2 \pi^{2\alpha}\left(
		e^{\frac{y}{2}} + e^{\frac{y^\prime}{2}}\right)}
		+ \frac{\nu e^{-(\alpha - \frac{1}{2})R}}{e^{\frac{y}{2}} + e^{\frac{y^\prime}{2}}}. 
\]
We thus need to show that $\phi_n(p,p^\prime) = \bigO{k_n^{-\varepsilon^\prime}}$. For $(p,p^\prime) \in \mathcal{E}_\varepsilon(k_n)$, it holds that $e^{y/2}, e^{y^\prime/2} = \bigT{k_n}$ and $|x - x^\prime|_n > k_n^{1+\varepsilon}$ and hence
\begin{align*}
	 2\left(\frac{|x - x^\prime|_n}{e^{\frac{y}{2}} + e^{\frac{y^\prime}{2}}}\right)^{-(2\alpha - 1)}
	 =	\bigO{k_n^{-\varepsilon(2\alpha - 1)}}.
\end{align*}
For the second term in $\phi_n(p,p^\prime)$ we use that $e^{\alpha y^\ast} = \bigT{k_n^{2\alpha}}$ and $e^{R} = \bigT{n^2}$ to obtain
\[
	\frac{3 \nu^{2\alpha + 1}e^{-(\alpha - \frac{1}{2})R} e^{\alpha y}}{2 \pi^{2\alpha}\left(
			e^{\frac{y}{2}} + e^{\frac{y^\prime}{2}}\right)}
	= \bigO{1} n^{-(2\alpha - 1)} k_n^{2\alpha - 1} = \bigO{n^{-(\alpha - \frac{1}{2})}}.
\]
Finally, the third term satisfies $\bigO{n^{-(2\alpha - 1)}k_n^{-1}}$, and we conclude that
\[
	\phi_n(p,p^\prime) = \bigO{k_n^{-\varepsilon(2\alpha - 1)} + n^{-(\alpha - \frac{1}{2})}
	+ n^{-(2\alpha - 1)}k_n^{-1}} = \bigO{k_n^{-\varepsilon^\prime}},
\]
where we used that $\varepsilon^\prime = \min\{\varepsilon(2\alpha - 1),\varepsilon\}$. 
\end{proof}

It is clear that using Lemma~\ref{lem:common_neighbours_KPKVB} instead of Lemma~\ref{lem:common_neighbours_Pcal_n}, the above proof applies to the Poissonized KPKVB model, yielding the following result.

\begin{lemma}\label{cor:expected_common_neighbours_KPKVB}
Fix $0 < \varepsilon < 1$ and let $\varepsilon^\prime = \min\{\varepsilon(2\alpha - 1),\varepsilon\}$. Then for all $(p,p^\prime) \in \mathcal{E}_\varepsilon(k_n)$, as $n \to \infty$,
\[
	\Mu{\BallHyp{p} \cap \BallHyp{p^\prime}} = \bigO{k_n^{1-\varepsilon^\prime}}.
\] 
\end{lemma}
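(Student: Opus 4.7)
The plan is to imitate, almost verbatim, the proof of Lemma~\ref{cor:expected_common_neighbours_Ecal_set}, with Lemma~\ref{lem:common_neighbours_KPKVB} replacing Lemma~\ref{lem:common_neighbours_Pcal_n}. The structure of the argument and all order-of-magnitude estimates carry over with only cosmetic changes, since the only difference between the two neighbour-count bounds is the multiplicative constant $e^{(2\alpha-1)\lambda}$ appearing in the hyperbolic case, together with a slightly stronger separation hypothesis.

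First I would verify that the hypotheses of Lemma~\ref{lem:common_neighbours_KPKVB} are met on $\mathcal{E}_\varepsilon(k_n)$. For any $(p,p') \in \mathcal{E}_\varepsilon(k_n)$ both heights satisfy $y, y' \in \Kcal_C(k_n)$, which yields $e^{y/2}, e^{y'/2} = \bigT{k_n}$ and therefore $(e^{y/2} + e^{y'/2})(1 + \pi^2 K/4) = \bigT{k_n}$. Since the separation requirement in $\mathcal{E}_\varepsilon(k_n)$ reads $|x-x'|_n > k_n^{1+\varepsilon}$, the stronger inequality $|x-x'|_n > (e^{y/2} + e^{y'/2})(1 + \pi^2 K/4)$ holds for all $n$ large enough. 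Moreover, $y_{k_n,C}^{+} = 2\log k_n + \smallO{1}$, so in any regime where $k_n$ is at most polynomially smaller than $n$ (in particular in the regime $k_n \ll n^{1/(2\alpha+1)}$ of Theorem~\ref{thm:mainktoinfty}), one can choose $\varepsilon'' > 0$ so that $y, y' \le (1-\varepsilon'')R$ for all $n$ sufficiently large, as required by Lemma~\ref{lem:common_neighbours_KPKVB}.

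With the hypotheses in place, Lemma~\ref{lem:common_neighbours_KPKVB} gives
\[
\Mu{\BallHyp{p} \cap \BallHyp{p'}} \le \Mu{\BallHyp{p}} \cdot \psi_n(p,p'),
\]
where
\[
\psi_n(p,p') := e^{(2\alpha-1)\lambda}\left(\frac{|x-x'|_n}{e^{y/2} + e^{y'/2}}\right)^{-(2\alpha-1)} + \frac{\nu}{\xi}\, e^{-(\alpha-1/2)(R-y)}.
\]
By Lemma~\ref{lem:average_degree_P_n}, $\Mu{\BallHyp{p}} = (1+\smallO{1})\,\mu(y) = \bigT{k_n}$ uniformly in $p \in \stripknc$. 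For the first term in $\psi_n$, the ratio $|x-x'|_n / (e^{y/2} + e^{y'/2})$ is $\Omega(k_n^\varepsilon)$ on $\mathcal{E}_\varepsilon(k_n)$, and the factor $e^{(2\alpha-1)\lambda}$ is a constant independent of $n$, so this term is $\bigO{k_n^{-\varepsilon(2\alpha-1)}}$. For the second term, $R - y \ge R - y_{k_n,C}^+ = 2\log(n/k_n) + \bigO{1}$, so it is $\bigO{(n/k_n)^{-(2\alpha-1)}}$, which is $\smallO{k_n^{-\varepsilon'}}$ for any fixed $\varepsilon' > 0$ whenever $k_n$ grows polynomially slower than $n$.

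Combining these estimates yields $\psi_n(p,p') = \bigO{k_n^{-\varepsilon'}}$ with $\varepsilon' = \min\{\varepsilon(2\alpha-1),\varepsilon\}$, and hence $\Mu{\BallHyp{p} \cap \BallHyp{p'}} = \bigO{k_n^{1-\varepsilon'}}$, as claimed. I do not anticipate any genuine obstacle. The only point demanding a little care is the verification that the upper-height constraint $y, y' \le (1-\varepsilon'')R$ of Lemma~\ref{lem:common_neighbours_KPKVB} is compatible with $y, y' \in \Kcal_C(k_n)$; this is automatic as long as $k_n$ is bounded away from the threshold $n^{1/(2\alpha+1)}$ in the relevant polynomial scale, which is the range in which the lemma will be invoked.
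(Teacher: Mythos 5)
Your proposal is correct and takes essentially the same route as the paper, which simply observes that the proof of Lemma~\ref{cor:expected_common_neighbours_Ecal_set} carries over once Lemma~\ref{lem:common_neighbours_KPKVB} is substituted for Lemma~\ref{lem:common_neighbours_Pcal_n}. Your explicit verification that the separation and height hypotheses of Lemma~\ref{lem:common_neighbours_KPKVB} hold on $\mathcal{E}_\varepsilon(k_n)$ is a bit more careful than the paper's one-sentence remark, but the argument is the same.
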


Recall the definition of the stripe $\stripknc = \Rcal \cap (\R_+ \times [y_{k_n,C}^-, y_{k_n,C}^+])$ from~\eqref{eq:def_stripe}. Consider a fixed number of points $p_1, \dots, p_r$. Then, if their $x$-coordinates are sufficient far apart and their $y$ coordinates lie within the stripe $\stripknc$, their degrees are asymptotically independent.

\begin{lemma}[Asymptotic factorization of degree probabilities]\label{lem:probdegFact}
Let $(k_n)$ be a sequence of integers with $0\leq k_n \leq n-1$, $k_n = \bigO{n^{\frac{1}{2\alpha+1}}}$ and $k_n \rightarrow \infty$.
Let $C, C^\prime>0$ and $r,s$ be positive integers with $r+1\leq s$. Fix $0<\varepsilon<1$. Then, it holds uniformly for all $(v_1,\dots,v_s) \in \left(\stripknc\right)^{s} = \stripknc \times \dots \times \stripknc$, satisfying $|x_{v_i}-x_{v_{r+1}}|\geq k_n^{1+\varepsilon}$ for all  $1\leq i \leq r$,  that
\[
	\varphi(\{v_1,\dots,v_{r+1}\};v_1,\dots,v_s)
	=(1+o(1))\varphi(\{v_1,\dots,v_r\};v_1,\dots,v_s)\varphi(\{v_{r+1}\};v_1,\dots,v_s)+\bigO{k_n^{-C^\prime}},
\]
where $\varphi$ is either $\varphi_{\Po}$ or $\varphi_{\mathrm{box}}$.
(Here uniformity means that the $\smallO{1}$ and $\bigO{k_n}$ terms do not depend on $v_1,\dots,v_{s}$.)
\end{lemma}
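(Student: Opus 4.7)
The plan is to reduce the joint-degree event to a joint Poisson-count event on almost-disjoint regions and exploit the overlap bound from Lemma~\ref{cor:expected_common_neighbours_Ecal_set} (for $\varphi_{\mathrm{box}}$) or Lemma~\ref{cor:expected_common_neighbours_KPKVB} (for $\varphi_{\Po}$), both of which give $\mu(B(v_i)\cap B(v_{r+1})) = \bigO{k_n^{1-\varepsilon'}}$ under the separation hypothesis, where $B(\cdot)$ stands for $\BallPon{\cdot}$ or $\BallHyp{\cdot}$ as appropriate. Since the contribution of the other added points to $\deg(v_i)$ is a deterministic integer $d_i\le s-1$, writing $N_i := |\Pcal \cap B(v_i)|$ and $k_i := k_n - d_i \sim k_n$ gives $\varphi(V;v_1,\dots,v_s) = \Pee(N_i=k_i\text{ for all }i\in V)$.

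I would then set $A := B(v_{r+1})$, $B^* := \bigcup_{i\le r} B(v_i)$ and introduce the independent Poissons $X := |\Pcal\cap(A\setminus B^*)|$ and $Y := |\Pcal\cap(A\cap B^*)|$, with means $\lambda$ and $\delta = \bigO{k_n^{1-\varepsilon'}}$. Crucially $X$ is independent of the whole family $(N_1,\dots,N_r,Y)$ because $A\setminus B^*$ is disjoint from each $B(v_i)$, $i\le r$, and from $A\cap B^*$; moreover $N_{r+1} = X + Y$. Splitting on the value of $Y$ gives
\begin{equation*}
\varphi(V;\cdots) = \sum_{m\ge 0}\Pee(N_1=k_1,\dots,N_r=k_r,Y=m)\,\Pee(X=k_{r+1}-m),
\end{equation*}
and similarly $\varphi(\{v_{r+1}\};\cdots) = \sum_{m\ge 0} \Pee(Y=m)\Pee(X=k_{r+1}-m)$, so the factorization is equivalent to the near-independence of $Y$ from $(N_1,\dots,N_r)$, weighted by the Poisson probabilities $\Pee(X=k_{r+1}-m)$.

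I would truncate $Y$ at the threshold $m^* := k_n^{1-\varepsilon'/2}$: the Chernoff bound~\eqref{eq:def_chernoff_bound_poisson} applied to $Y \sim \Po(\delta)$ yields $\Pee(Y>m^*) = \exp(-\Omega(k_n^{1-\varepsilon'/2}))$, which is absorbed into the additive $O(k_n^{-C'})$ term. To compare the conditional and unconditional distributions of $Y$ on $[0,m^*]$, I would decompose $A\cap B^*$ by inclusion-exclusion (triple-and-higher intersections have measure $\bigO{k_n^{1-2\varepsilon'}}$ by iterating the overlap lemma, so are negligible); at leading order $Y = \sum_{i\le r} W_i$ with $W_i := |\Pcal\cap A\cap B(v_i)|$ independent $\Po(\mu(A\cap B(v_i)))$. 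Conditional on $N_i = k_i$, each $W_i$ becomes $\mathrm{Bin}(k_i, p_i)$ with $p_i := \mu(A\cap B(v_i))/\mu(B(v_i)) = \bigO{k_n^{-\varepsilon'}}$; since $k_i = \mu(B(v_i))(1+o(1))$ on the stripe $\Kcal_C(k_n)$, the binomial $\mathrm{Bin}(k_i,p_i)$ and its Poisson companion $\Po(k_i p_i)$ have matching parameters up to relative error $o(1)$ and differ in total variation by $\bigO{p_i} = o(1)$; combining over the (fixed number of) components via the triangle inequality yields $d_{\mathrm{TV}}\bigl(\mathrm{law}(Y\mid N_1,\dots,N_r),\mathrm{law}(Y)\bigr) = o(1)$ on $[0,m^*]$, uniformly in the admissible configurations.

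The main obstacle is that this total-variation comparison must be sharp enough to preserve the multiplicative $(1+o(1))$ form of the factorization, not merely deliver an additive error. For this I would use that the Poisson probability factorizes as $\Pee(X=k_{r+1}-m) = \Pee(X=k_{r+1})\cdot (k_{r+1}/\lambda)^m\prod_{j<m}(1-j/k_{r+1})$, a factor that appears identically in the two expansions and hence cancels in the ratio of the LHS and RHS expectations, reducing the matter to an unweighted closeness of the distributions of $Y$---supplied by the TV bound above. The same argument handles both $\varphi=\varphi_{\Po}$ and $\varphi=\varphi_{\mathrm{box}}$, the only difference being which of Lemmas~\ref{cor:expected_common_neighbours_Ecal_set} and~\ref{cor:expected_common_neighbours_KPKVB} is invoked for the overlap bound.
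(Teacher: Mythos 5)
The decomposition you set up is the right one and in fact is essentially the same as the paper's: writing $N_{r+1}=X+Y$ with $X$ the Poisson count on $A\setminus B^*$, independent of $(N_1,\dots,N_r,Y)$, and summing over the value $m$ of $Y$, yields exactly the conditional sum the paper works with. The truncation of $Y$ at a polynomially small threshold via the Chernoff bound is also correct and matches the paper. Where you diverge is in how to compare
\[
\sum_m q(m)\,w(m)\quad\text{and}\quad\sum_m p(m)\,w(m),
\]
where $q(m)=\Pee(Y=m\mid N_1=k_1,\dots,N_r=k_r)$, $p(m)=\Pee(Y=m)$, and $w(m)=\Pee(X=k_{r+1}-m)$.

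The gap is in the ``cancellation'' step. You write $w(m)=\Pee(X=k_{r+1})\cdot u(m)$ with $u(m)=(k_{r+1}/\lambda)^m\prod_{j<m}(1-j/k_{r+1})$ and claim that because $u$ ``appears identically in the two expansions,'' it cancels in the ratio, reducing the comparison to a TV bound between $q$ and $p$. That is not correct: $u(m)$ is a nonconstant function of $m$ weighted differently against $q$ and $p$, so it does not drop out. A TV bound $d_{\mathrm{TV}}(q,p)=o(1)$ only gives $\left|\sum_m (q(m)-p(m))w(m)\right|\le 2\,d_{\mathrm{TV}}(q,p)\,\|w\|_\infty$, and $\|w\|_\infty$ can exceed $\sum_m p(m)w(m)$ by a polynomial factor in $k_n$: the mode of $w$ sits at $m\approx k_{r+1}-\lambda$, which for $v_{r+1}\in\stripknc$ can be offset from $\E[Y]$ by as much as $\Theta(\sqrt{k_n\log k_n})$, making $\sum_m p(m)w(m)$ smaller than $\|w\|_\infty$ by a factor $k_n^{-\Theta(1)}$. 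So an unweighted TV estimate does not preserve the multiplicative $(1+o(1))$ form. What is missing --- and what the paper's proof supplies --- is the uniform flatness estimate: for all $m,m'$ in the truncation window $A_n$, $\Pee(X=k_{r+1}-m)=(1+o(1))\Pee(X=k_{r+1}-m')$, established via Stirling's approximation. Once this is in hand one can pull $w$ out of the sum as a constant, and the remainder is $\Pee(N_1=\dots=N_r=k_n,\,Y\in A_n)=(1+o(1))\Pee(N_1=\dots=N_r=k_n)+O(k_n^{-C})$, which is the paper's route. Without that flatness estimate, your argument does not close.

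Two secondary issues: (i) the reduction ``at leading order $Y=\sum_i W_i$, triple intersections negligible'' is not supported by the cited overlap lemmas, which bound $\mu(B(v_i)\cap A)$ for the separated pair but say nothing about $\mu(B(v_i)\cap B(v_j))$ or $\mu(B(v_i)\cap B(v_j)\cap A)$ when $v_i,v_j$ need not be separated; and (ii) the conditional $W_i$'s are not independent across $i$ when the balls $B(v_i)$ overlap, so the conditional law of $Y$ is not simply a sum of independent binomials. The paper's proof sidesteps both points by never decomposing $Y$ and by manipulating the sum over $y_{r+1}\in A_n$ directly.
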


\begin{proof}
Let $H=\GPo \cup\{p_1,\dots,p_s\}$ or $H=\Gbox \cup \{p_1,\dots,p_s\}$ and $1\leq r\leq s$. For $1\leq j \leq r$, let $Y_j$ be the number of vertices of $H$ which are adjacent to both $p_j$ and $p_{r+1}$. Let $X_j$ be the number of vertices of $H$ which are adjacent to $p_j$, but not to $p_{r+1}$. Then, $X_j+Y_j= D_{H}(p_j)$ is the degree of $p_j$ in $H$. 

Now let $X_{r+1}$ be the number of vertices of $H$ which are adjacent to $p_{r+1}$, but to none of $p_1,\dots,p_r$. Let $Y_{r+1}$ be the number of vertices of $H$ which are adjacent to $p_{r+1}$, and at least one of $p_1,\dots,p_r$. Then, $X_{r+1}+Y_{r+1}=D_H(p_{r+1})$ is the degree of $p_{r+1}$ in $H$. By definition, we therefore have
\begin{align*}
\varphi(\{p_1,\dots,p_{r+1}\};p_1,\dots,p_s)&=\Prob{X_1+Y_1=\dots=X_{r+1}+Y_{r+1}=k_n}, \\
\varphi(\{p_1,\dots,p_r\};p_1,\dots,p_s)&=\Prob{X_1+Y_1=\dots=X_r+Y_r=k_n}, \\
\varphi(\{p_{r+1}\};p_1,\dots,p_s)&=\Prob{X_{r+1}+Y_{r+1}=k_n},
\end{align*}
and the claim of the lemma is that
\begin{equation}\label{eq:probdegfact_main}
	\begin{aligned}
		&\Prob{X_1+Y_1=\dots=X_{r+1}+Y_{r+1}=k_n} \\
		&= (1+o(1))\Prob{X_1+Y_1=\dots=X_r+Y_r=k_n}\Prob{X_{r+1}+Y_{r+1}=k_n}+\bigO{k_n^{-C}}.
	\end{aligned}
\end{equation}

To prove~\eqref{eq:probdegfact_main} let $\varepsilon'=\min(\varepsilon,\varepsilon(2\alpha-1))\in (0,1)$. Since for $1\leq i \leq r$, it is given that $|x_{p_i}-x_{p_{r+1}}|\geq k_n^{1+\varepsilon}$, then in the case where $H = \GPo \cup\{p_1,\dots,p_s\}$ it follows from Lemma~\ref{lem:common_neighbours_KPKVB} that $\E[Y_j] = \bigO{k_n^{1-\varepsilon'}}$. When $H=\Gbox \cup \{p_1,\dots,p_s\}$ we get the same result using Lemma~\ref{lem:common_neighbours_Pcal_n}. The rest of the proof is independent of which of the two models we consider and only uses that $\E[Y_j] = \bigO{k_n^{1-\varepsilon'}}$.

As $Y_{r+1} \leq Y_1+\dots+Y_r$, we also have $\mu:=\Exp{Y_{r+1}} = \bigO{k_n^{1-\varepsilon'}}$. In particular, there is $c_0>0$ such that $c_0\sqrt{k_n^{1-\varepsilon'}\ln k_n^{1-\varepsilon'}}\geq c_1\sqrt{\mu\ln\mu}$ (where $c_1=\sqrt{\frac{2C}{1-\varepsilon'}}$, which is well-defined because $1-\varepsilon'>0$). Now define
\begin{align*}
A_n=[\mu-c_0\sqrt{k_n^{1-\varepsilon'}\ln k_n^{1-\varepsilon'}},\mu+c_0\sqrt{k_n^{1-\varepsilon'}\ln k_n^{1-\varepsilon'}}]\cap \N_0.%=[\mu-c\sqrt{\mu\ln \mu},\mu+c\sqrt{\mu\ln \mu}].
\end{align*}
By equation~\eqref{eq:def_chernoff_bound_poisson_C}, we have
\begin{align*}
	\Prob{Y_{r+1} \not \in A_n} 
	&= \Prob{|Y_{r+1}-\mu|\geq c_0\sqrt{k_n^{1-\varepsilon'}\ln k_n^{1-\varepsilon'}}}\\
	&\le \Prob{|Y_{r+1}-\mu|\geq c_1\sqrt{\mu\ln\mu}}
	=\bigO{k_n^{-\frac{(1-\varepsilon')c_1^2}{2}}}.
\end{align*}
As by definition $c_1$ satisfies $\frac{(1-\varepsilon')c_1^2}{2}=C$, this implies that for the event $S_n = \{Y_{r+1}\in A_n\}$,
\[
	\Prob{S_n^c}=\bigO{k_n^{-C}}.
\]
Beginning with the left-hand side of the claim of the lemma, the law of total probability applied to the events $\{Y_{r+1}=y_{r+1}\}$, for all $y_{r+1}\in A_n$, and $S_n^c$ implies that
\begin{align*}
	&\Prob{X_1+Y_1=\dots=X_{r+1}+Y_{r+1}=k_n} \\
	&=\sum_{y_{r+1}\in A_n} \Prob{X_1+Y_1=\dots=X_{r+1}+y_{r+1}=k_n|Y_{r+1}=y_{r+1}}\Prob{Y_{r+1}=y_{r+1}}\\ 
	&\hspace{10pt}+ \Prob{X_1+Y_1=\dots=X_{r+1}+Y_{r+1}=k_n|S_n^c} \Prob{S_n^c}\\
	&=\sum_{y_{r+1}\in A_n} \Prob{X_1+Y_1=\dots=X_{r+1}+y_{r+1}=k_n|Y_{r+1}=y_{r+1}} 
		\Prob{Y_{r+1}=y_{r+1}}+\bigO{k_n^{-C}}.
\end{align*}
As $X_{r+1}$ is independent of $X_1, \dots, X_r, Y_1, \dots, Y_r$ by the properties of a Poisson process (as $X_{r+1}$ counts the number of points in a set which is disjoint of $X_1,\dots,X_r, Y_1, \dots, Y_r$), it follows that
\begin{align*}%\label{eq:afterindep}
	&\Prob{X_1+Y_1=\dots=X_{r+1}+Y_{r+1}=k_n} \\
	&=\sum_{y_{r+1} \in A_n} \Prob{X_1+Y_1=\dots=X_{r}+Y_r=k_n|Y_{r+1}=y_{r+1}}
		\Prob{X_{r+1}+y_{r+1}=k_n} \Prob{Y_{r+1}=y_{r+1}}\\
	&\hspace{10pt} +\bigO{k_n^{-C}}.
\end{align*}
We will now show that uniformly for all $y_{r+1}, s \in A_n$, it holds that,
\begin{align}\label{eq:yrp1tos}
	\Prob{X_{r+1}=k_n-y_{r+1}} = (1+o(1))\Prob{X_{r+1}=k_n-s}.
\end{align}
To see this, observe that for all $y_{r+1},s\in A_n$, we have that $|y_{r+1}-s|\leq 2c\sqrt{k_n^{1-\varepsilon'}\ln k_n^{1-\varepsilon'}}$. Denote the expectation of $X_{r+1}$ by $\lambda$, write $\delta_n = k_n-y_{r+1}-\lambda$ and note that
\begin{align*}
	\frac{\Prob{X_{r+1}=k_n-y_{r+1}}}{\Prob{X_{r+1}=k_n-s}} 
	&= \frac{\Prob{X_{r+1}=k_n-y_{r+1}}}{\Prob{X_{r+1}=k_n-y_{r+1}+(y_{r+1}-s)}}\\
	&=\frac{(k_n-y_{r+1}+(y_{r+1}-s))!}{(k_n-y_{r+1})!}\lambda^{s-y_{r+1}}
\end{align*}
We will now use that $\frac{(a+b)!}{a!} = (1+o(1))(a+b)^b$ for $b^2=o(a)$, applied to $a=k_n-y_{r+1}$ and $b=y_{r+1}-s$. To see this auxiliary fact, note that by Stirling's approximation to the factorial (see e.g.~\cite{Dutkay2013},~\cite{Nanjundiah1959}), it follows that
\begin{align*}
	\frac{(a+b)!}{a!}=(1+o(1))\frac{(a+b)^{a+b+\frac{1}{2}}e^{-a-b}}{a^{a+\frac{1}{2}}e^{-a}}
	=(1+o(1))\left(1+\frac{b}{a}\right)^{a+\frac{1}{2}}(a+b)^b e^{-b}.
\end{align*}
Since $1+\frac{b}{a}\leq e^{\frac{b}{a}}$, it holds that $(1+\frac{b}{a})^a \leq e^b$. Furthermore, as $\ln(1+x)\geq x-\frac{x^2}{2}$ (for $x\in (-1,1)$), we have that $(1+\frac{b}{a})^a =e^{a\ln(1+\frac{b}{a})}\geq e^{a(\frac{b}{a}-\frac{b^2}{2a})} =e^{b-\frac{b^2}{2a}}=(1+o(1))e^{b}$ because $b^2=o(a)$. Finally, $b^2=o(a)$ also implies that $(1+\frac{b}{a})^\frac{1}{2}=1+o(1)$. This finishes the proof of the auxiliary fact and we can continue with
\begin{align*}
	\frac{\Prob{X_{r+1}=k_n-y_{r+1}}}{\Prob{X_{r+1}=k_n-s}}
	&=(1+o(1)) (k_n-y_{r+1}+(y_{r+1}-s))^{y_{r+1}-s} \lambda^{s-y_{r+1}} \\
	&=(1+o(1)) (\lambda+\delta_n+(y_{r+1}-s))^{y_{r+1}-s} \lambda^{s-y_{r+1}} \\
	&=(1+o(1))\left(1+\frac{\delta_n+(y_{r+1}-s)}{\lambda}\right)^{y_{r+1}-s} \\
	&=(1+o(1)) e^{\frac{\delta_n(y_{r+1}-s)}{\lambda}} e^{\frac{(y_{r+1}-s)^2}{\lambda}} = 1+o(1),
\end{align*}
where the last line follows since $\delta_n, |y_{r+1}-s| \leq 2c_0\sqrt{k_n^{1-\varepsilon'}\ln k_n^{1-\varepsilon'}}$ and $\lambda=\Theta(k_n)$ and therefore, with convergence uniform in $y_{r+1},s$,
\[
	\frac{\delta_n(y_{r+1}-s)}{\lambda}, \frac{(y_{r+1}-s)^2}{\lambda} \leq \frac{4c_0^2 k_n^{1-\varepsilon'}\ln k_n^{1-\varepsilon'}}{\lambda}  \rightarrow 0.
\]
As $\Prob{S_n^c}=\bigO{k_n^{-C}}$, we have
\begin{align*}
	1=\sum_{s\in A_n}\Prob{Y_{r+1}=s}+\bigO{k_n^{-C}}.
\end{align*}
From~\eqref{eq:yrp1tos}, it then follows that
\begin{align*}
	\Prob{X_{r+1}=k_n-y_{r+1}} 
	&=\Prob{X_{r+1}=k_n-y_{r+1}} \sum_{s\in A_n} \Prob{Y_{r+1}=s}+\bigO{k_n^{-C}}\\
	&= (1+o(1)) \sum_{s\in A_n} \Prob{Y_{r+1}=s} \Prob{X_{r+1}=k_n-s}+\bigO{k_n^{-C}} \\
	&=(1+o(1))\sum_{s\in A_n} \Prob{X_{r+1}+Y_{r+1}=k_n, Y_{r+1}=s}+\bigO{k_n^{-C}} \\
	&=(1+o(1))\Prob{X_{r+1}+Y_{r+1}=k_n, Y_{r+1}\in A_n}+\bigO{k_n^{-C}} \\
	&=(1+o(1))\left(\Prob{X_{r+1}+Y_{r+1}=k_n}-\Prob{X_{r+1}+Y_{r+1}=k_n, Y_{r+1}\not \in A_n}\right)+\bigO{k_n^{-C}} \\
	&=(1+o(1))\Prob{X_{r+1}+Y_{r+1}=k_n}+\bigO{k_n^{-C}}.
\end{align*}
Note that $\Prob{X_{r+1}+Y_{r+1}=k_n}$ no longer depends on $y_{r+1}$ and neither does the $\bigO{k_n^{-C}}$ error term. Therefore we have
\begin{align*}
	&\Prob{X_1+Y_1=\dots=X_{r+1}+Y_{r+1}=k_n} \\
	&\sum_{y_{r+1} \in A_n} \Prob{X_1+Y_1=\dots=X_{r}+Y_r=k_n|Y_{r+1}=y_{r+1}} 	
		\Prob{X_{r+1}+y_{r+1}=k_n}\Prob{Y_{r+1}=y_{r+1}}+\bigO{k_n^{-C}}\\
	&=(1+o(1))\Prob{X_{r+1}+Y_{r+1}=k_n}\sum_{y_{r+1} \in A_n} 	
		\Prob{X_1+Y_1=\dots=X_{r}+Y_r=k_n|Y_{r+1}=y_{r+1}}\Prob{Y_{r+1}=y_{r+1}}\\
	&\hspace{10pt}+\bigO{k_n^{-C}}.
\end{align*}
For the last summation we have
\begin{align*}
	&\sum_{y_{r+1} \in A_n} \Prob{X_1+Y_1=\dots=X_{r}+Y_r=k_n|Y_{r+1}=y_{r+1}}\Prob{Y_{r+1}=y_{r+1}} \\
	&= \Prob{X_1+Y_1=\dots=X_r+Y_r=k_n,Y_{r+1}\in A_n} \\
	&= (1+o(1))\Prob{X_1+Y_1=\dots=X_r+Y_r=k_n}+\bigO{k_n^{-C}}.
\end{align*}
Finally, plugging this into the previous step gives 
\begin{align*}
	&\Prob{X_1+Y_1=\dots=X_{r+1}+Y_{r+1}=k_n} \\
	&=(1+o(1))\Prob{X_{r+1}+Y_{r+1}=k_n}\sum_{y_{r+1} \in A_n} 	
		\Prob{X_1+Y_1=\dots=X_{r}+Y_r=k_n|Y_{r+1}=y_{r+1}}\Prob{Y_{r+1}=y_{r+1}}\\
	&\hspace{10pt}+\bigO{k_n^{-C}} \\
	&=(1+o(1))\Prob{X_{r+1}+Y_{r+1}=k_n}\Prob{X_1+Y_1=\dots=X_r+Y_r=k_n}+\bigO{k_n^{-C}}.
\end{align*}
which establishes~\eqref{eq:probdegfact_main} and thus the claim of the lemma.
\end{proof}

\subsection{Factorial moments of degrees}\label{ssec:factorial_moments_GPo}

Now that we have analyzed the joint neighbourhoods and degree distributions in both the Poissonized KPKVB and finite box model, we can show convergence of the factorial moments of the number of nodes of degree $k$ in both models.

\begin{lemma}[Factorial moments]\label{lem:factmoment}
Recall that $N_{Po}(k)$ denotes the number of degree $k$ vertices in $\GPo$. Let $(k_n)$ be a sequence of integers with $0\leq k_n \leq n-1$, $k_n = \bigO{n^{\frac{1}{2\alpha+1}}}$ and $k_n \rightarrow \infty$. Then, for any positive integer $r$, it holds that
\[
	\Exp{{N_{Po}(k_n) \choose r}} =(1+o(1)) \frac{(\Exp{ N_{Po}(k_n)})^r}{r!}.
\]
\end{lemma}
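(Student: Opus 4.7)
The plan is to use the Campbell-Mecke formula to express the $r$-th factorial moment as an $r$-fold integral of the joint degree probability $\varphi_{\Po}$, then apply Lemma~\ref{lem:probdegFact} iteratively to factorize the integrand on a dominant subdomain, and finally bound the integral over the complementary subdomain by a combination of concentration of heights and a straightforward volume estimate.

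First I would write, using Campbell-Mecke,
\begin{align*}
r! \cdot \Exp{\binom{N_{Po}(k_n)}{r}}
&= \Exp{\sum_{\substack{p_1,\dots,p_r \in \Pcal \\ \text{distinct}}} \prod_{i=1}^r \1_{\{\deg_{\GPo}(p_i)=k_n\}}} \\
&= \int_{\Rcal} \cdots \int_{\Rcal} \varphi_{\Po}(\{p_1,\dots,p_r\}, k_n; p_1,\dots,p_r)\, \dd\mu(p_1)\cdots \dd\mu(p_r).
\end{align*}
Then I would split the domain $\Rcal^r$ into the ``good'' set $G_n$ consisting of tuples $(p_1,\dots,p_r)$ with each $p_i \in \stripknc$ and pairwise $|x_i - x_j|_n \ge k_n^{1+\varepsilon}$ for some small $\varepsilon > 0$, and its complement $B_n = \Rcal^r \setminus G_n$.

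On $G_n$, applying Lemma~\ref{lem:probdegFact} iteratively $r-1$ times (with the constant $C^\prime$ chosen sufficiently large depending on $r$, $\alpha$, and $\nu$) yields, uniformly in $(p_1,\dots,p_r) \in G_n$,
$$
\varphi_{\Po}(\{p_1,\dots,p_r\}, k_n; p_1,\dots,p_r) = (1+\smallO{1})\prod_{i=1}^r \varphi_{\Po}(\{p_i\}, k_n; p_i) + \bigO{k_n^{-C^\prime}}.
$$
Integrating the main term over $G_n$, I would compare with the full integral over $\Rcal^r$ of the product: the difference amounts to removing, from each of the $r$ single-point integrals, the contribution of heights outside $\Kcal_C(k_n)$ (negligible by Proposition~\ref{prop:concentration_height_general}) and an $x$-slab of width $O(r k_n^{1+\varepsilon})$ around each already-chosen $x_i$ (of total $\mu$-measure $\bigO{k_n^{1+\varepsilon}/e^{R/2} \cdot \Exp{N_{Po}(k_n)}} = \smallO{\Exp{N_{Po}(k_n)}}$, since $k_n^{1+\varepsilon} = \smallO{e^{R/2}}$ for $\varepsilon$ small). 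Thus the main-term integral equals $(1+\smallO{1})\left(\Exp{N_{Po}(k_n)}\right)^r$ by Lemma~\ref{lem:expnnkn}. The additive error $\bigO{k_n^{-C^\prime}}$ integrated over $G_n$ yields $\bigO{(e^{R/2})^r k_n^{-C^\prime}} = \bigO{n^r k_n^{-C^\prime}}$, which is negligible compared to $(\Exp{N_{Po}(k_n)})^r = \bigT{n^r k_n^{-r(2\alpha+1)}}$ as soon as $C^\prime > r(2\alpha+1)$, which we may freely arrange since $r$ is fixed.

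For $B_n$, I would split further: either (a) some $y_i \notin \Kcal_C(k_n)$, in which case I bound the joint probability trivially by the single-point probability $\Prob{\Po(\mu_{\Po}(y_i))=k_n}$, integrate out the other $r-1$ coordinates against $\mu$, and invoke Proposition~\ref{prop:concentration_height_general} to show that what remains is $\smallO{\Exp{N_{Po}(k_n)}^r}$; or (b) all $y_i \in \Kcal_C(k_n)$ but some pair has $|x_i-x_j|_n < k_n^{1+\varepsilon}$, in which case the same slab volume estimate as above shows this contributes only $\smallO{\Exp{N_{Po}(k_n)}^r}$. The main obstacle is the careful bookkeeping when iterating the factorization lemma: each application introduces a multiplicative $(1+\smallO{1})$ and an additive $\bigO{k_n^{-C^\prime}}$ error, and these need to be propagated through $r-1$ iterations while remaining dominated by the main term, which can itself be as small as $\bigT{1}$ when $k_n$ approaches the critical scale $n^{1/(2\alpha+1)}$.
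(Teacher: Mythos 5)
Your high-level plan matches the paper's: Campbell--Mecke reduces the factorial moment to an $r$-fold integral of $\varphi_{\Po}$, and the paper (through its Lemma~\ref{lem:asympind}) also restricts to the strip $\stripknc$, factorizes using Lemma~\ref{lem:probdegFact}, and controls the remaining error. Two issues, one minor, one serious.

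The minor point: iterating Lemma~\ref{lem:probdegFact} $r-1$ times produces factors $\varphi_{\Po}(\{p_i\},k_n;p_1,\dots,p_r)$, not $\varphi_{\Po}(\{p_i\},k_n;p_i)$. Replacing the former by the latter is a separate step (the paper's ``base case,'' which uses that adding a bounded number of deterministic points shifts the degree of $p_i$ by $O(1)$ and hence changes the Poisson probability only by a factor $1+o(1)$). Your displayed identity silently skips this.

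The serious gap is the bound on $B_n$ in case~(b), where all heights lie in $\Kcal_C(k_n)$ but some pair has $|x_i-x_j|_n<k_n^{1+\varepsilon}$. There ``the same slab volume estimate'' does not suffice. If you bound $\varphi_{\Po}(\{p_1,\dots,p_r\};\cdot)$ trivially by $1$ (or even by a single marginal $\rho_{\Po}(y_i,k_n)$), the remaining $r-1$ coordinates contribute at best $\mu(\stripknc)^{r-1}=\Theta\big((n k_n^{-2\alpha})^{r-1}\big)$, which is larger than the target $\big(\Exp{N_{Po}(k_n)}\big)^{r}=\Theta\big((n k_n^{-(2\alpha+1)})^{r}\big)$ by a factor of roughly $k_n^{r-1}$; the slab only recovers a single factor $k_n^{1+\varepsilon-2\alpha}/(n k_n^{-(2\alpha+1)})=k_n^{2+\varepsilon}/n$, not $r-1$ of them. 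Concretely the resulting condition is $k_n^{r+\varepsilon}\ll n$, which fails once $r\ge 2\alpha+1$, and $r$ is allowed to be arbitrary. The paper escapes this by the inductive structure of Lemma~\ref{lem:asympind}: on the bad set one first uses the monotonicity $\varphi(\{p_1,\dots,p_{r+1}\};\cdot)\le\varphi(\{p_1,\dots,p_r\};\cdot)$, which retains a genuine $r$-point degree probability (not a crude $1$-bound), then applies the \emph{induction hypothesis} to the integral over $p_1,\dots,p_r$ to produce $\big(\int_{\stripknc}\varphi(\{p_1\};p_1)\,\dd\mu\big)^{r}$, and finally compares only the last, slab-restricted $p_{r+1}$-integral against $\int_{\stripknc}\varphi(\{p_1\};p_1)\,\dd\mu$, giving the ratio $k_n^{2+\varepsilon}/n=o(1)$ once $\varepsilon<2\alpha-1$. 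Without this nested inductive bound (or an equivalent, considerably more intricate, case analysis over all the ``clusters'' of close points), the all-at-once decomposition does not close.
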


The proof of this result requires the following technical lemma which states that the integration of the joint degree distribution can be factorized.

\begin{lemma}[Factorization of degrees]
	\label{lem:asympind}
	Let $(k_n)$ be a sequence of integers with $0\leq k_n \leq n-1$, $k_n = \bigO{n^{\frac{1}{2\alpha+1}}}$ and $k_n \rightarrow \infty$.
	
	Let $\varphi$ be either $\varphi_{\mathrm{box}}$ or $\varphi_{\Po}$. Then we have that
	\begin{align*}
	&\int_\Rcal \cdots \int_\Rcal \varphi(\{p_1,\dots,p_r \}, k_n ;p_1,\dots,p_r) 
		 \dd \mu(p_1) \cdots \dd \mu(p_r) \\
	&= (1+o(1))\left(\int_\Rcal  \varphi(\{p_1\}, k_n ;p_1) \dd \mu(p)\right)^r .
	\end{align*}
\end{lemma}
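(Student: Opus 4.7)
The plan is to factorize the joint integral in three moves: reduce both sides to the stripe $\stripknc$ via concentration of heights; then restrict the $r$-fold integral to tuples whose $x$-coordinates are pairwise well separated; and finally iterate Lemma~\ref{lem:probdegFact} on this good set. To begin, I would apply Proposition~\ref{prop:concentration_height_general} componentwise, together with the uniform bound $\varphi \le 1$ and the Chernoff estimate~\eqref{eq:chernoff_bound_degrees}, to replace each copy of $\Rcal$ by $\stripknc$ up to a factor $1+\smallO{1}$ on both sides of the claimed identity. Explicitly, for any $C > 0$ and any tuple with some $y_i \notin \Kcal_C(k_n)$ the probability that $p_i$ alone has degree $k_n$ is $\bigO{k_n^{-C^2/2}}$, which for $C$ large is negligible relative to the main term $(n\pmf(k_n))^r$ (cf.\ Lemma~\ref{lem:expnnkn}).

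Next, fix a small $\varepsilon > 0$ and set
$$S_r := \left\{(p_1,\dots,p_r) \in (\stripknc)^r \,:\, |x_i - x_j|_n \ge k_n^{1+\varepsilon} \text{ for all } i \ne j\right\}.$$
The complement is covered by $\binom{r}{2}$ sets, each demanding that one prescribed pair is close. For such a set the measure is at most $\mu(\stripknc)^{r-1}$ times the $\mu$-measure of an $x$-window of width $2 k_n^{1+\varepsilon}$ in the stripe. A direct computation gives $\mu(\stripknc) = \bigO{n k_n^{-(2\alpha+1/2)}\sqrt{\log k_n}}$, and bounding $\varphi$ on the stripe by the largest single-point Poisson probability $\bigO{k_n^{-1/2}}$ then makes the bad-pair contribution $\smallO{(n\pmf(k_n))^r}$ provided $\varepsilon$ is chosen small enough relative to $\alpha$.

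On $S_r$ I would iterate Lemma~\ref{lem:probdegFact} exactly $r-1$ times, keeping the context $\{p_1,\dots,p_r\}$ fixed and peeling off one point per step, to obtain
$$\varphi(\{p_1,\dots,p_r\}, k_n; p_1,\dots,p_r) = (1+\smallO{1})\prod_{i=1}^r \varphi(\{p_i\}, k_n; p_1,\dots,p_r) + \bigO{k_n^{-C'}}.$$
A key simplification is that on $S_r$ the horizontal separation $k_n^{1+\varepsilon}$ strictly exceeds the horizontal radius $\bigO{e^{y_i/2}} = \bigO{k_n}$ of each neighbourhood $\BallPon{p_i}$ (or $\BallHyp{p_i}$), so the other $r-1$ context points cannot alter the degree of $p_i$; hence $\varphi(\{p_i\}, k_n; p_1,\dots,p_r) = \varphi(\{p_i\}, k_n; p_i)$ identically on $S_r$. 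Picking $C'$ larger than $r(2\alpha+1)+1$ makes the integrated additive error negligible. Integrating the factorized expression over $S_r$, then re-extending the integration back to $\Rcal^r$ on both sides via the previous two steps, yields the identity.

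The main obstacle is the quantitative bookkeeping of the four error sources—concentration tails, bad-pair measure, iterated multiplicative $(1+\smallO{1})$ corrections, and the additive $\bigO{k_n^{-C'}}$ residuals—which must all be jointly driven below $\asymp n^r k_n^{-r(2\alpha+1)}$; fortunately $C$, $C'$ and $\varepsilon$ are free parameters, so the estimates close.
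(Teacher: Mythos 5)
Your overall architecture (stripe reduction, separation of $x$-coordinates, iteration of the joint-degree factorization) tracks the paper's ideas, but the paper proceeds by induction on $r$ and peels off one point at a time, separating only the new point $p_{r+1}$ from the previous ones, whereas you try to impose \emph{pairwise} separation on all $r$ points at once and iterate directly. Two of your steps do not hold up.

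First, the claim that on $S_r$ one has the identity $\varphi(\{p_i\},k_n;p_1,\dots,p_r)=\varphi(\{p_i\},k_n;p_i)$ is false. The horizontal extent of $\BallPon{p_i}$ (or $\BallHyp{p_i}$) at the height of another stripe point $p_j$ is $e^{(y_i+y_j)/2}=\Theta(k_n^2)$, not $\Theta(e^{y_i/2})=\Theta(k_n)$, so separation of order $k_n^{1+\varepsilon}$ with $\varepsilon<1$ does not prevent $p_j\in\BallPon{p_i}$; the extra context points can perfectly well be adjacent to $p_i$ and shift its degree. The correct statement (and the one the paper uses as its base case) is the asymptotic $\varphi(\{p_i\},k_n;p_1,\dots,p_r)=(1+o(1))\varphi(\{p_i\},k_n;p_i)$, which follows simply because the degree of $p_i$ can shift by at most $r-1$, and since $k_n\to\infty$ one has $\Prob{D_{\Po}(p_i)=k_n-s'}=(1+o(1))\Prob{D_{\Po}(p_i)=k_n}$ uniformly for bounded $s'$. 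This needs no separation at all, so you should drop the identity claim and replace it with this asymptotic.

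Second, and more seriously, your bound on the bad-pair contribution is too lossy. You estimate it by $\bigO{k_n^{-1/2}}\cdot\mu(\stripknc)^{r-1}\cdot\mu(\text{window})$, with $\mu(\stripknc)=\bigO{nk_n^{-(2\alpha+1/2)}\sqrt{\log k_n}}$ and window measure $\bigO{k_n^{1/2+\varepsilon-2\alpha}\sqrt{\log k_n}}$. Dividing by the main term $\Theta(n^r k_n^{-r(2\alpha+1)})$ gives a ratio of order $n^{-1}k_n^{r/2+1/2+\varepsilon}(\log k_n)^{r/2}$, and with $k_n=\Theta(n^{1/(2\alpha+1)})$ this tends to zero only when $r<4\alpha+1-2\varepsilon$; for $\alpha$ close to $\tfrac12$ the argument fails already at $r=3$, and for every $\alpha$ it fails once $r$ is large enough. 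This is precisely the obstruction the paper's inductive structure is designed to bypass: on the complement the paper bounds $\varphi(\{p_1,\dots,p_{r+1}\};\cdot)\le\varphi(\{p_1,\dots,p_r\};\cdot)$, then invokes the \emph{induction hypothesis} to control the $r$-fold integral of the right-hand side, so the bad set only costs a factor $k_n^{1+\varepsilon-2\alpha}$ against the single main term $nk_n^{-(2\alpha+1)}$, which is small whenever $\tfrac12+\varepsilon<\alpha$, with no $r$-dependence. Without some substitute for the induction hypothesis, the crude measure-times-$k_n^{-1/2}$ bound does not close for general $r$.
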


\begin{proof}
Let $C>r(2\alpha+1)$ and define the set $A = (\Rcal \times \cdots \times\Rcal) \backslash (\stripknc \times \cdots\times \stripknc)$. We will first show that the contribution of the integration of $\varphi$ over this range is negligible. 

For $(p_1,\dots,p_r) \in (\Rcal \times \cdots \times\Rcal) \backslash (\stripknc \times \cdots\times \stripknc)$, there is a $j$, $1\leq j\leq r$, such that $y_j \not \in [y_{k_n,c}^-,y_{k_n,c}^+]$, so that the Chernoff bound (see in~\ref{eq:chernoff_bound_degrees}) yields that $\Prob{D_{\GPo}(v_j)=k_n } = \bigO{k_n^{-C}}$. As, for $1\leq j\leq r$, the event $\{D_{\GPo}(p_1)=\dots=D_{\GPo}(p_r)=k_n\}$ implies the event $\{D_{\GPo}(v_j)=k_n\}$, it follows that $\Prob{D_{\GPo}(p_1)=\dots=D_{\GPo}(p_r)=k_n} = \bigO{k_n^{-C}}$ and hence,
\begin{align*}
	\int_A \Pee(D_{\GPo}(p_1)=\dots=D_{\GPo}(p_r)=k_n) \dd \mu(p_1) \cdots \dd \mu(p_r)	= O\left(n^r k_n^{-C}\right).
\end{align*}

Next we note that by the concentration of heights (Proposition~\ref{prop:concentration_height_general}) it follows that
\begin{align*}
	\int_{\stripknc} \varphi(\{p_1\}, k_n ;p_1) \dd \mu(p_1) 
	&= (1+\smallO{1}) \int_\Rcal  \varphi(\{p_1\}, k_n ;p_1) \dd \mu(p_1) \\
	&= (1+\smallO{1}) 2\alpha \xi^{2\alpha} n k_n^{-(2\alpha + 1)} \numberthis \label{eq:integration_average_degree_stripe}
\end{align*}
Since for $C>r(2\alpha+1)$ it holds that $n^r k_n^{-C} = \smallO{\left(n k_n^{-(2\alpha + 1)}\right)^r}$ it now suffices to show that
\begin{align*}
	&\int_{\stripknc} \cdots \int_{\stripknc} \varphi(\{p_1,\dots,p_r \}, k_n ;p_1,\dots,p_r) 
		\dd \mu(p_1)\cdots \dd \mu(p_r) \\
	&= (1+o(1))\left(\int_{\stripknc}  \varphi(\{p_1\}, k_n ;p_1) \dd \mu(p_1) \right)^r.
\end{align*}

We will prove this using induction. More precisely, for every fixed positive integer $s$, we will show by induction on $r$, $1\leq r\leq s$, that for all $p_{r+1},\dots,p_s \in \stripknc$, it holds that
\begin{align*}
	&\int_{\stripknc} \cdots \int_{\stripknc} \varphi(\{p_1,\dots,p_r\}, k_n ;p_1,\dots,p_s)
		\dd \mu(p_1) \cdots \dd \mu(p_r)\\
	&= (1+o(1))\left(\int_{\stripknc} \varphi(\{p_1\}, k_n ;p_1) \dd \mu(p_1)\right)^r .
\end{align*}
Note that for $r=s$ this is the claim of the lemma. Throughout the proof $H$ is either $\GPo \cup\{p_1,\dots,p_s\}$ or $\Gbox \cup \{p_1,\dots,p_s\}$, depending on whether $\varphi$ is, respectively, $\varphi_{\Po}$ or $\varphi_{\mathrm{box}}$.

For $r=1$, we only need to show that uniformly for $p_1 \in \stripknc$,
\begin{align*}
\varphi(\{p_1\}, k_n ;p_1,\dots,p_s) = (1+o(1))\varphi(\{p_1\}, k_n ;p_1).
\end{align*}
To see this, note that as $p_1\in \stripknc$, the expected degree of $p_1$ in $\GPo$ is $\Theta(k_n)$. Assume that $p_1$ is adjacent to $s'<s$ many vertices among $p_2,\dots,p_s$. Then, as $s'<s$ is bounded and $k_n \rightarrow \infty$ as $n\rightarrow \infty$,  we have that
\begin{align*}
\Pee(D_{\GPo}(p_1)=k_n-s') = (1+o(1))\Pee(D_{\GPo}(p_1)=k_n).
\end{align*}
So, we have the base case of the induction,
\begin{align*}
\varphi(\{p_1\}, k_n ;p_1,\dots,p_s)&=\Pee(D_H(p_1)=k_n)
= \Pee(D_{\GPo}(p_1)=k_n-s')\\
&= (1+o(1))\Pee(D_{\GPo}(p_1)=k_n)=(1+o(1))\varphi(\{p_1\}, k_n ;p_1).
\end{align*}
Assuming the claim holds for integer $r <s$, we will show that it holds for $r+1$.

Let $p_{r+2},\dots,p_s\in \stripknc$ (if $r+2>s$, this definition is void and the corresponding points will never be used in the proof).
Fix $0<\epsilon<1$ small enough s.t. $\frac{1}{2}+\epsilon<\alpha$. Define the region that the $(r+1)$-th vertex $p_{r+1}$ is far apart from all other vertices horizontally,
$$\Fcal_\epsilon(k_n)=\{(p_1,\dots,p_{r+1}) \in \left(\stripknc\right)^{r+1}: \forall 1\leq i \leq r: |x_{p_i}-x_{p_{r+1}}|\geq k_n^{1+\epsilon}\}.$$

We will split the integration into this region and its complement $\Fcal_\epsilon(k_n)^c = \left(\stripknc\right)^{r+1} \backslash \Fcal_\epsilon(k_n)$.

Firstly, we derive an upper bound for the complement $\Fcal_\epsilon(k_n)^c$. Note that 
\[
	\varphi(\{p_1,\dots,p_{r+1}\}, k_n ;p_1,\dots,p_s) \leq \varphi(\{p_1,\dots,p_r\}, k_n ;p_1,\dots,p_s),
\]
and so,
\begin{align*}
	&\int \int_{\Fcal_\epsilon(k_n)^c} \varphi(\{p_1,\dots,p_{r+1}\}, k_n ;p_1,\dots,p_s)
		\dd \mu(p_1) \cdots \dd \mu(p_{r+1})\\
	&\leq \int \int_{\Fcal_\epsilon(k_n)^c} \varphi(\{p_1,\dots,p_r\}, k_n ;p_1,\dots,p_s)
		\dd \mu(p_1) \cdots \dd \mu(p_{r+1}).
\end{align*}

For $(p_1,\dots,p_{r+1})\in \Fcal_\epsilon(k_n)^c$, we have that $(p_1,\dots,p_r) \in \left(\stripknc\right)^r$ and $p_{r+1}=(x_{r+1},y_{r+1})$ satisfies $y_{k_n,c}^-\leq y_{r+1}\leq y_{k_n,c}^+$ and $x_{r+1}$ falls into an interval $I_n$ of length $2k_n^{1+\epsilon}$.
As the integrand $\varphi(\{p_1,\dots,p_r\};p_1,\dots,p_s)$ is constant in $x_{r+1}$, we can upper bound the corresponding integration as follows,
\begin{align*}
	&\int_{\{(x_{r+1},y_{r+1})\in\stripknc: x_{r+1}\in I_n\}} \varphi(\{p_1,\dots,p_r\}, k_n ;p_1,\dots,p_s) 
		\dd \mu(p_{r+1}) \\
	&= \int_{y_{k_n,c}^-}^{y_{k_n,c}^+} \int_{I_n} \varphi(\{p_1,\dots,p_r\}, k_n ;p_1,\dots,p_s)\frac{\alpha \nu}{\pi} e^{-\alpha y_{r+1}} dx_{r+1}dy_{r+1} \\
	&\leq 2k_n^{1+\epsilon} \varphi(\{p_1,\dots,p_r\}, k_n ;p_1,\dots,p_s)\int_{y_{k_n,c}^-}^{y_{k_n,c}^+}  \frac{\alpha \nu}{\pi} e^{-\alpha y_{r+1}} dy_{r+1}.
\end{align*}
Furthermore, we have that
\begin{align*}
	\int_{y_{k_n,c}^-}^{y_{k_n,c}^+} e^{-\alpha y_{r+1}}dy_{r+1} 
	\leq \int_{y_{k_n,c}^-}^\infty e^{-\alpha y_{r+1}}dy_{r+1} 
	= \bigO{e^{-\alpha y_{k_n,c}^-}} = \bigO{\left( \frac{k_n-c\sqrt{k_n\ln k_n}}{\xi}\right)^{-2\alpha}} 
	= \bigO{k_n^{-2\alpha}}.
\end{align*}
We have thus established that
\begin{align*}
	&\int \int_{\Fcal_\epsilon(k_n)^c} \varphi(\{p_1,\dots,p_r\}, k_n ;p_1,\dots,p_s)
		\dd \mu(p_1) \cdots \dd \mu(p_{r+1})\\
	&\leq \bigO{k_n^{1+\epsilon} k_n^{-2\alpha}}\int_{\stripknc} \cdots \int_{\stripknc} 
		\varphi(\{p_1,\dots,p_r\}, k_n ;p_1,\dots,p_s) \dd \mu(p_1) \cdots \dd \mu(p_{r}).
\end{align*}
The $r$-fold integral can the bounded from above using the induction hypothesis as
\begin{align}\label{eq:indhyp}
	\bigO{k_n^{1+\epsilon-2\alpha}}\left(\int_{\stripknc} \varphi(\{p_1\};p_1) \dd \mu(p_1) \right)^r .
\end{align}
Finally, we use that $k_n^{1+\epsilon-2\alpha} = \smallO{\int_{\stripknc} \varphi(\{p_1\};p_1)\mu_n(dp_1)}$. To see this, we will firstly show that $k_n^{1+\epsilon-2\alpha} = \smallO{nk_n^{-(2\alpha+1)}}$ and then apply~\eqref{eq:integration_average_degree_stripe}, which says that
\begin{align*}
\int_{\stripknc} \varphi(\{p_1\};p_1) \dd \mu(p_1) = \bigT{n k_n^{-(2\alpha+1)}}.
\end{align*}
By our choice of $\epsilon$, we have that $\frac{1}{2}+\epsilon<\alpha$, which implies that $\frac{2+\epsilon}{1+2\alpha}<1$, and hence, using $k_n = \bigO{n^{\frac{1}{2\alpha+1}}}$,
\begin{align*}
\frac{k_n^{1+\epsilon-2\alpha}}{nk_n^{-(2\alpha+1)}} = n^{-1} k_n^{2+\epsilon} = \bigO{n^{-1} n^{\frac{2+\epsilon}{1+2\alpha}}} = \smallO{n^{-1}n}= \smallO{1}.
\end{align*}
Having shown that $k_n^{1+\epsilon-2\alpha} = \smallO{\int_{\stripknc} \varphi(\{p_1\};p_1)\mu_n(dp_1)}$, we therefore conclude from~\eqref{eq:indhyp} that
\begin{align*}
	&\int \int_{\Fcal_\epsilon(k_n)^c} \varphi(\{p_1,\dots,p_r\}, k_n ;p_1,\dots,p_s)
		\dd \mu(p_1) \cdots \dd \mu(p_{r+1})\\
	&=\smallO{\int_{\stripknc} \varphi(\{p_1\}, k_n ;p_1)\mu_n(dp_1)}
		\left(\int_{\stripknc} \varphi(\{p_1\}, k_n ;p_1) \dd \mu(p_1) \right)^r  \\
	&=\smallO{\left(\int_{\stripknc} \varphi(\{p_1\}, k_n ;p_1) \dd \mu(p_1)\right)^{r+1}}.
\end{align*}

For the integration over $\Fcal_\epsilon(k_n)$, recall that by Lemma~\ref{lem:probdegFact}, \[
	\varphi(\{p_1,\dots,p_{r+1}\}, k_n ;p_1,\dots,p_s)
	=(1+o(1))\varphi(\{p_1,\dots,p_r\}, k_n ;p_1,\dots,p_s)
	\varphi(\{p_{r+1}\}, k_n ;p_1,\dots,p_s)+\bigO{k_n^{-C}}.
\]
This implies that
\begin{align*}
	&\int \int_{\Fcal_\epsilon(k_n)} \varphi(\{p_1,\dots,p_{r+1}\}, k_n ;p_1,\dots,p_s) 
		 \dd \mu(p_1) \cdots \dd \mu(p_{r+1})\\
	&=(1+o(1))\int \int_{\Fcal_\epsilon(k_n)} \varphi(\{p_1,\dots,p_r\}, k_n ;p_1,\dots,p_s) 
		\varphi(\{p_{r+1}\};p_1,\dots,p_s) \dd \mu(p_1) \cdots \dd \mu(p_{r+1}) \\
	&\hspace{10pt}+\bigO{k_n^{-C}}\int \int_{\Fcal_\epsilon(k_n)} 
		\dd \mu(p_1) \cdots \dd \mu(p_{r+1}) =:M+E.
\end{align*}
To finish the induction step we have to show that
\begin{align*}
M=(1+o(1)) \left(\int_{\stripknc} \varphi(\{p_1\}, k_n ;p_1)\mu_n(dp_1)\right)^{r+1}
\end{align*}
and
\begin{align*}
E=\smallO{\left(\int_{\stripknc} \varphi(\{p_1\}, k_n ;p_1)\mu_n(dp_1)\right)^{r+1}}.
\end{align*}

For $M$, note that we can factorize to integration over $p_1,\dots,p_r$ and over $p_{r+1}$.

Furthermore, we note that the condition $(p_1,\dots,p_{r+1})\in \Fcal_\epsilon(k_n)$ implies that (writing $p_{r+1}=(x_{r+1},y_{r+1})$) the horizontal coordinate $x_{r+1}$ falls into an interval $I_n$ of length $L_n$, satisfying
\[
	\frac{\pi n}{\nu}-2rk_n^{1+\epsilon}\leq L_n\leq \frac{\pi n}{\nu}-2k_n^{1+\epsilon}.
\]
As $k_n^{1+\epsilon} = \bigO{n^{\frac{1+\epsilon}{2\alpha+1}}} = o(n)$ for $\epsilon<1$ and $\alpha>\frac{1}{2}$, this shows that the length of the integration range in $x_{r+1}$ satisfies $L_n=(1+o(1))\frac{\pi n}{\nu}$. Thus, we have that
\begin{align*}
	M &= (1+o(1))\int_{I_n}\int_{y_{k_n,c}^-}^{y_{k_n,c}^+} \varphi(\{p_{r+1}\}, k_n ;p_1,\dots,p_s)
		\frac{\alpha\nu}{\pi} e^{-\alpha y_{r+1}}dy_{r+1}dx_{r+1}\\
	&\hspace{15pt} \times \int_{\stripknc} \cdots \int_{\stripknc} 
		\varphi(\{p_1,\dots,p_r\}, k_n ;p_1,\dots,p_s) \dd \mu(p_1) \cdots \dd \mu(p_{r})\\
	&=(1+o(1))n\int_{y_{k_n,c}^-}^{y_{k_n,c}^+} \varphi(\{p_{r+1}\}, k_n ;p_1,\dots,p_s)\alpha e^{-\alpha y_{r+1}}dy_{r+1}\\
	&\hspace{15pt} \times \int_{\stripknc} \cdots \int_{\stripknc} \varphi(\{p_1,\dots,p_r\};p_1,\dots,p_s)
		\dd \mu(p_1) \cdots \dd \mu(p_{r})\\
	&=(1+o(1))\int_{\stripknc} \varphi(\{p_{r+1}\}, k_n ;p_1,\dots,p_s)\mu_n(dp_1)\\
	&\hspace{15pt} \times\int_{\stripknc} \cdots \int_{\stripknc} \varphi(\{p_1,\dots,p_r\}, k_n ;p_1,\dots,p_s)
		\dd \mu(p_1) \cdots \dd \mu(p_{r}).
\end{align*}
By applying the base case of the induction to the first factor and the induction hypothesis to the second one, we have derived that
\begin{align*}
M&=(1+o(1))\left(\int_{\stripknc} \varphi(\{p_1\}, k_n ;p_1) \dd \mu(p_1) \right)^{r+1}.
\end{align*}
Finally, for $E$ we observe that
\begin{align*}
	E=\bigO{k_n^{-C}}\int \int_{\Fcal_\epsilon(k_n)} \dd \mu(p_1) \cdots \dd \mu(p_{r+1}) = \bigO{k_n^{-C}}(1+o(1))n^{r+1}.
\end{align*}
Recall that again by~\eqref{eq:integration_average_degree_stripe},
\begin{align*}
\int_{\stripknc} \varphi(\{p_1\}, k_n ;p_1) \dd \mu(p_1) = \bigT{n k_n^{-(2\alpha+1)}},
\end{align*}
which implies that
\begin{align*}
\left(\int_{\stripknc} \varphi(\{p_1\}, k_n ;p_1) \dd \mu(p_1) \right)^{r+1} 
= \bigT{n^{r+1}k_n^{-r(2\alpha+1)}}.
\end{align*}
For $C>r(2\alpha+1)$, we can conclude that indeed
\begin{align*}
E = \bigO{n^{r+1}k_n^{-C}}=\smallO{n^{r+1}k_n^{-r(2\alpha+1)}}=\smallO{\left(\int_{\stripknc} \varphi(\{p_1\}, k_n ;p_1) \dd \mu(p_1)\right)^{r+1}}.
\end{align*}
\end{proof}

We now prove the result for the factorial moments.

\begin{proofof}{Lemma~\ref{lem:factmoment}}
We give the proof for the Poissonized KPKVB model. The proof for the finite box model $\Gbox$ follows using similar arguments.

First of all, we observe that
\begin{align*}
{N_{\Po}(k_n) \choose r} = \frac{1}{r!} \sum_{p_1,\dots,p_r\in V(\GPo), \atop \text{distinct}} \1_{\{D_{\GPo}(p_1)=\dots=D_{\GPo}(p_r)=k_n\}}.
\end{align*}
This can be seen by induction on $r$. For $r=1$, the claim is clear. Assuming it holds for $r\geq 1$, by the induction hypothesis,
\begin{align*}
&{N_{\Po}(k_n) \choose r+1} = {N_{\Po}(k_n) \choose r} \frac{N_{\Po}(k_n)-r}{r+1} \\
&= \frac{1}{(r+1)!}\sum_{p_1,\dots,p_r\in V(\GPo), \atop \text{distinct}} \1_{\{D_{\GPo}(p_1)=\dots=D_{\GPo}(p_r)=k_n\}} (N_{\Po}(k_n)-r).
\end{align*}
Now, we can write
\[
	N_{\Po}(k_n) = \sum_{\stackrel{p_{r+1} \in V(\GPo),}{p_{r+1} \not \in\{ p_1,\dots,p_r\}} } \1_{\{D_{\GPo}(p_{r+1})=k_n\}}+\sum_{p_{r+1} \in \{p_1,\dots,p_r\}}\1_{\{D_{\GPo}(p_{r+1})=k_n\}}.
\]
The first sum leads to the right-hand side of the claim for $r+1$, whereas the second sum will cancel with the $-r$.

By the Campbell-Mecke formula 
\begin{align*}
	&\Exp{ {N_{\Po}(k_n) \choose r}}=\frac{1}{r!} \E\left[ \sum_{p_1,\dots,p_r\in V(\GPo), \atop \text{distinct}}  
		\1_{\{D_{\GPo}(p_1)=\dots=D_{\GPo}(p_r)=k_n\}}\right] \\
	&=\frac{1}{r!} \int_\Rcal \cdots \int_\Rcal \varphi_{\Po}(\{p_1,\dots,p_r\},p_1,\dots,p_r)
		\dd \mu(p_1) \cdots \dd \mu(p_{r}),
\end{align*}
where we integrate over $r$ additional points which we can think of as being added independently and with the same distribution as the vertices of the Poissonized KPKVB model $\GPo$ in the upper half-plane coordinates.

With $r=1$, it follows that
\[
	\Exp{N_{\Po}(k_n)} = \int_\Rcal \varphi_{\Po}(\{p_1\}, k_n ; p_1) f(x_1,y_1) \dd x_1 \dd y_1,
%=======
%	\Exp{N_{\Po}(k_n)} = \int_\Rcal \phi_{\Po}(\{p_1\}, k_n ; p_1) \dd \mu(p_1),
%>>>>>>> Stashed changes
\]
which yields that the right-hand side of the claim of the lemma can be rewritten as
\[
	\frac{1}{r!}(\Exp{N_{\Po}(k_n)})^r = \frac{1}{r!}\left(\int_\Rcal \phi_{\Po}(\{p_1\}, k_n; p_1) \dd \mu(p_1)\right)^r.
\]

Using Lemma~\ref{lem:asympind}, we conclude that
\begin{align*}
	\Exp{ { N_{\Po}(k_n) \choose r}} 
	&= \frac{1}{r!} \int_\Rcal \cdots \int_\Rcal 
		\varphi_{\Po}(\{p_1,\dots,p_r\}, k_n; p_1,\dots,p_r) 
		\dd \mu(p_1) \cdots \dd \mu(p_{r}) \\
	&= (1+o(1))\frac{1}{r!}\left(\int_\Rcal \varphi_{\Po}(\{p_1\}, k_n;p_1) f(x_1,y_1) \dd x_1 \dd y_1 \right)^r \\
	&= (1+o(1))\frac{1}{r!}(\Exp{N_{\Po}(k_n)})^r.
\end{align*}
\end{proofof}

\subsection{Coupling $G_n$ to $\GPo$}\label{ssec:coupling_Gn_GPo}

In the previous sections we have established results for the degrees and the factorial moments of the degree $k_n$ nodes in the Poissonized KPKVB and finite box model. Our intended result, however, was for the degree distribution in the original KPKVB model. In order to extend the result for the Poissonized KPKVB model to the original model we will use a coupling argument to show that the expected difference between the number of degree $k_n$ nodes is negligible.

\begin{lemma}\label{lem:diff_Nk_hyperbolic_binomial_poisson}
As $n \rightarrow \infty$, it holds that for $0 \le k_n \le n - 1$,
\[
\Exp{\left|N_{n}(k_n)-N_{Po}(k_n)\right|} = \smallO{\Exp{N_{Po}(k_n)}}.
\]
\end{lemma}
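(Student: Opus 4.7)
The plan is to couple $G_n$ and $\GPo$ via the standard device. Take an i.i.d.~sequence $u_1, u_2, \ldots$ of $(\alpha,R)$-quasi uniform points together with an independent $N \isd \Po(n)$, and set $V(G_n) := \{u_1, \ldots, u_n\}$ and $V(\GPo) := \{u_1, \ldots, u_N\}$, with edges drawn in both graphs according to the same hyperbolic-distance rule. Write $A := V(G_n) \triangle V(\GPo)$, so $|A| = |N - n|$, and by Chernoff bounds for the Poisson distribution $\Ee|A| = O(\sqrt{n})$.

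A vertex $v$ can contribute to $|N_n(k_n) - N_{\Po}(k_n)|$ only if it lies in $A$ with degree $k_n$ in the graph containing it, or lies in $V(G_n) \cap V(\GPo)$ with at least one neighbour in $A$ and degree $k_n$ in at least one of $G_n, \GPo$. Hence
\[
|N_n(k_n) - N_{\Po}(k_n)| \le \#\{u \in A : \deg(u) = k_n\} + 2 \sum_{u \in A} \#\{v \sim u : \deg(v) = k_n\},
\]
where $\deg(\cdot)$ denotes the degree in the larger of the two graphs. The rest of the proof consists of bounding the expectations of these two terms.

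The first term has expectation at most $\Ee|A|$ times the probability that a fresh quasi-uniform point has degree $k_n$, which is $(1+o(1))\pmf(k_n)$ by essentially the proof of Lemma~\ref{lem:expnnkn}; so this contributes $O(\sqrt{n}\,\pmf(k_n))$. For the second term, by the Campbell--Mecke formula (conditional on $|A|$, the points of $A$ are i.i.d.~quasi-uniform), its expectation factorises as
\[
\Ee|A| \cdot \Ee\bigl[\#\{v \sim u_\ast : \deg(v) = k_n\}\bigr],
\]
where $u_\ast$ is a fresh independent quasi-uniform point. The key technical step, and main obstacle, is to estimate the inner expectation as $O(k_n \pmf(k_n))$, rather than the naive $O(1)$ one would get from the bounded-average-degree bound. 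For this I would use the concentration-of-heights estimate (Proposition~\ref{prop:concentration_height_general}) to argue that a degree-$k_n$ vertex $v$ has $y_v \in \Kcal_C(k_n)$ with probability $1 - O(k_n^{-C^2/2})$, in which case $\mu(y_v) = \Theta(k_n)$ and hence the probability that $u_\ast$ is adjacent to $v$ is $\Theta(k_n/n)$; multiplying by the $\Theta(n\,\pmf(k_n))$ expected number of degree-$k_n$ vertices (Lemma~\ref{lem:expnnkn}) delivers the claimed bound.

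Combining the two estimates yields $\Ee|N_n(k_n) - N_{\Po}(k_n)| = O(\sqrt{n}\,k_n\,\pmf(k_n))$. Since $\alpha > \tfrac{1}{2}$ forces $n^{1/(2\alpha+1)} = o(\sqrt{n})$, for $k_n = O(n^{1/(2\alpha+1)})$ one has $k_n/\sqrt{n} = o(1)$, so this is $o(n\,\pmf(k_n)) = o(\Ee N_{\Po}(k_n))$ by Lemma~\ref{lem:expnnkn}. In the regime $k_n \gg n^{1/(2\alpha+1)}$ the same bound gives $\Ee|N_n(k_n) - N_{\Po}(k_n)| = o(1)$, which together with $\Ee N_{\Po}(k_n) = o(1)$ from Lemma~\ref{lem:expnnkn} is sufficient for the proof of Theorem~\ref{thm:degrees_hyperbolic}(iii) to conclude $N_n(k_n) = 0$ a.a.s. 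The main obstacle throughout is the $k_n$--rather--than--$n$ factor in the inner expectation: without exploiting the concentration of heights one would be stuck at $O(\sqrt n)$, which is insufficient for $k_n$ approaching the maximum possible scale $n^{1/(2\alpha+1)}$.
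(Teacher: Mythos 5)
Your overall route is genuinely different from the paper's. The paper fixes a vertex $u_i$ with $i\le n$, views $G_n$ (when $N>n$) as $\GPo$ with $N-n$ uniformly random points removed, and decomposes the event \emph{\(u_i\) changes degree status} according to the exact number $t$ of removed neighbours, using a hypergeometric estimate $\Prob{Z(n,t)=t}$ and then a careful sum over $t=1,\dots,N-n$. You instead sum directly over the \emph{perturbation set} $A$, counting degree-$k_n$ neighbours of each perturbing point. Both decompositions are sound in principle; yours is arguably more elementary, but you then need to do real work in the Campbell--Mecke estimate of the inner expectation to compensate, so the total effort is comparable.

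However, as written there is a concrete bug in your bounding inequality. You state
\[
|N_n(k_n) - N_{\Po}(k_n)| \le \#\{u \in A : \deg(u) = k_n\} + 2 \sum_{u \in A} \#\{v \sim u : \deg(v) = k_n\},
\]
with $\deg(\cdot)$ \emph{the degree in the larger of the two graphs}. Take $N>n$, and a vertex $v \in V(G_n)\cap V(\GPo)$ with $\deg_{G_n}(v)=k_n$ but $\deg_{\GPo}(v)=k_n+1$ (so $v$ has exactly one neighbour in $A$). This $v$ contributes $1$ to the left-hand side, yet it is missed by your right-hand side: $\deg_{\GPo}(v)=k_n+1\ne k_n$, so $v$ is not captured by $\#\{v\sim u: \deg_{\GPo}(v)=k_n\}$, and the factor $2$ does nothing to fix this. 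Your prose (\emph{``degree $k_n$ in at least one of $G_n,\GPo$''}) is right; the displayed formula contradicts it. The fix is to replace the degree condition inside the sum by ``$\deg_{G_n}(v)=k_n$ or $\deg_{\GPo}(v)=k_n$'', and the inner expectation then still satisfies the same $O(k_n\pmf(k_n))$ estimate you describe, since degree-$k_n$ vertices in either graph have heights in $\Kcal_C(k_n)$ with high probability, which is what drives the $\Theta(k_n/n)$ connection probability.

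Two further points worth flagging. First, the factorisation $\Ee\sum_{u\in A}f(u) = \Ee|A|\cdot\Ee[f(u_\ast)]$ is not exact here: the quantity being averaged depends on the whole configuration, and the relevant degree distribution after conditioning on $|A|=m$ depends (weakly) on $m$. This is fixable by a uniformity argument over $m\le C\sqrt{n\log n}$, but it is a genuine step you are suppressing, and it roughly corresponds to the part of the paper's proof where the hypergeometric sum over $t$ is controlled. Second, your observation at the end that for $k_n\gg n^{1/(2\alpha+1)}$ your bound only gives $o(1)$ rather than $o(\Ee N_{\Po}(k_n))$ is correct, and you rightly note that $o(1)$ suffices for the downstream application in Theorem~\ref{thm:degrees_hyperbolic}(iii); in fact the paper's own proof of the present lemma also implicitly assumes $k_n = O(n^{1/(2\alpha+1)})$ through the bound $\frac{(N-n)k_n}{N}=o(1)$, so this is a shared caveat, not one specific to your argument.
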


\begin{proof}
We couple both models by taking an infinite supply of i.i.d.~points $u_1, u_2, \dots$ chosen according
to the $(\alpha,R)$-quasi uniform distribution and letting the vertices of $G(n;\alpha,\nu)$ be $u_1,\dots, u_n$ and
the vertices of $\GPo(n;\alpha,\nu)$ be $u_1,\dots, u_N$ with $N\isd \Po(n)$ independently of
$u_1, u_2, \dots$. Thus, under this coupling, the only difference between $G_n = G(n;\alpha,\nu)$ and $\GPo = \GPo(n;\alpha,\nu)$ is the number of points. Note that since $N$ is Poisson with mean $n$, it follows from the Chernoff bound (see also equation (135) in the paper) that we may assume that $n - C\sqrt{n \log n} \le N \le n + C \sqrt{n \log n}$. To keep notation simple we will suppress this conditioning in the derivations.

Clearly, if $N = n$ the graphs are the same. So we will consider the two cases $n - C\sqrt{n \log n} \le N < n$ and $n < N \le n + C \sqrt{n \log n}$. We will prove the latter case. The other case uses similar arguments and hence we omit the details here.

If $n < N \le n + C \sqrt{n \log n}$ then the $G_n$ has less vertices that $\GPo$. Write $V_n(k_n)$ and $V_{\Po}(k_n)$ to denote the set of vertices that have degree $k_n$ in $G_n$ and $G_{\Po}$, respectively. Then since the vertices $u_{n+1}, \dots, u_N$ are not present in $G_n$,
\begin{align*}
	\left|N_n(k_n) - N_{\Po}(k_n)\right| &= \sum_{i = 1}^N \1_{\left\{u_i \in V_n(k_n) \Delta V_{\Po}(k_n)\right\}}\\
	&= \sum_{i = 1}^n \1_{\left\{u_i \in V_n(k_n) \Delta V_{\Po}(k_n)\right\}}
		+ \sum_{i = n+1}^N \1_{\left\{u_i \in V_{\Po}(k_n)\right\}}.
\end{align*}
Let $D_{\Po}$ denote the degree in the Poissonized KPKVB model of a point $u$ placed according to the $(\alpha,R)$-quasi uniform distribution. Then, using the Campbell-Mecke formula, the expectation of the second summation equals
\begin{align*}
	(N-n) \Prob{D_{\Po} = k_n} &= (N-n) \int_0^R \Prob{\Po(\mu_{\Po,n}(y)) = k_n} \alpha e^{-\alpha y} \dd y\\
	&\le (1+o(1)) C \sqrt{n \log n} p_{k_n} = \smallO{\Exp{N_{\Po}(k_n)}}.
\end{align*}
Therefore it remains to consider the first summation.

Let $D_n(u)$ and $D_{\Po}(u)$ denote the degree of a point $u$ in $G_n$ and $G_{\Po}$, respectively. Then there are two scenarios to consider: 1) either $D_n(u_i) = k_n$ and $D_{\Po}(u_i) \ne k_n$ or 2) $D_n(u_i) \ne k_n$ and $D_{\Po}(u_i) = k_n$. In the first case, since $u_i$ is present in both graphs it follows that $D_{\Po}(u_i) > k_n$. Similarly, for the second case it must hold that $D_n(u_i) < k_n$. Hence we have
\begin{align*}
	&\hspace{-30pt}\sum_{i = 1}^n \1_{\left\{u_i \in V_n(k_n) \Delta V_{\Po}(k_n)\right\}}\\
	&= \sum_{i = 1}^n \1_{\left\{D_n(u_i) = k_n, D_{\Po}(u_i) > k_n\right\}}
		+ \sum_{i = 1}^n \1_{\left\{D_n(u_i) < k_n, D_{\Po}(u_i) = k_n\right\}}.
\end{align*}

Let us first consider the second summation, i.e. the case where the node has degree smaller than $k_n$ in $G_n$. Taking the expectation gives $n \Prob{D_n < k_n, D_{\Po} = k_n}$, where $D_n$ denotes the degree in the KPKVB model of a point $u$ placed according to the $(\alpha,R)$-quasi uniform distribution. We now observe that because the points $u_1, \dots, u_N$ used to couple the graphs are independent, we can view the graph $G_n$ as being obtained from $\GPo$ by removing $N-n$ points, uniformly at random. Therefore if a point has degree $k_n$ in $\GPo$ but smaller degree in $G_n$, this means that at least one of its neighbors was removed. Denote by $Z(n)$ a random variable with a Hypergeometric distribution, for taking $N-n$ draws from a population of size $N$, where there are $k_n$ good objects. That is, $Z(n)$ denote the number of removed neighbors of a node $u$ with degree $k_n$ in $\GPo$. We then have
\begin{align*}
	\Prob{D_n < k_n, D_{\Po} = k_n} &= \Prob{Z(n) > 1}\Prob{D_{\Po} = k_n}\\
	&\le \Exp{Z(n)} \Prob{D_{\Po} = k_n} = \frac{(N-n) k_n}{N} \Prob{D_{\Po} = k_n}.
\end{align*}
Because $\alpha > 1/2$ and $k_n = \bigO{n^{\frac{1}{2\alpha+ 1}}}$ it holds that $k_n = \smallO{\sqrt{\frac{n}{\log n}}}$. Since $N = \bigT{n}$ and $N - n \le \bigO{\sqrt{n \log n}}$ it then follows that $\frac{(N-n) k_n}{N} = \smallO{1}$, from which we conclude that
\begin{align*}
	\Exp{\sum_{i = 1}^n \1_{\left\{d_n(u_i) < k_n, D_{\Po}(u_i) = k_n\right\}}}
	&= n \Prob{D_n < k_n, D_{\Po} = k_n}\\
	&\le \smallO{1} n \Prob{D_{\Po} = k_n} = \smallO{\Exp{N_{\Po}(k_n)}}.
\end{align*}

We now proceed with the other summation, for the case where a vertex has degree $k_n$ in $G_n$ but larger degree in $G_{\Po}$. Since the degree of $u$ in $\GPo$ can be a most $N-n$ larger we have
\[
	\Prob{D_n(u) = k_n, D_{\Po}(u) > k_n} = \sum_{t = 1}^{N - n} \Prob{D_n(u) = k_n, D_{\Po}(u) = k_n + t}.
\]
Using that the graph $G_n$ can be seen as being obtained from $\GPo$ by removing $N-n$ points uniformly at random, a point with degree $k_n + t$ in $G_{\Po}$ can only have degree $k_n$ in $G_n$ if exactly $t$ of its neighbors where removed. Let us therefore denote by $Z(n,t)$ a random variable with a Hypergeometric distribution, for taking $N-n$ draws from a population of size $N$, where there are $k_n + t$ good objects. Then
\[
	\Prob{D_n(u) = k_n, D_{\Po}(u) = k_n + t} = \Prob{Z(n,t) = t}\Prob{D_{\Po} = k_n + t}.
\]

Recall that, for any $0 < \varepsilon < 1$
\begin{align*}
	\Prob{D_{\Po} = k_n + t} &= \int_0^R \Prob{\Po(\mu_{Po,n}(y)) = k_n+ t} \alpha e^{-\alpha y} dy\\
	&= \int_0^{(1-\varepsilon)R} \Prob{\Po(\mu_{Po,n}(y)) = k_n+ t} \alpha e^{-\alpha y} dy \\
	&\hspace{10pt}+ \int_{(1-\varepsilon)R}^R \Prob{\Po(\mu_{Po,n}(y)) = k_n+ t} \alpha e^{-\alpha y} dy
\end{align*}
By Lemma~\ref{lem:degree_integral} the first part is $(1+o(1)) \pmf(k_n + t)$ while the second part is $O(n^{-2\alpha(1-\varepsilon)})$ and hence
\[
	\Prob{D_{\Po} = k_n + t} \le \bigO{1}\left(\pmf(k_n + t) + n^{-2\alpha(1-\varepsilon)}\right).
\]
In addition we have that $\Prob{Z(n,t) = t} \le \bigO{1} \frac{\Exp{Z(n,t)}}{t}$. We thus obtain
\begin{align*}
	\sum_{t = 1}^{N-n} \Prob{Z(n,t) = t}\Prob{D_{\Po} = k_n + t}
	&\le \bigO{1} \sum_{t = 1}^{N-n} \frac{\Exp{Z(n,t)}}{t}\left(\pmf(k_n + t) + n^{-2\alpha(1-\varepsilon)}\right)\\
	&= \bigO{1} \sum_{t = 1}^{N-n} \frac{(N-n)(k_n + t)}{N}\left(\pmf(k_n + t) + n^{-2\alpha(1-\varepsilon)}\right)\\
	&= \bigO{\sqrt{\frac{\log n}{n}}} \sum_{t = 1}^{N-n} \frac{k_n}{t}
		\left(\pmf(k_n + t) + n^{-2\alpha(1-\varepsilon)}\right)\\
	&\hspace{10pt}+ \bigO{\sqrt{\frac{\log n}{n}}} \sum_{t = 1}^{N-n}
		\left(\pmf(k_n + t) + n^{-2\alpha(1-\varepsilon)}\right),
\end{align*}
where we used that $\frac{N-n}{N} = \bigO{\sqrt{\frac{\log n}{n}}}$.

We will show that both summations are $\smallO{\pmf(k_n)}$. For the first summation we recall that $\frac{k_n (\log n)^{3/2}}{\sqrt{n}} = \smallO{1}$ for $k_n = \bigO{n^{\frac{1}{2\alpha + 1}}}$, while for $\varepsilon > 0$ small enough $n^{-2\alpha(1-\varepsilon)} = \smallO{k_n^{-(2\alpha + 1)}} = \smallO{\pmf(k_n)}$. Hence, since $\pmf(k_n + t)\le \pmf(k_n)$,
\begin{align*}
	\bigO{\sqrt{\frac{\log n}{n}}} \sum_{t = 1}^{N-n} \frac{k_n}{t}
		\left(\pmf(k_n + t) + n^{-2\alpha(1-\varepsilon)}\right)
	&\le \bigO{ k_n \sqrt{\frac{\log n}{n}} \pmf(k_n)} \sum_{t = 1}^{N - m} \frac{1}{t}\\
	&= \bigO{\frac{k_n (\log n)^{3/2}}{\sqrt{n}}} \pmf(k_n) = \smallO{\pmf(k_n)}.
\end{align*}

For the other summation we use that
\[
	\sum_{t = 1}^{N-n} \pmf(k_n + t) \le \sum_{t = 1}^\infty \pmf(k_n + t)
	\le \bigO{k_n^{-2\alpha}} = \bigO{k_n \pmf(k_n)},
\]
together with the fact that for $\varepsilon$ small enough, $\log(n) n^{-2\alpha(1-\varepsilon)} = \smallO{k_n^{-(2\alpha + 1)}} = \smallO{\pmf(k_n)}$. This implies that
\begin{align*}
	\bigO{\sqrt{\frac{\log n}{n}}} \sum_{t = 1}^{N-n}
		\left(\pmf(k_n + t) + n^{-2\alpha(1-\varepsilon)}\right)
	&\le \bigO{\sqrt{\frac{\log n}{n}} k_n \pmf(k_n)} + \bigO{(N-n)\sqrt{\frac{\log n}{n}} n^{-2\alpha(1-\varepsilon)}}\\
	&\le \smallO{\pmf(k_n)} + \bigO{(\log n) n^{-2\alpha(1-\varepsilon)}} = \smallO{\pmf(k_n)}.
\end{align*}

It now follows that
\begin{align*}
	n \Prob{D_n(u) = k_n, D_{\Po}(u) > k_n}
	&= \sum_{t = 1}^{N-n} \Prob{D_n = k_n, D_{\Po} = k_n + t}\\
	&= n \sum_{t = 1}^{N-n} \Prob{Z(n,t) = t}\Prob{D_{\Po} = k_n + t}\\
	&= n \, \smallO{\Prob{D_{\Po} = k_n}} = \smallO{\Exp{N_{\Po}(k_n)}},
\end{align*}
which finishes the proof for the case where $N > n$.
\end{proof}

\subsection{Proof of Theorem~\ref{thm:degrees_hyperbolic}}\label{ssec:proof_thm_degrees}

We now have all necessary ingredients to prove the main result on the degrees, Theorem~\ref{thm:degrees_hyperbolic}.

\begin{proofof}{Theorem~\ref{thm:degrees_hyperbolic}}

Recall that we shall only give the proof for the case where $k_n \to \infty$, since result (i) for fixed $k = \bigO{1}$ follows from~\cite{gugelmann2012random}.

\begin{enumerate}[\upshape (i)]
\item  First we recall that the statements regarding $\pmf(k)$ and its asymptotic behavior follow from Equation~\ref{eq:def_pk} and Equation~\ref{eq:degree_distribution_P_asymptotics}.

By Lemma~\ref{lem:expnnkn},
\[
\Exp{{ N_{Po}(k_n) }}=(1+o(1))n \pmf(k_n).
\]
Using Lemma~\ref{lem:factmoment} with $r=2$ we have that $\Exp{{N_{Po}(k_n) \choose 2}} = (1+o(1))\frac{(\Exp{ N_{Po}(k_n)})^2}{2}$, which implies $\Exp{N_{Po}(k_n)^2} = 2\Exp{{N_{Po}(k_n) \choose 2}} +\Exp{N_{Po}(k_n)} = (1+o(1))(\Exp{ N_{Po}(k_n)})^2 +o((\Exp{N_{Po}(k_n)})^2)$ (because $\Exp{N_{Po}(k_n)} = (1+o(1)) n\pmf(k_n) \rightarrow \infty$). Hence, by Chebychev for any $\epsilon >0$,
\[
	\Prob{|N_{Po}(k_n)-\Exp{N_{Po}(k_n)}|\geq \epsilon \Exp{N_{Po}(k_n)}}
	\leq \frac{Var(N_{Po}(k_n))}{\epsilon^2 (\Exp{ N_{Po}(k_n)})^2} = o(1).
\]
As $N_n(k_n)=N_{Po}(k_n)+N_n(k_n)-N_{Po}(k_n) = N_{Po}(k_n)\pm |N_n(k_n)-N_{Po}(k_n)|$ (where the sign depends on whether $N_n(k_n)>N_{Po}(k_n)$ or not), due to Lemma~\ref{lem:diff_Nk_hyperbolic_binomial_poisson}, we have that
\[
	\Exp{N_n(k_n)} = \Exp{N_{Po}(k_n)} \pm \Exp{ |N_n(k_n)-N_{Po}(k_n)|} = (1+o(1))\Exp{N_{Po}(k_n)} = (1+o(1))n \pmf(k_n).
\]

\item Let $\zeta = 2\alpha \xi^{2\alpha}c^{-(2\alpha+1)} \in \R$.
The proof consists of showing that $\Exp{{N_{Po}(k_n) \choose r} } \rightarrow \frac{\zeta^r}{r!}$ for every positive integer $r$.% and then applying \cite[Theorem 8.3.1]{alon2016probabilistic}.

If $k_n=(1+o(1))c n^{\frac{1}{2\alpha+1}}$, for some positive constant $c > 0$. then by Lemma~\ref{lem:expnnkn},
\begin{align*}
	\Exp{N_{Po}(k_n)} &= (1+o(1))2\alpha \xi^{2\alpha} n (1+o(1))^{-(2\alpha+1)} c^{-(2\alpha+1)}n^{-1} \\
	&= (1+o(1))2\alpha \xi^{2\alpha} c^{-(2\alpha+1)}=(1+o(1))\zeta,
\end{align*}
which implies $\Exp{N_{Po}(k_n)} \rightarrow \zeta$ (as $\zeta$ is a positive constant). From Lemma~\ref{lem:factmoment}, it then follows that $\Exp{{N_{Po}(k_n) \choose r}} =(1+o(1))\frac{(\Exp{ N_{Po}(k_n)})^r}{r!}\rightarrow \frac{\zeta^r}{r!}$. Thus, it follows from \cite[Theorem 8.3.1]{alon2016probabilistic} that $N_{Po}(k_n) \xrightarrow{d} \mathrm{Po}(\zeta)$ for the Poissonized version.

Finally, since $k_n = \bigO{n^{\frac{1}{2\alpha+1}}}$, by Lemma~\ref{lem:diff_Nk_hyperbolic_binomial_poisson}, $\Exp{| N_n(k_n) - N_{Po}(k_n)|} = o(\Exp{N_{Po}(k_n)}) = o(\zeta)$, from which it follows that $\Prob{|N_n(k_n)-N_{Po}(k_n)|\geq 1} \leq \Exp{ |N_n(k_n)-N_{Po}(k_n)|} = o(\zeta)$. Hence, it also holds in the original KPKVB model that $N_n(k_n) \xrightarrow{d} \mathrm{Po}(\zeta)$.

\item We will show that in this case $\Exp{N_n(k_n)} = o(1)$. This then implies, by Markov's inequality,
\[
	\Prob{N_n(k_n)>0}\leq \Exp{N_n(k_n)} = o(1).
\]

First we observe that as the Poissonized KPKVB model $\GPo$ has the same intensity measure as the original KPKVB model with a fixed number $n$ of points, the expected degree of a vertex of the KPKVB model with radial coordinate $r=R-y$ is given by $\mu_{\Po}(y)$ and hence,
\[
	\Exp{N_n(k_n)} = n \int_0^R \Prob{\mathrm{Bin}(n-1,\mu_{\Po}(y)/n) = k_n} 
	\frac{\alpha \sinh(\alpha (R-y))}{\cosh(\alpha R)-1} dy.
\]
Fix $0 < \varepsilon < \frac{4\alpha - 1}{4\alpha + 2} \wedge \frac{2\alpha - 1}{2\alpha}$. We first show that we only need to consider integration up to $y \le (1-\varepsilon)R$. By our choice of $\varepsilon$, $2\alpha(1-\varepsilon) > 1$, so that
\[
	\frac{\cosh(\alpha \varepsilon R) - 1}{\cosh(\alpha R) - 1}
	= \bigO{n^{-2\alpha (1 - \varepsilon)}} = \smallO{n^{-1}}.
\]
This implies
\[
	n \int_{(1-\varepsilon)R}^R \Prob{\mathrm{Bin}(n-1,\mu_{\Po}(y)/n) = k_n}
	\frac{\alpha \sinh(\alpha (R-y))}{\cosh(\alpha R)-1} dy
	\le n \frac{\cosh(\alpha \epsilon R)}{\cosh(\alpha R)-1} = \smallO{1},
\]
and thus it is enough to show that
\[
	n \int_0^{(1-\varepsilon)R} \Prob{\mathrm{Bin}(n-1,\mu_{\Po}(y)) = k_n}
	\frac{\alpha \sinh(\alpha (R-y))}{\cosh(\alpha R)-1} dy = \smallO{1}.
\]
Note that for all $0 \le y \le (1-\varepsilon)R$ we have
\[
	\frac{\alpha \sinh(\alpha (R-y))}{\cosh(\alpha R)-1} = (1+\smallO{1}) \alpha e^{-\alpha y}.
\]
Hence, by bounding the Binomial probability (see Lemma~\ref{lem:binomial_poisson_bound})
\begin{align*}
	&\hspace{-20pt} n \int_0^{(1-\varepsilon)R} \Prob{\mathrm{Bin}(n-1,\mu_{\Po}(y)) = k_n}
		\frac{\alpha \sinh(\alpha (R-y))}{\cosh(\alpha R)-1} dy\\
	&\le (1+\smallO{1}) \frac{e}{\sqrt{2\pi}} \sqrt{\frac{n}{n-k_n}}
		n \int_0^{(1-\varepsilon)R} \Prob{\Po(\mu_{Po}(y)) = k_n} \alpha e^{-\alpha y} dy \\
	&\le (1+\smallO{1}) \frac{e}{\sqrt{2\pi}} \sqrt{\frac{n}{n-k_n}} n \pmf(k_n)
	= \bigO{\sqrt{\frac{n}{n-k_n}} n k_n^{-(2\alpha + 1)}}.
\end{align*}
We shall now consider two cases: $n^{\frac{1}{2\alpha + 1}} \ll k_n < n^{1-\varepsilon}$ and $n^{1-\varepsilon} \le k_n \le n - 1$.

If $n^{\frac{1}{2\alpha + 1}} \ll k_n < n^{1-\varepsilon}$ then $\sqrt{\frac{n}{n-k_n}} = 1 + \smallO{1}$ and hence
\[
	\sqrt{\frac{n}{n-k_n}} n k_n^{-(2\alpha + 1)} = \bigO{n k_n^{-(2\alpha + 1)}} = \smallO{1}.
\]

For $k_n \ge n^{1 - \varepsilon}$ we have, by our choice of $\varepsilon$, that $\frac{3}{2} - (2\alpha + 1)(1-\varepsilon) < 0$, and thus
\[
	\sqrt{\frac{n}{n-k_n}} n k_n^{-(2\alpha + 1)} = \bigO{n^{\frac{3}{2}} k_n^{-(2\alpha + 1)}}
	= \bigO{n^{\frac{3}{2} - (2\alpha + 1)(1-\varepsilon)}} = \smallO{1}.
\]
\end{enumerate}
\end{proofof}

\section{Clustering when \texorpdfstring{$k\to\infty$}{k tends to infinity} : overview of the proof strategy}

The proof of Theorem~\ref{thm:mainktoinfty} follows the same strategy as outlined in Section~\ref{sec:proof_outline} and executed in Section~\ref{sec:proofs_fixed_k}. However, the fact that $k = k_n \to \infty$ as $n \to \infty$, introduces significant technical challenges, especially for $k_n$ close the the maximum scale $n^{\frac{1}{2\alpha + 1}}$. For example, the coupling between $\GPo$ and $\Gbox$ we use becomes less exact so that we can no longer use Lemma~\ref{lem:coupling_edges} to conclude that triangle counts in $\GPo$ and $\Gbox$ are asymptotically equivalent. Moreover, since we are ultimately interested in recovering the scaling of $c(k_n;G_n)$, which Theorem~\ref{thm:mainktoinfty} claims is $\gamma(k_n)$, we need to show that each step in the strategy outlined in Section~\ref{sec:proof_outline} only introduces error terms that are of smaller order, i.e. that are $\smallO{\gamma(k_n)}$. This will turn out to require a great deal of care in bounding all error terms we encounter.

In this section we explain the challenges associated with each step and give a detailed overview of the structure for the proof of Theorem~\ref{thm:mainktoinfty} using intermediate results for each of the steps. We first define the scaling function
\begin{equation}\label{eq:def_scaling_function}
	s(k) = \begin{cases}
		k^{-(4\alpha - 2)} &\mbox{if } \frac{1}{2} < \alpha < \frac{3}{4},\\
		\log(k) k^{-1} &\mbox{if } \alpha = \frac{3}{4},\\
		k^{-1} &\mbox{if } \alpha > \frac{3}{4},
	\end{cases}
\end{equation}
so that $\gamma(k) = \bigT{s(k)}$ as $k \to \infty$. We will end this section with the proof of Theorem~\ref{thm:mainktoinfty}, based on the intermediate results.

\begin{remark}[Diverging $k_n$]
Throughout the remainder of this paper, unless stated otherwise, $\{k_n\}_{n \ge 1}$ will always denote a sequence of non-negative integers satisfying $k_n \to \infty$ and $k_n = \smallO{n^{\frac{1}{2\alpha + 1}}}$, as $n \to \infty$.
\end{remark}

We start with introducing a slightly modified version of the local clustering function, which will be convenient for computations later,
\begin{equation}\label{eq:def_local_clustering_ast_general}
	c^\ast(k;G) = \frac{1}{\Exp{N(k)}} \sum_{v \in V(G) \atop \text{deg}(v)=k} c(v).
\end{equation}

Notice that the only difference between $c(k;G)$ and $c^\ast(k;G)$ is that we replace $N(k)$ by its expectation $\Exp{N(k)}$. The advantage is that now, the only randomness is in the formation of triangles. In addition, note that since $\Exp{N(k)} > 0$ a case distinction for $N(k)$ is no longer needed for $c^\ast(k;G)$. It is however still relevant since we are eventually interested in $c(k;G)$. Following the notational convention, throughout the remainder of this paper we write $c^\ast(k; \GPo)$ and $c^\ast(k; \Gbox)$ to denote the modified local clustering function in $\GPo$ and $G_{\mathcal{P},n}(\alpha,\nu)$, respectively.

Figure~\ref{fig:overview_proof} shows a schematic overview of the proof of Theorem~\ref{thm:mainktoinfty} based on the different propositions described below, plus the sections in which theses propositions are proved. Observe that the order in which the intermediate results are proved is reversed with respect to the natural order of reasoning. This does not create any circular logic, since each intermediate result is independent of the others. We choose this order because results proved in the later stages are helpful to deal with error terms coming up in proofs at earlier stages and hence help streamline those proofs. Below we briefly describe each of the intermediate steps leading up to the proof of Theorem~\ref{thm:mainktoinfty}.

\begin{figure}[!t]
\centering

\begin{tikzpicture}

\pgfdeclarelayer{background}
\pgfdeclarelayer{foreground}
\pgfsetlayers{background,main,foreground}

\tikzstyle{block} = [draw, text centered, rounded corners, draw=black, thick, fill=white]
\tikzstyle{textblock} = [draw, text centered, draw=black, fill=white]

\tikzset{
  double arrow/.style args={#1 colored by #2 and #3}{
    -latex,line width=#1,#2, % first arrow
    postaction={draw,-latex,#3,line width=(#1)/2,
                shorten <=(#1)/3,shorten >=(#1)}, % second arrow
  }
}

\draw node (anchor) at (0,0) {};

%Hyperbolic graph: top left

\path (anchor)+(0,0) node (hyp_header) {\begin{minipage}[c]{10em}
	\begin{center}
		\textbf{KPKVB graph}\\
		$G_n = G(n; \alpha, \nu)$
	\end{center}
\end{minipage}};

\path (hyp_header.south)+(0,-0.75) node (c_hyp) [block] {\begin{minipage}[c]{10em}
	\begin{center}
	Local clustering\\
	$\displaystyle c(k_n;G_n)$
	\end{center}
\end{minipage}};

%Adjusted local clustering: middle left

\path (c_hyp.south)+(0,-3) node (c_ast_hyp) [block] {\begin{minipage}[c]{10em}
	\begin{center}
	Adjusted local clustering\\
	$\displaystyle c^\ast(k_n;G_n)$
	\end{center}
\end{minipage}};

\path (c_ast_hyp.north)+(-2,1) node (hyp_text) [textblock] {\begin{minipage}[c]{7em}
	\begin{center}
		Lemma \ref{lem:clustering_ast_H}\\
		Section~\ref{ssec:coupling_H_HP}
	\end{center}
\end{minipage}};

%Poisson Hyperbolic graph: middle left

%\path (c_hyp.south)+(0,-6) node (c_pois_hyp) [block] {\begin{minipage}[c]{12em}
%	\begin{center}
%	Local clustering\\
%	$\displaystyle c_{\widetilde{\H},n}(k_n)$
%	\end{center}
%\end{minipage}};

\path (c_ast_hyp.south)+(0,-3) node (pois_hyp_header) {\begin{minipage}[c]{14em}
	\begin{center}
		\textbf{Poissonized KPKVB graph}\\
		$\GPo = \GPo(n;\alpha,\nu)$
	\end{center}
\end{minipage}};

\path (pois_hyp_header.north)+(-2,1.5) node (pois_hyp_text) [textblock] {\begin{minipage}[c]{7em}
	\begin{center}
		Proposition \ref{prop:clustering_ast_H_Pois}\\
		Section~\ref{ssec:coupling_H_HP}
	\end{center}
\end{minipage}};

%Poisson Hyperbolic graph: bottom left

\path (pois_hyp_header.south)+(0,-1) node (c_ast_pois_hyp) [block] {\begin{minipage}[c]{10em}
	\begin{center}
	Adjusted clustering function\\
	$\displaystyle c^\ast(k_n; \GPo)$
	\end{center}
\end{minipage}};

%Infinite limit model: top right

\path (hyp_header.east)+(6,0) node (pois_header) {\begin{minipage}[c]{10em}
	\begin{center}
		\textbf{Infinite limit model}\\
		$\Ginf= \Ginf(\alpha, \nu)$
	\end{center}
\end{minipage}};

\path (pois_header)+(0,-1.25) node (c_infty) [block] {\begin{minipage}[c]{10em}
	\begin{center}
	Clustering limit\\
	$\displaystyle \gamma(k_n)$
	\end{center}
\end{minipage}};

\path (c_infty.south)+(2,-1.5) node (exp_c_pois_text) [textblock] {\begin{minipage}[c]{7em}
	\begin{center}
		Proposition \ref{prop:convergence_average_clustering_P_n}\\
		Section \ref{sec:clustering_Pn_to_P}
	\end{center}
\end{minipage}};

%Finite box model: bottom right

\path (c_ast_pois_hyp.east)+(6,0) node (c_ast_pois_n) [block] {\begin{minipage}[c]{10em}
	\begin{center}
	Adjusted clustering function\\
	$\displaystyle c^\ast(k_n; \Gbox)$
	\end{center}
\end{minipage}};

\path (c_ast_pois_n.south)+(-4,-0.5) node (c_ast_pois_n_text) [textblock] {\begin{minipage}[c]{7em}
	\begin{center}
		Proposition \ref{prop:couling_c_H_P}\\
		Section \ref{ssec:coupling_HP_ast_P}
	\end{center}
\end{minipage}};

\path (c_ast_pois_n.north)+(0,2.5) node (exp_c_pois_n) [block] {\begin{minipage}[c]{10em}
	\begin{center}
	Expected adjusted local clustering\\
	$\displaystyle \Exp{c^\ast(k_n; \Gbox)}$
	\end{center}
\end{minipage}};

\path (exp_c_pois_n.south)+(2,-0.75) node (exp_c_pois_n_text) [textblock] {\begin{minipage}[c]{7em}
	\begin{center}
		Proposition \ref{prop:concentration_local_clustering_P_n}\\
		Section \ref{sec:concentration_c_P_n}
	\end{center}
\end{minipage}};

\path (exp_c_pois_n.north)+(0,0.75) node (pois_n_header) {\begin{minipage}[c]{13em}
	\begin{center}
		\textbf{Finite box graph}\\
		$\Gbox = \Gbox(n; \alpha, \nu)$
	\end{center}
\end{minipage}};

%Arrows

\path (c_hyp.south)+(0,-0.2) node (arrow_1l) {};
\path (c_ast_hyp.north)+(0,0.2) node (arrow_1r) {};

\draw [double arrow=6pt colored by black and white] (arrow_1l) -- (arrow_1r);

\path (c_ast_hyp.south)+(0,-0.1) node (arrow_2l) {};
\path (pois_hyp_header.north)+(0,0.1) node (arrow_2r) {};

\draw [double arrow=6pt colored by black and white] (arrow_2l) -- (arrow_2r);

\path (c_ast_pois_hyp.east)+(0.3,0) node (arrow_3l) {};
\path (c_ast_pois_n.west)+(-0.5,0) node (arrow_3r) {};

\draw [double arrow=6pt colored by black and white] (arrow_3l) -- (arrow_3r);

\path (c_ast_pois_n.north)+(0,0.2) node (arrow_4l) {};
\path (exp_c_pois_n.south)+(0,-0.2) node (arrow_4r) {};

\draw [double arrow=6pt colored by black and white] (arrow_4l) -- (arrow_4r);

\path (exp_c_pois_n.north)+(0,1.5) node (arrow_5l) {};
\path (c_infty.south)+(0,-0.2) node (arrow_5r) {};

\draw [double arrow=6pt colored by black and white] (arrow_5l) -- (arrow_5r);

\begin{pgfonlayer}{background}

\path (c_hyp)+(-3.5,2) node (hyp_a) {};
\path (c_ast_pois_hyp)+(3.5,-1.25) node (hyp_b) {};
\path[rounded corners, draw=black, dashed, thick, fill=black!10] (hyp_a) rectangle (hyp_b);

\path (pois_hyp_header)+(-2.75,0.75) node (pois_hyp_a) {};
\path (c_ast_pois_hyp)+(2.75,-1) node (pois_hyp_b) {};
\path[rounded corners, draw=black, dashed, thick, fill=black!20] (pois_hyp_a) rectangle (pois_hyp_b);

\path (c_infty)+(-3.5,2) node (pois_a) {};
\path (c_ast_pois_n)+(3.5,-1.25) node (pois_b) {};
\path[rounded corners, draw=black, dashed, thick, fill=black!10] (pois_a) rectangle (pois_b);

\path (exp_c_pois_n)+(-2.75,2) node (pois_n_a) {};
\path (c_ast_pois_n)+(2.75,-1) node (pois_n_b) {};
\path[rounded corners, draw=black, dashed, thick, fill=black!20] (pois_n_a) rectangle (pois_n_b);

\end{pgfonlayer}

\end{tikzpicture}

\caption{Overview of the proof strategy for Theorem \ref{thm:mainktoinfty}. The left column denote the models in which the true hyperbolic balls are used while the right column contains the models that use an approximation of these. The most important part is the transition between these to setting which is accomplished by Proposition~\ref{prop:couling_c_H_P}.}
\label{fig:overview_proof}
\end{figure}

\subsection{Adjusted clustering and the Poissonized KPKVB model}\label{ssec:KPKVB_to_GPo_infinite_k}

Recall that the first step for the fixed $k$ case was to show that the transition from the KPKVB graph $G_n = G(n;\alpha,\nu)$ to the Poissonized version $\GPo$ did not influence clustering. Here we first make a transition from the local clustering function $c(k_n; G_n)$ to the adjusted version $c^\ast(k_n; G_n)$. The following lemma justifies working with this modified version. The proof uses a concentration result for $N_{n}(k_n)$ and full details can be found in Section~\ref{ssec:coupling_H_HP}.
For a sequence of random variables $(X_n)_{n\in \mathbb{N}}$ and a sequence $(a_n)_{n \in \mathbb{N}}$, we write $X_n = \smallOp{a_n}$ to 
denote that $X_n/a_n \stackrel{\mathbb{P}}{\to} 0$ as $n\to \infty$. 
\begin{lemma}\label{lem:clustering_ast_H}
As $n\to \infty$ 
\[
\left|c^\ast(k_n; G_n) - c(k_n; G_n)\right| = \smallOp{s(k_n)}.
\]
%\[
%	\Exp{\left|c(k_n;G_n) - c^\ast(k_n;G_n)\right|} = \smallO{s(k_n)}.
%\]
\end{lemma}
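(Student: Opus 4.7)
The plan starts from the elementary algebraic identity
\[
    c^\ast(k_n; G_n) - c(k_n; G_n)
    = c(k_n; G_n) \cdot \frac{\Exp{N_n(k_n)} - N_n(k_n)}{\Exp{N_n(k_n)}} \cdot \1_{\{N_n(k_n) \ge 1\}},
\]
which follows from $c^\ast(k_n;G_n) = T(k_n)/\Exp{N_n(k_n)}$ and $c(k_n;G_n) = T(k_n)/N_n(k_n)$ on $\{N_n(k_n)\ge 1\}$, where $T(k_n) = \sum_{v:\deg(v)=k_n} c(v)$, together with the observation that both $c$ and $c^\ast$ vanish when $N_n(k_n)=0$. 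So we must bound the product of $c(k_n;G_n)$ and a relative-error term for $N_n(k_n)$.

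For the error term, Theorem~\ref{thm:degrees_hyperbolic}(i) applied to our sequence $k_n \ll n^{1/(2\alpha+1)}$ directly yields $N_n(k_n) = (1+\smallOp{1}) \Exp{N_n(k_n)}$, so that $|N_n(k_n) - \Exp{N_n(k_n)}|/\Exp{N_n(k_n)} = \smallOp{1}$ and, in particular, $N_n(k_n) \ge 1$ a.a.s. Combined with the trivial bound $c(k_n;G_n) \le 1$, this immediately gives $|c^\ast(k_n;G_n) - c(k_n;G_n)| = \smallOp{1}$. The main obstacle is that this is too crude: since $s(k_n) \to 0$, we need the sharper statement that $c(k_n;G_n)$ itself is of order at most $s(k_n)$ in probability, so that its product with an $\smallOp{1}$ factor becomes $\smallOp{s(k_n)}$.

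To obtain this a priori bound, I will use the downstream propositions of the proof strategy, which (as noted in the overview) are each proved independently and therefore do not create circular reasoning. Concretely, chaining Proposition~\ref{prop:convergence_average_clustering_P_n} (showing $\Exp{c^\ast(k_n;\Gbox)} \to \gamma(k_n)$), Proposition~\ref{prop:concentration_local_clustering_P_n} (concentration of $c^\ast(k_n;\Gbox)$), Proposition~\ref{prop:couling_c_H_P} (comparison of $c^\ast$ in $\Gbox$ and $\GPo$), and Proposition~\ref{prop:clustering_ast_H_Pois} (comparison of $c^\ast$ in $\GPo$ and $G_n$), together with $\gamma(k_n) = \bigT{s(k_n)}$ from Proposition~\ref{prop:asymp}, gives $c^\ast(k_n;G_n) = \gamma(k_n) + \smallOp{s(k_n)} = \bigOp{s(k_n)}$. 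Since $c(k_n;G_n) = (1+\smallOp{1}) c^\ast(k_n;G_n)$ by the same algebraic identity and the concentration of $N_n(k_n)$, it follows that $c(k_n;G_n) = \bigOp{s(k_n)}$ as well. Inserting this into the identity above yields
\[
    |c^\ast(k_n;G_n) - c(k_n;G_n)| \le c(k_n;G_n)\cdot \smallOp{1} = \bigOp{s(k_n)}\cdot \smallOp{1} = \smallOp{s(k_n)},
\]
which is the desired bound.
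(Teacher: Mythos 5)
Your proof is correct and follows essentially the same route as the paper's. Both arguments rest on the identity $|c^\ast(k_n;G_n)-c(k_n;G_n)| = c^\ast(k_n;G_n)\,|{\Exp{N_n(k_n)}}/{N_n(k_n)}-1|$ (your version writes the prefactor as $c$ rather than $c^\ast$, which differs by a $(1+o_\Pee(1))$ factor and changes nothing), then combine the relative concentration of $N_n(k_n)$ from Theorem~\ref{thm:degrees_hyperbolic}(i) with the chain of Propositions~\ref{prop:clustering_ast_H_Pois}--\ref{prop:convergence_average_clustering_P_n} to get $c^\ast(k_n;G_n)=\bigOp{s(k_n)}$, exactly as in the paper's one-paragraph proof.
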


We then establish that the modified local clustering function for KPKVB graphs $G_n$ behaves similarly to that in $\GPo$. The proof, found in Section~\ref{ssec:coupling_H_HP}, is based on a standard coupling between a Binomial Point Process and Poisson Point Process. 
%=======
%We then establish that the modified local clustering function for KPKVB graphs $G_n$ behaves similarly to that in the $\GPo$. The proof, found in Section~\ref{ssec:coupling_H_HP}, is based on the standard coupling between a Binomial Point Process and Poisson Point Process which we already encountered twice, once for the number of nodes with degree $k$ in Lemma~\ref{lem:diff_Nk_hyperbolic_binomial_poisson} and once for the proof of Lemma~\ref{lem:trunc}.
%>>>>>>> Stashed changes

\begin{proposition}\label{prop:clustering_ast_H_Pois}
As $n \to \infty$,
\[
	\Exp{\left|c^\ast(k_n;G_n) - c^\ast(k_n; \GPo)\right|} = \smallO{s(k_n)}.
\]
\end{proposition}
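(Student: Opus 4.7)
The plan is to use the standard coupling between the Binomial point process (underlying $G_n$) and the Poisson point process (underlying $\GPo$). Draw i.i.d.\ points $u_1, u_2, \ldots$ from the $(\alpha, R)$-quasi-uniform distribution and, independently, $N \sim \Po(n)$; realise $G_n$ on $\{u_1, \ldots, u_n\}$ and $\GPo$ on $\{u_1, \ldots, u_N\}$, so that $G_n$ is an induced subgraph of $\GPo$ on the event $\{N \geq n\}$ (the event $\{N < n\}$ is symmetric). Chebyshev gives $\Exp{|N - n|} = O(\sqrt{n})$. Writing $a_n := \Exp{N_n(k_n)}$, $a_{\Po} := \Exp{N_{\Po}(k_n)}$, $S_n := \sum_{v \in V(G_n)} c_{G_n}(v)\, 1_{\{\deg_{G_n}(v) = k_n\}}$ and similarly $S_{\Po}$, we have $c^\ast(k_n; G_n) = S_n/a_n$ and $c^\ast(k_n; \GPo) = S_{\Po}/a_{\Po}$. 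Theorem~\ref{thm:degrees_hyperbolic}(i) and Lemma~\ref{lem:diff_Nk_hyperbolic_binomial_poisson} yield $a_n = (1+o(1))a_{\Po} = (1+o(1)) n\pmf(k_n)$, and combined with the a priori estimate $\Exp{S_{\Po}} = O(a_{\Po}\, s(k_n))$ (obtained below), the contribution of the denominator discrepancy $|1/a_n - 1/a_{\Po}| = o(1/a_n)$ is $o(s(k_n))$ in expectation, so it suffices to show $\Exp{|S_n - S_{\Po}|} = o(n\, \pmf(k_n)\, s(k_n))$.

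The key structural fact is that if $u_i$ ($i \leq n$) has the same degree in $G_n$ and in $\GPo$ then, since $G_n \subseteq \GPo$ as induced subgraphs, its neighbourhoods in the two graphs coincide and so do its clustering coefficients. Letting $X_i$ be the number of points among $u_{n+1}, \ldots, u_N$ adjacent to $u_i$, we therefore obtain the decomposition
\begin{equation*}
|S_n - S_{\Po}| \;\leq\; \sum_{j=n+1}^{N} 1_{\{\deg_{\GPo}(u_j) = k_n\}} \;+\; \sum_{i=1}^{n} \Bigl( c_{G_n}(u_i)\, 1_{\{\deg_{G_n}(u_i) = k_n,\, X_i \geq 1\}} + c_{\GPo}(u_i)\, 1_{\{\deg_{\GPo}(u_i) = k_n,\, X_i \geq 1\}} \Bigr).
\end{equation*}
The first sum is straightforward: the $u_j$'s with $j > n$ are i.i.d.\ and independent of $N$, so its expectation is at most $\Exp{|N - n|}\, \pmf(k_n) = O(\sqrt{n}\, \pmf(k_n))$. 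Because $k_n \ll n^{1/(2\alpha+1)}$ one checks that $s(k_n) \gg n^{-1/2}$ in each of the three regimes of $\alpha$, so this is $o(n\, \pmf(k_n)\, s(k_n))$.

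The second sum is where the main work lies. The $X_i$'s depend only on $u_i$ and on $u_{n+1}, \ldots, u_N$, hence are independent of $(c_{G_n}(u_i), \deg_{G_n}(u_i))$ given $u_i$, and one has the marginal bound $\Exp{1_{\{X_i \geq 1\}} \mid u_i, N} \leq (N - n)^+\, \mu(y_i)/n$. Combined with the concentration of heights (Proposition~\ref{prop:concentration_height_general}), which places $\mu(y_i)$ near $k_n$ on the event $\{\deg_{G_n}(u_i) = k_n\}$, this gives
\begin{equation*}
\Exp{c_{G_n}(u_i)\, 1_{\{\deg_{G_n}(u_i) = k_n,\, X_i \geq 1\}}} \;=\; (1+o(1))\, \frac{\Exp{(N-n)^+}}{n}\, k_n\, \Exp{c_{G_n}(u_i)\, 1_{\{\deg_{G_n}(u_i) = k_n\}}}.
\end{equation*}
A direct Campbell--Mecke computation, closely modelled on Section~\ref{sec:Ginf} but performed inside the finite box, expresses this last expectation as an integral of $P_n(y)\, \rho(y, k_n)\, \alpha e^{-\alpha y}$, with $P_n$ the finite-box analogue of $P$; the asymptotics of $P(y)$ in Proposition~\ref{prop:asymptotics_P} together with concentration of heights then yield the uniform a priori bound $\Exp{c_{G_n}(u_i)\, 1_{\{\deg_{G_n}(u_i) = k_n\}}} = O(s(k_n)\, \pmf(k_n))$ (and identically for $\GPo$), without invoking Theorem~\ref{thm:mainktoinfty}. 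Summing over $i$ and adding the symmetric term for $\GPo$ yields the bound $O(\sqrt{n}\, k_n\, s(k_n)\, \pmf(k_n))$; since $\alpha > 1/2$ gives $k_n = o(\sqrt{n})$, this is $o(n\, \pmf(k_n)\, s(k_n))$.

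The principal obstacle is precisely this fine control of the second sum: the naive estimate $c \leq 1$ would produce an error of order $\sqrt{n}\, k_n\, \pmf(k_n)$, which is too large when $k_n$ is near the upper end of the admissible range. One must exploit the fact that the typical clustering coefficient at degree $k_n$ is already of order $s(k_n)$, and this requires replaying the Campbell--Mecke computation of the infinite limit model uniformly in the finite-$n$ setting, a task that will be carried out by the same routine already developed in Section~\ref{sec:Ginf} extended via the concentration-of-heights machinery.
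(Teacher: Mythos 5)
Your proof starts from the same coupling and normalization reduction as the paper, and in fact goes \emph{further} than the paper's written argument on the decisive point. The paper asserts
\begin{equation*}
\Exp{\left|c^\ast(k_n;G_n)-c^\ast(k_n;\GPo)\right|} \;\le\; \frac{\Exp{|N_n(k_n)-N_{\Po}(k_n)|}}{(1+\smallO{1})\Exp{N_{\Po}(k_n)}}\,\Exp{c^\ast(k_n;\GPo)}
\end{equation*}
after observing that the clustering coefficients cancel on $V_n(k_n)\cap V_{\Po}(k_n)$, but it does not justify why each vertex in the symmetric difference contributes on average $O(s(k_n))$ rather than $O(1)$; with only the trivial bound $c(v)\le 1$ the argument gives $\smallO{1}$, not $\smallO{s(k_n)}$. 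Your decomposition of $|S_n - S_\Po|$, the observation that $s(k_n)\gg n^{-1/2}$ in all three regimes (which handles the $j>n$ sum), and the use of an a priori bound $\Exp{c(u_i)\,1_{\deg(u_i)=k_n}}=O(s(k_n)\,\pmf(k_n))$ are precisely what is needed to close this.

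There is, however, a gap in your handling of the $\GPo$ part of the second sum. You correctly note that $X_i$ is independent of $(c_{G_n}(u_i),\deg_{G_n}(u_i))$ given $u_i$, since the former depends only on $u_{n+1},\dots,u_N$ and the latter only on $u_1,\dots,u_n$. You then dispose of the term $c_{\GPo}(u_i)\,1_{\{\deg_{\GPo}(u_i)=k_n,\,X_i\ge 1\}}$ by "adding the symmetric term for $\GPo$." But $\deg_{\GPo}(u_i)=\deg_{G_n}(u_i)+X_i$, so the $\GPo$ quantities are emphatically not independent of $X_i$ given $u_i$; in fact $\{\deg_{\GPo}(u_i)=k_n,\,X_i\ge 1\}$ \emph{forces} $\deg_{G_n}(u_i)<k_n$. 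The factorization you want does hold, but for a different reason: condition on $u_i$, $N$, $\deg_{\GPo}(u_i)=k_n$, and the positions of the $k_n$ neighbours of $u_i$ in $\GPo$. By exchangeability of $u_1,u_2,\dots$, the subset of those neighbours having index $>n$ (and hence $X_i$) is independent of their positions, while $c_{\GPo}(u_i)$ is a function of positions alone; this gives $\CExp{c_{\GPo}(u_i)}{\deg_{\GPo}(u_i)=k_n,X_i=j,u_i,N}=\CExp{c_{\GPo}(u_i)}{\deg_{\GPo}(u_i)=k_n,u_i,N}$ for all $j$, from which the bound follows. That exchangeability step is not in your write-up, and "identically for $\GPo$" is not enough to carry it. A smaller quibble: the Campbell--Mecke formula is a Poisson-process tool and does not apply directly to the binomial point process underlying $G_n$; the a priori bound $\Exp{c_{G_n}(u_i)\,1_{\deg_{G_n}(u_i)=k_n}}=O(s(k_n)\pmf(k_n))$ should be derived either by a direct i.i.d.\ calculation conditioned on $u_i$, or by first pushing to $\GPo$ and then back.
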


\subsection{Coupling of local clustering between \texorpdfstring{$\GPo$}{G Po} and \texorpdfstring{$\Gbox$}{G box}}

The next step is to show that the modified clustering is preserved under the coupling described in Section~\ref{ssec:coupling_H_P}. The proof can be found in Section~\ref{ssec:coupling_HP_ast_P}. This step is one of the key technical challenges we face in proving Theorem~\ref{thm:mainktoinfty}. 

To understand why, recall that the degree $k$ of a node is related to its height $y$, roughly speaking, by $k \approx \xi e^{y/2}$. Therefore, when $k$ is fixed we have that the heights of nodes with that degree are also fixed, in particular $y < R/4$ for large enough $n$. In addition, the main contribution of triangles would also come from nodes with heights $y^\prime < R/4$. This allowed us to use Lemma~\ref{lem:coupling_edges} and conclude that the triangles present in the graph $\GPo$ where exactly those present in $\Gbox$ and therefore the local clustering function was the same in both models. When $k_n \to \infty$ this is no longer true in general. For instance, suppose $k_n = n^{\frac{1-\varepsilon}{2\alpha + 1}}$, for some small $0 < \varepsilon < 1$. Then the relation $k_n \approx \xi e^{y_n/2}$ implies that $y_n \approx \frac{2(1-\varepsilon)}{2\alpha + 1}\log(n) - 2\log(\xi)$. Since
$R/4 = \frac{1}{2}\log(n) - \frac{1}{2}\log(\nu)$ we get that $R/4 = \smallO{y_n}$ for all $\alpha > (3 - 4\varepsilon)/2$ and hence $y_n > R/4$ for large enough $n$, violating the conditions of Lemma~\ref{lem:coupling_edges}. However, by carefully analyzing the difference between the adjusted local clustering function in both models we can still make the same conclusion. This is summarized in the following proposition whose proof is found in Section~\ref{ssec:coupling_HP_ast_P}.

\begin{proposition}[Coupling result for adjusted clustering function]\label{prop:couling_c_H_P}
As $n \to \infty$,
\[
	\Exp{\left|c^\ast(k_n; \GPo) - c^\ast(k_n; \Gbox)\right|} = \smallO{s(k_n)}.
\]
\end{proposition}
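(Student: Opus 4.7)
The plan is to exploit the coupling of Lemma~\ref{lem:coupling_hyperbolic_poisson}, under which $\GPo$ and $\Gbox$ share the vertex set $\Vbox$ a.a.s.\ and differ only in edges: $p p'$ is an edge in $\GPo$ iff $p' \in \BallHyp{p}$, and in $\Gbox$ iff $p' \in \BallPon{p}$. Since Theorem~\ref{thm:degrees_hyperbolic}(i) gives $\Exp{N_{\GPo}(k_n)} = (1+o(1))\Exp{\Nbox(k_n)} = (1+o(1)) n \pmf(k_n)$, matching the denominators in $c^\ast(k_n;\GPo)$ and $c^\ast(k_n;\Gbox)$ introduces only an $o(s(k_n))$ error (after we multiply by $c^\ast \le 1$), so it suffices to bound the expected absolute difference of the numerators $\sum_{v} c_{\GPo}(v)\1_{\deg_{\GPo}(v)=k_n}$ and $\sum_{v} c_{\Gbox}(v)\1_{\deg_{\Gbox}(v)=k_n}$ by $o(n \pmf(k_n) s(k_n))$.

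I will decompose this difference as $A_n + B_n$, where
\[
A_n = \sum_{v \in \Vbox} (c_{\GPo}(v) - c_{\Gbox}(v))\1_{\deg_{\GPo}(v)=k_n},
\qquad
B_n = \sum_{v \in \Vbox} c_{\Gbox}(v)\bigl(\1_{\deg_{\GPo}(v)=k_n} - \1_{\deg_{\Gbox}(v)=k_n}\bigr).
\]
For $A_n$, on the event $\deg_{\GPo}(v)=k_n$, $|c_{\GPo}(v) - c_{\Gbox}(v)|$ is at most $\binom{k_n}{2}^{-1}$ times the number of pairs $(u,w)$ of $\GPo$-neighbours of $v$ with $uw \in E(\GPo)\triangle E(\Gbox)$, plus a similar pair count coming from $\Gbox$-neighbours of $v$ that lie in $\BallHyp{v}\triangle \BallPon{v}$; either way each such bad pair is witnessed by an edge of $E(\GPo)\triangle E(\Gbox)$ incident to a point of $\BallHyp{v}\cup \BallPon{v}$. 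For $B_n$, the indicator difference is nonzero only when $v$ is incident to an edge of $E(\GPo)\triangle E(\Gbox)$, and $c_{\Gbox}(v)\le 1$. Applying Campbell--Mecke to both sums reduces everything to integrals of the form $\int_\Rcal h_n(v)\,\mathrm{d}\mu(v)$, where $h_n(v)$ is an expected bad--pair count conditional on the typical vertex sitting at $v$; I can then use Proposition~\ref{prop:concentration_height_general} (concentration of heights) to restrict $v$ to $\Kcal_C(k_n)$ and hence only consider $y \in [y^-_{k_n,C}, y^+_{k_n,C}]$.

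Once we are in the concentration strip, the main ingredient is Lemma~\ref{lem:asymptotics_Omega_hyperbolic}: for $y+y'<R$, $|\Phi(y,y') - e^{(y+y')/2}| \le K e^{3(y+y')/2 - R}$, which allows me to bound $\mu\bigl(\BallHyp{v}\triangle \BallPon{v}\bigr)$ by $O\bigl(e^{3y/2 - R}\int_0^{R-y} e^{(3/2-\alpha)y'}\,\mathrm{d}y'\bigr)$, plus a separate contribution from the ``top strip'' $y'\ge R-y$ handled as in the proof of Lemma~\ref{lem:average_degree_P_n}. Combined with the Chernoff-type bound~\eqref{eq:chernoff_bound_degrees} on $\Pr(\Po(\mu_\Po(y))=k_n)$, the Campbell--Mecke integral for $\Ee|B_n|$ is at most $C \cdot n \cdot \int_{\Kcal_C(k_n)} \mu(\BallHyp{v}\triangle \BallPon{v}) \rho(y,k_n) \alpha e^{-\alpha y}\,\mathrm{d}y$, while for $\Ee|A_n|$ an analogous integral appears with an extra factor $k_n$ (from the number of neighbour-pairs) divided by $\binom{k_n}{2}$, i.e.\ a factor $O(1/k_n)$. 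The key arithmetic step is then to check, in each of the three regimes $\alpha<3/4$, $\alpha=3/4$, $\alpha>3/4$, that the resulting integral is $o(n\pmf(k_n) s(k_n))$; this works because the symmetric--difference measure carries a $e^{-(\alpha-1/2)R}$ factor that more than compensates for the growth of $e^{\alpha y}$ on $\Kcal_C(k_n)$ as long as $k_n \ll n^{1/(2\alpha+1)}$.

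The main obstacle is precisely this last bookkeeping when $k_n$ is close to $n^{1/(2\alpha+1)}$, so that $y_{k_n}\approx \tfrac{2}{2\alpha+1}\log n$ can exceed $R/2$ and Lemma~\ref{lem:coupling_edges} no longer applies directly to neighbour--pairs. In that regime I expect to split the $y'$--integration at $R/4$ and at $R-y$, treating the ``low'' part with the $\Phi$-approximation above, the ``middle'' part with a direct comparison of the balls (using that both $e^{(y+y')/2}$ and $\Phi(y,y')$ are $O(e^{(y+y')/2})$), and the ``top'' part by noting that its $\mu$-measure is already $o(1)$ as in the proof of Lemma~\ref{lem:average_degree_P_n}. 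The delicate point is ensuring that the accumulated error, once multiplied by the relevant Poisson weight $\rho(y,k_n)$ and integrated against $\alpha e^{-\alpha y}$, remains $o(\pmf(k_n) s(k_n))$ uniformly across all three phases of $\gamma(k_n)$; the upper cap $k_n = o(n^{1/(2\alpha+1)})$ in Theorem~\ref{thm:mainktoinfty} is expected to be exactly what is needed here.
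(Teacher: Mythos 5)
Your high-level plan (Poissonization reduction to the numerators, Campbell--Mecke, concentration of heights, then the $\Phi$-approximation of Lemma~\ref{lem:asymptotics_Omega_hyperbolic} and the symmetric-difference estimate) uses the right ingredients, and the $A_n$/$B_n$ decomposition is a genuinely different, more elementary route than the paper's. However, the bound on $B_n$ as sketched does not close for the whole parameter range, and that is precisely where the real work of this proposition lies.

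The sticking point is $B_n=\sum_v c_{\Gbox}(v)\bigl(\1_{\deg_{\GPo}(v)=k_n}-\1_{\deg_{\Gbox}(v)=k_n}\bigr)$. Bounding $\Ee|B_n|$ by $c_{\Gbox}(v)\le 1$ (or even the sharper $c_{\Gbox}(v)=O(s(k_n))$, which concentration of heights gives on the strip) and then Campbell--Mecke leaves you with $\Pee\bigl(\1_{\deg_{\GPo}(y)=k_n}\ne\1_{\deg_{\Gbox}(y)=k_n}\bigr)$ for a typical $y\in\Kcal_C(k_n)$, which is governed by the symmetric-difference mass $\mu(\BallHyp{y}\,\triangle\,\BallPon{y})$. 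By Lemma~\ref{lem:sym_diff_measure_H_P}, on $\Kcal_C(k_n)$ this mass is $\Theta(n^{1-2\alpha}k_n^{2\alpha})$, and under the cap $k_n=o(n^{1/(2\alpha+1)})$ the exponent of $n^{1-2\alpha}k_n^{2\alpha}$ is
\[
-\frac{4\alpha^2-2\alpha-1}{2\alpha+1},
\]
which is \emph{positive} for $\tfrac12<\alpha<\tfrac{1+\sqrt5}{4}\approx 0.81$. So the symmetric-difference mass can diverge. In that regime the degree indicators in $\GPo$ and $\Gbox$ effectively decorrelate, $\Pee(\1_{\deg_{\GPo}(y)=k_n}\ne\1_{\deg_{\Gbox}(y)=k_n})$ becomes of the same order as $\rho_{\Po}(y,k_n)$ itself, and the resulting bound on $\Ee|B_n|/\Exp{\Nbox(k_n)}$ saturates at $\Theta(s(k_n))$ rather than $o(s(k_n))$. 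So ``the $e^{-(\alpha-1/2)R}$ factor more than compensates'' is not true for all $k_n\ll n^{1/(2\alpha+1)}$ when $\alpha$ is close to $\tfrac12$.

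To make $B_n$ small you need cancellation between the two random sums $\sum_v c_{\Gbox}(v)\1_{\deg_{\GPo}(v)=k_n}$ and $\sum_v c_{\Gbox}(v)\1_{\deg_{\Gbox}(v)=k_n}$: each is individually concentrated near $\Exp{\Nbox(k_n)}\,\gamma(k_n)$, and it is this concentration --- not overlap of the two index sets --- that forces the difference to be $o(s(k_n))$. That is a variance-type argument in the spirit of Proposition~\ref{prop:concentration_local_clustering_P_n}, not the triangle-inequality bound $\Ee|B_n|\le\Ee\sum_v|\cdots|$ you are using. The paper sidesteps the issue by never separating the degree indicator from the triangle weight: it first passes to $\widetilde{c}_{\mathrm{box}}$ (Corollary~\ref{cor:c_ast_box_2_tilde_c_box}), then expands the product $\1_{\deg=k_n}T(v)$ over pairs $(p_1,p_2)$ carrying a degree-minus-two indicator on $\Pcal\setminus\{p_1,p_2\}$, so that for each pair the triangle weight and the degree condition remain coupled. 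The ensuing geometric case analysis (the sums~\eqref{eq:sum_1}--\eqref{eq:sum_6}, Lemma~\ref{lem:sym_diff_measure_H_P}, and careful per-regime bookkeeping) is considerably heavier than your sketch suggests, and it is exactly what handles the range $\tfrac12<\alpha<\tfrac{1+\sqrt5}{4}$.
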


%\TM{ Maybe we could replace these three with a statement on $|c(k_n;G_n) - c^\ast(k_n; \Gbox)|$, at least at this point of the paper. This is only the high level description and that is all we need for the ``final proof".}\PvdH{I would vote for keeping them split, since this allows us to clearly point to the main technical challenge we have to overcome to obtain the final result.}

Together, the three results described so far imply that the difference between the clustering function for a KPKVB graph and the adjusted clustering function for the finite box graph $\Gbox$ converges to zero faster than the proposed scaling $\gamma(k_n)$ in Theorem~\ref{thm:mainktoinfty}. Hence, it is enough to prove the result for $c^\ast(k; \Gbox)$. 

\subsection{From the finite box to the infinite model}

To compute the limit of the adjusted clustering function $c^\ast(k; \Gbox)$ we first prove in Section~\ref{sec:concentration_c_P_n} that it is concentrated around its mean $\Exp{c^\ast(k_n; \Gbox)}$.

%\TM{ ``concentration'' is a loaded term in probability. I am not sure this use of the word will not be counterintuitive to many. }\PvdH{I am not sure. Concentration generally refers to how much a random variable deviates from its expectation. This is exactly what this proposition tells us. I would therefore vote for keeping this terminology, although I would have no problem with replacing it with an other term if someone has a good suggestion.}

\begin{proposition}[Concentration for adjusted clustering function in $\Gbox$]\label{prop:concentration_local_clustering_P_n}
As $n \to \infty$,
\[
	\Exp{\left|c^\ast(k_n; \Gbox) - \Exp{c^\ast(k_n; \Gbox)}\right|} = \smallO{s(k_n)}.
\]
\end{proposition}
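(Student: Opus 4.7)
The strategy is to reduce to a variance bound via Jensen's inequality,
\[
\Ee\bigl|c^\ast(k_n; \Gbox) - \Ee\,c^\ast(k_n; \Gbox)\bigr| \leq \sqrt{\Var(c^\ast(k_n;\Gbox))},
\]
and then show $\Var(c^\ast(k_n;\Gbox)) = o(s(k_n)^2)$. Setting $X := \sum_{v \in V(\Gbox),\, \deg(v) = k_n} c(v)$ so that $c^\ast(k_n;\Gbox) = X/\Ee N_{\mathrm{box}}(k_n)$ with $\Ee N_{\mathrm{box}}(k_n) = \Theta(n\pi(k_n))$ (by Lemma~\ref{lem:expnnkn}), this reduces to proving a second-moment factorization of the form
\[
\Ee\,X(X-1) = (1+o(1))(\Ee X)^2 + o\bigl(\Ee N_{\mathrm{box}}(k_n)^2\, s(k_n)^2\bigr),
\]
in the spirit of Lemma~\ref{lem:asympind} but now carrying the clustering coefficient through the computation.

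First I would use Campbell--Mecke to write
\[
\Ee\,X(X-1) = \iint_{\Rcal \times \Rcal} \Ee\bigl[\,c_{\Gbox^{p_1,p_2}}(p_1)\, c_{\Gbox^{p_1,p_2}}(p_2)\, \mathbbm{1}\{\deg(p_1) = \deg(p_2) = k_n\}\bigr]\,d\mu(p_1)\,d\mu(p_2),
\]
and split the integration into three pieces: (i) at least one of $p_1,p_2$ has height outside the stripe $\stripknc$; (ii) both heights lie in $\stripknc$ but $|x_1-x_2|_n \leq k_n^{1+\varepsilon}$; (iii) both heights in $\stripknc$ with $|x_1-x_2|_n > k_n^{1+\varepsilon}$. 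Region (i) contributes $O(n^2 k_n^{-C})$ by the Chernoff bound~\eqref{eq:chernoff_bound_degrees} and is negligible for $C$ large. Region (ii) has $(\mu\otimes\mu)$-measure $O(nk_n^{1+\varepsilon}\pi(k_n))$ in the $x$-variables, and the integrand is bounded by the single-point expectation $\Ee[c(p_i)\mathbbm{1}\{\deg(p_i)=k_n\}]$, which by conditioning on degree is at most $\pi(k_n)\cdot \gamma_n(k_n) = O(\pi(k_n) s(k_n))$ uniformly in the stripe; this contributes a factor that is $o(\Ee X)^2$ provided $\varepsilon$ is taken small enough, again using $k_n = o(n^{1/(2\alpha+1)})$.

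The main work is in region (iii). There, by Lemma~\ref{cor:expected_common_neighbours_Ecal_set}, the shared measure $\Mu{\BallPon{p_1}\cap\BallPon{p_2}} = O(k_n^{1-\varepsilon'})$ is negligible compared to $\Mu{\BallPon{p_i}} = \Theta(k_n)$. Conditioning on the Poisson counts in the pairwise disjoint regions $\BallPon{p_1}\setminus\BallPon{p_2}$, $\BallPon{p_2}\setminus\BallPon{p_1}$, and $\BallPon{p_1}\cap\BallPon{p_2}$, one decomposes each clustering coefficient as $c(p_i) = \tilde c(p_i) + R(p_i)$, where $\tilde c(p_i)$ depends only on the Poisson process on $\BallPon{p_i}\setminus \BallPon{p_{3-i}}$ and $R(p_i)$ accounts for edges incident to the $O(k_n^{1-\varepsilon'})$ shared neighbours. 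The variables $\tilde c(p_1)$ and $\tilde c(p_2)$ are then independent functions of disjoint pieces of $\Pcal$, and $\Ee R(p_i)$ can be shown to be $o(\gamma(k_n))$ uniformly. A conditioning argument parallel to that in the proof of Lemma~\ref{lem:probdegFact}---including the Chernoff window $A_n$ for the shared count---then gives
\[
\Ee\bigl[c(p_1)c(p_2)\mathbbm{1}\{\deg(p_1)=\deg(p_2)=k_n\}\bigr] = (1+o(1))\prod_{i=1,2}\Ee\bigl[c(p_i)\mathbbm{1}\{\deg(p_i)=k_n\}\bigr] + O(k_n^{-C}\pi(k_n)^2),
\]
uniformly on region (iii). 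Integrating and combining with regions (i), (ii) yields the desired factorization of $\Ee\,X(X-1)$, from which $\Var(X) = o(\Ee N_{\mathrm{box}}(k_n)^2 s(k_n)^2)$ and hence the proposition follow.

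The main obstacle is the upgrade from degree-only factorization (Lemma~\ref{lem:probdegFact}) to factorization of the joint expectation of the \emph{product} $c(p_1)c(p_2)$ with the degree indicators. The clustering coefficient is a nonlinear functional of the neighbourhood, and a perturbation of the neighbour set by $O(k_n^{1-\varepsilon'})$ shared vertices must be shown to change the number of edges among the $k_n$ neighbours by only $o(k_n^2 s(k_n))$ on average---and crucially, uniformly in the conditioning. This is delicate in the regime $\frac12 < \alpha \leq \frac34$ where $\gamma(k_n)$ decays only slowly ($k_n^{2-4\alpha}$ or $\log(k_n)/k_n$), so the acceptable error margin shrinks. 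I expect this to be handled by a second-moment computation on the number of edges inside the private and shared parts of the balls, broken into symmetric-difference/intersection contributions and controlled via the joint-neighbourhood bounds developed in Section~\ref{ssec:joint_degrees_GPo}.
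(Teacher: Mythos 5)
Your overall strategy is the paper's: reduce to a second-moment bound for the relevant sum via Campbell--Mecke, discard heights outside the stripe $\stripknc$ by the Chernoff window, discard close-by pairs $|x-x'|_n\le k_n^{1+\varepsilon}$ as a lower-order term, and factorize on the well-separated region $\mathcal{E}_\varepsilon(k_n)$. The paper differs in bookkeeping: it first passes from $c^\ast(k_n;\Gbox)$ to $\widetilde c_{\mathrm{box}}(k_n) = \widetilde T_{\mathrm{box}}(k_n,C)/(\binom{k_n}{2}\Exp{\Nbox(k_n)})$ (restricted heights, and using $\BallPo{p_1}\cap\Rcal$ in place of $\BallPon{p_1}$ for the third edge) via Corollary~\ref{cor:c_ast_box_2_tilde_c_box}, and then shows $\Exp{\widetilde T_{\mathrm{box}}(k_n,C)^2}=(1+o(1))\Exp{\widetilde T_{\mathrm{box}}(k_n,C)}^2$ by expanding the square \emph{combinatorially} into pieces $I_{a,b}$ indexed by how the two triangle configurations $(p,p_1,p_2)$ and $(p',p_1',p_2')$ intersect, then bounding each $I_{a,b}$ by a multi-region Campbell--Mecke integral. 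Your proposal instead decomposes each clustering coefficient pointwise, $c(p_i)=\tilde c(p_i)+R(p_i)$, separating ``private'' from ``shared'' neighbour contributions.

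The genuine gap is the claim that $\Ee R(p_i)$, conditional on $\deg(p_1)=\deg(p_2)=k_n$ and the shared count being in the window $A_n$, is $o(\gamma(k_n))$. This is exactly the hard part, and your proposal does not establish it. The cardinality bound from Lemma~\ref{cor:expected_common_neighbours_Ecal_set} only gives $O(k_n^{1-\varepsilon'})$ shared neighbours, hence the deterministic bound $R(p_i)=O(k_n^{1-\varepsilon'}\cdot k_n /\binom{k_n}{2})=O(k_n^{-\varepsilon'})$; but for $\alpha>3/4$ one has $\gamma(k_n)=\Theta(k_n^{-1})$, so $O(k_n^{-\varepsilon'})$ is not $o(\gamma(k_n))$. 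You must instead control the \emph{expected edge count} involving shared neighbours, which requires knowing the adjacency probabilities among them, and the shared neighbours sit in $\BallPon{p_1}\cap\BallPon{p_2}$ and are biased toward larger heights (hence larger balls, hence higher adjacency probabilities), so the naive $\Theta(s(k_n))$ per-pair estimate is not automatic. Moreover, $\tilde c(p_i)$ is normalized by $\binom{k_n}{2}$ but the private neighbour count is $k_n-|N_0|$, a random quantity coupled to the other factor through the degree conditioning; the Poisson-shift device from Lemma~\ref{lem:probdegFact} has to be re-run here carrying the triangle-count functional, not just an indicator. In the paper these issues surface as the estimates for $I_{0,1}$, $I_{0,2}$, $I_{1,1}$, each requiring its own case analysis on the heights $y_1,y_2,y_2'$ of the constituent vertices (e.g.\ the split $Z_{0,1}$ versus $Z_{0,1}^\ast$ in the $I_{0,1}$ paragraph); the indicator-level expansion makes those estimates tractable Campbell--Mecke integrals, which your pointwise decomposition does not on its own. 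In short: the outline is right, but the crucial estimate you defer to ``a second-moment computation on the number of edges inside the private and shared parts'' is where essentially all the proof of Proposition~\ref{prop:concentration_tilde_T_P_n} lives, and that proposition is what Proposition~\ref{prop:concentration_local_clustering_P_n} actually rests on.
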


This result represents another technical challenge we face when considering $k_n \to \infty$. For the proof, we first identify the specific range of heights that give the main contribution to the triangle count, showing that the triangles coming from nodes with heights outside this range is of smaller order. Then we prove a concentration result for the main term, using that the neighbourhoods of two nodes whose $x$-coordinates are sufficiently separated can be considered to be disjoint (see Section~\ref{ssec:joint_degrees_GPo}). The full details are found in Section~\ref{sec:concentration_c_P_n}.

Assuming this concentration result, we are left to compute the expectation $\Exp{c^\ast(k_n; \Gbox)}$ and show that it is asymptotically equivalent to $\gamma(k_n)$ as $n \to \infty$. To accomplish this we move to the infinite limit model $\Ginf$ and show that the difference between the expected value of $c^\ast(k;\Gbox)$ and $\gamma(k_n)$ goes to zero faster than the proposed scaling in Theorem \ref{thm:local_clustering_hyperbolic}.

\begin{proposition}[Transition to the infinite limit model]\label{prop:convergence_average_clustering_P_n}
As $n \to \infty$,
\[
	\left|\Exp{c^\ast(k_n; \Gbox)} - \gamma(k_n)\right| = \smallO{s(k_n)}.
\]
\end{proposition}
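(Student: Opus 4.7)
The plan is to apply the Campbell--Mecke formula~\eqref{eq:def_campbell_mecke} to rewrite $\Exp{c^\ast(k_n;\Gbox)}$ as a ratio of integrals over a single height variable $y_0$, and then compare the result with the integral representation~\eqref{eq:gammakint} of $\gamma(k_n)$. Arguing as in the proof of Lemma~\ref{lem:ckGboxH}, conditionally on a point $p=(0,y_0)$ having degree $k_n$ in $\Gbox\cup\{p\}$, its $k_n$ neighbours are i.i.d.~with density $f/\mu_{\mathrm{box}}(y_0)$ on $\BallPon{(0,y_0)}$; hence its conditional expected clustering coefficient equals $P_n(y_0):=\Prob{u_1\in\BallPon{u_2}}$ with $u_1,u_2$ two such draws, the box analogue of~\eqref{eq:def_Py}. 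Using Campbell--Mecke, translation invariance in $x_0$, and the identity $\Exp{\Nbox(k_n)}=n\int_0^R\Prob{\Po(\mu_{\mathrm{box}}(y))=k_n}\alpha e^{-\alpha y}\,dy$, one obtains
\[
\Exp{c^\ast(k_n;\Gbox)}=\frac{\int_0^R P_n(y_0)\Prob{\Po(\mu_{\mathrm{box}}(y_0))=k_n}\alpha e^{-\alpha y_0}\,dy_0}{\int_0^R \Prob{\Po(\mu_{\mathrm{box}}(y))=k_n}\alpha e^{-\alpha y}\,dy}.
\]
Lemma~\ref{lem:degree_integral} identifies the denominator as $(1+o(1))\pmf(k_n)$, while by~\eqref{eq:gammakint}, $\gamma(k_n)\pmf(k_n)=\int_0^\infty P(y_0)\rho(y_0,k_n)\alpha e^{-\alpha y_0}\,dy_0$; it therefore suffices to show that the box numerator differs from the $\Ginf$ numerator by $\smallO{\pmf(k_n)\,s(k_n)}$.

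The first step is to localise both integrals to $\Kcal_C(k_n)$. Since $P_n,P\leq 1$, Proposition~\ref{prop:concentration_height_general} applied with $h\equiv 1$ (once to $\mu_{\mathrm{box}}$ and once to $\mu$) bounds the contributions from outside $\Kcal_C(k_n)$ by $O(k_n^{-C^2/2})$, which for $C$ large enough is $\smallO{\pmf(k_n)\,s(k_n)}$ in every regime of $\alpha$. Because $k_n\ll n^{1/(2\alpha+1)}\leq n^{1/2}$, we have $y_{k_n,C}^+\ll R$, so $\Kcal_C(k_n)\subset[0,(1-\varepsilon)R]$ eventually; Lemma~\ref{lem:average_degree_G_box} then gives $\mu_{\mathrm{box}}(y_0)=(1+o(1))\mu(y_0)$ uniformly on $\Kcal_C(k_n)$, and the change of variables $z=2\log(\mu_{\mathrm{box}}(y_0)/\xi)$ used in the proof of Lemma~\ref{lem:degree_integral} transforms $\Prob{\Po(\mu_{\mathrm{box}}(y_0))=k_n}\alpha e^{-\alpha y_0}\,dy_0$ into $(1+o(1))\rho(z,k_n)\alpha e^{-\alpha z}\,dz$. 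The task therefore reduces to showing
\[
\int_{\Kcal_C(k_n)}\bigl|P_n(y_0)-P(y_0)\bigr|\rho(y_0,k_n)\alpha e^{-\alpha y_0}\,dy_0=\smallO{\pmf(k_n)\,s(k_n)}.
\]

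The main obstacle is the uniform relative estimate $P_n(y_0)=(1+o(1))P(y_0)$ on $\Kcal_C(k_n)$, which is stronger than the dominated-convergence argument that sufficed for fixed $k$ in Lemma~\ref{lem:ckGboxH}. The crucial observation is $\BallPon{(0,y_0)}=\BallPo{(0,y_0)}\cap\Rcal$ at every height (the torus boundary only becomes active when $y_0+y'\geq R+2\log(\pi/2)$, at which point both balls already span the entire horizontal extent of $\Rcal$), so the only discrepancies between $P_n$ and $P$ come from the vertical truncation $y'\leq R$ and the possibility that two points in $\Rcal$ are adjacent in $\Gbox$ but not in $\Ginf$ due to wrap-around of $|x_1-x_2|$ across the torus. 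Coupling $u_1,u_2\sim f/\mu(y_0)$ on $\BallPo{(0,y_0)}$ and conditioning on both lying in $\Rcal$ decomposes $|P_n(y_0)-P(y_0)|$ into a boundary contribution (nonzero only if some $y_i>R$) and a wrap-around contribution (nonzero only if $y_1+y_2\gtrsim R$); both are polynomially small in $n$, and Proposition~\ref{prop:asymptotics_P} together with $k_n\ll n^{1/(2\alpha+1)}$ imply that each is $\smallO{P(y_0)}$ uniformly on $\Kcal_C(k_n)$ in all three regimes of $\alpha$. Combining the three approximations with the restriction step gives that the box numerator equals the $\Ginf$ numerator $\gamma(k_n)\pmf(k_n)$ up to an error of $\smallO{\pmf(k_n)\,s(k_n)}$, and dividing by the denominator $(1+o(1))\pmf(k_n)$ yields $\Exp{c^\ast(k_n;\Gbox)}-\gamma(k_n)=\smallO{s(k_n)}$, as required.
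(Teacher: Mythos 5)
Your outline follows the same route as the paper (expectation via Campbell--Mecke, reduction to the stripe $\Kcal_C(k_n)$, change of variables to pass from $\rho_{\mathrm{box}}$ to $\rho$, and comparison of the conditional clustering probability $P_n(y_0)$ to $P(y_0)$). Restated in the paper's notation, $P_n(y_0)=\Mu{\BallPon{y_0}}^{-2}\Exp{T_{\mathrm{box}}(y_0)}$ and $P(y_0)=\Mu{\BallPo{y_0}}^{-2}\Exp{T_\infty(y_0)}$, so the comparison you need is precisely the content of the paper's Lemma~\ref{lem:appox_triangle_count_box}, whose engine is the technical Lemma~\ref{lem:clustering_error_T_term}. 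Up to this point everything is sound.

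The gap is in the sentence ``both are polynomially small in $n$, and Proposition~\ref{prop:asymptotics_P} together with $k_n\ll n^{1/(2\alpha+1)}$ imply that each is $\smallO{P(y_0)}$.'' You do not produce the polynomial rate, and the naive rate one would guess from your decomposition does \emph{not} suffice. The wrap-around event forces $y_i\gtrsim R-y_0$ for some $i$; under the $\operatorname{Exp}(\alpha-\tfrac12)$ marginal this has probability of order $e^{-(\alpha-1/2)(R-y_0)}\asymp (k_n/n)^{2\alpha-1}$ on $\Kcal_C(k_n)$. Comparing this with $s(k_n)$ one finds, for instance for $\tfrac12<\alpha<\tfrac34$, that $(k_n/n)^{2\alpha-1}/s(k_n)=n^{-(2\alpha-1)}k_n^{6\alpha-3}$, which for $k_n$ close to $n^{1/(2\alpha+1)}$ behaves like $n^{-2(2\alpha-1)(\alpha-1)/(2\alpha+1)}\to\infty$ (the exponent is positive when $\tfrac12<\alpha<1$). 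So conditioning on a tall $y_i$ alone is nowhere near enough; one must also exploit the fact that, given tall $y_i$, the wrap-around edge occurs only on a small sliver of $x$-values. That is exactly what Lemma~\ref{lem:clustering_error_T_term} extracts via the geometric decomposition into the regions $B_n^{(1)},B_n^{(2)},B_n^{(3)}$ and the region $\mathcal{T}(p,p_1)$, producing the much sharper bound $y\,O(n^{-(2\alpha-1)})+e^{y}\,O(n^{-(4\alpha-2)})$ for the excess triangle count (equivalently, $O(\log(k_n)k_n^{-2}n^{-(2\alpha-1)})+O(n^{-(4\alpha-2)})$ for $|P_n-P|$ on the stripe), which genuinely is $\smallO{s(k_n)}$. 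Without carrying out that computation your argument does not close; it is the substantive part of the proof, not a routine tail estimate, and cannot be dispatched by invoking Proposition~\ref{prop:asymptotics_P} plus the constraint $k_n\ll n^{1/(2\alpha+1)}$.

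One minor secondary remark: the identity $\BallPon{(0,y_0)}=\BallPo{(0,y_0)}\cap\Rcal$ holds for the specific point $(0,y_0)$ only because that point sits at $x=0$ and hence never sees a wrap-around in its own neighbourhood ball; it would be worth stating this explicitly, since the corresponding identity is false for generic $p_1\in\Rcal$, and it is precisely the failure for $p_1$ that generates the region $\mathcal{T}(p,p_1)$ you need to measure.
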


Recall that for the finite box model the left and right boundaries of $\Rcal_n$ where identified, so that graph $\Gbox$ contains some additional edge with respect to the induced subgraph of $\Ginf$ on $\Rcal_n$. The proof of Proposition~\ref{prop:convergence_average_clustering_P_n} therefore relies on analyzing the number of triangles coming from these additional edges and showing that their contribution to the clustering function are of negligible order, see Section~\ref{sec:clustering_Pn_to_P}. 

\begin{remark}[Notations for different graphs]
We will use the subscripts $n$, $\Po$, $\text{box}$ and $\infty$ to identify properties of, respectively, the KPKVB mode $G_n$, the Poisson version $\GPo$, the finite box model $\Gbox$ and the infinite model $\Ginf$. For example $N_{\Po}(k)$ denotes number of nodes with degree $k$ in $\GPo$ and $\rho_{\text{box}}(y,k) = \Prob{\Po(\mu(\BallPon{y})) = k}$, i.e. the degree distribution in $\Gbox$ for a point $p = (x,y)$.
\end{remark}

\subsection{Proof of the main results}\label{ssec:proof_main_result_diverging_k}

We are now ready to prove Theorem~\ref{thm:mainktoinfty}, using the results stated in the previous sections.

\begin{proofof}{Theorem~\ref{thm:mainktoinfty}}
We first rewrite $c(k_n; G_n)$ as
\begin{align*}
    c(k_n;G_n)-\gamma(k_n) &= \left(c(k_n;G_n)-c^\ast(k_n;G_n)\right)
    	+ \left(c^\ast(k_n;G_n)-c^\ast(k_n; \GPo)\right)\\
    &\hspace{10pt}+ \left(c^\ast(k_n; \GPo)-c^\ast(k_n; \Gbox)\right)
    	+ \left(c^\ast(k_n; \Gbox)-\Exp{c^\ast(k_n; \Gbox)}\right)\\
    &\hspace{10pt}+ \Exp{c^\ast(k_n; \Gbox)}-\gamma(k_n)
\end{align*}
Then, we take absolute values and apply the triangle inequality. 
\begin{align*}
    \left|c(k_n;G_n)-\gamma(k_n)\right|
    &\le \left|c(k_n;G_n)-c^\ast(k_n;G_n)\right|
    	+\left|c^\ast(k_n;G_n)-c^\ast(k_n; \GPo)\right|\\
    &\hspace{10pt}+ \left|c^\ast(k_n; \GPo)-c^\ast(k_n; \Gbox)\right|
    	+ \left|c^\ast(k_n; \Gbox)-\Exp{c^\ast(k_n; \Gbox)}\right|\\
    &+ \left|\Exp{c^\ast(k_n; \Gbox)}-\gamma(k_n)\right|
\end{align*}
The first term is $\smallOp{s(k_n)}$ by Lemma~\ref{lem:clustering_ast_H}. For the other terms, the propositions presented above in this section can be applied in order to show that the expectation of each difference is $\smallO{s(k_n)}$: Proposition~\ref{prop:clustering_ast_H_Pois} for the Poissonization in the second term, Proposition~\ref{prop:couling_c_H_P} for the coupling between the Poissonized KPKVB and the finite box model in the third term, Proposition~\ref{prop:concentration_local_clustering_P_n} for the concentration in the fourth term and finally Proposition~\ref{prop:convergence_average_clustering_P_n} for the transition to the infinite limit model. 

In particular, this implies that all terms are $\smallOp{s(k_n)}$. We thus conclude that
\begin{align*}
    \left|c(k_n;G_n)-\gamma(k_n)\right| = \smallOp{s(k_n)} = \smallOp{\gamma(k_n)},
\end{align*}
which establishes the first statement of the theorem and finishes the proof.  
\end{proofof}

\section{From \texorpdfstring{$\Gbox$}{G box} to \texorpdfstring{$\Ginf$}{G infinity} (Proving Proposition \ref{prop:convergence_average_clustering_P_n})}\label{sec:clustering_Pn_to_P}

In this section we shall relate the clustering in the finite box model $\Gbox$ to that of the infinite model. The main goal is to prove Proposition~\ref{prop:convergence_average_clustering_P_n} which states that
\[
	\left|\Exp{c^\ast(k_n; \Gbox)} - \gamma(k_n)\right| = \smallO{s(k_n)}.
\] 

Recall that $\Gbox$ is obtained by restricting the Poisson point process $\PPP$ to the box $\Rcal = (-I_n, I_n] \times (0, R]$, with $I_n = \frac{\pi}{2} e^{R/2}$ and connecting two points $p_1, p_2 \in \Rcal$ if and only if $|x_1 - x_2|_{\pi e^{R/2}} \le e^{(y_1 + y_2)/2}$. We also recall that by definition of the norm $|.|_{\pi e^{R/2}}$ the left and right boundaries of $\Rcal$ are identified. See Section~\ref{ssec:finite_model} for more details. Due to this identification of the boundaries some triples of nodes that form a triangle in the finite box model do not form a triangle in the infinite model. Therefore, to establish the required result we need to compute the asymptotic difference between triangle counts in both models. To keep notation concise we write $|\cdot|_n$ for the norm $|\cdot|_{\pi e^{R/2}}$.

For any $p \in \R \times \R_+$ we define for the finite box model,
\[
	T_{\text{box}}(p) = \sum_{p_1, p_2 \in \Pcal \setminus \{p\}, \atop \text{distinct}} T_{\text{box}}(p,p_1,p_2)
\]
where the sum is over all distinct pairs in $\Pcal \setminus p$ and
\[
	T_{\text{box}}(p,p_1,p_2) = \ind{p_1 \in \BallPon{p}}\ind{p_2 \in \BallPon{p}}\ind{p_2 \in \BallPon{p_1}}.
\]
Similarly, for the infinite model we define
\[
	T_\infty(y) = \sum_{p_1, p_2 \in \Pcal \setminus (0,y), \atop \text{distinct}} T_\infty(y,p_1,p_2),
\]
where
\[
	T_{\infty}(y,p_1,p_2) = \ind{p_1 \in \BallPo{y}}\ind{p_2 \in \BallPo{y}}\ind{p_2 \in \BallPo{p_1}}.
\]
Recall that, slightly abusing notation, we write $\BallPo{y}$ for $\BallPo{(0,y)}$ and that $\Nbox(k)$ denotes the number of vertices with degree $k$ in $\Gbox$.

We will first relate $\gamma(k_n)$ to an integral expression involving $T_\infty(y)$ and $\Exp{c^\ast(k_n;\Gbox)}$ to one involving $\Exp{T_{\text{box}}(y)}$. Recall the definition of $y_{k,C}^\pm$ from~\eqref{eq:def_y_k_C} and the interval $\Kcal_{C}(k_n) = [y_{k_n,C}^-, y_{k_n,C}^+]$. Note that for any $y \in \Kcal_C(k_n)$ it holds that
\begin{equation}\label{eq:def_K_C_set}
	\frac{k_n - C \sqrt{k_n \log(k_n)}}{\xi} \le e^{\frac{y}{2}}
	\le \frac{k_n + C \sqrt{k_n \log(k_n)}}{\xi}	
\end{equation}
and thus $k_n - C \sqrt{k_n \log(k_n)} \le \mu(y) \le k_n + C \sqrt{k_n \log(k_n)}$.

\begin{lemma}
Let $\gamma(k_n)$ be defined as in~\eqref{eq:gammakint}. Then as $n \to \infty$
\begin{equation}\label{eq:alt_gamma_kn}
	\gamma(k_n) = (1+\smallO{1})\frac{1}{k_n^2 \pmf(k_n)} \int_{\Kcal_{C}(k_n)} \Exp{T_\infty(y)} \rho(y,k) 
				\alpha e^{-\alpha y} \dd y. 
\end{equation}
Moreover,
\begin{equation}\label{eq:alt_c_ast_kn}
	\Exp{c^\ast(k_n;\Gbox)} = (1 + \smallO{1}) \frac{1}{k_n^2 \pmf(k_n)} \int_{\Kcal_{C}(k_n)} \Exp{T_{\text{box}}(y)}
		\rho_{\mathrm{box}}(y,k_n) \alpha e^{-\alpha y} \dd y
\end{equation}
as $n \to \infty$,
\end{lemma}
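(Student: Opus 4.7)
\medskip

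The plan is to express both $\gamma(k_n)$ and $\Exp{c^\ast(k_n;\Gbox)}$ as integrals of the connection probability $P(y)$ (respectively its finite-box analogue $P_n(y)$) of two random neighbours, then replace these connection probabilities by the expected triangle counts $\Exp{T_\infty(y)}$ and $\Exp{T_{\text{box}}(y)}$ via the identity
\begin{equation*}
\Exp{T_\infty(y)} = P(y)\mu(y)^2, \qquad \Exp{T_{\text{box}}(y)} = P_n(y)\mu_{\text{box}}(y)^2.
\end{equation*}
This identity follows from the fact that, conditional on the typical point $(0,y)$ having degree $k$, its $k$ neighbours are i.i.d.\ with the renormalised density $f\cdot\mathbf 1_{\BallPo{y}}/\mu(y)$, so that $\Exp{T_\infty(y)\mid \deg(0,y)=k}=k(k-1)P(y)$; unconditioning and using the second factorial moment of a Poisson$(\mu(y))$ variable yields $P(y)\mu(y)^2$. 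The same calculation, but with $\BallPo{y}$ replaced by $\BallPon{y}$, gives the finite-box version.

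For the first equality, I would start from~\eqref{eq:gammakint}, which writes $\pmf(k_n)\gamma(k_n)=\int_0^\infty P(y)\rho(y,k_n)\alpha e^{-\alpha y}\dd y$. Since $P(y)\le 1$ and $P(y)=\bigO{e^{-\beta y}}$ for a suitable $\beta<\alpha$ by Proposition~\ref{prop:asymptotics_P}, the concentration of heights (Proposition~\ref{prop:concentration_height_general}) gives that this integral is $(1+\smallO{1})\int_{\Kcal_C(k_n)}P(y)\rho(y,k_n)\alpha e^{-\alpha y}\dd y$. On $\Kcal_C(k_n)$ we have $\mu(y)=\xi e^{y/2}=k_n\bigl(1+\bigO{\sqrt{\log k_n/k_n}}\bigr)$ uniformly in $y$ by~\eqref{eq:def_K_C_set}, so $\mu(y)^2=(1+\smallO{1})k_n^2$ and hence $P(y)=(1+\smallO{1})\Exp{T_\infty(y)}/k_n^2$ uniformly on $\Kcal_C(k_n)$. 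Substituting gives~\eqref{eq:alt_gamma_kn}.

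For the second equality, I would apply the Campbell--Mecke formula to the sum
\begin{equation*}
\Exp{\Nbox(k_n)}\cdot \Exp{c^\ast(k_n;\Gbox)} = \Exp{\sum_{v\in\Pcal\cap\Rcal}\tfrac{T_{\text{box}}(v)}{k_n(k_n-1)}\mathbf 1_{\{\deg_{\Gbox}(v)=k_n\}}},
\end{equation*}
where I have used that $c(v)=T_{\text{box}}(v)/[k_n(k_n-1)]$ whenever $v$ has degree $k_n$ (with $T_{\text{box}}$ counting ordered pairs of neighbours of $v$ that are joined). Invariance under horizontal translations on the identified boundary collapses the $x$-integration into a factor $\pi e^{R/2}=\pi n/\nu$, leaving an integral over $y\in[0,R]$ of the form $\Exp{T^{(0,y)}_{\text{box}}\cdot\mathbf 1_{\{\deg=k_n\}}}$. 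Conditioning on the degree being $k_n$ and using the same i.i.d.\ representation as above splits this into $k_n(k_n-1)P_n(y)\rho_{\text{box}}(y,k_n)$. Lemma~\ref{lem:expnnkn} replaces $\Exp{\Nbox(k_n)}$ by $(1+\smallO{1})n\pmf(k_n)$, and Lemma~\ref{lem:average_degree_G_box} gives $\mu_{\text{box}}(y)^2=(1+\smallO{1})k_n^2$ on $\Kcal_C(k_n)$, so that $P_n(y)=(1+\smallO{1})\Exp{T_{\text{box}}(y)}/k_n^2$ there. Concentration of heights (this time applied to the version with $\hat\mu=\mu_{\text{box}}$) again restricts the integration to $\Kcal_C(k_n)$ with error $\smallO{\pmf(k_n)}$, and~\eqref{eq:alt_c_ast_kn} follows.

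I expect no genuine obstacle: the argument is essentially two applications of the Campbell--Mecke formula combined with the i.i.d.\ conditional representation and the concentration of heights. The main point requiring care is to check that the Proposition~\ref{prop:concentration_height_general} hypothesis is met by the integrand (trivially so here since $P,P_n\le 1$), and that the tail contribution from $[0,R]\setminus\Kcal_C(k_n)$ in the finite-box setting is controlled by the Chernoff bound~\eqref{eq:chernoff_bound_degrees}, which gives $\bigO{k_n^{-C^2/2}}$ against a main term of order $\pmf(k_n)=\bigT{k_n^{-(2\alpha+1)}}$, so choosing $C$ sufficiently large makes this tail negligible.
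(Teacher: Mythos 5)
Your proposal is essentially correct and follows the same route as the paper: express $\gamma(k_n)$ via the conditional connection probability $P(y)$, invoke the identity $\Exp{T_\infty(y)}=\mu(y)^2P(y)$ (and its box analogue), restrict to $\Kcal_C(k_n)$ by concentration of heights, and use the uniform equivalence $\mu(y),\mu_{\mathrm{box}}(y)=(1+\smallO{1})k_n$ there. The one genuinely different ingredient is your derivation of $\Exp{T_\infty(y)}=P(y)\mu(y)^2$: you condition on the degree, use the i.i.d.\ representation to get $\Exp{T_\infty(y)\mid\deg=k}=k(k-1)P(y)$, and then uncondition via the second factorial moment of a Poisson$(\mu(y))$ variable. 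The paper instead obtains the same identity directly from the two--point Campbell--Mecke formula. Both are correct; the Campbell--Mecke route is one line, while yours gives the slightly more informative conditional form $k(k-1)P(y)$ for free, which you then reuse in the second half.

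Two very small points of hygiene. First, when justifying the concentration of heights you invoke ``$P(y)=\bigO{e^{-\beta y}}$ for a suitable $\beta<\alpha$'' --- the hypothesis of Proposition~\ref{prop:concentration_height_general} requires an \emph{upper} bound of the form $\bigO{e^{\beta y}}$ with $\beta<\alpha$; since $P(y)\le 1$ this holds trivially with $\beta=0$, so the appeal to Proposition~\ref{prop:asymptotics_P} is unnecessary and slightly muddles the direction of the inequality. Second, be consistent about whether your $T_{\mathrm{box}}$ counts ordered or unordered distinct pairs: with ordered pairs, $c(v)=T_{\mathrm{box}}(v)/[k(k-1)]$ (as you write) and $\Exp{T_{\mathrm{box}}(y)}=\mu_{\mathrm{box}}(y)^2 P_n(y)$; with unordered pairs you would instead divide by $\binom{k}{2}$ and pick up an extra factor $\tfrac12$. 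The paper is slightly loose on this normalisation, but your version is internally consistent. Neither point constitutes a gap.
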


\begin{proof}
Recall that 
\[
	P(y) = \Exp{\ind{u_1 \in \BallPo{u_2}}},
\]
where $u_1$ and $u_2$ are independent and distributed according to the probability density \\$\Mu{\BallPo{y}}^{-1} \ind{u_i \in \BallPo{y}} f(x_i, y_i)$. It then follows from the Campbell-Mecke formula that
\begin{align*}
	\Exp{T_\infty(y)} &= \int \ind{p_1 \in \BallPo{y}}\ind{p_2 \in \BallPo{y}}
		\ind{p_2 \in \BallPo{p_1}} f(x_1,y_1) f(x_2,y_2) \dd x_1 \dd x_2 \dd y_1 \dd y_2\\
	&= \Mu{\BallPo{y}}^2 P(y).
\end{align*}
It then follows that,
\begin{align*}
	\gamma(k_n) &= \frac{1}{\pmf(k_n)} \cdot \int_0^\infty P(y) \rho(y,k) \alpha e^{-\alpha y} \dd y
	= \frac{1}{\pmf(k_n)} \int_0^\infty \Exp{T_\infty(y)} \Mu{\BallPo{y}}^{-2} \rho(y,k) 
		\alpha e^{-\alpha y} \dd y.
\end{align*}

Because $\Exp{T_\infty(y)} \Mu{\BallPo{y}}^{-2} = \bigO{1}$, uniformly in $y \in \R_+$ and $\mu(y) = (1+\smallO{1}) k_n$ uniformly for $y \in \Kcal_C(k_n)$, by the concentration of heights (Proposition~\ref{prop:concentration_height_general})
\begin{align*}
	&\hspace{-30pt} \frac{1}{\pmf(k_n)} 
		\int_0^\infty \Exp{T_\infty(y)} \Mu{\BallPo{y}}^{-2} \rho(y,k) \alpha e^{-\alpha y} \dd y \\
	&= (1+\smallO{1}) \frac{1}{\pmf(k_n)} 
		\int_{\Kcal_C(k_n)} \Exp{T_\infty(y)} \Mu{\BallPo{y}}^{-2} \rho(y,k) \alpha e^{-\alpha y} \dd y \\
	&= (1+ \smallO{1}) \frac{1}{k_n^2 \pmf(k_n)} \int_{\Kcal_C(k_n)} \Exp{T_\infty(y)} \rho(y,k) \alpha e^{-\alpha y} \dd y.
\end{align*}

For~\eqref{eq:alt_c_ast_kn} we recall that
\[
	c^\ast(k_n; \Gbox) = \frac{1}{\Exp{\Nbox(k_n)}} \sum_{p \in \Pcal} c_{\text{box}}(p)\ind{\Dbox(p) = k_n},
\]
where $c_{\text{box}}(p)$ can be expressed as
\[
	c_{\text{box}}(p) = \frac{1}{\binom{\Dbox(p)}{2}} \sum_{p_1, p_2 \in \Pcal \setminus p, \atop \text{distinct}} T_{\text{box}}(p,p_1,p_2)
	= \frac{T_{\text{box}}(p)}{\binom{\Dbox(p)}{2}}.
\]
By the Campbell-Mecke formula
\begin{align*}
	\Exp{c^\ast(k_n; \Gbox)} 
	&= \frac{1}{\Exp{\Nbox(k_n)}} \int_{\Rcal} \Exp{c_{\text{box}}(p)\ind{\Dbox(p) = k_n}} f(x,y) \dd x \dd y\\
	&= \frac{1}{\Exp{\Nbox(k_n)}} \int_{\Rcal} \CExp{c_{\text{box}}(p)}{\Dbox(p) = k_n}\rho_{\text{box}}(p,k_n) 
		f(x,y) \dd x \dd y\\
	&= (1+\smallO{1})\frac{n}{\Exp{\Nbox(k_n)}} \int_{\Kcal_C(k_n)} \CExp{c_{\text{box}}(y)}{\Dbox(y) = k_n}
		\rho_{\mathrm{box}}(y,k_n) \alpha e^{-\alpha y} \dd y,
\end{align*}
where the last line follows from the concentration of heights, for which we used the upper bound $\CExp{c_{\text{box}}(y)}{\Dbox(y) = k_n} \le 1$.

To analyze the conditional expectation we observe that, similar to the analysis of $\gamma(k_n)$, conditioned on there being $k_n$ points in $\BallPon{y}$, each point $u_i = (x_i,y_i)$ is independently distributed according to the probability density $\Mu{\BallPon{y}}^{-1} \ind{u_i \in \BallPon{y}} f(x_i,y_i)$. Therefore,
\begin{align*}
	\CExp{c_{\text{box}}(y)}{\Dbox(y) = k_n}
	&= \binom{k_n}{2}^{-1} \Exp{\sum_{1 \le i < j \le k_n} \ind{u_i \in \BallPon{u_j}}} \\
	&= \Exp{u_1 \in \BallPon{u_2}} \\
	&= \Mu{\BallPon{y}}^{-2} \iint T_{\text{box}}(y,p_1,p_2) f(x_1,y_1) f(x_2,y_2) 
		\dd x_1 \dd y_1 \dd x_2 \dd y_2\\
	&= \Mu{\BallPon{y}}^{-2} \Exp{T_{\text{box}}(y)}.
\end{align*}
and thus, by applying a concentration of heights argument on $\Mu{\BallPon{y}}^{-2}$,
\[
	\Exp{c^\ast(k_n; \Gbox)} = (1 + \smallO{1}) \frac{n \Mu{\BallPon{2\log(k_n/\xi)}}^{-2}}{\Exp{\Nbox(k_n)}} \int_{\Kcal_C(k_n)} \Exp{T_{\text{box}}(y)} \rho_{\mathrm{box}}(y,k_n) \alpha e^{-\alpha y} \dd y.
\]
To finish the argument, we first note that $\Mu{\BallPon{2\log(k_n/\xi)}}^{-2} = (1+\smallO{1})k_n^2$, while
\[
	\Exp{N_{\text{box}}(k_n)} = (1+\smallO{1}) n \pmf(k_n)
\]
by Lemma~\ref{lem:average_degree_G_box}. We therefore conclude that
\[
	\Exp{c^\ast(k_n; \Gbox)} = (1 + \smallO{1}) \frac{1}{k_n^2 \pmf(k_n)} \int_{\Kcal_C(k_n)} 
		\Exp{T_{\text{box}}(y)} \rho_{\mathrm{box}}(y,k_n) \alpha e^{-\alpha y} \dd y.
\]
\end{proof}

As a result of this lemma we only need to compare the difference in triangles between both models for height in the interval $\Kcal_C(k_n)$. This will significantly help the analysis.

%Comparing~\eqref{eq:alt_gamma_kn} and~\eqref{eq:alt_c_ast_kn}, we conclude that to prove Proposition~\ref{prop:convergence_average_clustering_P_n} it is enough to show that
%\begin{equation}\label{eq:triangle_diff_main_term}
%	 \left|\int_{\Kcal_{C}(k_n)} \Exp{T_{\text{box}}(y) - T_\infty(y)}
%	\rho(y,k) \alpha e^{-\alpha y} \, d y \right|
%	= \smallO{s(k_n)\pmf(k_n)k_n^2},
%\end{equation}
%which means we have to compute the expected difference in triangles between both models.

%it follows that
%\[
%	\Exp{T_{\text{box}}(p_0)} = \frac{1}{2}\iint_{\Rcal^2} T_{\text{box}}(p_0,p_1,p_2)
%	f(x_1,y_1) f(x_1,y_1) \, dx_1 \, dx_2 \, dy_1 \, dy_2,
%\]
%
%\TM{ $p$ is supposed to be $p_0$ I assume. }\PvdH{Indeed. It has been corrected.}

\subsection{Comparing triangles between \texorpdfstring{$\Ginf$}{G infinity} and \texorpdfstring{$\Gbox$}{G box}}

To analyze the difference $\left|T_{\text{box}}(y) - T_\infty(y)\right|$ we first reiterate that the difference between the indicator $\ind{p_1 \in \BallPon{p}}$ in the finite box model and $\ind{p_1 \in \BallPo{p}}$ is that in $\Gbox$ we identified the boundaries of the interval $[-\frac{\pi}{2}e^{R/2}, \frac{\pi}{2} e^{R/2}]$ and we stop at height $y = R$. This induces a difference in triangle counts between both models. 
To see this, note that for any $p = (x,y)$ with $0 \le y \le R$ we have that $\BallPon{p} = \BallPo{p} \cap \Rcal$. This means that if $p^\prime, p_2 \in \BallPon{p}$ and $p_2 \in \BallPo{p^\prime} \cap \Rcal$ then $p_2 \in \BallPon{p} \cap \BallPon{p^\prime}$ and hence $(p,p^\prime,p_2)$ form a triangle both in $\Gbox$ and $\Ginf$. However, it could happen that there are points in the intersection $\BallPon{p} \cap \BallPon{p^\prime}$ that are not in $\BallPo{p} \cap \BallPo{p^\prime}$. Let us denote this region by $\mathcal{T}(p,p^\prime)$, see Figure~\ref{fig:comparing_triangles_diff_intersections} for an example of this region. Then, any $p_2 \in \mathcal{T}(p,p^\prime)$ creates a triangle with $p$ and $p^\prime$ in $\Gbox$ that is not present in $\Ginf$. Finally, any point $p_2 \in \BallPo{p} \cap \BallPo{p^\prime}$ with height $y_2 > R$ creates a triangle with $p, p^\prime$ in $\Ginf$ but not in $\Gbox$.

Let us now define the following triangle count function
\[
	\widetilde{T}_{\text{box}}(p_0) = \sum_{(p_1, p_2) \in \Pcal \setminus \{p_0\}, \atop \text{distinct}}
		\widetilde{T}_{\text{box}}(p_0,p_1,p_2).
\]
where
\[
	\widetilde{T}_{\text{box}}(p_0,p_1,p_2) = \ind{p_1 \in \BallPon{p}}\ind{p_2 \in \BallPon{p}}\ind{p_2 \in \BallPo{p_1} \cap \Rcal}.
\]
Then $\widetilde{T}_{\text{box}}(p_0)$ only counts those triangles attached to $p_0$ that exist in both $\Gbox$ and $\Ginf$ and thus, by definition of the region $\mathcal{T}(p_0,p_1)$,
\[
	T_{\text{box}}(p_0) - \widetilde{T}_{\text{box}}(p_0)
	= \sum_{p_1, p_2 \in \Pcal \setminus \{p_0\}, \atop \text{distinct}} \ind{p_1 \in \BallPon{p_0}} 
			\ind{p_2 \in \mathcal{T}(p_0, p_1)}.
\]
%Thus, $\widetilde{T}_{\text{box}}(p_0)$ only count those triangles $(p_0,p_1,p_2)$ that also exist in $\Ginf$.
%Observe that the difference between $\widetilde{T}_{\text{box}}(p_0)$ and $T_{\text{box}}(p_0)$ is in the last indicator in $\widetilde{T}_{\text{box}}(p_0,p_1,p_2)$.  counts those points $p_2$ in the box $\Rcal$ that are in the neighbourhood $\BallPo{p_1}$ of $p_1$ in the infinite model. Thus, $\widetilde{T}_{\text{box}}(p_0)$ only count those triangles $(p_0,p_1,p_2)$ that also exist in $\Ginf$.

The next result, which is crucial for the proof of Proposition~\ref{prop:convergence_average_clustering_P_n}, computes the expected measure of $\mathcal{T}(p,p^\prime)$ with respect to $p^\prime$. 

\begin{lemma}\label{lem:clustering_error_T_term}
Let $p_0 = (0,y)$ with $y \in \Kcal_{C}(k_n)$. Then as $n \to \infty$,
\[
	\Exp{\left|T_{\text{box}}(p_0) - \widetilde{T}_{\text{box}}(p_0)\right|}
	= y \, \bigO{n^{-(2\alpha - 1)}} + e^{y}\,\bigO{n^{-(4\alpha-2)}}.
\]
%\[
%	\int_{\Rcal} \mu\left(\mathcal{T}(p,p^\prime)(p_0,p_1)\right) f(x_1,y_1) 
%	\dd x_1 \dd y_1 = \bigO{y n^{-(2\alpha - 1)} + n^{-(2\alpha-1)} e^{y}}
%\]
\end{lemma}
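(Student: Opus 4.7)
The plan is to apply the Campbell--Mecke formula to express the expectation as a double integral over $p_1, p_2 \in \Rcal$, then characterize the admissible region and estimate the resulting exponential integral.

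By Campbell--Mecke applied to $p_1$ and $p_2$,
\begin{equation*}
	\Exp{T_{\text{box}}(p_0) - \widetilde{T}_{\text{box}}(p_0)}
	= \int_{\Rcal}\int_{\Rcal} \ind{p_1, p_2 \in \BallPon{p_0}}\, \ind{p_2 \in \BallPon{p_1} \setminus \BallPo{p_1}}\, \dd\mu(p_1)\,\dd\mu(p_2).
\end{equation*}
Since $p_0 = (0,y)$ lies on the vertical axis, $|x_j|_n = |x_j|$ for every $p_j \in \Rcal$, so the condition $p_j \in \BallPon{p_0}$ simply reads $|x_j| \le e^{(y+y_j)/2}$ (or is vacuous at heights $y_j \ge R - y + 2\log(\pi/2)$, where $\BallPon{p_0}$ covers the full horizontal strip at height $y_j$). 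Meanwhile, $p_2 \in \BallPon{p_1} \setminus \BallPo{p_1}$ is equivalent to the $p_1p_2$ edge being present only via wrap around the boundary of $\Rcal$, i.e.\ $|x_1 - x_2| > \pi e^{R/2}/2$ together with $\pi e^{R/2} - |x_1 - x_2| \le e^{(y_1+y_2)/2}$.

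A direct geometric computation, splitting by the sign of $x_1 x_2$ (the two wrapping configurations $x_1 > 0 > x_2$ and $x_2 > 0 > x_1$ are symmetric) and translating the appropriate corner of $\Rcal$ to the origin, shows that at fixed $y_1, y_2$ the Lebesgue area of the admissible $(x_1, x_2)$-region is bounded above by
\begin{equation*}
	\min\bigl(AB,\ (A + B + C - D)_+^2\bigr),
\end{equation*}
where $A = e^{(y+y_1)/2}$, $B = e^{(y+y_2)/2}$, $C = e^{(y_1+y_2)/2}$, and $D = \pi e^{R/2}$. This region is empty unless $A + B + C \ge D$, which in turn forces at least one of $y_1$, $y_2$, or $y_1 + y_2$ to lie within $O(1)$ of $R - y$, $R - y$, or $R$ respectively. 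I would then split the $(y_1, y_2)$-integration into the three regions corresponding to these three possibilities, and handle the supplementary case where some $y_j \ge R - y + 2\log(\pi/2)$ separately via the simpler bound $\mathrm{Area} \le 2A \cdot e^{(y_1+y_2)/2}$ on the wrap strip alone.

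In the two symmetric regions where exactly one of $y_1, y_2$ is pinned near $R - y$, the corresponding exponential contributes $e^{-\alpha(R-y)} = n^{-2\alpha} e^{\alpha y}$, while the free coordinate ranges over an interval of length at most $y$ (arising either from the $[R - y, R]$ strip in the supplementary case, or from the feasibility constraint enforcing $y_2 \gtrsim y$ combined with integrable exponential decay). After collecting these factors against the $e^{y/2}$ term from the area bound, one obtains the contribution $y \cdot O(n^{-(2\alpha - 1)})$. In the third region, where $y_1 + y_2$ is near $R$ with both $y_j$ below $R - y + O(1)$, both exponentials decay simultaneously as $n^{-4\alpha}e^{2\alpha y}$, and after pairing with the area bound one arrives at $e^y \cdot O(n^{-(4\alpha-2)})$. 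The main obstacle is the interplay between the two area bounds $AB$ and $(A+B+C-D)_+^2$: they coincide near the feasibility boundary but $AB$ is the tighter of the two once $C$ becomes comparable to or exceeds $D$, and this crossover must be tracked carefully across the various sub-regions and across the thresholds $\alpha \in \{3/4, 1\}$ at which the free exponential integrals change their dominant behavior. Once the splitting is fixed, all of the constituent integrals are elementary.
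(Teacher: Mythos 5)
Your plan reorganizes the same Campbell--Mecke double integral that the paper uses, but in a genuinely different way: the paper fixes $p_1 = (x_1,y_1)$, computes $\mu(\mathcal T(p_0,p_1))$ explicitly (which is what forces the three sub-regions $B_n^{(1)},B_n^{(2)},B_n^{(3)}$), and then integrates over $p_1$; you instead fix the heights $(y_1,y_2)$ and bound the two-dimensional $(x_1,x_2)$-area directly. The two-sided area bound $\min\bigl(4A'B',\,(A'+B'+C-D)_+^2\bigr)$ (with $A',B'$ capped at $I_n=D/2$) is correct, and the feasibility criterion $A'+B'+C\geq D$ does force $\max(y+y_1,\,y+y_2,\,y_1+y_2)\geq R-O(1)$ as you say.

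However, the sketch has real gaps beyond ``the details must be tracked carefully''. First, the crude bound $AB$ alone is not enough in the symmetric regions. Take $y_1\leq y$ (WLOG by symmetry), so feasibility pushes $y_2\gtrsim R-y_1$; using $4AB=4e^{y}e^{(y_1+y_2)/2}$ gives a contribution of order $e^{y}\,e^{-(\alpha-1/2)(R-y)}=e^{(\alpha+1/2)y}\,n^{-(2\alpha-1)}$, which on $\Kcal_C(k_n)$ is $k_n^{1+2\alpha}n^{-(2\alpha-1)}$ and for $1/2<\alpha<1$ is \emph{not} $o(s(k_n)k_n^2)$ -- it overshoots the required bound, so the tighter $s^2$ bound (together with the saturations $A'=I_n$ or $B'=I_n$ and the switch in the $|x_1-x_2|$ constraint when $C$ crosses $D/2$) is not optional, it is essential. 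Second, the case description is off: if $y_1\leq y$ is the smaller coordinate, the constraint pins $y_2$ within $O(1)$ of $R-y_1$, not of $R-y$, and the $O(y)$ factor in the claimed bound arises from the length of the admissible $y_1$-interval $[0,y]$ rather than from any single free coordinate ranging over an interval of length $y$; the phrase ``the free coordinate ranges over an interval of length at most $y$'' and ``the $e^{y/2}$ term from the area bound'' don't match the actual arithmetic, and the advertised conclusion $y\cdot O(n^{-(2\alpha-1)})$ does not follow from the bounds written. In short: the route can work, but as written it contains unverified (and in places incorrect) intermediate claims, and the central difficulty you flag -- tracking the crossover between the two area bounds and the $I_n$-saturation -- is exactly where the paper's explicit three-region analysis of $\mu(\mathcal T(p_0,p_1))$ does the work; that step would need to be carried out, not merely noted.
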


The proof of the lemma is not difficult but cumbersome, since it involves computing many different integrals. We postpone this proof till the end of this section and proceed with the main goal, proving Proposition~\ref{prop:convergence_average_clustering_P_n}. First we state a small lemma about the scaling of $s(k_n)$ that will be very useful.  

\begin{lemma}\label{lem:scaling_s_alpha}
Let $s(k_n)$ be as defined in \eqref{eq:def_scaling_function}. Then for any $k_n = \smallO{n^{\frac{1}{2\alpha + 1}}}$, as $n \to \infty$,
\[
	n^{-(2\alpha - 1)} = \smallO{s(k_n)}.
\]
\end{lemma}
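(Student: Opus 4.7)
The plan is to split into the three regimes defining $s(k_n)$ and in each one reduce the claim to a simple polynomial-exponent comparison. In each regime $s(k_n)$ has the form $k_n^{-a}$, up to a harmless $\log k_n$ factor at the boundary $\alpha = 3/4$. Combining with the hypothesis $k_n = o(n^{1/(2\alpha+1)})$, we obtain $k_n^{-a} \gg n^{-a/(2\alpha+1)}$, so it suffices to verify $a/(2\alpha+1) \le 2\alpha - 1$, equivalently
\[
a \;\le\; (2\alpha - 1)(2\alpha + 1) \;=\; 4\alpha^2 - 1.
\]

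For $\alpha > 3/4$ one has $a = 1$, and $4\alpha^2 - 1 > 1$ holds strictly since $\alpha > 3/4 > 1/\sqrt{2}$; this gives $n^{1/(2\alpha+1)} = o(n^{2\alpha-1})$ and hence the result. For $\alpha = 3/4$ the required bound becomes $k_n = o(\sqrt{n}\,\log k_n)$, which follows a fortiori from $k_n = o(n^{2/5})$, the logarithmic factor working in our favour. For $1/2 < \alpha < 3/4$ one has $a = 4\alpha - 2$, and the inequality $4\alpha - 2 \le 4\alpha^2 - 1$ rearranges to $(2\alpha - 1)^2 \ge 0$, which is always true and strict for $\alpha > 1/2$.

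The argument is essentially a bookkeeping exercise and there is no genuine obstacle. The underlying identity $4\alpha^2 - 1 - (4\alpha - 2) = (2\alpha - 1)^2$ is what explains why the bound becomes tight precisely at $\alpha = 1/2$, the value excluded from the model by hypothesis; it is this strict inequality $\alpha > 1/2$ that makes the $o(\cdot)$ statement genuine in the middle regime, and nothing finer is needed.
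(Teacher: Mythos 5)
Your proposal is correct and follows essentially the same route as the paper: both reduce the claim to a polynomial-exponent comparison after invoking $k_n = o(n^{1/(2\alpha+1)})$ and split into the three regimes defining $s(k_n)$. The paper handles $\alpha=3/4$ and $\alpha>3/4$ together by simply bounding $s(k_n)^{-1}\le k_n$ (dropping the logarithm), while you treat $\alpha=3/4$ as its own case; and your packaging via the inequality $a\le 4\alpha^2-1$ is a mild reorganization of the paper's direct exponent computation (identifying $4\alpha^2-1-a=(2\alpha-1)^2$ in the middle range, exactly as the paper notes $4\alpha^2-4\alpha+1>0$ for $\alpha>1/2$). These are cosmetic differences; the underlying argument is identical.
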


\begin{proof}
First let $\frac{1}{2} < \alpha < \frac{3}{4}$. Then
\[
	n^{-(2\alpha - 1)}s(k_n)^{-1} = n^{-(2\alpha -1)}k_n^{4\alpha - 2}
	= \smallO{n^{-(2\alpha - 1) + \frac{4\alpha - 2}{2\alpha + 1}}} 
	= \smallO{n^{-\frac{4\alpha^2 - 4\alpha + 1}{2\alpha + 1}}}
	= \smallO{1},
\]
since $4\alpha^2 - 4\alpha + 1 > 0$ for all $\alpha > \frac{1}{2}$. Similarly, for $\alpha \ge \frac{3}{4}$ we have
that $4\alpha^2 > 2$ and hence,
\[
	n^{-(2\alpha - 1)} s_{\alpha}(k_n) = \smallO{n^{-(2\alpha - 1)} k_n} = \smallO{n^{-\frac{4\alpha^2 - 2}{2\alpha + 1}}}
	= \smallO{1}.
\]
\end{proof}

The first key implication of Lemma~\ref{lem:clustering_error_T_term} is that the triangle count in the finite box model is equivalent to $k_n^2 P(y)$, where $P(y)$ is defined by~\eqref{eq:def_Py}.

\begin{lemma}\label{lem:appox_triangle_count_box}
Let $p_0 = (0,y)$. Then uniformly for $y \in \Kcal_C(k_n)$,
\[
	\Exp{T_{\mathrm{box}}(p_0)} = (1+\smallO{1})k_n^2 P(y) + \smallO{s(k_n) k_n^2},
\]
as $n \to \infty$.
\end{lemma}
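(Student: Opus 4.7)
The plan is to pivot through the intermediate quantity $\widetilde{T}_\mathrm{box}(p_0)$: on one side, bound $|T_\mathrm{box}(p_0)-\widetilde{T}_\mathrm{box}(p_0)|$ via Lemma~\ref{lem:clustering_error_T_term}; on the other, compare $\widetilde{T}_\mathrm{box}(p_0)$ to $T_\infty(y)$ by a direct Campbell--Mecke computation. Since $\Exp{T_\infty(y)} = \mu(y)^2 P(y)$ and $\mu(y) = (1+\smallO{1})k_n$ uniformly for $y \in \Kcal_C(k_n)$, this reduces the lemma to showing $|\Exp{T_\mathrm{box}(p_0)} - \Exp{T_\infty(y)}| = \smallO{s(k_n) k_n^2}$.

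First I would apply Lemma~\ref{lem:clustering_error_T_term}. For $y \in \Kcal_C(k_n)$ we have $y = \bigO{\log k_n}$ and $e^y = \bigO{k_n^2}$, so the lemma gives
\[
\Exp{|T_\mathrm{box}(p_0) - \widetilde{T}_\mathrm{box}(p_0)|} = \bigO{\log(k_n) n^{-(2\alpha-1)}} + \bigO{k_n^2 n^{-(4\alpha-2)}}.
\]
Both terms are $\smallO{s(k_n) k_n^2}$: the first because $\log(k_n) n^{-(2\alpha-1)} = \smallO{s(k_n)}$ by Lemma~\ref{lem:scaling_s_alpha} (and $\log k_n \ll k_n^2$); the second because $n^{-(4\alpha-2)} = \bigl(n^{-(2\alpha-1)}\bigr)^2 = \smallO{s(k_n)^2} = \smallO{s(k_n)}$, again by Lemma~\ref{lem:scaling_s_alpha}.

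Next I would compute $\Exp{\widetilde{T}_\mathrm{box}(p_0)}$ via Campbell--Mecke. Because $\BallPon{p_0} \subseteq \Rcal$ forces $p_2 \in \Rcal$, the intersection with $\Rcal$ in the third indicator is automatic, giving
\[
\Exp{\widetilde{T}_\mathrm{box}(p_0)} = \iint \ind{p_1 \in \BallPon{p_0}}\ind{p_2 \in \BallPon{p_0}}\ind{p_2 \in \BallPo{p_1}}\, f(p_1) f(p_2)\, dp_1 \, dp_2.
\]
Replacing $\BallPon{p_0}$ by $\BallPo{y}$ in both indicators yields $\Exp{T_\infty(y)} = \mu(y)^2 P(y)$, so the discrepancy is controlled by the symmetric difference $\BallPon{p_0} \triangle \BallPo{y}$. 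This symmetric difference consists only of points at heights $y' \gtrsim R - y$ (arising from the vertical cut-off at $y'=R$, the horizontal cut-off at $|x'| = \pi e^{R/2}/2$, and the wrap-around identification in $|\cdot|_n$). For $y \in \Kcal_C(k_n)$ these regions lie deep in the exponential tail, contributing a factor $e^{-(\alpha - 1/2)R} = n^{-(2\alpha-1)}$; bounding the resulting integral and invoking $k_n = \smallO{n^{1/(2\alpha+1)}}$ together with Lemma~\ref{lem:scaling_s_alpha} yields $\smallO{s(k_n) k_n^2}$.

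The main obstacle is this last boundary integration. When $p_1$ lies in $\BallPon{p_0} \triangle \BallPo{y}$ one picks up an additional factor $\mu(\BallPo{p_1})$ from integrating out $p_2$, and since $y_1$ can be as large as $R$ there, this factor is potentially as large as $\bigT{n}$. The fix is to exploit that $p_2$ is constrained to $\BallPo{p_0} \cap \BallPo{p_1}$, an exponentially smaller set when $y_1$ is near $R$, so that the effective integrand is $e^{y/2} e^{-(\alpha-1)y_1}$ rather than $e^{-(\alpha - 1/2)y_1}$; carrying this through and carefully collecting the boundary terms gives a bound that matches the scaling $s(k_n)$ uniformly across the three regimes $\alpha \in (\tfrac12,\tfrac34)$, $\alpha = \tfrac34$, and $\alpha > \tfrac34$.
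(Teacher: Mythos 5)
Your high-level strategy --- passing through $\widetilde{T}_{\mathrm{box}}$, invoking Lemma~\ref{lem:clustering_error_T_term} on the first leg, and comparing $\widetilde{T}_{\mathrm{box}}$ to $T_\infty$ via Campbell--Mecke on the second --- is the same as the paper's, and your treatment of the first leg (Lemma~\ref{lem:clustering_error_T_term} together with Lemma~\ref{lem:scaling_s_alpha} and $s(k_n)^2 = \smallO{s(k_n)}$) is correct.

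The second leg, however, is not carried through and the sketch contains an error. Since $p_0=(0,y)$ sits at the center of the box, $|x'|_n=|x'|$ for all $x'\in\Rcal$, so $\BallPon{p_0}=\BallPo{p_0}\cap\Rcal$ exactly; the wrap-around identification contributes nothing to $\BallPon{p_0}\bigtriangleup\BallPo{y}$ --- that effect lives only in the third indicator (through $\BallPon{p_1}$) and is already absorbed by Lemma~\ref{lem:clustering_error_T_term}, so listing it as a source of the symmetric difference is spurious. More seriously, the proposed ``effective integrand'' $e^{y/2}e^{-(\alpha-1)y_1}$ cannot be right: for $\alpha\le1$ it does not decay in $y_1$, so the integral over $y_1>R$ diverges. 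That expression is exactly what the \emph{unrefined} bound $\mu(\BallPo{p_1})=O(e^{y_1/2})$ for the inner $p_2$-integral produces, i.e.\ the very problem you flagged. If instead you use $p_2\in\BallPon{p_0}$ to bound the inner integral by $\mu_{\mathrm{box}}(y)=O(e^{y/2})$, the $y_1$-integrand becomes $e^{y/2}\cdot e^{(y+y_1)/2-\alpha y_1}=e^{y}e^{-(\alpha-\frac12)y_1}$, whose tail over $y_1>R$ is $O(e^{y}n^{-(2\alpha-1)})=\smallO{s(k_n)k_n^2}$ by Lemma~\ref{lem:scaling_s_alpha} --- you appear to have swapped the roles of $y$ and $y_1$. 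The paper avoids this bookkeeping altogether by dropping the indicator $\ind{p_2\in\BallPo{p_1}}$ and bounding the contribution from $p_1,p_2$ outside the box $\Rcal$ by the square $\bigl(\int_{(\R\times\R_+)\setminus\Rcal}\ind{p_1\in\BallPo{p_0}}\,f\,dp_1\bigr)^2=O(e^{y}n^{-(4\alpha-2)})$; that product-form bound is what makes the estimate come out uniformly in the three ranges of $\alpha$, and it (or an equally careful handling of $\mu(\BallPo{p_0}\cap\BallPo{p_1})$ exploiting the horizontal separation, along the lines of Lemma~\ref{lem:common_neighbours_Pcal_n}) is what is missing from your argument.
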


\begin{proof}
Recall that $\Exp{T_\infty(y)} = \mu(y)^2 P(y) = (1+\smallO{1})k_n^2 P(y)$ on $\Kcal_C(k_n)$. We will show that
\[
	\Exp{\left|\Exp{T_{\mathrm{box}}(p_0)} - T_\infty(y)\right|} = \smallO{s(k_n) k_n^2},
\]
which implies the result.

Define $\Rcal^\prime := (\R \times \R_+) \setminus \Rcal$. Then we can write
\begin{align*}
	\left|T_{\mathrm{box}}(p_0) - T_\infty(y)\right|
	&\le \left|T_{\mathrm{box}}(p_0) - \widetilde{T}_{\text{box}}(p_0)\right| 
		+ \left|\widetilde{T}_{\text{box}}(p_0) - T_\infty(y)\right|\\
	&= \left|T_{\mathrm{box}}(p_0) - \widetilde{T}_{\text{box}}(p_0)\right| 
		+ \sum_{p_1, p_2 \in \Pcal \cap \Rcal^\prime, \atop \text{distinct}} T_{\infty}(p_0,p_1,p_2).
\end{align*}
Then by the Campbell-Mecke formula
\begin{align*}
	\left|\Exp{T_{\text{box}}(p_0) - T_{\infty}(p_0)}\right|
	&\le \Exp{\left|T_{\text{box}}(p_0) - \widetilde{T}_{\text{box}}(p_0)\right|}
		+ \int_{\Rcal^\prime}\int_{\Rcal^\prime} T_{\infty}(p_0,p_1,p_2) \dd \mu(p_1) \dd \mu(p_2). 
\end{align*}

%\TM{ in the 1st integral we could have restricted to the ball of $p$, and possibly saved a lot. Does that not help? }\PvdH{the first integral is taken care of by another Lemma so it does not matter I think.}
The first part is taken care of by Lemma~\ref{lem:clustering_error_T_term}. For the other integral we have
\begin{align*}
	\iint_{\Rcal^\prime} T_{\infty}(p_0,p_1,p_2) \dd \mu(p_1) \dd \mu(p_2)
	&\le \left(\int_{\Rcal^\prime} \ind{p_1 \in \BallPo{p_0}} f(x_1,y_1) \dd x_1 \dd y_1\right)^2\\
	&= \bigO{\left(e^{y/2} \int_{R}^\infty e^{-(\alpha - \frac{1}{2})y_1} \dd y_1\right)^2}\\
	&= \bigO{e^{y} e^{-(2\alpha-1)R}} = \bigO{e^y n^{-(4\alpha - 2)}}.
\end{align*}
%\TM{ Typo : $f_{\mu, \nu}$? Also give more details how line 3 follows. }
Thus we conclude, using Lemma~\ref{lem:clustering_error_T_term}, that,
\begin{equation}\label{eq:triangle_count_diff_scaling}
	\left|\Exp{T_{\text{box}}(p_0) - T_{\infty}(p_0)}\right| = \bigO{y n^{-(2\alpha - 1)} + n^{-(4\alpha-2)} e^{y}}.
\end{equation}

Therefore, on $\Kcal_C(k_n)$, 
\begin{align*}
	\left|\Exp{T_{\text{box}}(p_0) - T_{\infty}(p_0)}\right| 
	&= \bigO{\log(k_n) n^{-(2\alpha - 1)} + k_n^2 n^{-(4\alpha - 2)}} \\
	&= \bigO{\log(k_n) n^{-(2\alpha - 1)} + k_n^2 n^{-(4\alpha - 2)}} = \smallO{s(k_n) k_n^2},
\end{align*}
where the last part follows from Lemma~\ref{lem:scaling_s_alpha} and the fact that $s(k_n)^2 = \smallO{s(k_n)}$.
\end{proof}

We can now prove the main result of this section.

\begin{proofof}{Proposition~\ref{prop:convergence_average_clustering_P_n}}
First, by Lemma~\ref{lem:appox_triangle_count_box} and~\eqref{eq:alt_c_ast_kn} we have
\begin{align*}
	\Exp{c^\ast(k_n;\Gbox)} &= (1 + \smallO{1}) \frac{1}{\pmf(k_n)} \int_{\Kcal_{C}(k_n)} P(y)
		\rho_{\mathrm{box}}(y,k_n) \alpha e^{-\alpha y} \dd y \\
	&\hspace{10pt}+ \smallO{s(k_n)}	\frac{1}{\pmf(k_n)} \int_{\Kcal_{C}(k_n)} P(y)
			\rho_{\mathrm{box}}(y,k_n) \alpha e^{-\alpha y} \dd y.
\end{align*}
By Lemma~\ref{lem:degree_integral} the integral in the second term is $(1+\smallO{1}) \pmf(k_n)$ and thus the second term is $\smallO{s(k_n)} = \smallO{\gamma(k_n)}$. Hence it remain to prove that the first term is $(1+\smallO{1}) \gamma(y)$. Using~\eqref{eq:alt_gamma_kn} it is enough to show that
\[
	\int_{\Kcal_{C}(k_n)} P(y) \rho_{\mathrm{box}}(y,k_n) \alpha e^{-\alpha y} \dd y
	= (1+\smallO{1}) \int_{\Kcal_{C}(k_n)} P(y) \rho(y,k_n) \alpha e^{-\alpha y} \dd y.
\]
Now recall the substitution of variables from the proof of Lemma~\ref{lem:degree_integral}: $z(y) = 2 \log(\mu_{\mathrm{box}}(y)/\xi)$. We will apply this change of variables to the right hand side in the above equation. This yields
\begin{align*}
	&\hspace{-30pt}\int_{\Kcal_{C}(k_n)} P(y) \rho_{\mathrm{box}}(y,k_n) \alpha e^{-\alpha y} \dd y\\
	&= (1+\smallO{1}) \int_{\Kcal_C(k_n)} P(z(y) - 2\log(1+\smallO{1})) \rho(z(y),k_n) 
		z^\prime(y) \alpha e^{-\alpha z(y)} \dd y\\
	&= (1+\smallO{1}) \int_{\Kcal_C(k_n)} P(z - 2\log(1+\smallO{1})) \rho(z,k_n) \alpha e^{-\alpha z} \dd z.
\end{align*}
Next we recall that by Proposition~\ref{prop:asymptotics_P}
\[
	P(y) \sim \begin{cases}
		e^{-\frac{y}{2}(4\alpha - 2)} c_\alpha \xi^{4\alpha - 2} &\mbox{if } \frac{1}{2} < \alpha < \frac{3}{4}\\
		\frac{y}{2} e^{-\frac{y}{2}} &\mbox{if } \alpha = \frac{3}{4}\\
		e^{-\frac{y}{2}} \frac{\alpha - \frac{1}{2}}{\alpha - \frac{3}{4}} &\mbox{if } \alpha > \frac{3}{4}.
	\end{cases}
\]
In particular, this implies that $P(z - 2\log(1+\smallO{1})) = (1+\smallO{1})P(z)$, uniformly on $\Kcal_C(k_n)$. We therefore conclude that
\[
	\int_{\Kcal_{C}(k_n)} P(y) \rho_{\mathrm{box}}(y,k_n) \alpha e^{-\alpha y} \dd y
		= (1+\smallO{1}) \int_{\Kcal_{C}(k_n)} P(z) \rho(z,k_n) \alpha e^{-\alpha z} \dd z,
\]
which finishes the proof.
\end{proofof}

From the proof of Proposition~\ref{prop:convergence_average_clustering_P_n} we immediately obtain the following useful corollary, which will be used in Section~\ref{sec:concentration_c_P_n}. Recall from~\eqref{eq:def_stripe} that $\stripknc = \Rcal \cap (\R_+ \times [y_{k_n,C}^-, y_{k_n,C}^+])$.

\begin{corollary}\label{cor:adjusted_triangle_counting_P_n}
Let $p_0 = (0,y)$. Then, as $n \to \infty$,
\[
	\int_{-I_n}^{I_n}\int_{\Kcal_{C}(k_n)} \rho_{\text{box}}(y,k_n) \Exp{\widetilde{T}_{\text{box}}(p_0)} f(x,y) \dd x \dd y
	= (1+\smallO{1}) n k_n^2 \int_0^\infty P(y) \rho(y,k_n) \alpha e^{-\alpha y} \dd y. 
\]
In particular,
\[
	\int_{\stripknc} \rho_{\text{box}}(y,k_n) \Exp{\widetilde{T}_{\text{box}}(p_0)} f(x,y) \dd x \dd y
	= \bigT{n k_n^{-(2\alpha - 1)} s(k_n)}.
\]
\end{corollary}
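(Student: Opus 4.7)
The plan is to split $\Exp{\widetilde{T}_{\text{box}}(p_0)} = \Exp{T_{\text{box}}(p_0)} - \Exp{T_{\text{box}}(p_0) - \widetilde{T}_{\text{box}}(p_0)}$ and handle the two pieces separately using Lemmas~\ref{lem:appox_triangle_count_box} and~\ref{lem:clustering_error_T_term}. The key simplification is that $\Exp{T_{\text{box}}(p_0)}$ depends only on the height $y$ of $p_0$, because $T_{\text{box}}$ is defined entirely in terms of the toroidal distance $|\cdot|_n$ while the Poisson intensity is independent of $x$. By an analogous cyclic-shift argument, the bound in Lemma~\ref{lem:clustering_error_T_term} extends uniformly to $p_0 = (x,y)$ for any $x \in (-I_n, I_n]$.

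Using this, the $x$-integration in the main term collapses to a factor $n \alpha e^{-\alpha y}$ (since $2I_n \cdot (\alpha \nu/\pi) = n \alpha$), and Lemma~\ref{lem:appox_triangle_count_box} replaces $\Exp{T_{\text{box}}((0,y))}$ by $(1+\smallO{1})k_n^2 P(y) + \smallO{s(k_n) k_n^2}$ uniformly on $\Kcal_C(k_n)$. Lemma~\ref{lem:degree_integral} bounds the secondary integral $\int_{\Kcal_C(k_n)} \rho_{\text{box}}(y,k_n) \alpha e^{-\alpha y}\dd y$ by $(1+\smallO{1})\pmf(k_n)$, so the additive error in this step is of order $\smallO{s(k_n)k_n^2\, n\pmf(k_n)}$. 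The change of variables $z(y)=2\log(\mu_{\text{box}}(y)/\xi)$ combined with the asymptotics of $P$ from Proposition~\ref{prop:asymptotics_P}---exactly the argument used inside the proof of Proposition~\ref{prop:convergence_average_clustering_P_n}---replaces $\rho_{\text{box}}$ by $\rho$ and extends the integration to $(0,\infty)$ at an $(1+\smallO{1})$ cost, producing the target expression $(1+\smallO{1})\, n k_n^2 \int_0^\infty P(y)\rho(y,k_n)\alpha e^{-\alpha y}\dd y$.

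For the correction term, the uniform version of Lemma~\ref{lem:clustering_error_T_term} combined with $y = \bigO{\log k_n}$ and $e^y = \bigO{k_n^2}$ on $\Kcal_C(k_n)$ yields
\[
\int_{-I_n}^{I_n} \int_{\Kcal_C(k_n)} \rho_{\text{box}}(y,k_n) \Exp{T_{\text{box}}(p_0) - \widetilde{T}_{\text{box}}(p_0)} f(x,y) \dd x \dd y = n\,\pmf(k_n)\,\bigO{\log(k_n) n^{-(2\alpha-1)} + k_n^2 n^{-(4\alpha-2)}},
\]
and by Lemma~\ref{lem:scaling_s_alpha} (together with $s(k_n)^2 = \smallO{s(k_n)}$) this is $\smallO{n k_n^2 s(k_n) \pmf(k_n)} = \smallO{n k_n^{-(2\alpha-1)} s(k_n)}$, strictly lower order than the main term $\bigT{n k_n^2 \pmf(k_n) \gamma(k_n)} = \bigT{n k_n^{-(2\alpha-1)} s(k_n)}$. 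This proves the first displayed identity. The $\bigT{\,\cdot\,}$ claim over $\stripknc$ then follows because the integrand is even in $x$ (the intensity $f$ is constant in $x$ and every indicator involves either $|x_0-x_i|_n$ or $|x_1-x_2|$, both preserved by $x \mapsto -x$), so restricting to $\R_+$ halves the integral without changing its asymptotic order.

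The main technical obstacle I anticipate is the uniform-in-$x$ extension of Lemma~\ref{lem:clustering_error_T_term}: although $T_{\text{box}}$ is manifestly torus-invariant, $\widetilde{T}_{\text{box}}$ uses the non-invariant Euclidean distance $|x_1-x_2|$. The cleanest remedy is to observe that changing the fundamental domain of the cylinder from $(-I_n, I_n]$ to $(x_0-I_n, x_0+I_n]$ preserves the law of the Poisson process and identifies $\Exp{T_{\text{box}}((x_0,y)) - \widetilde{T}_{\text{box}}((x_0,y))}$ with the same expectation at $p_0 = (0,y)$ (now measured against the shifted Euclidean representative), so Lemma~\ref{lem:clustering_error_T_term} applies verbatim for every $x_0$.
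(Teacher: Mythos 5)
Your proof is correct and takes the same route the paper does: the corollary is extracted from the proof of Proposition~\ref{prop:convergence_average_clustering_P_n}, using precisely the ingredients you list---the $(1+\smallO{1})k_n^2 P(y)+\smallO{s(k_n)k_n^2}$ expansion of the triangle count from Lemma~\ref{lem:appox_triangle_count_box}, the error bound of Lemma~\ref{lem:clustering_error_T_term}, the degree-integral asymptotics of Lemma~\ref{lem:degree_integral}, the scaling comparison in Lemma~\ref{lem:scaling_s_alpha}, and the change of variables $z(y)=2\log(\mu_{\mathrm{box}}(y)/\xi)$. One small remark: the translation-invariance issue you flag as the main obstacle is not actually needed for the corollary as stated, since it fixes $p_0=(0,y)$ so that the integrand is literally constant in $x$ and the $x$-integration only contributes the factor $n\alpha e^{-\alpha y}$---your cyclic-shift argument is nevertheless a reasonable point of rigor for the downstream Campbell--Mecke application in the proof of Proposition~\ref{prop:concentration_local_clustering_P_n}, where the reference point $p$ does range over the box.
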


\subsection{Counting missing triangles}\label{ssec:missing_triangles}

We now come back to computing the expected number of triangles attached to a node at height $y$ in $\Gbox$ that are not present in $\Ginf$. 

\begin{figure}[!t]
\centering
\begin{tikzpicture}
	%Define the coordinates 
	%p = (\u,\v) and p^\prime = (\uu, \vv)
	%Box \Rcal has width 2\r and height \t
	\pgfmathsetmacro{\u}{0} %0
	\pgfmathsetmacro{\v}{1} %1
	\pgfmathsetmacro{\uu}{1.4} %1.4
	\pgfmathsetmacro{\vv}{0.8} %0.8
	\pgfmathsetmacro{\r}{6}
	\pgfmathsetmacro{\t}{4}
	
	%The box \Rcal
	\draw[line width=1pt] (-\r,0) -- (\r,0) -- (\r,\t) -- (-\r,\t) -- (-\r,0);

	%Dram both nodes
    \draw node[fill, circle, inner sep=0pt, minimum size=5pt] (p1) at (\u,\v) {};
    \path (p1)+(-0.2,0.2) node {$p$};
    \draw node[fill,blue, circle, inner sep=0pt, minimum size=5pt] (p2) at (\uu,\vv) {};
    \path (p2)+(-0.2,0.2) node {\color{blue}$p^\prime$};

	%Boundaries p = (\u,\v)
	
	%Right boundary
	\pgfmathsetmacro{\rightbounduv}{\u+exp((\v)/2)}
	\draw[domain=\rightbounduv:\r,smooth,variable=\x,black,line width=1pt] plot (\x, {2*ln(\x)-\v});
    %Left boundary
    \pgfmathsetmacro{\leftbounduv}{\u-exp((\v)/2)}
    \draw[domain=\leftbounduv:-\r,smooth,variable=\x,black,line width=1pt] plot (\x, {2*ln(-\x)-\v});
    
    %Boundaries p^\prime = (\uu,\vv)
    
    %Right boundary
    \pgfmathsetmacro{\rightbounduuvv}{\uu+exp((\vv)/2)}
    \draw[domain=\rightbounduuvv:\r,smooth,variable=\x,blue,line width=1pt] plot (\x, {2*ln(\x-\uu)-\vv});
    %Shifted right boundary
    \pgfmathsetmacro{\shiftrightbounduuvv}{\uu+exp((\vv + \t)/2)-2*\r}
    \draw[domain=\shiftrightbounduuvv:-\r,smooth,variable=\x,blue,line width=1pt] plot (\x, {2*ln(\x+(2*\r-\uu))-\vv});
    %Left boundary 
    \pgfmathsetmacro{\leftbounduuvv}{\uu-exp((\vv)/2)}
    \draw[domain=\leftbounduuvv:-\r,smooth,variable=\x,blue,line width=1pt] plot (\x, {2*ln(\uu-\x)-\vv});
    %Shifted left boundary
    \pgfmathsetmacro{\shiftleftbounduuvv}{\uu-exp((\vv + \t)/2)+2*\r}
    \draw[domain=\shiftleftbounduuvv:\r,smooth,variable=\x,blue,line width=1pt] plot (\x, {2*ln(2*\r + \uu-\x)-\vv});
    
    %Define x^\ast(p^\prime) and y^\ast(p^\prime)
    \pgfmathsetmacro{\uuast}{\uu-\r}
    \pgfmathsetmacro{\vvast}{2*ln(\r)-\vv}
    
    %Define intersection left black and shifted blue curve
    \pgfmathsetmacro{\vast}{2*ln((2*\r - \uu)/(exp(\v/2) + exp(\vv/2)))}
    \pgfmathsetmacro{\uast}{(\uu - 2*\r)/(1 + exp((\vv - \v)/2))}    
   
    %Define h_1(p) = h_2(p)
    \pgfmathsetmacro{\hp}{2*ln(\r-\u)-\v}
    
    %Define h_1(p^\prime) and h_2(p^\prime)
    \pgfmathsetmacro{\hh}{2*ln(\uu+\r)-\vv}
    \pgfmathsetmacro{\hhh}{2*ln(\r-\uu)-\vv}
    
    %Highlight area between left black, left blue and shifted right blue line
	\draw[red,line width=1pt,pattern=north west lines, pattern color=red] 
		plot[domain=\uuast:\uast,smooth,variable=\x,red] (\x, {2*ln(\x+(2*\r-\uu))-\vv}) 
		-- 
		plot[domain=\uast:-\r,smooth,variable=\x,red] (\x, {2*ln(-\x)-\v})
		-- 
		(-\r,\hh)
		-- 
		plot[domain=-\r:\uuast,smooth,variable=\x,red] (\x, {2*ln(\uu-\x)-\vv});
	
%	\draw node[fill, circle, inner sep=0pt, minimum size=4pt, blue] at (\uuast,\vvast) {};
%	\draw node[fill, circle, inner sep=0pt, minimum size=4pt, black] at (\uast,\vast) {};
%	\draw node (f1) at (\uuast,\vvast+0.9) {\color{blue}$(x^\ast(p^\prime), y^\ast(p^\prime))$};
%	\draw node (f2) at (\uast+1,\vast-1.25) {\color{black}$(\hat{x}(p,p^\prime), \hat{y}(p,p^\prime))$};
%	
%	\draw[->,thick] (f2) -- (\uast+0.1,\vast-0.2);
%	\draw[->,thick,blue] (f1) -- (\uuast,\vvast+0.2);
	
	 \draw[dashed,line width=1pt,blue] (\uuast,0) -- (\uuast,\vvast);
	 \draw node at (\uuast+0.1,-0.4) {\color{blue}$x^\ast(p^\prime)$};
	 \draw[dashed,line width=1pt,blue] (\r,\vvast) -- (\uuast,\vvast);
	 \draw node at (\r+0.75,\vvast) {\color{blue}$y^\ast(p^\prime)$};
	 \draw[dashed,black,line width=1pt] (-\r,\vast) -- (\uast,\vast);
	 \draw node at (-\r-0.75,\vast) {\color{black}$\hat{y}(p,p^\prime)$};
	 \draw[dashed,black,line width=1pt] (\uast,\vast) -- (\uast,0);
	 \draw node at (\uast-0.1,-0.4) {\color{black}$\hat{x}(p,p^\prime)$};
	 
	 \draw node[fill, circle, inner sep=0pt, minimum size=4pt, blue] at (\uuast,\vvast) {};
	 \draw node[fill, circle, inner sep=0pt, minimum size=4pt, black] at (\uast,\vast) {};
	
    \draw node at (-\r-0.75,\hh+0.1) {\color{blue}$h_1(p^\prime)$};
    \draw node at (\r+0.75,\hhh-0.1) {\color{blue}$h_2(p^\prime)$};

\end{tikzpicture}
\caption{Example configuration of two points $p$ and $p^\prime$ for which $\BallPon{p} \cap \BallPon{p^\prime}$ is not a subset of $\BallPo{p} \cap \BallPo{p^\prime}$. The red region indicates the area belonging to $\BallPon{p} \cap \BallPon{p^\prime}$ but not to $\BallPo{p} \cap \BallPo{p^\prime}$.}
\label{fig:comparing_triangles_diff_intersections}
\end{figure}

Recall that $\mathcal{T}(p,p^\prime)$ denotes the region of points which form triangles with $p$ and $p^\prime$ in $\Gbox$ but not in $\Ginf$. Figure \ref{fig:comparing_triangles_diff_intersections} shows an example of a configuration where $\mathcal{T}(p,p^\prime) \ne \emptyset$. We observe that $\mathcal{T}(p,p^\prime) \ne \emptyset$ because the right boundary of the ball $\BallPon{p^\prime}$ exits the right boundary of the box $\Rcal$ and then, since we identified the boundaries, continues from the left so that $\BallPon{p^\prime}$ covers part of the ball $\BallPon{p}$ which would not be covered in the infinite limit model. 

The point $(\hat{x}(p,p^\prime), \hat{y}(p,p^\prime))$ is the same as $(\hat{x}_{\mathrm{left}}, \hat{y}_{\mathrm{left}})$ from Section~\ref{ssec:joint_degrees_GPo}). Using the same approach as there we can compute the other two coordinates, $x^\ast(p^\prime)$ and $y^\ast(p^\prime)$. In total we have the following four expressions:
\begin{align*}
	x^\ast(p^\prime) &= x^\prime - \frac{\pi}{2} e^{R/2}\\
	y^\ast(p^\prime) &= 2\log\left(\frac{\pi}{2}e^{R/2}\right) - y^\prime\\
	\hat{x}(p,p^\prime) &= \frac{x^\prime - \pi e^{R/2}}{1 + e^{(y^\prime - y)/2}} \\
	\hat{y}(p,p^\prime) &= 2 \log\left(\frac{\pi e^{R/2} - x^\prime}{e^{y/2} + e^{y^\prime/2}}\right)
\end{align*}

The crucial observation is that $\mathcal{T}(p,p^\prime) = \emptyset$ as long as the point $(x^\ast(p^\prime), y^\ast(p^\prime))$ is above the left boundary of $p$. This happens exactly when $y^\ast(p^\prime) > b_p^-(x^\ast(p^\prime))$, where $b_p^-(z)$ is defined in~\eqref{eq:def_left_boundary_Bp}. Therefore the boundary of this event is given by the equation $y^\ast(p^\prime) = b_p^-(x^\ast(p^\prime))$ which reads
\[
	2\log\left(\frac{\pi}{2}e^{R/2}\right) - y^\prime = 2\log\left(\frac{\pi}{2} e^{R/2} -x^\prime\right) - y.
\]
Solving this equation gives us the function
\begin{equation}
	b^\ast_p(z) = y - 2\log\left(1 - \frac{z}{\frac{\pi}{2} e^{R/2}}\right),
\end{equation}
which is displayed by the red curve in Figure \ref{fig:comparing_triangles_diff_analysis}. It holds that $y^\ast(p^\prime) > b_p^-(x^\ast(p^\prime))$ if and only if $y^\prime < b^\ast_p(x^\prime)$ and hence we have that $\mathcal{T}(p,p^\prime) = \emptyset$ for all $p^\prime \in \Rcal$ for which $y^\prime \ge b^\ast_p(x^\prime)$. We also note that when $y^\prime = b^\ast_p(x^\prime)$ the two points $(x^\ast(p^\prime), y^\ast(p^\prime))$ and $(\hat{x}(p,p^\prime),\hat{y}(p,p^\prime))$ coincide.

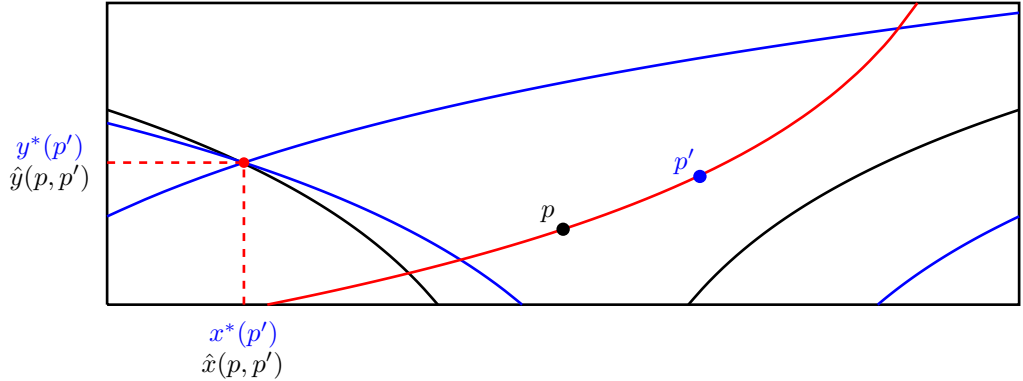
\begin{figure}[!t]

\centering
\begin{tikzpicture}
	%Define the coordinates 
	%p = (\u,\v) and p^\prime = (\uu, \vv)
	%Box \Rcal has width 2\r and height \t
	\pgfmathsetmacro{\u}{0}
	\pgfmathsetmacro{\v}{1}
	\pgfmathsetmacro{\uu}{1.8}
	\pgfmathsetmacro{\vv}{1.7}
	\pgfmathsetmacro{\r}{6}
	\pgfmathsetmacro{\t}{4}
	
	%The box \Rcal
	\draw[line width=1pt] (-\r,0) -- (\r,0) -- (\r,\t) -- (-\r,\t) -- (-\r,0);	
	
	%Boundaries p = (\u,\v)
	
	%Right boundary
	\pgfmathsetmacro{\rightbounduv}{\u+exp((\v)/2)}
	\draw[domain=\rightbounduv:\r,smooth,variable=\x,black,line width=1pt] plot (\x, {2*ln(\x)-\v});
    %Left boundary
    \pgfmathsetmacro{\leftbounduv}{\u-exp((\v)/2)}
    \draw[domain=\leftbounduv:-\r,smooth,variable=\x,black,line width=1pt] plot (\x, {2*ln(-\x)-\v});
    
    %Boundaries p^\prime = (\uu,\vv)
    
    %Right boundary
    \pgfmathsetmacro{\rightbounduuvv}{\uu+exp((\vv)/2)}
    \draw[domain=\rightbounduuvv:\r,smooth,variable=\x,blue,line width=1pt] plot (\x, {2*ln(\x-\uu)-\vv});
    %Shifted right boundary
    \pgfmathsetmacro{\shiftrightbounduuvv}{\uu+exp((\vv + \t)/2)-2*\r}
    %\draw[domain=\shiftrightbounduuvv:-\r,smooth,variable=\x,blue,line width=1pt] plot (\x, {2*ln(\x+(2*\r-\uu))-\vv});
    \draw[domain=-\r:\r,smooth,variable=\x,blue,line width=1pt] plot (\x, {2*ln(\x+(2*\r-\uu))-\vv});
    %Left boundary 
    \pgfmathsetmacro{\leftbounduuvv}{\uu-exp((\vv)/2)}
    \draw[domain=\leftbounduuvv:-\r,smooth,variable=\x,blue,line width=1pt] plot (\x, {2*ln(\uu-\x)-\vv});
    %Shifted left boundary

    %Star boundary
    \pgfmathsetmacro{\starleftbound}{\r*(1-exp(\v/2))}
    \pgfmathsetmacro{\starrightbound}{\r*(1-exp((\v-\t)/2))}
    \draw[domain=\starleftbound:\starrightbound,smooth,variable=\x,red, line width=1pt] plot (\x, {2*ln((\r/(\r-\x)))+\v});
    
    %Define h_1(p) = h_2(p)
    \pgfmathsetmacro{\hp}{2*ln(\r-\u)-\v}
%    \draw node at (\r+0.75,\hp) {\color{black}$h(y)$};
    \draw node at (\r+1,\hp) {};
    
    %Define h_1(p^\prime) and h_2(p^\prime)
    \pgfmathsetmacro{\hh}{2*ln(\uu+\r)-\vv}
    \pgfmathsetmacro{\hhh}{2*ln(\r-\uu)-\vv}

%    \draw node at (-\r-0.75,\hh+0.1) {\color{blue}$h_1(p^\prime)$};
%    \draw node at (\r+0.75,\hhh) {\color{blue}$h_2(p^\prime)$};
    
    %Define x^\ast(p^\prime) and y^\ast(p^\prime), intersection of left and shifted right blue curve
    \pgfmathsetmacro{\uuast}{\uu-\r}
    \pgfmathsetmacro{\vvast}{2*ln(\r)-\vv}

    %Define intersection left black and shifted blue curve
    \pgfmathsetmacro{\vast}{2*ln((2*\r - \uu)/(exp(\v/2) + exp(\vv/2)))}
    \pgfmathsetmacro{\uast}{(\uu - 2*\r)/(1 + exp((\vv - \v)/2))} 

%    \draw[dashed,thick,black] (-\r,\hhh) -- (\r,\hhh);
    \draw[dashed,line width=1pt,red] (\uuast,0) -- (\uuast,\vvast);
    \draw node at (\uuast,-0.4) {\color{blue}$x^\ast(p^\prime)$};
    \draw[dashed,line width=1pt,red] (-\r,\vvast) -- (\uuast,\vvast);
    \draw node at (-\r-0.75,\vvast+0.2) {\color{blue}$y^\ast(p^\prime)$};
%    \draw[dashed,black,line width=1pt] (-\r,\vast) -- (\uast,\vast);
    \draw node at (-\r-0.75,\vast-0.2) {\color{black}$\hat{y}(p,p^\prime)$};
%    \draw[dashed,black,line width=1pt] (\uast,\vast) -- (\uast,0);
    \draw node at (\uast,-0.8) {\color{black}$\hat{x}(p,p^\prime)$};
    
    \draw node[fill, circle, inner sep=0pt, minimum size=4pt, red] at (\uuast,\vvast) {};
    
   	%Draw both nodes
    \draw node[fill, circle, inner sep=0pt, minimum size=5pt] (p1) at (\u,\v) {};
    \path (p1)+(-0.2,0.2) node {$p$};
    \draw node[fill,blue, circle, inner sep=0pt, minimum size=5pt] (p2) at (\uu,\vv) {};
    \path (p2)+(-0.2,0.2) node {\color{blue}$p^\prime$};

%    \draw node[fill, circle, inner sep=0pt, minimum size=4pt, black] at (\uast,\vast) {};
    
%    \draw node at (-7,2.5835) {$h(y)$};

%    \draw[dotted,thick,black] (4.5208,2.0174) -- (4.5208,0);
%    \draw[dotted,thick,black] (4.5208,2.0174) -- (6,2.0174);
    
%    \draw node at (4.5208,-0.5) {$w_x(p,p^\prime)$};
%    \draw node at (7,2.0174) {$w_y(p,p^\prime)$};
    
%    \draw node at (0,3.5) {\color{blue}$x_1 = x^\prime + e^{\frac{y^\prime + y_1}{2}}$};
%    \draw node at (-2,1.6) {\color{blue}$x_1 = x^\prime - e^{\frac{y^\prime + y_1}{2}}$};
%    \draw node at (-4.5,1) {$x_1 = x - e^{\frac{y + y_1}{2}}$};
%    \draw node at (2,1.5) {$x_1 = x + e^{\frac{y + y_1}{2}}$};

\end{tikzpicture}
\caption{Example for a given $p$ of the boundary function $x^\prime \mapsto b^\ast_p(x^\prime)$, given by the red curve, which determines whether $\mathcal{T}(p,p^\prime) = \emptyset$. We see that when $y^\prime = b^\ast_p(x^\prime)$ then $(\hat{x}(p,p^\prime), \hat{y}(p,p^\prime)) = (x^\ast(p^\prime), y^\ast(p^\prime))$.}
\label{fig:comparing_triangles_diff_analysis}
\end{figure}

This analysis allows us to compute the expected difference in the number of triangles for the finite box model and the infinite model, for a typical node with height $y$, i.e. prove Lemma~\ref{lem:clustering_error_T_term}. 

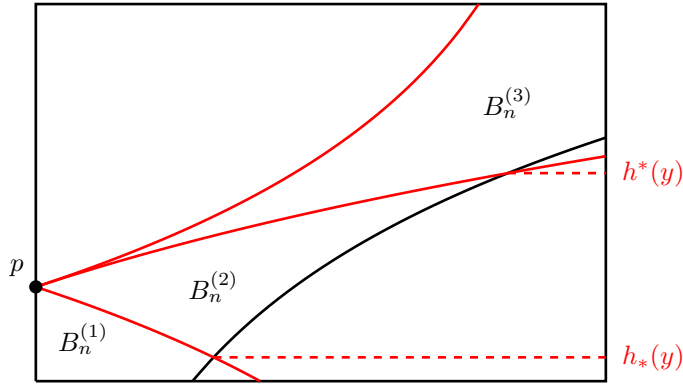
\begin{figure}[!t]

\centering
\begin{tikzpicture}[scale=1.25]

%%%%%%%%%%%%%%%%%%%%%%%%%%%%%%%%%%%%%%%%%%%%%%%%%%%%%%%%%%%%%%%%%%%%%%%%%%%%%%%%%%%%%%%%%%%%%%%%%%%
%																								  %
%	Shows the three different areas B_n^{(i)} for the computations in Lemma 					  %
%	\label{lem:clustering_error_T_term}			  												  %
%																								  %
%%%%%%%%%%%%%%%%%%%%%%%%%%%%%%%%%%%%%%%%%%%%%%%%%%%%%%%%%%%%%%%%%%%%%%%%%%%%%%%%%%%%%%%%%%%%%%%%%%%

	%Define the coordinates 
	%p = (\u,\v) and p^\prime = (\uu, \vv)
	%Box \Rcal has width 2\r and height \t
	\pgfmathsetmacro{\u}{0}
	\pgfmathsetmacro{\v}{1}
	\pgfmathsetmacro{\uu}{1.8}
	\pgfmathsetmacro{\vv}{1.7}
	\pgfmathsetmacro{\r}{6}
	\pgfmathsetmacro{\t}{4}
	
	%The box \Rcal only positive x
	\draw[line width=1pt] (0,0) -- (\r,0) -- (\r,\t) -- (0,\t) -- (0,0);	
	
	%Boundaries p = (\u,\v)
	
	%Right boundary
	\pgfmathsetmacro{\rightbounduv}{\u+exp((\v)/2)}
	\draw[domain=\rightbounduv:\r,smooth,variable=\x,black,line width=1pt] plot (\x, {2*ln(\x)-\v});
    %Left boundary
%    \pgfmathsetmacro{\leftbounduv}{\u-exp((\v)/2)}
%    \draw[domain=\leftbounduv:-\r,smooth,variable=\x,black,line width=1pt] plot (\x, {2*ln(-\x)-\v});
    
    %Three boundaries for the regions B_n
    \pgfmathsetmacro{\bone}{\r*(1-exp(-\v/2))}
    \pgfmathsetmacro{\btwo}{\r*(exp(-\v/2)-1)}
    %Star boundary
    \pgfmathsetmacro{\bthreeleft}{\r*(1-exp(\v/2))}
    \pgfmathsetmacro{\bthreeright}{\r*(1-exp((\v-\t)/2))}
    
    \draw[domain=0:\bone,smooth,variable=\x,red,line width=1pt] plot (\x, {\v-2*ln(\r/(\r-\x))});
    \draw[domain=0:\r,smooth,variable=\x,red,line width=1pt] plot (\x, {\v+2*ln(1+(\x/\r))});
    \draw[domain=0:\bthreeright,smooth,variable=\x,red, line width=1pt] plot (\x, {2*ln((\r/(\r-\x)))+\v});    
    
    %Define the top and bottom coordinates of y^prime boundaries
    \pgfmathsetmacro{\ybottom}{\v+2*ln(\r/(\r+exp(\v)))}
    \pgfmathsetmacro{\xbottom}{\r*exp(\v)/(\r + exp(\v))}
    \pgfmathsetmacro{\ytop}{\v+2*ln(\r/(\r-exp(\v)))}
    \pgfmathsetmacro{\xtop}{\r*exp(\v)/(\r - exp(\v))}
    
    \draw[dashed,line width=1pt,red] (\xbottom,\ybottom) -- (\r,\ybottom);
    \draw[dashed,line width=1pt,red] (\xtop,\ytop) -- (\r,\ytop);
    \draw node at (\r+0.5,\ybottom) {\color{red}$h_\ast(y)$};
    \draw node at (\r+0.5,\ytop) {\color{red}$h^\ast(y)$};
    
    \draw node at (0.5,\ybottom+0.2) {$B_n^{(1)}$};
    \draw node at (\xbottom,\ybottom+0.75) {$B_n^{(2)}$};
    \draw node at (\xtop,\ytop+0.75) {$B_n^{(3)}$};

    %Define h_1(p) = h_2(p)
    \pgfmathsetmacro{\hp}{2*ln(\r-\u)-\v}
  
    %Define h_1(p^\prime) and h_2(p^\prime)
    \pgfmathsetmacro{\hh}{2*ln(\uu+\r)-\vv}
    \pgfmathsetmacro{\hhh}{2*ln(\r-\uu)-\vv}

%    \draw node at (-\r-0.75,\hh+0.1) {\color{blue}$h_1(p^\prime)$};
%    \draw node at (\r+0.75,\hhh) {\color{blue}$h_2(p^\prime)$}; 
    
   	%Draw node p
    \draw node[fill, circle, inner sep=0pt, minimum size=5pt] (p1) at (\u,\v) {};
    \path (p1)+(-0.2,0.2) node {$p$};

\end{tikzpicture}
\caption{Three different areas $B_n^{(i)}$ used in the proof of Lemma \ref{lem:clustering_error_T_term}.}
\label{fig:comparing_triangles_B_areas}
\end{figure}

\begin{proofof}{Lemma \ref{lem:clustering_error_T_term}}
Due to symmetry it is enough to show that
\begin{equation}\label{eq:clustering_error_T_main}
	\int_0^{R}\int_0^{I_n} \mu\left(\mathcal{T}(p,p_1)\right) \dd \mu(p_1) = \bigO{y n^{-(2\alpha - 1)} + n^{-(2\alpha-1)} e^{y}}
\end{equation}
The proof goes in two stages. First we compute $\mu\left(\mathcal{T}(p,p_1)\right)$ by splitting it over three disjoint regimes with respect to $p_1$, with $x_1 \ge 0$. Then we do the integration with respect to $p_1$.

\subsubsection*{Computing $\bm{\mu\left(\mathcal{T}(p,p_1)\right)}$}

Recall that $I_n = \frac{\pi}{2} e^{R/2}$ and define the sets
\begin{align*}
	A_n^{(1)} &= \left\{p_1 = (x_1,y_1) \in \Rcal \, : \, 0 \le y_1 \le y - 2\log(I_n/(I_n-x_1)) \right\},\\
	A_n^{(2)} &= \left\{p_1 = (x_1,y_1) \in \Rcal \, : \, y - 2\log(I_n/(I_n-x_1)) < y_1 
		\le y + 2 \log\left(1 + \frac{x_1}{I_n}\right)\right\},\\
	A_n^{(3)} &= \left\{p_1 = (x_1,y_1) \in \Rcal \, : \, y + 2 \log\left(1 + \frac{x_1}{I_n}\right) < y_1 
			\le y + 2 \log\left(\frac{I_n}{I_n-x_1}\right)\right\},
\end{align*}
and let $B_n^{(i)} = \BallPon{p} \cap A_n^{(i)}$, for $i = 1, 2, 3$, see Figure~\ref{fig:comparing_triangles_B_areas}. Here the heights of the two intersections are given by
\begin{align}
	h_\ast(y) &= y + 2 \log\left(\frac{I_n}{I_n + e^y}\right)\\
	h^\ast(y) &= y + 2 \log\left(\frac{I_n}{I_n - e^y}\right).
\end{align}

With these definitions we have that the union $B_n := \bigcup_{i = 1}^n B_n^{(i)}$ denotes the area under the red curve in Figure~\ref{fig:comparing_triangles_diff_analysis} and hence, for all $p_1 \in \Rcal\setminus B_n$ with $x_1 \ge 0$ we have that $\mathcal{T}(p,p_1) = \emptyset$. So we only need to consider $p_1 \in B_n$. We shall establish the following result:
\begin{equation}\label{eq:mu_triangle_diff}
	\mu\left(\mathcal{T}(p,p_1)\right) = 
	\begin{cases}
		\bigO{I_n^{-2\alpha} e^{\alpha y_1}} &\mbox{if } p_1 \in B_n^{(1)}\\
		\bigO{I_n^{-2\alpha} e^{\alpha y}} &\mbox{if } p_1 \in B_n^{(2)} \cup B_n^{(3)}
	\end{cases}
\end{equation}

Depending on which set $p_1$ belongs to, the set $\mathcal{T}(p,p_1)$ has a different shape. We displayed these shapes in Figure~\ref{fig:shapes_triangle_mismatches} as a visual aid to follow the computations below. 

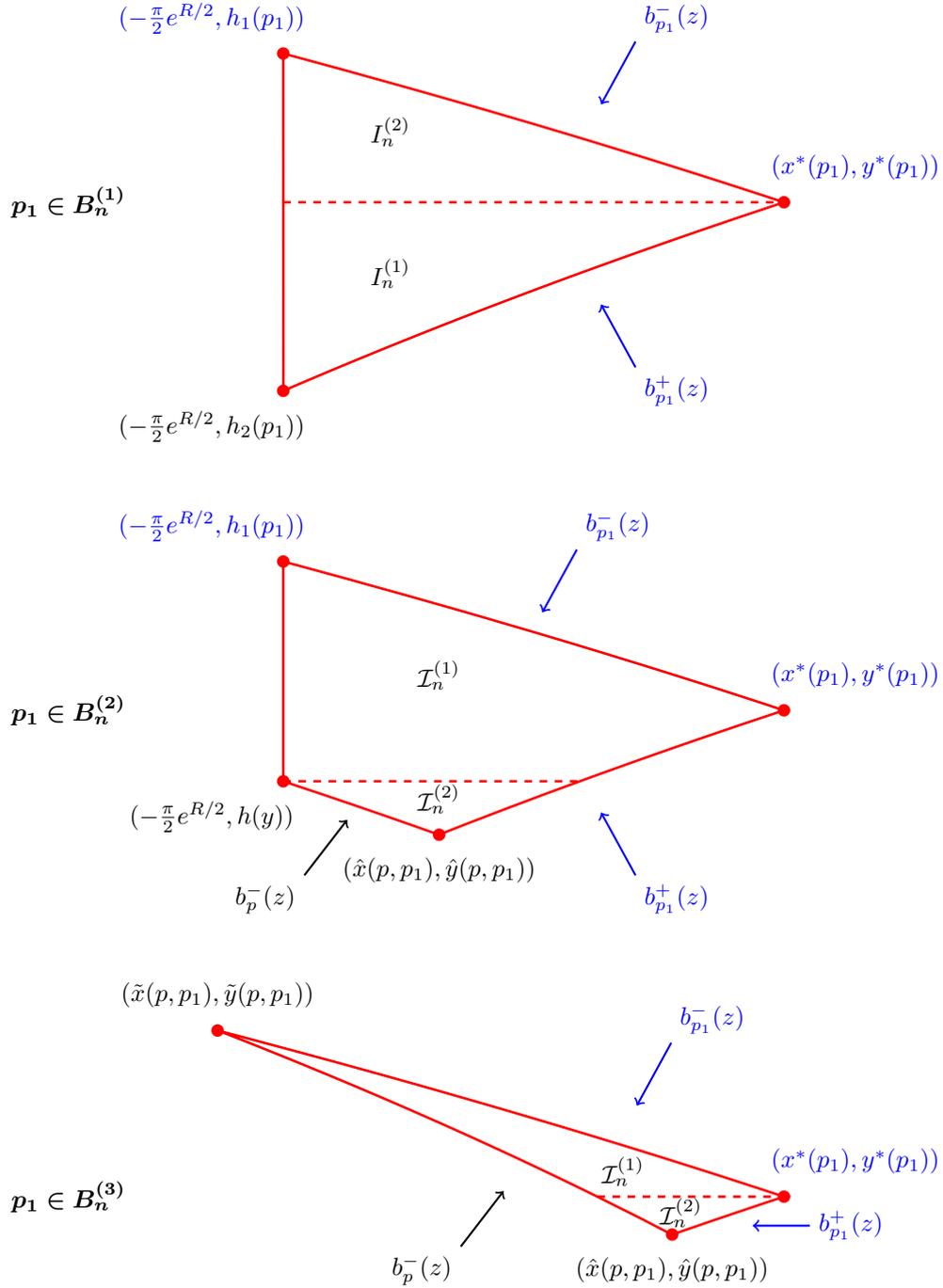
\begin{figure}[!tp]
\centering
\begin{tikzpicture}[scale=5]

%%%%%%%%%%%%%%%%%%%%%%%%%%%%%%%%%%%%%%%%%%%%%%%%%%%%%%%%%%%%%%%%%%%%%%%%%%%%%%%%%%%%%%%%%%%%%%%%%%%
%																								  %
%	Shows the area T_{\Pcal \Delta \Pcal} when h_1(p^\prime) > h_2(p^\prime) > h(y)			  %
%																								  %
%%%%%%%%%%%%%%%%%%%%%%%%%%%%%%%%%%%%%%%%%%%%%%%%%%%%%%%%%%%%%%%%%%%%%%%%%%%%%%%%%%%%%%%%%%%%%%%%%%%

	%Define the coordinates 
	%p = (\u,\v) and p^\prime = (\uu, \vv)
	%Box \Rcal has width 2\r and height \t
	\pgfmathsetmacro{\u}{0}
	\pgfmathsetmacro{\v}{1}
	\pgfmathsetmacro{\uu}{1.4}
	\pgfmathsetmacro{\vv}{0.2}
	\pgfmathsetmacro{\r}{6}
	\pgfmathsetmacro{\t}{4}
    
    %Define x^\ast(p^\prime) and y^\ast(p^\prime)
    \pgfmathsetmacro{\uuast}{\uu-\r}
    \pgfmathsetmacro{\vvast}{2*ln(\r)-\vv}
    
    %Define intersection left black and shifted blue curve
    \pgfmathsetmacro{\vast}{2*ln((2*\r - \uu)/(exp(\v/2) + exp(\vv/2)))}
    \pgfmathsetmacro{\uast}{(\uu - 2*\r)/(1 + exp((\vv - \v)/2))}    
   
    %Define h_1(p) = h_2(p)
    \pgfmathsetmacro{\hp}{2*ln(\r-\u)-\v}
    
    %Define h_1(p^\prime) and h_2(p^\prime)
    \pgfmathsetmacro{\hh}{2*ln(\uu+\r)-\vv}
    \pgfmathsetmacro{\hhh}{2*ln(\r-\uu)-\vv}
    
    %Draw the boundary left black, left blue and shifted right blue line
	\draw[red,line width=1pt] 
		plot[domain=\uuast:-\r,smooth,variable=\x,red] (\x, {2*ln(\x+(2*\r-\uu))-\vv}) 
		-- 
		(-\r,\hh)
		-- 
		plot[domain=-\r:\uuast,smooth,variable=\x,red] (\x, {2*ln(\uu-\x)-\vv});
	
	\pgfmathsetmacro{\top}{2*ln(\uu-\uast)-\vv}
	
	\draw[red, dashed,line width=1pt] (-\r,\vvast) -- (\uuast,\vvast);

    \draw node at (-\r-0.2,\hh+0.1) {\color{blue}$(-\frac{\pi}{2} e^{R/2}, h_1(p_1))$};
    \draw node at (-\r-0.2,\hhh-0.1) {$(-\frac{\pi}{2} e^{R/2}, h_2(p_1))$};
    \draw node at (\uuast+0.2,\vvast+0.1) {\color{blue}$(x^\ast(p_1), y^\ast(p_1))$};
    
    \draw node at (\uuast-2,\vvast) {$\bm{p_1 \in B^{(1)}_n}$};
    
    \draw node at (-\r+0.3,\vvast+0.2) {$I_n^{(2)}$};
    \draw node at (-\r+0.3,\vvast-0.2) {$I_n^{(1)}$};
    
    \draw node (f2) at (\uuast-0.3,\hh+0.1) {\color{blue}$b_{p_1}^-(z)$};
    \draw node (f3) at (\uuast-0.3,\hhh) {\color{blue}$b_{p_1}^+(z)$};
    \path (f2)+(230:0.35) node (f2_arrow) {};
    \path (f3)+(130:0.35) node (f3_arrow) {};
    \draw[->,thick,blue] (f2.south west) -- (f2_arrow);
    \draw[->,thick,blue] (f3.north west) -- (f3_arrow);

	\draw node[red, fill, circle, inner sep=0pt, minimum size=5pt] at (-\r,\hhh) {};
	\draw node[red, fill, circle, inner sep=0pt, minimum size=5pt] at (-\r,\hh) {};
	\draw node[red, fill, circle, inner sep=0pt, minimum size=5pt] at (\uuast,\vvast) {};

\end{tikzpicture}\\
\vspace{20pt}
\begin{tikzpicture}[scale=5]
	%Define the coordinates 
	%p = (\u,\v) and p^\prime = (\uu, \vv)
	%Box \Rcal has width 2\r and height \t
	\pgfmathsetmacro{\u}{0}
	\pgfmathsetmacro{\v}{1}
	\pgfmathsetmacro{\uu}{1.4}
	\pgfmathsetmacro{\vv}{0.8}
	\pgfmathsetmacro{\r}{6}
	\pgfmathsetmacro{\t}{4}
    
    %Define x^\ast(p^\prime) and y^\ast(p^\prime)
    \pgfmathsetmacro{\uuast}{\uu-\r}
    \pgfmathsetmacro{\vvast}{2*ln(\r)-\vv}
    
    %Define intersection left black and shifted blue curve
    \pgfmathsetmacro{\vast}{2*ln((2*\r - \uu)/(exp(\v/2) + exp(\vv/2)))}
    \pgfmathsetmacro{\uast}{(\uu - 2*\r)/(1 + exp((\vv - \v)/2))}    
   
    %Define h_1(p) = h_2(p)
    \pgfmathsetmacro{\hp}{2*ln(\r-\u)-\v}
    
    %Define h_1(p^\prime) and h_2(p^\prime)
    \pgfmathsetmacro{\hh}{2*ln(\uu+\r)-\vv}
    \pgfmathsetmacro{\hhh}{2*ln(\r-\uu)-\vv}
    
    %Draw the boundary left black, left blue and shifted right blue line
	\draw[red,line width=1pt] 
		plot[domain=\uuast:\uast,smooth,variable=\x,red] (\x, {2*ln(\x+(2*\r-\uu))-\vv}) 
		-- 
		plot[domain=\uast:-\r,smooth,variable=\x,red] (\x, {2*ln(-\x)-\v})
		-- 
		(-\r,\hh)
		-- 
		plot[domain=-\r:\uuast,smooth,variable=\x,red] (\x, {2*ln(\uu-\x)-\vv});
	
	\pgfmathsetmacro{\rb}{\uu + exp((\vv+\hp)/2) -2*\r}

	%\draw[red,dashed,line width=1pt] (\uast,\vast) -- (\uast,\hp);
	\draw[red,dashed,line width=1pt] (-\r,\hp) -- (\rb,\hp);

    \draw node at (-\r-0.2,\hh+0.1) {\color{blue}$(-\frac{\pi}{2} e^{R/2}, h_1(p_1))$};
    \draw node at (-\r-0.2,\hp-0.1) {$(-\frac{\pi}{2} e^{R/2}, h(y))$};
    \draw node at (\uuast+0.2,\vvast+0.1) {\color{blue}$(x^\ast(p_1), y^\ast(p_1))$};
    \draw node at (\uast,\vast-0.1) {$(\hat{x}(p,p_1), \hat{y}(p, p_1))$};
    
    \draw node at (\uuast-2,\vvast) {$\bm{p_1 \in B^{(2)}_n}$};
    
    \draw node at (\uast,\vvast+0.1) {$\mathcal{I}_n^{(1)}$};
    \draw node at (\uast,\hp-0.05) {$\mathcal{I}_n^{(2)}$};
    %\draw node at (\uast+0.1,\hp-0.05) {$\mathcal{I}_n^{(3)}$};
    
    \draw node (f1) at (-\r-0.05,\hhh) {$b_p^-(z)$};
    \draw node (f2) at (\uast+0.5,\hh+0.1) {\color{blue}$b_{p_1}^-(z)$};
    \draw node (f3) at (\uuast-0.3,\hhh) {\color{blue}$b_{p_1}^+(z)$};
    \path (f1)+(45:0.35) node (f1_arrow) {};
    \path (f2)+(230:0.35) node (f2_arrow) {};
    \path (f3)+(130:0.35) node (f3_arrow) {};
    \draw[->,thick] (f1.north east) -- (f1_arrow);
    \draw[->,thick,blue] (f2.south west) -- (f2_arrow);
    \draw[->,thick,blue] (f3.north west) -- (f3_arrow);

	\draw node[red, fill, circle, inner sep=0pt, minimum size=5pt] at (-\r,\hp) {};
	\draw node[red, fill, circle, inner sep=0pt, minimum size=5pt] at (-\r,\hh) {};
	\draw node[red, fill, circle, inner sep=0pt, minimum size=5pt] at (\uuast,\vvast) {};
	\draw node[red, fill, circle, inner sep=0pt, minimum size=5pt] at (\uast,\vast) {};

\end{tikzpicture}\\
\vspace{20pt}
\begin{tikzpicture}[scale=5]

%%%%%%%%%%%%%%%%%%%%%%%%%%%%%%%%%%%%%%%%%%%%%%%%%%%%%%%%%%%%%%%%%%%%%%%%%%%%%%%%%%%%%%%%%%%%%%%%%%%
%																								  %
%	Shows the area T_{\Pcal \Delta \Pcal} when h(y) > h_1(p^\prime)							  %
%																								  %
%%%%%%%%%%%%%%%%%%%%%%%%%%%%%%%%%%%%%%%%%%%%%%%%%%%%%%%%%%%%%%%%%%%%%%%%%%%%%%%%%%%%%%%%%%%%%%%%%%%

	%Define the coordinates 
	%p = (\u,\v) and p^\prime = (\uu, \vv)
	%Box \Rcal has width 2\r and height \t
	\pgfmathsetmacro{\u}{0}
	\pgfmathsetmacro{\v}{1}
	\pgfmathsetmacro{\uu}{2.5}
	\pgfmathsetmacro{\vv}{1.8}
	\pgfmathsetmacro{\r}{6}
	\pgfmathsetmacro{\t}{4}
    
    %Define x^\ast(p^\prime) and y^\ast(p^\prime)
    \pgfmathsetmacro{\uuast}{\uu-\r}
    \pgfmathsetmacro{\vvast}{2*ln(\r)-\vv}
    
    %Define intersection left black and shifted blue curve
    \pgfmathsetmacro{\vast}{2*ln((2*\r - \uu)/(exp(\v/2) + exp(\vv/2)))}
    \pgfmathsetmacro{\uast}{(\uu - 2*\r)/(1 + exp((\vv - \v)/2))}    
    
    %Define intersection left black and left blue curve
    \pgfmathsetmacro{\utilde}{\uu/(1-exp((\vv-\v)/2))} 
    \pgfmathsetmacro{\vtilde}{2*ln(\uu-\utilde)-\vv}
   
    %Define h_1(p) = h_2(p)
    \pgfmathsetmacro{\hp}{2*ln(\r-\u)-\v}
    
    %Define h_1(p^\prime) and h_2(p^\prime)
    \pgfmathsetmacro{\hh}{2*ln(\uu+\r)-\vv}
    \pgfmathsetmacro{\hhh}{2*ln(\r-\uu)-\vv}
    
    %Draw the boundary left black, left blue and shifted right blue line
	\draw[red,line width=1pt] 
		plot[domain=\utilde:\uuast,smooth,variable=\x,red] (\x, {2*ln(\uu-\x)-\vv})
		--
		plot[domain=\uuast:\uast,smooth,variable=\x,red] (\x, {2*ln(\x+(2*\r-\uu))-\vv})
		--
		plot[domain=\uast:\utilde,smooth,variable=\x,red] (\x, {2*ln(-\x)-\v}); 
	
	%Calculate intersection point
	\pgfmathsetmacro{\i}{-exp((\v+\vvast)/2)}
	\draw[red, dashed,line width=1pt] (\i,\vvast) -- (\uuast,\vvast);

    \draw node at (\utilde,\vtilde+0.1) {\color{black}$(\tilde{x}(p,p_1), \tilde{y}(p,p_1))$};
    \draw node at (\uast,\vast-0.1) {$(\hat{x}(p,p_1), \hat{y}(p,p_1))$};
    \draw node at (\uuast+0.2,\vvast+0.1) {\color{blue}$(x^\ast(p_1), y^\ast(p_1))$};
   
   \draw node at (\uuast-2,\vvast) {$\bm{p_1 \in B^{(3)}_n}$};
   
    \draw node at (\uuast-0.45,\vvast+0.07) {$\mathcal{I}_n^{(1)}$};
    \draw node at (\uast+0.025,\vvast-0.045) {$\mathcal{I}_n^{(2)}$};
   
    \draw node (f1) at (\uast-0.7,\vast-0.1) {\color{black}$b_{p}^-(z)$};
    \draw node (f2) at (\uuast-0.2,\vvast+0.5) {\color{blue}$b_{p_1}^-(z)$};
    \draw node (f3) at (\uast+0.5,\vast+0.025) {\color{blue}$b_{p_1}^+(z)$};
    \path (f1)+(45:0.35) node (f1_arrow) {};
    \path (f2)+(230:0.35) node (f2_arrow) {};
    \path (f3)+(180:0.3) node (f3_arrow) {};
	\draw[->,thick] (f1.north east) -- (f1_arrow);
    \draw[->,thick,blue] (f2.south west) -- (f2_arrow);
    \draw[->,thick,blue] (f3.west) -- (f3_arrow);
	\draw node[red, fill, circle, inner sep=0pt, minimum size=5pt] at (\utilde,\vtilde) {};
	\draw node[red, fill, circle, inner sep=0pt, minimum size=5pt] at (\uast,\vast) {};
	\draw node[red, fill, circle, inner sep=0pt, minimum size=5pt] at (\uuast,\vvast) {};

\end{tikzpicture}
\caption{The different shapes of $\mathcal{T}(p,p_1)$ depending on the regime to which $p_1$ belongs. The top figure is for $p_1 \in B_n^{(1)}$, the middle one for $p_1 \in B_n^{(2)}$ and the bottom one for $p_1 \in B_n^{(3)}$.}
\label{fig:shapes_triangle_mismatches}
\end{figure}

\paragraph{Case $p_1 \in B_n^{(1)}$: $0 \le y_1 \le y - 2\log(I_n/(I_n-x_1))$}

In this case the integral over $p_2$ splits into two parts
\begin{align*}
	\mathcal{I}_n^{(1)}(p_1) &:= \int_{h_2(p_1)}^{y^\ast(p_1)} \int_{-I_n}^{x_1 + e^{(y_1+y_2)/2}-2I_n} e^{-\alpha y_2}
		\dd x_2 \dd y_2\\
	\mathcal{I}_n^{(2)}(p_1) &:= \int_{y^\ast(p_1)}^{h_1(p_1)} \int_{x^\ast(p_1)}^{x_1 - e^{(y_1+y_2)/2}} e^{-\alpha y_2}
		\dd x_2 \dd y_2.
\end{align*}

We first compute $\mathcal{I}_n^{(1)}$.
\begin{align*}
	\mathcal{I}_n^{(1)}(p_1) &= \int_{h_2(p_1)}^{y^\ast(p_1)} \left(x_1 + e^{(y_1+y_2)/2} - I_n\right) e^{-\alpha y_2} 
		\dd y_2\\
	&\le e^{y_1/2} \int_{h_2(p_1)}^{y^\ast(p_1)} e^{-(\alpha-\frac{1}{2}) y_2} \dd y_2\\
	&= \frac{2 e^{y_1/2}}{2\alpha - 1} \left(e^{-(\alpha-\frac{1}{2})h_2(p_1)} - e^{-(\alpha-\frac{1}{2})y^\ast(p_1)}
		\right) \\
	&= \frac{2 e^{\alpha y_1}}{2\alpha - 1} I_n^{-(2\alpha -1)}\left(\left(1 - \frac{x_1}{I_n}\right)^{-(2\alpha - 1)}-1\right)\\
	&= \bigO{I_n^{-2\alpha} x_1 e^{\alpha y_1}},
\end{align*}
where we used that $x_1 \le e^{(y+y_1)/2} = \smallO{I_n}$ for all $y_1\le y$ and $y \in \Kcal_{C}(k_n)$ so that
\[
	\left(\left(1 - \frac{x_1}{I_n}\right)^{-(2\alpha - 1)}-1\right) = \bigO{\frac{x_1}{I_n}} \quad 
	\text{as } n \to \infty.
\]

For $\mathcal{I}_n^{(2)}(p_1)$ we have
\begin{align*}
	\mathcal{I}_n^{(2)}(p_1) &= \int_{y^\ast(p_1)}^{h_1(p_1)} \left(I_n + x_1 - e^{(y_1+y_2)}\right) e^{-\alpha y_2}
		\dd y_2\\
	&\le 2 I_n \int_{y^\ast(p_1)}^{h_1(p_1)} e^{-\alpha y_2} \dd x_2 \dd y_2\\
	&= \frac{2}{\alpha} I_n \left(I_n^{-2\alpha}e^{\alpha y_1} - \left(I_n + x_1\right)^{-2\alpha} e^{-\alpha y_1}\right)\\
	&= \bigO{I_n^{-2\alpha} x_1 e^{\alpha y_1}} = \bigO{I_n^{-(2\alpha - 1)} e^{\alpha y_1}}.
\end{align*}

We conclude that for $p_1 \in B_n^{(1)}$:
\[
	\mu\left(\mathcal{T}(p,p_1)\right) = \bigO{I_n^{-2\alpha} x_1 e^{\alpha y_1}},
\]
which establishes the first part of \eqref{eq:mu_triangle_diff}.

\paragraph{Case $p_1 \in B_n^{(2)}$: $y - 2\log(I_n/(I_n-x_1)) < y_1 \le y + 2 \log\left(1 + \frac{x_1}{I_n}\right)$}

Here we split the integration into two parts (see Figure~\ref{fig:shapes_triangle_mismatches}). Recall that $x^\ast(p,p_1) = x_1 - I_n$. Then, for the first part we have
\begin{align*}
	\mathcal{I}_n^{(1)}(p,p_1) &\le \int_{h(y)}^{h_1(p_1)} \int_{-I_n}^{x^\ast(p,p_1)} f(x_2, y_2) 
		\dd x_2 \dd y_2\\
	&= \bigO{x_1 \left(e^{-\alpha h(y)} - e^{-\alpha h_1(p_1)}\right)}\\
	&= \bigO{x_1 I_n^{-2\alpha}\left(e^{\alpha y} - e^{\alpha y_1}\left(1 + \frac{x_1}{I_n}\right)^{-2\alpha}\right)}\\
	&= \bigO{I_n^{-2\alpha} x_1 e^{\alpha y_1}\left(\left(1 - \frac{x_1}{I_n}\right)^{-2\alpha} 
		- \left(1 + \frac{x_1}{I_n}\right)^{-2\alpha}\right)}\\
	&= \bigO{I_n^{-2\alpha} x_1 e^{\alpha y_1}} = \bigO{I_n^{-(2\alpha - 1)} e^{\alpha y}}, 
\end{align*}
were we used that $y \le y_1 + 2\log(I_n/(I_n-x_1))$ for $p_1 \in B_n^{(2)}$ for the third line and 
\[
	\left(1 - \frac{x_1}{I_n}\right)^{-2\alpha} - \left(1 + \frac{x_1}{I_n}\right)^{-2\alpha}
	= \bigO{\frac{x_1}{I_n}} = \bigO{1},
\]
for the last line.

For the second part we first us the upper bound on $y_1$ to compute that 
\begin{align*}
	x_1 + e^{(y_1+y_2)/2} - 2 I_n + e^{(y + y_2)/2} &\le \left(e^{y/2} + e^{y_1/2}\right)e^{y_2/2}\\
	&\le e^{y/2}\left(2 + \frac{x_1}{I_n}\right)e^{y_2/2} = \bigO{e^{(y+y_2)/2}},
\end{align*}
since $|x_1| \le I_n$. Then we have
\begin{align*}
	\mathcal{I}_n^{(2)} &= \int_{\hat{y}(p,p_1)}^{h(y)} \int_{-e^{(y + y_2)/2}}^{x_1 + e^{(y+y_1)/2} - 2 I_n} 
		f(x_2, y_2) \dd x_2 \dd y_2\\
	&= \bigO{e^{y/2} \int_{\hat{y}(p,p_1)}^{h(y)} e^{-(\alpha -\frac{1}{2}) y_2} \dd y_2}\\
	&= \bigO{e^{y/2} \left(e^{-(\alpha -\frac{1}{2}) \hat{y}(p,p_1)} - e^{-(\alpha -\frac{1}{2}) h(y)}\right)}\\
	&= \bigO{e^{y/2} \left(\left(\frac{2I_n - x_1}{e^{y/2} + e^{y_1/2}}\right)^{-(2\alpha-1)} 
		- I_n^{-(2\alpha-1)} e^{(\alpha -\frac{1}{2}) y}\right)}\\
	&= \bigO{I_n^{-(2\alpha-1)} e^{\alpha y}},
\end{align*}
where for the last line we first used that $(2I_n - x_1)^{-(2\alpha-1)} \le I_n^{-(2\alpha-1)}$ and then
\[
	\left(\left(e^{y/2} + e^{y_1/2}\right)^{2\alpha-1}- e^{(\alpha -\frac{1}{2}) y}\right)
	\le e^{(\alpha -\frac{1}{2}) y}\left(\left(1 + \sqrt{1+\frac{x_1}{I_n}} \, \right)^{2\alpha - 1} - 1\right)
	= \bigO{e^{(\alpha -\frac{1}{2}) y}}.
\]

It then follows that for $p_1 \in B_n^{(2)}$
\[
	\mu\left(\mathcal{T}(p,p_1)\right) = \bigO{I_n^{-(2\alpha-1)} e^{\alpha y}}.
\]

\paragraph{Case $p_1 \in B_n^{(3)}$: $y +2\log(1+x_1/I_n) < y_1 \le y + 2\log(I_n/(I_n-x_1))$}

\begin{align*}
	\mathcal{I}_n^{(1)} &= \int_{y^\ast}^{\tilde{y}} \int_{-e^{(y+y_2)/2}}^{x_1-e^{(y_1+y_2)/2}} f(x_2,y_2)
		\dd x_2 \dd y_2\\
	&= \bigO{\int_{y^\ast}^{\tilde{y}} x_1 e^{-\alpha y_2} - \left(e^{y_1/2} - e^{y/2}\right)e^{-(\alpha - \frac{1}{2})y_2}
		\dd y_2}\\
	&= \bigO{x_1 \int_{y^\ast}^{\tilde{y}}  e^{-\alpha y_2} \dd y_2}.
\end{align*}

Now 
\begin{align*}
	\int_{y^\ast}^{\tilde{y}}  e^{-\alpha y_2} \dd y_2 
	&= \frac{1}{\alpha}\left(e^{-\alpha y^\ast} - e^{-\alpha \tilde{y}}\right) 
		= \frac{1}{\alpha}\left(I_n^{-2\alpha} e^{\alpha y_1} 
		- \left(\frac{x_1}{e^{y_1/2} - e^{y/2}}\right)^{-2\alpha}\right) \\
	&= \frac{I_n^{-2\alpha} e^{\alpha y_1}}{\alpha}\left(1 - \left(1 - e^{(y - y_1)/2}\right)^{2\alpha}
		\left(\frac{x_1}{I_n}\right)^{-2\alpha}\right) = \bigO{I_n^{-2\alpha} e^{\alpha y_1}},
\end{align*}
and hence we have
\[
	\mathcal{I}_n^{(1)} = \bigO{I_n^{-2\alpha} x_1 e^{\alpha y_1}}.
\]

For the second integral we have, using that $y \le y_1$ for $p_1 \in B_n^{(3)}$,
\begin{align*}
	\mathcal{I}_n^{(2)} &= \int_{\hat{y}}^{y^\ast} \int_{-e^{(y+y_2)/2}}^{e^{(y_1 + y_2)/2} + x_1 - 2I_n} 
		f(x_2,y_2)\dd x_2 \dd y_2\\
	&= \bigO{\int_{\hat{y}}^{y^\ast} \left(e^{y/2} + e^{y_1/2}\right)e^{-(\alpha - \frac{1}{2})y_2} \dd y_2}\\
	&= \bigO{ e^{y_1/2} \int_{\hat{y}}^{y^\ast}e^{-(\alpha - \frac{1}{2})y_2} \dd y_2}.
\end{align*}

For the integral we have
\begin{align*}
	&\hspace{-30pt}\int_{\hat{y}}^{y^\ast}e^{-(\alpha - \frac{1}{2})y_2} \dd y_2
		= \frac{2}{2\alpha - 1} \left(e^{-(\alpha - \frac{1}{2})\hat{y}} - e^{-(\alpha - \frac{1}{2})y^\ast}\right)\\
	&= \frac{2}{2\alpha - 1}\left(\left(\frac{2I_n - x_1}{e^{y/2} + e^{y_1/2}}\right)^{-(2\alpha - 1)} 
		- I_n^{-(2\alpha - 1)} e^{(\alpha - \frac{1}{2})y_1}\right) = \bigO{I_n^{-(2\alpha-1)} e^{(\alpha - \frac{1}{2})y_1}},
\end{align*}
where we used the upper bound on $y_1$ and the fact that $2I_n - x_1 = \bigT{I_n}$ for all $x_1 \in [-I_n,I_n]$. We conclude that
\[
	\mathcal{I}_n^{(2)} = \bigO{I_n^{-(2\alpha-1)} x_1 e^{\alpha y}}
\]
and hence for $p_1 \in B_n^{(3)}$
\[
	\mu\left(\mathcal{T}(p,p_1)\right) = \bigO{I_n^{-2\alpha} x_1 e^{\alpha y}}
	= \bigO{I_n^{-(2\alpha - 1)} e^{\alpha y}}.
\]

\subsubsection*{Integration $\mu(\mathcal{T}(p,p_1))$ with respect to $p_1$}

We now proceed with the second part of the computation leading to \eqref{eq:clustering_error_T_main}. Here we will integrate $\mu(\mathcal{T}(p,p^\prime))(p,p_1)$ over the region $B_n := B_n^{(1)} \cup B_n^{(2)} \cup B_n^{(3)}$, see Figure~\ref{fig:comparing_triangles_B_areas}. Let us first identify the boundaries of these areas. 

The area $B_n^{(1)}$ is bounded from above by the line given by the equation
\[
	y_1 = y - 2\log\left(\frac{I_n}{I_n - x_1}\right).
\]
Solving this for $x_1$ yields $x_1 = I_n\left(1 - e^{(y_1-y)/2}\right)$ and hence the area $B_n^{(1)}$ is given by
\[
	B_n^{(1)} = \left\{(x_1, y_1) \, : \, 0 \le y_1 \le y, \quad 0 \le x_1 \le I_n\left(1 - e^{(y_1-y)/2}\right) \wedge e^{(y + y_1)/2} \right\}.
\]

In a similar way we have that $B_n^{(2)}$ is bounded from above by line
\[
	y_1 = y + 2\log\left(\frac{I_n}{I_n + x_1}\right),
\]
which yields $x_1 = I_n\left(e^{(y_1 - y)/2} - 1\right)$. The lower red boundary is the upper boundary of $B_n^{(2)}$ and hence we have
\[
	B_n^{(2)} = \left\{(x_1, y_1) \, : \, h_\ast(y) \le y_1 \le h^\ast(y), \,\, I_n\left(1 - e^{(y_1-y)/2}\right) \vee 
	I_n\left(e^{(y_1 - y)/2} - 1\right) \le x_1 \le e^{(y + y_1)/2} \right\}.
\]

We continue in the same way for $B_n^{(3)}$
\[
	B_n^{(3)} = \left\{(x_1, y_1) \, : \, y \le y_1 \le R, \,\,
	I_n\left(1 - e^{(y - y_1)/2}\right) \le x_1 \le I_n\left(e^{(y_1 - y)/2} - 1\right) \wedge e^{(y + y_1)/2} \wedge I_n \right\}.
\]

We these characterizations of the areas we now integrate $\mu(\mathcal{T}(p,p_1))$ over $B_n$, splitting the computations over the three different areas.

\paragraph{Integration over $\bm{B_n^{(1)}}:$}

We use that $I_n\left(1 - e^{(y_1-y)/2}\right) \wedge e^{(y + y_1)/2} \le I_n\left(1 - e^{(y_1-y)/2}\right)$ so that
\begin{align*}
	&\hspace{-30pt}\int_{B_n^{(1)}} \mu\left(\mathcal{T}(p,p_1)\right) 
		f(x_1,y_1)	\dd x_1 \dd y_1 \\
	&\le  \int_0^y \int_0^{I_n(1-e^{(y_1-y)/2})} \mu\left(\mathcal{T}(p,p_1)\right) 
		f(x_1,y_1) \dd x_1 \dd y_1\\
	&= \bigO{ I_n^{-2\alpha} \int_0^y \int_0^{e^{(y+y_1)/2}}  x_1 \dd x_1 \dd y_1 }\\
	&= \bigO{I_n^{-(2\alpha-1)} \int_0^y \left(1 - e^{(y_1-y)/2}\right)^2 \dd y_1} \\
	&= \bigO{I_n^{-(2\alpha - 1)} y} = \bigO{y n^{-(2\alpha - 1)}}.
\end{align*} 

\paragraph{Integration over $\bm{B_n^{(2)}}:$}

We will show that
\begin{equation}\label{eq:mu_triangle_diff_2}
	\mu(B_n^{(2)}) = \bigO{I_n^{-1} e^{(2-\alpha)y}},
\end{equation}
which together with \eqref{eq:mu_triangle_diff} yields
\begin{align*}
	\int_{B_n^{(2)}} \mu\left(\mathcal{T}(p,p_1)\right) 
		f(x_1,y_1)	\dd x_1 \dd y_1
	&= \bigO{\mu(B_n^{(2)}) I_n^{-(2\alpha - 1)} e^{\alpha y}}\\
	&= \bigO{I_n^{-2\alpha} e^{2y}}.
\end{align*}

The integration is split into two parts determined by $I_n\left(1 - e^{(y_1-y)/2}\right) \vee 
	I_n\left(e^{(y_1 - y)/2} - 1\right)$:
\begin{align*}
	\mu(B_n^{(3)}) &= \int_{h_\ast(y)}^{y} \int_{I_n(1-e^{(y_1-y)/2})}^{e^{(y + y_1)/2}} 
		f(x_1,y_1) \dd x_1 \dd y_1\\
	&\hspace{10pt} + \int_y^{h^\ast(y)} \int_{I_n(e^{(y_1-y)/2}-1)}^{e^{(y + y_1)/2}} 
		f(x_1,y_1) \dd x_1 \dd y_1.
\end{align*}

For the first integral we use that $e^{(y + y_1)/2} - I_n(1-e^{(y_1-y)/2}) \le e^{y_1/2}\left(e^{y/2} + e^{-y/2}\right)$ to obtain
\begin{align*}
	&\hspace{-30pt}\int_{h_\ast(y)}^{y} \int_{I_n(1-e^{(y_1-y)/2})}^{e^{(y + y_1)/2}} f(x_1,y_1) 
		\dd x_1 \dd y_1\\
	&= \bigO{e^{y/2} \int_{h_\ast(y)}^{y} e^{-(\alpha - \frac{1}{2})y_1} \dd y_1}\\
	&= \bigO{e^{y/2}\left(e^{-(\alpha - \frac{1}{2})y} - e^{-(\alpha - \frac{1}{2})y} 
		\left(\frac{I_n}{I_n + e^y}\right)^{-(2\alpha - 1)}\right)}\\
	&= \bigO{I_n^{-1} e^{(2-\alpha)y}}.
\end{align*}
For the second integral note that $e^{(y + y_1)/2} - I_n(e^{(y_1-y)/2}-1) \le e^{(y + y_1)/2}$ and hence
\begin{align*}
	&\hspace{-30pt}\int_y^{h^\ast(y)} \int_{I_n(e^{(y_1-y)/2}-1)}^{e^{(y + y_1)/2}} f(x_1,y_1) 
		\dd x_1 \dd y_1\\
	&= \bigO{e^{y/2} \int_y^{h^\ast(y)} e^{-(\alpha - \frac{1}{2})y_1} \dd y_1}\\
	&= \bigO{e^{y/2} \left(e^{-(\alpha - \frac{1}{2})y} - e^{-(\alpha - \frac{1}{2})y}
		\left(\frac{I_n}{I_n - e^y}\right)^{-(2\alpha - 1)}\right)}\\
	&= \bigO{I_n^{-1} e^{(2-\alpha)y}},
\end{align*}
so that \eqref{eq:mu_triangle_diff_2} follows.

\paragraph{Integration over $\bm{B_n^{(3)}}:$}

For this case we show that 
\begin{equation}\label{eq:mu_triangle_diff_3}
	\mu(B_n^{(3)}) = \bigO{e^{(1-\alpha)y}}
\end{equation} 
so that
\begin{align*}
	\int_{B_n^{(3)}} \mu\left(\mathcal{T}(p,p_1)\right) 
		f(x_1,y_1)	\dd x_1 \dd y_1
	&= \bigO{\mu(B_n^{(2)}) I_n^{-(2\alpha - 1)} e^{\alpha y}}\\
	&= \bigO{I_n^{-(2\alpha-1)} e^{y}}.
\end{align*}

Here the integral is split into three parts:
\begin{align*}
	\mu(B_n^{(3)}) &= \int_y^{h^\ast(y)} \int_{I_n(1-e^{(y-y_1)/2})}^{I_n(e^{(y_1-y)/2}-1)}
		f(x_1,y_1) \dd x_1 \dd y_1\\
	&\hspace{10pt}+ \int_{h^\ast(y)}^{h(y)} \int_{I_n(1-e^{(y-y_1)/2})}^{e^{(y+y_1)/2}}
		f(x_1,y_1) \dd x_1 \dd y_1\\
	&\hspace{10pt}+ \int_{h(y)}^{R} \int_{I_n(1-e^{(y-y_1)/2})}^{I_n}
		f(x_1,y_1) \dd x_1 \dd y_1.
\end{align*}

Let us first focus on the first integral. Since	$I_n(e^{(y_1-y)/2}-1) - I_n(1-e^{(y-y_1)/2}) \le I_n e^{(y_1-y)/2}$ we get,
using similar arguments as above
\begin{align*}
	\int_y^{h^\ast(y)} \int_{I_n(1-e^{(y-y_1)/2})}^{I_n(e^{(y_1-y)/2}-1)} f(x_1,y_1) \dd x_1 \dd y_1
	&= \bigO{I_n e^{-y/2} \int_y^{h^\ast(y)} e^{-(\alpha - \frac{1}{2})y_1} \dd y_1}\\
	&= \bigO{I_n e^{-\alpha y} \left(1 - \left(\frac{I_n}{I_n - e^y}\right)^{-(2\alpha - 1)}\right)}\\
	&= \bigO{e^{(1-\alpha)y}}.
\end{align*}

Proceeding to the second integral, we first note that $e^{(y+y_1)/2} - I_n(1-e^{(y-y_1)/2}) = \bigO{I_n e^{(y_1-y)/2}}$ so that similar calculations as before yield
\begin{align*}
	\int_{h^\ast(y)}^{h(y)} \int_{I_n(1-e^{(y-y_1)/2})}^{e^{(y+y_1)/2}}	f(x_1,y_1) \dd x_1 \dd y_1
	&= \bigO{I_n e^{-y/2} \int_{h^\ast(y)}^{h(y)} e^{-(\alpha - \frac{1}{2})y_1} \dd y_1}
		= \bigO{e^{(1-\alpha)y}}.
\end{align*}

\end{proofof}

\section{Concentration for \texorpdfstring{$c(k; \Gbox)$}{c(k;G box)} (Proving Proposition \ref{prop:concentration_local_clustering_P_n})}
\label{sec:concentration_c_P_n}

In this section we establish a concentration result for the local clustering function $c^\ast(k; \Gbox)$ in the finite box model $\Gbox$. Similar to the previous section we will focus on typical points $p = (0,y)$ with $y \in \Kcal_{C}(k_n)$. 

\subsection{The main contribution of triangles}

Recall that $\Nbox(k_n)$ denotes the number of vertices in $\Gbox$ with degree $k_n$. We first write
\[
	c^\ast(k_n ; \Gbox) = \frac{T_{\text{box}}(k_n)}{\binom{k_n}{2}\Exp{\Nbox(k_n)}},
\]
where
\[
	 T_{\text{box}}(k_n) = \sum_{p \in \Pcal} \ind{\Dbox(p) = k_n} \sum_{(p_1, p_2) \in \Pcal \setminus \{p\}, \atop \text{distinct}} 
	 \ind{p_1 \in \BallPon{p}}\ind{p_2 \in \BallPon{p}}\ind{p_2 \in \BallPon{p_1}}
\]
In particular, the variance of $c^\ast(k_n ; \Gbox)$ is determined by the variance of $T_{\text{box}}(k_n)$.

Next, recall the adjusted triangle count function
\[
	\widetilde{T}_{\text{box}}(p_0) = \sum_{(p_1, p_2) \in \Pcal \setminus \{p_0\}, \atop \text{distinct}}
		\widetilde{T}_{\text{box}}(p_0,p_1,p_2).
\]
where
\[
	\widetilde{T}_{\text{box}}(p_0,p_1,p_2) = \ind{p_1 \in \BallPon{p_0}}\ind{p_2 \in \BallPon{p_0}}\ind{p_2 \in \BallPo{p_1} \cap \Rcal},
\]
as well as the definition of $\Kcal_{C}(k_n)$
\[
	\Kcal_{C}(k_n) = \left\{y \in \R_+ : \frac{k_n - C \sqrt{k_n \log(k_n)}}{\xi} \vee 1 \le e^{\frac{y}{2}}
		\le \frac{k_n + C \sqrt{k_n \log(k_n)}}{\xi} \right\},
\]
and write $\Rcal(k_n,C) = [-I_n,I_n] \times \Kcal_{C}(k_n)$ for the part of the box $\Rcal$ with heights in $\Kcal_{C}(k_n)$.
Slightly abusing notation, we will define the corresponding triangle degree function
\begin{equation}\label{eq:def_degree_triangle_count_in_K}
	\widetilde{T}_{\text{box}}(k_n, C) = \sum_{p \in \Pcal \cap \Rcal(k_n,C)} \ind{\text{deg}_{\text{box}}(p) = k_n} \widetilde{T}_{\text{box}}(p).
\end{equation}
and with that a different clustering function.
\begin{equation}\label{eq:def_tilde_c_box}
	\widetilde{c}_{\text{box}}(k_n) = \frac{\widetilde{T}_{\text{box}}(k_n,C)}{\binom{k_n}{2}\Exp{N_{\text{box}}(k_n)}}.
\end{equation}
The idea is that the main contribution of triangles of degree $k_n$ to the triangle count $T_{\text{box}}(k_n)$ is given by $\widetilde{T}_{\text{box}}(k_n, C)$. Therefore, in order to prove Proposition \ref{prop:concentration_local_clustering_P_n} it suffices to show that $\widetilde{T}_{\text{box}}(k_n,C)$ is sufficiently concentrated around its mean. This last part is done in the following proposition.

%\subsection{The proof of Proposition \ref{prop:concentration_local_clustering_P_n}}
%
%We start with the concentration result for $\widetilde{T}_{\text{box}}(k_n)$.

\begin{proposition}[Concentration $\widetilde{T}_{\text{box}}(k_n,C)$]\label{prop:concentration_tilde_T_P_n}
Let $\alpha > \frac{1}{2}$, $\nu > 0$ and let $(k_n)_{n \ge 1}$ be any positive sequence satisfying $k_n = \smallO{n^{\frac{1}{2\alpha+1}}}$. Then for any $C > 0$, as $n \to \infty$,
\[
	\Exp{\widetilde{T}_{\text{box}}(k_n,C)^2} = \left(1 + \smallO{1}\right)\Exp{\widetilde{T}_{\text{box}}(k_n, C)}^2.
\]
\end{proposition}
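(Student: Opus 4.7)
The strategy is to compute both $\Exp{\widetilde{T}_{\text{box}}(k_n,C)^2}$ and $\Exp{\widetilde{T}_{\text{box}}(k_n,C)}^2$ via the Campbell--Mecke formula and show that the difference (i.e.\ the variance of $\widetilde{T}_{\text{box}}(k_n,C)$) is $o(\Exp{\widetilde{T}_{\text{box}}(k_n,C)}^2)$. Expanding the square and applying Campbell--Mecke, the expected value $\Exp{\widetilde{T}_{\text{box}}(k_n,C)^2}$ decomposes as a finite sum of integrals indexed by the coincidence pattern of the two 3-tuples $(p,p_1,p_2)$ and $(q,q_1,q_2)$ that form the two triangles, where $p,q \in \Rcal(k_n,C)$ carry the degree-$k_n$ constraint. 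Writing $M := \Exp{\widetilde{T}_{\text{box}}(k_n,C)}$, the target bound is $M^2 = \Theta\!\bigl(n^2 k_n^{-2(2\alpha-1)} s(k_n)^2\bigr)$ by Corollary~\ref{cor:adjusted_triangle_counting_P_n}.

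The dominant contribution comes from the configurations with all six points distinct. I would further split this contribution according to whether the $x$-coordinates of $p$ and $q$ satisfy $|x_p - x_q|_n > k_n^{1+\varepsilon}$ (the ``far'' regime) or not (the ``close'' regime), for a small $\varepsilon > 0$ chosen exactly as in Section~\ref{ssec:joint_degrees_GPo}. In the far regime, Lemma~\ref{cor:expected_common_neighbours_Ecal_set} shows that $\Mu{\BallPon{p}\cap\BallPon{q}} = O(k_n^{1-\varepsilon'})$, which means that the events defining the two triangles depend on essentially disjoint parts of the Poisson process; coupled with Lemma~\ref{lem:probdegFact} for the joint degree probability of $p$ and $q$ (adapted from two to the needed number of reference points), one obtains that the integrand factorizes as $(1+o(1))\rho_{\text{box}}(y_p,k_n)\Exp{\widetilde{T}_{\text{box}}(p)}\,\rho_{\text{box}}(y_q,k_n)\Exp{\widetilde{T}_{\text{box}}(q)}$ plus a uniformly negligible error. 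Integrating over $p,q$ then yields exactly $(1+o(1))M^2$.

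The remaining contributions must be shown to be $o(M^2)$. For the close regime with all six points distinct, the integration over $x_q$ is confined to an interval of length $O(k_n^{1+\varepsilon})$ rather than of length $\Theta(n)$, which saves a factor $k_n^{1+\varepsilon}/n$; combined with the trivial bound $\Exp{\widetilde{T}_{\text{box}}(q)} = O(k_n^2)$ and the height concentration on $\Kcal_C(k_n)$, one gets a contribution $O\!\bigl(n\, k_n^{1+\varepsilon} \cdot k_n^{-(2\alpha-1)}s(k_n) \cdot k_n^2 \cdot \pmf(k_n)\bigr) = o(M^2)$ provided $\varepsilon$ is small enough (using $k_n \ll n^{1/(2\alpha+1)}$ and the explicit form of $s(k_n)$). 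For the configurations in which at least one of the six points coincides (e.g.\ $p_1 = q_1$, or $p = q_1$, or $p = q$, etc.), Campbell--Mecke produces one fewer integration variable, which costs a factor of order $n^{-1}$ times an appropriate measure of the joint neighbourhood or height region; using the volume bounds $\Mu{\BallPon{y}} = \Theta(k_n)$ on $\Kcal_C(k_n)$ together with Lemma~\ref{cor:expected_common_neighbours_Ecal_set} for the shared neighbourhood terms, each of the finitely many such cases is easily bounded by $M^2$ times a factor tending to zero.

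The main obstacle is the bookkeeping for the close/shared-vertex cases: one must match the precise scaling $M^2 = \Theta(n^2 k_n^{-2(2\alpha-1)} s(k_n)^2)$ in each of the three regimes $\frac{1}{2}<\alpha<\frac{3}{4}$, $\alpha=\frac{3}{4}$, $\alpha>\frac{3}{4}$, and verify that the extra $k_n^\varepsilon$ factor coming from restricting to the ``close'' window is absorbed by the gap between $k_n$ and $n^{1/(2\alpha+1)}$. The height concentration (Proposition~\ref{prop:concentration_height_general}) together with the uniform estimate $\Mu{\BallPon{y}} = (1+o(1))\mu(y)$ on $\Kcal_C(k_n)$ (Lemma~\ref{lem:average_degree_G_box}) will be used repeatedly to replace Poisson probabilities $\rho_{\text{box}}(y,k_n)$ by $\rho(y,k_n)$ and thus reduce all estimates to integrals of the type already handled in Section~\ref{sec:Ginf}.
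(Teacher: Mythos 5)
Your proposal matches the paper's proof almost point for point: expand $\widetilde{T}_{\text{box}}(k_n,C)^2$ via Campbell--Mecke indexed by coincidence patterns, isolate the all-distinct main term, split that term into far ($|x-x'|_n>k_n^{1+\varepsilon}$, factorizes via Lemma~\ref{lem:joint_degree_factorization}/Lemma~\ref{cor:expected_common_neighbours_Ecal_set}) and close (window of length $O(k_n^{1+\varepsilon})$, absorbed by the $k_n\ll n^{1/(2\alpha+1)}$ gap) regimes, and bound the shared-vertex cases using one fewer integration variable and the scaling $\Exp{\widetilde{T}_{\text{box}}(k_n,C)}=\Theta(nk_n^{-(2\alpha-1)}s(k_n))$. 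This is exactly the paper's $I_{a,b}$ decomposition and the same sequence of lemmas, so the proposal is correct and takes essentially the same route.
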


We first use this result to prove Proposition~\ref{prop:concentration_local_clustering_P_n}. The remainder of this section is devoted to the proof of Proposition~\ref{prop:concentration_tilde_T_P_n}. The final proof can be found in Section~\ref{ssec:concentration_tilde_T}.

\begin{proofof}{Proposition \ref{prop:concentration_local_clustering_P_n}}
We bound the expectation as follows,
\begin{align*}
	\Exp{\left|c^\ast(k_n; \Gbox) - \Exp{c^\ast(k_n; \Gbox)}\right|} 
	&\le \frac{\Exp{\left|\widetilde{T}_{\text{box}}(k_n,C) - \Exp{\widetilde{T}_{\text{box}}(k_n,C)}\right|}}
		{\binom{k_n}{2}\Exp{N_{\text{box}}(k_n)}}\\
	&\hspace{10pt} + 2 \Exp{\left|c^\ast(k_n; \Gbox) - \widetilde{c}_{\text{box}}(k_n)\right|}.
\end{align*}
We will show that both terms are $\smallO{s(k_n)}$.

First we note that $\ind{p_2 \in \BallPo{p_1} \cap \Rcal} \le \ind{p_2 \in \BallPon{p_1}}$ and hence $\widetilde{T}_{\text{box}}(p) \le T_{\text{box}}(p)$. This implies that
\[
	 \widetilde{c}_{\text{box}}(k_n) = \frac{\widetilde{T}_{\text{box}}(k_n,C)}{\binom{k_n}{2}\Exp{N_{\text{box}}(k_n)}} \le c^\ast(k_n; \Gbox). 
\]
and therefore
\[
	\Exp{\left|c^\ast(k_n; \Gbox) - \widetilde{c}_{\text{box}}(k_n)\right|}
	= \Exp{c^\ast(k_n; \Gbox)} - \Exp{\widetilde{c}_{\text{box}}(k_n)}.
\]
For the expectation of $\widetilde{T}_{\text{box}}(k_n,C)$ we use that 
\[
	\CExp{\widetilde{T}_{\text{box}}(p)}{\Dbox(p) = k_n}
= \binom{k_n}{2} \Mu{\BallPon{y}}^{-2} \Exp{\widetilde{T}_{\text{box}}(p)}.
\] 
Recall that for $y \in \Rcal(k_n,C)$
\[
	\Mu{\BallPon{y}}^{-2} = (1+\smallO{1})\Mu{y}^{-2} = (1+\smallO{1})k_n^{-2},
\]
where the error term is uniform in $y$.

We thus obtain
\begin{align*}
	\Exp{\widetilde{T}_{\text{box}}(k_n,C)} 
	&= \int_{\Rcal(k_n,C)} \CExp{\widetilde{T}_{\text{box}}(p)}{\Dbox(p) = k_n}
		\rho_{\text{box}}(y,k_n) f(x,y) \dd x \dd y\\
	&= (1+\smallO{1}) \binom{k_n}{2} \int_{\Rcal(k_n,C)} \Mu{\BallPon{y}}^{-2} \Exp{\widetilde{T}_{\text{box}}(y)}
		\rho_{\text{box}}(y,k_n) \alpha e^{-\alpha y} \dd y\\
	&= (1+\smallO{1}) \frac{1}{2} \int_{\Rcal(k_n,C)} \Exp{\widetilde{T}_{\text{box}}(y)}
			\rho_{\text{box}}(y,k_n) \alpha e^{-\alpha y} \dd y\\
	&= (1+\smallO{1}) n \binom{k_n}{2} \int_0^\infty P(y) \rho(y,k_n) \alpha e^{-\alpha y} \dd y,
\end{align*}
where the last line is due to Corollary~\ref{cor:adjusted_triangle_counting_P_n}. In particular, since the last integral is $\bigT{k_n^{-(2\alpha + 1)} s(k_n)}$ we conclude that
\begin{equation}\label{eq:scaling_exected_tilde_T}
	\Exp{\widetilde{T}_{\text{box}}(k_n,C)} = \bigT{n k_n^{-(2\alpha - 1)}s(k_n)}. 
\end{equation}

Since $\Exp{N_{\text{box}}(k_n)} = (1+\smallO{1}n \pmf(k_n)$ it follows that
\[
	\widetilde{c}_{\text{box}}(k_n) = \frac{\Exp{\widetilde{T}_{\text{box}}(k_n,C)}}{\binom{k_n}{2}\Exp{N_{\text{box}}(k_n)}}
	= (1+\smallO{1}) \frac{\int_0^\infty P(y) \alpha e^{-\alpha y} \dd y}{\pmf(k_n)}
	= (1 + \smallO{1}) \gamma(k_n).
\]
On the other hand, Proposition~\ref{prop:convergence_average_clustering_P_n} implies that $\Exp{c^\ast(k_n; \Gbox)} = (1+\smallO{1})\gamma(k_n)$ and thus we conclude that
\[
	2\Exp{\left|c^\ast(k_n; \Gbox) - \widetilde{c}_{\text{box}}(k_n)\right|}
	= \smallO{\gamma(k_n)} = \smallO{s(k_n)}.	
\]

For the remaining term we use H\"{o}lder's inequality and Proposition \ref{prop:concentration_tilde_T_P_n} to obtain
\begin{align*}
	\Exp{\left|\widetilde{T}_{\text{box}}(k_n,C) - \Exp{\widetilde{T}_{\text{box}}(k_n,C)}\right|}
	&\le \left(\Exp{\widetilde{T}_{\text{box}}(k_n,C)^2} 
		- \Exp{\widetilde{T}_{\text{box}}(k_n,C)}^2\right)^{\frac{1}{2}}\\
	&= \smallO{\Exp{\widetilde{T}_{\text{box}}(k_n,C)}}.
\end{align*}
This implies
\begin{align*}
	\frac{\Exp{\left|\widetilde{T}_{\text{box}}(k_n,C) - \Exp{\widetilde{T}_{\text{box}}(k_n,C)}\right|}}
		{\binom{k_n}{2}\Exp{N_{\text{box}}(k_n)}}
	&= \smallO{\frac{\Exp{\widetilde{T}_{\text{box}}(k_n,C)}}{\binom{k_n}{2}\Exp{N_{\text{box}}(k_n)}}}
	= \smallO{s(k_n)},
\end{align*}
which finishes the proof.
\end{proofof}

We note that the above proof establishes the following important result

\begin{corollary}\label{cor:c_ast_box_2_tilde_c_box}
Let $k_n \to \infty$. Then, as $n \to \infty$,
\[
	\Exp{\left|c^\ast(k_n; \Gbox) - \widetilde{c}_{\mathrm{box}}(k_n)\right|} = \smallO{s(k_n)}.
\]
\end{corollary}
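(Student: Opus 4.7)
The plan is to exploit the fact that the only difference between $c^\ast(k_n;\Gbox)$ and $\widetilde{c}_{\mathrm{box}}(k_n)$ is the omission, in the latter, of (a) triangles centred at points with height outside the strip $\Kcal_C(k_n)$, and (b) those triangles on the third vertex lying in $\BallPon{p_1}\setminus(\BallPo{p_1}\cap\Rcal)$ (the ``wrap-around'' region). In particular, since $\ind{p_2\in\BallPo{p_1}\cap\Rcal}\le\ind{p_2\in\BallPon{p_1}}$ and the outer sum in $\widetilde{T}_{\mathrm{box}}(k_n,C)$ is restricted to $\Pcal\cap\Rcal(k_n,C)$, we have $\widetilde{T}_{\mathrm{box}}(k_n,C)\le T_{\mathrm{box}}(k_n)$ pointwise, hence $\widetilde{c}_{\mathrm{box}}(k_n)\le c^\ast(k_n;\Gbox)$ a.s. Therefore absolute values can be dropped and it suffices to show
\[
\Exp{c^\ast(k_n;\Gbox)}-\Exp{\widetilde{c}_{\mathrm{box}}(k_n)}=\smallO{s(k_n)}.
\]

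For the first expectation, I would invoke Proposition~\ref{prop:convergence_average_clustering_P_n}, which already gives $\Exp{c^\ast(k_n;\Gbox)}=\gamma(k_n)+\smallO{s(k_n)}$. For the second, the idea is to show $\Exp{\widetilde{c}_{\mathrm{box}}(k_n)}=(1+\smallO{1})\gamma(k_n)$. Using the Campbell--Mecke formula exactly as in the derivation of \eqref{eq:alt_c_ast_kn}, together with the identity $\CExp{\widetilde{T}_{\mathrm{box}}(p)}{\Dbox(p)=k_n}=\binom{k_n}{2}\Mu{\BallPon{y}}^{-2}\Exp{\widetilde{T}_{\mathrm{box}}(y)}$, one obtains
\[
\Exp{\widetilde{T}_{\mathrm{box}}(k_n,C)}=(1+\smallO{1})\binom{k_n}{2}\int_{\Rcal(k_n,C)}\Mu{\BallPon{y}}^{-2}\Exp{\widetilde{T}_{\mathrm{box}}(y)}\rho_{\mathrm{box}}(y,k_n)\alpha e^{-\alpha y}\,\dd y.
\]
Since $\Mu{\BallPon{y}}^{-2}=(1+\smallO{1})k_n^{-2}$ uniformly on $\Kcal_C(k_n)$, this plus Corollary~\ref{cor:adjusted_triangle_counting_P_n} reduces the right-hand side to $(1+\smallO{1})\binom{k_n}{2}\cdot n\int_0^\infty P(y)\rho(y,k_n)\alpha e^{-\alpha y}\,\dd y$. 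Dividing by $\binom{k_n}{2}\Exp{\Nbox(k_n)}=(1+\smallO{1})\binom{k_n}{2}n\pmf(k_n)$ and recalling \eqref{eq:gammakint} then yields $\Exp{\widetilde{c}_{\mathrm{box}}(k_n)}=(1+\smallO{1})\gamma(k_n)$.

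Combining the two estimates,
\[
\Exp{\left|c^\ast(k_n;\Gbox)-\widetilde{c}_{\mathrm{box}}(k_n)\right|}=\Exp{c^\ast(k_n;\Gbox)}-\Exp{\widetilde{c}_{\mathrm{box}}(k_n)}=\smallO{s(k_n)}+\smallO{\gamma(k_n)},
\]
and since $\gamma(k_n)=\bigT{s(k_n)}$ by Proposition~\ref{prop:asymp}, the error collapses to $\smallO{s(k_n)}$, as required.

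The bulk of the technical work has already been done upstream: the key ingredients are the almost-sure one-sided inequality $\widetilde{c}_{\mathrm{box}}\le c^\ast(k_n;\Gbox)$, Corollary~\ref{cor:adjusted_triangle_counting_P_n} (which compares the adjusted triangle integral to $n k_n^2\int P(y)\rho(y,k_n)\alpha e^{-\alpha y}\dd y$), and Proposition~\ref{prop:convergence_average_clustering_P_n} (which handles the ``full'' clustering expectation). The only mild subtlety is verifying that the concentration-of-heights truncation built into $\widetilde{T}_{\mathrm{box}}(k_n,C)$ really does preserve the leading-order constant in the integral; this is precisely what Corollary~\ref{cor:adjusted_triangle_counting_P_n} provides, using that $P(y)$ is bounded and that $\rho_{\mathrm{box}}(y,k_n)\alpha e^{-\alpha y}$ concentrates on $\Kcal_C(k_n)$ (Proposition~\ref{prop:concentration_height_general}). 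Thus the corollary will follow essentially as a bookkeeping consequence of the results already in hand, and I do not anticipate a genuine obstacle.
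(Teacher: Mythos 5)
Your proposal is correct and follows essentially the same route as the paper: the one-sided inequality $\widetilde{c}_{\mathrm{box}}(k_n)\le c^\ast(k_n;\Gbox)$ (from the indicator comparison and the height restriction) reduces the claim to a difference of expectations, which the paper likewise handles by computing $\Exp{\widetilde{T}_{\mathrm{box}}(k_n,C)}$ via Campbell--Mecke together with Corollary~\ref{cor:adjusted_triangle_counting_P_n}, and then invoking Proposition~\ref{prop:convergence_average_clustering_P_n} for $\Exp{c^\ast(k_n;\Gbox)}$. This is exactly the argument embedded in the proof of Proposition~\ref{prop:concentration_local_clustering_P_n}, of which the corollary is a byproduct.
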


\subsection{Joint degrees in \texorpdfstring{$\Gbox$}{G box}}

To prove Proposition~\ref{prop:concentration_tilde_T_P_n} will use results from Section~\ref{ssec:joint_degrees_GPo} regarding the joint degree distribution in $\Gbox$. For any two points $p,p^\prime \in \Rcal$ we will denote by 
\begin{equation}\label{eq:def_joint_degree_distribution_2}
	\rho_{\text{box}}(p,p^\prime,k,k^\prime) 
	:= \Prob{\Po\left(\Mu{\BallPon{p}}\right) = k, \Po\left(\Mu{\BallPon{p^\prime}}\right) = k^\prime}.
\end{equation}
the joint degree distribution.

Recall the definition of $\mathcal{E}_\varepsilon(k_n)$
from Section~\ref{ssec:joint_degrees_GPo},
\[
	\mathcal{E}_{\varepsilon}(k_n) = \left\{(p,p^\prime) \in \Rcal \times \Rcal
			\, : \, y,y^\prime \in [y_{k_n,C}^-, y_{k_N,C}^+] \text{ and } |x - x^\prime|_n > k_n^{1 + \varepsilon} \right\},
\]
where
\[
	y_{k,C}^\pm = 2 \log\left(\frac{k \pm C \sqrt{k \log(k)}}{\xi}\right),
\]
as defined in~\eqref{eq:def_y_k_C}. Furthermore, we recall that by Lemma~\ref{lem:probdegFact} the joint degree distribution of two point $p, p^\prime \in \mathcal{E}_{\varepsilon}(k_N)$ factorizes, i.e. on the set $\mathcal{E}_\varepsilon(k_n)$ the joint degree distribution in $\Gbox$ is asymptotically equivalent to the product of the degree distributions. We shall now prove a slightly stronger result (Lemma~\ref{lem:joint_degree_factorization}) which also takes care of bounded shifts in the joint degree distribution $\rho_{\text{box}}(p,p^\prime,k_n - t,k_n - t^\prime)$, for some uniformly bounded $t, t^\prime \in \mathbb{Z}$. For this we first need the following simple result for Poisson distributions.

\begin{lemma}\label{lem:finite_shifts_poisson}
Let $k_n \to \infty$ be a sequence of non-negative integers and $X = \Po(\lambda_n)$ be a Poisson random variable with mean $\lambda_n$ satisfying
\[
		k_n - C\sqrt{k_n \log(k_n)} \le \lambda_n \le k_n + C\sqrt{k_n \log(k_n)}
\]
for some $C > 0$. Then, for any $t_n, s_n = O(1)$, as $n \to \infty$,
\[
	\Prob{X = k_n - t_n} \sim \Prob{X = k_n - s_n}.
\]
\end{lemma}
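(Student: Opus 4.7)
The plan is to compute the ratio $\Prob{X = k_n - t_n}/\Prob{X = k_n - s_n}$ directly from the Poisson pmf and show it tends to $1$. Writing out
\[
\frac{\Prob{X = k_n - t_n}}{\Prob{X = k_n - s_n}}
= \lambda_n^{s_n - t_n} \cdot \frac{(k_n - s_n)!}{(k_n - t_n)!},
\]
without loss of generality we may assume $t_n \leq s_n$ (otherwise swap the roles), so the factorial ratio is $\prod_{i=0}^{s_n-t_n-1}(k_n - t_n - i)^{-1}$.

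First I would control the factorial ratio. Since $s_n - t_n$ is bounded by some constant $M$ (as $t_n, s_n = O(1)$) and $k_n \to \infty$, each factor satisfies $k_n - t_n - i = k_n(1 + O(1/k_n))$ uniformly in $i$, so
\[
\prod_{i=0}^{s_n-t_n-1}(k_n - t_n - i) = k_n^{s_n - t_n}\bigl(1 + o(1)\bigr).
\]
Next I would handle the $\lambda_n^{s_n - t_n}$ factor: the hypothesis gives $\lambda_n/k_n = 1 + O(\sqrt{\log(k_n)/k_n}) = 1 + o(1)$, and since $s_n - t_n$ is uniformly bounded, $(\lambda_n/k_n)^{s_n - t_n} = 1 + o(1)$ as well. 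Combining the two estimates yields
\[
\frac{\Prob{X = k_n - t_n}}{\Prob{X = k_n - s_n}}
= \left(\frac{\lambda_n}{k_n}\right)^{s_n - t_n}\bigl(1 + o(1)\bigr) = 1 + o(1),
\]
which is exactly the claim.

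There is no real obstacle here; the only point requiring a slight care is that $s_n - t_n$ could in principle be negative, but the same argument runs symmetrically since the assumption $t_n, s_n = O(1)$ and $k_n \to \infty$ are symmetric in the two indices. The uniform boundedness of $|s_n - t_n|$ is what allows both the factorial estimate and the geometric term $(\lambda_n/k_n)^{s_n-t_n}$ to be absorbed into a $1+o(1)$ factor.
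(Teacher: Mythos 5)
Your proof is correct, and it takes a genuinely simpler route than the paper's. Both arguments start from the same identity
\[
\frac{\Prob{X = k_n - t_n}}{\Prob{X = k_n - s_n}} = \lambda_n^{s_n - t_n}\,\frac{(k_n - s_n)!}{(k_n - t_n)!},
\]
but the paper then invokes Stirling's formula to approximate each factorial, which produces terms of the form $\ell_n^{k_n-t_n}$ with $\ell_n = (k_n-s_n)/(k_n-t_n)$, and these have to be controlled through careful two-sided bounds on $\log(1+x)$. You observe instead that because the two factorial arguments differ by the bounded quantity $s_n - t_n$, the ratio $(k_n-s_n)!/(k_n-t_n)!$ is just a product of $|s_n - t_n| = O(1)$ consecutive integers, each equal to $k_n\bigl(1 + O(1/k_n)\bigr)$; hence the factorial ratio is $k_n^{t_n - s_n}\bigl(1+o(1)\bigr)$ with no appeal to Stirling at all. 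Combined with $\lambda_n/k_n = 1 + o(1)$ and the uniform bound on $|s_n-t_n|$, the conclusion follows immediately. This is a cleaner argument for this particular lemma, since Stirling's expansion is more machinery than the problem requires when the shift between the two factorials stays bounded. One small remark: the WLOG reduction to $t_n \le s_n$ is harmless, but you could also run the argument uniformly without it, since the factorization $k_n^{s_n-t_n}(1+o(1))$ of the product of consecutive integers holds with the same justification whether $s_n - t_n$ is positive, negative, or varying in sign along the sequence.
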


\begin{proof}
Note that $k_n > t_n, s_n$ for large enough $n$. Hence, using Stirling's formula, as $n \to \infty$,
\begin{align*}
	\frac{\Prob{X = k_n - t_n}}{\Prob{X = k_n - s_n}}
	&= \frac{(k_n - t_n - (s_n-t_n))!}{(k_n - t_n)!} \lambda_n^{s_n - t_n} \\
	&\sim \sqrt{\frac{k_n - s_n}{k_n - t_n}} \, \frac{(k_n-s_n)^{k_n - s_n}}{(k_n - t_n)^{k_n - t_n}} \, e^{t_n - s_n} 
		\lambda_n^{s_n-t_n} \\
	&= \sqrt{\ell_n} (\ell_n)^{k_n - t_n} e^{t_n - s_n} (k_n - s_n)^{t_n - s_n} \lambda_n^{s_n-t_n}\\
	&= \sqrt{\ell_n} e^{(k_n - t_n)\log(\ell_n) + t_n - s_n} \left(\frac{k_n - s_n}{\lambda_n}\right)^{t_n - s_n}
\end{align*}
where we wrote $\ell_n = (k_n - s_n)/(k_n - t_n)$. Note that $\ell_n \to 1$ and hence $\sqrt{\ell_n} \to 1$. Moreover, since $(k_n - s_n)/\lambda_n \to 1$ and $|s_n - t_n| = \bigO{1}$ we have that $\left(\frac{k_n - s_n}{\lambda_n}\right)^{t_n - s_n} \sim 1$ Therefore it remains to show that
\[
	\lim_{n \to \infty} e^{(k_n - t_n)\log(\ell_n) + t_n - s_n} = 1.
\] 
For this we note that for any $x$, such that $|x| \le 1/2$, we have 
\[
	x - x^2 \le \log(1+x) \le x.
\]
Write $x_n = \ell_n - 1 = \frac{t_n - s_n}{k_n - t_n}$. Then by the assumptions of the lemma, $x_n \to 0$, and thus, for $n$ large enough,
\[
	t_n - s_n - \frac{(t_n - s_n)^2}{k_n - t_n}
	\le (k_n-t)\log\left(\ell_n\right)
	\le t_n - s_n.
\]
In particular
\[
	e^{-\frac{(t_n-s_n)^2}{k_n-t_n}}
	\le e^{(k_n - t_n)\log(\ell_n) + t_n - s_n} \le 1,
\]
and the result follows since $\frac{(t_n-s_n)^2}{k_n-t_n} \to 0$.
\end{proof}

We can now prove the main result of this section.

\begin{lemma}\label{lem:joint_degree_factorization}
Let $0 < \varepsilon < 1$, $k_n \to \infty$ and let $t_n,t^\prime_n, s_n, s_n^\prime \in \mathbb{Z}$ be uniformly bounded.
Then for any $(p,p^\prime) \in \mathcal{E}_\varepsilon(k_n)$, as $n \to \infty$,
\begin{align*}
	\rho_{\mathrm{box}}(p,p^\prime,k_n - t_n,k_n - t_n^\prime)
	&= (1+\smallO{1})\rho_{\mathrm{box}}(p,k_n - s_n)\rho_{\mathrm{box}}(p^\prime,k_n-s_n^\prime).
\end{align*}
\end{lemma}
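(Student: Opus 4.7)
The plan is to imitate the proof of Lemma~\ref{lem:probdegFact}, carrying bounded shifts through each step.  First, I would decompose the relevant Poisson structure: let $Y$ be the number of points of $\Pcal$ in $\BallPon{p}\cap\BallPon{p'}$, and for $i=1,2$ let $X_i$ be the number of points in $\BallPon{p_i}\setminus\BallPon{p_{3-i}}$, where $p_1=p$ and $p_2=p'$.  Then $X_1,X_2,Y$ are independent Poisson variables, and the ``degree'' random variables $\Po(\Mu{\BallPon{p_i}})$ appearing in the definition of $\rho_{\mathrm{box}}$ are equal in distribution to $X_i+Y$.  Conditioning on $Y$ therefore gives
\begin{equation*}
\rho_{\mathrm{box}}(p,p',k_n-t_n,k_n-t_n') = \sum_{y\ge 0}\Prob{Y=y}\Prob{X_1=k_n-t_n-y}\Prob{X_2=k_n-t_n'-y},
\end{equation*}
together with analogous identities for each marginal $\rho_{\mathrm{box}}(p,k_n-s_n)$ and $\rho_{\mathrm{box}}(p',k_n-s_n')$.

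Next I would localise these sums to a narrow window in $y$.  Because $(p,p')\in\mathcal{E}_\varepsilon(k_n)$, Lemma~\ref{cor:expected_common_neighbours_Ecal_set} yields $\mu_Y:=\E Y=\bigO{k_n^{1-\varepsilon'}}$ with $\varepsilon'=\min\{\varepsilon(2\alpha-1),\varepsilon\}$, while Lemma~\ref{lem:average_degree_G_box} gives $\E X_i=(1+o(1))k_n$ uniformly over $y,y'\in\Kcal_C(k_n)$.  Pick any constant $C^\ast>2(2\alpha+1)$ and set
\begin{equation*}
A_n = \left\{y\in\mathbb{N}_0 : |y-\mu_Y|\le c_0\sqrt{k_n^{1-\varepsilon'}\log k_n^{1-\varepsilon'}}\right\},
\end{equation*}
with $c_0$ chosen, exactly as in the proof of Lemma~\ref{lem:probdegFact}, so that~\eqref{eq:def_chernoff_bound_poisson_C} gives $\Prob{Y\notin A_n}=\bigO{k_n^{-C^\ast}}$.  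Since every Poisson factor is bounded by~$1$, restricting each of the three sums above to $y\in A_n$ changes them by at most $\bigO{k_n^{-C^\ast}}$.

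The central step is then to re-use the Stirling-based ratio computation around~\eqref{eq:yrp1tos}.  For $y,y'\in A_n$ and uniformly bounded $t,s\in\mathbb{Z}$, the total displacement $|t-s+y'-y|$ is at most $\bigO{\sqrt{k_n^{1-\varepsilon'}\log k_n}}=o(\sqrt{k_n})$, which is precisely the regime where Stirling's formula forces $\Prob{X_i=k_n-t-y}/\Prob{X_i=k_n-s-y'}\to 1$.  In other words, a direct adaptation of Lemma~\ref{lem:finite_shifts_poisson} to this wider shift window (already carried out inside the proof of Lemma~\ref{lem:probdegFact}) yields
\begin{equation*}
\Prob{X_i=k_n-t-y} = (1+o(1))\Prob{X_i=k_n-s-y'}
\end{equation*}
uniformly for $i\in\{1,2\}$, $y,y'\in A_n$ and bounded $t,s\in\mathbb{Z}$.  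Fixing a reference point $y_0\in A_n$ (say the integer nearest $\mu_Y$), I would pull $\Prob{X_i=k_n-t_n^{(i)}-y}$ and $\Prob{X_i=k_n-s_n^{(i)}-y}$ out of their respective sums at the cost of a single $(1+o(1))$ factor each.

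Finally, using $\sum_{y\in A_n}\Prob{Y=y}=1-\bigO{k_n^{-C^\ast}}$, the joint sum becomes $(1+o(1))\Prob{X_1=k_n-s_n-y_0}\Prob{X_2=k_n-s_n'-y_0}+\bigO{k_n^{-C^\ast}}$, while each marginal becomes $(1+o(1))\Prob{X_i=k_n-s_n^{(i)}-y_0}+\bigO{k_n^{-C^\ast}}$.  Each marginal is of order $k_n^{-(2\alpha+1)}$ by~\eqref{eq:degree_distribution_P_asymptotics}, so the product on the right is of order $k_n^{-2(2\alpha+1)}$ and the choice $C^\ast>2(2\alpha+1)$ makes the additive $\bigO{k_n^{-C^\ast}}$ errors negligible, yielding the claim.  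The only point that demands care is the uniformity of the $(1+o(1))$ estimate simultaneously in $y\in A_n$ and in the bounded shifts $t_n,t_n',s_n,s_n'$; since those shifts are $\bigO{1}$ they are dwarfed by the width $\bigO{\sqrt{k_n^{1-\varepsilon'}\log k_n}}$ of $A_n$, so no new argument beyond~\eqref{eq:yrp1tos} is required.
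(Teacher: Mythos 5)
The high-level strategy matches the paper's: decompose into independent Poisson variables $X_1,X_2,Y$, condition on the overlap count $Y$, Chernoff-localise $Y$ to a window $A_n$, and use a Stirling ratio computation to absorb both the bounded shifts and the $y\in A_n$ fluctuation into a $(1+o(1))$ factor. The paper achieves the same end more modularly, by first applying Lemma~\ref{lem:probdegFact} (factorisation with an additive error term) and then Lemma~\ref{lem:finite_shifts_poisson} (to move between bounded shifts in each marginal), whereas you unfold those two cited lemmas inline. This is a legitimate variant of the same argument.

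There is, however, one genuine slip in the final step. You justify the choice $C^\ast > 2(2\alpha+1)$ by claiming that each marginal $\rho_{\mathrm{box}}(p,k_n-s_n)$ is of order $k_n^{-(2\alpha+1)}$ ``by~\eqref{eq:degree_distribution_P_asymptotics}''. That equation gives the asymptotics of $\pmf(k)=\int_0^\infty\rho(y,k)\alpha e^{-\alpha y}\,\dd y$, i.e.\ the degree distribution of a typical point with \emph{exponentially random} height, which is a mixture over $y$. The quantity $\rho_{\mathrm{box}}(p,k_n-s_n)$ is the Poisson mass function at a fixed point $k_n-s_n$ with a \emph{deterministic} parameter $\mu_{\mathrm{box}}(y_p)\approx k_n$, which is $\Theta(k_n^{-1/2})$ when $\mu_{\mathrm{box}}(y_p)$ is within $O(\sqrt{k_n})$ of $k_n-s_n$ and decays to roughly $k_n^{-1/2-C^2/2}$ as $y_p$ approaches the edges of $\Kcal_C(k_n)$. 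The exponent of the product of the two marginals therefore depends on the constant $C$ defining $\mathcal{E}_\varepsilon(k_n)$, not on $\alpha$, and $C^\ast>2(2\alpha+1)$ need not dominate it when $C$ is large. The fix is easy — after $C$ is fixed, choose $C^\ast$ large enough that $k_n^{-C^\ast}=o(k_n^{-1-C^2-\eta})$ for some $\eta>0$, exactly as the paper's proof of Lemma~\ref{lem:probdegFact} allows by leaving $C'$ free — but as written both the cited source and the stated threshold are incorrect.
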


\begin{proof}
Define the random variables
\begin{align*}
	X_1(p,p^\prime) &:= \Po\left(\Mu{\BallPon{p}\setminus \BallPon{p^\prime}}\right),\\
	X_2(p,p^\prime) &:= \Po\left(\Mu{\BallPon{p^\prime}\setminus \BallPon{p}}\right),\\
	Y(p,p^\prime) &:= \Po\left(\Mu{\BallPon{p} \cup \BallPon{p^\prime}}\right),
\end{align*}
so that
\begin{align*}
	\rho_{\text{box}}(p,p^\prime,k_n-t_n,k_n-t_n^\prime) &= \Prob{X_1(p,p^\prime) + Y(p,p^\prime) = k_n-t_n, X_2(p,p^\prime) + Y(p,p^\prime) = k_n - t_n^\prime}.
\end{align*}
Since by Lemma~\ref{cor:expected_common_neighbours_Ecal_set} $\Mu{\BallPon{p} \cap \BallPon{p^\prime}} = \bigO{k_n^{1-\varepsilon^\prime}}$, it follows from Lemma~\ref{lem:probdegFact} that
\[
	\rho_{\text{box}}(p,p^\prime,k_n-t_n,k_n-t_n^\prime) = (1+\smallO{1})\rho_{\text{box}}(p,k_n-t_n)\rho_{\text{box}}(p^\prime,k_n-t_n^\prime).
\]
The result then follows by applying Lemma~\ref{lem:finite_shifts_poisson} twice.
\end{proof}

\subsection{Concentration result for main triangle contribution}\label{ssec:concentration_tilde_T}

We now turn to Proposition \ref{prop:concentration_tilde_T_P_n}. Before we dive into the proof let us first give a high level overview of the strategy and the flow of the arguments. 

Recall (see \eqref{eq:def_degree_triangle_count_in_K}) that for any $C > 0$
\[
	\widetilde{T}_{\text{box}}(k_n,C) = \sum_{p \in \Pcal_n \cap \Kcal_{C,n}(k_n)} \ind{\text{deg}_{\text{box}}(p) = k} \widetilde{T}_{\text{box}}(p)
\]
Then we have
\[
	\widetilde{T}_{\text{box}}(k_n,C)^2 = \hspace{-5pt} \sum_{p, p^\prime \in \Pcal_n \cap \Kcal_C(k_n)}
		\hspace{-3pt} \ind{\Dbox(p), \, \Dbox(p^\prime) = k_n} 
		\sum_{(p_1, p_2), (p_1^\prime, p_2^\prime) \in \Pcal_n, \atop \text{distinct}} \hspace{-3pt}
		\widetilde{T}_{\Pcal}(p,p_1,p_2) \widetilde{T}_{\Pcal}(p^\prime, p_1^\prime, p_2^\prime),
\]
This expression can be written as the sum of several terms, depending on how $\{p, p_1, p_2\}$ and $\{p^\prime, p_1^\prime, p_2^\prime\}$ intersect. To this end we define, for $a \in \{0,1\}$ and $b \in \{0,1,2\}$,
\[
	I_{a,b} = \hspace{-3pt} \sum_{p, p^\prime \in \Pcal_n \cap \Kcal_C(k) \atop |\{p\} \cap \{p^\prime\}| = a}
	\hspace{-5pt} \ind{\Dbox(p), \, \Dbox(p^\prime) = k_n} J_b(p,p^\prime),
\]
where
\[
	J_b(p,p^\prime) = \hspace{-10pt} \sum_{{p_1, p_2, p_1^\prime, p_2^\prime \in \Pcal_n 
		\atop |\{p_1, p_2\} \cap \{p_1^\prime, p_2^\prime\}| = b,} \atop \text{distinct}}
		\hspace{-5pt} T_{\Pcal,n}(p,p_1,p_2) T_{\Pcal,n}(p^\prime, p_1^\prime, p_2^\prime),
\]
with the sum taken over all two distinct pairs $(p_1, p_2)$ and $(p_1^\prime, p_2^\prime)$. Then we have
\[
	\widetilde{T}_{\text{box}}(k, C)^2 = \sum_{a = 0}^1 \sum_{b = 0}^2 I_{a,b}.
\]

To prove Proposition \ref{prop:concentration_tilde_T_P_n} we will deal with each of the $I_{a,b}$ separately, showing that 
\begin{equation}\label{eq:variance_T_I_00}
	\Exp{I_{0,0}} = (1+\smallO{1})\Exp{\widetilde{T}_{\text{box}}(k_n,C)}^2
\end{equation}
and for all other combinations
\begin{equation}\label{eq:variance_T_I_ab}
	\Exp{I_{a,b}} = \smallO{\Exp{\widetilde{T}_{\text{box}}(k_n,C)}^2}.
\end{equation}
Note $I_{1,2} = \widetilde{T}_{\text{box}}(k_n,C)$ and since~\eqref{eq:scaling_exected_tilde_T} implies that $\Exp{\widetilde{T}_{\text{box}}(k_n,C)} \to \infty$, it follows that \eqref{eq:variance_T_I_ab} holds for $I_{1,2}$. 

Recall that $\Rcal(k_n,C) = [-I_n,I_n] \times \Kcal_{C}(k_n)$ and~\eqref{eq:def_joint_degree_set_E_growing_k}
\[
	\mathcal{E}_{\varepsilon}(k_n) = \left\{(p,p^\prime) \in \Rcal \times \Rcal
			\, : \, y,y^\prime \in \Kcal_{C}(k_n) \text{ and } |x - x^\prime|_n > k_n^{1 + \varepsilon} \right\}.
\]
Let $\mathcal{E}_\varepsilon(k_n)^c$ be the same set but with $|x - x^\prime|_n \le k_n^{1 + \varepsilon}$ and denote by $I_{a,b}^\ast$ the the part of $I_{a,b}$ where $(p,p^\prime) \in \mathcal{E}_\varepsilon(k_n)$. Will split the analysis between $I_{a,b}^\ast$ and $I_{a,b} - I_{a,b}^\ast$. The idea for these two cases is that by Lemma~\ref{lem:joint_degree_factorization} it follows that on the set $\mathcal{E}_\varepsilon(k_n)$ and for any uniformly bounded $t, t^\prime \in \mathbb{Z}$, the joint degree distribution factorizes,
\[
	\rho_{\text{box}}(p,p^\prime,k_n + t,k_n+t^\prime) = (1 + \smallO{1})\rho_{\text{box}}(p,k_n)\rho_{\text{box}}(p,k_n).
\]
In particular this allows us to prove that $\Exp{I_{0,0}^\ast} = (1+\smallO{1})\Exp{\widetilde{T}_{\text{box}}(k_n,C)}^2$. 
On the other hand, the expected number of points in $\mathcal{E}_\varepsilon(k_n)^c$ is $\bigO{k_n^{1+\varepsilon} k_n^{-2\alpha} \Exp{\Nbox(k_n)}} = \smallO{\Exp{\Nbox(k_n)}^2}$, where the latter is the expected number of points in $\Rcal(k_n,C) \times \Rcal(k_n,C)$. Hence we expect the contributions coming from $\mathcal{E}_\varepsilon(k_n)^c$ to be negligible.

\begin{proofof}{Proposition \ref{prop:concentration_tilde_T_P_n}}

Throughout this proof we set $i = |\{p^\prime, p_1, p_2, p_1^\prime, p_2^\prime\} \cap \BallPon{p}|$, $j = |\{p^\prime\} \cap \BallPon{p}|$ and define $i^\prime, j^\prime$ in a similar way by interchanging the primed and non-primed variables. In addition, we write $\widetilde{D}_{\text{box}}(p,p^\prime,k,\ell)$ to denote the indicator that $|\BallPon{p} \cap (\Pcal \setminus \{p, p^\prime, p_1, p_2, p_1^\prime, p_2^\prime\})| = k$ and $|\BallPon{p^\prime} \cap (\Pcal \setminus \{p, p^\prime, p_1, p_2, p_1^\prime, p_2^\prime\})| = \ell$. Note that this also depend on $\{p_1, p_2, p_1^\prime, p_2^\prime\}$ but we suppressed this to keep notation concise. Similarly we write $D_{\text{box}}(p,p^\prime,k,\ell)$ to denote the indicator that $|\BallPon{p} \cap (\Pcal \setminus \{p, p^\prime\})| = k$ and $|\BallPon{p^\prime} \cap (\Pcal \setminus \{p, p^\prime\})| = \ell$, which now only depends on $p$ and $p^\prime$. Then, by the Campbell-Mecke formula
\begin{align*}
	&\Exp{\ind{\Dbox(p) = k_n, \Dbox(p^\prime) = k_n} J_b(p,p^\prime)}\\
	&= \Exp{\sum_{{p_1, p_2, p_1^\prime, p_2^\prime \in \Pcal_n 
		\atop |\{p_1, p_2\} \cap \{p_1^\prime, p_2^\prime\}| = b,} \atop \text{distinct}}
			\hspace{-7pt} \widetilde{D}_{\text{box}}(p,p^\prime, k_n-i,k_n- i^\prime) \,
			\widetilde{T}_{\text{box}}(p,p_1,p_2) \widetilde{T}_{\text{box}}(p^\prime, p_1^\prime, p_2^\prime)}
\end{align*}
where the sum is over all distinct pairs $(p_1, p_2)$ and $(p_1^\prime, p_2^\prime)$. We also know that 
\[
	\Exp{T_\Pcal(k_n)} = \bigT{n k_n^{-(2\alpha - 1)} s_\alpha(k_n)}.
\]

We will now proceed to establish \eqref{eq:variance_T_I_00} and \eqref{eq:variance_T_I_ab}. 

\paragraph{Computing $\bm{I_{0,0}}$}
We first show that
\begin{equation}\label{eq:I_ab_ast_main}
	\Exp{I_{0,0} - I_{0,0}^\ast} = \smallO{\Exp{T_{\text{box}}(k_n,C)}^2},
\end{equation}
so that for the remainder of the proof we only need to consider $p, p^\prime \in \mathcal{E}_\varepsilon(k_n)$ and hence, we can apply Lemma \ref{lem:joint_degree_factorization}. 

For $J_0$ we have, using Lemma~\ref{lem:joint_degree_factorization}
\begin{align*}
	&\Exp{\ind{\Dbox(p) = k_n, \Dbox(p^\prime) = k_n} J_0(p,p^\prime)}\\
	&= \Exp{\sum_{{p_1, p_2, p_1^\prime, p_2^\prime \in \Pcal \setminus \{p,p^\prime\} 
		\atop |\{p_1, p_2\} \cap \{p_1^\prime, p_2^\prime\}| = 0,} \atop \text{distinct}}
		\hspace{-7pt} \widetilde{D}_{\text{box}}(p,p^\prime, k_n-i,k_n- i^\prime) \,
		\widetilde{T}_{\text{box}}(p,p_1,p_2) \widetilde{T}_{\text{box}}(p^\prime, p_1^\prime, p_2^\prime)}\\
	&= \Exp{D_{\text{box}}(p,p^\prime, k_n-j-2,k_n- j^\prime-2) \, \sum_{p_1,p_2 \in \Pcal \setminus p, \atop \text{distinct}}
		\widetilde{T}_{\text{box}}(p,p_1,p_2)
		\sum_{p_1^\prime,p_2^\prime \in \Pcal \setminus p^\prime, \atop \text{distinct}} 
			\widetilde{T}_{\text{box}}(p^\prime,p_1^\prime,p_2^\prime)}\\
	&= (1+\smallO{1})\rho_{\text{box}}(p,p^\prime,k_n,k_n) \CExp{\widetilde{T}_{\text{box}}(p)}{\Dbox(p) = k_n}
		\CExp{\widetilde{T}_{\text{box}}(p^\prime)}{\Dbox(p^\prime) = k_n},
\end{align*}
Next we recall that for all $y^\prime \in \Kcal_{C}(k_n)$ (see~\eqref{eq:def_K_C_set}), 
\[
	\CExp{\widetilde{T}_{\text{box}}(p^\prime)}{\Dbox(p^\prime) = k_n} = \binom{k_n}{2} \Mu{\BallPon{p^\prime}}^{-2} \Exp{\widetilde{T}_{\text{box}}(p^\prime)} = \bigO{1}k_n^2 P(y^\prime),
\] 
where $p^\prime = (x^\prime, y^\prime)$ and we used that $\Exp{\widetilde{T}_{\text{box}}(p^\prime)} = (1+\smallO{1}) k_n^2 P(y^\prime)$, for all $y^\prime \in \Kcal_{C}(k_n)$. Therefore, using that $\rho_{\text{box}}(p,p^\prime,k_n,k_n) \le \rho_{\text{box}}(p,k_n)$,
\begin{align*}
	&\Exp{\ind{\Dbox(p) = k_n, \Dbox(p^\prime) = k_n} J_0(p,p^\prime)}\\
	&\le \bigO{k_n^2} \rho_{\text{box}}(p,k_n) \CExp{\widetilde{T}_{\text{box}}(p)}{\Dbox(p) = k_n} P(y^\prime)
\end{align*}
and thus
\begin{align*}
	&\Exp{I_{0,0} - I_{0,0}^\ast}\\
	&= \int_{\mathcal{E}_\varepsilon(k_n)^c}
		\Exp{\ind{\Dbox(p), \Dbox(p^\prime) = k_n} J_0(p,p^\prime)} f(x,y) f(x^\prime,y^\prime) \, dx^\prime \, dx \, dy^\prime \, dy\\
%	&= \int_{\Kcal_C(k_n)^2} \ind{|x-x^\prime| \le k_n^{1 + \varepsilon}}
%		\Exp{\ind{\Dbox(p), \Dbox(p^\prime) = k_n} J_0(p,p^\prime)} f(x,y) f(x^\prime,y^\prime) \, dx^\prime \, dx \, dy^\prime \, dy\\
	&\le \bigO{k_n^2} k_n^{1+\varepsilon} \left(\int_{a_n^-}^{a_n^+} P(y^\prime) 
		e^{-\alpha y^\prime} \, dy^\prime\right) \Exp{\widetilde{T}_{\text{box}}(k_n,C)}\\
	&= \bigO{k_n^{3 + \varepsilon -2\alpha} s_\alpha(k_n) \Exp{\widetilde{T}_{\text{box}}(k_n,C)}}\\
	&= \smallO{n k_n^{-(2\alpha - 1)}s_\alpha(k_n) \Exp{\widetilde{T}_{\text{box}}(k_n,C)}} 
		= \smallO{\Exp{\widetilde{T}_{\text{box}}(k_n,C)}^2},
\end{align*}
which proves \eqref{eq:I_ab_ast_main}. Here we used that $k_n^{2 + \varepsilon} = \smallO{n}$ and
\[
	\Exp{\widetilde{T}_{\text{box}}(k_n,C)} = \bigT{\Exp{\widetilde{T}_{\text{box}}(k_n)}} 
	= \bigT{n k_n^{-(2\alpha - 1)}s_\alpha(k_n)}
\] 
for the last line.

We will now show that
\[
	\Exp{I^\ast_{0,0}} = (1+\smallO{1})\Exp{\Exp{T_{\text{box}}(k_n,C)}^2}.
\]
Recall the result from Lemma~\ref{lem:joint_degree_factorization}, that for $(p,p^\prime) \in \mathcal{E}_{\varepsilon}(k_n)$ and any two uniformly bounded $t, t^\prime \in \mathbb{Z}$,
\[
	\rho_{\text{box}}(p,p^\prime,k_n + t,k_n+t^\prime) = (1 + \smallO{1})\rho_{\text{box}}(p,k_n)\rho_{\text{box}}(p,k_n).
\]
Therefore, by defining $h(y) = \CExp{\widetilde{T}_{\text{box}}(y)}{\Dbox(y) = k_n}$
\[
	\Exp{I_{0,0}^\ast} = (1 + \smallO{1})\int_{\mathcal{E}_{\varepsilon}(k_n)}\rho_{\text{box}}(p,k_n)
		\rho_{\text{box}}(p^\prime,k_n)
		h(y) h(y^\prime) f(x,y)	f(x^\prime,y^\prime) \, dx^\prime \, dx \, dy^\prime \, dy.
\]
The difference with $\Exp{\Exp{T_{\text{box}}(k_n,C)}^2}$ is in that the above integral is over $\mathcal{E}_\varepsilon(k_n)$ instead of $\Rcal(k_n,C) \times \Rcal(k_n,C)$. Since the difference between the two sets is 
$\mathcal{E}_\varepsilon(k_n)^c$ and $n k_n^{1+\varepsilon} = \smallO{n^2}$ it follows that
\begin{align*}
	&\Exp{\Exp{T_{\text{box}}(k_n,C)}^2}
		- \int_{\mathcal{E}_{\varepsilon}(k_n)}\rho_{\text{box}}(p,k_n)\rho_{\text{box}}(p^\prime,k_n)
		h(y) h(y^\prime) f(x,y)	f(x^\prime,y^\prime) \dd x^\prime \dd x \dd y^\prime \dd y\\
	&= \int_{\mathcal{E}_{\varepsilon}(k_n)^c}\rho_{\text{box}}(p,k_n)\rho_{\text{box}}(p^\prime,k_n)
		h(y) h(y^\prime) f(x,y)	f(x^\prime,y^\prime) \dd x^\prime \dd x \dd y^\prime \dd y\\
	&= \bigO{k_n^{1+\varepsilon}n} 
		\left(\int_{\Kcal_{C}(k_n)}h(y)\rho_{\text{box}}(y,k_n)\alpha e^{-\alpha y} \dd y\right)^2
		= \smallO{\Exp{\Exp{T_{\text{box}}(k_n,C)}^2}}.
\end{align*}
Thus we conclude that $\Exp{I^\ast_{0,0}} = (1+\smallO{1})\Exp{\Exp{T_{\text{box}}(k_n,C)}^2}$, which finishes the proof of \eqref{eq:variance_T_I_00}.

\paragraph{Computing $\bm{\Exp{I_{0,1}}}$}

We first write
\begin{equation}\label{eq:bound_E_J1}
	\Exp{\ind{\Dbox(p) = k_n, \Dbox(p^\prime) = k_n} J_1}
	\le \bigO{1} k_n \rho_{\text{box}}(p,p^\prime,k_n,k_n)\CExp{\widetilde{T}_{\text{box}}(p)}{\Dbox(p) = k_n}
\end{equation}
Then, using that $\rho_{\text{box}}(p,p^\prime,k_n,k_n) \le \rho_{\text{box}}(p,k_n)$,
\begin{align*}
	&\Exp{I_{0,1} - I_{0,1}^\ast}\\
	&\int_{\mathcal{E}_\varepsilon(k_n)^c}
		\Exp{\ind{\Dbox(p), \Dbox(p^\prime) = k_n} J_1(p,p^\prime)} f(x,y) f(x^\prime,y^\prime) \, dx^\prime \, dx \, dy^\prime \, dy\\
	&= k_n \int_{\Kcal_C(k_n)^2} \ind{|x-x^\prime| \le k_n^{1 + \varepsilon}}
		\rho_{\text{box}}(p,k_n)\CExp{\widetilde{T}_{\text{box}}(p)}{\Dbox(p) = k_n} f(x,y) f(x^\prime,y^\prime) \, dx^\prime \, dx \, dy^\prime \, dy\\
	&\le \bigO{k_n^{2+\varepsilon}} \left(\int_{a_n^-}^{a_n^+} e^{-\alpha y^\prime} \, dy^\prime\right) 
		\Exp{\widetilde{T}_{\text{box}}(k_n,C)}\\
	&= \bigO{k_n^{2 + \varepsilon - 2\alpha} \Exp{\widetilde{T}_{\text{box}}(k_n,C)}}.
\end{align*}
Recall that $\Exp{\widetilde{T}_{\text{box}}(k_n,C)} = \bigT{n k_n^{-(2\alpha - 1)}s(k_n)}$. Therefore to show that $\Exp{I_{0,1} - I_{0,1}^\ast} = \smallO{\Exp{\widetilde{T}_{\text{box}}(k_n,C)}^2}$ it suffices to show that $k_n^{2 + \varepsilon - 2\alpha} = \smallO{n k_n^{-(2\alpha - 1)}s(k_n)}$. When $\frac{1}{2} < \alpha \le \frac{3}{4}$ we have
\[
	\frac{4\alpha - 1 + \varepsilon}{2\alpha + 1} < 1,
\]
for $\varepsilon$ small enough. Hence
\[
	n^{-1} k_n^{2\alpha - 1} s(k_n)^{-1} k_n^{2 + \varepsilon -2\alpha} = n^{-1} k_n^{4\alpha - 1 + \varepsilon} 
	= \smallO{n^{-1}n^{\frac{4\alpha - 1 + \varepsilon}{2\alpha + 1}}} = \smallO{1}
\]
When $\alpha \ge \frac{3}{4}$,
\[
	n^{-1} k_n^{2\alpha - 1} s(k_n)^{-1} k_n^{2 + \varepsilon -2\alpha} = \bigO{\log(k_n)} n^{-1} k_n^{2 + \varepsilon} = \smallO{1},
\]
for $\varepsilon$ small enough.

For $(p,p^\prime) \in \mathcal{E}_\varepsilon(k_n)$ we assume without loss of generality that $p_1^\prime = p_1 = (x_1,y_1)$, i.e.
\[
	J_{0,1} = \sum_{(p_1, p_2) \in \Pcal \setminus \{p\},\atop \text{distinct}} \widetilde{T}_{\text{box}}(p,p_1,p_2) 
	\sum_{p_2^\prime \in \Pcal \setminus \{p^\prime, p_1\}} \widetilde{T}_{\text{box}}(p^\prime,p_1,p_2^\prime). 
\] 
Now let $Z_{0,1}$ denote the part of $J_{0,1}$ where $y_1 \le 4\log(k_n)$ and $y_2, y_2^\prime \le \varepsilon \log(k_n)$. 

We first analyze $\CExp{Z_{0,1}}{\Dbox(p), \Dbox(p^\prime) = k_n}$. When $y_1 \le 4\log(k_n)$ and both $y_2, y_2^\prime \le \varepsilon \log(k_n)$ we have that
\[
	|x_2 - x_2^\prime| \le |x_1 - x_2| + |x_1 - x_2^\prime| \le e^{\frac{y_1}{2}}\left(e^{\frac{y_2}{2}} + e^{\frac{y_2^\prime}{2}}\right) \le 2k_n^{2+\varepsilon}
\]
whenever $\widetilde{T}_{\text{box}}(p,p_1,p_2) \widetilde{T}_{\text{box}}(p^\prime,p_1,p_2^\prime) > 0$ while both $|x - x_2|, |x^\prime - x_2^\prime| = \bigO{k_n^{1 + \varepsilon}}$. Hence it follows that $\widetilde{T}_{\text{box}}(p,p_1,p_2) \widetilde{T}_{\text{box}}(p^\prime,p_1,p_2^\prime) > 0$ implies that
\[
	|x - x^\prime| \le |x - x_2| + |x_2 - x_2^\prime| + |x_2^\prime - x^\prime| = \bigO{k_n^{2 + \varepsilon}}.
\]
Next, by integrating only over $x_2^\prime$ and $y_2^\prime $ we get
\begin{align*}
	\CExp{Z_{0,1}}{\Dbox(p), \Dbox(p^\prime) = k_n} &=
	\bigO{e^{\frac{y^\prime}{2}}\ind{|x-x^\prime|\le \bigO{1}k_n^{2 + \varepsilon}} 
		\CExp{\widetilde{T}_{\text{box}}(p)}{\Dbox(p) =k_n}}\\
	&= \bigO{k_n \CExp{\widetilde{T}_{\text{box}}(p)}{\Dbox(p) =k_n}}.
\end{align*}
Thus
\begin{align*}
	&\int_{\mathcal{E}_\varepsilon(k_n)} \rho_{\text{box}}(p,p^\prime,k_n,k_n)
		\CExp{Z_{0,1}}{\Dbox(p), \Dbox(p^\prime) = k_n} f(x,y) f(x^\prime,y^\prime) \dd x \dd y \dd x^\prime \dd y^\prime\\
	&= \bigO{k_n^{3+\varepsilon}} \Exp{\widetilde{T}_{\text{box}}(k_n,C)} 
		\int_{\Kcal_{C}(k_n)}\rho_{\text{box}}(y^\prime,k_n) e^{-\alpha y^\prime} \dd y^\prime\\
	&= \bigO{k_n^{2+\varepsilon} k_n^{-2\alpha} \Exp{\widetilde{T}_{\text{box}}(k_n,C)} } 
		= \smallO{\Exp{\widetilde{T}_{\text{box}}(k_n,C)}^2}, 
\end{align*}
where the last line follows from the analysis done for $\Exp{I_{0,0}-I_{0,0}^\ast}$.

It now remains to consider $J_{0,1} - Z_{0,1} := Z_{0,1}^\ast$. We will show that 
\begin{equation}\label{eq:expectation_J01}
	\CExp{Z_{0,1}^\ast}{\Dbox(p), \Dbox(p^\prime) = k_n} = \smallO{k_n^4 s(k_n)^2}.
\end{equation} 
Using that the joint degree distribution factorizes on $\mathcal{E}_\varepsilon(k_n)$ this then implies that
\begin{align*}
	\Exp{I_{0,1}^\ast}
%	&= \bigO{1} \int_{\mathcal{E}_\varepsilon(k_n)} \rho_{\text{box}}(p,k_n) \rho_{\text{box}}(p^\prime,k_n) \CExp{J_{0,1}}{\Dbox(p), \Dbox(p^\prime) = k_n} f(x,y) f(x^\prime,y^\prime) \, dx^\prime \, dx \, dy^\prime \, dy \\
	&= \smallO{k_n^4 s(k_n)^2} \left(\int_{\Rcal(k_n,C)} \rho_{\text{box}}(y,k_n) f(x,y) \dd x \dd y \right)^2\\ 
	&= \smallO{\left(n s(k_n) k_n^{-2\alpha + 1}\right)^2} = \smallO{\Exp{\widetilde{T}_{\text{box}}(k_n,C)}^2},
\end{align*}
which finished the proof of~\eqref{eq:variance_T_I_ab} for $a = 0, b = 1$.

We first consider the part with $y_1 > 4\log(k_n)$. Since the integration over $x_1, x_2$ and $x_2^\prime$ of $\CExp{Z_{0,1}^\ast}{\Dbox(p), \Dbox(p^\prime) = k_n}$ is bounded by $\bigO{e^{y}e^{\frac{y^\prime}{2}}}$ we get that the contribution to $\CExp{Z_{0,1}^\ast}{\Dbox(p), \Dbox(p^\prime) = k_n}$ with $y > 4\log(k_n)$ and $(p,p^\prime) \in \mathcal{E}_\varepsilon(k_n)$ is
\begin{align*}
	\bigO{e^{y}e^{\frac{y^\prime}{2}} \int_{4\log(k_n)}^{R} e^{-(\alpha - \frac{1}{2})y_1} \, dy_1}
	&= \bigO{k_n^3 \int_{4\log(k_n)}^{R} e^{-(\alpha - \frac{1}{2})y_1} \, dy_1}\\
	&= \bigO{k_n^{3 - (4\alpha - 2)}} = \smallO{k_n^4 s_\alpha(k_n)^2}.
\end{align*}
Here the last step follows since for $\frac{1}{2} < \alpha < \frac{3}{4}$
\[
	k_n^{3 - (4\alpha - 2) - 4} s(k_n)^{-2} 
	= k_n^{3 - (4\alpha - 2) - 4 + 2(4\alpha - 2)} = k_n^{-5 + 4\alpha} = \smallO{1},
\]
while for $\alpha = \frac{3}{4}$
\[
	k_n^{3 - (4\alpha - 2) - 4} s(k_n)^{-2} 
	= \bigO{\log(k_n)^{-2}} k_n^{3 - (4\alpha - 2) - 2} = \bigO{\log(k_n)^{-2}} = \smallO{1},
\]
and for $\alpha > \frac{3}{4}$
\[
	k_n^{3 - (4\alpha - 2) - 4} s(k_n)^{-2} = k_n^{3 - (4\alpha - 2) - 2} = \smallO{1}.
\]

Next we consider the case where $y_1 \le 4\log(k_n)$ and at least one of $y_2, y_2^\prime$ is larger than $\varepsilon \log(k_n)$. Due to symmetry it is enough to consider the case with $y_2 > \varepsilon \log(k_n)$. Here the contribution to $\CExp{Z_{0,1}^\ast}{\Dbox(p), \Dbox(p^\prime) = k_n}$ is
\begin{align*}
	\Exp{\widetilde{T}_{\text{box}}(p)}
		\bigO{e^{\frac{y^\prime}{2}}\int_{\varepsilon \log(k_n)}^{R} e^{-(\alpha -\frac{1}{2})y_2} \, dy_2}
	&= \bigO{k_n^{1 - \varepsilon(\alpha -\frac{1}{2})}}\Exp{\widetilde{T}_{\text{box}}(p)}\\
	&= \bigO{k_n^{3 - \varepsilon(\alpha -\frac{1}{2})} s(k_n)} = \smallO{k_n^4 s(k_n)^2}.
\end{align*}
The last line follows since $k_n^{-1} = \smallO{s(k_n)}$ for $\frac{1}{2} < \alpha < \frac{3}{4}$ and $k_n^{-1} = \bigO{s(k_n)}$ for $\alpha \ge \frac{3}{4}$. 

\paragraph{Computing $\bm{\Exp{I_{0,2}}}$} In this case we have
\[
	\Exp{\ind{\Dbox(p) = k_n, \Dbox(p^\prime) = k_n}J_2} = (1+\smallO{1})\rho_{\text{box}}(p,p^\prime,k_n,k_n)
	\CExp{\widetilde{T}_{\text{box}}(p)}{\Dbox(p) = k_n}.
\]
We then use that $\rho_{\text{box}}(p,p^\prime,k_n,k_n) \le \rho_{\text{box}}(p,k_n)$ to obtain
\begin{align*}
	\Exp{I_{0,2}-I_{0,2}^\ast} 
	&= \bigO{k_n^{1+\varepsilon}} \left(\int_{\Kcal_{C}(k_n)} e^{-\alpha y^\prime} \dd y^\prime\right) 
		\Exp{\widetilde{T}_{\text{box}}(k_n,C)}\\
	&= \bigO{k_n^{\varepsilon -(2\alpha - 1)}}\Exp{\widetilde{T}_{\text{box}}(k_n,C)}
		= \smallO{\Exp{\widetilde{T}_{\text{box}}(k_n,C)}}
\end{align*}
were the last line follows since $\Exp{\widetilde{T}_{\text{box}}(k_n,C)} = \bigT{n k_n^{-(2\alpha - 1)}s(k_n)}$ and $k_n^{\varepsilon}n^{-1} = \smallO{s(k_n)}$.

For the other term we use the fact that the degree distribution factorizes;
\begin{align*}
	\Exp{I_{0,2}^\ast} 
	&= \bigO{1}\left(\int_{\Rcal(k_n,C)} \rho_{\text{box}}(y^\prime,k_n) f(x^\prime,y^\prime) 
		\dd x^\prime \dd y^\prime \right)\Exp{\widetilde{T}_{\text{box}}(k_n,C)}\\
	&= \bigO{n k_n^{-(2\alpha + 1)}}\Exp{\widetilde{T}_{\text{box}}(k_n,C)}
		= \smallO{\Exp{\widetilde{T}_{\text{box}}(k_n,C)}^2},
\end{align*}
where we also used that $k_n^{-2} = \smallO{s(k_n)}$.

\paragraph{Computing $\bm{\Exp{I_{1,1}}}$}

Using~\eqref{eq:expectation_J01} we get
\begin{align*}
	\Exp{I_{1,1}}
	&= \bigO{k_n} \int_{\Rcal(k_n,C)} \rho_{\text{box}}(y,k_n) \CExp{\widetilde{T}_{\text{box}}}{\Dbox(p) = k_n}
		f(x,y) \dd x \dd y\\
	&= \bigO{k_n}\Exp{\widetilde{T}_{\text{box}}(k_n,C)}.
\end{align*}
Now observe that for $\frac{1}{2} < \alpha < \frac{3}{4}$
\[
	k_n n^{-1} k_n^{(2\alpha -1)} s(k_n)^{-1} = k_n^{6\alpha - 2} n^{-1}
	= \bigO{n^{\frac{4\alpha - 3}{2\alpha + 1}}} = \smallO{1},
\]
while for $\alpha \ge \frac{3}{4}$
\[
	k_n  n^{-1} k_n^{(2\alpha -1)} s(k_n)^{-1} = \bigO{n^{-1} k_n^{-(2\alpha - 1)}} = \smallO{1}.
\]
We conclude that $k_n = \smallO{n k^{-(2\alpha - 1)}s(k_n)}$ and hence $\Exp{I_{1,1}} = \smallO{\Exp{\widetilde{T}_{\text{box}}(k_n,C)}^2}$.

\end{proofof}

\section{Equivalence for local clustering in \texorpdfstring{$\GPo$}{G Po} and \texorpdfstring{$\Gbox$}{G box}}\label{sec:coupling_H_P_n}

In this section we establish the equivalence between $c^\ast(k; G_n)$ and $c^\ast(k; \Gbox)$ as expressed in Proposition~\ref{prop:couling_c_H_P}, using the coupling procedure explained in Section~\ref{ssec:coupling_H_P}. As in the previous section we write $|\cdot|_n$ for the norm $|\cdot|_{\pi e^{R/2}}$.

%Recall that $\mathcal{P}$ denotes a Poisson process on $\mathbb{R} \times \mathbb{R}_+$, with intensity $f(x,y)$, $I_n = \frac{\pi}{2}e^{R/2}$, $\mathcal{R} = (-I_n, I_n] \times (0,R]$ and $\mathcal{V}_n = \mathcal{P}\cap \mathcal{R}$. In addition we define for any interval $I \subseteq \mathbb{R}_+$, $\Rcal(I) := (-I_n,I_n] \times I$ and denote by $\BallPon{p}$ the \emph{ball}
%\[
%	\BallPon{p} = \left\{p^\prime \in \mathcal{V}_n : |x - x^\prime |_{\pi e^{R/2}} < e^{\frac{y+y^\prime}{2}}\right\}.
%\]
%Note that when $p \in \mathcal{V}_n$ then $\BallPon{p}$ denotes its neighbourhood in the graph $\Gbox$. 
%
%For any Borel-measurable subset $S \subseteq \mathbb{R} \times \mathbb{R}_+$, we let 
%\[
%	\mu (S) = \int_S f(x,y) \, dx \, dy = \frac{\nu \alpha}{\pi}\int_S e^{-\alpha y}dy.
%\]
%Thus, the number of points of $\Pcal_{\alpha, \nu}$ inside $S$ is distributed as $\Po ({\mu_{\alpha, \nu,} (S)})$.
%
%Finally, we recall 

Recall the map $\Psi$ from~\eqref{eq:def_Psi}
\[
	\Psi(r,\theta) = \left(\theta \frac{e^{R/2}}{2}, R - r\right),
\] 
and that $\BallHyp{p}$ denotes the image under $\Psi$ of the ball of hyperbolic radius $R$ around the point $\Psi^{-1}(p)$. Under the coupling between the hyperbolic random graph and the finite box model, described in Section~\ref{ssec:coupling_H_P}, two points $p = (x,y)$ and $p^\prime = (x^\prime, y^\prime)$ are connected if and only if
\[
	|x-x^\prime|_n \le \Phi(y, y^\prime)
	= \frac{1}{2}e^{R/2} \arccos\left( \frac{\cosh(R-y) \cosh(R-y^\prime) - \cosh R}{\sinh(R-y) \sinh(R-y^\prime)} \right),
\]
see~\eqref{eq:def_Omega_hyperbolic}. We will often use the result from Lemma~\ref{lem:asymptotics_Omega_hyperbolic} to approximate the function $\Phi$, for $y + y^\prime < R$, by  
\[
	e^{\frac{1}{2}(y+y^\prime)} - K e^{\frac{3}{2}(y+y^\prime) - R} \leq \Phi(R - y, R - y^\prime) 
		\leq  e^{\frac{1}{2}(y+y^\prime)} + K e^{\frac{3}{2}(y+y^\prime) - R},
\]
where $K$ is a constant determined by the lemma.

%To prove Proposition~\ref{prop:couling_c_H_P} we calculate the error in two steps. First we show in Section~\ref{ssec:coupling_HP_ast_P} that
%\[
%	\lim_{n \to \infty} s(k_n) \Exp{\left|c^\ast(k_n; \GPo) - c_{\Pcal,n}^\ast(k_n)\right|} = 0,
%\]
%Then, in Section~\ref{ssec:coupling_H_HP}, we prove Proposition~\ref{prop:clustering_ast_H_Pois}, i.e.
%\[
%	\lim_{n \to \infty} s(k_n) \Exp{\left| c^\ast(k_n; G_n) - c^\ast(k_n; \GPo)\right|} = 0.
%\]
%Together these results yield Proposition~\ref{prop:couling_c_H_P}.

\subsection{Some results on the hyperbolic geometric graph}

We start with some basic results for the hyperbolic random geometric graph. Recall that $\BallPo{p} = \{p^\prime \in \R \times \R_+ \, : \, |x - x^\prime| \le e^{(y +y^\prime)/2}\}$ and observe that~\eqref{eq:asymp2} from Lemma~\ref{lem:asymptotics_Omega_hyperbolic} implies the following. 
\begin{corollary}\label{cor:balls_inclusion}
For sufficiently large $n$ and $p \in \Rcal$,
\begin{equation*}
 \BallPo{p} \cap \Rcal ([K,R]) \subseteq \BallHyp{p} \cap \Rcal ([K,R]),
\end{equation*}
where $K$ is the constant from Lemma~\ref{lem:asymptotics_Omega_hyperbolic}.
\end{corollary}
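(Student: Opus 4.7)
The plan is a short, direct unfolding of definitions: given $p=(x,y)\in\Rcal$ with $y\in[K,R]$ and $p'=(x',y')\in\BallPo{p}\cap\Rcal([K,R])$, I need to verify that $p'\in\BallHyp{p}$. Recall from~\eqref{eq:def_Omega_hyperbolic} that $p'\in\BallHyp{p}$ is equivalent to either $y+y'\geq R$, or $y+y'<R$ together with $|x-x'|_{\pi e^{R/2}}\leq \Phi(y,y')$. So the proof splits naturally into those two cases.

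First I would dispose of the case $y+y'\geq R$, where the inclusion is immediate from the very definition of $\BallHyp{p}$, with no use of Lemma~\ref{lem:asymptotics_Omega_hyperbolic} at all. For the complementary case $y+y'<R$, both heights lie in $[K,R]$ by assumption, so in particular $y,y'>K$ (perhaps after replacing $K$ by $K+\epsilon$ if strictness is needed; for sufficiently large $n$ this is harmless since the statement is qualitative). Then estimate~\eqref{eq:asymp2} of Lemma~\ref{lem:asymptotics_Omega_hyperbolic} applies and yields the clean bound
\[
\Phi(y,y')\;\geq\; e^{\frac{1}{2}(y+y')}.
\]
On the other hand, $p'\in\BallPo{p}$ means precisely $|x-x'|\leq e^{(y+y')/2}$, and since the wrapped norm is dominated by the ordinary one, $|x-x'|_{\pi e^{R/2}}\leq |x-x'|$. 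Chaining these three inequalities gives $|x-x'|_{\pi e^{R/2}}\leq \Phi(y,y')$, which is exactly the condition for $p'\in\BallHyp{p}$ in this regime.

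There is no real obstacle here; the corollary is essentially a restatement of the lower bound~\eqref{eq:asymp2} combined with the observation that wrapping around the boundary of $\Rcal$ can only shrink distances. The only point that deserves a line of care is the height threshold: the hypothesis $y,y'\geq K$ (coming from $p,p'\in\Rcal([K,R])$) is exactly what is needed to invoke~\eqref{eq:asymp2}, so no separate verification is required. The intersection with $\Rcal([K,R])$ is preserved trivially on both sides of the inclusion.
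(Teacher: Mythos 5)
Your strategy is exactly the one the paper has in mind — the paper offers no written proof, only the one-line remark that the corollary follows from \eqref{eq:asymp2}, and you unfold this correctly: split on $y+y'\geq R$ versus $y+y'<R$, observe that the wrapped norm is dominated by the ordinary one, and invoke \eqref{eq:asymp2} to get $\Phi(y,y')\geq e^{(y+y')/2}$. That is the whole argument.

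There is, however, one real slip in the justification, and it is worth flagging because it exposes an imprecision in the statement itself. You write that ``both heights lie in $[K,R]$ by assumption,'' but the corollary's hypothesis is only $p\in\Rcal$: it is the \emph{other} point $p'$ that is restricted to $\Rcal([K,R])$, so only $y'\geq K$ is guaranteed, while $y$ may be anywhere in $(0,R]$. The bound \eqref{eq:asymp2} genuinely needs \emph{both} $y>K$ and $y'>K$, so your proof silently uses a hypothesis that is not stated. This matters: the inequality $\Phi(y,y')\geq e^{(y+y')/2}$ really does fail when $y$ is small. For instance, expanding $\arccos(1-x)=\sqrt{2x}\bigl(1+\tfrac{x}{12}+\dots\bigr)$ in the definition of $\Phi$ gives
\[
\Phi(y,y') = e^{(y+y')/2}\Bigl(1 - e^{-R}\cosh(y-y') + \tfrac16 e^{y+y'-R} + \dots\Bigr),
\]
and for $y$ near $0$ and $y'$ of constant order (say $y'\geq K$ but fixed) the leading correction is $-e^{-R}\cosh(y')+\tfrac16 e^{y'-R}\approx -\tfrac13 e^{y'-R}<0$, so $\Phi(y,y')<e^{(y+y')/2}$ and the inclusion is violated by a point $p'$ on the boundary of $\BallPo{p}$ at that height. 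The corollary as literally stated therefore appears to be too strong; the intended (and used) version restricts $p$ to have height at least $K$ as well, which is satisfied in every application in the paper. You should state the extra hypothesis $y>K$ on $p$ explicitly rather than attributing it to ``the assumption.'' (A smaller point: \eqref{eq:asymp2} is stated inside Lemma~\ref{lem:asymptotics_Omega_hyperbolic}, whose blanket hypothesis also includes $y,y'\leq(1-\varepsilon)R$; it is worth a remark that when $y+y'<R$ and $y>K$ one automatically has $y'<R-K$, and that the relevant bound survives for such $y'$, rather than leaving this unexamined.)
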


Furthermore, Lemma~\ref{lem:asymptotics_Omega_hyperbolic} enables us to determine the measure of a ball around a given point $p=(0,y)$ - this is will be fairly useful in our subsequent analysis. 

Let $p \in \Rcal$. Then we can see that the curve $x^\prime = e^{\frac{1}{2} (y + y^\prime)}$ with $x^\prime \geq 0$ meets the right boundary of $\Rcal$, that is, the line $x^\prime = \frac{\pi}{2} e^{R/2}$ at $y^\prime = R - y + 2\ln \frac{\pi}{2}$. Hence, any point $p^{\prime} \in \Rcal ([R - y + 2\ln \frac{\pi}{2}, R])$ is included in $\BallPo{p}$. In other words,
\begin{equation*} \label{eq:P_ball_inclusion_lower}
\BallPo{p} \cap \Rcal ([R - y +2\ln \frac{\pi}{2},R]) = \Rcal ([R - y + 2\ln \frac{\pi}{2},R]).
\end{equation*}
This together with the fact that for any $u^\prime = (r^\prime, \theta^\prime)$,
\[
	r^\prime < y = R -r \Rightarrow d_\H(\Psi^{-1}(p),u^\prime) \le R
\]
implies that 
\begin{equation}\label{eq:symm_diff_upper_P} 
(\BallHyp{p} \bigtriangleup \BallPo{p})  \cap \Rcal ([R - y + 2 \ln \frac{\pi}{2},R]) = \emptyset,
\end{equation}
where $A \bigtriangleup B$ denotes the symmetric difference of the sets $A$ and $B$. We can now compute the expected number of points in $\BallHyp{p} \bigtriangleup \BallPo{p}$, i.e. those vertices that are neighbors of $p$ in only one of the two models.

\begin{lemma}\label{lem:sym_diff_measure_H_P}
Let $0 \le y_n <R$ be such that $R - y_n \to \infty$ and write $p_n = (x_n, y_n)$. Then we have, as $n \to \infty$,
\[
	\mu (\BallHyp{p_n} \bigtriangleup \BallPo{p_n} ) 
	= \Theta (1) \cdot \begin{cases} 
		e^{(1/2-\alpha)R + \alpha y_n}, & \mbox{if } \alpha < 3/2 \\
		(R-y_n) e^{3y_n/2 - R}, & \mbox{if }\alpha = 3/2\\
		e^{3y_n/2 - R}, &  \mbox{if } \alpha > 3/2 
	\end{cases}.
\]
\end{lemma}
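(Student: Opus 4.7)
The plan is to reduce the computation to an integral over heights $y^\prime$ and then split that integral into ranges where we can control the pointwise gap $\Phi(y_n,y^\prime) - e^{(y_n+y^\prime)/2}$. First I would invoke~\eqref{eq:symm_diff_upper_P} to restrict attention to $y^\prime \in [0, R - y_n + 2\ln(\pi/2))$, since the symmetric difference is empty above this height. Fubini (integrating out $x^\prime$) then gives
\[
 \mu(\BallHyp{p_n} \bigtriangleup \BallPo{p_n})
 = \frac{2\alpha\nu}{\pi} \int_0^{R-y_n+2\ln(\pi/2)} \lambda(y^\prime)\, e^{-\alpha y^\prime}\, \dd y^\prime,
\]
where $\lambda(y^\prime)$ denotes the $x^\prime$-length of the symmetric difference at height $y^\prime$. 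I would then split this integral into (i) a small region $y^\prime \le K$ (with $K$ as in Lemma~\ref{lem:asymptotics_Omega_hyperbolic}), contributing only $O(1)$; (ii) the \emph{main region} $y^\prime \in (K, R-y_n)$ where the bounds of Lemma~\ref{lem:asymptotics_Omega_hyperbolic} yield $\lambda(y^\prime) \le 2K e^{3(y_n+y^\prime)/2 - R}$; and (iii) the \emph{boundary strip} $y^\prime \in [R-y_n, R-y_n + 2\ln(\pi/2))$, where every point lies in $\BallHyp{p_n}$ while only the central strip of $x^\prime$-width $2e^{(y_n+y^\prime)/2}$ lies in $\BallPo{p_n}$, so $\lambda(y^\prime) = \pi e^{R/2} - 2e^{(y_n+y^\prime)/2} = \Theta(e^{R/2})$.

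For the \emph{upper bound}, regions (ii) and (iii) are both straightforward: region (iii) contributes $\Theta(e^{R/2})\int_{R-y_n}^{R-y_n+O(1)} e^{-\alpha y^\prime}\, \dd y^\prime = \Theta(e^{(1/2-\alpha)R + \alpha y_n})$, while region (ii) contributes
\[
 O\bigl(e^{3y_n/2 - R}\bigr)\int_K^{R-y_n} e^{(3/2 - \alpha)y^\prime}\, \dd y^\prime,
\]
which splits into the three announced cases according to the sign of $\tfrac32 - \alpha$ and in each case is $O(\cdot)$ of the claimed quantity. For the \emph{lower bound}, note that when $\alpha < 3/2$ the boundary-strip contribution $\Theta(e^{(1/2-\alpha)R + \alpha y_n})$ already matches the target order, so nothing more is needed. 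The delicate cases are $\alpha \ge 3/2$, where the boundary strip is of strictly smaller order and we need a matching lower bound on the main-region integrand.

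The \emph{main obstacle} is exactly this last point: Lemma~\ref{lem:asymptotics_Omega_hyperbolic} only provides a two-sided \emph{absolute} bound on $\Phi(y_n,y^\prime) - e^{(y_n+y^\prime)/2}$ and the one-sided inequality $\Phi \ge e^{(y_n+y^\prime)/2}$, neither of which forces the gap to be \emph{as large as} $e^{3(y_n+y^\prime)/2 - R}$. To remedy this I would revisit the hyperbolic law of cosines: writing $r = R-y_n$, $r^\prime = R-y^\prime$, $\theta_0 = 2\Phi/e^{R/2}$, and expanding all $\cosh, \sinh$ as exponentials, one finds
\[
 1 - \cos\theta_0 = \frac{2e^{y_n+y^\prime - R}(1 + O(e^{-R}))}{1 + O(e^{-2(R-y_n)}) + O(e^{-2(R-y^\prime)})},
\]
so, writing $\epsilon := 2e^{y_n+y^\prime-R}$ and using $\arccos(1 - s) = \sqrt{2s}\bigl(1 + s/12 + O(s^2)\bigr)$, one obtains the sharp expansion
\[
 \Phi(y_n, y^\prime) = e^{(y_n+y^\prime)/2}\left(1 + \tfrac{1}{6}e^{y_n+y^\prime - R} + O(\text{lower order})\right).
\]
This yields $\Phi(y_n,y^\prime) - e^{(y_n+y^\prime)/2} \ge c\, e^{3(y_n+y^\prime)/2 - R}$ for some absolute constant $c > 0$ on a subrange of $(K, R-y_n)$ large enough for the integral over $y^\prime$ to capture the full asymptotics. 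Plugging this lower bound into the main-region integral and computing for $\alpha = 3/2$ (yielding the logarithmic factor $R - y_n$) and $\alpha > 3/2$ (where the integral converges as $R-y_n \to \infty$) gives the remaining two lower bounds. The verification that the ``lower order'' terms in the expansion above are genuinely dominated by $\epsilon/12$ throughout the relevant range of $y^\prime$ is the main technical care required.
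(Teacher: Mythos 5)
Your decomposition of the integration range --- a bounded low region, the main region $y' \in (K, R-y_n)$, and a boundary strip of width $2\ln(\pi/2)$ at the top --- together with the observation that the boundary strip already supplies the matching lower bound when $\alpha < 3/2$, follows the same outline as the paper's proof. You are also right that, as restated here, Lemma~\ref{lem:asymptotics_Omega_hyperbolic} gives only $|\Phi(y_n,y') - e^{(y_n+y')/2}| \le K e^{3(y_n+y')/2-R}$ together with $\Phi \ge e^{(y_n+y')/2}$ for heights above $K$; neither yields a \emph{lower} bound on the gap, which is what the paper needs in the main region when $\alpha \ge 3/2$ and simply asserts as $\Theta(1)$ while citing that lemma.

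However, your Taylor expansion is not correct. Writing $A = e^{y_n}$, $B = e^{y'}$, the hyperbolic identities give exactly $1 - \cos\theta_0 = \tfrac{2AB}{e^{R}}\cdot\bigl(1 - \tfrac{B}{Ae^{R}}\bigr)\bigl(1 - \tfrac{A}{Be^{R}}\bigr)\big/\bigl[\bigl(1 - \tfrac{A^2}{e^{2R}}\bigr)\bigl(1 - \tfrac{B^2}{e^{2R}}\bigr)\bigr]$, and the factors you fold into ``$O(e^{-R})$'' and ``lower order'' contribute to $\Phi - e^{(y_n+y')/2}$ at the \emph{same} order as the $s/12$ term in the $\arccos$ expansion, not at a lower one. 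Carrying the expansion out to first order (for $y'$ in a bounded range, which carries the dominant contribution to the $\alpha > 3/2$ integral) gives
\[
\Phi(y_n,y') - e^{(y_n+y')/2}
= \frac{1}{6}\Bigl(1 - 3e^{-2y_n} - 3e^{-2y'}\Bigr)\,e^{3(y_n+y')/2 - R}\,\bigl(1 + o(1)\bigr).
\]
The prefactor is \emph{negative} when $y_n, y'$ are small (it equals $-5/6$ at $y_n = y' = 0$), vanishes along $e^{-2y_n} + e^{-2y'} = \tfrac13$, and is positive only when both heights are large enough. So the inequality you aim for, $\Phi - e^{(y_n+y')/2} \ge c\, e^{3(y_n+y')/2-R}$, is false, and the discarded factors are not ``genuinely dominated by $\epsilon/12$.'' The lemma survives because what one actually needs is a lower bound on $|\Phi - e^{(y_n+y')/2}|$, and $|1 - 3e^{-2y_n} - 3e^{-2y'}|$ is bounded away from $0$ on a set of $y'$ carrying a constant fraction of the relevant integral for any fixed $y_n$; but tracking the sign change and handling the zero set of the prefactor is precisely the work your sketch assumes away.
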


\begin{proof}
Let $r_n := R - y_n$. Lemma~\ref{lem:asymptotics_Omega_hyperbolic} implies that for such a $p_n$, if a point $p$ belongs to $\BallHyp{p_n} \bigtriangleup \BallPo{p_n} \cap \Rcal ([0,r_n])$ then 
\[
	|x_n - x| = \Theta(1) \cdot e^{\frac{3}{2} (y_n + y) - R}.
\]
Now, if $p \in [r_n, r_n + 2 \ln \frac{\pi}{2})]$ and also $p \in \BallHyp{p_n} \bigtriangleup \BallPo{p_n}$, then 
\[
	|x_n-x|_n =\frac{\pi}{2} e^{R/2} - e^{\frac{1}{2} (y_n + y)}.
\]
Finally,~\eqref{eq:symm_diff_upper_P} implies that no point in $\Rcal ([r_n+2\ln \frac{\pi}{2},R])$ belongs to $\BallHyp{p_n} \bigtriangleup \BallPo{p_n}$. We first compute the expected number of points in $p \in \BallHyp{p_n} \bigtriangleup \BallPo{p_n}$ that have $R - y \le r_n$. The result depends on the value of $\alpha$, yielding the following three cases
\begin{align*}
	\mu (\BallHyp{p_n} \bigtriangleup \BallPo{p_n} \cap \Rcal ([0,r_n])) 
	&= \Theta(1) \cdot e^{3y_n/2 - R}\int_0^{r_n} e^{(3/2-\alpha) y} \, dy \\
	&= \Theta(1)\cdot \begin{cases} e^{(1/2-\alpha)R + \alpha y_n}, & \mbox{if } \alpha < 3/2 \\
		(R - y_n) e^{3y_n/2 - R}, & \mbox{if }\alpha = 3/2\\
		e^{3y_n/2 - R}, &  \mbox{if } \alpha > 3/2
	\end{cases}.
\end{align*}
Next we compute the number of remaining points in $\BallHyp{p_n} \bigtriangleup \BallPo{p_n}$, 
\begin{align*}
	\mu (\BallHyp{p_n} \bigtriangleup \BallPo{p_n} \cap \Rcal ([r_n, R])) 
	&= \frac{\nu\alpha}{\pi} \int_{r_n}^{r_n + 2 \ln \frac{\pi}{2}} 
		\left(\frac{\pi}{2} e^{R/2} - e^{\frac{1}{2} (y_n + y)}\right)e^{-\alpha y} \, dy \\
	&= O(1) \cdot e^{R/2} \int_{r_n}^{r_n + 2 \ln \frac{\pi}{2}} e^{-\alpha y} \, dy 
		= O(1) \cdot e^{R/2} e^{-\alpha r_n} \\
	&=O(1) \cdot e^{(1/2 - \alpha) R + \alpha y_n}.
\end{align*}
Now note that for any $\alpha > 3/2$, we have 
\[
	\left( (1/2 - \alpha) R + \alpha y_n\right) - \left(3y_n/2 - R \right)
	= (3/2 -\alpha) (R- y_n) \to -\infty,
\]
by our assumption on $y_n$. For $\alpha = 3/2$, these two quantities are equal. From these observations, we deduce that 
\[
	\mu (\BallHyp{p_n} \bigtriangleup \BallPo{p_n} ) 
	= \Theta (1) \cdot \begin{cases} 
		e^{(1/2-\alpha)R + \alpha y_n}, & \mbox{if } \alpha < 3/2 \\
		r_n e^{3y_n/2 - R}, & \mbox{if }\alpha = 3/2\\
		e^{3y_n/2 - R}, &  \mbox{if } \alpha > 3/2 
	\end{cases}.
\]
\end{proof}

\subsection{Equivalence clustering \texorpdfstring{$\GPo$}{G Po} and \texorpdfstring{$\Gbox$}{G box}}\label{ssec:coupling_HP_ast_P}

Here we prove Proposition~\ref{prop:couling_c_H_P}. We first note that Lemma~\ref{lem:average_degree_P_n} and Lemma~\ref{lem:average_degree_G_box} imply the following
\begin{equation} \label{eq:n_k_Hyp}
	\Exp{N_{\Po}(k_n)} = \bigT{1} n k_n^{-(2\alpha + 1)},
\end{equation}
and
\begin{equation} \label{eq:n_k_Po}
	\Exp{N_{\mathrm{box}}(k_n)} = \bigT{1} n k_n^{-(2\alpha + 1)}.
\end{equation}
Moreover,
\begin{equation}\label{eq:equivalence_N_HP_P}
	\lim_{n \to \infty} \frac{\Exp{N_{\Po}(k_n)}}{\Exp{N_{\mathrm{box}}(k_n)}} = 1.
\end{equation}

Recall that Proposition~\ref{prop:couling_c_H_P} states
\[
	\lim_{n \to \infty} s(k_n)^{-1} \, \Exp{\left|c^\ast(k_n; \GPo) - c^\ast(k_n;\Gbox)\right|} = 0.
\]

Next recall the definition of $\Kcal_{C}(k_n)$
\[
	\Kcal_C(k_n) = \left\{y \in \R_+ : \frac{k_n - C \sqrt{k_n \log(k_n)}}{\xi} \vee 0 \le e^{\frac{y}{2}}
	\le \frac{k_n + C \sqrt{k_n \log(k_n)}}{\xi} \wedge e^{R/2} \right\},
\]
and~\eqref{eq:def_tilde_c_box}
\[
	\widetilde{c}_{\text{box}}(k_n) = \frac{\widetilde{T}_{\text{box}}(k_n,C)}{\binom{k_n}{2}\Exp{N_{\text{box}}(k_n)}},
\]
where $\widetilde{T}_{\text{box}}(k_n,C)$ counts for all nodes $p = (x,y)$ with $y \in \Kcal_{C}(k_n)$ the pairs $(p_1,p_2)$ that form a triangle with $p$, with the exception that it considers $p_2 \in \BallPo{p_1} \cap \Rcal$ instead of $\BallPon{p_1}$. Then using Corollary~\ref{cor:c_ast_box_2_tilde_c_box} we get
\[
	\Exp{\left|c^\ast(k_n; \GPo) - c^\ast(k_n;\Gbox)\right|}
	\le \Exp{\left|c^\ast(k_n; \GPo) - \widetilde{c}_{\text{box}}(k_n)\right|} 
	+ \smallO{s(k_n)},
\]
and hence it is enough to prove that
\[
	\lim_{n \to \infty} s(k_n)^{-1}\Exp{\left|c^\ast(k_n; \GPo) - \widetilde{c}_{\text{box}}(k_n)\right|} = 0.
\]

%Since for $\alpha > 3/4$, $s_{3/4}(k_n) = \log(k_n)^{-1} s_\alpha(k_n) = \smallO{s_\alpha(k_n)}$ it suffices to prove the following two cases:
%\begin{enumerate}
%\item if $1/2 < \alpha \leq 3/4$, then
%\[
%	\lim_{n\to \infty} k_n^{4\alpha -2}\cdot \Exp{\left|c^\ast(k_n; \GPo) - \widetilde{c}_{\text{box}}(k_n)\right|}=0,
%\]
%\item if $3/4 < \alpha$, then
%\[ 
%	\lim_{n\to \infty} k_n \cdot \Exp{\left|c^\ast(k_n; \GPo) - \widetilde{c}_{\text{box}}(k_n)\right|}=0.
%\]
%\end{enumerate}

The following lemma will be frequently used in the proof of Proposition~\ref{prop:couling_c_H_P}.

\begin{lemma} \label{lem:gamma_approx}
Let $t, r \in \mathbb{R}$ be fixed and let $\hat{\rho}(y,k)$ be any of the three probability functions $\rho_{\Po}(y,k), \rho_{\text{box}}(y,k)$ or $\rho(y,k)$. Then for any sequence $k_n$ of non-negative integers with $k_n = \bigO{n^{\frac{1}{2\alpha + 1}}}$ and $C > 0$ large enough,
\[
	\int_{\Kcal_{C}} e^{t y} \hat{\rho}_n(y,k_n-r) e^{-\alpha y} \dd y = \bigO{1} k_n^{-2\alpha - 1 + 2t}
\]
as $n \to \infty$.
\end{lemma}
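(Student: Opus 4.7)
The plan is to exploit the fact that the interval $\Kcal_C(k_n)$ is narrow and centered on heights where $e^{y/2}$ is of order $k_n$, which lets us pull the factor $e^{ty}$ out of the integral and reduce the claim to a standard bound on a degree-type integral.

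First I would observe that by the definition of $y_{k_n,C}^{\pm}$ (see~\eqref{eq:def_y_k_C}), uniformly for $y\in \Kcal_C(k_n)$,
\[
\frac{k_n-C\sqrt{k_n\log k_n}}{\xi}\;\le\;e^{y/2}\;\le\;\frac{k_n+C\sqrt{k_n\log k_n}}{\xi},
\]
so that $e^{y/2}=\Theta(k_n)$ uniformly, and consequently $e^{ty}=(e^{y/2})^{2t}=\Theta(k_n^{2t})$ uniformly in $y\in \Kcal_C(k_n)$. Pulling this factor out of the integral reduces the claim to showing that
\[
\int_{\Kcal_C(k_n)} \hat\rho(y,k_n-r)\,e^{-\alpha y}\,dy \;=\; O\!\left(k_n^{-(2\alpha+1)}\right).
\]

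Next I would upper bound the remaining integral by extending it to all of $\R_+$. Since $\hat\rho(y,k_n-r)\in[0,1]$, the extension loses only a constant. For $\hat\rho=\rho$, the resulting integral is exactly $\pi(k_n-r)/\alpha$ by definition, and~\eqref{eq:degree_distribution_P_asymptotics} together with $k_n-r=\Theta(k_n)$ gives $\pi(k_n-r)=\Theta(k_n^{-(2\alpha+1)})$. For $\hat\rho\in\{\rho_{\Po},\rho_{\mathrm{box}}\}$, I would split $[0,R]=[0,(1-\varepsilon)R]\cup[(1-\varepsilon)R,R]$ with $\varepsilon>0$ small. On the first piece, Lemma~\ref{lem:degree_integral} gives
\[
\int_0^{(1-\varepsilon)R}\hat\rho(y,k_n-r)\alpha e^{-\alpha y}\,dy=(1+o(1))\pi(k_n-r)=O(k_n^{-(2\alpha+1)}).
\]
On the second piece, the trivial bound $\hat\rho\le 1$ gives $\int_{(1-\varepsilon)R}^R e^{-\alpha y}dy=O(n^{-2\alpha(1-\varepsilon)})$, and for $\varepsilon$ small enough (using $k_n=O(n^{1/(2\alpha+1)})$, so $k_n^{-(2\alpha+1)}\ge C n^{-1}\ge C n^{-2\alpha(1-\varepsilon)}$ when $2\alpha(1-\varepsilon)\ge 1$), this tail is also $O(k_n^{-(2\alpha+1)})$. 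Note that for $n$ large enough $\Kcal_C(k_n)\subseteq[0,(1-\varepsilon)R]$ since $y_{k_n,C}^+\sim \tfrac{2}{2\alpha+1}\log n$ while $R=2\log(n/\nu)$.

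Multiplying the $\Theta(k_n^{2t})$ factor by the $O(k_n^{-(2\alpha+1)})$ bound yields $O(k_n^{-2\alpha-1+2t})$, as required. There is no real obstacle here; the argument is a direct concentration-of-heights calculation. The only point requiring a little care is matching up the three incarnations of $\hat\rho$ with the right upper bound, which Lemma~\ref{lem:degree_integral} handles uniformly.
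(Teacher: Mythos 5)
Your proposal is correct and takes essentially the same approach as the paper: observe that $e^{ty}=\Theta(k_n^{2t})$ uniformly on $\Kcal_C(k_n)$, pull that factor out, and bound the remaining integral using Lemma~\ref{lem:degree_integral}. The paper's own proof is terser (it goes directly from Lemma~\ref{lem:degree_integral} to $(k_n-r)^{-(2\alpha+1)}$ without explicitly discussing the tail beyond $(1-\varepsilon)R$ or the case $\hat\rho=\rho$ separately); your extra steps, in particular the tail estimate after you have already noted $\Kcal_C(k_n)\subseteq[0,(1-\varepsilon)R]$, are harmless but redundant, since the containment alone suffices to invoke Lemma~\ref{lem:degree_integral}.
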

\begin{proof}
Note that on $\Kcal_{C}(k_n)$ we have that $e^{ty} = \bigT{k_n^{2t}}$. Hence, by Lemma~\ref{lem:degree_integral}
\begin{align*}
	\int_{\Kcal_{C}} e^{t y} \hat{\rho}_n(y,k_n-r) e^{-\alpha y} \dd y
	&= \bigT{k_n^{2t}} \int_{\Kcal_{C}} \hat{\rho}_n(y,k_n-r) e^{-\alpha y} \dd y\\
	&= \bigO{k_n^{2t}} (k_n-r)^{-(2\alpha + 1)} = \bigO{1} k_n^{-2\alpha - 1 + 2t}.
\end{align*}
\end{proof}

\begin{proofof}{Proposition~\ref{prop:couling_c_H_P}}

To keep notation concise we abbreviate $\Exp{N_{\Po}(k_n)}$ and $\Exp{\Nbox(k_n)}$ by $\expH$ and $\expP$, respectively. We will also suppress the subscript $n$ in most expressions regarding the graphs $\GPo$ and $\Gbox$. Finally we will write 
\[
	T_{\Po}(p) = \sum_{(p_1, p_2) \in \Pcal \setminus \{p\}, \atop \text{distinct}} T_{\Po}(p,p_1,p_2),
\] 
with
\[
	T_{\Po}(p,p_1,p_2) = \ind{p_1 \in \BallHyp{p}} \ind{p_2 \in \BallHyp{p}} \ind{p_2 \in \BallHyp{p_1}}
\]
to denote the triangle count function for $p$ in $\GPo$. Then we have
\begin{align*} 
	&\Exp{\left|  c^\ast(k_n; \GPo) - \widetilde{c}_{\text{box}}(k_n)\right|}= 
	\binom{k_n}{2}^{-1}\Exp{\left|\sum_{p \in \Pcal} 
    	\frac{\ind{\mathrm{deg}_{\Po}(p) = k_n}}{\expH} T_{\Po}(p)
        - \frac{\ind{\mathrm{deg}_{\mathrm{box}}(p) = k_n}}{\expP}  \widetilde{T}_{\mathrm{box}}(p)\right|} \\
    &\le\binom{k_n}{2}^{-1} \expH^{-1} \Exp{\left|\sum_{p \in \Pcal} \ind{\mathrm{deg}_{\Po}(y) = k_n} T_{\Po}(p) 
    	- \ind{\mathrm{deg}_{\mathrm{box}}(p) = k_n} \widetilde{T}_{\text{box}}(p)\right|} \\
    &\hspace{10pt}+ \binom{k_n}{2}^{-1} \left|\frac{1}{\expH} - \frac{1}{\expP}\right|\Exp{
        	\sum_{p \in \Pcal} \ind{\mathrm{deg}_{\mathrm{box}}(p) = k_n} \widetilde{T}_{\text{box}}(p) }
\end{align*}
The last term can be rewritten as
\begin{align*}
	\left|1 - \frac{\expH}{\expP}\right| \Exp{\widetilde{c}_{\text{box}}(k_n)} = \left|1 - \frac{\expH}{\expP}\right|\gamma(k_n)(1+o(1)),
\end{align*}
where we used Proposition~\ref{prop:convergence_average_clustering_P_n} (See Section~\ref{sec:clustering_Pn_to_P}). The first term in this product converges to zero by~\eqref{eq:equivalence_N_HP_P} while the second term scales as $s(k_n)$. Hence
\[
	\left|1 - \frac{\expH}{\expP}\right| \Exp{\widetilde{c}_{\text{box}}(k_n)} = \smallO{s(k_n)},
\]
and therefore we are left to analyze the other term. By the Campbell-Mecke formula we have that
\begin{align*}
	    &\Exp{\left|\sum_{p \in \Pcal} \ind{\mathrm{deg}_{\Po}(p) = k_n} T_{\Po}(p) 
	        	- \ind{\mathrm{deg}_{\mathrm{box}}(p) = k_n} \widetilde{T}_{\text{box}}(p)\right|} \\
	    &= \int_{\Rcal} 
	        \Exp{\left|\ind{\mathrm{deg}_{\Po}(y) = k_n} T_{\Po}(y)
	        - \ind{\mathrm{deg}_{\mathrm{box}}(y) = k_n} \widetilde{T}_{\text{box}}(y)\right|} 
	        	f(x,y) \dd y \dd x.
\end{align*}
Since 
\begin{align*}
	\Exp{\frac{\ind{\mathrm{deg}_{\Po}(y) = k_n}}{\expH} T_{\Po}(y)}
	&\le \binom{k_n}{2} \rho_{\Po}(y,k_n)\expH^{-1} \\
	&= \binom{k_n}{2} \rho_{\Po}(y,k_n)\bigT{\expP^{-1}}\\
	&= \bigT{n^{-1} k_n^{2\alpha + 3}}\rho_{\Po}(y,k_n)
\end{align*}
and similar for the other term, it follows that
\begin{align*}
	&\hspace{-60pt}\Exp{ \left| \frac{\ind{\mathrm{deg}_{\Po}(y) = k_n}}{\expH} T_{\Po}(y)
		- \frac{\ind{\mathrm{deg}_{\mathrm{box}}(y) = k_n}}{\expH}  \widetilde{T}_{\text{box}}(y)\right|} \\
	&\le \bigT{n^{-1} k_n^{2\alpha + 3}}\left(\rho_{\Po}(y,k_n) + \rho_{\text{box}}(y,k_n)\right).
\end{align*}
Therefore, by a concentration of heights argument (c.f. Proposition~\ref{prop:concentration_height_general}), it is enough to consider the integral
\begin{equation} \label{eq:expectation_total}
	n \int_{\Kcal_{C}(k_n)} \Exp{\left|\ind{\mathrm{deg}_{\Po}(y) = k_n} T_{\Po}(y)
		- \ind{\mathrm{deg}_{\mathrm{box}}(y) = k_n} \widetilde{T}_{\mathrm{box}}(y)\right|} e^{-\alpha y} \dd y,
\end{equation}
where we also used that $f(x,y)$ is simply a constant multiple of the function $e^{-\alpha y}$. Since $\binom{k_n}{2}\expH
= \bigT{n k_n^{-(2\alpha - 1)}}$ we have to show that
\[
	\lim_{n \to \infty} k_n^{2\alpha - 1} s(k_n)^{-1} \int_{\Kcal_{C}(k_n)} 
		\Exp{\left|\ind{\mathrm{deg}_{\Po}(y) = k_n} T_{\Po}(y)
			- \ind{\mathrm{deg}_{\infty}(y) = k_n} \widetilde{T}_{\mathrm{box}}(y)\right|} e^{-\alpha y} \dd y = 0.
\] 

For $\alpha > 3/4$, $s_{3/4}(k_n) = \log(k_n)^{-1} s_\alpha(k_n) = \smallO{s_\alpha(k_n)}$ and thus it suffices to prove the following two cases:
\begin{enumerate}
\item if $1/2 < \alpha \leq 3/4$, then
\[
	\lim_{n\to \infty} k_n^{6\alpha - 3} \, \int_{\Kcal_{C}(k_n)} 
		\Exp{\left|\ind{\mathrm{deg}_{\Po}(y) = k_n} T_{\Po}(y)
		- \ind{\mathrm{deg}_{\mathrm{box}}(y) = k_n} \widetilde{T}_{\mathrm{box}}(y)\right|} e^{-\alpha y} \dd y = 0,
\]
\item if $3/4 < \alpha$, then
\[ 
	\lim_{n\to \infty} k_n^{2\alpha} \, \int_{\Kcal_{C}(k_n)} 
		\Exp{\left|\ind{\mathrm{deg}_{\Po}(y) = k_n} T_{\Po}(y)
		- \ind{\mathrm{deg}_{\mathrm{box}}(y) = k_n} \widetilde{T}_{\mathrm{box}}(y)\right|} e^{-\alpha y} \dd y = 0.
\]
\end{enumerate}
We shall proceed by expanding the integrand and analyzing the individual terms. With a slight abuse of notation we shall write $y$ instead of $(0,y)$ in an expression such as $\BallHyp{y}$. In addition we write $D_{\Po}(y,k_n;\Pcal)$ for the indicator which is equal to 1 if and only if $\BallHyp{y}$ contains $k_n$ points from $\Pcal \setminus \{(0,y)\}$. We define $D_{\text{box}}(y,k_n;\Pcal)$ analogously for the ball $\BallPon{y}$. It is important to note that for any $p^\prime \in \Rcal$ it holds that $p^\prime \in \BallPon{y} \iff p^\prime \in \BallPo{y}$.

We need to split the integrand over several terms and then analyze each of these separately. Applying the Campbell-Mecke formula yields
\begin{align*} 
 &\hspace{-30pt}\Exp{ \left| \ind{\mathrm{deg}_{\Po}(y) = k_n} P_{\Po}(y)
        - \ind{\mathrm{deg}_{\infty}(y) = k_n}  \widetilde{T}_{\mathrm{box}}(y)
        \right|}\leq \\
 & {\mathbb E} \left[ \sum_{(p_1,p_2) \in \Pcal \setminus \{(0,y)\}, \atop \text{distinct}}
  \left| D_{\Po}(y,k_n-2; \Pcal \setminus \{p_1,p_2 \}) T_{\Po}(y,p_1,p_2) \right. \right.\\
  & \hspace{5cm} 
\left. \left. - D_{\text{box}} (y,k_n-2;\Pcal \setminus \{p_1,p_2\}) \widetilde{T}_{\text{box}}(y,p_1,p_2)
   \right| \vphantom{\sum_{p_1,p_2 \in \Pcal \setminus \{(0,y)\}, \atop \text{distinct}}}\right],
\end{align*}
where the sum ranges over all distinct pairs of points in $\Pcal \setminus \{ (0,y)\}$. In what follows, we will set $\BallSym{p'} = \BallHyp{p'} \bigtriangleup (\BallPo{p'} \cap \Rcal)$ and $\BallInter{p'} =\BallHyp{p'} \cap \BallPon{p'}$ and observe that $\BallInter{y} = \BallHyp{y} \cap \BallPo{y}$. We will now bound the sum that is inside the expectation.
We will split the sum into different parts, depending on combinations of $p_1, p_2\in \Pcal \setminus \{(0,y)\}$ for which only one of the two terms of the difference is non-zero. Clearly, for this we need that either $p_1 \in \BallInter{y}$ and $p_2 \in \BallSym{p_1}$ or $p_1 \in \BallSym{y}$ and $p_2 \in \BallInter{p_1}$. We will consider the following four cases:
\begin{enumerate}
\item$p_1 \in \BallInter{y}$ and $p_2 \in \BallSym{p_1}$
\begin{enumerate}
\item $y_1,y_2 < (1-\eps ) R \wedge (R-y)$
\item $y_1 \ge (1-\eps ) R \wedge (R-y)$
\end{enumerate}
\item$p_1 \in \BallHyp{y} \setminus \BallPo{y}$ with $y_1 < K$ and $p_2 \in \BallInter{y}$.
\item $p_1 \in \BallSym{y}$ with $y_1 \ge K$ and $p_2 \in \BallInter{y}$,
\end{enumerate}
where $K$ in the last two cases is the constant from Lemma~\ref{lem:asymptotics_Omega_hyperbolic}.

Observe that when $y_1 < (1-\eps ) R \wedge (R-y)$ and $y_2 \ge (1-\eps ) R \wedge (R-y)$ it follows from Corollary~\ref{cor:balls_inclusion} that $p_2 \in \BallInter{p_1}$ and thus we do not have to consider this case when $p_1 \in \BallInter{y}$ and $p_2 \in \BallSym{p_1}$. Similarly, when $y_1 \ge K$ and $p_1 \in \BallSym{y}$ Corollary~\ref{cor:balls_inclusion} implies that $p_1 \in \BallHyp{y} \setminus \BallPo{y}$ which explains the setting of case 2.

%\begin{enumerate} 
%\item both $p_1$ and $p_2$ have $y_1,y_2 < (1-\eps ) R \wedge (R-y)$ and 
%\begin{enumerate}
%\item $p_1$ is in $\BallInter{y}$ but $p_2 \in \BallHyp{p_1} \setminus \BallPo{p_1}$ 
%and $\BallHyp{y}$ contains exactly $k_n-2$ or $k_n-1$ other points (depending on whether 
%$p_2 \in \BallHyp{y}$ or not).
%\item $p_1$ is in $\BallInter{y}$ but $p_2 \in (\BallPo{p_1} \cap \Rcal) \setminus \BallHyp{p_1}$ 
%and $\BallPo{y}$ contains exactly $k_n-2$ or $k_n-1$ other points (depending on whether 
%$p_2 \in \BallPo{y}$ or not).
%\end{enumerate}
%\item the above cases but with $y_1 \geq (1-\eps) R \wedge (R -y)$. 
%\item $y_1 \geq K$ and $p_1 \in \BallHyp{y} \setminus 
%\BallPon{y}$ and $p_2 \in \BallInter{y}$ - here we use 
%Corollary~\ref{cor:balls_inclusion} which implies that if $p_1 \in \BallSym{y}$ and $y_1 \geq K$, then in fact $p_1 \in \BallHyp{y} \setminus \BallPo{y}$. 
%\item $y_1 < K$ and $p_1 \in \BallSym{y}$ and $p_2 \in \BallInter{y}$. 
%%\item $p_1$ and $p_2$ are such that $\Delta_\H ((0,y),p_1,p_2)=\Delta_\Pcal ((0,y),p_1,p_2)=1$. 
%\end{enumerate}
We can now bound the sum by the following expression:
\begin{align} 
	&\sum_{(p_1,p_2) \in \Pcal \setminus \{(0,y)\}, \atop \text{distinct}}
	  \left| D_{\Po}(y,k_n-2; \Pcal \setminus \{p_1,p_2 \}) T_{\Po} (y,p_1,p_2) \right. \notag \\
	& \hspace{5cm} \left. - D_{\text{box}} (y,k_n-2;\Pcal \setminus \{p_1,p_2\}) \widetilde{T}_{\text{box}} (y,p_1,p_2)
	   \right|  \notag \\
	&\le \hspace{-15pt} \sum_{{p_1,p_2\in \Pcal \setminus \{(0,y)\} \atop  y_1, y_2 < (1-\eps)R \wedge (R-y),} \atop \text{distinct}}
		\hspace{-10pt} \ind{p_1 \in \BallInter{y}} \cdot \ind{p_2 \in \BallSym{p_1}} 
		\, D_{\Po} (y,k_n-2;\Pcal \setminus \{ p_1,p_2\}) \label{eq:sum_1}\\
	&\hspace{10pt}+ \hspace{-15pt} \sum_{{p_1,p_2\in \Pcal \setminus \{(0,y)\} \atop  y_1, y_2 < (1-\eps)R \wedge (R-y),} \atop \text{distinct}}
		\hspace{-10pt} \ind{p_1 \in \BallInter{y}} \cdot \ind{p_2 \in \BallSym{p_1}} 
		\, D_{\text{box}} (y,k_n-2;\Pcal \setminus \{ p_1,p_2\}) \label{eq:sum_2}\\
	&\hspace{10pt}+  \hspace{-15pt} \sum_{{p_1,p_2 \ \in \Pcal \setminus \{(0,y) \} 
		\atop y_1 \geq (1-\eps) R \wedge (R-y),} \atop \text{distinct}} \hspace{-10pt}
		\ind{p_1 \in \BallInter{y}} \cdot \ind{p_2 \in \BallSym{p_1}\cap \BallHyp{y}} 
		\, D_{\Po} (y,k_n-2;\Pcal \setminus \{ p_1,p_2\}) \label{eq:sum_3} \\
	&\hspace{10pt}+ \hspace{-15pt} \sum_{{p_1,p_2 \ \in \Pcal \setminus \{(0,y) \} 
		\atop y_1 \geq (1-\eps) R \wedge (R-y),} \atop \text{distinct}} \hspace{-10pt}
		\ind{p_1 \in \BallInter{y}} \cdot \ind{p_2 \in \BallSym{p_1}\cap \BallHyp{y}} 
		\, D_{\text{box}} (y,k_n-2;\Pcal \setminus \{ p_1,p_2\}) \label{eq:sum_4} \\
	&\hspace{10pt}+ \hspace{-10pt} \sum_{{p_1,p_2 \in\Pcal \setminus \{(0,y)\} 
		\atop y(p_1) \geq K,} \atop \text{distinct}} \hspace{-10pt} \ind{p_1\in \BallHyp{y}\setminus \BallPo{y}} \ind{p_2\in \BallHyp{y}\cap \BallPo{y}} 
		\, D_{\Po} (y,k_n-2;\Pcal \setminus \{ p_1,p_2\}) \label{eq:sum_5}\\
	&\hspace{10pt}+ \hspace{-10pt} \sum_{{p_1,p_2 \in\Pcal \setminus \{(0,y)\} 
			\atop y(p_1) \geq K,} \atop \text{distinct}} \hspace{-10pt} \ind{p_1\in \BallHyp{y}\setminus \BallPo{y}} \ind{p_2\in \BallHyp{y}\cap \BallPo{y}} 
			\, D_{\text{box}} (y,k_n-2;\Pcal \setminus \{ p_1,p_2\}) \label{eq:sum_5b}\\
	&\hspace{10pt}+ \hspace{-10pt} \sum_{{p_1,p_2 \in\Pcal \setminus \{(0,y)\} \atop \ y(p_1) < K,} \atop \text{distinct}}
		\ind{p_1\in \BallSym{y}} \ind{p_2\in \BallHyp{y}\cap \BallPo{y}}. \label{eq:sum_6}
%& +  \sum_{p_1,p_2 \in \Pcal \setminus \{(0,y)\}}^{\not =} \ind{p_1,p_2 \in \BallHyp{(0,y)}\cup \BallPo{(0,y)}} \cdot 
%D_{\Po}(y,k_n-2; \Pcal \setminus \{ p_1,p_2\})  \cdot 
%\left| \expH^{-1} - \expP^{-1} \right|.  \label{eq:sum_7}
\end{align}
In the following paragraphs we will give upper bounds on the expected values of each one of these partial sums. 

\paragraph{The sums~\eqref{eq:sum_1} and~\eqref{eq:sum_2}}

We will analyze~\eqref{eq:sum_1}. The analysis of the other sum~\eqref{eq:sum_2} is similar.
Note first that for any two points $p_1,p_2$ the following holds: $p_1 \in \BallHyp{y}$ and $p_2 \in \BallSym{p_1}\cap \BallHyp{y}$, then $p_2 \in \BallHyp{y}$ and $p_1 \in \BallSym{p_2}\cap \BallHyp{y}$.
Using this symmetry, it suffices to consider distinct pairs $(p_1,p_2) \in \Pcal \setminus \{(0,y)\}$ with $0\leq y_2 \leq y_1 \leq R- y$. Let $\dom{}$ denote the set of these pairs. 

We are going to consider several sub-cases and, thereby, split the domain $\dom{}$ into the corresponding sub-domains. 
Let $\omega =\omega (n) \to \infty$ as $n\to \infty$ be a slowly growing function and set $y_\omega := y +\omega$. 
We let 
\begin{align*}
	\dom{1} &= \{(p_1,p_2) \in \dom{} \cap \Pcal \ : \ y \leq y_1 \leq R/2, \, y_\omega \leq y_2 \leq y_1 \},\\
	\dom{2} &= \{(p_1,p_2) \in \dom{} \cap \Pcal \ : \ y_1 \leq R/2, \, y_2 \leq y_\omega \} \text{ and }\\
	\dom{3} &=  \{(p_1,p_2) \in \dom{} \cap \Pcal \ : R/2 < y_1 \leq R -y, \, y_2 \leq y_1 \}.
\end{align*} 
Note that $\dom{} \subseteq \dom{1} \cup \dom{2}\cup \dom{3}$.
Hence, we can write 
\begin{equation} \label{eq:1sum-rewriting}
\begin{split} 
	&\Exp{\sum_{p_1, p_2\in \Pcal \setminus \{(0,y)\} 
	\atop y_1, y_2 \leq (1-\eps) R\wedge (R-y)} \hspace{-10pt} \ind{p_1 \in \BallHyp{y}} 
	\ind{p_2 \in \BallSym{p_1}\cap \BallHyp{y}} 
	\, D_{\Po} (y,k_n-2;\Pcal \setminus \{p_1,p_2\})} \\ 
	&\le \sum_{i=1}^3 \Exp{\sum_{(p_1, p_2)\in \dom{i}} \ind{p_1 \in \BallHyp{y}} 
	\ind{p_2 \in \BallSym{p_1}\cap \BallHyp{y}} \cdot D_{\Po} (y,k_n-2;\Pcal \setminus \{p_1,p_2\})}.
\end{split}
\end{equation}
We bound each one of the above three summands as follows:  
\begin{equation} \label{eq:term1}
\begin{split}
	&\Exp{\sum_{(p_1, p_2)\in \dom{1}} \ind{p_1 \in \BallHyp{y}} \cdot \ind{p_2 \in \BallSym{p_1}\cap \BallHyp{y}} 
		\, D_{\Po} (y,k_n-2;\Pcal \setminus \{p_1,p_2\})} \\
	&\le \Exp{\sum_{(p_1, p_2)\in \dom{1}} \ind{p_1 \in \BallHyp{y}} \cdot \ind{p_2 \in  \BallHyp{y}} 
		\, D_{\Po} (y,k_n-2;\Pcal \setminus \{p_1,p_2\})} := \mathcal{I}_n^{(1)}(y),
\end{split}
\end{equation}

\begin{equation} \label{eq:term2}
\begin{split}
	&\Exp{\sum_{(p_1, p_2)\in \dom{2}} \ind{p_1 \in \BallHyp{y}} \cdot \ind{p_2 \in \BallSym{p_1}\cap \BallHyp{y}} 
		\, D_{\Po} (y,k_n-2;\Pcal \setminus \{p_1,p_2\})} \\
	&\le \Exp{\sum_{(p_1, p_2)\in \dom{2}} \ind{p_1 \in \BallHyp{y}} \cdot \ind{p_2 \in \BallSym{p_1}} 
		\, D_{\Po} (y,k_n-2;\Pcal \setminus \{p_1,p_2\})} := \mathcal{I}_n^{(2)}(y)
\end{split}
\end{equation}
and 
\begin{equation}\label{eq:term3}
\begin{split}
	&\Exp{\sum_{(p_1, p_2)\in \dom{3}} \ind{p_1 \in \BallHyp{y}} \cdot \ind{p_2 \in \BallSym{p_1}\cap \BallHyp{y}} 
		\, D_{\Po} (y,k_n-2;\Pcal \setminus \{p_1,p_2\})} \\
	&\le \Exp{\sum_{(p_1, p_2)\in \dom{3}} \ind{p_1 \in \BallHyp{y}} \cdot \ind{p_2 \in \BallHyp{y}} 
		\, D_{\Po} (y,k_n-2;\Pcal \setminus \{p_1,p_2\})} := \mathcal{I}_n^{(3)}(y).
\end{split}
\end{equation}
We will bound each term using the Campbell-Mecke formula and show for $i = 1,2,3$ that for $1/2 < \alpha < 3/4$
\begin{equation}
	\lim_{n \to \infty} k_n^{6\alpha -3} 
		\int_{\Kcal_{C}(k_n)} \mathcal{I}_n^{(i)}(y) e^{-\alpha} \dd y = 0,
\end{equation}
and for $\alpha \ge 3/4$
\begin{equation}
	\lim_{n \to \infty} k_n^{2\alpha}
		\int_{\Kcal_{C}(k_n)} \mathcal{I}_n^{(i)}(y) e^{-\alpha} \dd y = 0.
\end{equation}

For the first term~\eqref{eq:term1}, we note that
\[ 
\Exp{D_{\Po}(y, k_n-2;\Pcal \setminus \{p_1, p_2\}} =\rho_{\Po}(y,k_n-2).
\] 
and hence $\mathcal{I}_n^{(1)}(y)$ becomes
\begin{equation} \label{eq:1sum-expansion} 
\begin{split}
	\rho_{\Po}(y,k_n-2) \int_{-I_n}^{I_n} \int_y^{R/2}\int_{-I_n}^{I_n} \int_{y_\omega}^{y_1}
  	\ind{p_1 \in \BallInter{y}} \, \ind{p_2 \in \BallHyp{y}} 
  	e^{-\alpha (y_1 + y_2)} \dd y_2 \dd x_2 \dd y_1 \dd x_1.
\end{split}
\end{equation}

Next, Lemma~\ref{lem:asymptotics_Omega_hyperbolic} implies that for $y'\leq R -y$, we have that if $(x',y') \in \BallHyp{y}$, then $|x'| < (1+ K) e^{y/2 + y'/2}$, where $K >0$ is as in Lemma~\ref{lem:asymptotics_Omega_hyperbolic}. Using these observations, we obtain: 
\begin{align*}
	&\hspace{-30pt} \Exp{\sum_{p_1, p_2\in \dom{1}} \ind{p_1 \in \BallInter{(0,y)}} \cdot \ind{p_2 \in \BallHyp{y}} \cdot 
	D_{\Po} (y,k_n-2;\Pcal \setminus \{p_1, p_2\})} \\
	&= \rho_{\Po}(y,k_n-2)
	e^{y}\int_y^{R/2} e^{y_1/2}\int_{y_\omega}^{y_1} e^{y_2/2} e^{-\alpha y_2} \cdot e^{-\alpha y_1} dy_2 dy_1.
\end{align*}

Now, the double integral becomes
\begin{equation}
\begin{split}
& \int_y^{R/2} e^{y_1/2}\int_{y_\omega}^{y_1} e^{y_2/2} e^{-\alpha y_2} \cdot e^{-\alpha y_1} dy_2 dy_1 = \\
&  O(1) \cdot  \int_y^{R/2} e^{y_1/2 - \alpha y_1} \cdot 
e^{(1/2 - \alpha) y_\omega} dy_1 \\
& =O(1) \cdot e^{(1/2 - \alpha) y_\omega} \cdot \int_y^{R/2} e^{y_1/2 - \alpha y_1} d y_1 \\
& =O(1) \cdot e^{(1/2 - \alpha) y_\omega + (1/2 - \alpha) y} \\ 
& \ll e^{(1 - 2\alpha) y},
\end{split}
\end{equation}
since $y_\omega = y + \omega$ and $\omega \to \infty$. 
We then deduce that 
\begin{equation}
\begin{split}
&\hspace{-50pt}\Exp{\sum_{p_1, p_2\in \dom{1}} \ind{p_1 \in \BallInter{(0,y)}} \cdot \ind{p_2 \in \BallHyp{y}} \cdot 
D_{\Po} (y,k_n-2;\Pcal \setminus \{p_1, p_2\})}\\
& \ll \rho_{\Po}(y,k_n-2) e^{(1-2\alpha)y}.
\end{split}
\end{equation}
We now integrate this with respect to $y$ and determine its contribution to~\eqref{eq:expectation_total};
\begin{align*} 
	& \int_{\Kcal_{C}(k_n)} \rho_{\Po}(y,k_n-2) e^{(1-2\alpha)y} e^{-\alpha y} \dd y \dd x \\
	&= \bigO{k_n^{-6\alpha + 1}}
\end{align*}
where we used Lemma~\ref{lem:gamma_approx} with $t = 1 - 2\alpha$.

Since $1 - 6\alpha + \min\{6\alpha - 3, 2\alpha\} < 0$ for all $\alpha > 1/2$ we deduce that for $1/2 < \alpha < 3/4$
\[
	\lim_{n \to \infty} k_n^{6\alpha - 3} 
	\int_{\Kcal_{C}(k_n)} \mathcal{I}_n^{(1)}(y) e^{-\alpha y} \dd y = 0,
\]
while for $\alpha \ge 3/4$
\[
	\lim_{n \to \infty} k_n^{2\alpha}  
		\int_{\Kcal_{C}(k_n)} \mathcal{I}_n^{(1)}(y) e^{-\alpha y} \dd y = 0.
\]

We will now bound the term in~\eqref{eq:term2}. Using similar observations as for the previous term we get that $\mathcal{I}_n^{(2)}(y)$ equals
\[
	 \rho_{\Po}(y,k_n-2) \int_{-I_n}^{I_n} \int_0^{R/2} \hspace{-2pt} \int_{-I_n}^{I_n} \int_0^{y_\omega}
	 \ind{p_1 \in \BallHyp{y}} \, \ind{p_2 \in \BallSym{(0,y)}} e^{-\alpha(y_1 + y_2)} \dd y_2 
	 \dd x_2 \dd y_1 \dd x_1.
\]

Now, Lemma~\ref{lem:asymptotics_Omega_hyperbolic} implies that for 
$y_2 \leq R -y_1$, we have that if 
$(x_2,y_2) \in \BallSym{(x_1,y_1)}$, then $x_2$ lies in an interval of length 
$Ke^{3 y/2 + 3y'/2 - R}$, where $K >0$ is again the constant in Lemma~\ref{lem:asymptotics_Omega_hyperbolic}. 
%For $y> R - y$, we will simply take $|x'| < I_n$. 
Using these observations we obtain: 
\begin{equation} \label{eq:term2_intermediate}
	\mathcal{I}_n^{(2)}(y) = \rho_{\Po}(y,k_n-2) e^{y/2}\int_0^{R/2} e^{y_1/2 + 3y_1/2}
	\int_0^{y_\omega} e^{3y_2/2 - R} e^{-\alpha y_2} \cdot e^{-\alpha y_1} \dd y_2 \dd y_1. 
\end{equation} 
The integrals satisfy
\begin{align*}
	&\hspace{-30pt} e^{-R}  \left(\int_0^{R/2} e^{(2-\alpha )y_1} \dd y_1\right) 
		\left( \int_0^{y_\omega} e^{(3/2 - \alpha) y_2} \dd y_2 \right)\\
	&= \bigO{1} e^{-R} \left(\begin{cases}
		e^{(1-\alpha/2)R} &\mbox{if } \frac{1}{2} < \alpha < 2\\
		R &\mbox{if } \alpha \ge 2
	\end{cases}\right)
	\left(\begin{cases}
		e^{(3/2 - \alpha)y_\omega} &\mbox{if } \frac{1}{2} < \alpha < \frac{3}{2}\\
		y &\mbox{if } \alpha \ge \frac{3}{2}
	\end{cases}\right)\\
	&= \bigO{1} \begin{cases}
		e^{-\frac{\alpha}{2}R} e^{(3/2 - \alpha)y} &\mbox{if } \frac{1}{2} < \alpha < \frac{3}{2}\\
		(y +\omega(n)) e^{-\frac{\alpha}{2} R} &\mbox{if } \frac{3}{2} \le \alpha < 2\\
		(y + \omega(n)) R e^{-R} &\mbox{if } \alpha \ge 2
	\end{cases}.
\end{align*}
Since $y_\omega := y + \omega(n) \le R = \bigO{\log(n)}$ we conclude that on $\Kcal_{C}(k_n)$
\[
	\mathcal{I}_n^{(2)}(y) = \bigO{1} \rho_{\Po}(y,k_n-2) \begin{cases}
				n^{-\alpha} k_n^{3 - 2\alpha} &\mbox{if } \frac{1}{2} < \alpha < \frac{3}{2}\\
				n^{-\alpha} \log(n) &\mbox{if } \frac{3}{2} \le \alpha < 2\\
				n^{-2} \log(n)^{2} &\mbox{if } \alpha \ge 2
		\end{cases},
\]
and hence
\begin{align*}
	\int_{\Kcal_{C}(k_n)} \mathcal{I}_n^{(2)}(y) e^{-\alpha y} \dd y
	&= \bigO{1} k_n^{-(2\alpha + 1)}\begin{cases}
				n^{-\alpha} k_n^{3 - 2\alpha} &\mbox{if } \frac{1}{2} < \alpha < \frac{3}{2}\\
				n^{-\alpha} \log(n) &\mbox{if } \frac{3}{2} \le \alpha < 2\\
				n^{-2} \log(n)^{2} &\mbox{if } \alpha \ge 2
		\end{cases},	\\
	&= \bigO{1} \begin{cases}
				n^{-\alpha} k_n^{2 - 4\alpha} &\mbox{if } \frac{1}{2} < \alpha < \frac{3}{2}\\
				n^{-\alpha} \log(n)k_n^{-(2\alpha + 1)} &\mbox{if } \frac{3}{2} \le \alpha < 2\\
				n^{-2} \log(n)^{2} k_n^{-(2\alpha + 1)} &\mbox{if } \alpha \ge 2
		\end{cases}.
\end{align*}

Now for $1/2 < \alpha < 3/4$ it holds that $4\alpha^2 - \alpha + 1 > 0$. Hence since $k_n = \bigO{n^{\frac{1}{2\alpha + 1}}}$, we have
\[
	k_n^{6\alpha - 3} n^{-\alpha} k_n^{2 - 4\alpha} = n^{-\alpha} k_n^{2\alpha - 1} = \bigO{n^{-\alpha + \frac{2\alpha - 1}{2\alpha + 1}}} = \bigO{k_n^{-\frac{4\alpha^2 - \alpha + 1}{2\alpha + 1}}} = \smallO{1},
\]
from which we deduce that
\[
	\lim_{n \to \infty} k_n^{6\alpha - 3} \int_{\Kcal_{C}(k_n)} \mathcal{I}_n^{(2)}(y)
	e^{-\alpha y} \dd y = 0.
\]
For $\alpha \ge 3/4$ we have that both $n^{-\alpha} \log(n) k_n^{-1}$ and $n^{-2} \log(n)^2 k_n^{-1}$ converge to zero as $n \to \infty$ and hence in this case
\[
	\lim_{n \to \infty} k_n^{2\alpha} \int_{\Kcal_{C}(k_n)} \mathcal{I}_n^{(2)}(y)
	e^{-\alpha y} \dd y = 0.
\]

We will now consider the term in~\eqref{eq:term3}. 
Recall that $\dom{3}$ consists of all pairs $(p_1, p_2 ) \in \dom{}$ such that $R/2 < y_1 \leq (1-\eps)R \wedge (R-y)$ and $y_1 \leq y_\omega$ with the property that 
$p_1 \in \BallHyp{y}$ and $p_2 \in \BallSym{p_1} \cap \BallHyp{y}$.    
So, in particular, $p_2 \in (\BallHyp{p_1} \cup \BallPo{p_1}) \cap \BallHyp{y}$.

We will consider this intersection more closely. We use Lemma~\ref{lem:asymptotics_Omega_hyperbolic} to define a ball around $p_1$ that contains both 
$\BallHyp{p_1}$ and $\BallPo{p_1}$.
For $K > 0$, we define, for any point $p_1=(x_1,y_1) \in \R \times \R_+$,
\begin{equation}\label{eq:def_fatball}
	\FatBallHyp{p_1} : = \{ (x',y') \ : \  y' < R - y_1, \ | x_1 - x'| < (1+K) e^{\frac{1}{2} (y_1 + y')}  \}.
\end{equation}
%Note that $\Delta (r(p),r') = \frac12 e^{R/2} \theta_R (p)$.
It is an implication of Lemma~\ref{lem:asymptotics_Omega_hyperbolic}  that 
\begin{equation*} %\label{eq:ball_inclusion} 
(\BallHyp{p_1} \cup \BallPo{p_1}) \cap \Rcal([0, R - y_1]) \subseteq \FatBallHyp{p_1}
\end{equation*}
Therefore, any point $p_2 = (x_2,y_2) \in \BallSym{p_1} \cap \BallHyp{y}$ with 
$y_2 \leq R-y_1$ must belong to $\FatBallHyp{p_1} \cap \FatBallHyp{y}$.

We will use this in order to derive a lower bound on $y_2$ as a function of $x_1, y_1$. 
Let us suppose without loss of generality that $x_1 < 0$. 
The left boundary of $\FatBallHyp{(0,y)}$ is given by the equation 
$x^\prime = (1-K)e^{\frac{1}{2} (y + y^\prime)}$ whereas the right boundary of $\FatBallHyp{p_1}$ is given by the curve having equation $x^\prime = x_1 + (1+ K)e^{\frac{1}{2} (y_1 + y^\prime)}.$
The equation that determines the intersection point $(\hat{x},\hat{y})$ of these curves  is
\[
	x_1 + (1+K)e^{(y_1 + \hat{y})/2}= (1-K) e^{(y + \hat{y})/2}.
\]
We can solve the above for $\hat{y}$  
\begin{equation*} 
\begin{split}
|x_1| &=(1+K) e^{\hat{y}/2} \left( e^{y_1/2} + e^{y/2} \right).
\end{split}
\end{equation*}
But $y_1 > R/2$ and since $y \in \Kcal_{C}(k_n)$, it follows that for sufficiently large $n$, $y \le (1+\eps) R /(2\alpha +1)$. So if $\eps$ is small enough depending on $\alpha$, we have 
$$ |x_1| =(1+K) e^{\hat{y}/2} \left( e^{y_1/2} + e^{y/2} \right) = (1+K+o(1))e^{\hat{y}/2 + y_1/2}. $$
Let $c_K^2$ denote the multiplicative term $1+ K+o(1)$, which appears in the above.
The above yields
\begin{equation} \label{eq:to_use_I1}
\hat{y}= \left(2 \log(|x_1|e^{-y_1/2}) - \log c_K \right) \vee 0 := \hat{y}(x_1,y_1). 
\end{equation}
In particular, note that $\hat{y} = 0$ if and only if $|x_1| \leq c_K e^{y_1/2}$.  
Moreover, since $p_1 \in \BallHyp{y}$ and $x_1 \leq R - y$, we also have that 
$|x_1| \leq e^{(y+y_1)/2} (1+o(1))$. This upper bound on $|x_1|$ together with~\eqref{eq:to_use_I1}, imply that for $n$ sufficiently large, we have $\hat{y} \leq y$. This observation will be used below, where 
we integrate over $y_2$, thus ensuring that the integrals are non-zero. 

We conclude that 
\begin{equation*}%\label{eq:intersex_approx}
	p^\prime \in \FatBallHyp{y}\cap \FatBallHyp{(x_1,y_1)} \Rightarrow y^\prime \ge \hat{y}(x_1,y_1),
\end{equation*}
which implies
\begin{equation} \label{eq:symdiff_loc}
\begin{split} 
 \ind{p_2 \in \BallSym{p_1}\cap \BallHyp{y}} \leq \ind{y_2 \geq \hat{y}(x_1,y_1), p_2 \in \FatBallHyp{(0,y)}}.
\end{split}
\end{equation}
If we integrate this over $x_2, y_2$ we get 
\begin{align*}
\int_{-I_n}^{I_n} \int_{0}^{y_1}  \ind{p_2 \in \BallSym{p_1}\cap \BallHyp{y}}  
e^{-\alpha y_2} dy_2 dx_2
&  \leq 
\int_{-I_n}^{I_n} \int_{0}^{y_1}  \ind{y_2 \geq \hat{y}(x_1,y_1), p_2 \in \FatBallHyp{y}}  
e^{-\alpha y_2} dy_2 dx_2 \\
&\leq (1+K) \cdot e^{y/2} \int_{\hat{y}(x_1,y_1)}^{y_1} e^{y_2/2 - \alpha y_2} dy_2 \\
&=O(1) \cdot e^{y/2 + (1/2 -\alpha) \hat{y}(x_1,y_1)}.
\end{align*}
Note also that 
\[
	\Exp{D_{\Po} (y,k_n-2;\Pcal \setminus \{p_1, p_2\})} = \rho_{\Po}(y,k_n-2),
\]
uniformly over all $(p_1, p_2) \in \dom{3}$. Hence the Campbell-Mecke formula yields that $\mathcal{I}_n^{(3)}(y)$ equals: 
\begin{align*}
 	&O(1) \rho_{\Po}(y,k_n-2) \, e^{y/2} \int_{-I_n}^{I_n} \int_{R/2}^{(R - y) \wedge (1-\eps)R}  
		\ind{p_1 \in \BallHyp{y}} e^{(1/2 -\alpha) \hat{y}(x_1,y_1) - \alpha y_1} dy_1dx_1 \\
	&= O(1) \rho_{\Po}(y,k_n-2) \, e^{y/2} \int_{-I_n}^{I_n} \int_{R/2}^{(R - y) \wedge (1-\eps)R}  
		\ind{p_1 \in \FatBallHyp{y}} e^{(1/2 -\alpha) \hat{y}(x_1,y_1) - \alpha y_1} dy_1dx_1.
\end{align*}
Due to the symmetry of $\FatBallHyp{y}$, the integration over $x_1$ is: 
\[
	O(1) \cdot e^{y/2} \cdot \int_0^{(1+K)e^{y/2 + y_1/2}} e^{\hat{y}(x_1,y_1) (1/2 -\alpha)} dx_1
\]
We will split this integral into two parts according to the value of $\hat{y}(x_1,y_1)$:
\[
\int_0^{(1+K) e^{y/2 + y_1/2}} e^{\hat{y}(x_1,y_1) (1/2 -\alpha)} dx_1 = 
\int_{c_K e^{y_1/2}}^{(1+K)e^{y/2 + y_1/2}} e^{\hat{y}(x_1,y_1) (1/2 -\alpha)} dx_1 + \int_0^{c_K e^{y_1/2}} dx_1.
\]
The first integral becomes: 
\begin{align*}
&\int_{c_K e^{y_1/2}}^{(1+K)e^{y/2 + y_1/2}} e^{\hat{y}(x_1,y_1) (1/2 -\alpha)} dx_1  = 
\int_{c_K e^{y_1/2}}^{(1+K)e^{y/2 + y_1/2}} e^{\hat{y}(x_1,y_1)/2 (1 -2\alpha)} dx_1  \\
&= O(1)\cdot \int_{c_K e^{y_1/2}}^{(1+K)e^{y/2 + y_1/2}} x_1^{1 -2\alpha} 
e^{-\frac{y_1}{2} (1-2\alpha)} dx_1 \\
&= O(1) \cdot e^{-y_1/2 + \alpha y_1} \cdot e^{\frac{(y+y_1)}{2} 2(1-\alpha)} \\
&=O(1) \cdot e^{y_1/2 +y(1-\alpha)}.  
\end{align*}
The second integral trivially gives: 
\[
	\int_0^{c_K e^{y_1/2}} dx_1 = O(1) \cdot e^{y_1/2} = O(1) \cdot e^{y_1/2 +y(1-\alpha)}.
\]
We conclude that 
\[
	e^{y/2} \cdot \int_0^{(1+K)e^{y/2 + y_1/2}} e^{\hat{y}(x_1,y_1) (1/2 -\alpha)} dx_1 = 
	O(1) \cdot e^{y_1/2 +y(3/2-\alpha)}.
\]
Now, we integrate this with respect to $y_1$ and get 
\[
	e^{y(3/2 -\alpha)} \int_{R/2}^{R-y} e^{(1/2-\alpha)y_1} dy_1 =  O(1) \cdot e^{y(3/2 -\alpha)} 
	e^{(1/2 -\alpha) R/2} = O(1) \cdot n^{1/2 -\alpha} \cdot e^{y(3/2 - \alpha)},
\]
from which we deduce
\begin{equation} \label{eq:term3_intermediate}
	\mathcal{I}_n^{(3)}(y) = 
	O(1) \cdot n^{1/2 -\alpha} e^{y(3/2 - \alpha)} \, \rho_{\Po}(y,k_n-2). 
\end{equation}
We now apply Lemma~\ref{lem:gamma_approx} with $t = \frac{3}{2} - \alpha$ and get 
\begin{align*}
	\int_{\Kcal_{C}(k_n)} \mathcal{I}_n^{(3)}(y) e^{-\alpha y} \dd y
	&= \bigO{1} n^{-(\alpha - \frac{1}{2})} \int_{\Kcal_{C}(k_n)} e^{(3/2 - \alpha)y} \rho_{\Po}(y,k_n-2) e^{-\alpha y} 
		\dd y\\
	&= \bigO{n^{-(\alpha - \frac{1}{2})} k_n^{2 -4\alpha}}.
\end{align*}

Since for $\alpha > 1/2$, $k_n = \bigO{n^{\frac{1}{2\alpha +1}}} = \smallO{n^{1/2}}$ we have that $k_n^{6\alpha - 3} k_n^{2 - 4\alpha} n^{-(\alpha - 1/2)} = \smallO{1}$ and hence for $1/2 < \alpha < 3/4$.
\[
	\lim_{n \to \infty} k_n^{6\alpha - 3} \int_{\Kcal_{C}(k_n)} \mathcal{I}_n^{(3)}(y) e^{-\alpha y} \dd x \dd y = 0,
\]
For $\alpha \ge 3/4$ we observe that $2\alpha^2 + 2\alpha - 5/2 > 0$. Hence,
\[
	k_n^{2\alpha} n^{-(\alpha - \frac{1}{2})} k_n^{2 -4\alpha} 
	= \bigO{n^{-(\alpha - 1/2)} n^{\frac{2 - 2\alpha}{2\alpha + 1}}}
 	= \bigO{n^{- \frac{2\alpha^2 + 2\alpha - 5/2}{2\alpha + 1}}} = \smallO{1}.
\]
and we get for $\alpha \ge 3/4$
\[
	\lim_{n \to \infty} k_n^{2\alpha} \int_{\Kcal_{C}(k_n)} \mathcal{I}_n^{(3)}(y) e^{-\alpha y} \dd x \dd y = 0.
\]

\paragraph{The sums~\eqref{eq:sum_3} and~\eqref{eq:sum_4}}
Again, we will only consider~\eqref{eq:sum_3} since the analysis for the other term is similar. Recall that in this case, we consider pairs $(p_1,p_2)$, with $p_1 = (x_1,y_1)$ satisfying 
$y_1 \geq (R - y) \wedge (1-\eps)R$, and $p_1 \in \BallHyp{y}$, $p_2 \in \BallSym{p_1} \cap \BallHyp{y}$. 
We split this into three sub-domains:  i) $y_2 \geq R - y$; ii) $R -y_1 \leq y_2 \leq R -y$ and iii) $y_2 < R - y_1$. Similar to the analysis above we define
\begin{align*}
	\dom{1} &:= \{(p_1, p_2) \ : \  p_1,p_2 \ \in \Pcal \setminus \{(0,y) \}, \ y_1 \geq (1-\eps) R \wedge (R-y), 
		\ R -y \leq y_2 \leq R \}\\
	\dom{2} &:= \{(p_1, p_2) \ : \  p_1,p_2 \ \in \Pcal \setminus \{(0,y) \}, \ y_1 \geq (1-\eps) R \wedge (R-y), 
		\ R -y_1 \leq y_2 \leq R - y \}\\
	\dom{3} &:= \{(p_1, p_2) \ : \  p_1,p_2 \ \in \Pcal \setminus \{(0,y) \}, \ y_1 \geq (1-\eps) R \wedge (R-y), 
		\ y_2 \leq R - y_1 \}
\end{align*}
and write, for $i = 1,2,3$,
\[
	\mathcal{I}_n^{(i)}(y) := \Exp { \sum_{(p_1,p_2)  \in \dom{i}} 
	\ind{p_1 \in \BallHyp{y}} \cdot \ind{p_2 \in \BallSym{p_1} \cap \BallHyp{y}}
	\cdot D_{\Po} (y,k_n-2;\Pcal \setminus \{ p_1, p_2\})}.
\]

In the first case, note that for $y \in \Kcal_{C}(k_n)$ we have, for small enough $\varepsilon$ and sufficiently large $n$, $2y \le 2(1+\eps)
\frac{R}{2\alpha +1} = \smallO{R}$. Thus $y_1 + y_2 \geq 2(R - y) = \Omega(R)$ and thus $p_2 \in \BallHyp{p_1}$ for large enough $n$. Furthermore, 
$y_2 > R - y_1 + 2\ln (\pi/2)$, which implies that $p_2 \in \BallPo{p_1}$ too. 
Hence, the contribution from these pairs is zero.   

The Campbell-Mecke formula yields that: 
\begin{equation*}
\begin{split} 
\mathcal{I}_n^{(1)}(y) 
&=O(1) \int_{-I_n}^{I_n} \int_{(1-\eps) R \wedge (R-y)}^{R} \ind{p_1 \in \BallHyp{y}} \times\\
& \hspace{15pt}\int_{-I_n}^{I_n} \int_{R - y}^{R} 
\ind{p_2 \in \BallSym{p_1} \cap \BallHyp{y}}
  \rho_{\Po}(y,k_n-2) \cdot
e^{-\alpha( y_2 + y_1)} \dd y_2 \dd x_2 \dd y_1 \dd x_1.
\end{split}
\end{equation*}

We proceed to bound the integral: 
\begin{align*}
	&\int_{-I_n}^{I_n} \int_{(1-\eps) R \wedge (R-y)}^{R} \hspace{-5pt} \ind{p_1 \in \BallHyp{y}}
		\int_{-I_n}^{I_n} \int_{R - y}^{R } \ind{p_2 \in \BallSym{p_1} \cap \BallHyp{y}} 
		e^{-\alpha(y_1 + y_2)} \dd y_2 \dd x_2 \dd y_1 \dd x_1 \\
	&\leq \int_{-I_n}^{I_n} \int_{(1-\eps) R \wedge (R-y)}^{R} 
		\int_{-I_n}^{I_n} \int_{R - y}^{R}  e^{-\alpha(y_1 + y_2)} \dd y_2 \dd x_2 \dd y_1 \dd x_1\\
	&= \left( \int_{-I_n}^{I_n} \int_{(1-\eps) R \wedge (R-y)}^{R} e^{- \alpha y_1} \dd y_1 \dd x_1 \right) 
	\left(\int_{-I_n}^{I_n} \int_{R - y}^{R}  e^{-\alpha y_2} \dd y_2 \dd x_2 \right).
\end{align*}
We evaluate
$$  \int_{-I_n}^{I_n} \int_{(1-\eps) R \wedge (R-y)}^{R} 
e^{- \alpha y_1} dy_1 dx_1= O(1) \cdot n \cdot e^{-\alpha R + ((\eps R) \vee y))\alpha}
=O(1) \cdot n \cdot e^{-\alpha R + \alpha y + \alpha \eps R }
$$
and 
$$\int_{-I_n}^{I_n} \int_{R - y}^{R}  e^{-\alpha y_2} dy_2 dx_2 
=O(1) \cdot n \cdot e^{-\alpha R +\alpha y}.
$$
Also, $n \cdot e^{-\alpha R} = O(1) \cdot e^{(1/2 -\alpha) R}$, whereby we deduce that 
\begin{equation*}
\begin{split}
&\hspace{-30pt}\int_{\dom{1}} \ind{p_1 \in \BallHyp{y}} \ind{p_2 \in \BallSym{p_1} \cap \BallHyp{y}} 
 e^{-\alpha(y_1 + y_2)} dy_2 dx_2 dy_1 dx_1 \\
&=O(1) \cdot e^{(1-2\alpha)R + 2\alpha y + \alpha \eps R} =O(1) \cdot n^{2(1-2\alpha) + 2\alpha \eps} \cdot e^{2\alpha y}.
\end{split}.
\end{equation*}

With these computations we obtain
\begin{align*} 
	\int_{\Kcal_{C}(k_n)} \mathcal{I}_n^{(1)}(y) e^{-\alpha y} \dd x \dd y
	&= O(1) n^{2(1-2\alpha) + 2\alpha \eps}
		\int_{\Kcal_{C}(k_n)} e^{2\alpha y} \rho_{\Po}(y,k_n-2) e^{-\alpha y} \dd y \dd x \\ 
	&=O(1) n^{2(1-2\alpha) + 2\alpha \eps} \, k_n^{2\alpha - 1}.
\end{align*}
Thus, for $1/2 < \alpha < 3/4$, we have 
\begin{equation*}
k_n^{6 \alpha -3} \,  n^{2(1-2\alpha) + 2\alpha \eps} \, k_n^{2\alpha -1} = 
n^{2\alpha \eps} \, \left( \frac{k_n^2}{n} \right)^{2(2\alpha -1)} = o(1), 
\end{equation*}
provided that $\eps = \eps (\alpha)>0$ is small enough, and hence for such $\eps$
\[
	\lim_{n \to \infty} k_n^{6\alpha - 3} \int_{\Kcal_{C}(k_n)} \mathcal{I}_n^{(1)}(y) e^{-\alpha y} \dd x \dd y = 0.
\]

When $\alpha \ge 3/4$ we have $2(1-2\alpha) < 1/2(4\alpha - 1)$ and we get
\begin{equation*} 
k_n^{2\alpha} \, n^{2(1-2\alpha) + 2\alpha \eps} \cdot k_n^{2\alpha -1}
\le  k_n^{4\alpha - 1} \, n^{2(1-2\alpha)} n^{2\alpha \eps} = o(1),
\end{equation*}
provided that $\eps$ is small enough, depending on $\alpha$, so that
\[
	\lim_{n \to \infty} k_n^{2\alpha} \int_{\Kcal_{C}(k_n)} \mathcal{I}_n^{(1)}(y) e^{-\alpha y} \dd x \dd y = 0.
\]

We now consider the second sub-domain $\dom{2}$. The Campbell-Mecke formula yields that: 
\begin{align*}
	\mathcal{I}_n^{(2)}(y) 
	&= \Exp { \sum_{(p_1,p_2)  \in \dom{2}} \ind{p_1 \in \BallHyp{y}} \ind{p_2 \in \BallSym{p_1} \cap \BallHyp{y}}
		D_{\Po} (y,k_n-2;\Pcal \setminus \{ p_1\})} \\
	&= O(1) \rho_{\Po}(y,k_n-2) \cdot \int_{-I_n}^{I_n} \int_{(1-\eps) R \wedge (R-y)}^{R} 
		\ind{p_1 \in \BallHyp{y}} \times \\
	&\hspace{15pt} \int_{-I_n}^{I_n} \int_{R - y_1}^{R-y} \ind{p_2 \in \BallSym{p_1} \cap \BallHyp{y}}
		e^{-\alpha(y_1 + y_2)} \dd y_2 \dd x_2 \dd y_1 \dd x_1.
\end{align*}

We bound the integral as follows: 
\begin{align*}
	&\int_{-I_n}^{I_n} \int_{(1-\eps) R \wedge (R-y)}^{R} \hspace{-5pt} \ind{p_1 \in \BallHyp{y}}
		\int_{-I_n}^{I_n} \int_{R - y_1}^{R - y} \ind{p_2 \in \BallSym{p_1} \cap \BallHyp{y}}
		e^{-\alpha(y_1 + y_2)} \dd y_2 \dd x_2 \dd y_1 \dd x_1 \\
	&\leq \int_{-I_n}^{I_n} \int_{(1-\eps) R \wedge (R-y)}^{R} \ind{p_1 \in \BallHyp{y}}
		\int_{-I_n}^{I_n} \int_{R - y_1}^{R - y} 
		\ind{p_2 \in  \BallHyp{y}} e^{-\alpha(y_1 + y_2)} \dd y_2 \dd x_2 \dd y_1 \dd x_1.
\end{align*}
Now, by Lemma~\ref{lem:asymptotics_Omega_hyperbolic},
\begin{align*}
&\int_{-I_n}^{I_n} \int_{R - y_1}^{R - y} \ind{p_2 \in  \BallHyp{y}} \cdot  
e^{-\alpha y_2} dy_2 dx_2 = O(1) \cdot e^{y/2} \int_{R - y_1}^{R - y} e^{(1/2 - \alpha) y_2} dy_2 \\
&= O(1) \cdot e^{y/2 + (1/2 - \alpha) (R - y_1)}.
\end{align*}
We then integrate with respect to $y_1$:
\begin{align*}
	O(1) \cdot e^{y/2} \cdot 
	&\int_{-I_n}^{I_n} \int_{(1-\eps) R \wedge (R-y)}^{R} \ind{p_1 \in \BallHyp{y}} 
		e^{(1/2 - \alpha) (R - y_1)} e^{-\alpha y_1} dy_1 dx_1 \\
	&\le O(1) \cdot e^{y/2 + (1/2 -\alpha) R} \cdot \int_{-I_n}^{I_n} \int_{(1-\eps) R \wedge (R-y)}^{R} 
		e^{(\alpha -1/2) y_1} e^{-\alpha y_1} dy_1dx_1 \\
	&=O(1) \cdot e^{y/2 + (1 -\alpha) R -((1-\eps) R \wedge (R-y))/2} \\
	&=O(1) \cdot e^{y/2 + (1/2 -\alpha) R + ((\eps R) \vee y)/2}\\
	&=O(1) \cdot e^{y + (1/2 -\alpha) R + \eps R}
		= O(1) \cdot n^{1- 2\alpha+ \eps} \cdot e^{y}. 
\end{align*}

Therefore we get
\begin{align*} 
	&\int_{\Kcal_{C}(k_n)}\mathcal{I}_n^{(2)}(y) e^{-\alpha y} \dd x \dd y\\
	&= O\left( n^{1-2\alpha + \eps} \right)
		\int_{\Kcal_{C}(k_n)} \rho_{\Po}(y,k_n-2) e^{y} e^{-\alpha y} \dd x \dd y\\
	&= \bigO{1} n^{1-2\alpha + \eps} k_n^{-2\alpha + 1},
\end{align*}
where we used Lemma~\ref{lem:gamma_approx} with $t = 1$.

For $1/2 < \alpha < 3/4$, we have
\[
	k_n^{4 \alpha -2} \cdot  n^{1-2\alpha + \eps} = n^{\eps} \left( \frac{k_n^2}{n}\right)^{2\alpha -1} = o(1),
\]
provided that $\eps = \eps (\alpha)>0$ is small enough, yielding
\[
	\lim_{n \to \infty} k_n^{6\alpha - 3} 
			\int_{\Kcal_{C}(k_n)}\mathcal{I}_n^{(2)}(y) e^{-\alpha y} \dd x \dd y = 0.
\]
Similarly, for $\alpha > 3/4$ we have $2\alpha -1 > 1/2$ and we get
\[
	k_n \cdot n^{1-2\alpha + \eps} \ll n^{-1/2 + \eps}  \cdot k_n  = o(1),
\]
provided that $\eps$ is small enough, so that
\[
	\lim_{n \to \infty} k_n^{2\alpha}
			\int_{\Kcal_{C}(k_n)} \mathcal{I}_n^{(2)}(y) e^{-\alpha y} \dd x \dd y = 0.
\]

For the third sub-domain $\dom{3}$ we shall use~\eqref{eq:symdiff_loc} which states that if 
$p_2=(x_2,y_2) \in \BallSym{p_1}\cap \BallHyp{y}$ and $y_2\leq R - y_1$, then 
$y_2 \geq \hat{y}(x_1,y_1)$, where $\hat{y}(x_1,y_1) = \left(2 \log(|x_1|e^{-y_1/2}) - \log c_K \right) \vee 0$ (cf.~\eqref{eq:to_use_I}). Moreover, $p_2 \in \FatBallHyp{p_1}$.

Again, we will use the Campbell-Mecke formula: 
\begin{align*}
	\mathcal{I}_n^{(3)}(y) &= \Exp { \sum_{(p_1,p_2)  \in \dom{3}} 
		\ind{p_1 \in \BallHyp{y}} \cdot \ind{p_2 \in \BallSym{p_1} \cap \BallHyp{y}}
		\cdot D_{\Po} (y,k_n-2;\Pcal \setminus \{ p_1, p_2\})} \\
	&= O(1) \rho_{\Po}(y,k_n-2) \int_{-I_n}^{I_n} \int_{(1-\eps) R \wedge (R-y)}^{R} \ind{p_1 \in \BallHyp{y}}
		\times\\
	&\hspace{25pt} \int_{-I_n}^{I_n} \int_{0}^{R-y_1} 
		\ind{p_2 \in \BallSym{p_1} \cap \BallHyp{y}} 
		e^{-\alpha(y_1 + y_2)} dy_2 dx_2 dy_1 dx_1 \\
\end{align*}

The inner integral with respect to $p_2 := (x_2,y_2)$ is 
\begin{align*}
	&\hspace{-30pt} \int_{-I_n}^{I_n} \int_{0}^{R - y_1}  \ind{p_2 \in \BallSym{p_1}\cap \BallHyp{y}}  
		e^{-\alpha y_2} dy_2 dx_2\\
	&\leq \int_{-I_n}^{I_n} \int_{0}^{R-y_1}  \ind{y_2 \geq \hat{y}(x_1,y_1), p_2 \in \FatBallHyp{(0,y)}}  
		e^{-\alpha y_2} dy_2 dx_2 \\
	&= O(1) e^{y/2} \int_{\hat{y}(x_1,y_1)}^{R - y_1} e^{y_2/2 - \alpha y_2} dy_2 \\
	&= O(1) e^{y/2 + (1/2 -\alpha) \hat{y}(x_1,y_1)}.
\end{align*}

Thus, we get
\begin{align*}
	&\int_{-I_n}^{I_n} \int_{(1-\eps) R \wedge (R-y)}^{R} \ind{p_1 \in \BallHyp{y}}
		\int_{-I_n}^{I_n} \int_{0}^{R-y_1} 
		\ind{p_2 \in \BallSym{p_1} \cap \BallHyp{y}} \times \\ 
	& \hspace{2cm}  e^{-\alpha(y_1 + y_2)} dy_2 dx_2 dy_1 dx_1 \\
	&\leq O(1) \int_{-I_n}^{I_n} \int_{(1-\eps) R \wedge (R-y)}^{R} 
		e^{y/2 + (1/2 -\alpha) \hat{y}(x_1,y_1)} e^{-\alpha y_1} dy_1 dx_1. 
\end{align*}
Due to symmetry, to bound the integral it is enough to integrate this with respect to $x_1$ from 0 to $I_n$.
We will split this integral into two parts according to the value of $c(x_1,y_1)$:
\[
	\int_0^{I_n} e^{\hat{y}(x_1,y_1) (1/2 -\alpha)} dx_1 = 
	\int_{c_K e^{y_1/2}}^{I_n} e^{c(x_1,y_1) (1/2 -\alpha)} dx_1 + \int_0^{c_K e^{y_1/2}} dx_1.
\]
The first integral becomes: 
\begin{align*}
	&\hspace{-30pt}\int_{c_K e^{y_1/2}}^{I_n} e^{\hat{y}(x_1,y_1) (1/2 -\alpha)} dx_1  = 
 		O(1)\cdot \int_{c_K e^{y_1/2}}^{I_n} x_1^{1 -2\alpha} 
		e^{-\frac{y_1}{2} (1-2\alpha)} dx_1 \\
	&= \begin{cases}
		O(R) \cdot e^{-y_1/2 + \alpha y_1} \cdot e^{\frac{R}{2} 2(1-\alpha)} & \ \mbox{if $\alpha \leq 1$}\\
		O(1) \cdot e^{-y_1/2 + \alpha y_1 + 2(1-\alpha)y_1/2} & \ \mbox{if $\alpha > 1$}
		\end{cases}\\
	&=\begin{cases}
		O(R) \cdot e^{(\alpha -1/2) y_1} \cdot n^{2(1-\alpha)} & \ \mbox{if $\alpha \leq 1$}\\
		O(1) \cdot e^{y_1/2} &\ \mbox{if $\alpha > 1$}
	\end{cases}.  
\end{align*}
The second integral trivially gives: 
\[
	\int_0^{c_K e^{y_1/2}} dx_1 = O(1) \cdot e^{y_1/2}.
\]
Putting these two together we conclude that 
\[
	e^{y/2} \cdot \int_0^{I_n} e^{\hat{y}(x_1,y_1) (1/2 -\alpha)} dx_1
	= O(1) \cdot e^{y_1/2 +y(3/2-\alpha)}.
\]

Now, we integrate these with respect to $y_1$:
\begin{align*}
	&n^{2(1-\alpha)} \cdot \int_{(1-\eps) R \wedge (R-y)}^{R} e^{(\alpha -1/2)y_1 - \alpha y_1} dy_1 
		=O(1) \cdot n^{2(1-\alpha)} \cdot e^{-R/2 + \eps R/2  + y/2}  \\
	&= O(1) \cdot n^{1-2\alpha + \eps} \cdot e^{y/2}.
\end{align*}
%Also,
%\begin{equation}
%\begin{split}
%&\int_{(1-\eps) R \wedge (R-y)}^{R} e^{(1/2 - \alpha) y_1} dy_1 
%= O(1) \cdot e^{(1/2-\alpha) R + y(\alpha -1/2) + \eps R (\alpha -1/2)} \\
%&= O(1) \cdot n^{1-2\alpha +\eps(2\alpha -1)} \cdot e^{y(\alpha -1/2)}. 
%\end{split}
%\end{equation}
Therefore, we conclude that
\[
	\mathcal{I}_n^{(3)}(y) = \bigO{R} n^{1-2\alpha +\eps(2\alpha -1)} \, e^{y/2} \rho_{\Po}(y,k_n-2)
\]
and hence, using again Lemma~\ref{lem:gamma_approx},
\begin{align*}
	\int_{\Kcal_{C}(k_n)} \mathcal{I}_n^{(3)}(y) e^{-\alpha y} \dd x \dd y
	&= \bigO{R} n^{1-2\alpha +\eps(2\alpha -1)} \int_{\Kcal_{C}(k_n)} e^{y/2} 
		\rho_{\Po}(y,k_n-2) e^{-\alpha y} \dd x \dd y\\
	&= \bigO{R} n^{1-2\alpha +\eps(2\alpha -1)} k_n^{-2\alpha + 1}.
\end{align*}

It follows that for $\eps = \eps(\alpha)$ small enough
\[
	k_n^{6\alpha - 3} R n^{1-2\alpha +\eps(2\alpha -1)} k_n^{-2\alpha + 1}
	= R n^{\eps(2\alpha -1)} \left(\frac{k_n^2}{n}\right)^{2\alpha - 1} = \smallO{1}
\]
and hence for $\alpha > 1/2$,
\[
	\lim_{n \to \infty} k_n^{6\alpha - 3} \int_{\Kcal_{C}(k_n)} \mathcal{I}_n^{(3)}(y) e^{-\alpha y} \dd x \dd y = 0.
\]
Since $2\alpha - 1 \ge 1/2$ when $\alpha \ge 3/4$ it immediately follows that
\[
	\lim_{n \to \infty} k_n^{2\alpha} \int_{\Kcal_{C}(k_n)} \mathcal{I}_n^{(3)}(y) e^{-\alpha y} \dd x \dd y = 0.
\]

\paragraph{The sums~\eqref{eq:sum_5} and~\eqref{eq:sum_5b}}

Again, the analysis for both terms are similar and we shall analyze~\eqref{eq:sum_5}. Let us set $p=(0,y)$. Recall that $\BallSym{y}\cap \Rcal([R - y + 2 \log\left(\frac{\pi}{2}\right),R]) = \emptyset$. Thus, the summand in~\eqref{eq:sum_5} is equal to 0, when $y_1 > R - y + 2 \log (\pi/2)$. 

Recall the definition of the extended ball $\FatBallHyp{p}$ around $p$~\eqref{eq:def_fatball} that contains both $\BallHyp{p}$ and $\BallPo{p}$ 
\[
	\FatBallHyp{y} : = \{ p^\prime : y^\prime < R - y, \ |x^\prime| < (1+K) e^{\frac{1}{2} (y + y^\prime)}  \},
\]
and that we have $\Exp{D_{\Po} (y,k_n-2;\Pcal \setminus \{ p_1,p_2\})} = \rho_{\Po}(y,k_n-2)$.

Further, observe that,
\begin{equation*} %\label{eq:ball_inclusion} 
\BallHyp{y} \cap \Rcal ([0,R-y)) \subseteq \FatBallHyp{y}
\end{equation*}
and 
\begin{equation*} %\label{eq:ball_inclusion_lower}
\BallHyp{y} \cap \Rcal ([R-y,R]) = \Rcal ([R-y,R]).
\end{equation*}
We thus conclude that 
\begin{equation} \label{eq:ball_inclusions}
\BallHyp{y} \subseteq \FatBallHyp{y} \cup \Rcal([R-y,R]).
\end{equation}
Hence, if we set 
\[
h_y(p_1) := \ind{p_1 \in \BallHyp{p}\setminus \BallPo{y}} \cdot    
\left( \mu \left( \FatBallHyp{p_1} \cap \FatBallHyp{y} \right)
+ \mu \left( \Rcal([R-y,R]) \right) \right),
\]
then 
\begin{align*}
&\ind{p_1\in \BallHyp{p}\setminus \BallPo{y}} \cdot \Exp{ \left(\sum_{p_2 \in \Pcal \setminus 
\{p,p_1\}} \ind{p_2 \in \BallHyp{y} \cap \BallPo{p_1}}\right) \cdot 
D_{\Po} (y,k_n-2;\Pcal \setminus \{ p_1, p_2\})
} \\
&=O(1)\cdot
\ind{p_1\in \BallHyp{y}\setminus \BallPo{y}} \cdot \mu (\BallHyp{y}\cap \BallHyp{p_1}) \rho_{\Po}(y,k_n-2) \\
& \leq O(1) \cdot  h_y (p_1) \rho_{\Po}(y,k_n-2) . 
\end{align*}
To calculate the expectation of the above function we need to approximate the 
intersection of the two balls $\FatBallHyp{y}$ and $\FatBallHyp{p_1}$, 
where $p_1= (x_1,y_1)$. 
Let us assume without loss of generality that $x_1 > 0$. 
The right boundary of $\FatBallHyp{y}$ is given by the equation 
$x = x(y^\prime) = (1+K)e^{\frac{1}{2} (y + y^\prime)}$ whereas the left boundary of $\FatBallHyp{p_1}$ is given by the curve $x = x(y^\prime)= x_1 - (1+ K)e^{\frac{1}{2} (y_1 + y^\prime)}.$ 

The equation that determines the intersecting point of the two curves is
\[
	x_1 - (1+K)e^{(\hat{y} + y_1)/2}= (1+K) e^{(\hat{y} + y)/2},
\]
where $\hat{y}$ is the $y$-coordinate of the intersecting point. 
We can solve the above for $\hat{y}$  
\begin{equation*} 
\begin{split}
x_1 &=(1+K) e^{\hat{y}/2} \left( e^{y/2} + e^{y_1/2} \right).
\end{split}
\end{equation*}
But since $p_1=(x_1,y_1)  \in \BallSym{p}$, we also have $x_1 > e^{\frac{y + y_1}{2}}$. Therefore, 
\begin{equation}\label{eq:bounds_fat_ball_points}
\begin{split}
 e^{\hat{y}/2}& > \frac{1}{1+K}~\frac{e^{\frac{y + y_1}{2}}}{ e^{y/2}+ e^{y_1/2}} \geq 
\frac{1}{2(1+K)}~\frac{e^{\frac{y_1 + y}{2}}}{ e^{(y \vee y_1) /2}} 
> \frac{1}{2(1 + K)} ~ e^{(y \wedge y_1)/2}. 
 \end{split}
\end{equation}
The above yields
\begin{equation} \label{eq:to_use_I}
\hat{y} > (y \wedge y_1) - 2\log(2(1+K)) := \hat{y}(y_1, y). 
\end{equation}
which, in turn, implies the following 
\begin{equation}\label{eq:intersex_approx}
	p \in \FatBallHyp{(0,y)}\cap \FatBallHyp{p_1} \Rightarrow y(p) \ge \hat{y}(y_1,y).
\end{equation}
We thus conclude that 
\[ 
	\BallHyp{p_1} \cap \BallHyp{p} \subseteq \left(\FatBallHyp{p} \cap \Rcal([\hat{y}(y_1,y), R])\right)
	\, \cup \, \Rcal ([R - y,R]),
\]
which in turn implies that
\[
	\mu \left( \FatBallHyp{p_1} \cap \BallHyp{p} \right) \leq 
	\mu\left( \FatBallHyp{p} \cap  \Rcal([\hat{y}(y_1,y), R]\right) + 
	\mu (\Rcal ([R - y, R]) ).
\]
Therefore, 
\begin{align*} 
	h_y(p_1, \Pcal) &\leq \ind{p_1 \in \BallHyp{p}\setminus \BallPo{p}} 
    	\mu  \left( \FatBallHyp{p} \cap  \Rcal([\hat{y}(y_1,y), R])\right)
        \\
	&\hspace{10pt}+ \ind{p_1 \in \BallHyp{p}\setminus \BallPo{p}}
    	\mu  \left( \Rcal ([R - y, R]) \right).
\end{align*}

Now, the Campbell-Mecke formula gives
\begin{align*}
	&\Exp{ \sum_{p_1,p_2 \in\Pcal \setminus \{(0,y)\} \atop y(p_1) \geq K} 
		\hspace{-10pt} \ind{p_1\in \BallHyp{y}\setminus \BallPo{y}} \ind{p_2\in \BallHyp{y}\cap \BallPo{y}} 
		\, D_{\Po} (y,k_n-2;\Pcal \setminus \{ p_1,p_2\})}\\
	&\le \Exp{\left( \sum_{p_1 \in \Pcal} 
		h_y (p_1, \Pcal \setminus \{ p_1 \})\right)} \\
	&=\frac{\nu \alpha}{\pi} \int_{\Rcal} \Exp{h_y(p_1, \Pcal \setminus \{p_1\})}
		e^{-\alpha y_1} \dd x_1 \dd y_1\\
	&\le \frac{\nu \alpha}{\pi} \int_{\Rcal} \ind{p_1 \in \BallHyp{p}\setminus \BallPo{p}} 
	    	\mu  \left( \FatBallHyp{p} \cap  \Rcal([\hat{y}(y_1,y), R])\right)
	    	e^{-\alpha y_1} \dd x_1 \dd y_1 \numberthis \label{eq:h_upper_bound_1}\\
	&\hspace{10pt}+ \frac{\nu \alpha}{\pi} \int_{\Rcal} \ind{p_1 \in \BallHyp{p}\setminus \BallPo{p}}
	    	\mu  \left( \Rcal ([R - y, R]) \right) e^{-\alpha y_1} \dd x_1 \dd y_1.
	    	\numberthis \label{eq:h_upper_bound_2}
\end{align*}

Recall that $(\BallSym{(0,y)})\cap \Rcal([R - y + 2 \log\left(\frac{\pi}{2}\right),R]) = \emptyset$. 
We will first calculate the measures $\mu$ appearing in~\eqref{eq:h_upper_bound_1} and~\eqref{eq:h_upper_bound_2}. The first one is:
\begin{align*}
	\mu\left( \FatBallHyp{y} \cap  \Rcal([c(y_1,y), R])\right) 
	&\leq (1+ K) \frac{\nu \alpha}{\pi} \cdot e^{y/2}  \int_{\hat{y}(y_1,y)}^{R} e^{-(\alpha - \frac{1}{2}) y'} \, dy' \\
	&=  \bigO{e^{\frac{y}{2} - (\alpha-\frac{1}{2}) (y \wedge y_1)}}.
\end{align*}

The second term is: 
\begin{align*}
	\mu \left( \Rcal([R - y,R]) \right) 
    &= \frac{\nu \alpha}{\pi} \int_{R - y}^{R} \pi e^{\frac{R}{2}} e^{-\alpha y'} \, dy' 
    	= \bigO{e^{\frac{R}{2}} e^{-\alpha (R-y)}} = \bigO{e^{\alpha y - (\alpha - \frac{1}{2})R}}. 
\end{align*}

Using these, we get
\begin{align} 
	&\int_{\Rcal ([0, R - y_n + 2 \ln \frac{\pi}{2}])} \Exp{h_y(p_1, \Pcal \setminus \{p_1\})} 
    e^{-\alpha y_1} \, dx_1 \, dy_1 \notag \\
	&= \bigO{1} \int_{\Rcal ([0, R - y+ 2 \ln \frac{\pi}{2}])} \ind{p_1 \in \BallSym{p}} 
		e^{\frac{y}{2} - (\alpha - \frac{1}{2}) (y \wedge y_1) - \alpha y_1} \, dx_1 \, dy_1
		\label{eq:Mecke_sum_1}\\ 
	&\hspace{10pt}+ \bigO{1} \int_{\Rcal ([0, R - y + 2 \ln \frac{\pi}{2}])} 
    	\ind{p_1 \in \BallHyp{(0,y)}} 
    	e^{\alpha y - (\alpha - \frac{1}{2})R - \alpha y_1} \, dx_1 \, dy_1.\label{eq:Mecke_sum_2}
\end{align}
Now, Lemma~\ref{lem:asymptotics_Omega_hyperbolic} implies that 
for any $y_1 \in [0, R - y + 2 \ln \frac{\pi}{2}]$, we have 
\[ 
	\int_{-I_n}^{I_n} \ind{p_1 \in \BallSym{y}} \, dx_1 \leq 2 K e^{\frac{3}{2} (y_1 + y) - R}.
\]

Therefore, \eqref{eq:Mecke_sum_1} is 
\begin{align*}
	&\hspace{-20pt}O(1) \cdot e^{2 y - R} \int_{0}^{R - y + 2 \ln \frac{\pi}{2}} 
    	e^{\frac{3y_1}{2} - (\alpha - \frac{1}{2})(y_1 \wedge y) - \alpha y_1} \, dy_1 \\
 	&=  O(1) \cdot e^{2 y - R} \left( \int_{0}^{y} e^{\frac{3y_1}{2} - (2\alpha - \frac{1}{2})y_1} \, dy_1 
 		+ e^{-(\alpha-\frac{1}{2}) y}\int_{y}^{R - y + 2 \ln \frac{\pi}{2}} e^{(\frac{3}{2} - \alpha) y_1} \, dy_1 \right)\\
  	&= O(1) \left( 
	\begin{cases}
	e^{(4-2\alpha) y - R}, & \mbox{if $\alpha < 1$} \\
	R \cdot e^{2y - R}, & \mbox{if $\alpha \geq 1$}
	\end{cases}
	+
	\begin{cases}
	e^{-(\alpha - \frac{1}{2})R +y}, & \mbox{if $\alpha < 3/2$} \\
	R \cdot  e^{2(2-\alpha)y - R}, & \mbox{if $\alpha \geq 3/2$}
	\end{cases}\right).
\end{align*}

Similarly, for~\eqref{eq:Mecke_sum_2} we have
\begin{align*}
	&\hspace{-30pt}\int_{\Rcal ([0, R - y + 2 \ln \frac{\pi}{2}])} \ind{p_1 \in \BallSym{(0,y)}} e^{\alpha y - (\alpha - \frac{1}{2})R - \alpha y_1} \, dx_1 \, dy_1\\
	&= e^{\frac{3y}{2} - R + \alpha y - (\alpha - \frac{1}{2})R} 
    	\cdot \int_{0}^{R - y + 2 \ln \frac{\pi}{2}} e^{\frac{3y_1}{2}-\alpha y_1} \, dy_1\\
	&= O(1)\cdot 
	\begin{cases} 
	e^{\frac{3y}{2} - R + \alpha y - (\alpha - \frac{1}{2})R + (\frac{3}{2} - \alpha)(R-y)} 
	, & \mbox{if $\alpha < 3/2$} \\ 
	R \cdot e^{(\frac{3}{2} +\alpha)y -  (\alpha + \frac{1}{2})R}, & \mbox{if 
	$\alpha \geq 3/2$}
	\end{cases}	\\
	&= O(1) \cdot 
	\begin{cases}
	  e^{-(2\alpha-1) R + 2 \alpha y}, & \mbox{if $\alpha < 3/2$} \\
	  R \cdot e^{(\frac{3}{2} +\alpha)y -  (\alpha + \frac{1}{2})R}, & \mbox{if 
	$\alpha \geq 3/2$}
	\end{cases}.
\end{align*}

We thus conclude, using $2(2 - \alpha)y \le y$ for $\alpha > 3/2$, that 
\begin{equation} \label{eq:upper_bound_faulty_edges} 
\Exp{\left( \sum_{p_1 \in \Pcal \setminus\{p\}} 
		h_y (p_1)\right)} \le \bigO{1} \cdot 
\left( \mathcal{I}_n^{(1)}(y) + \mathcal{I}_n^{(2)}(y) + \mathcal{I}_n^{(3)}(y) \right),
\end{equation}
where 
\begin{align*}
 \mathcal{I}_n^{(1)}(y) &= \begin{cases}
	e^{(4-2\alpha) y - R}, & \mbox{if $\alpha < 1$} \\
	R \cdot e^{2y - R}, & \mbox{if $\alpha \geq 1$}
	\end{cases},  \\
	\mathcal{I}_n^{(2)}(y) &= 
	\begin{cases}
	e^{-(\alpha - \frac{1}{2})R +y}, & \mbox{if $\alpha < 3/2$} \\
	R \cdot  e^{y - R}, & \mbox{if $\alpha \geq 3/2$}
	\end{cases}\\
	\mathcal{I}_n^{(3)}(y) &= 
\begin{cases}
	  e^{-(2\alpha-1) R + 2 \alpha y}, & \mbox{if $\alpha < 3/2$} \\
	  R \cdot e^{(\frac{3}{2} +\alpha)y -  (\alpha + \frac{1}{2})R}, & \mbox{if 
	$\alpha \geq 3/2$}
	\end{cases}.
\end{align*}

We proceed to calculate:
\begin{align*}
&\int_{\Kcal_{C}(k_n)} \Exp{\left( \sum_{p_1 \in \Pcal} h_y (p_1, \Pcal \setminus \{ p_1 \})\right)} \cdot 
 \rho_{\Po}(y,k_n-2) e^{-\alpha y} \dd y.
\end{align*}
%Firstly, note that as $\expH = \Theta (1) \cdot n \cdot k_n^{-(2\alpha +1)}$, we have 
%$$ {k_n\choose 2}^{-1}\cdot \expH^{-1} = O(1) \cdot \frac{k_n^{2\alpha-1}}{n}.$$
%Also, $\Exp{\left( \sum_{p_1 \in \Pcal} h_y (p_1, \Pcal \setminus \{ p_1 \})\right)}$ is given 
%as the sum of $\mathcal{I}_n^{(1)}(y), \mathcal{I}_n^{(2}(y)$ and $\mathcal{I}_n^{(3)}(y)$ (cf.~\eqref{eq:upper_bound_faulty_edges}). Setting 
%\[
%	J_3 = \frac{k_n^{2\alpha-1}}{n}\cdot  \left(M_1+ M_2 + M_3 \right)
%\]
%with
For this we define
\[
	M_i = \int_{\Kcal_{C}(k_n)} \mathcal{I}_n^{(i)}(y) \rho_{\Po}(y,k_n-1) e^{-\alpha y} \dd y
\]
so that
\begin{align*}
\int_{\Kcal_{C}(k_n)} \Exp{\left( \sum_{p_1 \in \Pcal \setminus \{(0,y)\}} h_y (p_1)\right)} \rho_{\Po}(y,k_n-1) e^{-\alpha y} \dd y
&= \bigO{M_1 + M_2 + M_3}.
\end{align*}

Computing each of the integral separately we obtain, using Lemma~\ref{lem:gamma_approx} and the fact that $n = \nu e^{R/2}$,
\begin{align*} 
M_1:= \int_{\Kcal_{C}(k_n)} \mathcal{I}_n^{(1)}(y) \rho_{\Po}(y,k_n-1) e^{-\alpha y} \dd y
&= O(1) \cdot 
\begin{cases} 
	\frac{k_n^{7-6\alpha}}{n^2}, & \mbox{if $\alpha <1$} \\
	R \frac{k_n^{3-2\alpha}}{n^2}, & \mbox{if $\alpha \geq 1$}
\end{cases}. 
\end{align*}
\begin{align*} 
M_2:= \int_{\Kcal_{C}(k_n)} \mathcal{I}_n^{(2)}(y) \rho_{\Po}(y,k_n-1) e^{-\alpha y} dy
&= O(1) \cdot 
\begin{cases}
\frac{k_n^{1-2\alpha}}{n^{2\alpha-1}}, & \mbox{if $\alpha < 3/2$} \\
R   \frac{k_n^{1-2\alpha}}{n^{2}}, & \mbox{if $\alpha \geq 3/2$}
\end{cases}
\end{align*}
and finally 
\begin{align*} 
M_3:= \int_{\Kcal_{C}(k_n)} \mathcal{I}_n^{(3)}(y) \rho_{\Po}(y,k_n-1) e^{-\alpha y} dy
&= O(1) \cdot 
\begin{cases} 
\frac{k_n^{2\alpha - 1}}{n^{4\alpha -2}}, & \mbox{if $\alpha <3/2$} \\ 
R \cdot \frac{k_n^2}{n^{2\alpha+1}}, &\mbox{if $\alpha \geq 3/2$}
\end{cases}  .
\end{align*}

Now, we will consider the two cases according to the value of $\alpha$. First we note that $R = \bigO{\log(n)}$ and since
$k_n = O(n^{\frac{1}{2\alpha +1}})$ and $\alpha > 1/2$ we have that $R k_n^2 n^{-1} = \smallO{1}$.
Assume first that $1/2 < \alpha < 3/4$. In this case, we want to show that 
\begin{equation} \label{eq:int3_to_prove_I}
\lim_{n \to \infty} k_n^{6\alpha -3} (M_1 + M_2 + M_3) = 0. 
\end{equation}
Using the above expression for $M_i$, we have 
\begin{align*} 
 k_n^{6\alpha -3} (M_1 + M_2 + M_3) &= O(1) \cdot  
 k_n^{6\alpha -3} 
\left( 
\frac{k_n^{7-6\alpha}}{n^2} + \frac{k_n^{1-2\alpha}}{n^{2\alpha-1}} 
+\frac{k_n^{2\alpha-1}}{n^{4\alpha - 3}}.
\right) 
\end{align*}
We wish to show that each one of the above three terms is $o(1)$ for $k_n = O(n^{\frac{1}{2\alpha +1}})$. 
For the first one we have 
\[ 
	k_n^{6\alpha -3} \frac{k_n^{7-6\alpha}}{n^2} = \left(\frac{k_n^{2}}{n}\right)^2 = \smallO{1}. 
\]
The second term yields: 
\[
	k_n^{6\alpha -3}  \frac{k_n^{-2\alpha+1}}{n^{2\alpha-1}} = \left(\frac{k_n^2}{n}\right)^{2\alpha - 1} = \smallO{1}.
\]
Finally, the third one yields: 
\[
	k_n^{6\alpha -3} \cdot \frac{k_n^{2\alpha -1}}{n^{4\alpha - 2}}  
	= \left(\frac{k_n^2}{n}\right)^{4\alpha - 2} = \smallO{1}.
\]
 
For $\alpha \ge 3/4$, we would like to show that 
\begin{equation} \label{eq:int3_to_prove_II}
\lim_{n \to \infty} k_n^{2\alpha} \cdot (M_1 + M_2 + M_3) = 0. 
\end{equation}
Firstly, we note that each $M_i$ is as above if $3/4 < \alpha < 1$. Therefore, since for this range $2 \alpha <6\alpha - 3$ the result follows from the above analysis. Next we consider the case $1 \le \alpha < 3/2$. Here, only the value of $M_1$ changes and we compute that
\[
	k_n^{6\alpha - 3} M_1 = \bigO{1} \log(n) n^{-2} k_n^{4\alpha} \le \bigO{\log(n)} \left(\frac{k_n^2}{n}\right)^2 = \smallO{1},
\]
so that~\eqref{eq:int3_to_prove_II} holds for $3/4 < \alpha < 1$.

Proceeding with the case $\alpha \ge 3/2$, it is only $M_2$ and $M_3$ that change values. In particular, for any $\alpha \geq 3/2$ we have 
\[
	\frac{k_n}{n}  M_2 =O(1) R \frac{k_n}{n^2} = o(1).
\]
Also,
\[
	k_n^{2\alpha }  M_3 = O(1) R \frac{k_n^{2\alpha + 2}}{n^{2\alpha + 1}}
	= R \smallO{\frac{n^{\alpha + 1}}{n^{2\alpha +1}}} = o(1),
\]
since $k_n = o(n^{1/2})$ and hence~\eqref{eq:int3_to_prove_II} holds. This finished the proof for~\eqref{eq:sum_5}.

\paragraph{The sum of~\eqref{eq:sum_6}}
%Now, we will first give an upper  bound on the term
%\begin{equation*}
%\Exp { \sum_{p_1,p_2 \in\Pcal \setminus \{(0,y)\} \atop y_1 < K}
%\ind{p_1\in \BallSym{y}} \ind{p_2\in \BallHyp{y}\cap \BallPo{y}}}.
%\end{equation*}
Using the Campbell-Mecke formula, we write 
\begin{align*}
	&\hspace{-30pt}\Exp { \sum_{p_1,p_2 \in\Pcal \setminus \{(0,y)\}, \ y_1 < K, \atop \text{distinct}}
		\ind{p_1\in \BallSym{y}} \ind{p_2\in \BallHyp{y}\cap \BallPo{y}}}\\
	&\leq  \int_0^K \int_{-I_n}^{I_n}  \int_0^{R} \int_{-I_n}^{I_n}
		\ind{p_1 \in \BallSym{y}} 
 		\ind{p_2 \in \BallHyp{y}\cap \BallPo{y}} e^{-\alpha y_2} e^{-\alpha y_1} \dd x_2 \dd y_2 \dd x_1 \dd y_1  \\
 	&\leq  \mu (\BallHyp{y})\cdot \int_{-I_n}^{I_n} \int_0^K \ind{p_1 \in \BallSym{y}} 
		e^{-\alpha y_1}  dx_1 dy_1.
\end{align*}
Recall that $\Mu{\BallHyp{y}} =O(1) e^{y/2}$. We bound the integral using Lemma~\ref{lem:asymptotics_Omega_hyperbolic}. In particular,~\eqref{eq:asymp1} implies that if $p_1 = (x_1,y_1) \in \BallSym{y}$, then because $y_1 < K$
\[
	|x_1 - e^{(y+y_1)/2} |\leq e^{(y+y_1)/2} \cdot K e^{y+y_1- R} = O(1) 
	e^{(y+y_1)/2} \cdot e^{y- R}.
\]
Therefore, 
\[
	\int_{-I_n}^{I_n} \int_0^K \ind{(x_1,y_1) \in \BallSym{(0,y)}} 
	e^{-\alpha y_1}  dx_1 dy_1 = O(1) \cdot e^{y-R} 
	\cdot \int_0^K e^{(y+y_1)/2} 
	e^{-\alpha y_1}   dy_1 = O(1)\cdot e^{3y/2 - R}, 
\]
and hence
\begin{align*}
	\Exp { \sum_{{p_1,p_2 \in\Pcal \setminus \{(0,y)\} \atop y_1 < K,} \atop \text{distinct}}
	\ind{p_1\in \BallSym{y}} \ind{p_2\in \BallHyp{y}\cap \BallPo{y}}} 
	= O(1) \cdot e^{2y - R}.
\end{align*}
Now, we integrate this over $y$ to obtain that
\begin{align*}
	&\hspace{-30pt}\int_{\Kcal_{C}(k_n)} \Exp { \sum_{{p_1,p_2 \in\Pcal \setminus \{(0,y)\} \atop y_1 < K,} \atop \text{distinct}}
		\ind{p_1\in \BallSym{y}} \ind{p_2\in \BallHyp{y}\cap \BallPo{y}}} e^{-\alpha y} \dd y\\
	&= O(1) e^{-R} \int_{\Kcal_{C}(k_n)} e^{2y -\alpha y} \dd y
		= O(1) n^{-2}
			\begin{cases}
			k_n^{4-2\alpha}, & \mbox{if $\alpha < 2$} \\
			\log k_n, & \mbox{if $\alpha =2$} \\
			1, & \mbox{if $\alpha >2$}
			\end{cases}.
\end{align*}

To finish the argument assume first that $1/2 <\alpha \leq 3/4$. In this case,
\[
	k_n^{6\alpha -3}  n^{-2} k_n^{4 - 2\alpha} = 
	n^{-2} \cdot k_n^{4\alpha +1} = \smallO{1}.
\]
For $3/4 \le \alpha < 2$ we use that $2\alpha < 6\alpha - 3$, so that $k_n^{2\alpha} n^{-2} k_n^{4 - 2\alpha} = \smallO{1}$. Finally, when $\alpha \geq 2$, we have that
\[
	k_n^{2\alpha} (\log(k_n) \wedge 1) n^{-2} \le k_n^{2\alpha + 1} n^{-2} = \bigO{n^{-1}} = \smallO{1}.
\] 
which completes the proof for~\eqref{eq:sum_6} and thus the proof of Proposition~\ref{prop:couling_c_H_P}.
\end{proofof}

\subsection{Coupling \texorpdfstring{$G_n$}{Gn} to \texorpdfstring{$\GPo$}{G Po}}\label{ssec:coupling_H_HP}

Now that we have established the equivalence of the clustering function between the Poissonized KPKVB graph $\GPo$ and the finite box graph $\Gbox$ the final step is to relate the clustering function in $\GPo$ to the KPKVB graph $G_n$. As mentioned in Section~\ref{ssec:KPKVB_to_GPo_infinite_k}, this is done by moving from $c(k_n; G_n)$ to the adjusted clustering function $c^\ast(k_n; G_n)$ (Lemma~\ref{lem:clustering_ast_H}) and then to $c^\ast(k_n; \GPo)$ (Proposition~\ref{prop:clustering_ast_H_Pois}). For this we will use the coupling result (Lemma~\ref{lem:diff_Nk_hyperbolic_binomial_poisson}) from Section~\ref{ssec:coupling_Gn_GPo}. We first give the proof of Proposition~\ref{prop:clustering_ast_H_Pois} and after that we prove Lemma~\ref{lem:clustering_ast_H}. Recall that Proposition~\ref{prop:clustering_ast_H_Pois} states
\[
	\lim_{n \to \infty} s(k_n)\Exp{\left|c^\ast(k_n; G_n) - c^\ast(k_n; \GPo)\right|} = 0.
\]

\begin{proofof}{Proposition~\ref{prop:clustering_ast_H_Pois}}
First we note that Proposition~\ref{prop:couling_c_H_P}, \ref{prop:concentration_local_clustering_P_n} and~\ref{prop:convergence_average_clustering_P_n} together imply that
\[
	\Exp{c^\ast(k_n; \GPo)} = (1+\smallO{1})s(k_n)
\]
Therefore it suffices to show that
\[
	\Exp{\left|c^\ast(k_n; G_n) - c^\ast(k_n; \GPo)\right|} = \smallO{\Exp{c^\ast(k_n; \GPo)}}.
\]
For this we observe that we are looking at the modified clustering coefficient, where we divide by the expected number of degree $k_n$ vertices. As the expected numbers of degree $k_n$ vertices in $\GPo$ and $G_n$ are asymptotically equivalent (see Lemma~\ref{lem:diff_Nk_hyperbolic_binomial_poisson}), it is therefore sufficient to consider the sum of the clustering coefficients of all vertices of degree $k_n$.
Given again the standard coupling between the binomial and Poisson process (as used in the proof of Lemma~\ref{lem:diff_Nk_hyperbolic_binomial_poisson}), we again denote by $V_n(k_n)$ the set of degree $k_n$ vertices in $G_n$ and by $\VPo(k_n)$ the set of degree $k_n$ vertices in $\GPo$. If a vertex is contained in both sets, it must have the same degree in both the Poisson and KPKVB graph, and given the nature of the coupling, the neighbourhoods are therefore the same and hence also their clustering coefficients agree.

The difference of the sum of the clustering coefficients therefore comes from all the clustering coefficients of the symmetric difference $V_n(k_n) \Delta \VPo(k_n)$. By Lemma~\ref{lem:diff_Nk_hyperbolic_binomial_poisson} the expected number vertices in this set is $\Exp{\left|N_{n}(k_n) - N_{\Po}(k_n)\right|} = \smallO{\Exp{N_{\Po}(k_n)}}$. Therefore we have that
\begin{align*}
	\Exp{\left|c^\ast(k_n; G_n) - c^\ast(k_n; \GPo)\right|}
	&\le \frac{\Exp{\left|N_{n}(k_n)-N_{\Po}(k_n)\right|}}{(1+\smallO{1})\Exp{N_{\Po}(k_n)}} \Exp{c^\ast(k_n; \GPo)}
	= \smallO{1}\Exp{c^\ast(k_n; \GPo)},
\end{align*}
which finishes the proof.
\end{proofof}

Finally we prove Lemma~\ref{lem:clustering_ast_H}, whose statement is
\[
\left|c^\ast(k_n; G_n) - c(k_n; G_n)\right| = \smallOp{s(k_n)}.
\]

\begin{proofof}{Lemma~\ref{lem:clustering_ast_H}}
	Since Propositions~\ref{prop:clustering_ast_H_Pois}-\ref{prop:convergence_average_clustering_P_n} imply that
	\[
	\Exp{c^\ast(k_n; G_n)} = \bigO{s(k_n)},
	\]  
	and since
	\begin{align*}
	\frac{|N_n(k_n)-\Exp{N_{n}(k_n)}|}{N_{n}(k_n)} = \smallOp{1},
	\end{align*}
	we immediately infer that
	\begin{align*}
	\left|c^\ast(k_n; G_n) - c(k_n; G_n)\right|
	&= c^\ast(k_n; G_n)\left|\frac{\Exp{N_{n}(k_n)}}{N_{n}(k_n)} - 1\right| = \smallOp{s(k_n)}.
	\end{align*}
\end{proofof}

\paragraph{Acknowledgments} We are grateful to Dmitri Krioukov for pointing us to the problem of local clustering in this model and his helpful insights during discussions of the topic. We thank Remco van der Hofstad for pointing out a mistake in an earlier version of the proof of Theorem~\ref{thm:mainktoinfty}. We also thank an anonymous referee for his/her suggestions and comments on improving the manuscript. 

\bibliographystyle{plain}
\bibliography{references}

\begin{thebibliography}{10}

\bibitem{abdullah2017typical}
Mohammed~Amin Abdullah, Nikolaos Fountoulakis, and Michel Bode.
\newblock Typical distances in a geometric model for complex networks.
\newblock {\em Internet Mathematics}, 1(1), 2017.

\bibitem{alon2016probabilistic}
Noga Alon and Joel~H Spencer.
\newblock {\em The probabilistic method}.
\newblock John Wiley \& Sons, 2016.

\bibitem{andrews2000special}
George~E Andrews, Richard Askey, and Ranjan Roy.
\newblock {\em Special functions}, volume~71.
\newblock Cambridge university press, 2000.

\bibitem{blasius2016hyperbolic}
Thomas Bl{\"a}sius, Tobias Friedrich, and Anton Krohmer.
\newblock Hyperbolic random graphs: Separators and treewidth.
\newblock In {\em 24th Annual European Symposium on Algorithms (ESA 2016)}.
  Schloss Dagstuhl-Leibniz-Zentrum fuer Informatik, 2016.

\bibitem{blasius2018cliques}
Thomas Bl{\"a}sius, Tobias Friedrich, and Anton Krohmer.
\newblock Cliques in hyperbolic random graphs.
\newblock {\em Algorithmica}, 80(8):2324--2344, 2018.

\bibitem{blasius2018efficient}
Thomas Bl{\"a}sius, Tobias Friedrich, Anton Krohmer, and S{\"o}ren Laue.
\newblock Efficient embedding of scale-free graphs in the hyperbolic plane.
\newblock {\em IEEE/ACM Transactions on Networking}, 26(2):920--933, 2018.

\bibitem{bode2015largest}
Michel Bode, Nikolaos Fountoulakis, and Tobias M{\"u}ller.
\newblock On the largest component of a hyperbolic model of complex networks.
\newblock {\em The Electronic Journal of Combinatorics}, 22(3):3--24, 2015.

\bibitem{bode2016probability}
Michel Bode, Nikolaos Fountoulakis, and Tobias M{\"u}ller.
\newblock The probability of connectivity in a hyperbolic model of complex
  networks.
\newblock {\em Random Structures \& Algorithms}, 49(1):65--94, 2016.

\bibitem{boguna2010sustaining}
Mari{\'a}n Bogun{\'a}, Fragkiskos Papadopoulos, and Dmitri Krioukov.
\newblock Sustaining the internet with hyperbolic mapping.
\newblock {\em Nature communications}, 1:62, 2010.

\bibitem{candellero2016clustering}
Elisabetta Candellero and Nikolaos Fountoulakis.
\newblock Clustering and the hyperbolic geometry of complex networks.
\newblock {\em Internet Mathematics}, 12(1-2):2--53, 2016.

\bibitem{chen2015critical}
Hanshuang Chen, Chuansheng Shen, Gang He, Haifeng Zhang, and Zhonghuai Hou.
\newblock Critical noise of majority-vote model on complex networks.
\newblock {\em Physical Review E}, 91(2):022816, 2015.

\bibitem{Chernoff1952}
Herman Chernoff.
\newblock A measure of asymptotic efficiency for tests of a hypothesis based on
  the sum of observations.
\newblock {\em Annals of Mathematical Statistics}, pages 23:493--509, 1952.

\bibitem{davies2012integral}
Brian Davies.
\newblock {\em Integral transforms and their applications}, volume~41.
\newblock Springer Science \& Business Media, 2012.

\bibitem{DiaconisFreedman}
Persi Diaconis and David Freedman.
\newblock An elementary proof of {S}tirling's formula.
\newblock {\em Amer. Math. Monthly}, 93(2):123--125, 1986.

\bibitem{Dutkay2013}
Dorin~Ervin Dutkay, Constantin~P. Niculescu, and Florin Popvici.
\newblock Stirling’s formula and its extension for the gamma function.
\newblock {\em American Mathematical Monthly}, page 120:737–740, 2013.

\bibitem{erdelyi1953higher}
Arthur Erd{\'e}lyi, Wilhelm Magnus, Fritz Oberhettinger, Francesco~G Tricomi,
  et~al.
\newblock Higher transcendental functions, vol. 1, 1953.

\bibitem{fountoulakis2012evolution}
Nikolaos Fountoulakis.
\newblock On a geometrization of the {C}hung-{L}u model for complex networks.
\newblock {\em J. Complex Netw.}, 3(3):361--387, 2015.

\bibitem{fountoulakis2018law}
Nikolaos Fountoulakis and Tobias M{\"u}ller.
\newblock Law of large numbers for the largest component in a hyperbolic model
  of complex networks.
\newblock {\em The Annals of Applied Probability}, 28(1):607--650, 2018.

\bibitem{friedrich2018diameter}
Tobias Friedrich and Anton Krohmer.
\newblock On the diameter of hyperbolic random graphs.
\newblock {\em SIAM Journal on Discrete Mathematics}, 32(2):1314--1334, 2018.

\bibitem{gradshteyn2015table}
Izrail~Solomonovich Gradshteyn and Iosif~Moiseevich Ryzhik.
\newblock {\em Table of Integrals, Series, and Products}.
\newblock Elsevier, 2015.

\bibitem{gugelmann2012random}
Luca Gugelmann, Konstantinos Panagiotou, and Ueli Peter.
\newblock Random hyperbolic graphs: degree sequence and clustering.
\newblock In {\em International Colloquium on Automata, Languages, and
  Programming}, pages 573--585. Springer, 2012.

\bibitem{kiwi2015bound}
Marcos Kiwi and Dieter Mitsche.
\newblock A bound for the diameter of random hyperbolic graphs.
\newblock In {\em Proceedings of the Meeting on Analytic Algorithmics and
  Combinatorics}, pages 26--39. Society for Industrial and Applied Mathematics,
  2015.

\bibitem{kiwi2018spectral}
Marcos Kiwi and Dieter Mitsche.
\newblock Spectral gap of random hyperbolic graphs and related parameters.
\newblock {\em The Annals of Applied Probability}, 28(2):941--989, 2018.

\bibitem{kiwi2017second}
Marcos Kiwi and Dieter Mitsche.
\newblock On the second largest component of random hyperbolic graphs.
\newblock {\em SIAM J. Discrete Math.}, 33(4):2200--2217, 2019.

\bibitem{krioukov2010hyperbolic}
Dmitri Krioukov, Fragkiskos Papadopoulos, Maksim Kitsak, Amin Vahdat, and
  Mari{\'a}n Bogun{\'a}.
\newblock Hyperbolic geometry of complex networks.
\newblock {\em Physical Review E}, 82(3):036106, 2010.

\bibitem{last2017lectures}
G{\"u}nter Last and Mathew Penrose.
\newblock {\em Lectures on the Poisson process}, volume~7.
\newblock Cambridge University Press, 2017.

\bibitem{luke2014mathematical}
Yudell~L Luke.
\newblock {\em Mathematical functions and their approximations}.
\newblock Academic Press, 2014.

\bibitem{meijer1946gfunction}
Cornelis~Simon Meijer.
\newblock On the {G}-function. {I-VIII}.
\newblock In {\em Nederl. Akad. Wetensch., Proc.}, volume~49, 1946.

\bibitem{miguel2018effects}
M~Carmen Miguel, Jack~T Parley, and Romualdo Pastor-Satorras.
\newblock Effects of heterogeneous social interactions on flocking dynamics.
\newblock {\em Physical review letters}, 120(6):068303, 2018.

\bibitem{muller2017diameter}
Tobias M\"{u}ller and Merlijn Staps.
\newblock The diameter of {KPKVB} random graphs.
\newblock {\em Adv. in Appl. Probab.}, 51(2):358--377, 2019.

\bibitem{Nanjundiah1959}
T.~S. Nanjundiah.
\newblock Note on stirling’s formula.
\newblock {\em American Mathematical Monthly}, page 66:701–703, 1959.

\bibitem{dlmf2019digital}
F.~W.~J. Olver, A.~B. {Olde Daalhuis}, D.~W. Lozier, B.~I. Schneider, R.~F.
  Boisvert, C.~W. Clark, B.~R. Miller, and B.~V. Saunders.
\newblock Nist digital library of mathematical functions.
\newblock Release 1.0.22 of 2019-03-15.

\bibitem{owada2018sub}
Takashi Owada and D.~Yogeshwaran.
\newblock Sub-tree counts on hyperbolic random geometric graphs.
\newblock 32 pages, available at \texttt{https://arxiv.org/abs/1802.06105},
  2018.

\bibitem{penrose2003random}
Mathew Penrose.
\newblock {\em Random geometric graphs}.
\newblock Number~5 in Oxford studies in probability. Oxford university press,
  2003.

\bibitem{stegehuis2019scale}
Clara Stegehuis, Remco van~der Hofstad, and Johan~SH van Leeuwaarden.
\newblock Scale-free network clustering in hyperbolic and other random graphs.
\newblock {\em Journal of Physics A: Mathematical and Theoretical}, 2019.

\bibitem{stillwell2012geometry}
John Stillwell.
\newblock {\em Geometry of surfaces}.
\newblock Springer Science \& Business Media, 2012.

\bibitem{temme2011special}
Nico~M Temme.
\newblock {\em Special functions: An introduction to the classical functions of
  mathematical physics}.
\newblock John Wiley \& Sons, 2011.

\end{thebibliography}

\begin{appendices}

\section{Meijer's G-function}\label{sec:Meijer_G_functions}

Recall that $\Gamma(z)$ denotes the Gamma function. Let $p, q, m, \ell$ be four integers satisfying $0 \le m \le q$ and $0 \le \ell \le p$ and consider two sequences ${\bf a}_p = \{a_1, \dots, a_p\}$ and ${\bf b}_q = \{b_1, \dots, b_q\}$ of reals such that $a_i - b_j$ is not a positive integer for all $1 \le i \le p$ and $1 \le j \le q$ and $a_i - a_j$ is not an integer for all distinct indices $1 \le i, j \le p$. Then, with $\iota$ denoting the complex unit, Meijer's G-Function~\cite{meijer1946gfunction} is defined as
\begin{equation}\label{eq:def_Meijer_G_function}
	\MeijerGnew{m}{\ell}{p}{q}{{\bf a}}{{\bf b}}{z} 
	= \frac{1}{2 \pi \iota} \int_L 
	\frac{\prod_{j = 1}^{m}\Gamma(b_j - t) \prod_{j = 1}^\ell \Gamma(1 - a_j + t)}
	{\prod_{j = m + 1}^{q}\Gamma(1 - b_j + t) \prod_{j = \ell + 1}^p\Gamma(a_j-t)} \, z^t \dd t,
\end{equation}
where the path $L$ is an upward oriented loop contour which separates the poles of the function $\prod_{j = 1}^{m}\Gamma(b_j - t)$ from those of $\prod_{j = 1}^n \Gamma(1 - a_j + t)$ and begins and ends at $+\infty$ or $-\infty$.

The Meijer's G-Function is of very general nature and has relation to many known special functions such as the Gamma function and the generalized hypergeometric function. For more details, such as many identities for $\MeijerGnew{m}{\ell}{p}{q}{{\bf a}}{{\bf b}}{z}$ see \cite{gradshteyn2015table,luke2014mathematical}.

For our purpose we need the following identity which follows from an Mellin transform operation.

\begin{lemma}\label{lem:gamma_meijer_G}
For any $a\in \R$ and $\xi, s>0$,
$$\Gamma^+(-a-1,\xi/s) = \MeijerGnew{2}{0}{1}{2}{1}{-a-1,0}{\frac{\xi}{s}}$$
\end{lemma}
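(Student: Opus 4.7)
The plan is to verify the identity by matching Mellin--Barnes integral representations of both sides. First I would write out the right-hand side using the definition~\eqref{eq:def_Meijer_G_function}. With $m=2$, $\ell=0$, $p=1$, $q=2$, $\mathbf{a}=(1)$, $\mathbf{b}=(-a-1,0)$, and $z=\xi/s$, the only surviving factors in the numerator are $\Gamma(-a-1-t)\Gamma(-t)$, and the only factor in the denominator is $\Gamma(1-t)$. Applying the functional equation $\Gamma(1-t) = -t\,\Gamma(-t)$ collapses the integrand to
$$
-\frac{\Gamma(-a-1-t)}{t}\,z^{t},
$$
so that
$$
\MeijerGnew{2}{0}{1}{2}{1}{-a-1,0}{z} = -\frac{1}{2\pi\iota}\int_L \frac{\Gamma(-a-1-t)}{t}\,z^{t}\,dt.
$$

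Next I would recall (or derive by Fubini/integration by parts) the Mellin transform identity
$$
\int_0^\infty z^{w-1}\,\Gamma^+(\sigma,z)\,dz = \frac{\Gamma(\sigma+w)}{w},
$$
valid for $w$ in the strip $\Real(w)>0$, $\Real(\sigma+w)>0$. Mellin inversion then gives
$$
\Gamma^+(\sigma,z) = \frac{1}{2\pi\iota}\int_{c-\iota\infty}^{c+\iota\infty}\frac{\Gamma(\sigma+w)}{w}\,z^{-w}\,dw,
$$
and the change of variables $t=-w$ converts this into exactly
$$
\Gamma^+(\sigma,z) = -\frac{1}{2\pi\iota}\int_{-c-\iota\infty}^{-c+\iota\infty}\frac{\Gamma(\sigma-t)}{t}\,z^{t}\,dt.
$$
Setting $\sigma=-a-1$ matches the Meijer G-function integrand computed above, establishing the identity up to verification that the contours agree.

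The main obstacle is precisely that contour check: Meijer's G-function uses a loop contour that separates the poles of $\Gamma(-a-1-t)\Gamma(-t)$ (located at $t = -a-1+k$ and $t=k$ for $k\geq 0$) from the poles of $\Gamma(1-a_j+t)$ (here empty, since $\ell=0$), whereas Mellin inversion uses a vertical line. I would pick $c>0$ so that the corresponding $t$-line lies to the left of all poles of $\Gamma(-a-1-t)$ and $\Gamma(-t)$, and then invoke standard Stirling-type decay estimates on $|\Gamma(\cdot)|$ along vertical lines to justify that the vertical contour can be closed into the loop contour used in the Meijer definition without picking up extra residues. This step is routine but requires care; once done, the two integrals are identical and the lemma follows.
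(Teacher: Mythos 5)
Your proof is correct and takes essentially the same Mellin--Barnes route as the paper: both reduce the Meijer $G$-integrand to $-\Gamma(-a-1-t)\,z^{t}/t$ via the functional equation $\Gamma(1-t)=-t\,\Gamma(-t)$, and both match it against a Mellin--Barnes representation of $\Gamma^{+}$. The paper obtains that representation by integrating the inverse Mellin transform of $e^{-x}$, whereas you compute the Mellin transform of $\Gamma^{+}$ directly by Fubini and then invert; the two are minor variations of each other, though you are noticeably more careful about the contour-compatibility check, which the paper's proof does not address explicitly.
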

\begin{proof}
Let $x>0$ and $q\in\R$ and note that as the $\Gamma$-function is the Mellin transform of $e^{-x}$, by the inverse Mellin transform formula, we have $e^{-x}=\frac{1}{2\pi \iota}\int_{c-\iota\infty}^{c+\iota\infty} \Gamma(p)x^{-p}dp$ for $c>0$ (see \cite[p.196]{davies2012integral}). Applying the change of variable $p(r)=q-r$ yields $e^{-x}=\frac{1}{2\pi \iota}\int_{c+q-\iota\infty}^{c+q+\iota\infty} \Gamma(q-r) x^{r-q}dr$, then multiplying both sides with $-x^{q-1}$ gives $-x^{q-1}e^{-x} = -\frac{1}{2\pi \iota}\int_{c+q-\iota\infty}^{c+q+\iota\infty} \Gamma(q-r) x^{r-1}dr$. Now, integrating both sides gives $\int_x^\infty t^{q-1}e^{-t}dt = \frac{1}{2\pi \iota}\int_{c+q-\iota\infty}^{c+q+\iota\infty}\frac{\Gamma(q-r)}{-r}x^r dr$. On the left-hand side is the incomplete gamma function and on the right-hand side with using $-r= \frac{\Gamma(1-r)}{\Gamma(-r)}$ is the Meijer $G$-function, i.e. $\Gamma^+(q,x)=\MeijerGnew{2}{0}{1}{2}{1}{q,0}{x}$. The claim follows by plugging in $q=-a-1$ and $x=\frac{\xi}{s}$.
\end{proof}

\section{Incomplete Beta function}\label{sec:beta_function}

Here we derive the asymptotic behavior for the function $B^-(1-z; 2\alpha, 3-4\alpha )$ as $z \to 0$, which is used to analyze the asymptotic behavior of $P(y)$, see Section~\ref{ssec:asymptotics_local_clustering_P}.

\begin{lemma}\label{lem:asymptotics_incomplete_beta}
We have the following asymptotic results for $B^-(1-z; 2\alpha, 3-4\alpha )$
\begin{enumerate}
\item For $1/2 < \alpha < 3/4$
\[
	\lim_{z \to 0} B^-(1-z, 2\alpha, 3-4\alpha ) = B(2\alpha, 3 - 4\alpha).
\]
\item When $\alpha = 3/4$,
\[
	\lim_{z \to 0} \frac{B^-(1-z, 2\alpha, 3-4\alpha)}{\log(z)} = -1.
\]
\item For $\alpha > 3/4$,
\[
	\lim_{z \to 0} z^{4\alpha - 3} B^-(1-z, 2\alpha, 3-4\alpha ) = \frac{1}{4\alpha - 3}.
\]
\end{enumerate}
\end{lemma}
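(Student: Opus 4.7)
The plan is to rewrite the incomplete beta function via the substitution $u = 1-t$ and then analyze the resulting integral by isolating the potentially singular behavior near $u=0$. Starting from the definition,
\[
B^-(1-z; 2\alpha, 3-4\alpha) = \int_0^{1-z} t^{2\alpha-1}(1-t)^{2-4\alpha}\, dt = \int_z^1 (1-u)^{2\alpha-1} u^{2-4\alpha}\, du.
\]
The exponent on $u$ is $2-4\alpha$, whose sign relative to $-1$ splits the problem into three cases, matching exactly the three regimes in the lemma statement.

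For $1/2 < \alpha < 3/4$, we have $2-4\alpha > -1$, so $u^{2-4\alpha}$ is integrable at $u=0$. By monotone/dominated convergence, the integral from $z$ to $1$ converges to $\int_0^1 (1-u)^{2\alpha-1}u^{2-4\alpha}\, du = B(3-4\alpha, 2\alpha) = B(2\alpha, 3-4\alpha)$, yielding (i).

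For $\alpha = 3/4$, the integral becomes $\int_z^1 (1-u)^{1/2} u^{-1}\, du$. Splitting off $(1-u)^{1/2}$ as $1 - [1 - (1-u)^{1/2}]$, the first piece gives $-\log z$ exactly, while the second piece $\int_z^1 \frac{1 - (1-u)^{1/2}}{u}\, du$ is bounded as $z\to 0$ since its integrand is $O(1)$ near $u=0$. Dividing by $\log z$ and letting $z\to 0$ produces the limit $-1$ required for (ii).

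For $\alpha > 3/4$, we have $2-4\alpha < -1$. Write
\[
\int_z^1 (1-u)^{2\alpha-1}u^{2-4\alpha}\, du = \int_z^1 u^{2-4\alpha}\, du + \int_z^1 \bigl[(1-u)^{2\alpha-1}-1\bigr]u^{2-4\alpha}\, du.
\]
The first integral equals $\frac{1 - z^{3-4\alpha}}{3-4\alpha} = \frac{z^{3-4\alpha} - 1}{4\alpha - 3}$, whose leading term as $z\to 0$ is $\frac{z^{3-4\alpha}}{4\alpha-3}$. For the correction integral, $(1-u)^{2\alpha-1}-1 = O(u)$ near $u=0$, so the integrand is $O(u^{3-4\alpha})$; since $3-4\alpha > -1$ fails exactly when $\alpha \ge 1$, one should split further. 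If $3/4 < \alpha < 1$, the correction integral is bounded as $z\to 0$ and hence negligible compared to $z^{3-4\alpha}$. If $\alpha \ge 1$, the correction contributes a term of order $z^{4-4\alpha}$ (or $\log z$ at the integer boundary $\alpha=1$), which is still $o(z^{3-4\alpha})$. Multiplying through by $z^{4\alpha-3}$ and sending $z \to 0$ gives the limit $\frac{1}{4\alpha-3}$ claimed in (iii).

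The only mildly delicate step is case (iii) for $\alpha \ge 1$, where one must track several lower-order powers of $z$ and verify that each is negligible compared with $z^{3-4\alpha}$; this is a routine bookkeeping exercise using the Taylor expansion of $(1-u)^{2\alpha-1}$.
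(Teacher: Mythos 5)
Your proof is correct, and it takes a genuinely different route from the paper's. The paper invokes the hypergeometric representation $B^-(x;a,b) = \frac{x^a}{a}F(a,1-b,a+1,x)$ and then cites the three standard limiting forms of ${}_2F_1(a,b,c,1-z)$ as $z\to 0$ according to the sign of $c-a-b$ (from Andrews--Askey--Roy, or DLMF \S 15.4(ii)), reducing the three cases of the lemma to plugging in parameters. Your argument, by contrast, is elementary and self-contained: you rewrite the incomplete beta function via $u=1-t$ as $\int_z^1 (1-u)^{2\alpha-1}u^{2-4\alpha}\,du$, observe that the sign of $(2-4\alpha)-(-1)=3-4\alpha$ is exactly what governs integrability at $u=0$, and then in cases (ii) and (iii) split off the singular power $u^{2-4\alpha}$ and bound the remainder using $0\le 1-(1-u)^{2\alpha-1}\le C u$ on $[0,1]$. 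The trade-off is clear: the paper's route is shorter if one takes the hypergeometric asymptotics as known, while yours avoids any special-function machinery and makes the mechanism (the competition between the exponent $2-4\alpha$ and $-1$) transparent. Your handling of case (iii) for $\alpha\ge 1$ is adequate as sketched, though for a fully polished write-up you should state the uniform bound $|(1-u)^{2\alpha-1}-1|\le C_\alpha u$ on $[0,1]$ explicitly, so that the correction integral is bounded by $C_\alpha\int_z^1 u^{3-4\alpha}\,du$ in one stroke across all three subcases ($3/4<\alpha<1$, $\alpha=1$, $\alpha>1$), rather than appealing only to the local Taylor expansion near $u=0$.
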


\begin{proof}
We use the hypergeometric representation of the incomplete Beta function,
\[
	B^-(x, a, b) = \frac{x^a}{2a}F(a, 1-b,a+1,x),
\]
where $F$ denote the hypergeometric function~\cite{temme2011special} (or see~\cite[Section 8.17 (ii)]{dlmf2019digital}). In particular we have that
\[
	B^-(1-z; 2\alpha, 3-4\alpha ) = \frac{(1-z)^{2\alpha}}{2\alpha} F(2\alpha,4\alpha-2,2\alpha+1,1-z).
\]

The behavior of $F(a,b,c,1-z)$ as $z \to 0$ depend on the real part of the sum of $c - a - b$ and whether $c = a + b$~\cite{andrews2000special} (or see~\cite[Section 15.4(ii)]{dlmf2019digital}). Since in our case $a,b,c$ will be real it only depends on the sum of $c - a - b$. For $c - a - b > 0$ we have
\begin{equation}\label{eq:F_cab_less_than_zero}
	\lim_{z \to 0} F(a,b,c,1-z) = \frac{\Gamma(c)\Gamma(c-a-b)}{\Gamma(c-a)\Gamma(c-b)},
\end{equation}
if $c = a + b$ then
\begin{equation}\label{eq:F_cab_zero}
	\lim_{z \to 0} \frac{F(a,b,c,1-z)}{\log(z)} = -\frac{\Gamma(a+b)}{\Gamma(a)\Gamma(b)},
\end{equation}
and finally, when $c - a - b < 0$
\begin{equation}\label{eq:F_cab_greater_than_zero}
	\lim_{z \to 0} \frac{F(a,b,c,1-z)}{z^{c - a - b}} = \frac{\Gamma(c)\Gamma(a + b -c)}{\Gamma(a)\Gamma(b)}.
\end{equation}

In our case we have,
\[
	B^-(1-z; 2\alpha, 3-4\alpha ) = \frac{(1-z)^{2\alpha}}{2\alpha} F(a,b,c,1-z),
\]
with $a := 2\alpha$, $b: = 4\alpha-2$ and $c := 2\alpha + 1$. Therefore,
\[
	c - a - b = 2\alpha + 1 - 2\alpha -(4\alpha - 2) = 3 - 4\alpha.
\]

Now if $\alpha < 3/4$ then $c - a - b > 0$ and hence
\[
	\lim_{z \to 0} B^-(1-z; 2\alpha, 3-4\alpha ) = \frac{1}{2\alpha} \frac{\Gamma(2\alpha + 1)\Gamma(3 - 4\alpha)}{\Gamma(1)\Gamma(3-2\alpha)} = \frac{\Gamma(2\alpha)\Gamma(3 - 4\alpha)}{\Gamma(3-2\alpha)}
	= B(2\alpha, 3 - 4\alpha),
\]
where we used that $\Gamma(2\alpha + 1) = 2\alpha \Gamma(2\alpha)$.

When $\alpha =3/4$ then $c - a - b = 0$ and therefore~\eqref{eq:F_cab_zero}, together with the fact that $(1-z)^{3/2} \sim 1$ as $z \to 0$, implies that
\begin{align*}
	\lim_{z \to 0} \frac{B^-(1-z; 2\alpha, 3-4\alpha )}{\log(z)} 
	= -\frac{1}{2\alpha} \frac{\Gamma(6\alpha - 2)}{\Gamma(2\alpha)\Gamma(4\alpha - 2)}
	= - \frac{\Gamma(5/2)}{\frac{3}{2}\Gamma(3/2)} = -1.
\end{align*}

Finally, when $\alpha > 3/4$, $c - a - b = 3 - 4\alpha < 0$ and using~\eqref{eq:F_cab_greater_than_zero} we get
\begin{align*}
	\lim_{z \to 0} z^{4\alpha - 3} B^-(1-z, 2\alpha, 3-4\alpha ) 
	= \frac{1}{2\alpha}\frac{\Gamma(2\alpha + 1)\Gamma(4\alpha - 3)}{\Gamma(2\alpha)\Gamma(4\alpha - 2)}
	= \frac{\Gamma(4\alpha - 3)}{\Gamma(4\alpha - 2)} = \frac{1}{4\alpha - 3}.
\end{align*}
\end{proof}

\section{Some results on functions}

\begin{lemma}\label{lem:arccos_approx}
For any $0 < \lambda < 1$ there exists a $K > 0$, such that for all $0 < x \le (1 - \lambda)2$
\[
	\frac{1}{2}\arccos(1-x)
	\le \frac{x}{\sqrt{1-(1-x)^2}} 
	\le \frac{1}{2}\arccos(1-x)\left(1 + x\right).
\]
In particular, as $x \to 0$,
\[
	\frac{x}{\sqrt{1-(1-x)^2}} \sim \frac{1}{2}\arccos(1-x).
\]
\end{lemma}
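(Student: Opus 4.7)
The plan is to reduce the double inequality to a tangent inequality via a half-angle substitution. Setting $\theta = \arccos(1-x)$, one has $\cos\theta = 1-x$ and $\sin\theta = \sqrt{1-(1-x)^2}$, so the identities $1-\cos\theta = 2\sin^2(\theta/2)$ and $\sin\theta = 2\sin(\theta/2)\cos(\theta/2)$ give
\[
\frac{x}{\sqrt{1-(1-x)^2}} \;=\; \frac{1-\cos\theta}{\sin\theta} \;=\; \tan(\theta/2),
\]
while $x = 2\sin^2(\theta/2)$ so that $1+x = 1 + 2\sin^2(\theta/2)$. Writing $y = \theta/2$, the claim is equivalent to
\[
y \;\le\; \tan y \;\le\; y\bigl(1 + 2\sin^2 y\bigr), \qquad y \in \bigl(0, \tfrac{1}{2}\arccos(2\lambda - 1)\bigr].
\]
The constraint $x \le 2(1-\lambda)$ keeps $y$ bounded away from $\pi/2$ by a margin depending on $\lambda$, so $\tan y$ remains bounded and continuous on the interval. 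The lower bound is the classical inequality $\tan y \ge y$ on $[0,\pi/2)$, immediate from $\tan y - y = \int_0^y \tan^2 t\, dt \ge 0$.

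For the upper bound, I would study the function $h(y) := y(1 + 2\sin^2 y) - \tan y$. A Taylor expansion at the origin yields $h(0) = h'(0) = h''(0) = 0$ and $h'''(0) = 10$, so $h(y) = \tfrac{5}{3} y^3 + O(y^5)$ near the origin and $h$ is strictly positive in a right-neighbourhood of $0$. To propagate positivity over the full interval I would clear the denominator by multiplying by $\cos y > 0$, rewrite the desired inequality as $\sin y \le y\cos y\,(1 + 2\sin^2 y)$, use the identity $\sin y - y\cos y = \int_0^y t \sin t\, dt$ together with the pointwise bounds $t \le y$ and $\sin t \le \sin y$ on $[0, y]$, and reduce matters to the elementary cubic inequality $2\cos^2 y + 2\cos y - 1 \ge 0$, equivalently $\cos y \ge (\sqrt{3}-1)/2$. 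Since $\lambda$ controls how close to $\pi/2$ the variable $y$ can come, this delivers the bound on the claimed range; any residual range near the endpoint can be mopped up by a direct sign analysis of $h'$ and $h''$.

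The asymptotic equivalence $x/\sqrt{1-(1-x)^2} \sim \tfrac{1}{2}\arccos(1-x)$ as $x \to 0$ is then immediate from the two-sided bound by squeezing, since $1+x \to 1$. I expect the main obstacle to be calibrating the upper bound so that it remains valid up to the endpoint of the interval: the gap $y(1+2\sin^2 y) - \tan y$ closes as $y$ approaches $\pi/2$, so the pointwise estimates in the integrand argument must be sharp enough that the elementary cubic in $\cos y$ still controls $h$ at the extreme value $y = \tfrac12 \arccos(2\lambda-1)$, which is precisely where the lemma is most delicate.
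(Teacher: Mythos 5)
Your half-angle substitution $\theta=\arccos(1-x)$, $y=\theta/2$ is a clean idea: it correctly converts the middle expression into $\tan y$, the two outer terms into $y$ and $y\left(1+2\sin^2 y\right)$, and the hypothesis $x\le 2(1-\lambda)$ into $y\le \tfrac12\arccos(2\lambda-1)$. The lower bound $y\le\tan y$ is indeed immediate. The trouble is that the upper bound $\tan y\le y\left(1+2\sin^2 y\right)$ is genuinely \emph{false} once $y$ approaches $\pi/2$: the right-hand side tends to $3\pi/2$ while $\tan y\to\infty$. Numerically, $h(y):=y\left(1+2\sin^2 y\right)-\tan y$ changes sign near $y\approx 1.31$ (e.g.\ $h(1.3)\approx 0.11>0$ but $h(1.35)\approx -0.53<0$), and since $\tfrac12\arccos(2\lambda-1)$ exceeds $1.31$ as soon as $\lambda<0.07$ or so, the interval you must cover contains points where $h<0$ for admissible $\lambda$. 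So this is not a calibration problem that sharper estimates could fix: $h$ actually goes negative, and no amount of sign analysis of $h'$, $h''$ will rescue positivity. (Your integrand-bound route leading to $2\cos^2 y+2\cos y-1\ge0$, i.e.\ $\cos y\ge(\sqrt3-1)/2$, already restricts to $y\le 1.196$, which is even smaller.)

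This symptom points at the statement itself rather than at your argument: the lemma's preamble announces a constant $K>0$ that never appears in the display, and the hard-coded factor $1+x$ should almost certainly read $1+Kx$ with $K=K(\lambda)$; one can check directly that the displayed inequality with factor $1+x$ fails at, say, $x=1.9$. (The paper's own proof also only establishes a bound of the form $1+\frac{(K+1)x}{1-x}$, and the lemma is applied downstream only through the asymptotic $\sim$ and with arguments $x<1$, where everything is fine.) Your trigonometric route closes cleanly for the corrected statement: on $[0,a]$ with $a:=\tfrac12\arccos(2\lambda-1)<\pi/2$, the quantity $g(y):=\bigl(\tan y-y\bigr)/\bigl(y\sin^2 y\bigr)$ extends continuously to $y=0$ with value $1/3$ and is continuous on the compact interval $[0,a]$, hence bounded by some $M=M(\lambda)$; taking $K=M/2$ gives $\tan y\le y\left(1+2K\sin^2 y\right)=y(1+Kx)$ on the whole range. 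Since $1+Kx\to 1$ as $x\to 0$, the asymptotic $x/\sqrt{1-(1-x)^2}\sim\tfrac12\arccos(1-x)$ follows exactly as you intended.
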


\begin{proof}
First we observe that for all $0 < x < 2$
\[
	0 < \sqrt{2x}\left(1 - \frac{x}{\sqrt{8}}\right) \le \arccos(1-x) \le 
	\sqrt{2x}\left(1 + \frac{x}{\sqrt{8}}\right)
\]
while for every $0 < \lambda < 1$, there exists a $K > 0$ such that for all $0 < x \le (1-\lambda) 2$,
\[
	0 < \frac{1}{\sqrt{2x}}\left(1 - \frac{x}{2}\right) \le \frac{1}{\sqrt{1 - (1 - x)^2}} \le
	\frac{1}{\sqrt{2x}}\left(1 + K x\right).
\]
It then follows that for all $0 < x \le (1-\lambda) 2$,
\begin{align*}
	\frac{x}{\sqrt{1 - (1-x^2)}} &\le \frac{1}{2} \sqrt{2x}\left(1 + K\frac{x}{\sqrt{2}}\right)
		\le \frac{1}{2} \arccos(1-x) \frac{1 + Kx}{1 - \frac{x}{\sqrt{8}}}
		\le \frac{1}{2} \arccos(1-x)\left(1 + \frac{(K + 1)x}{1 - x}\right),
\end{align*}
and
\begin{align*}
	\frac{x}{\sqrt{1 - (1-x^2)}} &\ge \frac{1}{2} \sqrt{2x}\left(1 - \frac{x}{2}\right)
		\ge \frac{1}{2} \arccos(1-x) \frac{1 - \frac{x}{2}}{1 + \frac{x}{\sqrt{8}}}
		\ge \frac{1}{2} \arccos(1-x)\left(1 - \frac{(1+\sqrt{2})x}{1 + x}\right),
\end{align*}
which finishes the proof.
\end{proof}

\section{Some results for random variables}

Here we summarize several known results for random variables and provide one technical lemma for Binomial random variables.

%We start with the following concentration result which follows from~\cite[Theorem 4]{freedman1973another}, together with the note directly after it.

%\begin{lemma}\label{lem:general_concentration_sum_indicators}
%Let $X_n$ be a sum of $n$, possibly dependent, indicators and $c > 0$. Then
%\[
%	\Prob{|X_n - \Exp{X_n}| > c \Exp{X_n}} \le 2 e^{-\frac{c^2 \Exp{X_n}^2}{2}}.
%\]
%\end{lemma}

First of all, we recall two versions of the Chernoff bound for Poisson and Binomial random variables. They can be found in \cite[Lemma 1.2]{penrose2003random}; note that the Chernoff bound exists in many different versions, the original idea was developed by Chernoff in the context of efficiency of statistical hypothesis testing in \cite{Chernoff1952}):.

\begin{lemma}
Let $\Po(\lambda)$ denote a Poisson random variable with mean $\lambda$ and let $H(x) = x\log(x) - x + 1$. Then
\begin{align*}
	&\Prob{\Po(\lambda) \ge k} \le e^{-\lambda H(k/\lambda)} \quad \text{for all } k \ge \lambda\\
	&\Prob{\Po(\lambda) \le k} \le e^{-\lambda H(k/\lambda)} \quad \text{for all } k \le \lambda.
\end{align*}
\end{lemma}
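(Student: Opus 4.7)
The plan is to prove this via the standard Chernoff/exponential moment method, optimizing a free parameter in each direction separately.

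First I would recall that the moment generating function of $X \sim \Po(\lambda)$ is $\E[e^{tX}] = e^{\lambda(e^t - 1)}$ for any $t \in \R$, which can be verified by a direct computation using the Taylor series of the exponential. This is the only probabilistic input needed; everything else is the Markov-inequality trick plus calculus.

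For the upper tail, for any $t > 0$ Markov's inequality applied to $e^{tX}$ gives
\[
\Pee(X \ge k) = \Pee(e^{tX} \ge e^{tk}) \le e^{-tk} \E[e^{tX}] = \exp\!\bigl(\lambda(e^t - 1) - tk\bigr).
\]
Differentiating the exponent in $t$ yields the optimizer $t^\ast = \log(k/\lambda)$, which is nonnegative precisely because $k \ge \lambda$. Substituting $e^{t^\ast} = k/\lambda$ into the exponent gives $\lambda(k/\lambda - 1) - k\log(k/\lambda) = -\lambda\bigl((k/\lambda)\log(k/\lambda) - k/\lambda + 1\bigr) = -\lambda H(k/\lambda)$, which is the claimed bound.

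For the lower tail I would mirror the argument with $e^{-tX}$: for any $t > 0$,
\[
\Pee(X \le k) = \Pee(e^{-tX} \ge e^{-tk}) \le e^{tk}\E[e^{-tX}] = \exp\!\bigl(\lambda(e^{-t} - 1) + tk\bigr),
\]
and the optimizer is $t^\ast = \log(\lambda/k) \ge 0$ exactly when $k \le \lambda$. The computation of the resulting exponent is identical and again collapses to $-\lambda H(k/\lambda)$, which is the symmetry that makes the statement clean. There is no real obstacle: the only thing to check carefully is that the sign conditions on $t^\ast$ match the hypotheses $k \ge \lambda$ and $k \le \lambda$ respectively, and the edge case $k = \lambda$, where $t^\ast = 0$ and $H(1) = 0$ so the bound reads $\Pee(X \ge \lambda), \Pee(X \le \lambda) \le 1$, trivially true.
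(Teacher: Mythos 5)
Your argument is correct. The paper does not prove this lemma itself — it simply cites it as Lemma~1.2 of Penrose's \emph{Random Geometric Graphs} — so there is no "paper's proof" to compare against, but the Chernoff/moment-generating-function argument you give is the standard one and is indeed what appears in that reference. All the details check out: the MGF formula $\E[e^{tX}] = e^{\lambda(e^t-1)}$, Markov's inequality, the optimization $t^\ast = \log(k/\lambda)$ (resp. $\log(\lambda/k)$) with the sign conditions matching the hypotheses, and the algebra collapsing to $-\lambda H(k/\lambda)$ in both cases. The only cosmetic point worth mentioning is that the optimization should be accompanied by the (trivial) observation that the exponent is convex in $t$, so $t^\ast$ really is a minimizer, and that the case $k=0$ in the lower tail is handled by the convention $0\log 0 = 0$, giving $H(0)=1$ and the exact identity $\Pee(X\le 0)=e^{-\lambda}$.
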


It follows from the above lemma that
\begin{equation*}
	\Prob{\left|\mathrm{Po}(\lambda) - \lambda\right| \ge x} \le 2e^{-\frac{x^2}{2(\lambda + x)}}.
\end{equation*}
In particular, if $\lambda_n \to \infty$, then, for any $C>0$,
\begin{equation*}
	\Prob{\left|\mathrm{Po}(\lambda_n) - \lambda_n\right| \ge C \sqrt{\lambda_n\log(\lambda_n)}} \le 2e^{-\frac{C^2 \lambda_n \log(\lambda_n}{2\left(\lambda_n + C\sqrt{\lambda_n\log(\lambda_n)}\right)}}
	= \bigO{\lambda_n^{-\frac{C^2}{2}}}.
\end{equation*}

Note that these are equations~\eqref{eq:def_chernoff_bound_poisson} and~\eqref{eq:def_chernoff_bound_poisson_C} from the main text.

Let $\mathrm{Bin}(n,p)$ denote a Binomial random variable with $n$ trials and success probability $p$, and $0 < \delta < 1$. Then we have the following well-known Chernoff bound.
\begin{equation}\label{eq:def_chernoff_bound_binomial}
	\Prob{|\mathrm{Bin}(n,p) - np| > \delta np} \le e^{-\frac{\delta^2 np}{3}}.
\end{equation}

The following lemma gives an upper bound on the Binomial distribution for $p = \lambda/n$ in terms a Poisson distribution with mean $\lambda$.
The following lemma  gives a standard comparison between Binomial and Poisson distribution. We provide a short proof  for completeness.

\begin{lemma}\label{lem:binomial_poisson_bound}
Let $n \ge 1$, $0 < \lambda < n$. Then, for any integer $0 \le k \le n - 1$,
\[
	\Prob{\mathrm{Bin}(n,\lambda/n) = k} \le \frac{e}{\sqrt{2\pi}} \sqrt{\frac{n}{n-k}} \Prob{\Po(\lambda) = k}.
\]
\end{lemma}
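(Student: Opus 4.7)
The plan is to write the ratio of the two probabilities in closed form and then massage it using two very classical estimates: Stirling's inequality for the factorial, and the elementary bound $1+x\le e^{x}$.

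First I would compute explicitly
\[
\frac{\Pee(\mathrm{Bin}(n,\lambda/n)=k)}{\Pee(\Po(\lambda)=k)}
= \frac{n!}{(n-k)!\,n^{k}}\cdot\bigl(1-\lambda/n\bigr)^{n-k}\,e^{\lambda}.
\]
For the factorial ratio I would invoke the standard two-sided Stirling estimate, valid for every integer $m\ge 1$,
\[
\sqrt{2\pi m}\,(m/e)^{m}\le m!\le e\sqrt{m}\,(m/e)^{m},
\]
applied with $m=n$ in the numerator and $m=n-k$ in the denominator (noting $n-k\ge 1$ since $k\le n-1$). This immediately yields
\[
\frac{n!}{(n-k)!\,n^{k}}\le \frac{e}{\sqrt{2\pi}}\sqrt{\frac{n}{n-k}}\left(\frac{n}{n-k}\right)^{n-k}e^{-k}.
\]

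Plugging this into the previous display, the quantity remaining to control is
\[
\left(\frac{n}{n-k}\right)^{n-k}\bigl(1-\lambda/n\bigr)^{n-k}e^{\lambda-k}
=\left(1+\frac{k-\lambda}{n-k}\right)^{n-k}e^{-(k-\lambda)}.
\]
Here I would use the universal bound $1+t\le e^{t}$ with $t=(k-\lambda)/(n-k)$, noting that $t>-1$ precisely because $\lambda<n$, so raising to the integer power $n-k$ is legitimate and gives $(1+t)^{n-k}\le e^{t(n-k)}=e^{k-\lambda}$. Hence the whole expression is at most $1$, and combining the two bounds produces
\[
\frac{\Pee(\mathrm{Bin}(n,\lambda/n)=k)}{\Pee(\Po(\lambda)=k)}\le \frac{e}{\sqrt{2\pi}}\sqrt{\frac{n}{n-k}},
\]
which is the claim.

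There is no real obstacle here: the only thing to be slightly careful about is the sign of $k-\lambda$, since $\lambda$ is allowed to be larger or smaller than $k$. Both signs, however, are handled by the single inequality $1+t\le e^{t}$, so the argument is uniform. The whole proof is about four lines once the identity for the ratio is written down.
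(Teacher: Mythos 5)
Your proof is correct and follows essentially the same route as the paper's: apply the two-sided Stirling bound to $n!$ and $(n-k)!$ while leaving $k!$ intact, then reduce the remaining factor $\left(\frac{n-\lambda}{n-k}\right)^{n-k}e^{\lambda-k}$ and show it is at most $1$. The only (minor) difference is that you compute the ratio of the two probabilities up front and you spell out the final inequality via $1+t\le e^t$ (with the check that $t>-1$), whereas the paper bounds $\Prob{\mathrm{Bin}(n,\lambda/n)=k}$ directly and asserts the last inequality without comment.
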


\begin{proof}
Using Stirling's bounds (see e.g.~\cite{Dutkay2013},~\cite{Nanjundiah1959})
	\[
	\sqrt{2\pi s} \left(\frac{s}{e}\right)^{-s} \le s! \le e \sqrt{s} \left(\frac{s}{e}\right)^{-s},
	\]
we have
\begin{align*}
	\Prob{\mathrm{Bin}(n,\lambda/n) = k}
	&= \binom{n}{k} \left(\frac{\lambda}{n}\right)^k \left(1 - \frac{\lambda}{k}\right)^{n - k}\\
	&\le \frac{e}{\sqrt{2\pi}} \sqrt{\frac{n}{n - k}} \frac{n^n}{k!} (n-k)^{-(n-k)} e^{-k}
		\left(\frac{\lambda}{n}\right)^k \left(1 - \frac{\lambda}{n}\right)^{n - k}\\
	&= \frac{e}{\sqrt{2\pi}} \sqrt{\frac{n}{n - k}} \frac{\lambda^k e^{-\lambda}}{k!}
		\left(\frac{n-\lambda}{n - k}\right)^{n - k} e^{\lambda - k}\\
	&= \frac{e}{\sqrt{2\pi}} \sqrt{\frac{n}{n-k}} \Prob{\Po(\lambda) = k}
		\left(\frac{n-\lambda}{n - k}\right)^{n - k} e^{\lambda - k}.
\end{align*}
The result then follows by observing that $\left(\frac{n-\lambda}{n - k}\right)^{n - k} e^{\lambda - k} \le 1$ for all $0 < \lambda < n$ and $0 \le k \le n - 1$.
\end{proof}

\section{Concentration of heights for vertices with degree \texorpdfstring{$k$}{k}}\label{sec:concentration_argument}

Here we will prove Proposition~\ref{prop:concentration_height_general}. We start by considering integration with respect to the function $\rho(y,k_n) = \Prob{\Po(\mu(y)) = k_n}$ (the degree distribution of a typical point in $\Ginf$). Here we show that we may restrict integration with respect to the \emph{height} $y$ to the interval $\Kcal_C(k_n) = [y_{k_n,C}^-, y_{k_n,C}^+]$ on which $\mu(y) = \bigT{k_n}$. Next we show that if we consider any other measure $\hat{\mu}_n(y)$ that is sufficiently equivalent to $\mu(y)$ on this interval (which will be made precise later), then we may replace $\hat{\rho}_n(y,k_n) := \Prob{\Po(\hat{\mu}_n(y)) = k_n}$ in integrals with $\rho(y,k_n)$. This then implies that we can also restrict integration to the interval $\Kcal_C(k_n)$. We will refer to such results as a \emph{concentration of heights} result.

We start with a concentration of heights result for the infinite model $\Ginf$ (Lemma~\ref{lem:concentration_argument}). We then present a generalization of this result (Lemma~\ref{lem:concentration_argument}) and use this to establish concentration of heights results for the Poissonized KPKVB $\GPo$ and finite box model $\Gbox$. 

Finally we provide a general result that allow to substitute $\hat{\rho}_n(y,k_n)$ in the integrand with $\rho(y,k_n)$ and show that this holds in particular for the degree distributions in $\GPo$ and $\Gbox$, given by, respectively $\rho_{\Po}(y,k_n) := \Prob{\Po(\mu_{\Po}(y)) = k_n}$ and $\rho_{\text{box}}(y,k_n) := \Prob{\Po(\mu_{\mathrm{box}}(y)) = k_n}$.

\subsection{Concentration of heights argument for the infinite model}

The next lemma states that for a large class of functions $h(y)$ and $k_n \to \infty$, to compute the integral 
\[
	\int_{0}^\infty \rho(y,k_n) h(y) e^{-\alpha y} \dd y
\]
it is enough to consider integration over a small interval on which $e^{y/2} \approx k_n$, instead of $\R_+$. 

\begin{lemma}\label{lem:concentration_argument}
Let $\alpha > \frac{1}{2}$, $\nu > 0$, $(k_n)_{n \ge 1}$ be any positive sequence such that $k_n \to \infty$ and $k_n = \smallO{n}$. Then the following holds.

For any continuous function $h : \R_+ \rightarrow  \R$, such that $h(y) = \bigO{e^{\beta y}}$ as $y \to \infty$ for some $\beta < \alpha$, 
\begin{equation}\label{eq:error_bound_int_rho_not_K}
	\int_{\R_+ \setminus \Kcal_C(k_n)} \rho(y,k_n) h(y) \alpha e^{-\alpha y} \dd y
	= \bigO{k_n^{-C^2/2}},
\end{equation}
as $n \to \infty$.
%\item If in addition $C > \sqrt{4\alpha + 1}$ and $h(a_n) \sim h(b_n)$ whenever $a_n \sim b_n$, as $n \to \infty$. Then, 
%\begin{equation}\label{eq:concentration_h_rho}
%	\int_0^\infty h(y) \rho(y,k_n) \alpha e^{-\alpha y} \dd y \sim  
%		2\alpha \xi^{2\alpha} h(2\log(k_n/\xi)) k_n^{-(2\alpha + 1)},
%\end{equation}
%as $n \to \infty$.
%\end{enumerate}
\end{lemma}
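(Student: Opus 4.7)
The plan is to split the integral over $\R_+ \setminus \Kcal_C(k_n)$ into the two pieces
\[
	I_1 := \int_0^{y_{k_n,C}^-} \rho(y,k_n)\, h(y)\, \alpha e^{-\alpha y}\, \dd y,
	\qquad
	I_2 := \int_{y_{k_n,C}^+}^\infty \rho(y,k_n)\, h(y)\, \alpha e^{-\alpha y}\, \dd y,
\]
(with $I_1$ vanishing if $y_{k_n,C}^-=0$) and to show that in each piece the Poisson probability $\rho(y,k_n) = \Prob{\Po(\mu(y))=k_n}$ satisfies the uniform bound $\rho(y,k_n) = \bigO{k_n^{-C^2/2}}$. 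Once this is established, the remaining weight $h(y) \alpha e^{-\alpha y}$ is integrable on $\R_+$ since $h(y) = \bigO{e^{\beta y}}$ with $\beta<\alpha$, so $\int_0^\infty |h(y)|\, \alpha e^{-\alpha y}\, \dd y = \bigO{1}$, and each piece is $\bigO{k_n^{-C^2/2}}$ as required.

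To get the uniform Poisson bound I would first recall $\mu(y) = \xi e^{y/2}$, which is continuous and strictly increasing, so $\mu(y_{k_n,C}^\pm) = k_n \pm C\sqrt{k_n \log k_n}$. Then I would apply the Chernoff bound~\eqref{eq:def_chernoff_bound_poisson}: writing $\lambda = \mu(y)$ and $u = |\lambda - k_n|$,
\[
	\rho(y,k_n) \leq \Prob{|\Po(\lambda)-\lambda| \geq u} \leq 2\exp\!\left(-\tfrac{u^2}{2(\lambda+u)}\right).
\]
For $y \in [0, y_{k_n,C}^-]$ we have $\lambda \leq k_n$ and $u = k_n - \lambda \geq C\sqrt{k_n \log k_n}$, hence $\lambda + u \leq k_n$ and the exponent is $\geq u^2/(2k_n) \geq (C^2/2)\log k_n$, growing as $\lambda$ decreases. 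For $y \in [y_{k_n,C}^+,\infty)$ we have $\lambda \geq k_n$ and $u = \lambda - k_n \geq C\sqrt{k_n \log k_n}$; the exponent $u^2/(2(2\lambda-k_n))$ equals $(C^2/2)\log k_n \cdot (1+o(1))$ at $y=y_{k_n,C}^+$ and is monotone increasing in $\lambda$, hence in $y$ (a short calculus check confirms this; alternatively, bound $2\lambda - k_n \leq 2(u+k_n)\leq 4\max(u,k_n)$ and separate the cases $u\leq k_n$ and $u>k_n$). In either piece, the uniform bound $\rho(y,k_n) \leq 2 k_n^{-C^2/2}$ holds for $n$ large.

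The main (minor) obstacle is the second case: one must verify that the Chernoff exponent does not degrade as $y \to \infty$, even though $\lambda = \xi e^{y/2}$ blows up. Handling this requires only the elementary observation above that $(\lambda-k_n)^2/(2(2\lambda-k_n))$ is monotone increasing in $\lambda \geq k_n$, so the worst case is at $y = y_{k_n,C}^+$. After that, the conclusion
\[
	\int_{\R_+\setminus \Kcal_C(k_n)} \rho(y,k_n) h(y) \alpha e^{-\alpha y}\,\dd y
	\leq 2k_n^{-C^2/2} \int_0^\infty |h(y)|\,\alpha e^{-\alpha y}\,\dd y = \bigO{k_n^{-C^2/2}}
\]
is immediate from the assumption $\beta < \alpha$, which guarantees convergence of the dominating integral.
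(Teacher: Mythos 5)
Your argument is correct and is essentially the paper's own proof: both rely on the Chernoff bound for the Poisson distribution to show $\rho(y,k_n)$ is $\bigO{k_n^{-C^2/2}}$ outside the window $\Kcal_C(k_n)$, and both use $\beta<\alpha$ to ensure the residual weight is integrable. The only difference is how monotonicity is deployed: the paper computes $\partial_y\rho(y,k)=\tfrac12(k-\mu(y))\rho(y,k)$ to conclude that $\rho$ itself is increasing on $[0,y_{k_n,C}^-]$ and decreasing on $[y_{k_n,C}^+,\infty)$, pulls the endpoint value out of the integral, and then invokes Chernoff only at the two endpoints $y_{k_n,C}^\pm$; you instead apply Chernoff at every $y$ in the tail and verify that the exponent $(\lambda-k_n)^2/(2(2\lambda-k_n))$ is monotone in $\lambda$, so the bound is uniform. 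These are two ways of expressing the same fact, and neither requires more machinery than the other. One small point worth making explicit: on $[y_{k_n,C}^+,\infty)$ your exponent at the left endpoint is $(C^2/2)\log k_n\cdot(1-\Theta(\sqrt{\log k_n/k_n}))$, slightly \emph{less} than $(C^2/2)\log k_n$, but since the correction multiplies $\log k_n$ by an $O((\log k_n)^{1/2}k_n^{-1/2})$ term, $k_n^{o(1)}=1+o(1)$ and the stated $\bigO{k_n^{-C^2/2}}$ survives; it would be worth saying this rather than asserting $\rho(y,k_n)\le 2k_n^{-C^2/2}$ outright.
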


\begin{proof}
Since $\mu^\prime(y) = \mu(y)/2$, we get that
\[
	\frac{\partial \rho(y,k)}{\partial y} = \frac{1}{2}\left(k - \mu(y)\right)\rho(y,k),
\]
which implies that $\rho(y,k)$ attains its maximum at $\mu(y) = k$. Moreover we see that the derivative is strictly positive when $\mu(y) < k$ and strictly negative when $\mu(y) > k$. Since $\mu(y_{k,C}^-) < k$ and $\mu(y_{k,C}^+) > k$, we conclude that $\rho(y,k)$, as a function of $y$, is strictly increasing on $[0,y_{k,C}^-]$ and strictly decreasing on $[y_{k,C}^+,\infty)$.

Therefore, by our assumption on $h(y)$,
\begin{align*}
	&\hspace{-20pt}\int_{\R_+ \setminus \Kcal_C(k_n)} h(y) \rho(y,k_n) 
		\alpha e^{-\alpha y} \dd y\\
    &= \bigO{1} \int_0^{y_{k_n,C}^-} e^{\beta y} \rho(y,k_n) \alpha e^{-\alpha y} \dd y 
    	+ \bigO{1}\int_{y_{k_n,C}^+}^{\infty} e^{\beta y} \rho(y,k_n) \alpha e^{-\alpha y} \dd y \\
    &= \bigO{1} \int_0^{y_{k_n,C}^-} \rho(y,k_n) e^{-(\alpha-\beta) y} \dd y 
   		+ \bigO{1} \int_{y_{k_n,C}^+}^{\infty} \rho(y,k_n) e^{-(\alpha-\beta) y} \dd y\\
   	&\le \bigO{1}\rho(y_{k_n,C}^-,k_n)\int_0^{y_{k_n,C}^-} e^{-(\alpha-\beta) y} \, \dd y
   		+ \bigO{1} \rho(y_{k_n,C}^+,k_n) \int_{y_{k_n,C}^+}^{\infty} e^{-(\alpha-\beta) y} \, \dd y.
\end{align*}
Since $\alpha - \beta > 0$, we conclude that
\begin{equation}\label{eq:concentration_lemma_integral_bound}
	\int_{\R_+ \setminus \Kcal_C(k_n)} h(y) \rho(y,k_n) \alpha e^{-\alpha y} \dd y
	= \bigO{1} \left(\rho(y_{k_n,C}^-,k_n) + \rho(y_{k_n,C}^+,k_n)\right).
\end{equation}

We shall now bound the terms $\rho(y_{k_n,C}^\pm,k_n)$. We explicitly show the bound for $\rho(y_{k_n,C}^+,k_n)$, the computation for $\rho(y_{k_n,C}^-,k_n)$ is similar. First note that $\mu(y_{k_n,C}^+) = k_n + C \sqrt{\frac{\log(k_n)}{k_n}}$. Hence we can write
\begin{align*}
	\rho(y_{k_n,C}^+,k_n) &= \Prob{\Po(\mu(y_{k_n,C}^+)) = k_n} \le \Prob{\Po(\mu(y_{k_n,C}^+)) \ge k_n}\\
	&\le \Prob{\left|\Po(\mu(y_{k_n,C}^+)) - \mu(y_{k_n,C}^+)\right| \ge C \sqrt{\frac{\log(k_n)}{k_n}}}.
\end{align*}
Apply the Chernoff bound~\eqref{eq:def_chernoff_bound_poisson_C} then yields
\begin{equation}\label{eq:concentration_lemma_bound_an+}
	\rho(y_{k_n,C}^+,k_n) = \bigO{k_n^{-C^2/2}}.
\end{equation}
A similar analysis yields
\begin{equation}\label{eq:concentration_lemma_bound_an-}
	\rho(y_{k_n,C}^-,k_n) \le \bigO{k_n^{-C^2/2}}.
\end{equation} 
Plugging~\eqref{eq:concentration_lemma_bound_an-} and~\eqref{eq:concentration_lemma_bound_an+}  into~\eqref{eq:concentration_lemma_integral_bound} yields the result.

%\paragraph{Proof of the second statement.} By the mean value theorem for definite integrals, there exists a $c_n \in (a_n^-, a_n^+)$ such that
%\[
%	\int_{a_n^-}^{a_n^+} h(y) \rho(y,k_n) \alpha e^{-\alpha y} \dd y
%	= h(c_n) \int_{a_n^+}^{a_n^+} \rho(y,k_n) \alpha e^{-\alpha y} \dd y.
%\]
%Since $\int_0^\infty \rho(y,k_n) \alpha e^{-\alpha y} \dd y = \bigT{k_n^{-(2\alpha + 1)}}$, taking any $C > \sqrt{4\alpha + 1}$, \eqref{eq:error_bound_int_rho_not_K} implies that
%\[
%	\int_{a_n^+}^{a_n^+} \rho(y,k_n) \alpha e^{-\alpha y} \dd y
%	= (1 + \smallO{1})\int_0^\infty \rho(y,k_n) \alpha e^{-\alpha y} \dd y,
%\]
%from which we conclude that (see~\eqref{eq:degree_distribution_P_asymptotics}),
%\[
%	\int_{a_n^+}^{a_n^+} \rho(y,k_n) \alpha e^{-\alpha y} \dd y = (1 + \smallO{1}) 2\alpha \xi^{2\alpha} k_n^{-(2\alpha + 1)},
%\]
%as $n \to \infty$. Finally, since $c_n \in (a_n^-, a_n^+)$ it follows that
%\[
%	\left|\frac{c_n}{2\log(k_n/\xi)} - 1\right| \le 2 C \sqrt{\frac{\log(k_n)}{k_n}}, 
%\]
%so that $c_n \sim 2\log(k_n/\xi)$. Therefore, by assumption on $h$ 
%\[
%	\int_{a_n^-}^{a_n^+} h(y) \rho(y,k_n) \alpha e^{-\alpha y} \dd y
%	\sim h(c_n) 2\alpha \xi^{2\alpha} k_n^{-(2\alpha + 1)} 
%	\sim 2\alpha \xi^{2\alpha} h(2\log(k_n/\xi)) k_n^{-(2\alpha + 1),}
%\]
%as $n \to \infty$.
\end{proof}

Note that we can tune the error in~\eqref{eq:error_bound_int_rho_not_K} by selecting an appropriately large $C > 0$, i.e. by restricting the function $h(y)$ inside the integral to an appropriate interval around $2\log(k_n/\xi)$. This makes Lemma~\ref{lem:concentration_argument} very powerful. As an example we give the following corollary, which allows us to bound integrals of functions $h_n(y)$ by considering their maximum of $\Kcal_{C}(k_n)$.

%\begin{corollary}\label{cor:concentration_of_heights_asymptotics}
%Let $h : \R_+ \to \R$ be any continuous function such that for some $\beta < \alpha$, $h(y) = \bigO{e^{\beta y}}$ as $y \to \infty$ and $h(a_n) \sim h(b_n)$ whenever $a_n \sim b_n$. Then for any other continuous function $g: \R_+ \to \R$, such that $g(y) \sim h(y)$ as $y \to \infty$
%\begin{equation}\label{eq:concentration_h_sim_rho}
%	\int_0^\infty g(y) \rho(y,k_n) \alpha e^{-\alpha y} \dd y \sim  
%		2\alpha \xi^{2\alpha} h(2\log(k_n/\xi)) k_n^{-(2\alpha + 1)},
%\end{equation}
%as $n \to \infty$.
%\end{corollary}
%
%\begin{proof}
%By assumption, $g$ satisfies the conditions of the second statement of Lemma~\ref{lem:concentration_argument}. Since in addition $g(2\log(k_n/\xi)) \sim h(2\log(k_n/\xi))$, the result follows.
%\end{proof}

\begin{corollary}\label{cor:concentration_heights_bounds_n}
Let $h_n : \R_+ \to \R_+$ be a sequence of continuous functions which such that for some $s \in \R$ and $\beta < \alpha$, as $n \to \infty$, $h_n(y) = \bigO{k_n^{s} e^{\beta y}}$ and $h_n(y) = \Omega(1)$, uniformly on $0 \le y \le (1-\varepsilon)R$ for some $0 < \varepsilon < 1$. Then for large enough $C > 0$, as $n \to \infty$,
\[
	\int_{\R_+} h_n(y) \rho(y,k_n) e^{-\alpha y} \dd y 
	= (1 + \smallO{1}) \int_{\Kcal_{C}(k_n)} h_n(y) \rho(y,k_n) \alpha e^{-\alpha y} \dd y.
\]
In particular,
\[
	\int_{\R_+} h_n(y) \rho(y,k_n) e^{-\alpha y} \dd y = \bigO{1} k_n^{-(2\alpha + 1)} \max_{y \in \Kcal_{C}(k_n)} h_n(y),
\]
as $n \to \infty$.
\end{corollary}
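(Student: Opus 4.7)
The plan is to split the integral over $\R_+$ around the interval $\Kcal_C(k_n)$ and to show that the complementary contribution is of lower order, with the size of $C$ tuned to the growth exponent $s$ of $h_n$. Write
$$\int_0^\infty h_n(y)\rho(y,k_n)\alpha e^{-\alpha y}\,dy = I_{\mathrm{in}} + I_{\mathrm{out}},$$
where $I_{\mathrm{in}}$ is the integral over $\Kcal_C(k_n)$ and $I_{\mathrm{out}}$ is the integral over $\R_+\setminus\Kcal_C(k_n)$.

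First I would bound $I_{\mathrm{in}}$ from below. Since $y_{k_n,C}^+ = 2\log(k_n/\xi) + o(1)$ while $R = 2\log(n/\nu)$ and $k_n = o(n)$, the interval $\Kcal_C(k_n)$ lies inside $[0,(1-\varepsilon)R]$ for all sufficiently large $n$ (shrinking $\varepsilon$ if necessary). On this region the hypothesis $h_n(y) = \Omega(1)$ applies uniformly, so $I_{\mathrm{in}} \ge c\int_{\Kcal_C(k_n)}\rho(y,k_n)\alpha e^{-\alpha y}\,dy$ for some $c>0$. Combining the argument in Lemma~\ref{lem:concentration_argument} (the complement of $\Kcal_C(k_n)$ in $\R_+$ with $h\equiv 1$ contributes only $\bigO{k_n^{-C^2/2}}$) with $\pmf(k_n) = \bigT{k_n^{-(2\alpha+1)}}$ from \eqref{eq:degree_distribution_P_asymptotics}, I get $I_{\mathrm{in}} = \bigT{k_n^{-(2\alpha+1)}}$ provided $C$ is large enough.

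Next I would control $I_{\mathrm{out}}$ by splitting it further as $I_{\mathrm{out},1}$ on $[0,(1-\varepsilon)R]\setminus\Kcal_C(k_n)$ and $I_{\mathrm{out},2}$ on $[(1-\varepsilon)R,\infty)$. On $I_{\mathrm{out},1}$ the upper bound $h_n(y)=\bigO{k_n^s e^{\beta y}}$ holds, so applying Lemma~\ref{lem:concentration_argument} with the admissible test function $\tilde h(y) = e^{\beta y}$ (since $\beta<\alpha$) yields $I_{\mathrm{out},1} = \bigO{k_n^{s - C^2/2}}$. For $I_{\mathrm{out},2}$ I reuse the monotonicity of $y\mapsto \rho(y,k_n)$ established in the proof of Lemma~\ref{lem:concentration_argument}: beyond $y_{k_n,C}^+$ the density $\rho(y,k_n)$ is decreasing in $y$, and combined with the $e^{-\alpha y}$ factor and any polynomial-exponential growth of $h_n$ carried over by continuity, this tail is at most of the same order $\bigO{k_n^{s - C^2/2}}$. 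Choosing $C$ so large that $\tfrac{C^2}{2} > s + 2\alpha + 1$ gives $I_{\mathrm{out}} = o(k_n^{-(2\alpha+1)}) = o(I_{\mathrm{in}})$, which proves the first claim.

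The ``in particular'' statement then follows immediately by bounding $h_n(y) \le \max_{y\in\Kcal_C(k_n)}h_n(y)$ inside $I_{\mathrm{in}}$ and using that $\int_{\Kcal_C(k_n)}\rho(y,k_n)\alpha e^{-\alpha y}\,dy \le \pmf(k_n) = \bigO{k_n^{-(2\alpha+1)}}$, together with the equivalence just proved. The only delicate step is the treatment of the far tail $[(1-\varepsilon)R,\infty)$: the hypothesis on $h_n$ is only stated uniformly on $[0,(1-\varepsilon)R]$, so I expect the main obstacle to be arguing that in this regime either $h_n$ inherits a comparable growth bound (which is the case for all the $h_n$ appearing in the applications) or that the joint exponential decay of $\rho(y,k_n)$ and $e^{-\alpha y}$ suffices to absorb any reasonable growth of $h_n$ by a Chernoff-type estimate analogous to \eqref{eq:concentration_lemma_bound_an+}.
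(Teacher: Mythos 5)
Your proposal follows essentially the same route as the paper's proof: split $\int_{\R_+}$ into the $\Kcal_C(k_n)$ contribution and its complement, lower-bound the inner integral as $\Omega(k_n^{-(2\alpha+1)})$ using $h_n = \Omega(1)$ on $\Kcal_C(k_n)\subset[0,(1-\varepsilon)R]$ together with $\pmf(k_n)=\Theta(k_n^{-(2\alpha+1)})$, upper-bound the outer integral as $\bigO{k_n^{s-C^2/2}}$ via Lemma~\ref{lem:concentration_argument} applied to $e^{\beta y}$, and choose $C$ large enough. The additional subdivision of the outer region into $[0,(1-\varepsilon)R]\setminus\Kcal_C(k_n)$ and $[(1-\varepsilon)R,\infty)$ is not something the paper does: the paper applies Lemma~\ref{lem:concentration_argument} to the whole complement in one step, which implicitly treats the hypothesis $h_n(y)=\bigO{k_n^s e^{\beta y}}$ as holding for all $y\in\R_+$ (this is what Lemma~\ref{lem:concentration_argument} itself requires of its test function).

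Your closing paragraph therefore identifies an ambiguity in the corollary's wording rather than a gap in the paper's argument: if the ``uniformly on $[0,(1-\varepsilon)R]$'' clause is read to modify only the $\Omega(1)$ bound (the reading consistent with the paper's proof and with all invocations of the corollary), the far-tail concern evaporates and your extra split is superfluous. If instead both bounds are read as local, your worry is legitimate, but your proposed fix --- ``growth of $h_n$ carried over by continuity'' --- does not actually control $h_n$ beyond $(1-\varepsilon)R$, since continuity yields nothing about growth; you would need an explicit global hypothesis or a separate argument. One other small slip: ``shrinking $\varepsilon$ if necessary'' goes the wrong way --- shrinking $\varepsilon$ enlarges the interval $[0,(1-\varepsilon)R]$, where you do not have the hypothesis; the containment $\Kcal_C(k_n)\subset[0,(1-\varepsilon)R]$ instead follows from $k_n=o(n^{1/(2\alpha+1)})$ provided the given $\varepsilon$ satisfies $\varepsilon<2\alpha/(2\alpha+1)$.
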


\begin{proof}
The second result follows immediately from the first. For the first result we note that by Lemma~\ref{lem:concentration_argument}
\begin{align*}
	\int_{\R_+ \setminus \Kcal_C(k_n)} h_n(y) \rho(y,k_n) e^{-\alpha y} \dd y
	&\le \bigO{k_n^s} \int_{\R_+ \setminus \Kcal_C(k_n)} e^{\beta y } \rho(y,k_n) e^{-\alpha y} \dd y\\
	&= \bigO{k_n^{s - C^2/2}}.
\end{align*}
By assumption on $h_n(y)$,
\[
	\int_{\Kcal_{C}(k_n)} h_n(y) \rho(y,k_n) e^{-\alpha y} \dd y 
	= \bigO{k_n^{s+2\beta}} \int_{\Kcal_{C}(k_n)} \rho(y,k_n) e^{-\alpha y} \dd y
	= \bigO{k_n^{s+2\beta - (2\alpha + 1)}},
\]
and
\[
	\int_{\Kcal_{C}(k_n)} h_n(y) \rho(y,k_n) e^{-\alpha y} \dd y 
	= \Omega(1) \int_{\Kcal_{C}(k_n)} \rho(y,k_n) e^{-\alpha y} \dd y
	= \Omega(k_n^{-(2\alpha + 1)}).
\]
Hence, by taking $C > 0$ such that $C^2/2 > \max\{2\alpha + 1 + s, 2\alpha +1 - \beta\}$ we get that
\[
	\int_{\R_+ \setminus \Kcal_C(k_n)} h_n(y) \rho(y,k_n) e^{-\alpha y} \dd y
	= \smallO{1} \int_{\Kcal_{C}(k_n)} h_n(y) \rho(y,k_n) e^{-\alpha y} \dd y.
\]
\end{proof}

\subsection{Concentration of heights for the KPKVB and finite box model}\label{ssec:general_concentration_lemma}

Although powerful, the current versions of the concentration of heights argument is only valid for the function $\rho(y,k_n) := \Prob{\Po\left(\Mu{\BallPo{y}}\right) = k_n}$. We want to extend this to the Poissonized KPKVB model $\GPo$ and the finite box model $\Gbox$. To be more precise, recall that $\mu_{Po}(y) = \Mu{\BallHyp{y}}$ and $\mu_{\mathrm{box}}(y) = \Mu{\BallPon{y}}$ and let us define
\[
	\rho_{\Po}(y,k) = \Prob{\Po(\mu_{\Po}(y)) = k}
\] 
and
\[
	\rho_{\mathrm{box}}(y,k) = \Prob{\Po(\mu_{\mathrm{box}}(y) = k}.
\] 
Then we want when Lemma~\ref{lem:concentration_argument} to remain true if we replace $\rho(y,k_n)$ with either the function $\rho_{\Po}(y,k_n)$ or $\rho_{\text{box}}(y,k_n)$. To establish this result we first prove the following technical lemma.

\begin{lemma}\label{lem:concentration_heights_mu_approx}
Let $0 < \delta < 1$ and $k_n \to \infty$ be such that $k_n = \bigO{n^{1-\delta}}$. Let $\hat{\mu}_n(y)$ be a monotone increasing differentiable function such for some $0 < \varepsilon < 1$, $\hat{\mu}_n(y) = (1+\smallO{1})\mu(y)$ holds uniformly for $0 \le y \le (1-\varepsilon)R$. Furthermore, let $h : \R_+ \rightarrow  \R$, be a continuous function such that $h(y) = \bigO{e^{\beta y}}$ as $y \to \infty$ for some $\beta < \alpha$. Then, for $C > 0$ large enough
\[
	\int_0^\infty h(y) \hat{\rho}_n(y,k_n) e^{-\alpha y} \dd y \sim  
		\int_{\Kcal_C(k_n)} h(y) \rho(y,k_n) e^{-\alpha y} \dd y,
\]
as $n \to \infty$.
\end{lemma}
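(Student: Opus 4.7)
The plan is to adapt the change-of-variables argument used in the proof of Lemma~\ref{lem:degree_integral}. I would first split the integral on the left at $y = (1-\varepsilon)R$. On the tail $y > (1-\varepsilon)R$ the growth bound $h(y) = \bigO{e^{\beta y}}$ with $\beta < \alpha$, combined with the trivial bound $\hat{\rho}_n \le 1$, gives a tail contribution of order $e^{-(\alpha-\beta)(1-\varepsilon)R} = \bigO{n^{-2(\alpha-\beta)(1-\varepsilon)}}$. Since $k_n = \bigO{n^{1-\delta}}$, taking $C$ sufficiently large makes this tail negligible compared to the right-hand side, which by Corollary~\ref{cor:concentration_heights_bounds_n} is at least of order $k_n^{-(2\alpha+1)}$ up to the typical value of $h$ on $\Kcal_C(k_n)$.

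Next, on $[0,(1-\varepsilon)R]$ I would apply the substitution $z = z(y) := 2\log(\hat{\mu}_n(y)/\xi)$, which is well defined and strictly monotone by hypothesis. By construction $\hat{\mu}_n(y) = \xi e^{z(y)/2} = \mu(z(y))$, so $\hat{\rho}_n(y,k_n) = \rho(z(y),k_n)$. The assumption $\hat{\mu}_n(y) = (1+\smallO{1})\mu(y)$ gives $e^{y/2} = (1+\smallO{1})e^{z(y)/2}$, hence $e^{-\alpha y} = (1+\smallO{1})e^{-\alpha z(y)}$ uniformly on the interval; continuity of $h$ together with its controlled growth then yields $h(y) = (1+\smallO{1})h(z(y))$ on the same range.

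The main technical obstacle is the Jacobian $z'(y) = 2\hat{\mu}_n'(y)/\hat{\mu}_n(y)$: making the substitution go through cleanly requires $z'(y) = 1+\smallO{1}$ uniformly, equivalently $\hat{\mu}_n'(y) = (1+\smallO{1})\mu'(y)$, and this does not follow from the pointwise statement $\hat{\mu}_n \sim \mu$ alone. In the two cases that actually matter here, $\hat{\mu}_n = \mu_{\Po}$ and $\hat{\mu}_n = \mu_{\mathrm{box}}$, the estimate is verified separately: for $\mu_{\Po}$ it was handled via Lemma~\ref{lem:derivative_mu_Po} in the proof of Lemma~\ref{lem:degree_integral}, and for $\mu_{\mathrm{box}}$ it follows from direct differentiation of the explicit expression in Lemma~\ref{lem:average_degree_G_box}. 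I would therefore either add this derivative estimate to the hypotheses of the lemma or verify it case-by-case for the specific $\hat{\mu}_n$ of interest.

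Granting the derivative estimate, integration by substitution gives
\begin{equation*}
\int_0^{(1-\varepsilon)R} h(y) \hat{\rho}_n(y,k_n) \alpha e^{-\alpha y} \, dy = (1+\smallO{1})\int_{z(0)}^{z((1-\varepsilon)R)} h(z)\rho(z,k_n) \alpha e^{-\alpha z} \, dz.
\end{equation*}
Since $z$ is monotone and $\hat{\mu}_n \sim \mu$, one has $\Kcal_C(k_n) \subset [z(0), z((1-\varepsilon)R)]$ for $n$ large, and a final application of Lemma~\ref{lem:concentration_argument} to the right-hand side (using the same $h$ and, if necessary, enlarging $C$) shows the latter integral is asymptotically equivalent to $\int_{\Kcal_C(k_n)} h(z)\rho(z,k_n)\alpha e^{-\alpha z}\, dz$, which, combined with the tail estimate from the first step, completes the proof.
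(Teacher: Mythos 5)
You propose to repeat the change of variables $z = 2\log(\hat{\mu}_n(y)/\xi)$ from the proof of Lemma~\ref{lem:degree_integral}, and you correctly identify the obstruction: the Jacobian $z'(y) = 2\hat{\mu}_n'(y)/\hat{\mu}_n(y)$ would need to be $1+o(1)$ uniformly, i.e.\ $\hat{\mu}_n' = (1+o(1))\mu'$, and this does not follow from the pointwise hypothesis $\hat{\mu}_n = (1+o(1))\mu$ alone. Your proposed remedies (add the derivative estimate to the hypotheses, or verify it case by case for $\mu_{\Po}$ and $\mu_{\mathrm{box}}$) would both close that gap, but they are unnecessary: the paper's proof of this lemma never performs a change of variables and needs only the hypotheses as stated. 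It fixes $\eta\in(0,\min\{\delta,\varepsilon\})$, disposes of the tail $y>(1-\eta)R$ by noting that monotonicity and $\hat{\mu}_n((1-\eta)R)/k_n\to\infty$ make $\hat{\rho}_n$ superexponentially small there (via Stirling's bound), and then treats the two intervals between $\Kcal_C(k_n)$ and $(1-\eta)R$ by locating the points $\hat{y}_n^\pm$ with $\hat{\mu}_n(\hat{y}_n^\pm) = k_n\pm C\sqrt{k_n\log k_n}$, using $\hat{\mu}_n\sim\mu$ only to see that $\hat{y}_n^\pm = y_{k_n,C}^\pm + o(1)$, and then applying the Poisson Chernoff bound together with the monotonicity of $y\mapsto\hat{\mu}_n(y)$, which makes $y\mapsto\hat{\rho}_n(y,k_n)$ unimodal. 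No derivative ever enters. A further caution about your route: the assertion that continuity and controlled growth of $h$ give $h(y) = (1+o(1))h(z(y))$ uniformly is not automatic (the hypothesis allows $h$ to change sign or vanish on the range of interest, in which case the ratio $h(y)/h(z(y))$ need not stay near $1$ even with $z(y)-y=o(1)$), so that step too would require additional care if the substitution route were pursued; the paper's Chernoff route sidesteps this as well.
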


\begin{proof}
Take any $0 < \eta < \min\{\delta, \varepsilon\}$. We first show that we can restrict to integration to the interval $[0, (1-\eta)R)$. By construction $\eta < \varepsilon$, and hence by the assumption on $\hat{\mu}$ we have that $\hat{\mu}_n((1-\eta)R) = \bigT{\mu((1-\eta)R} = \bigT{n^{(1-\delta)}}$. Therefore, since $\eta<\delta$ and $k_n = \bigO{n^{1-\delta}}$, it follows that $\hat{\mu}_n((1-\eta)R)/k_n = \omega\left(n^{\delta - \eta}\right) = \omega(1)$ as $n \to \infty$. Hence $\hat{\rho}_n(y,k_n) \le \hat{\rho}((1-\eta)R,k_n)$ for all $y \ge (1-\eta)R$. It now follow that
\begin{align*}
	\int_{(1-\eta)R}^{R} h(y) \hat{\rho}_n(y,k_n) e^{-\alpha y} \dd y
	&= \bigO{1} \hat{\rho}_n((1-\eta)R,k_n) e^{-(\alpha-\beta)(1-\eta)R} \\
	&= \bigO{\hat{\rho}_n((1-\eta)R,k_n) n^{-2(\alpha-\beta)(1-\eta)}},
\end{align*}
where we used that that $h(y) = \bigO{e^{\beta y}}$. Next we use Stirling's bound $k! \ge \sqrt{2\pi} k^{k+\frac{1}{2}} e^{-k}$, to bound $\hat{\rho}_n((1-\eta)R,k_n)$,
\begin{align*}
	\hat{\rho}_n((1-\eta)R,k_n) &= \Prob{\Po(\hat{\mu}_n((1-\eta)R)) = k_n} \\
	&= \frac{\hat{\mu}_n((1-\eta)R)^{k_n}}{k_n!} e^{-\hat{\mu}_n((1-\eta)R)}\\
	&= \bigO{1} k_n^{-1/2} \left(\frac{\hat{\mu}_n((1-\eta)R)}{k_n}\right)^{k_n} e^{k_n - \hat{\mu}_n((1-\eta)R)}\\
	&= \bigO{1} k_n^{-1/2} e^{k_n\left(1 - \frac{\hat{\mu}_n((1-\eta)R)}{k_n}+ \log\left(\frac{\hat{\mu}_n((1-\eta)R)}{k_n}\right)\right)}\\
	&\le \bigO{1} k_n^{-1/2} e^{-\hat{\mu}_n((1-\eta)R)/2},
\end{align*}
where the last line follows since $\hat{\mu}_n((1-\eta)R)/k_n \to \infty$ and $1 - x + \log(x) \le -x/2$ for large enough $x$. Because $\hat{\mu}_n((1-\eta)R) = \bigT{n^{(1-\delta)}}$ we conclude that for any $C > 0$
\[
	\int_{(1-\delta)R}^{R} h(y) \hat{\rho}_n(y,k_n) e^{-\alpha y} \dd y
	= \bigO{k_n^{-1/2} n^{-2(\beta-\alpha)(1-\delta)} e^{-n^{(1-\delta)}/2}}
	= \bigO{k_n^{-C^2/2}}.
\]

It thus remains to prove that 
\[
	\int_{y_{k_n,C}^+}^{(1-\eta)R} h(y) \hat{\rho}_n(y,k_n) e^{-\alpha y} \dd y
	\le \bigO{k_n^{-C^2/8}}
\]
and
\[
	\int_0^{y_{k_n,C}^-} h(y) \hat{\rho}_n(y,k_n) e^{-\alpha y} \dd y
	\le \bigO{k_n^{-C^2/8}}.
\]

Define $\hat{y}_n^\pm$ to be such that $\hat{\mu}_n(y_n^\pm) = k_n \pm C \sqrt{k_n \log(k_n)}$. Then by assumption on $\hat{\mu}_n$ we have that
\[
	k_n \pm C \sqrt{k_n \log(k_n)} = \hat{\mu}_n(\hat{y}_n^\pm) = (1+\smallO{1}) \mu(\hat{y}_n^\pm)
	= (1+\smallO{1}) \xi e^{\hat{y}_n^\pm/2}.
\]
and hence
\[
	\hat{y}_n^\pm = 2\log\left(\frac{k_n \pm C \sqrt{k_n \log(k_n)}}{\xi}\right) - 2\log(1+\smallO{1})
	= y_{k_n,C}^\pm - 2\log(1+\smallO{1}) := y_{k_n,C}^\pm - \epsilon_n,
\]
with $\epsilon_n \to 0$. Recall that $\hat{\mu}_n(y)$ is monotonic increasing. Now let $n$ be large enough such that  $\hat{\mu}_n(\hat{y}_n^+ - \epsilon_n) > k_n + \frac{C}{2}\sqrt{k_n \log(n)}$. Then 
\begin{align*}
	\int_{y_{k_n,C}^+}^{(1-\eta)R} h(y) \hat{\rho}_n(y,k_n) e^{-\alpha y} \dd y
	&\le \int_{\hat{y}_n^+ - \epsilon_n}^{(1-\eta)R} h(y) \hat{\rho}_n(y,k_n) e^{-\alpha y} \dd y\\
	&\le \hat{\rho}_n(\hat{y}_n^+ - \epsilon_n,k_n) \int_{\hat{y}_n^+ + \eta}^{(1-\eta)R} h(y) e^{-\alpha y} \dd y\\
	&\le \bigO{1} \hat{\rho}_n(\hat{y}_n^+ - \epsilon_n,k_n),
\end{align*}
where we used that $\hat{\mu}_n(y)$ is monotonic increasing and $\hat{\rho}_n(y,k_n)$ is decreasing for all $y \ge \hat{y}_n^+$. Write $\lambda_n = \hat{\mu}_n(\hat{y}_n^+ - \epsilon_n)$. Then, similar to the proof of Lemma~\ref{lem:concentration_argument}, we have that
\[
	\hat{\rho}_n(\hat{y}_n^+ - \epsilon_n,k_n)
	\le \Prob{\left|\Po(\lambda_n) - \lambda_n\right| \ge \frac{C}{2} \sqrt{k_n \log(k_n)}}
	\le \bigO{k_n^{-C^2/8}},
\]
where the last step follows from the Chernoff bound~\eqref{eq:def_chernoff_bound_poisson_C} with $C = C/2$.

In a similar fashion we can let $n$ be large enough such that
$\hat{\mu}_n(\hat{y}_n^- + \epsilon_n) < k_n - \frac{C}{2}\sqrt{k_n \log(n)}$ can show that
\begin{align*}
	\int_0^{y_{k_n,C}^-} h(y) \hat{\rho}_n(y,k_n) e^{-\alpha y} \dd y
	&\le \int_0^{\hat{y}_n^- + \epsilon_n} h(y) \hat{\rho}_n(y,k_n) e^{-\alpha y} \dd y\\
	&\le \bigO{1} \hat{\rho}_n(\hat{y}_n^- + \epsilon_n,k_n) = \bigO{k_n^{-C^2/8}}.
\end{align*}
\end{proof}

The conclusion of Lemma~\ref{lem:concentration_heights_mu_approx} is that as long as $\mu_{Po}(y)$ and $\mu_{\mathrm{box}}(y)$ are $(1+\smallO{1})\mu(y)$, uniformly on $[0,(1-\varepsilon)R]$, then indeed the concentration of height result (Lemma~\ref{lem:concentration_argument}) also holds in both $\GPo$ and $\Gbox$. This was proven in Lemma~\ref{lem:average_degree_P_n} and Lemma~\ref{lem:average_degree_G_box}, respectively. For completeness we give the proof of Proposition~\ref{prop:concentration_height_general}.

\begin{proofof}{Proposition~\ref{prop:concentration_height_general}}
The proof for $\hat{\mu}(y) = \mu(y)$ follows directly from Lemma~\ref{lem:concentration_argument}.

Now consider the case $\hat{\mu}(y) = \mu_{Po}(y)$. Then by Lemma~\ref{lem:average_degree_P_n} $\hat{\mu}(y) = (1+\smallO{1}) \mu(y)$, uniformly on $[0,(1-\varepsilon)R]$ and thus in particular on $\Kcal_C(k_n)$. Finally we note that by Lemma~3.3. in \cite{gugelmann2012random} $\hat{\mu}(y)$ is monotonic increasing. The statement then follows by applying Lemma~\ref{lem:concentration_heights_mu_approx}.

Finally, for $\hat{\mu}(y) = \mu_{\mathrm{box}}(y)$ we recall that by Lemma~\ref{lem:average_degree_G_box} $\hat{\mu}(y) = (1+\smallO{1})\mu(y)$. More precisely,
\[
	\hat{\mu}(y) = \mu(y)(1-\phi_n(y)) = \begin{cases}
		\mu(y)\left(1 - e^{-(\alpha-\frac{1}{2})R}\right) &\mbox{if } 0 \le y \le 2\log(\pi/2) \\
		\mu(y)\left(1 - \phi_n(y)\right) &\mbox{if } 2\log(\pi/2) < y \le (1-\varepsilon)R,
	\end{cases}
\]
where
\[
	\phi_n(y) :=  \left(\frac{\pi}{2}\right)^{-(2\alpha - 1)} \hspace{-3pt} e^{-(\alpha - \frac{1}{2})(R - y)}
				+ \frac{\nu}{\xi}e^{-(\alpha - \frac{1}{2})R - \frac{y}{2}} - \frac{\nu}{\xi}\left(\frac{\pi}{2}\right)^{-2\alpha} e^{-(\alpha-\frac{1}{2})(R - y)}.
\]
Note that $\phi_n(2\log(\pi/2)) = e^{-(\alpha-\frac{1}{2})R}$. In addition, since $|\phi_n(y)| \le \bigO{e^{-(\alpha-\frac{1}{2})\varepsilon R}}$ for $0 \le y \le (1-\varepsilon)R$ we have that $\hat{\mu}(y)$ is monotonic increasing for large enough $n$. The statement now follows by applying Lemma~\ref{lem:concentration_heights_mu_approx}.
\end{proofof}

\section{Derivative of \texorpdfstring{$\mu_{Po}(y)$}{mu Po (y)}}

Recall that $\mu_{Po}(y) = \Mu{\BallHyp{y}}$ denote the measure of the ball at height $y$ in the KPKVB model and $\mu(y) = \xi e^{\frac{y}{2}}$ denotes the measure of a ball at height $h$ in the infinite model $\Ginf$. In this section we will show that $\mu_{Po}^\prime(y) = (1+\smallO{1})\mu^\prime(y)$, uniformly on $[0,(1-\varepsilon)R]$, for some $0 < \varepsilon < 1$. This is a technical result that is needed in the proof of Lemma~\ref{lem:degree_integral} in Section~\ref{ssec:expected_degrees_GPo}.

First we note that it follows from Lemma~\ref{lem:average_degree_P_n} that $\mu_{Po}(y) = \mu(y)(1 + \phi_n(y))$, where $\phi_n(y) := \mu_{Po}(y)/\mu(y) - 1$. Taking the derivative we have
\[
	\mu_{Po}^\prime(y) = \mu^\prime(y)(1 + \phi_n(y)) + \mu(y)\phi_n^\prime(y)
	= \mu^\prime(y)(1 + \phi_n(y) + 2 \phi_n^\prime(y)),
\] 
where we used that $\frac{\partial}{\partial y} \mu(y) = \frac{1}{2}\mu(y)$. Hence, to show the result for we thus need to show that $\phi_n^\prime(y)) = \smallO{1}$, uniformly on $[0,(1-\varepsilon)R]$. 

Writing out the derivative we have
\[
	\phi_n^\prime(y) = 
	\mu_{Po}(y) ^{-1} \frac{\partial}{\partial y} \mu_{Po}(y) -  \frac{1}{2} \frac{\mu_{Po}(y) }{\mu(y)},
\]
where we used again that $\frac{\partial}{\partial y} \mu(y) = \frac{1}{2}\mu(y)$. For the second term Lemma~\ref{lem:average_degree_P_n} implies that $\frac{1}{2} \frac{\mu_{Po}(y) }{\mu(y)} = (1+\smallO{1})\frac{1}{2}$ uniformly on $[0,(1-\varepsilon)R]$. The following lemma shows that the same holds for the first term from which we conclude that $\phi_n^\prime(y)) = \smallO{1}$ and hence $\mu_{Po}^\prime(y) = (1+\smallO{1})\mu^\prime(y)$, uniformly on $[0,(1-\varepsilon)R]$.

\begin{lemma}\label{lem:derivative_mu_Po}
For any $0 < \varepsilon < 1$,
\[
	\lim_{n \to \infty} \sup_{0 \le y \le (1-\varepsilon)R} \left|\mu(y)^{-1}
	\frac{\partial}{\partial y} \mu_{Po}(y) - \frac{1}{2}\right| = 0.
\]
\end{lemma}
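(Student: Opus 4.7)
The starting point is the decomposition $\mu_{\Po}(y) = I_1(y) + I_2(y)$ used in the proof of Lemma~\ref{lem:average_degree_P_n}, namely $I_1(y) = \frac{2\alpha\nu}{\pi}\int_0^{R-y}\Phi(y,y_1)e^{-\alpha y_1}\,dy_1$ and $I_2(y) = \alpha\nu e^{R/2}\int_{R-y}^R e^{-\alpha y_1}\,dy_1$. Differentiating via Leibniz's rule and using the identity $\cosh(R-y)\cosh y + \sinh(R-y)\sinh y = \cosh R$, one checks that the argument of $\arccos$ in $\Phi(y,R-y)$ equals $-1$, so $\Phi(y,R-y) = \frac{\pi}{2}e^{R/2}$. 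The boundary term $-\frac{2\alpha\nu}{\pi}\Phi(y,R-y)e^{-\alpha(R-y)}$ from $I_1'$ therefore cancels exactly the boundary term $\alpha\nu e^{R/2}e^{-\alpha(R-y)}$ from $I_2'$, yielding
\[
\mu_{\Po}'(y) = \frac{2\alpha\nu}{\pi}\int_0^{R-y}\frac{\partial\Phi}{\partial y}(y,y_1)e^{-\alpha y_1}\,dy_1.
\]

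Next, since $\Phi(y,y_1) = \frac{1}{2}e^{R/2}\arccos A(y,y_1)$ with $A(y,y_1) = \frac{\cosh(R-y)\cosh(R-y_1)-\cosh R}{\sinh(R-y)\sinh(R-y_1)}$, a direct differentiation gives
\[
\frac{\partial\Phi}{\partial y}(y,y_1) = \frac{e^{R/2}}{2}\cdot\frac{\cosh R\cosh(R-y)-\cosh(R-y_1)}{\sinh^2(R-y)\sinh(R-y_1)\sqrt{1-A(y,y_1)^2}}.
\]
Using $\cosh u,\sinh u \sim \frac{1}{2}e^u$ as $u\to\infty$ together with $1+A \sim 2e^{y+y_1-R}$ (hence $\sqrt{1-A^2}\sim 2e^{(y+y_1-R)/2}$), one obtains, uniformly in $0\le y\le(1-\varepsilon)R$ and $y+y_1\le(1-\eta)R$ for fixed $\eta>0$, the derivative-analogue of Lemma~\ref{lem:asymptotics_Omega_hyperbolic}:
\[
\frac{\partial\Phi}{\partial y}(y,y_1) = \tfrac{1}{2}e^{(y+y_1)/2} + O\bigl(e^{3(y+y_1)/2-R}\bigr).
\]
Substituted into the integral, the main term contributes $\frac{\alpha\nu}{\pi}e^{y/2}\int_0^{(1-\eta)R-y}e^{(1/2-\alpha)y_1}\,dy_1 = (1+o(1))\frac{1}{2}\mu(y)$ uniformly on $[0,(1-\varepsilon)R]$ (this uses $\alpha>1/2$), while the error term contributes $O\bigl(e^{3y/2-R}\int_0^{R-y}e^{(3/2-\alpha)y_1}\,dy_1\bigr)$, which is $o(\mu(y))$ uniformly by the calculation already performed for $I_{1,\mathrm{error}}$ in the proof of Lemma~\ref{lem:average_degree_P_n}.

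It remains to control the thin boundary strip $y+y_1\in((1-\eta)R,R)$. From the identity $\cosh(R-y)\cosh(R-y_1)+\sinh(R-y)\sinh(R-y_1) = \cosh(2R-y-y_1)$, one shows that $1+A(y,y_1)\asymp R-y-y_1$ as $y+y_1\to R$, so that $\partial\Phi/\partial y = O\bigl(e^{R/2}/\sqrt{R-y-y_1}\bigr)$ in this strip. The substitution $u = R-y-y_1$ then bounds the corresponding contribution to $\mu_{\Po}'(y)$ by a constant times $e^{(1/2-\alpha)(R-y)}\cdot e^{\alpha\eta R}\sqrt{\eta R}$, which is $o(e^{y/2}) = o(\mu(y))$ uniformly on $[0,(1-\varepsilon)R]$ provided $\eta$ is chosen small enough that $\alpha\eta<(\alpha-\tfrac{1}{2})\varepsilon$. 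Combining the three estimates yields $\mu_{\Po}'(y) = (1+o(1))\frac{1}{2}\mu(y)$ uniformly in $y\in[0,(1-\varepsilon)R]$, which is the claim.

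The main obstacle is this last boundary strip: even though $\mu_{\Po}(y)$ itself receives negligible contribution from the region $y+y_1\approx R$, the derivative $\partial\Phi/\partial y$ has an integrable $1/\sqrt{R-y-y_1}$ singularity at $y_1=R-y$, and both the numerator $\partial A/\partial y$ and the denominator $\sqrt{1-A^2}$ individually degenerate there. A careful local expansion of $A+1$ together with a judicious choice of the cut-off $\eta = \eta(\alpha,\varepsilon)$ is needed to ensure that this singular contribution remains of strictly smaller order than $\mu(y)=\xi e^{y/2}$.
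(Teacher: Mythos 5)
Your argument is correct and follows the same overall template as the paper's (decompose $\mu_{\Po}(y)$ into the regions below and above height $R-y$, differentiate the lower-region integral by Leibniz, split the result into a main range and a thin strip near $y_1 = R-y$, and establish the limit $\tfrac12\mu(y)$ on the main range while showing the strip is $o(\mu(y))$). Where you deviate is worth noting: you observe that at $y_1 = R-y$ the $\arccos$ argument equals $-1$, so the Leibniz boundary term cancels exactly against the derivative of the upper-region measure, whereas the paper instead bounds each of those two contributions separately; your cancellation is a cleaner way of handling that piece. You also compute $\partial_y\Phi$ by direct quotient rule on $A(y,y_1)$, while the paper factors through the auxiliary quantity $\Xi = 1-A$ written as a product of $h_i$-factors and takes a logarithmic derivative; the two computations arrive at the same object, but the paper's factorization makes the subsequent bound on the thin strip (via Lemma~\ref{lem:arccos_approx} and the substitution $z=e^{-(R-y-y')}$) slightly sharper. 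Two small inaccuracies in your write-up: the asymptotic $2e^{y+y_1-R}$ describes $1-A$, not $1+A$ (the end product $\sqrt{1-A^2}\sim 2e^{(y+y_1-R)/2}$ is nevertheless right, since it uses $(1-A)(1+A)$ with $1+A\to 2$); and the strip bound should read $e^{(1/2-\alpha)(R-y)}\,e^{y/2}\,e^{\alpha\eta R}$ up to lower-order factors, with the $\sqrt{\eta R}$ in fact appearing in the denominator of the sharp estimate of $\int_0^{\eta R}e^{\alpha u}/\sqrt u\,du$. Neither affects the condition $\alpha\eta<(\alpha-\tfrac12)\varepsilon$ or the conclusion. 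The one place where your proposal is thinner than a full proof is the assertion of the ``derivative-analogue'' of Lemma~\ref{lem:asymptotics_Omega_hyperbolic}, which you state with a plausible heuristic but without the uniform error control that the paper carries out explicitly through the $h_i$-factor expansion; as written the reader must trust that the same manipulations go through.
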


\begin{proof}
We again split $\mu_{\Po}(y)$ over the top and bottom part,
\[
	\mu_{\Po}(y) 
	= \Mu{\BallHyp{y} \cap \Rcal ([0,R - y))} + \Mu{\BallHyp{y} \cap \Rcal ([R - y, R])},
\]
where
\[
	\Mu{\BallHyp{y} \cap \Rcal ([0,R - y))} = \frac{2\alpha \nu}{\pi}\int_0^{R - y} \Phi(y,y^\prime) 
		e^{-\alpha y^\prime} \dd y^\prime,
\]
with $\Phi(y,y^\prime)$ defined as in~\eqref{eq:def_Omega_hyperbolic}. For the second term we have
\[
	\Mu{\BallHyp{y} \cap \Rcal ([R - y, R])}
	= \int_{R-y}^R \int_{I_n} f(x^\prime,y^\prime) \dd x^\prime \dd y^\prime
	= \xi e^{y/2}\frac{2\alpha - 1}{4\pi} \left( e^{-(\alpha - \frac{1}{2})(R - y)}
	- e^{-(\alpha - \frac{1}{2})R - y/2}\right).
\]
Taking the derivative of the last expression gives
\begin{align*}
	&\frac{\partial}{\partial y} \Mu{\BallHyp{y} \cap \Rcal ([R - y, R])}\\
	&= \frac{1}{2}\Mu{\BallHyp{y} \cap \Rcal ([R - y, R])}
		+ \xi e^{y/2}\frac{2\alpha - 1}{4\pi}\left(
		\left(\alpha - \frac{1}{2}\right)e^{-(\alpha - \frac{1}{2})(R - y)} 
		+ \frac{1}{2}e^{-(\alpha - \frac{1}{2})R - y/2}\right)\\
	&= \frac{1}{2}\Mu{\BallHyp{y} \cap \Rcal ([R - y, R])}\left(1 +
		\frac{(2\alpha - 1)e^{-(\alpha - \frac{1}{2})(R - y)} + e^{-(\alpha - \frac{1}{2})R - y/2}}
		{e^{-(\alpha - \frac{1}{2})(R - y)} - e^{-(\alpha - \frac{1}{2})R - y/2}}\right).
\end{align*}
Since, $\lim_{n \to \infty} \sup_{0 < y \le (1-\varepsilon)R} \Mu{\BallPo{y}}^{-1} \Mu{\BallHyp{y} \cap \Rcal ([R - y, R])} = 0$, we are left to show that
\begin{equation}\label{eq:derivative_mu_hyp_ball_main}
	\lim_{n \to \infty} \sup_{0 < y \le (1-\varepsilon)R} \left|\Mu{\BallPo{y}}^{-1} \frac{2\alpha \nu}{\pi} \frac{\partial}{\partial y} \int_0^{R - y} \Phi(y,y^\prime) e^{-\alpha y^\prime} \dd y^\prime
	- \frac{1}{2}\right| = 0.
\end{equation}

We start with some preliminary computations. For convenience we define
\[
	\Xi(y,y^\prime) = 1 - \frac{\cosh(R- y)\cosh(R-y^\prime) - \cosh(R)}{\sinh(R - y) \sinh(R - y^\prime)},
\]
so that
\[
	\Phi(y, y^\prime) = \frac{1}{2}e^{R/2} \arccos\left(1 - \Xi(y,y^\prime)\right).
\]
Next, following the same calculation as in the proof of~\cite[Lemma 28]{fountoulakis2018law}, we write
\begin{align*}
	\Xi(y,y^\prime)
	&= 2 e^{-(R - y - y^\prime)} \frac{\left(1 - e^{y^\prime - y - R}\right)\left(1 - e^{y - y^\prime - R}\right)}
		{\left(1 - e^{-2(R - y^\prime)}\right)\left(1 - e^{-2(R- y)}\right)}\\
	&:= 2 e^{-(R - y - y^\prime)} \frac{h_1(y) h_2(y)}{h_3(y^\prime) h_3(y)},
\end{align*}
with
\[
	h_1(y) = 1 - e^{y^\prime - y - R}, \quad h_2(y) = 1 - e^{y - y^\prime - R}
	\quad \text{and} \quad h_3(y) = 1 - e^{-2(R- y)}.
\]
We suppressed the dependence on $n$ and, in some cases, on $y^\prime$ for notation convenience.

We make two important observations. First, $\Xi(y,y^\prime)$ is an increasing function in both arguments, for $y, y^\prime < R$ and $y + y^\prime < R$. Second, for all $y + y^\prime < R$, $h_1(y) \le h_3(y^\prime)$ and $h_2(y) \le h_3(y)$, while $h_3(y), h_3(y^\prime) < 1$, so that
\begin{equation}\label{eq:derivative_hyp_ball_Xi_bounds}
	2 e^{-(R - y - y^\prime)}h_1(y) h_2(y) \le \Xi(y,y^\prime) \le 2 e^{-(R - y - y^\prime)}.
\end{equation}
In particular, since $R-y$ is an increasing function of $n$ uniformly on $0 < y < (1-\varepsilon)R$, there exists a $0 < \delta < 1$ such that $1/2 \le \Xi(y,y^\prime) < 2$ for all $y + y^\prime < R$ and $(1-\delta)(R-y) < y^\prime < R$ and $n$ large enough.

Next, taking the derivative of $\Xi(y,y^\prime)$ yields,
\begin{align*}
	\frac{\partial}{\partial y} \Xi(y,y^\prime) &= \Xi(y,y^\prime) + 2 e^{-(R - y - y^\prime)}
		\left(\frac{h_1^\prime(y) h_2(y)}{h_3(y^\prime) h_3(y)} + \frac{h_1(y)h_2^\prime(y)}{h_3(y^\prime) h_3(y)}
		- \frac{h_1(y) h_2(y) h_3^\prime(y)}{h_3(y^\prime) h_3(y)^2}\right)\\
	&= \Xi(y,y^\prime)\left(1 + \frac{h_1^\prime(y)}{h_1(y)} + \frac{h_2^\prime(y)}{h_2(y)} 
		- \frac{h_3^\prime(y)}{h_3(y)}\right)\\
	&:= \Xi(y,y^\prime)\left(1 + \varphi_n(y,y^\prime)\right),
\end{align*}
with
\[
	\varphi_n(y,y^\prime) = \frac{e^{y^\prime - y- R}}{1 - e^{y^\prime - y - R}} 
	- \frac{e^{y - y^\prime - R}}{1 - e^{y - y^\prime - R}} - \frac{2e^{-2(R - y)}}{1 - e^{-2(R-y)}}. 
\]
Therefore, by the chain rule,
\begin{align*}
	\frac{\partial}{\partial y} \Phi(y, y^\prime)
	&= \frac{1}{2}e^{R/2} \frac{1}{\sqrt{1 - \left(1 - \Xi(y,y^\prime)\right)^2}} 
		\frac{\partial}{\partial y} \Xi(y,y^\prime)\\
	&=  \frac{ \frac{1}{2}e^{R/2} \Xi(y,y^\prime)}{\sqrt{1 - \left(1 - \Xi(y,y^\prime)\right)^2}}\left(1 + 
		\varphi_n(y,y^\prime)\right). \numberthis \label{eq:derivative_Phi}
\end{align*}
Applying the Leibniz's rule we then get
\begin{align*}
	&\hspace{-30pt}\frac{\partial}{\partial y} \int_0^{R - y} \Phi(y,y^\prime) e^{-\alpha y^\prime} \dd y^\prime\\
	&= - \Phi(y,R - y)e^{-\alpha(R-y)} + \int_0^{R - y} \frac{\partial}{\partial y}  \Phi(y,y^\prime) 
		 e^{-\alpha y^\prime} \dd y^\prime\\
	&= -\frac{1}{2}e^{-(\alpha-\frac{1}{2})R + \alpha y} + \int_0^{R- y} \frac{ \frac{1}{2}e^{R/2} \Xi(y,y^\prime)}{\sqrt{1 - \left(1 - \Xi(y,y^\prime)\right)^2}}
		\left(1 + \varphi_n(y,y^\prime)\right) e^{-\alpha y^\prime} \dd y^\prime\\
	&= -\frac{1}{2}e^{-(\alpha-\frac{1}{2})R + \alpha y}  + \int_0^{(1-\delta)(R- y)} \frac{ \frac{1}{2}e^{R/2} \Xi(y,y^\prime)}
		{\sqrt{1 - \left(1 - \Xi(y,y^\prime)\right)^2}}
		\left(1 + \varphi_n(y,y^\prime)\right) e^{-\alpha y^\prime} \dd y^\prime\\
	&\hspace{10pt}+ \int_{(1-\delta)(R- y)}^{R - y} \frac{ \frac{1}{2}e^{R/2} \Xi(y,y^\prime)}
			{\sqrt{1 - \left(1 - \Xi(y,y^\prime)\right)^2}}
			\left(1 + \varphi_n(y,y^\prime)\right) e^{-\alpha y^\prime} \dd y^\prime\\
	&:= -I_1(y) + I_2(y) + I_3(y),
\end{align*}
with $0 \le \delta < 1$ such that $0 < \Xi(y,y^\prime) < 2$ for all $0 < y < R$ and $(1-\delta)(R-y) < y^\prime < R$.

We proceed by showing that
\begin{equation}\label{eq:derivative_hyp_ball_error_1}
	\lim_{n \to \infty} \sup_{0 < y \le (1-\varepsilon)R} \left|\frac{I_t(y)}{\Mu{\BallPo{y}}}\right| 
	= 0, \quad \text{for } t = 1,3
\end{equation}
while
\begin{equation}\label{eq:derivative_hyp_ball_main_part}
	\lim_{n \to \infty} \sup_{0 \le y \le (1-\varepsilon)R} \left|\frac{2\nu \alpha}{\pi \Mu{\BallPo{y}}} I_2(y) - \frac{1}{2}\right| = 0.
\end{equation}
This then implies~\eqref{eq:derivative_mu_hyp_ball_main} and finishes the proof.

For $I_1(y)$ we have 
\[
	\lim_{n \to \infty} \sup_{0 < y \le (1-\varepsilon)R} \Mu{\BallPo{y}}^{-1} I_1(y) 
	\le \lim_{n \to \infty} \sup_{0 < y \le (1-\varepsilon)R} \frac{1}{2\xi} e^{-(\alpha - \frac{1}{2})(R - y)} = 0.
\]

For $I_3(y)$ we first use that $y^\prime < R - y$ to bound $\varphi(y,y^\prime)$ as follows,
\[
	\varphi_n(y,y^\prime) \le \frac{e^{y^\prime - y - R}}{1 - e^{y^\prime - y - R}} \le \frac{e^{-2y}}{1 - e^{-2y}}.
\]
This then yields that
\[
	I_3(y) \le \frac{1}{2}\left(1 + \frac{e^{-2y}}{1 - e^{-2y}}\right)e^{R/2}
	\int_{(1-\delta)(R- y)}^{R - y} \frac{ \Xi(y,y^\prime)}{\sqrt{1 - \left(1 - \Xi(y,y^\prime)\right)^2}}
	e^{-\alpha y^\prime} \dd y^\prime.
\]
To bound the integral we recall that $0 < \Xi(y,y^\prime) \le 2e^{-(R-y-y^\prime)} < 2$ and for all $1/2 \le x < 2$,
\[
	\frac{1}{\sqrt{1- (1-x)^2}} \le \frac{2}{\sqrt{2-x}},
\]a
where the right hand side is a monotonic increasing function.
Therefore
\begin{align*}
	\int_{(1-\delta)(R- y)}^{R - y} \frac{ \Xi(y,y^\prime)}{\sqrt{1 - \left(1 - \Xi(y,y^\prime)\right)^2}}
		e^{-\alpha y^\prime} \dd y^\prime
	&\le 2 \int_{(1-\delta)(R- y)}^{R - y} \frac{\Xi(y,y^\prime)}{\sqrt{(2-\Xi(y,y^\prime))}} e^{-\alpha y^\prime} 
		\dd y^\prime \\
	&\le \sqrt{2} e^{-\alpha(R - y)} \int_{(1-\delta)(R- y)}^{R - y} 
		\frac{e^{-(R - y - y^\prime)}}{\sqrt{1 - e^{-(R - y - y^\prime)}}} e^{\alpha (R - y - y^\prime)} \dd y^\prime,
\end{align*}
Making the change of variables $z = e^{-(R - y - y^\prime)}$ ($\dd y^\prime = z^{-1} \dd z$) we get that
\begin{align*}
	&\hspace{-20pt}\sqrt{2} e^{-\alpha(R - y)} \int_{(1-\delta)(R- y)}^{R - y} 
			\frac{e^{-(R - y - y^\prime)}}{\sqrt{1 - e^{-(R - y - y^\prime)}}} e^{\alpha (R - y - y^\prime)} \dd y^\prime\\
	&= \sqrt{2} e^{-\alpha(R - y)} \int_{e^{-\delta (R - y)}}^{1} \frac{z^{-\alpha}}{\sqrt{1 - z}} \dd z
		\le \sqrt{2} e^{-\alpha(R - y)} \sqrt{1 - e^{-\delta (R - y)}}
		\le \sqrt{2} e^{-\alpha (R -y)}.
\end{align*}
We therefore conclude that
\[
	I_3(y) \le \frac{1}{\sqrt{2}} \left(1 + \frac{e^{-2y}}{1 - e^{-2y}}\right)e^{-(\alpha -\frac{1}{2})R + \alpha y}.
\]
which implies~\eqref{eq:derivative_hyp_ball_error_1} for $t=3$.

Finally, to show~\eqref{eq:derivative_hyp_ball_main_part} we first write
\begin{align*}
	\left|\frac{2\alpha \nu}{\pi \Mu{\BallPo{y}}} I_2(y) - \frac{1}{2}\right|
	&\le \left|\frac{2\alpha \nu}{\pi \Mu{\BallPo{y}}}  \int_0^{(1-\delta)(R- y)} \frac{\Phi(y,y^\prime)}{2} 
		e^{-\alpha y^\prime} \dd y^\prime - \frac{1}{2}\right| \\
	&\hspace{10pt}+ \frac{2\alpha \nu}{\pi \Mu{\BallPo{y}}}\left|\int_0^{(1-\delta)(R- y)} 
		\frac{\Phi(y,y^\prime)}{2} e^{-\alpha y^\prime} \dd y^\prime - I_2(y)\right|.
\end{align*}

Note that by Lemma~\ref{lem:average_degree_P_n}
\[
	\frac{2\nu \alpha}{\pi} \int_{0}^{(1-\delta)(R-y)} \Phi(y,y^\prime) e^{-\alpha y^\prime} \dd y^\prime
	= (1+\smallO{1})\Mu{\BallPo{y}},
\]
uniformly for all $0 \leq y \leq (1-\varepsilon)R$. Therefore
\[
	\lim_{n \to \infty} \sup_{0 < y \le (1-\varepsilon)R} \left|\frac{2\nu \alpha}{\pi \Mu{\BallPo{y}}} 
	\int_{0}^{(1-\delta)(R-y)} \frac{\Phi(y,y^\prime)}{2} e^{-\alpha y^\prime} \dd y^\prime
	- \frac{1}{2}\right| = 0,
\] 
and thus it suffices to show that the $\lim_{n \to \infty} \sup_{0 < y \le (1-\varepsilon)R}$ of the second term goes to zero.

Recalling the definition of $I_2(y)$ we have
\begin{align*}
	&\hspace{-30pt}\left|\int_0^{(1-\delta)(R- y)} \frac{\Phi(y,y^\prime)}{2} e^{-\alpha y^\prime} \dd y^\prime 
		- I_2(y)\right|\\
	&\le \int_0^{(1-\delta)(R- y)} \left|\frac{\Phi(y,y^\prime)}{2}- \frac{\frac{1}{2}e^{R/2} \Xi(y,y^\prime)}
		{\sqrt{1 - \left(1 - \Xi(y,y^\prime)\right)^2}}(1+\varphi_n(y,y^\prime)\right|e^{-\alpha y^\prime} \dd y^\prime.
		\numberthis \label{eq:derivative_hyp_ball_final_error}
\end{align*}
We will proceed to bound the term inside the integral. For this we first note that for $0 \le y^\prime \le (1-\delta)(R-y)$,
\[
	\varphi_n(y,y^\prime) \le \frac{e^{-\delta(R-y)}}{1 - e^{-\delta(R-y)}}.
\]
and recall that $\Xi(y,y^\prime) \le 2 e^{-(R-y-y^\prime)}$. Moreover, since $x/\sqrt{1-(1-x)^2} = x/\sqrt{2x-x^2}$ is an increasing function and $e^{-(R - y - y^\prime)} \le e^{-\delta(R- y)}$ for $0 < y^\prime < (1-\delta)(R-y)$,
\[
	\frac{\frac{1}{2}e^{R/2} \Xi(y,y^\prime)}{\sqrt{1 - \left(1 - \Xi(y,y^\prime)\right)^2}}
	\le e^{R/2} \frac{e^{-\delta(R-y)}}{\sqrt{2e^{-\delta(R-y)} - e^{-2\delta(R-y)}}}.
\]
Next, recall that $\Phi(y,y^\prime) = \frac{1}{2}e^{R/2}\arccos(1-\Xi(y,y^\prime))$. Then, since $\Xi(y,y^\prime) < 1$ for all $y^\prime < (1-\delta)(R-y)$, $y < R$ and $n$ large enough, we have (see Lemma~\ref{lem:arccos_approx}),
\[
	\left|\frac{1}{2}\Phi(y,y^\prime) - \frac{\frac{1}{2}e^{R/2} \Xi(y,y^\prime)}{\sqrt{1 - \left(1 - \Xi(y,y^\prime)\right)^2}}\right| \le \frac{1}{2}\Phi(y,y^\prime) \Xi(y,y^\prime).
\]
for all $y^\prime < (1-\delta)(R - y)$ and $y < R$. Together these facts imply that for $n$ large enough
\begin{align*}
	&\hspace{-30pt}\left|\frac{\Phi(y,y)}{2} - \frac{\frac{1}{2}e^{R/2} \Xi(y,y^\prime)}{\sqrt{1 - \left(1 - 	
		\Xi(y,y^\prime)\right)^2}}(1+\varphi_n(y,y^\prime))\right|\\
	&\le \frac{\Phi(y,y^\prime)\Xi(y,y^\prime)}{2} 
		+ \frac{\frac{1}{2}e^{R/2} \Xi(y,y^\prime) \varphi_n(y,y^\prime)}
		{\sqrt{1 - \left(1 - \Xi(y,y^\prime)\right)^2}} \\
	&\le e^{-\delta(R-y)} \Phi(y,y^\prime) + \frac{e^{-\delta(R-y)}}{1-e^{-\delta(R-y)}}
		\frac{e^{R/2} e^{-\delta(R-y)}}{\sqrt{2e^{-\delta(R-y)} - e^{-2\delta(R-y)}}}\\
	&\le e^{-\delta(R-y)} \Phi(y,y^\prime) + \frac{e^{\frac{R}{2}}e^{-\frac{3}{2}\delta(R-y)}}
		{\left(1 - e^{-\delta(R-y)}\right)^{3/2}}
\end{align*}

Plugging this into~\eqref{eq:derivative_hyp_ball_final_error} yields
\begin{align*}
	&\hspace{-30pt}\left|\int_0^{(1-\delta)(R- y)} \frac{\Phi(y,y^\prime)}{2} e^{-\alpha y^\prime} \dd y^\prime 
			- I_2(y)\right|\\
	&\le \int_0^{(1-\delta)(R- y)} \left(e^{-\delta(R-y)} \Phi(y,y^\prime) + 
		\frac{e^{\frac{R}{2}}e^{-\frac{3}{2}\delta(R-y)}}{\left(1 - e^{-\delta(R-y)}\right)^{3/2}}\right)
		e^{-\alpha y^\prime} \dd y^\prime \\
	&\le e^{-\delta(R-y)} \Mu{\BallPo{y}} + e^{\frac{y}{2}} \frac{e^{-(\alpha - \frac{1}{2} - (\alpha - \frac{3}{2})\delta)(R-y)}}
		{\alpha \left(1 - e^{-\delta(R-y)}\right)^{3/2}}
\end{align*}
To finish the argument we note that $R-y > 0$ for all $0 < y \le (1-\varepsilon)R$ and observe that $\delta < 1$ implies that $(\alpha - \frac{1}{2} - (\alpha - \frac{3}{2})\delta > 1$. Since $\Mu{\BallPo{y}} = \bigT{e^{\frac{y}{2}}}$ it the
then follows that
\[
	\lim_{n \to \infty} \sup_{0 < y \le (1-\varepsilon)R} \frac{2\alpha \nu}{\pi \Mu{\BallPo{y}}}
	\left|\int_0^{(1-\delta)(R- y)} \frac{\Phi(y,y^\prime)}{2} e^{-\alpha y^\prime} \dd y^\prime - I_2(y)\right| = 0,
\]
which completes the proof.
\end{proof}

\section{Code for the simulations}

The simulations of the clustering coefficient and function in the KPKVB model were done using Wolfram Mathematica 11.1.
The simulation dots for the clustering coefficient in Figure~\ref{fig:gamma} were generated by the following code (where in the second line, the entire script was also run for the values \verb|nu=1| and \verb|nu=0.5|):
\begin{lstlisting}[language=Mathematica,breaklines]
n=10000;
nu=2;
R=2*Log[n/nu];
plotpoints=20;
reps=100;
Plotingdataalpha = ConstantArray[0,{plotpoints,2}];
SeedRandom[1];
For[z=1,z<=plotpoints,z++,a=0.4+z (4.6/plotpoints); sum=0;
	For[r=1,r<=reps,r++,V = ConstantArray[0,{n,2}];
		For[i=1,i<=n,i++,
			V[[i,1]]=RandomReal[{-Pi ,Pi }];
			V[[i,2]]=ArcCosh[RandomReal[{0,1}](Cosh[a*R]-1)+1]/a];
		A= ConstantArray[0,{n,n}];
		For[i=1,i<=n,i++,
			For[j=1,j<=n,j++,
				If[Cosh[V[[i,2]]]Cosh[V[[j,2]]]-Sinh[V[[i,2]]]Sinh[V[[j,2]]]Cos[Abs[V[[i,1]]-V[[j,1]]]] <= Cosh[R] && i != j,A[[i,j]]=1,A[[i,j]]=0]]];
		g = AdjacencyGraph[A];
		sum=sum+MeanClusteringCoefficient[g]];
	Plotingdataalpha[[z,1]]=a;
	Plotingdataalpha[[z,2]]=1.0*sum/reps;]
Print[Plotingdataalpha]
\end{lstlisting}
The simulation dots for the clustering function in Figure~\ref{fig:gammak} were generated by the following code (where in the third line, the entire script was also run for the values \verb|nu=1| and \verb|nu=0.5|):
\begin{lstlisting}[language=Mathematica,breaklines]
n=10000;
a=0.8;
nu=2;
R=2*Log[n/nu];
plotpoints=24;
reps=100;
Plotingdatak = ConstantArray[0,{reps,plotpoints,2}];
SeedRandom[1];
For[r=1,r<=reps,r++,V = ConstantArray[0,{n,2}];
	For[i=1,i<=n,i++,
		V[[i,1]]=RandomReal[{-Pi ,Pi }];
		V[[i,2]]=ArcCosh[RandomReal[{0,1}](Cosh[a*R]-1)+1]/a];
	A= ConstantArray[0,{n,n}];
	For[i=1,i<=n,i++,
		For[j=1,j<=n,j++,
			If[Cosh[V[[i,2]]]Cosh[V[[j,2]]]-Sinh[V[[i,2]]]Sinh[V[[j,2]]]Cos[Abs[V[[i,1]]-V[[j,1]]]] <= Cosh[R] && i != j,A[[i,j]]=1,A[[i,j]]=0]]];
	g = AdjacencyGraph[A];
	For[k=1,k<=plotpoints,k++,
		sum=0;
		result=0;
		nrdegk=0;
		For[v =1,v<=n,v++; 
			If[VertexDegree[g,v]==k+1,
				result=result+LocalClusteringCoefficient[g,v];nrdegk++]];
		Plotingdatak[[r,k,1]]=k+1;
		If[nrdegk>0,Plotingdatak[[r,k,2]]=1.0*result/nrdegk]];]
Print[Mean[Plotingdatak]];
\end{lstlisting}

\section{Explicit expressions for \texorpdfstring{$\gamma, \gamma(k)$}{gamma, gamma(k)} when \texorpdfstring{$\alpha=1$}{alpha equals 1}.}\label{ssec:alphais1}

We've already established that $\gamma, \gamma(k)$ can be obtained at $\alpha=1$ by
taking the $\alpha\to 1$ limit of the expression obtained for $\alpha=1$. Here we derive an alternative explicit expression for completeness. Since the rest of our 
proofs do not the depend on it the reader could decide to skip this section on a first reading.

Recall that $\Gamma^\ast(q,z) = \Gamma^+(q+1,z) + \Gamma^+(q,z)$. We will prove the following.

\begin{proposition}\label{prop:gammaais1}
If $\alpha=1$ then 
\begin{align*} 
\gamma &= \frac{575 - 12 \pi^2}{576} + \frac{\eta^4(7 + \pi^2)\Gamma^\ast(-4, \eta)}{4}\\
 	&\hspace{10pt}- \frac{1}{2} \int_0^1 (1 - 4z + 3z^3)\log(1-z)(z + \eta)e^{-\eta/z} \dd z\\
 	&\hspace{10pt}- \int_0^1 \Li_2(z)(z^3 + \eta z^2) e^{-\eta/z} \dd z,		
 \end{align*}
and
\begin{align*}
 \gamma(k) &= \frac{9 \eta^3}{2 k!} \Gamma^+(k-3,\eta)-\frac{\xi^4}{k!}\frac{7+\pi^2}{4}\Gamma^+(k-4,\eta)\\
 	&\hspace{10pt}+ \frac{\eta^k}{2k!}\int_0^1 (1-4z+3z^2)\ln(1-z)z^{1-k}e^{-\eta/z}\dd z\\ 
 	&\hspace{10pt}+ \frac{\eta^k}{k!}\int_0^1 z^{3-k} \Li_2(z) e^{-\eta/z} \dd z,
 \end{align*}
with $\eta = 4\nu/\pi$ and $\Li_2(z) = \sum_{t = 1}^\infty z^t/t^2$, the dilogarithm 
function.
\end{proposition}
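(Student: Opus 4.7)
The plan is to derive $P_1(z) := \lim_{\alpha \to 1} P_\alpha(y(z))$ in closed form, where $z = e^{-y/2}$, and then integrate against the kernels $z$ and $z^{1-k} e^{-\eta/z}$ to obtain $J = 2\int_0^1 P_1(z)\, z\, dz$ and $I^{(k)} = (2\eta^k/k!) \int_0^1 P_1(z) z^{1-k} e^{-\eta/z}\, dz$, from which $\gamma = J - I^{(0)} - I^{(1)}$ and $\gamma(k) = I^{(k)}/\pmf(k)$ follow directly (with $\pmf(k) = 2\eta^2 \Gamma^+(k-2,\eta)/k!$ at $\alpha=1$). The apparent simple and double poles at $\alpha = 1$ in the expression for $P_\alpha$ from Lemma~\ref{lem:Paneq1} are removable; to see this and extract the finite limit, I would set $\epsilon = \alpha - 1$ and expand every $\alpha$-dependent factor to order $\epsilon^2$, collecting the coefficients of $\epsilon^{-2}$, $\epsilon^{-1}$, and $\epsilon^0$ separately.

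The key expansions I would use are $z^{4\alpha - 2} = z^2 \bigl(1 + 4\epsilon \ln z + 8\epsilon^2 \ln^2 z + O(\epsilon^3)\bigr)$ and $(1-z)^{2\alpha} = (1-z)^2 \bigl(1 + 2\epsilon \ln(1-z) + 2\epsilon^2 \ln^2(1-z) + O(\epsilon^3)\bigr)$, combined with the elementary Taylor expansions of $2^{-4\alpha - 1}$, $(\alpha - 1/2)^2$, $(3\alpha - 1)/\alpha$, and similar rational factors. For the two incomplete beta functions I would differentiate under the integral sign in the definition $B^-(x; a, b) = \int_0^x t^{a-1}(1-t)^{b-1}\, dt$. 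At $\alpha = 1$ one computes directly that $B^-(1-z; 2, -1) = 1/z - 1 + \ln z$ and $B^-(1/2; 3, 0) = \log 2 - 5/8$. The $\alpha$-derivative of $B^-(1-z; 2\alpha, 3-4\alpha)$ at $\alpha = 1$ yields $\int_0^{1-z} t (1-t)^{-2} [2\ln t - 4 \ln(1-t)]\, dt$, which after integration by parts (using $\frac{d}{dt}(1/(1-t)) = 1/(1-t)^2$) reduces to elementary functions of $z$ together with $\Li_2(z) = \sum_{t \geq 1} z^t/t^2$. Verifying that the $\epsilon^{-2}$ and $\epsilon^{-1}$ coefficients vanish identically then provides a built-in consistency check.

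Once $P_1(z)$ is assembled as an explicit combination of polynomial terms, $z^2 \ln z$, $(1-z)^2 \ln(1-z)$, and $\Li_2(z)$, the computation of $J$ reduces to integrating these elementary building blocks against $z$ on $[0,1]$. Polynomial pieces are trivial; the logarithmic pieces give elementary contributions plus a $\zeta(2) = \pi^2/6$ term (accounting for the $\pi^2$ appearing in the stated formula for $\gamma$); the dilogarithmic piece yields further rational and $\zeta(2)$ contributions after integration by parts. For $I^{(k)}$, the substitution $u = \eta/z$ converts the polynomial part of $P_1$ integrated against $z^{1-k} e^{-\eta/z}$ into a linear combination of upper incomplete gamma functions $\Gamma^+(k - j, \eta)$, which supplies precisely the $\Gamma^+(k-3,\eta)$ and $\Gamma^+(k-4,\eta)$ terms in the statement; the transcendental $\log(1-z)$ and $\Li_2(z)$ parts cannot be reduced in closed form against the kernel $z^{1-k} e^{-\eta/z}$ and are retained as the two explicit integrals displayed in the proposition. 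Dividing $I^{(k)}$ by $\pmf(k)$ and combining $J - I^{(0)} - I^{(1)}$ yields the two stated expressions.

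The main obstacle is the careful bookkeeping in the $\epsilon$-expansion of Lemma~\ref{lem:Paneq1}: the expression for $P_\alpha$ has several terms, each requiring the expansion of multiple $\alpha$-dependent factors to sufficient order, and the cancellation of the singular coefficients must be verified (by hand or with computer algebra) before the $O(\epsilon^0)$ part can be trusted. A secondary subtlety is matching the specific polynomial coefficients $9\eta^3/2$, $-(7+\pi^2)\eta^4/4$, $1 - 4z + 3z^2$, and $z^3 + \eta z^2$ appearing in the statement, since these emerge from a delicate interplay of the polynomial pieces of $P_1(z)$ with the kernel $z^{1-k} e^{-\eta/z}$ and with one another after summation in $\gamma = J - I^{(0)} - I^{(1)}$.
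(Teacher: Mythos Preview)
Your proposal is correct and follows essentially the same route as the paper. The paper likewise labels the individual summands of $P_\alpha$ (as $s_1,\dots,s_7$), Taylor-expands each around $\alpha=1$ to the required order, checks explicitly that the $(\alpha-1)^{-2}$ and $(\alpha-1)^{-1}$ coefficients cancel, and collects the finite part into the closed form $P_1(z)=\tfrac{9}{4}z+\tfrac{1-4z+3z^2}{4}\ln(1-z)-\tfrac{7+\pi^2}{8}z^2+\tfrac{1}{2}z^2\Li_2(z)$ (Lemma~\ref{lem:Pais1}); the proposition then follows by integrating this against $z$ and $z^{1-k}e^{-\eta/z}$ exactly as you describe.
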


Naturally, the proof proceeds by proving the analogue of Lemma~\ref{lem:Paneq1}:

\begin{lemma}\label{lem:Pais1}
If $\alpha = 1$, then for all $y>0$:
	\begin{align*}
	&P(y) =\frac{9}{4} e^{-\frac{1}{2}y} + \frac{1 - 4 e^{-\frac{1}{2}y} + 3 e^{-y}}{4}\ln(1 - e^{-\frac{1}{2}y}) - \frac{7+\pi^2}{8}
e^{-y}  + 
\frac{1}{2}e^{-y}\Li_2(e^{-y})
	\end{align*}
	where $\Li_2(z)=-\int_0^z \frac{\ln(1-t)}{t}dt$ is the dilogarithm function.
\end{lemma}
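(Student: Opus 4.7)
The plan is to bypass the $\alpha\to 1$ limit of Lemma~\ref{lem:Paneq1} (which has multiple colliding poles in $(\alpha-1)$ and a degenerate incomplete beta function, so L'Hôpital-style extraction is algebraically painful) and instead redo the computation of equation~\eqref{eq:delta_P} directly at $\alpha=1$. Setting $\alpha=1$ from the outset, the prefactor becomes $(\alpha-1/2)^2 = 1/4$ and the exponential weight is $e^{-(y_1+y_2)/2}$. Under the change of variables $z_i = e^{-y_i/2}$ used in the proof of Lemma~\ref{lem:Paneq1}, the Jacobians contribute $4/(z_1 z_2)$, which combines with $z_1 z_2$ and the $1/4$ to give the particularly clean expression
\[
P(y_0) \;=\; \iint_{(0,1)^2} P\bigl(y_0, y_1(z_1), y_2(z_2)\bigr)\,dz_1\, dz_2.
\]

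Next, I would apply Lemma~\ref{lem:triangle_prob_y_coordinates} and, exactly as in Section~\ref{sec:42}, exploit the symmetry in $(y_1,y_2)$ to write $P(y_0) = 2(I_1 + I_2)$, where $I_1$ is restricted to $z_0 > z_1 > z_2$ and $I_2$ to $z_1 > \max(z_0,z_2) > z_2$. On each of these wedges one splits further according to whether the triangle inequality $z_0 \lessgtr z_1+z_2$ (respectively $z_1 \lessgtr z_0+z_2$) holds: the ``full'' part gives $P = 1$ or $z_0/z_1$ and integrates to elementary polynomials in $z_0$ (producing the $\tfrac{9}{4} z_0$ contribution), while the ``boundary'' part contributes $-G(z_0,z_1,z_2)$ or $-(z_0/z_1)G(z_1,z_0,z_2)$, which have to be expanded term by term using the explicit form of $G$.

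The substantive calculation is these integrals of $G$. The ``symmetric'' monomials $z_2/z_1$, $z_1/z_2$, $z_0/z_1$, $z_0/z_2$ integrate to rational polynomials plus simple logarithms such as $\ln(1 - z_0)$ and $\ln z_0$; the resulting logarithmic contributions combine into the coefficient $(1 - 4z_0 + 3z_0^2)/4$ multiplying $\ln(1 - z_0)$. The critical monomial $z_0^2/(z_1 z_2)$, after integrating out $z_2$ on the constrained domain, produces
\[
z_0^2\int_{z_0/2}^{z_0}\frac{1}{z_1}\bigl[\ln z_1 - \ln(z_0 - z_1)\bigr]\,dz_1,
\]
which under $u = z_1/z_0$ reduces to $z_0^2\int_{1/2}^1 u^{-1}\ln\bigl(u/(1-u)\bigr)\,du$. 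The analogous term in $I_2$ reduces, after $u = z_2/z_1$, to the standard dilogarithm integral $\int_0^{z_0^2} t^{-1}\ln(1-t)\,dt = -\Li_2(z_0^2)$. This is precisely the source of the $\tfrac12 z_0^2\,\Li_2(z_0^2)$ term in the target formula (recall $z_0^2 = e^{-y}$).

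The main obstacle will not be any individual integration but the bookkeeping of numerical constants. In particular, one must verify that (i) the spurious $\ln z_0$ pieces arising in $I_1$ and $I_2$ cancel exactly (as they must, since Lemma~\ref{lem:continuity_Delta_function} guarantees $P_\alpha(y_0)$ is continuous at $\alpha=1$ and the $\alpha \ne 1$ formula has no $\ln z_0$), and (ii) the remaining constant contributions assemble into the coefficient $-(7+\pi^2)/8$ of $e^{-y}$. For the latter, one needs the identities $\Li_2(1/2) = \pi^2/12 - (\ln 2)^2/2$ and $\Li_2(1) = \pi^2/6$, together with the Euler reflection $\Li_2(z) + \Li_2(1-z) = \pi^2/6 - \ln z \ln(1-z)$, to combine the boundary-value contributions $\int_{1/2}^1 u^{-1}\ln u\, du$, $\int_{1/2}^1 u^{-1}\ln(1-u)\, du$, and similar pieces from $I_2$ into the stated $\pi^2$-coefficient. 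With these identities in hand, the remaining algebra is routine and yields the claimed formula for $P(y)$.
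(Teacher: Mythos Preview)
Your approach is valid and genuinely different from the paper's. The paper does not redo the integration at $\alpha=1$; instead it takes the $\alpha\to 1$ limit of the expression in Lemma~\ref{lem:Paneq1} directly, by Taylor-expanding each of its seven terms $s_1,\dots,s_7$ around $\alpha=1$ (to second order for the two terms $s_3,s_4$ sitting over the double pole $(\alpha-1)^{-2}$), verifying that the singular parts cancel pairwise, and reading off the regular part. The dilogarithm enters through differentiating $B^-(1-z_0;2\alpha,3-4\alpha)$ in $\alpha$, producing integrals of the form $\int t(1-t)^{-2}\ln\bigl(\sqrt{t}/(1-t)\bigr)\,dt$. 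Your route is conceptually cleaner: at $\alpha=1$ the weight collapses to Lebesgue measure on $(0,1)^2$, and the logarithms and dilogarithm arise directly from the exponent collisions in the $J_{a,b,c}$ integrals rather than via a limiting procedure. The cost is redoing all six monomial integrals in $G$ by hand rather than recycling the general-$\alpha$ answer, together with the bookkeeping you already anticipate.

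One correction: your $z_0^2/(z_1 z_2)$ integral in $I_2$ actually yields $\Li_2(z_0)$, not $\Li_2(z_0^2)$. Under $v=z_0/z_1$ one has
\[
z_0^2\int_{z_0}^1 \frac{1}{z_1}\bigl[\ln z_1-\ln(z_1-z_0)\bigr]\,dz_1
= z_0^2\int_{z_0}^1 \frac{-\ln(1-v)}{v}\,dv
= z_0^2\bigl[\Li_2(1)-\Li_2(z_0)\bigr].
\]
This is in fact what the paper's own proof arrives at (its final display has $\tfrac12 z_0^2\,\Li_2(z)$ with $z=z_0$, and the proof of Proposition~\ref{prop:gammaais1} integrates $\Li_2(z)$, not $\Li_2(z^2)$). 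The $\Li_2(e^{-y})$ appearing in the lemma statement is thus a typo for $\Li_2(e^{-y/2})$; your computation will reproduce the correct formula, not the one printed.
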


\begin{proof}
We want to compute the limit $\lim_{\alpha \rightarrow 1} P_\alpha(y_0(z_0))$. For $\alpha \neq 1$, we label the terms as follows:
%\begin{align*}
%&P_\alpha(y_0(z_0)) \\
%&= \frac{1}{\alpha-1}\left(s_1(\alpha,z_0)+s_2(\alpha,z_0)+s_3(\alpha,z_0)+s_4(\alpha,z_0) 
%+s_5(\alpha,z_0)+s_6(\alpha,z_0)+s_7(\alpha,z_0)\right)
%\end{align*}
\begin{align*}
&P_\alpha(y_0(z_0)) \\
&= \frac{1}{\alpha-1}\left(s_1(\alpha,z_0)+s_2(\alpha,z_0)+\frac{1}{\alpha-1}(s_3(\alpha,z_0)+s_4(\alpha,z_0)) 
+s_5(\alpha,z_0)+s_6(\alpha,z_0)+s_7(\alpha,z_0)\right)
\end{align*}
where
\begin{align*}
s_1(\alpha,z_0) &= -\frac{1}{8 \alpha}\\
s_2(\alpha,z_0) &= (\alpha-1/2)z_0 \\
s_3(\alpha,z_0) &= - \frac{(\alpha - 1/2)^2 z_0^2}{4}\\
s_4(\alpha,z_0) &= z_0^{-2 + 4 \alpha} \frac{2^{-4 \alpha-1} (3 \alpha - 1)}{\alpha}\\
s_5(\alpha,z_0) &= z_0^{-2 + 4 \alpha} \frac{(\alpha - 1/2 ) B^-(1/2; 1 + 2 \alpha, -2 + 2 \alpha)}{2\alpha}\\
s_6(\alpha,z_0) &= \frac{(1 - z_0)^{2 \alpha}}{8 \alpha} \\
s_7(\alpha,z_0) &= - \frac{z_0^{4 \alpha - 2} B^-(1 - z_0; 2 \alpha, 3 - 4 \alpha)}{4}
\end{align*}
Now, we consider the functions $s_i(\alpha) = s_i(\alpha,z_0)$ as functions of $\alpha$ only and compute their Taylor 
expansion at $\alpha=1$, for $i\in \{1,2,5,6,7\}$ up to linear and for $i\in \{3,4\}$ up to quadratic order, i.e. we 
write $s_i(\alpha) = s_i(1)+s_i'(1)(\alpha-1)+o(\alpha-1)$ for $i\in \{1,2,5,6,7\}$ and 
$s_i(\alpha)=s_i(1)+s_i'(1)(\alpha-1)+\frac{s_i''(1)}{2}(\alpha-1)^2+o((\alpha-1)^2)$ for $i\in \{3,4\}$. Using these expansions, we can rewrite
%\begin{align*}
%&P(y_0(z_0)) = \frac{1}{\alpha-1}\left( \sum_{i\in \{1,2,5,6,7\}} s_i(1)+\sum_{i\in\{1,2,5,6,7\}} 
%s_i'(1)(\alpha-1)+o(\alpha-1) \right. \\
%&\left.+\frac{1}{\alpha-1}(s_3(1)+s_4(1)+(s_3'(1)+s_4'(1))(\alpha-1)+\frac{1}{2}(s_3''(1)+s_4''(1))(\alpha-1)^2 +o((\alpha-1)^2)) \right)
%\end{align*}
\begin{align*}
&P(y_0(z_0)) = \frac{1}{\alpha-1}\left( \sum_{i\in \{1,2,5,6,7\}} s_i(1)+\sum_{i\in\{1,2,5,6,7\}} 
s_i'(1)(\alpha-1)+o(\alpha-1) \right. \\
&\left.+\frac{s_3(1)+s_4(1)}{\alpha-1} + s_3'(1)+s_4'(1)+\frac{1}{2}(s_3''(1)+s_4''(1))(\alpha-1) +o((\alpha-1)) \right)
\end{align*}
In order to continue, we compute:
\begin{align*}
s_1(\alpha) &=-\frac{1}{8}+\frac{1}{8}(\alpha-1)+o(\alpha-1) \\
s_2(\alpha) &=\frac{1}{2}z_0 +z_0 (\alpha-1)+o(\alpha-1) \\
%s_3(\alpha) &= -\frac{1}{16}z_0^2 -\frac{1}{4}z_0^2(\alpha-1)-\frac{1}{4}z_0^2(\alpha-1)^2+o( (\alpha-1)^2) \\
s_3(\alpha) &= -\frac{1}{16}z_0^2 -\frac{1}{4}z_0^2(\alpha-1)-\frac{1}{2}z_0^2(\alpha-1)^2+o( (\alpha-1)^2) \\
%s_4(\alpha) &= \frac{1}{16}z_0^2 +\frac{z_0^2}{4}\left(\frac{1}{8}+\ln\frac{z_0}{2}\right)(\alpha-1)\\
%&\qquad+\frac{z_0^2}{8}\left(4\left(\ln\frac{z_0}{2}\right)^2+\ln\frac{z_0}{2} - \frac{1}{4}\right)(\alpha-1)^2+o( (\alpha-1)^2)\\
s_4(\alpha) &= \frac{1}{16}z_0^2 +\frac{z_0^2}{4}\left(\frac{1}{8}+\ln\frac{z_0}{2}\right)(\alpha-1)\\
&\qquad+\frac{z_0^2}{8}\left(8\left(\ln\frac{z_0}{2}\right)^2+2\ln\frac{z_0}{2} - \frac{1}{2}\right)(\alpha-1)^2+o( (\alpha-1)^2)\\
s_5(\alpha) &= \frac{z_0^2}{4} B^-(1/2;3,0) +o(\alpha-1) \\
	&\hspace{10pt}+ z_0^2\left(\left(\ln(z_0)+\frac{1}{4}\right) B^-(1/2;3,0) +1/2\int_0^{\frac{1}{2}} \ln(t(1-t))t^2(1-t)^{-1} \dd t \right)  (\alpha-1)+o (\alpha-1)\\
s_6(\alpha) &= \frac{(1-z_0)^2}{8}+\frac{(1-z_0)^2}{4} (\ln(1-z_0)-1/2 )(\alpha-1 + \smallO{a-1})\\
%s_7(\alpha) &= -\frac{z_0^2}{4}B^-(1-z_0;2,-1) - z_0^2 \left(\ln(z_0)B^-(1-z_0;2,-1) \right. \\
%&\qquad \left.+\int_0^{1-z_0} 1/2\ln(t)t(1-t)^{-2}-t\ln(1-t)(1-t)^{-2}dt \right) (\alpha-1)+o(\alpha-1)
s_7(\alpha) &= -\frac{z_0^2}{4}B^-(1-z_0;2,-1) +o(\alpha-1) \\
	&\hspace{10pt}- z_0^2 \left(\ln(z_0)B^-(1-z_0;2,-1) 
		+\int_0^{1-z_0} t(1-t)^{-2}\ln\left(\frac{\sqrt{t}}{1-t}\right)t(1-t)^{-2} \dd t \right) (\alpha-1).
\end{align*}
Based on this we see that
\begin{align*}
s_3(1)+s_4(1)=-\frac{1}{16}z_0^2+\frac{1}{16}z_0^2 =0
\end{align*}
and
\begin{align*}
&\sum_{i\in \{1,2,5,6,7\}} s_i(1) + s_3'(1)+s_4'(1) \\
&= -\frac{1}{8}+\frac{1}{2}z_0-\frac{1}{4}z_0^2+\frac{z_0^2}{32}+\frac{z_0^2}{4}\ln(\frac{z_0}{2}) 
+\frac{z_0^2}{4}B^-(1/2;3,0)+\frac{(1-z_0)^2}{8}-\frac{z_0^2}{4}B^-(1-z_0;2,-1) \\
&=-\frac{1}{8}+\frac{1}{2}z_0-\frac{1}{4}z_0^2+\frac{z_0^2}{32}+\left(\frac{z_0^2}{4}\ln(z_0)-\frac{z_0^2}{4}\ln 2\right) 
+\left(-\frac{5z_0^2}{32}+\frac{z_0^2}{4}\ln2\right) \\ 
&\hspace{50pt}+\left(\frac{1}{8}-\frac{z_0}{4}+\frac{z_0^2}{8}\right)+\left(\frac{z_0^2}{4}-\frac{z_0}{4}-\frac{z_0^2}{4}\ln z_0\right) &=0,
\end{align*}
using that %$B^-(\frac{1}{2};3,0)=-\frac{5}{8}+\ln 2$ and $B^-(1-z_0;2,-1)=-1+z_0^{-1}+\ln z_0$.
\begin{align*}
B^-(\frac{1}{2};3,0)&=\int_0^\frac{1}{2}t^2(1-t)^-1dt=\int_\frac{1}{2}^1 (1-s)^2 s^{-1}ds\\
&=\int_\frac{1}{2}^1 s^{-1}-2+sds = -2+\frac{1}{2}-\ln\frac{1}{2}+1-\frac{1}{8}=-\frac{5}{8}+\ln 2
\end{align*}
and
\begin{align*}
B^-(1-z_0;2,-1)&=\int_0^{1-z_0} t(1-t)^{-2}dt = \int_{z_0}^1 (1-s)s^{-2}ds \\
&=\int_{z_0}^1 s^{-2}-s^{-1}ds = -1+z_0^{-1}+\ln z_0.
\end{align*}

Finally, it follows that as $\alpha \to 1$,
\begin{align*}
P(y_0(z_0)) = \sum_{i \in\{1,2,5,6,7\}} s_i'(1)+\frac{1}{2}(s_3''(1)+s_4''(1)) + o(1)
\end{align*}
Therefore, the desired value of $\lim_{\alpha \rightarrow 1} P(y_0(z_0))$ is given by
\begin{align*}
&\sum_{i \in\{1,2,5,6,7\}} s_i'(1)+\frac{1}{2}(s_3''(1)+s_4''(1))\\
	&=\frac{1}{8}+z_0-\frac{z_0^2}{4}+\frac{z_0^2}{8}(4(\ln\frac{z_0}{2})^2+\ln\frac{z_0}{2} - \frac{1}{4}) 
		+\frac{(1-z_0)^2}{4} (\ln(1-z_0)-1/2 )\\
	&\hspace{10pt}+z_0^2\left(\left(\ln(z_0)+\frac{1}{4}\right) B^-(1/2;3,0)+ \frac{1}{2}\int_0^{\frac{1}{2}} 
		\ln(t(1-t)) \, t^2(1-t)^{-1} \dd t \right) \\
	&\hspace{10pt}- z_0^{2}\left(\ln(z_0)B^-(1-z_0;2,-1)+\int_0^{1-z_0}\ln\left(\frac{\sqrt{t}}{1-t}\right)t(1-t)^{-2} 
		\dd t \right) \\
	&=\frac{1}{8}+z_0-\frac{z_0^2}{4}+\frac{z_0^2}{2}(\ln\frac{z_0}{2})^2
		+\frac{z_0^2}{8}\ln\frac{z_0}{2} - \frac{z_0^2}{32} \\
	&\hspace{10pt}-\frac{5}{8}z_0^2\ln(z_0)+z_0^2\ln(z_0)\ln 2-\frac{5z_0^2}{32} +\frac{z_0^2 \ln2}{4}\\
	&\hspace{10pt}+z_0^2/2\int_0^{\frac{1}{2}} \ln(t(1-t)) \, t^2(1-t)^{-1}\dd t \\
	&\hspace{10pt}+\frac{(1-z_0)^2}{4}\ln(1-z_0) -\frac{1}{8}+\frac{z_0}{4}-\frac{z_0^2}{8}\\
	&\hspace{10pt}+ z_0^2\ln(z_0)-z_0 \ln z_0-z_0^2(\ln z_0)^2
		-z_0^2\int_0^{1-z_0} \ln\left(\frac{\sqrt{t}}{1-t}\right)t(1-t)^{-2}  \dd t \\
	&=\frac{5}{4}z_0-\frac{9}{16}z_0^2 +\frac{z_0^2}{2}(\ln\frac{z_0}{2})^2+\frac{z_0^2}{8}\ln\frac{z_0}{2} 
		+\frac{(1-z_0)^2}{4}\ln(1-z_0) \\
	&\hspace{10pt}+\frac{3}{8}z_0^2\ln(z_0)+z_0^2\ln(z_0)\ln 2+\frac{z_0^2 \ln2}{4}
		+z_0^2/2\int_0^{\frac{1}{2}} \ln(t(1-t)) \, t^2(1-t)^{-1} \dd t \\
	&\hspace{10pt}-z_0 \ln z_0-z_0^2(\ln z_0)^2-z_0^2\int_0^{1-z_0} \ln\left(\frac{\sqrt{t}}{1-t}\right)t(1-t)^{-2} \dd t \\
	&=\frac{5}{4}z_0-\frac{9}{16}z_0^2 +\frac{z_0^2}{2}(\ln\frac{z_0}{2})^2+\frac{z_0^2}{8}\ln\frac{z_0}{2} 
		+\frac{(1-z_0)^2}{4}\ln(1-z_0) \\
	&\hspace{10pt}+\frac{3}{8}z_0^2\ln(z_0)+z_0^2\ln(z_0)\ln 2+\frac{z_0^2 \ln2}{4}
		+z_0^2/2(11/8 -1/4 \ln 2 -3/2\ln(2)^2 -  \Li_2(1/2)) \\
	&\hspace{10pt}-z_0 \ln z_0-z_0^2(\ln z_0)^2+z_0(1 + \frac{1}{2}(2-z_0) \ln(z_0) 
		+ \frac{1}{2}z_0 \ln(z_0)^2 - \frac{1}{2} (1-z_0) \ln(1-z_0) \\
	&\hspace{10pt}+\frac{1}{2}z_0 \Li_2(z_0)) - z_0^2   -\frac{1}{2}  z_0^2\Li_2(1) \\
	&=\frac{9}{4}z_0-\frac{25}{16}z_0^2 +\frac{z_0^2}{2}(\ln\frac{z_0}{2})^2+\frac{z_0^2}{8}\ln\frac{z_0}{2} 
		+\frac{(1-z_0)^2}{4}\ln(1-z_0) \\
	&\hspace{10pt}-\frac{1}{8}z_0^2\ln(z_0)+z_0^2\ln(z_0)\ln 2+\frac{z_0^2 \ln2}{4}
		+z_0^2/2(11/8 -1/4 \ln 2 -3/2\ln(2)^2 \\
	&\hspace{10pt}- \Li_2(1/2)-\Li_2(1)+\Li_2(z_0)) -\frac{1}{2}z_0^2(\ln z_0)^2 - \frac{1}{2} z_0(1-z_0) \ln(1-z_0)
\end{align*}
where we used that
\begin{align*}
	z_0^2/2\int_0^{\frac{1}{2}} \ln(t)t^2(1-t)^{-1}+\ln(1-t)t^2(1-t)^{-1} \dd t
	&= 11/8 -1/4 \ln 2 -3/2\ln(2)^2 -  \Li_2(1/2),
\end{align*}
and
\begin{align*}
	&z_0^2\int_0^{1-z_0} 1/2\ln(t)t(1-t)^{-2}-t\ln(1-t)(1-t)^{-2} \dd t\\
	&= -\frac{1}{z_0} (1 + \frac{1}{2}(2-z_0) \ln(z_0) + \frac{1}{2}z_0 \ln(z_0)^2 - \frac{1}{2} (1-z_0) \ln(1-z_0) +\frac{1}{2}z_0 \Li_2(z_0))+1   +\frac{1}{2}  \Li_2(1).
\end{align*}
By expanding the squares and collecting terms, the last expression can be simplified to
\begin{align*}
&\frac{9}{4} z_0 + \frac{1 - 4 z_0 + 3 z_0^2}{4}\ln(1 - z_0) + 
z_0^2 \left(-7/8 - \frac{\ln(2)^2+2\Li_2(1/2) + 2\Li_2(1)}{4} \right) + 
\frac{1}{2}z_0^2 \Li_2(z) \\
=&\frac{9}{4} z_0 + \frac{1 - 4 z_0 + 3 z_0^2}{4}\ln(1 - z_0) - \frac{7+\pi^2}{8}
z_0^2  + 
\frac{1}{2}z_0^2 \Li_2(z)
\end{align*}
which finishes the computation.
\end{proof}

\begin{proofof}{Proposition~\ref{prop:gammaais1}}
It suffices to find the value of $J$ and $I^{(k)}$ 
at $\alpha = 1$. We can do this by computing the integrals with the expression for $P(y)$ that we found for $\alpha=1$, i.e.
\begin{align*}
	J &= 2\alpha\int_0^1 \left(\frac{9}{4} z + \frac{1 - 4 z + 3 z^2}{4}\ln(1 - z) - \frac{7+\pi^2}{8}
		z^2  + \frac{1}{2}z^2 \Li_2(z)\right) z^{2\alpha -1} \dd z \\
	&=\frac{575-12\pi^2}{576}
\end{align*}
and 
\begin{align*}
	I^{(k)} &= \frac{2\alpha \xi^k}{k!}\int_0^1 \left(\frac{9}{4} z + \frac{1 - 4 z + 3 z^2}{4}\ln(1 - z) 
		- \frac{7+\pi^2}{8}	z^2  + \frac{1}{2}z^2 \Li_2(z) \right)z^{2\alpha-k-1} e^{-\xi/z} \dd z \\
	&= \frac{2 \eta^k}{k!}\int_0^1 \left(\frac{9}{4} z + \frac{1 - 4 z + 3 z^2}{4}\ln(1 - z) 
		- \frac{7+\pi^2}{8} z^2  + \frac{1}{2}z^2 \Li_2(z) \right)z^{1-k} e^{-\eta/z} \dd z \\
	&= \frac{9 \eta^k}{2 k!}\eta^{3-k}\Gamma^+(k-3,\eta)-\frac{\eta^k}{k!}\frac{7+\pi^2}{4}\eta^{4-k}\Gamma^+(k-4,\eta)\\
	&\hspace{10pt}+ \frac{\eta^k}{2k!}\int_0^1 (1-4z+3z^2)\ln(1-z)z^{1-k}e^{-\eta/z}dz+\frac{\eta^k}{k!}\int_0^1 z^{3-k} 
		\Li_2(z) e^{-\eta/z} \dd z \\
	&=\frac{9 \eta^3}{2 k!}\Gamma^+(k-3,\eta)-\frac{\eta^4}{k!}\frac{7+\pi^2}{4}\Gamma^+(k-4,\eta)\\
	&\hspace{10pt}+ \frac{\eta^k}{2k!}\int_0^1 (1-4z+3z^2)\ln(1-z)z^{1-k}e^{-\eta/z}dz+\frac{\eta^k}{k!}\int_0^1 z^{3-k} 
		\Li_2(z) e^{-\eta/z} \dd z
\end{align*}
where $\eta = \frac{4\nu}{\pi}$ and and $\Li_2(z) = \sum_{t = 1}^\infty z^t/t^2$, the dilogarithm function. 
Plugging this into ~\eqref{eq:gammaint} and~\eqref{eq:gammakint} yields the expressions in the statement of the proposition.
\end{proofof}

\end{appendices}

\end{document}